\newlength{\dhatheight}
\newcommand{\doublehat}[1]{%
    \settoheight{\dhatheight}{\ensuremath{\hat{#1}}}%
    \addtolength{\dhatheight}{-0.35ex}%
    \hat{\vphantom{\rule{1pt}{\dhatheight}}%
    \smash{\hat{#1}}}}
\newtheorem{thmI}{Theorem}
\newtheorem{prop}{Proposition}[section]
\newtheorem{thm}[prop]{Theorem}
\newtheorem{lm}[prop]{Lemma}
\newtheorem{cor}[prop]{Corollary}
\newtheorem{cl}[prop]{Proposition}
\theoremstyle{definition}
\newtheorem{dfn}[prop]{Definition}
\theoremstyle{remark}
\newtheorem{rem}[prop]{Remark}
\newtheorem{ex}[prop]{Example}
\newtheorem{notn}[prop]{Notation}
\DeclareMathOperator{\Id}{Id}
\DeclareMathOperator{\im}{Im}
\DeclareMathOperator{\tr}{tr}
\DeclareMathOperator{\ogw}{OGW}
\newcommand{\fix}{\textit{fix}}
\newcommand{\lrarr}{\longrightarrow}
\renewcommand{\k}{\Bbbk}
\newcommand{\R}{\mathbb{R}}
\newcommand{\Z}{\mathbb{Z}}
\newcommand{\N}{\mathbb{N}}
\newcommand{\M}{\mathcal{M}}
\renewcommand{\P}{\mathbb{C}P}
\renewcommand{\L}{\Lambda}
\renewcommand{\l}{\lambda}
\newcommand{\mI}{\mathcal{I}}
\newcommand{\g}{\Gamma}
\newcommand{\Hh}{\widetilde{H}}
\newcommand{\J}{\mathcal{J}}
\newcommand{\m}{\mathfrak{m}}
\renewcommand{\d}{\partial}
\newcommand{\at}{\tilde{\alpha}}
\newcommand{\xit}{\tilde{\xi}}
\newcommand{\etat}{\tilde{\eta}}
\newcommand{\bt}{\tilde{b}}
\newcommand{\gt}{\tilde{\gamma}}
\newcommand{\mt}{\tilde{\mathfrak{m}}}
\newcommand{\ct}{\tilde{c}}
\newcommand{\mC}{\mathfrak{C}}
\newcommand{\mCd}{\mathfrak{C}^{\diamond}}
\newcommand{\mCdh}{\hat{\mathfrak{C}}^{\diamond}}
\newcommand{\Ch}{\hat{C}}
\newcommand{\Rh}{\hat{R}}
\newcommand{\Rd}{R^{\diamond}}
\newcommand{\Rdh}{\hat{R}^{\diamond}}
\newcommand{\Cd}{C^{\diamond}}
\newcommand{\Cdh}{\hat{C}^{\diamond}}
\newcommand{\ah}{\hat{\alpha}}
\newcommand{\mD}{\mathfrak{D}}
\newcommand{\mR}{\mathfrak{R}}
\newcommand{\mB}{\mathfrak{B}}
\newcommand{\mRd}{\mathfrak{R}^{\diamond}}
\newcommand{\mRdh}{\hat{\mathfrak{R}}^{\diamond}}
\newcommand{\evbt}{\widetilde{evb}}
\newcommand{\evit}{\widetilde{evi}}
\newcommand{\mg}{\m^{\gamma}}
\newcommand{\mgh}{\mh^{\gamma}}
\newcommand{\mghb}{\mh^{\gamma,b}}
\newcommand{\mghbl}{\mh^{\gamma,b_{(l)}}}
\newcommand{\mghi}[1]{\mh^{\gamma,#1}}
\newcommand{\mgie}[2]{\mh^{\gamma,#1}_{0,#2}}
\newcommand{\mgiek}[3]{\mh^{\gamma,#1}_{#3,#2}}
\newcommand{\mgp}{\m^{\gamma'}}
\newcommand{\mgt}{\mt^{\gt}}
\newcommand{\mgtb}{\mt^{\gt,\tilde{b}}}
\newcommand{\mgti}[1]{\mt^{\gt,#1}}
\newcommand{\mgtbl}{\mt^{\gt,\tilde{b}_{(l)}}}
\renewcommand{\ll}{\langle\!\langle}
\renewcommand{\gg}{\rangle\!\rangle}
\newcommand{\degev}[1]{|#1|_{\mathbb{Z}_2}}
\renewcommand{\Im}{\im}
\newcommand{\invs}{\kappa}
\newcommand{\RP}{\mathbb{R}P}
\newcommand{\sly}{\Pi}
\newcommand{\D}{\mathcal{D}}
\newcommand{\bb}{\bar{b}}
\newcommand{\Ups}{\Upsilon}
\newcommand{\Mt}{\widetilde{\M}}
\newcommand{\e}{{\bf{e}}}
\newcommand{\mbar}{\bar{\m}}
\newcommand{\mh}{\hat{\m}}
\newcommand{\lp}{{\prec}}
\newcommand{\rp}{{\succ}}
\newcommand{\cC}{\mathcal{C}}
\newcommand{\lpt}{{\preccurlyeq}}
\newcommand{\rpt}{{\succcurlyeq}}
\newcommand{\s}{\mathfrak{s}}
\renewcommand{\a}{\alpha}
\newcommand{\ssly}{{S}}
\title{Point-like non-commutative families of bounding cochains}
\author[E. Kosloff]{Elad Kosloff}
\address{Institute of Mathematics, The Hebrew University of Jerusalem}
\email{eladkosloff@gmail.com}
\author[J. Solomon]{Jake P. Solomon}
\address{Institute of Mathematics, The Hebrew University of Jerusalem}
\email{jake@math.huji.ac.il}
\keywords{$A_\infty$ algebra, bounding cochain, open Gromov-Witten invariant, Lagrangian submanifold, non-commutative family, $J$-holomorphic, stable map, superpotential}
\subjclass[2020]{53D45, 53D37 (Primary) 14N35, 14N10, 53D12 (Secondary)}
\date{November 2025}
\begin{document}

\begin{abstract}
    We define genus zero open Gromov-Witten invariants with boundary and interior constraints for a Lagrangian submanifold of arbitrary even dimension. The definition relies on constructing a canonical family of bounding cochains that satisfy the point-like condition of the second author and Tukachinsky. Since the Lagrangian is even dimensional, the parameter of the family is odd. Thus, to avoid the vanishing of invariants with more than one boundary constraint, the parameter must be non-commutative. The invariants are defined either when the Lagrangian is a rational cohomology sphere or when the Lagrangian is fixed by an anti-symplectic involution, has dimension $2$ modulo $4$, and its cohomology is that of a sphere aside from degree $1$ modulo $4$. In dimension $2$, these invariants recover Welschinger's invariants.
    
    We develop an obstruction theory for the existence and uniqueness of bounding cochains in a Fukaya $A_\infty$ algebra with non-commutative coefficients. The obstruction classes belong to twisted cohomology groups of the Lagrangian instead of the de Rham cohomology of the commutative setting. A spectral sequence is constructed to compute the twisted cohomology groups. The extension of scalars of an $A_\infty$ algebra by a non-commutative ring is treated in detail. A theory of pseudo-completeness is introduced to guarantee the convergence of the Maurer-Cartan equation, which defines bounding cochains, even though the non-commutative parameter is given zero filtration.    
\end{abstract}
\maketitle

\vspace{-2.5em}

%%%%%%%%%%%%%%%%%%%%%%%%%%%%%%%%%%%%%%%%%%%%
\tableofcontents
%%%%%%%%%%%%%%%%%%%%%%%%%%%%%%%%%%%%%%%%%%%%

\section{Introduction}
\subsection{Overview} 
An open Gromov-Witten invariant is a count of $J$-holomorphic curves in a symplectic manifold $X$ with boundary in a Lagrangian submanifold $L \subset X$ representing a homology class $d \in H_2(X, L).$ To obtain a finite count, the curves are constrained to pass through a collection of cycles on $X$ and $L$, called interior and boundary constraints respectively.
The count should not depend on the choice of tame almost complex structure $J$ and it should depend only on the homology class of the cycles used as constraints. The main difficulty in defining open Gromov-Witten invariants is that as one varies $J$ or the constraints in a one-parameter family, $J$-holomorphic curves can degenerate and disappear through the process of boundary bubbling. In some situations~\cite{Liu2002Counting,cho2008counting,solomon2006intersection,georgieva2016open}, boundary bubbles can be shown to cancel out using symmetries of the pair $(X,L).$ 
However, in the presence of boundary constraints, when $L$ has dimension greater than $3,$ the cancellation of bubbles based on symmetries breaks down. The second author and Tukachinsky~\cite{solomon2016point,solomon2016differential,solomon2019relative} used the Fukaya-Oh-Ohta-Ono theory of bounding cochains \cite{fukaya2010lagrangian,fukaya2010cyclic} to define open Gromov-Witten invariants without requiring any symmetry of $(X,L)$, in arbitrary dimension, with boundary and interior constraints. To this end, they define the notion of a ``point-like'' family of bounding cochains and prove its existence and uniqueness up to gauge-equivalence under appropriate conditions. However, in even dimensions, the parity of the parameter of the family is odd, and since it is graded commutative, invariants with more than one boundary constraint vanish.

The present work develops the theory of point-like families of bounding cochains with non-commutative parameter and uses it to define open Gromov-Witten invariants with an arbitrary number of boundary constraints that need not vanish. As in~\cite{solomon2016point,solomon2016differential,solomon2019relative}, these invariants are defined without requiring any symmetry of $(X,L)$, in arbitrary dimension, and interior constraints are allowed as well. In dimension $2$ the open Gromov-Witten invariants of the present work are shown to recover Welschinger's enumerative invariants~\cite{welschinger2005enumerative}. So, computations of Welschinger's invariants~\cite{Itenberg2003enumerationrealcurves,Itenberg2004LogEquiv,Itenberg2007newLogWel,Brugalle2018SurgeryWelsch} give examples where the open Gromov-Witten invariants of the present work with multiple boundary constraints are non-zero. 
A forthcoming article of the first author~\cite{Kosloff2026} extends the open WDVV equation of~\cite{solomon2019relative} to the invariants of the present work, which should allow extensive calculations in arbitrary dimension. While the present work assumes the Lagrangian submanifold is orientable, the techniques developed here applied to the Fukaya $A_\infty$ algebras of non-orientable Lagrangians developed in~\cite{kedar2022ainftyorientorcalculus,kedar2022fukaynonorientable} should give open Gromov-Witten invariants of non-orientable Lagrangians.

The existence and uniqueness of non-commutative families of bounding cochains is controlled by obstruction classes that belong to a twisted version of the de Rham cohomology of the Lagrangian submanifold. The twisted cohomology is computed from the ordinary cohomology of the Lagrangian by means of a spectral sequence. The non-triviality of the differentials of this spectral sequence force the vanishing of certain obstructions that do not appear in the commutative setting. the obstruction classes to belong to the twisted cohomology, we assign zero filtration to the non-commutative parameter. Thus, the Maurer-Cartan equation, which  a bounding cochain satisfies by definition, a priori need not converge. To prove convergence, we show that Fukaya $A_\infty$ algebras satisfy a property called pseudo-completeness. 
\subsection{Statement of the results}

\subsubsection{\texorpdfstring{$A_{\infty}$}{A-infinity} algebras}\label{subsubsec:a_infty}

To formulate our results, we recall relevant notation from~\cite{solomon2016differential}. For an element $x$ of a graded abelian group we denote by $|x|$ the degree of $x$. Consider a symplectic manifold $(X,\omega)$ of even complex dimension $0<n$, and a connected, Lagrangian submanifold $L$ with relative spin structure~$\s =\s_L.$ Let $J$ be an $\omega$-tame almost complex structure on~$X.$ Denote by $\mu:H_2(X,L) \to \Z$ the Maslov index. Denote by $A^*(L)$ the ring of differential forms on $L$ with coefficients in $\R$.
Let $\sly = \sly_L$ be the quotient of $H_2(X,L;\Z)$ by a possibly trivial subgroup $\ssly_L$ contained in the kernel of the homomorphism $\omega \oplus \mu : H_2(X,L;\Z) \to \R \oplus \Z.$ Thus the homomorphisms $\omega,\mu,$ descend to $\sly.$ Denote by $\beta_0$ the zero element of $\sly.$ We use a Novikov ring $\L$ which is a completion of a subring of the group ring of $\sly$. The precise definition follows. Denote by $T^\beta$ the element of the group ring corresponding to $\beta \in \sly$, so $T^{\beta_1}T^{\beta_2} = T^{\beta_1 + \beta_2}.$ Then,
\begin{equation}\label{eq:lambada_def}
    \L=\left\{\sum_{i=1}^\infty a_iT^{\beta_i}\bigg|a_i\in\R,\beta_i\in \sly,\omega(\beta_i)\ge 0,\omega(\beta_i)=0 \implies \beta_i= \beta_0, \; \lim_{i\to \infty}\omega(\beta_i)=\infty\right\}.
\end{equation}
A grading on $\L$ is defined by declaring $T^\beta$ to be of degree $\mu(\beta).$
Denote also
\[
\L^+=\left\{\sum_{i=1}^\infty a_iT^{\beta_i} \in \L \bigg|\;\omega(\beta_i)> 0 \quad\forall i\right\}.
\]

We use a family of $A_\infty$ structures on $A^*(L)\otimes \L$ following\cite{fukaya2010cyclic,fukaya2010lagrangian},
based on the results of~\cite{solomon2016differential}. Let $\M_{k+1,l}(\beta)$ be the moduli space of genus zero $J$-holomorphic open stable maps $u:(\Sigma,\d \Sigma) \to (X,L)$ of degree $[u_*([\Sigma,\d \Sigma])] = \beta \in \sly$ with one boundary component, $k+1$ boundary marked points, and $l$ interior marked points. The boundary points are labeled according to their cyclic order. The space $\M_{k+1,l}(\beta)$ carries evaluation maps associated to boundary marked points $evb_j^\beta:\M_{k+1,l}(\beta)\to L$, $j=0,\ldots,k$, and evaluation maps associated to interior marked points $evi_j^\beta:\M_{k+1,l}(\beta)\to X$, $j=1,\ldots,l$.

We assume that all $J$-holomorphic genus zero open stable maps with one boundary component are regular, the moduli spaces $\M_{k+1,l}(\beta;J)$ are smooth orbifolds with corners, and the evaluation maps $evb_0^\beta$ are proper submersions.
Examples include $(\P^n,\RP^n)$ with the standard symplectic and complex structures or, more generally, flag varieties, Grassmannians, and products thereof. See~\cite[Example 1.4, Remark 1.5]{solomon2016differential}.
Throughout the paper we fix a connected component $\mathcal{J}$ of the space of $\omega$-tame almost complex structures satisfying our assumptions. All almost complex structures are taken from $\J.$ Throughout the paper we fix a connected component $\mathcal{J}$ of the space of $\omega$-tame almost complex structures satisfying our assumptions.  All almost complex structures are taken from $\J.$ The definition of open Gromov-Witten invariants in the present paper extends to arbitrary targets $(X,\omega,L)$ and $\mathcal{J}$ the space of all $\omega$-tame almost complex structures given the virtual fundamental class technique of Kuranishi structures~\cite{fukaya2010cyclic,Fukaya2011Counting,fukaya2010lagrangian,Fukaya2010ToricI,Fukaya2011ToricII,Fukaya2016Miror,Fukaya2017involution,fukaya2024constructionkuranishistructuresmoduli,Fukaya2019Spectral,Fukaya2020Kuranishi,Fukaya1999Arnold,abouzaid2021complexcobordismhamiltonianloops,Abouzaid2024Gromov,hirschi2025opencloseddelignemumfordfieldtheory}. Alternatively, it should be possible to use the polyfold theory of~\cite{Hofer2007Fredholm,Hofer2009Fredholm,Hofer2010Integration,Hofer2009functors,li2014structures}. To make the present paper more accessible, we use the self-contained treatment of Fukaya $A_\infty$ algebras for Lagrangian submanifolds of~\cite{solomon2016differential}. The treatment suffices for the calculations of~\cite{Kosloff2026}. The treatment is sufficiently general to cover an infinite family of Lagrangian submanifolds for which no open Gromov-Witten invariants were defined previously, as mentioned above. The techniques introduced in this paper are no simpler for the Lagrangians covered by the treatment of~\cite{solomon2016differential} than in the general case.
The relative spin structure $\s$ determines an orientation on the moduli spaces $\M_{k+1,l}(\beta)$ as in~\cite[Chapter 8]{fukaya2010lagrangian}.
For $m>0$ denote by
$A^m(X,L)$ differential $m$-forms on $X$ that vanish on $L$, and denote by $A^0(X,L)$ functions on $X$ that are constant on $L$. The exterior differential $d$ makes $A^*(X,L)$ into a complex. 
Let
\begin{equation*}
R:=\L [[t_0,\ldots,t_N]],\qquad Q:=\R[t_0,\ldots,t_N],
\end{equation*}
be graded commutative power series and polynomial rings, and let
\begin{equation*}
C:= R \otimes A^*(L),\qquad\text{and}\qquad D:= Q\otimes  A^*(X,L).
\end{equation*}
Here and throughout the paper, unless otherwise mentioned, tensor products are implicitly completed.
Let $S:=\R  \langle s\rangle$ denote the ring of polynomials in a non-commutative formal variable $s$ of degree $|s| = 1-n,$ which is necessarily odd, and let
\begin{gather*}
 \hat{R}:=S \otimes R, \qquad \hat{C}:=S \otimes R \otimes A^*(L).
\end{gather*}
The grading on these complexes takes into account the degrees of $s,t_j,T^\beta,$ and the degree of differential forms.
For an $\R$-algebra $\Upsilon$, write
\[
 \Hh^*(X,L;\Upsilon):=H^*(A^*(X,L)\otimes\Upsilon, d).
\]
Observe that
\[
\Hh^*(X,L;\Upsilon) \simeq \left(H^0(L;\R)\oplus H^{>0}(X,L;\R)\right)\otimes  \Upsilon,\qquad \Hh^*(X,L;Q)=H^*(D).
\]
Given a graded module $M$, we write $M_j$ or $(M)_j$ for the degree~$j$ part.
Let
\[
R^+:=\hat{R}\L^+\triangleleft \hat{R},\qquad\mathcal{I}_{\hat{R}}:=\left<t_0,\ldots,t_N\right>+R^+ \triangleleft \hat{R},
\qquad \mathcal{I}_Q:=\left<t_0,\ldots,t_N\right>\triangleleft Q,
\]
be the ideals generated by the formal variables.  Define $\hat{R}^*$ to be the units of the ring $\hat{R}$. For $x \in S$ we write
\[x= x^0 +x^1,\qquad x^0 \in \R\langle s^2 \rangle, \qquad x^1 \in s\cdot \R\langle s^2\rangle. \]
We can extend this definition to elements of tensor products $M$ of $S$ such as $M=\Rh$ and $M=\Ch$. An $x\in M$ such that $x=x^0$ will be called even. Likewise an  $x \in M$ such that $x=x^1$ will be called odd. The sub-modules of even and odd elements will be denoted by $M^{even}$ and $M^{odd}$ respectively.

Let $\gamma\in \mI_QD$ be a closed form with $|\gamma|=2$. For example, given closed differential forms $\gamma_j\in A^*(X,L)$ for $j=0,\ldots,N,$ take $t_j$ of degree $2-|\gamma_j|$ and $\gamma:=\sum_{j=0}^Nt_j\gamma_j$.
Define structure maps
\[
\mgh_k:\Ch^{\otimes k}\lrarr \Ch
\]
by
\begin{multline*}
\mgh_k(\alpha_1,\ldots,\alpha_k):=\\
=\delta_{k,1}\cdot d\alpha_1+(-1)^{\sum_{j=1}^kj(|\alpha_j|+1)+1}
\sum_{\substack{\beta\in\sly\\l\ge0}}T^{\beta}\frac{1}{l!}{evb_0^\beta}_* (\bigwedge_{j=1}^l(evi_j^\beta)^*\gamma\wedge
\bigwedge_{j=1}^k (evb_j^\beta)^*\alpha_j
).
\end{multline*}
The push-forward $(evb_0^\beta)_*$ is defined by integration over the fiber; it is well-defined because $evb_0^\beta$ is a proper submersion. The condition $\gamma\in \mI_QD$ ensures that the infinite sum converges.
Intuitively, $\gamma$ should be thought of as interior constraints, while $\alpha_j$ are boundary constraints. Then the output is a cochain on $L$ that is ``Poincar\'e dual'' to the image of the boundaries of disks that satisfy the given constraints.

Define the pairing $\langle\;,\;\rangle:\Ch\otimes \Ch \to \Rh^{odd}$ by
\[\langle \xi, \eta\rangle:= (-1)^{|(\eta)|}\left(\int_L\xi\wedge\eta\right)^1.\]
In Section \ref{Fukaya algebras} we show that $(\Ch,\{\mgh_k\}_{k\ge 0},\langle\;,\,\rangle,1)$ is an $n$-dimensional curved cyclic unital $A_\infty$ algebra. 

\begin{dfn}\label{dfn_bd_pair}
A \textbf{bounding pair} with respect to $J$ is a pair $(\gamma,b)$ where $\gamma\in \mathcal{I}_Q D$ is closed with $|\gamma|=2$ and $b\in \left(\langle s\rangle +\mathcal{I}_{\hat{R}} \right)\hat{C}$ with
$
|b|=1,
$
such that
\begin{equation}\label{eq:bc}
\sum_{k\ge 0}\mh_k^\gamma(b^{\otimes k})=c\cdot 1, \qquad c\in \mI_{\Rh}\cap Z\left(\Rh\right),\qquad |c|=2. 
\end{equation}
In this situation, following~\cite{fukaya2010lagrangian}, $b$ is called a \textbf{bounding cochain} for $\mgh$. 
\end{dfn}
\begin{rem}
The condition that $c$ belongs to the center of $\Rh$ is crucial for bounding cochains in the present context to behave analogously to bounding cochains of Fukaya $A_\infty$ algebras over commutative rings.
\end{rem}
\begin{dfn}
A \textbf{unit bounding pair} $(\gamma,b)$ is a bounding pair  such that $ \int_L b^1 \in s \hat{R}^*  $.
\end{dfn}

The superpotential is defined by
\[
\Omega(\gamma,b)
:=(-1)^{n}\big(\sum_{k\ge0}\frac{1}{(k+1)}\langle\mh_k^\gamma (b^{\otimes k}),b\rangle \big)\]
\begin{rem}
The superpotential in~\cite{solomon2016point} includes an $\m_{-1}$ term to account for disks without boundary constraints, and the enhanced superpotential of~\cite{solomon2019relative} adds a term to correct for disks degenerating into spheres. These terms cannot be included in the superpotential in the non-commutative setting of the present paper for the following reason.
The pairing $\langle \cdot, \cdot\rangle$ is by definition the odd part of the Poincar\'e pairing. It is necessary to take the odd part in order for the pairing to be cyclically symmetric. See Lemma~\ref{lm:gradedcyclic}. Cyclic symmetry is used in the proof that the superpotential is invariant under gauge equivalence as asserted by Theorem~\ref{thm_inv} below. On the other hand, terms arising from disks without boundary marked points or spheres are of even parity. They are not invariant by themselves, but in the commutative setting they are augmented by contributions from other terms of the superpotential. In the non-commutative setting, the rest of the superpotential is odd, so it cannot contribute to the invariance of even terms.
\end{rem}
Definition~\ref{dfn_g_equiv} gives a notion of gauge equivalence between a bounding pair $(\gamma,b)$ with respect to $J$ and a bounding pair $(\gamma',b')$ with respect to another almost complex structure~$J'.$ Let $\sim$ denote the resulting equivalence relation.
\begin{thmI}[Invariance of the super-potential]\label{thm_inv}
If $(\gamma,b)\sim(\gamma',b')$, then $\Omega(\gamma,b)=\Omega(\gamma',b')$.
\end{thmI}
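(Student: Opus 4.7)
The plan is to reduce invariance under gauge equivalence to an infinitesimal statement along the pseudo-isotopy that realizes the equivalence, and then derive it from the bounding cochain equation together with the cyclic symmetry of $\langle\,\cdot\,,\,\cdot\,\rangle$ asserted in Lemma~\ref{lm:gradedcyclic}.

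As is standard in the Fukaya framework, the expected form of Definition~\ref{dfn_g_equiv} should package $(\gamma,b)\sim(\gamma',b')$ as a pseudo-isotopy: a curved cyclic unital $A_\infty$ structure $\{\tilde{\mh}_k^{\tilde{\gamma}}\}$ on $\hat{C}\otimes A^*([0,1])$ carrying a bounding pair $(\tilde{\gamma},\tilde{b})$ whose restrictions at $t=0$ and $t=1$ agree with $(\gamma,b)$ and $(\gamma',b')$. Write $\tilde b = b(t) + h(t)\,dt$ with $b(t),h(t)\in \hat C$, and similarly for $\tilde \gamma$. The Maurer-Cartan equation for $\tilde b$ then decomposes into two components: an equation without $dt$ that says $b(t)$ is a bounding cochain for the $t$-frozen $A_\infty$ structure with central $c(t)\in \mI_{\Rh}\cap Z(\Rh)$, and an equation linear in $dt$ that expresses $\dot b(t)$ in terms of $h(t)$ and iterated insertions of $h(t)$ into the $\tilde\m$'s. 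Setting $\tilde\Omega(t):=\Omega(\gamma(t),b(t))$, it suffices to prove $\frac{d}{dt}\tilde\Omega(t)=0$.

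Differentiating term by term gives
\[
\frac{d}{dt}\tilde\Omega = (-1)^n\sum_{k\ge 0}\frac{1}{k+1}\Big[\sum_{j=1}^k\langle \mh^{\gamma}_k(b^{\otimes j-1}\otimes \dot b\otimes b^{\otimes k-j}),b\rangle + \langle \mh^{\gamma}_k(b^{\otimes k}),\dot b\rangle\Big] + (\text{terms from }\dot\gamma).
\]
Cyclic symmetry of the pairing, combined with $|b|=1$, identifies the $k+1$ terms in each bracket up to sign, cancelling the $\tfrac{1}{k+1}$ factor and reducing the sum to $\sum_{k\ge 0}\langle \mh^{\gamma}_k(b^{\otimes k}),\dot b\rangle$. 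The bounding cochain equation collapses this to $\langle c\cdot 1,\dot b\rangle$, and the centrality of $c$ inside $\Rh$ lets one pull the scalar $c$ across the pairing. The remaining term $\langle 1,\dot b\rangle$ is exactly what the $dt$-component of the $A_\infty$ relations for $\tilde\m^{\tilde\gamma}$ produces when paired cyclically against contributions of $h$, so that combining with the analogous cyclic collapse for the $h$-insertions cancels the residue; this is the non-commutative analogue of the argument of~\cite{solomon2016point,solomon2016differential}. The variation in $\gamma$ is handled by an entirely parallel calculation using that $\gamma$ is closed and lies in $\mI_Q D$.

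The main obstacle will be the bookkeeping behind the cyclic collapse step in the non-commutative setting. In the commutative case, cycling $\dot b$ around $b^{\otimes k}$ produces a uniform Koszul sign; here one must track the position of the variable $s$ within each tensor factor, and verify that after passing to the odd component in the definition of $\langle\,\cdot\,,\,\cdot\,\rangle$, all $k+1$ cyclic terms still coincide. This is precisely where the two design choices flagged in the remark after Definition~\ref{dfn_bd_pair} pay off: the centrality requirement on $c$ ensures that $c\langle 1,\dot b\rangle = \langle c\cdot 1,\dot b\rangle$ and that the cancellation between the $dt$-component and the residue takes place inside $\Rh^{odd}$ without a permutation defect; and the projection to the odd part in $\langle\,\cdot\,,\,\cdot\,\rangle$ is what makes cyclic symmetry hold in the first place, via Lemma~\ref{lm:gradedcyclic}. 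Once these two facts are invoked, the proof proceeds in parallel with the commutative argument, with each step justified by the $A_\infty$ relations for the pseudo-isotopy $\tilde\m^{\tilde\gamma}$.
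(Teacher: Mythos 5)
Your overall strategy (reduce to the pseudoisotopy realizing the equivalence and exploit cyclic symmetry plus the Maurer--Cartan equation) is in the right family, but the infinitesimal formulation you chose has two genuine gaps, and the paper's proof is structured precisely to avoid them. First, your decomposition $\tilde b = b(t)+h(t)\,dt$ with ``$b(t)$ is a bounding cochain for the $t$-frozen $A_\infty$ structure'' presumes that the operations $\m^{\gamma(t)}$ exist for each fixed $t$. In this framework they need not: the regularity hypotheses only guarantee that the \emph{parametrized} moduli spaces $\Mt_{k+1,l}(\beta)$ are orbifolds with corners and that $\evbt_0$ (from the total space to $I\times L$) is a proper submersion; nothing ensures submersivity, or even transversality, at an individual time $t$. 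Consequently $\tilde\Omega(t):=\Omega(\gamma(t),b(t))$ is not defined for intermediate $t$, and differentiating it term by term is not available. The paper sidesteps this by never freezing $t$: it writes the difference $\Omega(\gamma',b')-\Omega(\gamma,b)$ as $\pm\, pt_*\sum_k\frac{1}{k+1}\,d\ll\mgt_k(\bt^{\otimes k}),\bt\gg$ via the Stokes-type Lemma~\ref{lm:d_ll_gg}, and then works entirely inside the pseudoisotopy algebra $(\mC,\mgt,\ll\,,\,\gg)$.

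Second, the step where you claim the residue $\langle c\cdot 1,\dot b\rangle$ ``is exactly what the $dt$-component of the $A_\infty$ relations produces'' is asserted rather than proved, and it is the actual content of the theorem. Note that $\langle 1,\dot b\rangle=\pm\big(\int_L\dot b\big)^1$ does not vanish in general (by Lemma~\ref{lm_rho}, $\int_L b$ can change along a gauge equivalence modulo $(\mI_{\hat R}^{odd})^2$), so the cancellation against the $h$-insertion and structure-variation terms must be exhibited, not invoked. In the paper this is done by applying the cyclic structure equations (Proposition~\ref{prop:super_cyclic_structure_equation}) to $\mgt$ and $\bt$, symmetrizing in $(k_1,k_2)$, and then using the Maurer--Cartan equation for $\bt$ to collapse everything to $pt_*\tfrac12\ll\ct\cdot 1,\ct\cdot 1\gg$, which vanishes by $\hat{\mathfrak R}$-bilinearity of the pairing and $n>0$. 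Your proposal never identifies this final term nor the reason it vanishes (the $n>0$ hypothesis and the unit enter here), so as written the argument does not close. If you want to keep an infinitesimal picture, you would first have to prove a frozen-time regularity statement that the present hypotheses do not supply; otherwise the global Stokes argument over $I\times L$ is the correct route.
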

To obtain invariants from $\Omega,$ we must understand the space of gauge equivalence classes of bounding pairs.
Assume $n>0$.
Define a map
\[
\varrho:\{\text{unit bounding pairs}\}/\sim\;\;\lrarr \;(\mI_Q\Hh^*(X,L;Q))_2\oplus (\hat{R}^*)^{even}_{0}\oplus 
\raisebox{.2em}{($\mI_{\hat{R}})_{1-n}^{even}$}\left/\raisebox{-.2em}{($\mI_{\hat{R}}^{odd})^2$}\right.
\]
by
\begin{equation}\label{eqn_rho}
\varrho([\gamma,b]):=\left([\gamma]\,,\frac{\int_L b^1}{s},\int_L b^0 \right).
\end{equation}
We prove in Lemma~\ref{lm_rho} that $\varrho$ is well defined.
\begin{thmI}[Classification of bounding pairs -- rational cohomology spheres]\label{thm1}
Assume $H^*(L;\R)=H^*(S^n;\R)$. Then
$\varrho$ is bijective.
\end{thmI}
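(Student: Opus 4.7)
The plan is to prove bijectivity of $\varrho$ by an obstruction-theoretic induction along the filtration by powers of the ideal $\mI_{\hat{R}}$, using the non-commutative twisted cohomology theory developed earlier in the paper. The hypothesis $H^*(L;\R) = H^*(S^n;\R)$ feeds into the spectral sequence computing this twisted cohomology and collapses it to classes concentrated in the bidegrees corresponding exactly to the three factors in the codomain of $\varrho$. Thus the only obstructions that cannot be absorbed by adjusting $b$ within its gauge class are the ones recorded by $\varrho$ itself, yielding both surjectivity and injectivity once the induction is set up.

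For surjectivity, given target data $([\gamma_0], u, v)$, first fix a closed representative $\gamma \in \mI_Q D$ with $[\gamma] = [\gamma_0]$. Choose a top form $\rho_L \in A^n(L)$ with $\int_L \rho_L = 1$, and set the initial approximation $b := \tilde v \cdot 1 + s\tilde u \cdot \rho_L$, where $\tilde u \in \hat{R}$ and $\tilde v \in \mI_{\hat{R}}$ are representatives of $u$ and $v$. This cochain has degree $1$ and satisfies $\int_L b^1 = us$ and $\int_L b^0 \equiv v \pmod{(\mI_{\hat{R}}^{odd})^2}$. Then proceed by induction on filtration level: assuming a partial solution to the Maurer-Cartan equation \eqref{eq:bc} modulo $\mI_{\hat{R}}^{k+1}$ with central $c$, the error at level $k+1$ is closed in the twisted complex. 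Its cohomology class splits, via the spectral sequence, into a multiple of the constant function $1$ (absorbed into the update of $c$) and a multiple of $\rho_L$ (killed by adjusting the top-form piece of $b$ in a manner that leaves $\int_L b$ unchanged); in all other bidegrees the class vanishes, providing a correction $\beta \in \mI_{\hat{R}}^{k+1}\hat{C}^1$ with $\int_L \beta = 0$ that completes the step and preserves $\varrho$.

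For injectivity, suppose $\varrho([\gamma, b]) = \varrho([\gamma', b'])$. Using $[\gamma] = [\gamma']$ together with the interior pseudoisotopy machinery, first construct a gauge equivalence that reduces to the case $\gamma = \gamma'$. Then build a boundary gauge equivalence between $b$ and $b'$ by induction on filtration: the initial interpolation of the top-form pieces of $b$ and $b'$ is permitted precisely because their integrals agree, and at each higher filtration level the obstruction to extending the equivalence lies in the same twisted cohomology, vanishing outside the bidegrees already handled by the initial interpolation.

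The main obstacle is managing the twisted obstruction complex in the presence of the non-commutative parameter $s$. Unlike the commutative setting of~\cite{solomon2016point}, the obstruction differential combines $d$ with operations that intertwine with $s$, and the shift $|s|=1-n$ mixes cohomological degrees, so the obstructions do not simply live in $H^*(L;\R) \otimes \hat{R}$. The crucial technical input is the collapse of the twisted-cohomology spectral sequence for $L$ a rational cohomology sphere, together with careful bookkeeping of the decomposition into $s^2$-even and $s^2$-odd parts: the even part of $b$ carries the information of $v$ and the odd part carries $u$, matching the parities of form-degrees $0$ and $n$ in the cohomology of $L$.
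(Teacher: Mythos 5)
Your overall architecture does match the paper's (obstruction-theoretic induction with surjectivity from an existence result and injectivity from a pseudoisotopy construction, with the obstructions measured in twisted cohomology computed by a spectral sequence whose $E_1$ page is $S_{-p}\otimes H^{2p+q}(L)$), but two of your concrete steps fail. First, you misplace the classifying data inside $b$. Both coordinates $\int_L b^1/s$ and $\int_L b^0$ of $\varrho$ are read off from the $A^n(L)$-component of $b$: a form-degree-$0$ component integrates to zero over $L$, and $\tilde v\cdot 1$ has total degree $1-n\neq 1$, so your seed $b=\tilde v\cdot 1+s\tilde u\cdot\rho_L$ is neither homogeneous of degree $1$ nor does it satisfy $\int_L b^0\equiv v$. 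The coordinates $u$ and $v$ are distinguished by parity in $s$, not by form degree; the correct seed is $(\tilde v+s\tilde u)\,\rho_L$, as in the paper's $b_{(0)}=a\otimes \bar b_0$ with $a=\tilde v+s\tilde u$ and $\bar b_0$ Poincar\'e dual to a point. Second, ``in all other bidegrees the class vanishes'' does not follow from the sphere hypothesis alone. For $n$ even the spectral sequence does \emph{not} degenerate at $E_1$: the obstruction components in $S^{odd}\otimes H^0(L)$ and $S^{even,>0}\otimes H^n(L)$ die only because the differential $d_{n-1}(\alpha)=-2[\bar b]\cup\alpha$ is an isomorphism, which requires $\int_L\bar b\neq 0$, i.e.\ the unit condition $\int_L b^1\in s\hat R^*$. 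Your argument never invokes the unit hypothesis, so the induction step is unjustified in exactly the even-dimensional case the theorem addresses; the residual odd-$s$ top-degree obstruction is then removed by a Stokes argument modulo $(\mI_{\hat R}^{odd})^2$, which is precisely why $\varrho$ records $\int_L b^0$ only modulo that square.

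There is also a gap on convergence and on the choice of filtration. Since $s$ carries zero energy filtration, the Maurer--Cartan sum $\sum_k\hat{\m}^\gamma_k(b^{\otimes k})$ and the deformed operators $\hat{\m}^{\gamma,b}_k$ do not converge for free; the paper's sababa/pseudo-completeness machinery (restriction to a sababa subalgebra $\Rd$, Proposition~\ref{prop:saba_decend_psedu_comp} and Corollary~\ref{cor:mgh_expo_converge}) exists precisely to make these sums and the inductive limits well defined, and your proposal does not address this. Moreover, the induction should run over the discrete energy filtration $\nu$ (discreteness coming from Gromov compactness through the sababa subalgebra), not over powers of $\mI_{\hat R}$: the Novikov ring is complete with respect to energy, not with respect to powers of $\L^+$ (positive energies need not be bounded below), so completeness and exhaustion of your $\mI_{\hat R}$-adic induction are not guaranteed. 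Without these inputs the limiting bounding cochain need not exist in $\Ch$, and the same issues afflict the injectivity step, where the paper corrects $\bt$ inside the subcomplex $\mB$ along the energy filtration rather than first reducing to $\gamma=\gamma'$.
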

In the presence of an anti-symplectic involution, the assumptions on $L$ can be relaxed. A \textbf{real setting} is a quadruple $(X,L,\omega,\phi)$ where $\phi:X\to X$ is an anti-symplectic involution such that $L\subset \fix(\phi)$. Throughout the paper, whenever we discuss a real setting, we fix a connected subset $\J_\phi \subset \J$ consisting of $J \in \J$ such that $\phi^*J = -J.$ All almost complex structures of a real setting are taken from $\J_\phi.$ If we use virtual fundamental class techniques, we can treat any $\omega$-tame almost complex structure $J$ satisfying $\phi^*J = -J.$ Whenever we discuss a real setting, we take $\ssly_L \subset  H_2(X,L;\Z)$ with $\Im(\Id+\phi_*) \subset \ssly_L,$ so $\phi_*$ acts on $\sly_L = H_2(X,L;Z)/\ssly_L$ as $-\Id.$ Also, the formal variables $t_i$ have even degree. We denote by $\Hh_\phi^{even}(X,L;\R)$ (resp. $H_\phi^{even}(X;\R)$) the direct sum over $k$ of the $(-1)^{k}$-eigenspace of $\phi^*$ acting on $\Hh^{2k}(X,L;\R)$ (resp. $H^{2k}(X;\R)$). Note that the Poincar\'e duals of $\phi$-invariant almost complex submanifolds of $X$ disjoint from $L$ belong to $\Hh_\phi^{even}(X,L;\R).$
Extend the action of $\phi^*$ to $\L,Q,R,\hat{R},S,C,\hat{C}$ and $D,$ by taking
\begin{equation}\label{eq:phi*ext}
\phi^*T^\beta = (-1)^{\mu(\beta)/2}T^\beta, \qquad \phi^* t_i = (-1)^{|t_i|/2}t_i,\qquad \phi^*(s)=(-1)^{\frac{|s|(|s| -1)}{2}}s.
\end{equation}
Elements $a \in \L,Q,R,C,D,$ and pairs thereof are called \textbf{real} if
\begin{equation}\label{eq:relt}
\phi^* a = -a.
\end{equation}
For a group $Z$ on which $\phi^*$ acts, let $Z^{-\phi^*}\subset Z$ denote the elements fixed by $-\phi^*.$  Let
\[
\varrho_\phi:\{\text{real bounding pairs}\}/\sim\;\;\lrarr\;(\mI_Q\Hh^*(X,L;Q))^{-\phi^*}_2\oplus (\hat{R}^*)^{even}_{0}
\]
be given by the first two coordinates of the formula for $\varrho$.
Then, we obtain the following variant of Theorem~\ref{thm1}.
\begin{thmI}[Classification of bounding pairs -- real spin case]\label{thm2}
Suppose $( X, L, \omega, \phi )$ is a real setting, $\mathfrak{s}$ is a spin structure, and $n \equiv 2\pmod 4$. Moreover, 
assume $H^i(L;\R) \simeq H^i(S^n;\R)$ for $i \not \equiv 1 \pmod 4$.
Then $\varrho_\phi$ is bijective.
\end{thmI}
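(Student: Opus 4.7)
The plan is to parallel the proof of Theorem~\ref{thm1}, with the anti-symplectic involution $\phi$ used to suppress the ``extra'' cohomology of $L$ in degrees $\equiv 1 \pmod 4$ that is not controlled by the sphere hypothesis. Three things need verification: that $\varrho_\phi$ is well-defined, that it is surjective, and that it is injective.

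For well-definedness, the claim is that the third coordinate of $\varrho$ automatically vanishes in the real spin case. In the real setting every generator of $\hat{R}^{\even}$ has even degree: $|s^{2k}| = 2k(1-n)$ is even since $|s|$ is odd, $|t_i|$ is even by the real-setting convention, and \eqref{eq:phi*ext} presupposes that $\mu|_\sly$ takes values in $2\Z$. Since $n \equiv 2 \pmod 4$ is even, $1-n$ is odd, whence $(\mI_{\hat{R}})_{1-n}^{\even}=0$. Thus $\int_L b^0$ is automatically zero for any real bounding pair and $\varrho_\phi$ is well-defined.

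For surjectivity and injectivity the plan is to invoke the obstruction-theoretic framework developed earlier in the paper. Given target data, first pick a real representative $\gamma\in\mI_QD$ of the first coordinate (for instance by anti-averaging $\gamma:=\tfrac12(\tilde\gamma-\phi^*\tilde\gamma)$), then construct a real bounding cochain $b$ inductively with respect to the $\mI_{\hat R}$-adic filtration, maintaining $\phi^*b=-b$ at each stage. This is possible because the structure maps $\mgh_k$ satisfy a $\phi$-equivariance property, for which the assumption that $\s$ is a genuine spin structure (rather than merely relative spin) is essential. At each stage the obstruction to extending the partial solution lies in a twisted cohomology group of the Fukaya $A_\infty$ algebra, and reality restricts it to its $(-\phi^*)$-fixed part. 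Using the spectral sequence from earlier in the paper, whose $E_2$-page is $H^p(L;\R)\otimes M^q$ for the appropriate coefficient module $M$, together with the fact that $\phi^*|_L=\Id$, the $(-\phi^*)$-fixed $E_2$-page becomes $H^p(L;\R)\otimes (M^q)^{-\phi^*}$. The spherical contributions $p=0,n$ are handled exactly as in Theorem~\ref{thm1}. For the ``extra'' rows $p\equiv 1\pmod 4$ the $E_2$-entries are a priori non-zero, but, as foreshadowed in the overview, the spectral-sequence differentials act non-trivially on them; the key input is that for $n\equiv 2\pmod 4$ the parity pattern imposed by \eqref{eq:phi*ext} on $s^j,\,t_i,\,T^\beta$ aligns the $\phi^*$-eigenvalues with the bidegree shifts of the differentials, so every extra class in the relevant obstruction degree becomes a non-trivial boundary on some later page. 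Hence the corresponding $E_\infty$-entry in the $(-\phi^*)$-fixed sector vanishes, the inductive construction goes through, and surjectivity follows. Injectivity is established by applying the same obstruction analysis to the construction of a real gauge equivalence between two real bounding pairs with the same $\varrho_\phi$-image.

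The main technical obstacle I anticipate is the simultaneous bookkeeping of four pieces of data -- the $s$-parity, the $\phi^*$-eigenvalue, the de Rham degree on $L$, and the degree in the coefficient ring -- and the verification that the combination of $n\equiv 2\pmod 4$ with $H^i(L;\R)\simeq H^i(S^n;\R)$ for $i\not\equiv 1\pmod 4$ is precisely what forces the spectral-sequence differentials to annihilate every $(-\phi^*)$-fixed obstruction class in the relevant obstruction degree. Once this is checked, the remainder of the argument transfers verbatim from the proof of Theorem~\ref{thm1}.
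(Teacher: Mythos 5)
Your overall skeleton matches the paper: well-definedness via Lemma~\ref{lm_rho} (and your parity observation that $(\mI_{\hat{R}})^{even}_{1-n}=0$ for $n$ even, so the third coordinate carries no information, is correct), surjectivity via an inductive construction of a real bounding cochain with $\kappa=-\phi^*$ (for which $\phi^*$-self-duality of $\mgh$, hence the genuine spin hypothesis, is indeed essential, cf.\ Proposition~\ref{prop:sd_super}), and injectivity via a real pseudoisotopy; this is exactly the route of Propositions~\ref{prop:exist}\ref{cond:invo_coho} and~\ref{prop:unique}\ref{cond:real_setting}.

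However, the mechanism you propose for disposing of the extra cohomology in degrees $\equiv 1\pmod 4$ is wrong, and it is precisely the step you flag as unverified. You claim that the spectral-sequence differentials act non-trivially on the $(-\phi^*)$-fixed classes coming from $H^i(L)$, $i\equiv 1\pmod 4$, so that they die as boundaries on later pages. In the spectral sequence of Theorem~\ref{thm:spectral_conv} the only possibly non-zero differential is $d_{n-1}$, and by Theorem~\ref{thm:hat_spec_seq} it is (up to sign and a factor $2$) multiplication by $[\bar b]\in S_{1-n}\otimes H^n(L)$; since cup product with a degree-$n$ class annihilates $H^i(L)$ for $0<i\le n$, the entries $S_{-p}\otimes H^i(L)$ with $0<2p+q=i<n$ satisfy $E_1^{p,q}=E_\infty^{p,q}$ (Lemma~\ref{lm:E_infty_explisit}); no differential ever touches them, so your proposed cancellation cannot occur. (The sentence in the overview about non-trivial differentials refers to the column $2p+q=0$, i.e.\ to obstructions proportional to the unit times negative powers of $s$, not to the intermediate cohomology.) What actually eliminates the degree-$\equiv 1\pmod 4$ classes is the eigenvalue constraint, not the differentials: real elements of $\hat R$ have degree $\equiv 2,3\pmod 4$ (Lemma~\ref{lm:reR}, Definition~\ref{dfn:inv_shift}), so the $(-\phi^*)$-fixed part of the relevant $E_\infty$ bidegrees is non-zero only when the accompanying form degree is $\equiv 0,3\pmod 4$ (Corollary~\ref{cor:overline_Real_spec}); those groups vanish by the hypothesis $H^i(L;\R)\simeq H^i(S^n;\R)$ for $i\not\equiv 1\pmod 4$, while the surviving $H^{\equiv 1\,(4)}$ classes lie in the $+\phi^*$ sector and therefore never meet the real obstruction cocycle (Lemmas~\ref{lm:cohomo-asum-filt_real} and~\ref{lm:cohomo-asum-filt_real_dt}, fed into Lemmas~\ref{lm:td} and~\ref{lm:dt_td}). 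Without replacing your differential argument by this eigenspace argument, the inductive construction of $b$ and of the pseudoisotopy does not go through as written.
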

\subsubsection{Open Gromov-Witten invariants}\label{sssec:intro_OGW}
When the hypothesis of either Theorem~\ref{thm1} or~\ref{thm2} is satisfied, we define open Gromov-Witten invariants as follows. In the case of Theorem~\ref{thm2}, take $\ssly_L$ containing $\Im(\Id+\phi_*).$ In the case of Theorem~\ref{thm1} (resp. Theorem~\ref{thm2}) let $W_L = \Hh^*(X,L;\R)$
(resp. $W_L = \Hh^{even}_\phi(X,L;\R)$). Fix
$\g_0,\ldots,\g_N,$ a basis of $W_L$, set $|t_j|=2-|\g_j|$, and take
\[
\g:=\sum_{j=0}^Nt_j\g_j \in (\mI_Q\Hh^*(X,L;Q))_2.
\]
By Theorem~\ref{thm1} (resp. Theorem~\ref{thm2}), choose a unit bounding pair $(\gamma,b)$ such that
\begin{equation}\label{eq:cbp}
\varrho([\gamma,b])=(\g,1,0) \qquad  \text{(resp. $\varrho_\phi([\gamma,b])=(\g,1)).$}
\end{equation}
By Theorem~\ref{thm_inv}, the superpotential $\Omega = \Omega(\gamma,b)$ is independent of the choice of $(\gamma,b).$
\begin{dfn}
The \textbf{open Gromov-Witten invariants} of $(X,L),$
\[
\ogw_{\beta,k}=\ogw_{\beta,k}^L : W_L^{\otimes l} \to \R,
\]
are defined by setting
\[
\ogw_{\beta,k}(\g_{i_1},\ldots,\g_{i_l}):= \text{ the coefficient of }T^{\beta}\text{ in }
\d_{t_{i_1}}\cdots\d_{t_{i_l}}\d_s^k\Omega|_{s=0,t_j=0}
\]
and extending linearly to general input.
\end{dfn}

\begin{rem}\label{rem1}
Theorems~\ref{thm1} and~\ref{thm2} say that a bounding cochain is determined up to equivalence by the cohomology class of its part that has degree $n$ in $A^*(L)$. In general, the degree $n$ part of $b$ must be ``corrected'' by non-closed forms of lower degrees in order to solve equation~\eqref{eq:bc}. Equation~\eqref{eq:cbp} says that the degree $n$ part of the bounding cochain parameterizes multiples of the point as the formal variable $s$ varies. Thus, following~\cite{solomon2016point}, such a bounding cochain is called \emph{point-like}.
\end{rem}

\begin{rem}\label{rem:dim23}
In the special case that $n=2,$ the cohomological assumption is always satisfied. This explains the significance of dimension $2$ in Welschinger's work~\cite{ welschinger2005invariants}. See Theorem~\ref{Welsch} for a comparison of Welschinger's invariants with the invariants of the present work.
\end{rem}
\subsubsection{Comparison with Welschinger invariants}\label{sssec:comps}
For a symplectic manifold $(X,\omega)$ of dimension $n = 2$ with an anti-symplectic involution $\phi: X \to X,$ we proceed to compare the invariants defined above with Welschinger's invariants~\cite{welschinger2005invariants}. We begin with some relevant notation. Let $Y \subset \fix(\phi)$ be closed and open. Define the doubling map
\begin{equation}\label{eq:chi}
\chi_Y: H_2(X,Y;\Z) \to H_2(X;\Z)
\end{equation}
as follows. For $\beta \in H_2(X,Y;\Z)$ represent $\beta$ by a singular chain $\sigma \in C_2(X,Y;\Z).$ Then $\chi_Y(\beta) = [\sigma - \phi_\#\sigma].$  Observe that $\chi_{Y}$ descends to the quotient of $H_2(X,Y;\Z)$ by any subgroup contained in $\Im(\Id+\phi_*).$ We denote the map on the quotient by $\chi_Y$ as well. In particular, as long as $\ssly_L \subset \Im(\Id+\phi_*),$ we have $\chi_L : \sly_L \to H_2(X;\Z).$ Thus, for the following theorem, we take $\ssly_L \subset \Im(\Id+\phi_*)$. When we consider invariants arising from Theorem~\ref{thm2}, the opposite inclusion holds as well, so $\ssly_L = \Im(\Id+\phi_*).$

For $d \in H_2(X;\Z), l \in \Z_{\geq 0},$ write
\[
k_{d,l} = \frac{c_1(X)(d) - 2l-1}2.
\]
Let $\mathcal{W}_{d,l}$ denote Welschinger's invariant counting real rational $J$-holomorphic curves in $(X,\phi)$ of degree $d$ with real locus in $L$ passing through $k_{d,l}$ real points and $l$ pairs of $\phi$-conjugate points. Recall from Remark~\ref{rem:dim23} that when $n = 2,$ the hypothesis of Theorem~\ref{thm2} is always satisfied, so the invariants $\ogw_{\beta,k}$ are defined.
\begin{thmI}[Comparison with Welschinger's invariants]\label{Welsch}
Suppose $n=2$. Let $A \in \Hh^{2n}_\phi(X,L;\R)$ be the Poincar\'e dual of a point. Let $d \in H_2(X;\Z), l \in \Z_{\geq 0},$ such that  $k_{d,l} \geq 0.$Then,
\[
\sum_{\chi_L(\beta)=d}
\ogw_{\beta,\,k_{d,l}}(A^{\otimes l})
=\pm 2^{1-l}\cdot \mathcal{W}_{d,\,l}.
\]
\end{thmI}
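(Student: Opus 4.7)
The plan is to compute $\Omega(\gamma,b)$ geometrically for a judicious bounding pair and then match the resulting disk count with $\mathcal{W}_{d,l}$ via Schwarz doubling. First, using Theorem~\ref{thm2}, choose a real point-like bounding pair $(\gamma,b)$ with $\varrho_\phi([\gamma,b]) = (t_A A,1)$, where $A$ is a $\phi$-invariant bump form Poincar\'e-dual to a generic point $p \in X\setminus L$; arrange $b^1 = s\cdot\omega_L$ with $\omega_L \in A^n(L)$ a bump top form supported near a generic $q \in L$ with $\int_L\omega_L = 1$. The component $b^0$ and the higher-order corrections in $t_A,T^\beta$ are determined recursively by~\eqref{eq:bc}, and by Theorem~\ref{thm_inv} the superpotential $\Omega(\gamma,b)$ is independent of these choices.

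Next, expand $\Omega(\gamma,b)$ using the definition of $\mgh_k$ as pushforward integrals over $\M_{k+1,l}(\beta)$, and apply $\partial_{t_A}^l\partial_s^{k_{d,l}}$ at $s=t_A=0$. Each $\partial_{t_A}$ inserts one factor of $\gamma$, constraining an interior marked point to $p$; each $\partial_s$ extracts a factor of $s\omega_L$ from a $b^1$-insertion, constraining a boundary marked point to $q$. Tracking form degrees and $s$-powers through the pushforward formula identifies the extracted $T^\beta$-coefficient with a signed weighted count of $J$-holomorphic disks $u:(D,\partial D)\to(X,L)$ of class $\beta$ meeting $q$ at the appropriate number of boundary marked points and $p$ at $l$ interior marked points, possibly decorated by boundary insertions coming from $b^0$. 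Since $\varrho_\phi$ imposes no constraint on $b^0$ when $n=2$, a gauge-equivalence or cobordism argument shows that these $b^0$-decorations contribute trivially to the $0$-dimensional integer count.

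By Schwarz reflection (using $\phi^*J = -J$), each such disk $u$ extends uniquely to a $\phi$-equivariant $J$-holomorphic sphere $\tilde u:\P^1\to X$ of class $\chi_L(\beta) = d$; the boundary marked points of $u$ become real marked points of $\tilde u$, and each interior marked point at $p$ is paired with a conjugate one at $\phi(p)$. Conversely, each real rational $J$-holomorphic curve of class $d$ cuts $L$ in a real circle and splits into two disks $D_\pm$ with $[D_\pm] = \pm\beta$ in $\sly_L$, both satisfying $\chi_L([D_\pm]) = d$ thanks to $\ssly_L \supset \Im(\Id+\phi_*)$. Hence the sum $\sum_{\chi_L(\beta) = d}$ counts each real rational curve twice, giving one factor of $2$. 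Welschinger's $l$ pair-constraints correspond to OGW's $l$ individual point-constraints: each real sphere through $l$ conjugate pairs admits $2^l$ possible assignments of pair-members between the two disks, and OGW with constraints at the specific representatives $p_i$ is sensitive to exactly one such assignment, producing a compensating $2^{-l}$ factor. Together these yield the overall factor $2^{1-l}$.

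For the signs, Welschinger weighs $\tilde u$ by $(-1)^{m(\tilde u)}$, where $m(\tilde u)$ counts hyperbolic real nodes; the OGW sign comes from the orientation on $\M_{k+1,l}(\beta;J)$ induced by the spin structure $\s$ via~\cite[Ch.~8]{fukaya2010lagrangian}. Their identification is a standard but delicate orientation computation along the lines of~\cite{solomon2006intersection,georgieva2016open}. The principal technical obstacle is controlling the contributions of $b^0$: its cohomology class is unconstrained by the classification in Theorem~\ref{thm2} when $n=2$, yet~\eqref{eq:bc} forces $b^0$ to be generically nontrivial whenever $H^1(L;\R)\neq 0$ (for instance when $L$ has positive genus), and verifying that its boundary-insertion decorations leave the integer disk count unchanged requires a careful gauge-equivalence or cobordism argument.
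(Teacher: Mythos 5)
Your outline follows the paper's broad strategy (choose a point-like real bounding pair with $b^1 = s\,\omega_L$, expand $\Omega$, and match disks with real spheres via doubling and \cite[Theorem 1.8]{solomon2006intersection}), but it has a genuine gap exactly at the point you yourself flag as the ``principal technical obstacle'': you assume the bounding cochain must carry corrections ($b^0$ and higher-order terms in $t_A, T^\beta$ determined recursively by~\eqref{eq:bc}), and then defer the claim that these decorations do not change the $0$-dimensional count to an unspecified ``gauge-equivalence or cobordism argument.'' No such argument is supplied, and it is not a routine one: gauge equivalence preserves $\Omega$ (Theorem~\ref{thm_inv}), but it does not by itself tell you that the coefficient you extract equals the undecorated geometric disk count, which is what the comparison with $\mathcal{W}_{d,l}$ requires.

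The paper closes this gap not by a cobordism argument but by Proposition~\ref{prop:3d}: when $n=2$ and $\gamma$ is real, \emph{any} $b \in (\mI_{\hat R})_{1-n}\otimes A^n(L)$ is already a real bounding cochain — no corrections of lower form degree and no higher-order $T^\beta$ or $t$-corrections are needed at all. The proof is a degree/parity argument: reality forces the obstruction $\pi_r(\mghi{b}_0)$ into form degrees $\equiv 0,3 \pmod 4$ (Lemma~\ref{lm:Real_pi_r_mod_deg}), $A^3(L)=0$ since $n=2$, closedness forces the $A^0(L)$ part to be constant, and the relevant even powers of $s$ lie in $Z(S)$, so the weak Maurer–Cartan equation~\eqref{eq:bc} holds on the nose. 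With $b = s\bar b$ literally a product of $s$ with a Poincaré dual of a point, $\partial_{t_N}^l\partial_s^k\Omega$ collapses to a single pushforward integral over $\M_{k,l}(\beta)$ with only $\gamma_N$ and $\bar b$ insertions, which (after passing to unordered boundary marked points) is exactly the integral computed in \cite[Theorem 1.8]{solomon2006intersection}; the factors $2$ and $2^{-l}$ and the sign identification are then quoted from there rather than re-derived. So your argument could be repaired, but only by proving something like Proposition~\ref{prop:3d} — the step you left open is precisely where the dimension-$2$ and reality hypotheses do their work, and without it the comparison does not go through.
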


\subsection{General notation}\leavevmode \label{ssec:gen_not}
We write $I:=[0,1]$ for the closed unit interval.
Use $i$ to denote the inclusion $i:L\hookrightarrow X$. By abuse of notation, we also use $i$ for $\Id\times i:I\times L\to I\times X$. The meaning in each case should be clear from the context.

Denote by $pt$ the map (from any space) to a point.

Define a filtration
\[
\nu: R \longrightarrow \mathbb{R}_{\geq 0}
\]
by
\[
\nu\left(\sum_{j=0}^{\infty} a_{j} T^{\beta_{j}}  \prod_{a=0}^{N} t_{a}^{l_{a j}}\right)=\inf _{a_j\ne 0}\left(\omega\left(\beta_{j}\right)+\sum_{a=0}^{N} l_{a j}\right).
\]
Let $\Upsilon^{\prime}$ be an $\mathbb{R}$ vector space and let $\Upsilon=\Upsilon^{\prime} \otimes R$. Then, equipping $\Upsilon^{\prime}$ with the trivial filtration, $\nu$ induces a filtration on $\Upsilon$ which we also denote by $\nu .$ Whenever a tensor product (resp. direct sum) of modules with filtration is written, we mean the completed tensor product (resp. direct sum).
Whenever a tensor product is written, we mean the completed tensor product over $\R$.

Write $A^*(L;R)$ for $R \otimes A^*(L) $. Similarly, $A^*(X;R)$ and $A^*(X,L;R)$ stand for $ R \otimes A^*(X)$ and $R \otimes A^*(X,L) $, respectively.           

We implicitly assume that elements of graded rings and modules are homogeneous and write $| \cdot |$ for the degree.
Given $\alpha$, a homogeneous differential form with coefficients in $R$, denote by $|\alpha|_{A}$ the degree of the differential form, ignoring the grading of $R$.

For a possibly non-homogeneous $\alpha$, denote by $(\alpha)_j$ the form that is the part of degree $j$ in $\alpha$. In particular, $|(\alpha)_j|_A=j$. For a graded module $M$, the notation $M_j$ or $(M)_j$ stands for the degree~$j$ part of the module, which in the present context involves degrees of forms as well as degrees of variables.

Let $\Upsilon'$ be an $\R$-vector space, $\Upsilon''=R,$ $Q,$ or $\L,$ and let $\Upsilon=\Upsilon'\otimes \Upsilon''$. For $x\in \Upsilon$ and $\lambda\in\Upsilon'',$ denote by $[\lambda](x)\in \Upsilon'$ the coefficient of $\lambda$ in $x$.

\subsection{Acknowledgements}
The authors would like to thank P.~Seidel for suggestions that had a major impact on the present work. They would also like to thank O.~Kedar for helpful conversations. The authors were partially supported by ISF grants 569/18 and 1127/22. J.~S. was partially supported by the Miriam and Julius Vinik Chair in Mathematics.

\section{\texorpdfstring{$A_\infty$}{A-infinity} algebras over an associative algebra}\label{sec:Non_commutative_algebra}
\subsection{Graded algebras}
We recall that if $V$ is a vector space and $G$ an abelian group, then a $G$-grading of $V$ is a decomposition of $V$ into a direct sum of subspaces $V = \bigoplus_{\alpha \in G} V_\alpha.$ If $v \in V_\alpha,$ then we say that $v$ is homogeneous of degree $\alpha$ and we write $|v|=\alpha$. 
A $G$-graded algebra is an algebra $A$ with a $G$ grading for which $A_\alpha A_\beta \subset A_{\alpha+\beta}$. 
Let $A$ and $B$ be associative graded algebras, graded by $\Z$. Their tensor product $A \otimes B$ is the graded algebra whose space is the tensor product of the spaces of $A$ and $B$, with the induced $\Z$-grading and the product operation defined by 
$$
\left(a_{1} \otimes b_{1}\right)\left(a_{2} \otimes b_{2}\right)=(-1)^{\left(| a_{2}|\right)\left(| b_{1}|\right)} a_{1} a_{2} \otimes b_{1} b_{2}, \qquad a_{i} \in A, \qquad b_{i} \in B.
$$
If $M$ is a graded $A$ bimodule and $N$ is a graded $B$ bimodule, then $M \otimes N$ is naturally a graded $A\otimes B$ bimodule. Explicitly, for $a \in A, b \in B, m \in M$ and $n \in N,$ we have
\[\left(a \otimes b\right)\left(m \otimes n\right)=(-1)^{\left(| b|\right)\left(| m|\right)} a m \otimes b n, \qquad \left(m \otimes n\right)\left(a \otimes b\right)=(-1)^{\left(| a|\right)\left(| n|\right)}  m a \otimes n b.
\]
The bracket $[\cdot\, , \cdot ]$ in a graded associative algebra is defined by
$$[a,b]=ab-(-1)^{|a||b|}ba.$$
If $A$ is an associative algebra with a bracket $[x, y],$ then the centralizer of a subset $S$ of $A$ is defined to be
$$
Z(S):=\{x \in A  \mid[x, s]=0 \text { for all } s \in S\}.
$$
 For a graded vector space $V,$ let  $V[k]$ be a copy of $V$ with the shifted grading $V[k]_i = V_{k+i}.$

\begin{dfn}
Let $M,N$, be differential graded associative algebras over a field $\k$. A differential graded $\k$ linear map $F:M \to N$ is said to be \textbf{graded cyclic} if for all $s_1,\dots s_n\in  M$, we have
\[F(s_1 \cdots s_n)=(-1)^{|s_n|\sum_{i=1}^{n-1}|s_i|}F(s_n \cdot s_1 \cdots s_{n-1}).\] 
\end{dfn}
\begin{ex}
The identity morphism of a graded commutative algebra is a graded cyclic morphism. $M_{n}(R)$ denote the algebra of $n\times n$ matrices with entries in a graded commutative $\k$ algebra $R.$ Then the trace map $\tr : M_n(R) \to R$ is a graded cyclic morphism.
\end{ex}

\begin{dfn}\label{dfn:frm}
A \textbf{filtered ring} is a ring $\mathcal N$ with a map $\zeta_\mathcal{N}: \mathcal{N} \to \R_{\ge 0}$ such that
\[
\zeta_\mathcal{N}(x) = \infty \iff x = 0, \qquad \zeta_\mathcal{N}(xy) \ge \zeta_\mathcal{N}(x) + \zeta_\mathcal{N}(y), \qquad \zeta_\mathcal{N}(x + y) \geq \min(\zeta_\mathcal{N}(x),\zeta_\mathcal{N}(y)).
\]
We write $F_{\zeta_\mathcal{N}}^a\mathcal{N} = \{x \in \mathcal{N}| \zeta_\mathcal{N}(x) \ge a\}$ and $F_{\zeta_\mathcal{N}}^{> a}\mathcal{N} = \{x \in \mathcal{N}|\zeta_\mathcal{N}(x) > a\}.$
For $E\in \R$ abbreviate by $\mathcal{N}_E$ the rings $F_{\zeta_\mathcal{N}}^{ E}\mathcal{N}/F_{\zeta_\mathcal{N}}^{>E}\mathcal N$. The  \textbf{residue ring} is $\overline{\mathcal{N}} = \mathcal{N}_0.$ 

A \textbf{filtered dga} over a field $\k$ is a dga $\mathcal{N}$ over $\k$ equipped with a filtration $\zeta_{\mathcal{N}}$ that makes $\mathcal{N}$ a filtered ring such that the differential $d$ satisfies $\zeta_{\mathcal{N}}(dx) \geq \zeta_{\mathcal{N}}(x)$ for $x \in \mathcal{N}$ and each graded part $\mathcal{N}_m$ is a closed subspace of $\mathcal{N}.$

A \textbf{filtered module} over $\mathcal{N}$ is an $\mathcal{N}$-module $\mathcal{C}$ along with a map $\zeta_\mathcal{C} : \mathcal{C} \to \R_{\ge 0}$ such that
\[
\zeta_\mathcal{C}(x) = \infty \iff x = 0, \qquad \zeta_\mathcal{C}(xy) \ge \zeta_{\mathcal{N}}(x) + \zeta_\mathcal{C}(y), \qquad \zeta_\mathcal{C}(x + y) \geq \min(\zeta_{\mathcal{C}}(x), \zeta_\mathcal{C}(y)).
\]
We write $F_{\zeta_\mathcal{C}}^a\mathcal{C} = \{x \in \mathcal{C}| \zeta_\mathcal{C}(x) \ge a\}$ and $F_{\zeta_\mathcal{C}}^{> a}\mathcal C = \{x \in \mathcal{C}|\zeta_\mathcal{C}(x) > a\}.$ For $E\in \R$ abbreviate by $\mathcal{C}_E$ the $\overline{\mathcal{N}}$ modules $F_{\zeta_\mathcal{C}}^{ E}\mathcal{C}/F_{\zeta_\mathcal{C}}^{>E}\mathcal C$. The \textbf{residue module} is the $\overline{\mathcal{N}}$-module $\overline{\mathcal{C}} = \mathcal{C}_0.$ For $n \in \mathcal{N}$ we write $\bar{n} \in \overline{\mathcal{N}}$ for the residue. For $f : \mathcal{C} \to \mathcal{D}$ a homomorphism of $\mathcal{N}$-modules, we write $\bar f : \overline{\mathcal{C}} \to \overline{\mathcal{D}}$ for the induced homomorphism of $\overline{\mathcal{N}}$-modules.
\end{dfn}
\begin{dfn}\label{dfn:bar_structre}
Let $\mathcal{N}$ be a filtered commutative unital ring. An $\overline{\mathcal{N}}$ \textbf{structure} on $\mathcal{N}$ is an $\overline{\mathcal{N}}$ algebra structure on $\mathcal{N}$ such that the residue map $\mathcal{N} \to \overline{\mathcal{N}}$ is an augmentation. Equivalently, it is a homomorphism $\overline{\mathcal{N}} \to \mathcal{N}$ such that the composition $\bar{\mathcal{N}}\to \mathcal{N} \to \bar{\mathcal{N}}$ is the identity. Assume now that $\mathcal{N}$ is provided with an $\overline{\mathcal{N}}$ structure, and let $\mathcal{C}$ be a filtered $\mathcal{N}$ module. An $\overline{\mathcal{N}}$ \textbf{structure} on $\mathcal{C}$ is a filtered $\mathcal{N}$-module isomorphism $\mathcal{N} \otimes_{\bar{\mathcal{N}}} \overline{\mathcal{C}}\overset{\sim}{\longrightarrow} \mathcal{C} $ that induces the identity homomorphism 
\[
\overline{\mathcal{C}} \simeq\overline{\mathcal{N}} \otimes_{\bar{\mathcal{N}}} \overline{\mathcal{C}}\to\overline{\mathcal{C}}.
\]
\end{dfn}
\begin{lm}\label{lm:residue_id}
Let $\cC$ be a filtered $\mathcal{N}$ module with an $\bar{\mathcal{N}}$ structure given by $g:\mathcal{N} \otimes_{\bar{\mathcal{N}}} \overline{\mathcal{C}} \to \cC$. For all $b \in \bar{\mathcal{C}}$ we have
\[\overline{g(1\otimes b)}=b.\]
\end{lm}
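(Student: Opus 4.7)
The plan is to unwind the definition of an $\overline{\mathcal{N}}$ structure; the statement is essentially a tautology once the identifications are made explicit. The main (and only) subtlety is bookkeeping: tracking the element $1\otimes b$ through the chain of canonical isomorphisms that appears in Definition~\ref{dfn:bar_structre}.

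First, I would note that since $g:\mathcal{N}\otimes_{\bar{\mathcal{N}}}\overline{\mathcal{C}}\to \mathcal{C}$ is a filtered $\mathcal{N}$-module homomorphism, it passes to residues to give an $\overline{\mathcal{N}}$-module homomorphism
\[
\bar g:\overline{\mathcal{N}\otimes_{\bar{\mathcal{N}}}\overline{\mathcal{C}}}\longrightarrow \overline{\mathcal{C}}.
\]
Here $\overline{\mathcal{C}}$ carries the trivial filtration (as the residue of itself), so for $b\in\overline{\mathcal{C}}$ one has $\zeta(b)=0$, and since $\zeta_{\mathcal{N}}(1)=0$ the element $1\otimes b$ has filtration $0$ in $\mathcal{N}\otimes_{\bar{\mathcal{N}}}\overline{\mathcal{C}}$.

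Next, I would identify the source of $\bar g$. The canonical map $\mathcal{N}\to\overline{\mathcal{N}}$ induces an isomorphism
\[
\overline{\mathcal{N}\otimes_{\bar{\mathcal{N}}}\overline{\mathcal{C}}}\;\simeq\;\overline{\mathcal{N}}\otimes_{\bar{\mathcal{N}}}\overline{\mathcal{C}},
\]
and under this identification the residue class of $1\otimes b$ corresponds to $\bar 1\otimes b = 1\otimes b$ on the right. Composing with the canonical isomorphism $\overline{\mathcal{N}}\otimes_{\bar{\mathcal{N}}}\overline{\mathcal{C}}\simeq \overline{\mathcal{C}}$, the class $\overline{1\otimes b}$ becomes exactly $b$.

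Finally, by the defining property of an $\overline{\mathcal{N}}$ structure (Definition~\ref{dfn:bar_structre}), the map $\bar g$ is the identity on $\overline{\mathcal{C}}$ under these identifications. Therefore
\[
\overline{g(1\otimes b)}\;=\;\bar g\bigl(\overline{1\otimes b}\bigr)\;=\;b,
\]
which is the desired equality. No real difficulty arises — the only care needed is to verify the commutativity of the identification square so that the residue $\overline{1\otimes b}$ on the left indeed matches $b\in\overline{\mathcal{C}}$ on the right.
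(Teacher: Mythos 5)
Your proof is correct and follows essentially the same route as the paper: both arguments use the defining property of an $\bar{\mathcal{N}}$ structure, namely that $\bar g$ precomposed with $b\mapsto \bar 1\otimes b$ is the identity on $\overline{\mathcal{C}}$, together with the fact that $\overline{g(1\otimes b)}=\bar g(\overline{1\otimes b})$. Your extra bookkeeping of the identifications and filtration levels is fine but does not change the argument.
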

\begin{proof}
By the definition of an $\bar{\mathcal{N}}$ structure, the composition
\begin{align*}
    & \bar{\cC}\to \bar{\mathcal{N}}\otimes_{\bar{\mathcal{N}}} \bar{\cC}\overset{\bar{g}}{\to}\bar{\cC}\\
    & b \,\mapsto \, \bar{1} \otimes b \; \mapsto\, \bar{g}(\bar{1}\otimes b)
\end{align*}
is the identity.
Hence $b=\bar{g}(\bar{1}\otimes b)=\overline{g(1\otimes b)}$.
\end{proof}
\begin{lm}\label{lm:residue_zero_equiv}
Let $\cC$ be a filtered $\mathcal{N}$ module with an $\bar{\mathcal{N}}$ structure given by $g:\mathcal{N} \otimes_{\bar{\mathcal{N}}} \overline{\mathcal{C}} \to \cC$, and let $f$ be its inverse. Suppose $\a \in \cC$. Then \[f(\a)\equiv 1 \otimes \bar{\a} \pmod{F^{>0} \cC}.\]
\end{lm}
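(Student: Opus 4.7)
The plan is to reduce the statement directly to Lemma \ref{lm:residue_id} by exploiting the fact that $g$, being a filtered $\mathcal{N}$-module isomorphism, has a filtered inverse $f$, so both $g$ and $f$ send $F^{>0}$ into $F^{>0}$. First I would consider the element $\a - g(1\otimes \bar\a)\in \cC$. By Lemma~\ref{lm:residue_id}, $\overline{g(1\otimes \bar\a)} = \bar\a$, so the residue of $\a - g(1\otimes \bar\a)$ vanishes, meaning $\a - g(1\otimes \bar\a) \in F^{>0}\cC$.

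Now I would apply $f$. Since $f$ is $\mathcal{N}$-linear and respects the filtration, $f(\a - g(1\otimes \bar\a)) = f(\a) - 1\otimes \bar\a$ lies in $F^{>0}(\mathcal{N}\otimes_{\bar{\mathcal{N}}} \bar\cC)$, which corresponds to $F^{>0}\cC$ under the identification $g$. This gives exactly $f(\a) \equiv 1\otimes \bar\a \pmod{F^{>0}\cC}$. The only point that needs a sentence of justification is that $f$ preserves the filtration; this is automatic from the definition of an isomorphism of filtered modules, since the inverse of a filtration-preserving bijection between filtered modules with the same underlying filtration data is again filtration-preserving. I do not foresee a genuine obstacle — the lemma is essentially an immediate consequence of Lemma~\ref{lm:residue_id} together with the filtered nature of the isomorphism $g$.
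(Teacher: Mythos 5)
Your argument is correct and is essentially the paper's own proof: the paper reduces to Lemma~\ref{lm:residue_id} in exactly the same way, just phrased at the level of the induced residue maps ($\overline{f(\a)}=\bar f(\bar g(\bar 1\otimes\bar\a))=\bar 1\otimes\bar\a$) rather than via the element $\a-g(1\otimes\bar\a)$. The only caveat is your justification that $f$ preserves the filtration: the inverse of a filtration-preserving bijection is not filtration-preserving in general, so the correct reason is that Definition~\ref{dfn:bar_structre} takes $g$ to be an isomorphism of \emph{filtered} modules, whence its inverse is filtered by definition --- the same fact the paper uses implicitly when it forms $\bar f$.
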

\begin{proof}
It is enough to show that $\overline{f(\alpha)}=\bar{1}\otimes \bar{\a}$. Indeed by Lemma~\ref{lm:residue_id} 
\[\bar{g}(\bar{1}\otimes \bar{\a})=\overline{g\left(1\otimes \bar{\a}\right)}=\bar{\a}.\]
This implies 
\[\bar{1} \otimes \bar{\a}=\bar{f}\left(\bar{g}(\bar{1}\otimes \bar{\a})\right)=\bar{f}(\bar{\a}). \]
\end{proof}
Let $\mathcal{N}$ be a filtered commutative ring with an $\bar{\mathcal{N}}$ structure given by  $h:\bar{\mathcal{N}}\to \mathcal{N}$. Let $\mathcal{C}$ be a filtered $\mathcal{N}$ module with a $\bar{\mathcal{N}}$ structure given by $g:\mathcal{N} \otimes_{\bar{\mathcal{N}}} \overline{\mathcal{C}}\overset{\sim}{\longrightarrow} \mathcal{C}$. Let $A \subset \mathcal{N}$ be a filtered unital subring such that $h(\bar{\mathcal{N}})\subset A.$ Let $h_A : \bar{\mathcal{N}} \to A$ denote the corestriction.
\begin{lm}\label{lm:A_bar_A_str}
We have $\bar A = \bar{\mathcal{N}}.$ It follows that $h_A : \bar{A} = \bar{\mathcal{N}} \to A$ is an $\bar{A}$ structure.
\end{lm}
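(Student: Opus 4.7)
The plan is to build a canonical isomorphism $\bar A\cong \bar{\mathcal{N}}$ that makes $h_A$ visibly an $\bar A$ structure. First I would construct two maps in opposite directions: the inclusion $A\hookrightarrow \mathcal{N}$ is filtration-preserving (since $A$ is a filtered subring, its filtration is the restriction of $\zeta_\mathcal{N}$), so it descends on residues to a canonical ring homomorphism $\iota:\bar A\to\bar{\mathcal{N}}$; and $h_A:\bar{\mathcal{N}}\to A$ postcomposed with the residue map $A\to\bar A$ yields a ring homomorphism $\bar h_A:\bar{\mathcal{N}}\to\bar A$ (using that $\bar{\mathcal{N}}$ carries the trivial filtration).

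Next I would check that $\iota$ and $\bar h_A$ are mutually inverse. The composition $\iota\circ \bar h_A:\bar{\mathcal{N}}\to\bar{\mathcal{N}}$ is the residue of $\bar{\mathcal{N}}\xrightarrow{h} A\hookrightarrow \mathcal{N}\to\bar{\mathcal{N}}$, which is $\Id_{\bar{\mathcal{N}}}$ by the very definition of the $\bar{\mathcal{N}}$-structure $h$ on $\mathcal{N}$. For the other composition, pick $\bar a\in\bar A$ with representative $a\in A$; the same $\bar{\mathcal{N}}$-structure identity gives $h(\iota(\bar a))-a\in F^{>0}\mathcal{N}$. Since both $h(\iota(\bar a))$ and $a$ lie in $A$, their difference lies in $A\cap F^{>0}\mathcal{N}=F^{>0}A$, so $\bar h_A(\iota(\bar a))=\overline{h(\iota(\bar a))}=\bar a$ in $\bar A$. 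This yields $\bar A\cong \bar{\mathcal{N}}$.

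Finally I would deduce the $\bar A$-structure claim. By Definition~\ref{dfn:bar_structre}, it suffices to check that the composition $\bar A\xrightarrow{h_A} A\to\bar A$ is the identity, where the second arrow is the residue map of $A$. Under the identification $\bar A=\bar{\mathcal{N}}$ via $\iota$, this composition is precisely $\bar h_A\circ \iota$, which we have just shown to equal $\Id_{\bar A}$. Hence $h_A$ is an $\bar A$-structure on $A$.

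The only subtle point, and the one I would state explicitly, is the identity $F^{>0}A=A\cap F^{>0}\mathcal{N}$, which I read from the hypothesis that $A$ is a \emph{filtered} subring of $\mathcal{N}$; everything else is a direct unwinding of definitions.
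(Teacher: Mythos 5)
Your proof is correct and follows essentially the same route as the paper: there, $\bar A$ is identified with $\pi(A)\subset\bar{\mathcal{N}}$ and the two observations $\bar{\mathcal{N}}=\pi\circ h(\bar{\mathcal{N}})\subset\bar A\subset\bar{\mathcal{N}}$ and $\pi_A\circ h_A=\pi\circ h=\Id$ carry exactly the content of your mutually inverse maps $\iota$ and $\bar h_A$. Your explicit remark that $F^{>0}A=A\cap F^{>0}\mathcal{N}$ just makes precise what the paper uses implicitly in writing the residue map of $A$ as $\pi|_A$.
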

\begin{proof}
Let $\pi:\mathcal{N} \to \bar{\mathcal{N}}$ be the residue map. As $h(\bar{\mathcal{N}})\subset A$ it follows that 
\[
\bar{\mathcal{N}}= \pi \circ h(\bar{\mathcal{N}}) \subset \bar{A} \subset \bar{\mathcal{N}}.
\]
So, $\bar A = \bar{\mathcal{N}}.$ Let $\pi_A = \pi|_A: A \to \bar A $ denote the residue map. Then $\pi_A \circ h_A = \pi \circ h = \Id.$ So, $h_A : \bar{A} \to A$ is an $\bar{A}$ structure.
\end{proof}
Let 
\[
\mathcal{C}_A:=g(A \otimes_{\bar{A}} \overline{\mathcal{C}}).
\]
\begin{lm}\label{lm:bar_AC_bar_C}
We have $\overline{\mathcal{C}_A}=\overline{\cC}$.
\end{lm}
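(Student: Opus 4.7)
The plan is to prove the equality by identifying $\overline{\mathcal{C}_A}$ with $\overline{\mathcal{C}}$ via the map on residues induced by the inclusion $\mathcal{C}_A \hookrightarrow \mathcal{C}$. Recall that $\mathcal{C}_A = g(A \otimes_{\bar A} \overline{\mathcal{C}})$ inherits a filtration from $\mathcal{C}$, so $\overline{\mathcal{C}_A} = \mathcal{C}_A / (F^{>0}_{\zeta_\mathcal{C}}\mathcal{C} \cap \mathcal{C}_A)$, and the natural inclusion yields an $\bar A$-linear map $\iota : \overline{\mathcal{C}_A} \to \overline{\mathcal{C}}$, which by Lemma~\ref{lm:A_bar_A_str} is a map of $\bar{\mathcal{N}}$-modules. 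It suffices to show that $\iota$ is a bijection.

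For surjectivity, fix $b \in \overline{\mathcal{C}}$. Since $A$ is a unital subring of $\mathcal{N}$, the element $1 \otimes b$ lies in $A \otimes_{\bar A} \overline{\mathcal{C}}$, so $g(1 \otimes b) \in \mathcal{C}_A$. By Lemma~\ref{lm:residue_id} applied to $\mathcal{C}$ as an $\mathcal{N}$-module with $\bar{\mathcal{N}}$ structure $g$, we have $\overline{g(1 \otimes b)} = b$ in $\overline{\mathcal{C}}$. Hence $\iota$ hits every class in $\overline{\mathcal{C}}$.

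For injectivity, let $c \in \mathcal{C}_A$ and suppose $\iota([c]) = 0$ in $\overline{\mathcal{C}}$, i.e.\ $c \in F^{>0}_{\zeta_\mathcal{C}}\mathcal{C}$. Since the filtration on $\mathcal{C}_A$ is the one induced from $\mathcal{C}$, this means $c \in F^{>0}_{\zeta_{\mathcal{C}_A}}\mathcal{C}_A$, so $[c] = 0$ in $\overline{\mathcal{C}_A}$. Thus $\iota$ is injective and therefore an isomorphism, establishing $\overline{\mathcal{C}_A} = \overline{\mathcal{C}}$.

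I do not expect a serious obstacle here: the argument is bookkeeping built on Lemmas~\ref{lm:residue_id} and~\ref{lm:A_bar_A_str}. The only point that requires a moment of care is the compatibility of filtrations — namely that the inclusion $A \hookrightarrow \mathcal{N}$ preserves filtrations (so that $\mathcal{C}_A$ really inherits a well-defined filtration from $\mathcal{C}$) and that $g$ is a filtered isomorphism (so that $F^{>0}$ on $\mathcal{C}_A$ is captured by $F^{>0}$ on $\mathcal{C}$). Both are built into the hypotheses.
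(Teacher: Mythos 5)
Your proof is correct and is essentially the paper's argument unpacked: the paper computes $\overline{\mathcal{C}_A}=\overline{g(A\otimes_{\bar A}\overline{\cC})}=\bar g(\bar A\otimes_{\bar A}\overline{\cC})=\overline{\cC}$ using that $g$ preserves filtration together with Lemma~\ref{lm:A_bar_A_str}, while you phrase the same content as bijectivity of the inclusion-induced map, with surjectivity from Lemma~\ref{lm:residue_id} and injectivity immediate from the induced filtration. No gap; the two arguments rely on the same facts.
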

\begin{proof}
Keeping in mind Lemma~\ref{lm:A_bar_A_str}, since $g$ preserves filtration, we have 
\[ 
\overline{\mathcal{C}_A} = \overline{g(A \otimes_{\bar{A}}\overline{\cC})} = \bar g (\overline{A} \otimes_{\bar{A}} \overline{\cC})=\bar{g}(\bar{\mathcal{N}}\otimes_{\bar{\mathcal{N}}}\overline{\cC}  )=\overline{\cC}.
\]
\end{proof}
Let 
\[
g_A : A\otimes_{\bar{A}} \overline{\mathcal{C}_A} \to
\mathcal{C}_A
\]
be given by $g_A=g|_{A\otimes_{\bar{A}}\overline{\mathcal{C}} }.$
\begin{lm}
$g_A$ is an $\bar A$ structure on $\mathcal{C}_A.$
\end{lm}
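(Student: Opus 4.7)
The plan is to verify the two requirements in Definition~\ref{dfn:bar_structre}: that $g_A$ is a filtered $A$-module isomorphism from $A\otimes_{\bar A}\overline{\mathcal{C}_A}$ to $\mathcal{C}_A$, and that it induces the identity on $\overline{\mathcal{C}_A}$. The earlier lemmas in this block do most of the bookkeeping, so the proof reduces to transporting the corresponding facts about $g$ along the inclusion $A\hookrightarrow\mathcal{N}$.

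First I would combine Lemma~\ref{lm:A_bar_A_str} (giving $\bar A=\bar{\mathcal{N}}$) with Lemma~\ref{lm:bar_AC_bar_C} (giving $\overline{\mathcal{C}_A}=\overline{\mathcal{C}}$) to identify the source of $g_A$ with $A\otimes_{\bar{\mathcal{N}}}\overline{\mathcal{C}}$, which sits naturally inside the source $\mathcal{N}\otimes_{\bar{\mathcal{N}}}\overline{\mathcal{C}}$ of $g$ via the inclusion $A\hookrightarrow\mathcal{N}$. Under this identification, the definition $g_A=g|_{A\otimes_{\bar A}\overline{\mathcal{C}}}$ is literally a restriction, which immediately gives the filtration-preserving $A$-linearity of $g_A$: it inherits $\mathcal{N}$-linearity (hence $A$-linearity) from $g$, and it takes $F^{a}(A\otimes_{\bar A}\overline{\mathcal{C}})$ into $F^{a}\mathcal{C}_A$ because $g$ preserves the filtration and $\mathcal{C}_A\subset\mathcal{C}$ inherits the subspace filtration.

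For the isomorphism claim, surjectivity is immediate from the definition $\mathcal{C}_A=g(A\otimes_{\bar A}\overline{\mathcal{C}})$, and injectivity is inherited from $g$: any element in the kernel of $g_A$ also lies in the kernel of $g$, which is zero. For the residue condition, I would apply Lemma~\ref{lm:residue_id} to $g$: for $b\in\overline{\mathcal{C}_A}=\overline{\mathcal{C}}$,
\[
\bar g_A(\bar 1\otimes b)=\overline{g_A(1\otimes b)}=\overline{g(1\otimes b)}=b,
\]
so the induced map on residues is the identity under the standard isomorphism $\overline{\mathcal{C}_A}\simeq\bar A\otimes_{\bar A}\overline{\mathcal{C}_A}$.

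The only genuine subtlety, which I expect to be the main obstacle, is the implicit claim that the natural map $A\otimes_{\bar{\mathcal{N}}}\overline{\mathcal{C}}\to\mathcal{N}\otimes_{\bar{\mathcal{N}}}\overline{\mathcal{C}}$ is injective, so that the notation $g|_{A\otimes_{\bar A}\overline{\mathcal{C}}}$ is unambiguous and the injectivity step above is legitimate. In the general filtered-ring setting this requires flatness, but in the present context the $\bar{\mathcal{N}}$-structure expresses $\mathcal{N}$ as a completed tensor product whose free summand corresponding to $A$ embeds as a direct factor; I would spell this out briefly by noting that one can choose a $\bar{\mathcal{N}}$-basis of $\overline{\mathcal{C}}$ and observe that the action of $g$ on each basis element is compatible with the restriction to $A$, so no cancellation is introduced when passing from $A\otimes_{\bar A}\overline{\mathcal{C}}$ to its image under $g$.
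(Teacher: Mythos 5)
Your argument is essentially the paper's: both proofs reduce everything to Lemmas~\ref{lm:A_bar_A_str}, \ref{lm:bar_AC_bar_C} and~\ref{lm:residue_id} (equivalently $\bar g_A=\bar g$) to get the residue-identity condition, and then note that $g_A$ preserves the filtration; the paper is in fact terser and does not even spell out the bijectivity check you include. The flatness-type subtlety you flag (injectivity of $A\otimes_{\bar A}\overline{\mathcal{C}}\to\mathcal{N}\otimes_{\bar{\mathcal{N}}}\overline{\mathcal{C}}$) is likewise passed over silently in the paper, and is harmless in all of its applications since there $\bar{\mathcal{N}}\cong\R$ is a field, so your proposal is fine as it stands.
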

\begin{proof}
In Lemma~\ref{lm:bar_AC_bar_C} we showed $\overline{\cC}=\overline{\cC_A}$, and in Lemma~\ref{lm:A_bar_A_str} we have shown that $\bar{A}=\bar{N}$. So, we have $
\bar g = 
\bar g_A$. Hence the fact that $g$ is an $\bar{\mathcal{N}}$ structure on $\cC$ implies that $g_A$ induces the identity 
\[
\begin{tikzcd}
\overline{\cC} \arrow[r, "\simeq "] \arrow[d,  equal]  & \bar{\mathcal{N}}\otimes_{\bar{\mathcal{N}}}\overline{\cC} \arrow[rr, "\bar g"] \arrow[d,equal] &  & \overline{\cC} \arrow[d, equal] \\
\overline{\cC_A} \arrow[r, "\simeq "]                              & \bar{\mathcal{A}}\otimes_{\bar{\mathcal{A}}}\overline{\cC_A} \arrow[rr, "\bar g_A"]                            &  & \overline{\cC_A}                           
\end{tikzcd}
\]
We also notice that $g_A$ preserves filtration, thus we conclude that $g_A$ is a $\bar{A}$ structure on $\cC_A$.
\end{proof}

\begin{dfn}\label{dfn:restriction_of_scalars}
We call $h_A$ the \textbf{canonical} $\bar A$ structure on $A.$ 
We call $\mathcal{C}_A$ the \textbf{restriction of scalars} of $\mathcal{C}$ and we call $g_A$ the \textbf{canonical} $\bar A$ structure on $\mathcal{C}_A.$
\end{dfn}
\subsection{Cyclic unital \texorpdfstring{$A_{\infty}$}{A infinity} algebras}\label{ssec:Cyc_A_infty_alg}
Let $\mathcal{R}$ be a filtered dga over a field $\k$ with filtration $\varsigma_{\mathcal{R}}$ and let $\cC$ be a filtered $\mathcal{R}$ bimodule with filtration $\varsigma_{\cC}$ such that if $\varsigma_{\cC}(a)=\infty$ then $a=0$. That is, $\cC$ is a filtered module.  

The following extends \cite[Definition~1.1]{solomon2016differential} for the case that $\mathcal{R}$ is not commutative.
\begin{dfn}\label{dfn:cycunit}
An $n$-dimensional (curved) \textbf{cyclic unital $A_\infty$ structure} over $\mathcal{R}$ on $\cC$ is a triple $(\{\m_k\}_{k\ge 0},\lp\;,\,\rp,\e)$ of
maps $\m_k:\cC^{\otimes k}\to \cC[2-k]$, a pairing $\lp\;,\,\rp:\cC\otimes \cC\to \mathcal{R}[-n]$, and an element $\e\in \cC$ with $|\e|=0$, satisfying the following properties. We denote by $\alpha,$ possibly with subscripts, an element of $\cC,$ and by $a$ an element of $\mathcal{R}.$
\begin{enumerate}[label=(\arabic*), ref=(\arabic*)]
		\item\label{it:lin}
		The operations $\m_k$ are $\mathcal{R}$-multilinear in the sense that
		\[\m_k(a\cdot\alpha_1,\ldots,\alpha_k)=
		(-1)^{|a|}
		a\cdot\m_k(\alpha_1,\ldots,\alpha_k)+\delta_{1,k}\cdot da\cdot\alpha_1,\]
		and
		\[\m_k(\alpha_1,\ldots,\alpha_k \cdot a)=
		\m_k(\alpha_1,\ldots,\alpha_k)\cdot a+(-1)^{|\a_1|}\delta_{1,k}\cdot \alpha_1\cdot da,\]
		and
		\[
		\m_k(\alpha_1,\ldots,\alpha_{i-1},a\cdot\alpha_i,\ldots,\alpha_k)=\m_k(\alpha_1,\ldots,\alpha_{i-1}\cdot a,\alpha_i,\ldots,\alpha_k).
			\]
    \item\label{it:a_infty}
    The $A_\infty$ relations hold:
    \[
    \sum_{\substack{k_1+k_2=k+1\\1\le i\le k_1}}(-1)^{\sum_{j=1}^{i-1}(|\alpha_j|+1)}
\m_{k_1}(\alpha_1,\ldots,\alpha_{i-1},\m_{k_2}(\alpha_i,\ldots,\alpha_{i+k_2-1}), \alpha_{i+k_2},\ldots,\alpha_k)=0.
    \]
    \item\label{it:val}
    $\varsigma_{\cC}(\m_k(\alpha))\ge \varsigma_{\cC}(\alpha)$ and $\varsigma_{\cC}(\m_0) > 0.$
	\item\label{it:unit1}
	$\m_k(\alpha_1,\ldots,\alpha_{i-1},\e,\alpha_{i+1},\ldots,\alpha_k)=0 \qquad \forall k\ne 0,2.$
	\item\label{it:unit2}
	$\m_2(\e,\alpha)=\alpha=(-1)^{|\alpha|}\m_2(\alpha,\e).$
	\item\label{it:plin}
		The pairing $\lp\;,\,\rp$ is $\mathcal{R}$-bilinear in the sense that
		\[
		\lp\a_1,a\cdot\a_2\rp=\lp\a_1\cdot a,\a_2\rp,\qquad \forall a 
		\in \mathcal{R},
		\]
		and
		\[\lp b\cdot\a_1,\a_2\rp=b\cdot \lp\a_1,\a_2\rp,\qquad \forall b \in Z(\mathcal{R}).\]
    \item\label{it:val2}
    $\varsigma_{\mathcal R}(\lp\alpha_1,\alpha_2\rp)\ge\varsigma_{\cC}(\alpha_1)+\varsigma_{\cC}(\alpha_2).$
    \item\label{it:symm}
    $\lp \alpha_1,\alpha_2\rp=(-1)^{(|\alpha_1|+1)(|\alpha_2|+1)+1}\lp\alpha_2,\alpha_1\rp$.
	\item\label{it:cyclic}
     The pairing is cyclic:
	\begin{multline*}
    \qquad\lp \m_k(\alpha_1,\ldots,\alpha_k),\alpha_{k+1}\rp=\\
	=(-1)^{(|\alpha_{k+1}|+1)\sum_{j=1}^k(|\alpha_j|+1)}
	\lp\m_k(\alpha_{k+1},\alpha_1,\ldots,\alpha_{k-1}),\alpha_k\rp+
	\delta_{1,k}\cdot d\lp\alpha_1,\alpha_2\rp.
    \end{multline*}
    \item\label{it:unit3}
    $\lp \m_0,\e\rp=0$.

\end{enumerate}
\end{dfn}
\begin{dfn}\label{dfn:A_infty_arises_dga}
A cyclic unital $A_\infty$ algebra $(\cC, \{\m_k\}_{k\ge 0},\lp\;,\,\rp,\e)$ \textbf{arises from a dga} if
		\[
		\qquad \m_k=0, \quad k \neq 1,2.
		\]
In this case the \textbf{underlying dga} structure on $\cC$ is given by
\[
d \a_1= \m_1(\a_1),\qquad \a_1 \a_2=(-1)^{|\a_1|}\m_2(\alpha_1,\alpha_2), \qquad \a_1, \a_2 \in \cC.
\]
Let $\Upsilon$ denote the algebra obtained by equipping $\cC$ with the product above. We say that $(\cC, \{\m_k\}_{k\ge 0},\lp\;,\,\rp,\e)$ arises from the dga $(\Upsilon,d).$
\end{dfn}
Recall the notion of the residue module from Definition~\ref{dfn:frm}.
By property~\ref{it:val}, the maps $\m_k$ descend to maps on the residue module,
\[
\mbar_k:\overline{\cC}^{\otimes k}\lrarr \overline{\cC}.
\]
For $\bar{\a}_1,\dots ,\bar{\a}_k\in \overline{\cC}$, we have 
\[\mbar_k\left(\bar{\a}_1,\dots ,\bar{\a}_k\right)=\overline{\m_k\left(\a_1,\dots, \a_k\right)},\]
where $\a_i \in \cC$ is a representative for $\bar{\a}_i$. Similarly, for $k\ge 1$ we define operators 
\[
\m_{k,E}:\left(\cC^{\otimes k}\right)_{E}\lrarr \cC_{E}.
\]
For $[\a_1\otimes\ldots\otimes\a_k]\in \left(\cC^{\otimes k}\right)_{E}$, we define 
\[\m_{k,E}\left([\a_1\otimes\ldots\otimes\a_k]\right)=\left[\m_k\left(\a_1\otimes\ldots\otimes\a_k\right)\right],\]
where $\a_1\otimes\ldots\otimes\a_k\in F^{E}\cC^{\otimes k}$ is a representative for $[\a_1\otimes\ldots\otimes\a_k].$ If $\m_0\in F^E\cC$, then we define $\m_{0,E}$ to be its image in $\cC_E.$ We sometime extend the domain of the operators to $F^{E}\cC^{\otimes k}$ by precomposing them with the quotient map.
\begin{prop}\label{prop:r_structure_energy_dga}
Let $(\cC, \{\m_k\}_{k\ge 0},\lp\;,\,\rp,\e)$ be a cyclic unital $A_\infty$ algebra over a commutative dga $\mathcal{R}$. Assume that $\cC$ has an $\bar{\mathcal{R}}$ structure $g:\mathcal{R}\otimes \bar{\cC} \overset{\sim}{\to} \cC$ and let $f$ be its inverse. Assume also that $(\bar{\cC}, \{\bar{\m}_k\}_{k\ge 0},\lp\;,\,\rp,\e)$ arises from a dga. So, $\mathcal{R}\otimes \bar{\cC}$ inherits a dga structure as well. Let $\alpha_1,\dots, \a_k \in \cC$ be  homogeneous elements such that $\sum_{i=1}^k \varsigma_{\cC}(\alpha_i)=E$.
For $k=1$ we have
\[f\left(\m_1(\a_1)\right) \equiv df(\a_1) \pmod{F^{>E}\cC}.\]
If in addition the left and right $\mathcal{R}$ module structures on $\cC[1]$ coincide, for $k=2$ we have 
\[f\left(\m_2(\a_1, \a_2)\right)\equiv(-1)^{|\a_1|}f(\a_1)  f(\a_2) \pmod{F^{>E}\cC}. \]
If $k>2,$ then
\[\m_k(\a_1, \dots, \a_k)\equiv 0 \pmod{F^{>E}\cC}.\]
\end{prop}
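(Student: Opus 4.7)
The approach is to transfer the question along the $\bar{\mathcal{R}}$-structure isomorphism $g : \mathcal{R}\otimes_{\bar{\mathcal{R}}}\bar{\cC}\xrightarrow{\sim}\cC$, under which $\cC$ becomes the graded tensor product of the dgas $\mathcal{R}$ and $\bar{\cC}$. Writing $i_{\cC}(\bar\beta):=g(1\otimes\bar\beta)$, the filtration-preserving $\mathcal{R}$-linearity of $g,f$ together with the multilinearity axioms \ref{it:lin} let me reduce, by finite approximation of infinite sums and closedness of $F^{>E}\cC$, to the case $\alpha_i = r_i\cdot i_{\cC}(\bar\beta_i)$ with $\varsigma_{\mathcal{R}}(r_i) = \varsigma_{\cC}(\alpha_i)$. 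Throughout I will use that by Lemma~\ref{lm:residue_id} one has $\overline{i_{\cC}(\bar\beta)} = \bar\beta$, so $\overline{\m_k(i_{\cC}(\bar\beta_1),\ldots,i_{\cC}(\bar\beta_k))} = \bar{\m}_k(\bar\beta_1,\ldots,\bar\beta_k)$, while the hypothesis that $(\bar\cC,\bar\m_k,\ldots)$ arises from a dga (Definition~\ref{dfn:A_infty_arises_dga}) forces $\bar{\m}_k = 0$ for $k\ne 1,2$, $\bar\m_1 = d$, and $\bar\m_2(\bar\beta_1,\bar\beta_2) = (-1)^{|\bar\beta_1|}\bar\beta_1\bar\beta_2$.

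For $k > 2$, iterating the third and first clauses of \ref{it:lin} pulls all coefficients to the front, yielding $\m_k(\alpha_1,\ldots,\alpha_k) = \pm(r_1\cdots r_k)\cdot\m_k(i_{\cC}(\bar\beta_1),\ldots,i_{\cC}(\bar\beta_k))$. The residue of the second factor is $\bar\m_k(\bar\beta_1,\ldots,\bar\beta_k)=0$, hence that factor lies in $F^{>0}\cC$, and multiplying by $r_1\cdots r_k\in F^E\mathcal{R}$ gives an element of $F^{>E}\cC$ as required.

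For $k = 1$, the first clause of \ref{it:lin} yields
\[
\m_1(r_1\cdot i_{\cC}(\bar\beta_1)) = (-1)^{|r_1|}r_1\cdot\m_1(i_{\cC}(\bar\beta_1)) + dr_1\cdot i_{\cC}(\bar\beta_1).
\]
Using Lemma~\ref{lm:residue_zero_equiv} together with $\bar\m_1 = d$, I obtain $f(\m_1(i_{\cC}(\bar\beta_1)))\equiv 1\otimes d\bar\beta_1\pmod{F^{>0}}$. Applying the $\mathcal{R}$-linear map $f$ and comparing with the induced differential $d(r\otimes \bar\beta) = dr\otimes\bar\beta + (-1)^{|r|}r\otimes d\bar\beta$ on the tensor-product dga yields the claim modulo $F^{>E}$.

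For $k = 2$, I first apply the third clause of \ref{it:lin} to rewrite $\m_2(r_1 i_{\cC}(\bar\beta_1), r_2 i_{\cC}(\bar\beta_2)) = \m_2((r_1 i_{\cC}(\bar\beta_1))\cdot r_2, i_{\cC}(\bar\beta_2))$, then invoke the coincidence of left and right $\mathcal{R}$-module structures on $\cC[1]$ to pass $r_2$ past $i_{\cC}(\bar\beta_1)$, producing precisely the Koszul sign appearing in the tensor-product multiplication $(r_1\otimes\bar\beta_1)(r_2\otimes\bar\beta_2) = (-1)^{|r_2|\cdot|\bar\beta_1|}r_1r_2\otimes\bar\beta_1\bar\beta_2$. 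Pulling the coefficients out by clause one of \ref{it:lin} and identifying the residue of $\m_2(i_\cC(\bar\beta_1),i_\cC(\bar\beta_2))$ with $(-1)^{|\bar\beta_1|}\bar\beta_1\bar\beta_2$ then reduces the statement to a sign check using $|\alpha_1| = |r_1|+|\bar\beta_1|$. I expect this sign bookkeeping to be the main obstacle: the abstract hypothesis on $\cC[1]$ must first be translated into a concrete relation between $i_{\cC}(\bar\beta)\cdot r$ and $r\cdot i_{\cC}(\bar\beta)$ on $\cC$ before it can be aligned with the Koszul convention of the tensor-product dga.
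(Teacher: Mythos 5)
For $k=1$ and $k=2$ your argument is essentially the paper's own proof: reduce via the $\bar{\mathcal{R}}$-structure to pure tensors $\alpha_i=g(a_i\otimes b_i)$, expand with the clauses of~\ref{it:lin}, identify residues through Lemma~\ref{lm:residue_id} and Lemma~\ref{lm:residue_zero_equiv} with the dga operations on $\bar\cC$, and absorb the $F^{>0}$ errors into $F^{>E}\cC$ by multiplying with the coefficients; the only difference is cosmetic (you go from $\m_1(\alpha_1)$ toward $df(\alpha_1)$, the paper goes the other way), and your remark that the ``coincidence on $\cC[1]$'' must be converted into an explicit sign rule before matching the Koszul sign of the tensor-product dga is exactly the sign bookkeeping the paper carries out.

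The one step that is not justified as you state it is the $k>2$ case (which the paper omits as ``similar''). Clauses~\ref{it:lin} alone do not let you pull all coefficients to the front: the third clause only converts a left action on the $i$-th slot into a right action on the $(i-1)$-st slot, so to move $r_j$ past the argument $g(1\otimes\bar\beta_{j-1})$ you must commute a right multiple into a left multiple, i.e.\ you are invoking the coincidence of the left and right $\mathcal{R}$-module structures on $\cC[1]$ --- a hypothesis the proposition grants only for the $k=2$ conclusion. For a general bimodule your claimed identity $\m_k(\alpha_1,\ldots,\alpha_k)=\pm(r_1\cdots r_k)\cdot\m_k(g(1\otimes\bar\beta_1),\ldots,g(1\otimes\bar\beta_k))$ therefore does not follow from the cited axioms. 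The gap is repairable without the extra hypothesis: after sliding $r_j$ onto slot $j-1$ as a right multiple, use surjectivity of $g$ to rewrite $g(1\otimes\bar\beta_{j-1})\cdot r_j$ as a (limit of) sum $\sum a'\cdot g(1\otimes\bar\beta')$ with $\varsigma_{\mathcal{R}}(a')\ge\varsigma_{\mathcal{R}}(r_j)$, pull the new coefficients out through the first slot, and iterate; then the vanishing of $\bar\m_k$ for $k>2$ puts the coefficient-free term in $F^{>0}\cC$, and property~\ref{it:val} together with completeness and closedness of $F^{>E}\cC$ gives the claim. Alternatively, state explicitly that you use the coincidence hypothesis also for $k>2$ (it holds in all of the paper's applications, where $\cC$ carries the diagonal bimodule structure), but then your proof establishes a slightly weaker statement than the one asserted.
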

\begin{proof}
 Assume with out loss of generality there exists $a_j \in \mathcal{R}$ and $b_j \in \bar{\cC}$, such that $g(a_j\otimes b_j)=\a_j$ for $1\le j\le k$.
For $k=1$, we have $\varsigma_{\mathcal{R}}(a_1)=E$. We calculate
\begin{align}
    d f(\a_1)&=d(a_1 \otimes b_1) \nonumber\\
    &=(-1)^{|a_1|}a_1\otimes db_1+ da_1\otimes b_1\nonumber\\
    &= (-1)^{|a_1|}a_1\otimes \bar{m}_1(b_1)+ da_1\otimes b_1 \nonumber\\
    &=(-1)^{|a_1|}a_1(1 \otimes \mbar_1(b_1))+ da_1\otimes b_1. \nonumber\\
    \shortintertext{Using Lemma~\ref{lm:residue_id}, we proceed}
    &=(-1)^{|a_1|}a_1\left(1\otimes \overline{\m_1(g(1\otimes b_1))}\right)+da_1\otimes b_1
    \label{eq:dga_df}.
\end{align}
Using Lemma~\ref{lm:residue_zero_equiv}, we deduce that
\[1\otimes \overline{\m_1(g(1\otimes b_1))}\equiv f(\m_1(g(1\otimes b_1)))\pmod{F^{>0} \cC}.\] 
Remembering that $\cC$ is a filtered module, we combine the preceding equation with equation~\eqref{eq:dga_df} to deduce
\begin{multline*}
    df(\a_1)\equiv (-1)^{|a_1|}a_1f\left(\m_1(g(1\otimes b_1))\right)+ da_1\otimes b_1\equiv \\ \equiv f\left(\m_1(g(a_1\otimes b_1))\right)-f\left(da_1\cdot g(1\otimes b_1)\right)+ da_1\otimes b_1\equiv f(\m_1(\a_1)) \pmod{F^{>E} \cC}.
\end{multline*}

For $k=2$ we have
\begin{align}
f(\m_2(\a_1,\a_2))&=(-1)^{|a_1|}a_1f\left(\m_2(g(1\otimes b_1)a_2,g(1\otimes b_2))\right).\\
\shortintertext{
Remembering that the left and right $\mathcal{R}$ module structures on $\cC[1]$ coincide, we continue}
&=(-1)^{|a_1|+|a_2||b_1|}a_1 a_2f\left(\m_2(g(1\otimes b_1), g(1\otimes b_2))\right).
\end{align}
We use Lemma~\ref{lm:residue_zero_equiv} and Lemma~\ref{lm:residue_id} to deduce the equivalence
\begin{multline*}
    f\left(\m_2(g(1\otimes b_1), g(1\otimes b_2))\right)\equiv 1\otimes \overline{\m_2(g(1\otimes b_1), g(1\otimes b_2))}\equiv \\ \equiv 1\otimes \bar{\m}_2(\overline{g(1\otimes b_1)},\overline{g(1\otimes b_2)})\equiv 1\otimes \bar{\m}_2(b_1,b_2) \pmod{F^{>0}\cC}.
\end{multline*}
By the definition of the dga on $\bar{\cC}$ and the property $\varsigma_{\mathcal{R}}(a_1 a_2)\ge\varsigma_{\mathcal{R}}(a_1)+\varsigma_{\mathcal{R}}(a_2),$ we conclude
\begin{multline*}
    f(\m_2(\a_1,\a_2))\equiv (-1)^{|a_1|+|a_2||b_1|}(a_1 a_2) \left(1\otimes 
    \mbar_2(b_1, b_2)\right)\equiv \\\equiv (-1)^{|a_1|+|b_1|+|a_2||b_1|}(a_1 a_2) \otimes (b_1 b_2) \equiv  (-1)^{|\a_1|}f(\a_1)f(\a_2)\pmod{F^{>E}\cC}.
\end{multline*}
We omit the proof of the cases $k>2$ as it is similar.
\end{proof}

The following is a generalization of \cite[Proposition 4.20]{solomon2016differential}, which formulates the $A_\infty$ relations with respect to the cyclic pairing.
\begin{prop}[Cyclic structure equations]\label{prop:super_cyclic_structure_equation}
Let $(\mathcal{C}, \{\m_k\}_{k\ge 0},\lp\;,\,\rp,\e)$ be a cyclic unital $A_\infty$ algebra, then for $k\ge 0$,
\begin{multline*}
d\lp\m_k(\a_1,\ldots,\a_k),\a_{k+1}\rp=\\
=\sum_{\substack{k_1+k_2=k+1\\k_1\ge 1,k_2\ge 0\\1\le i\le k_1}}
(-1)^{\nu(\a;k_1,k_2,i)}\lp\m_{k_1}(\a_{i+k_2},\ldots,\a_{k+1},\a_1,\ldots,\a_{i-1}), \m_{k_2}(\a_i,\ldots,\a_{k_2+i-1})\rp
\end{multline*}
with
\[
\nu(\a;k_1,k_2,i):=\sum_{j=1}^{i-1}(|\a_j|+1)+
\sum_{j=i+k_2}^{k+1}(|\a_j|+1)\Big(\sum_{\substack{m\ne j\\1\le m\le k+1}}(|\a_m|+1)+1\Big)+1.
\]
\end{prop}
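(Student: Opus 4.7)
The plan is to derive the identity by applying the $A_\infty$ relation \ref{it:a_infty} to $(\a_1,\ldots,\a_k)$, pairing with $\a_{k+1}$ on the right, and then invoking the cyclic property \ref{it:cyclic} iteratively on each resulting term.

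First apply \ref{it:a_infty} with inputs $\a_1,\ldots,\a_k$ and pair with $\a_{k+1}$ to obtain
\[
\sum_{\substack{k_1+k_2=k+1\\1\le i\le k_1}}(-1)^{\sum_{j=1}^{i-1}(|\a_j|+1)}\lp\m_{k_1}(\a_1,\ldots,\m_{k_2}(\a_i,\ldots,\a_{i+k_2-1}),\ldots,\a_k),\a_{k+1}\rp = 0.
\]
For each term indexed by $(k_1,k_2,i)$, apply \ref{it:cyclic} exactly $k_1+1-i$ times. Each application rotates the cyclic sequence of $k_1+1$ slots (the $k_1$ inputs of $\m_{k_1}$ together with the right pairing slot) by one position. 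After $k_1+1-i$ rotations, the entry $\m_{k_2}(\a_i,\ldots,\a_{i+k_2-1})$ ends up in the right pairing slot and the arguments of $\m_{k_1}$ become $\a_{i+k_2},\ldots,\a_{k+1},\a_1,\ldots,\a_{i-1}$, precisely the shape of the summands on the right-hand side.

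The case $k_1=1$ is distinguished because then the Kronecker delta in \ref{it:cyclic} fires. The unique such term corresponds to $(k_1,k_2,i)=(1,k,1)$, namely $\lp\m_1(\m_k(\a_1,\ldots,\a_k)),\a_{k+1}\rp$, and the single application of \ref{it:cyclic} required here produces, in addition to the rotated pairing, the term $d\lp\m_k(\a_1,\ldots,\a_k),\a_{k+1}\rp$. For $k_1>1$ no $d$-terms arise, since each application of \ref{it:cyclic} is then $d$-free. Transferring this unique $d$-term to the left-hand side yields the stated equation.

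The main obstacle is sign bookkeeping. Each application of \ref{it:cyclic} contributes a Koszul sign of the form $(-1)^{(|\b|+1)\sum(|\b_m|+1)}$, with $\b$ the element newly moved into the pairing slot and the sum taken over the remaining entries of the cyclic sequence at that step. Accumulating the signs of the $k_1+1-i = k+2-k_2-i$ applications, and combining with the initial $A_\infty$ sign $(-1)^{\sum_{j=1}^{i-1}(|\a_j|+1)}$ and the extra $(-1)$ from transferring the $d$-term, must yield $(-1)^{\nu(\a;k_1,k_2,i)}$. The outer sum in the definition of $\nu$, indexed by $j=i+k_2,\ldots,k+1$, is in natural bijection with the individual rotations, each summand encoding the Koszul sign of one rotation. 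The verification is formally identical to that in the commutative case \cite[Proposition 4.20]{solomon2016differential}; the non-commutativity of $\mathcal R$ plays no role because the rotations only move the homogeneous inputs $\a_j$ past each other, governed solely by the Koszul rule encoded in properties \ref{it:symm} and \ref{it:cyclic}.
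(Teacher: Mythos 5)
Your proposal is correct and is essentially the paper's own proof with the steps reordered: the paper first applies the $\m_1$-case of the cyclic property to $\lp\m_1(\m_k(\a_1,\ldots,\a_k)),\a_{k+1}\rp$ (which produces the $d$-term) and then expands via the $A_\infty$ relation and rotates, whereas you pair the $A_\infty$ relation with $\a_{k+1}$ first and extract the $d$-term from the $(k_1,k_2,i)=(1,k,1)$ summand — the same identities applied in a different order, with the same count $k_1+1-i$ of rotations and the same accumulated sign. One small wording slip: by property~\ref{it:cyclic} the Koszul sign of each rotation is carried by the element rotated out of the pairing slot into the first argument (these are $\a_{k+1},\a_k,\ldots,\a_{i+k_2}$, which is exactly how the outer sum over $j=i+k_2,\ldots,k+1$ in $\nu$ arises), not by the element newly entering the slot; with that reading your bookkeeping reproduces $(-1)^{\nu(\a;k_1,k_2,i)}$ exactly as in the paper.
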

\begin{proof}
As our pairing $\lp \cdot, \cdot \rp$ is cyclic, the following equation holds for $\xi, \eta \in \cC$:
	\[
    \qquad\lp \m_1(\xi),\eta\rp
	=(-1)^{(|\eta|+1)(|\xi|+1)}
	\lp\m_1(\eta),\xi\rp+
	 d\lp\xi,\eta\rp.
    \]
We can plug in $\m_k(\alpha_1,\dots \alpha_k)$ and $\a_{k+1}$ instead of $\xi$ and $\eta$ in the equation above to get
\begin{align*}
d\lp&\m_k(\a_1,\ldots,\a_k),\a_{k+1}\rp=\\
=&\lp \m_1\left(\m_k(\a_1,\ldots,\a_k)\right),\a_{k+1}\rp-
(-1)^{(|\a_{k+1}|+1)(|\m_k(\a_1,\ldots,\a_k)|+1)}\lp \m_1(\a_{k+1}),\m_k(\a_1,\ldots,\a_k)\rp.
\end{align*}
Using the $A_\infty$ structure and cyclic symmetry we continue,
\begin{align*}
d\lp&\m_k(\a_1,\ldots,\a_k),\a_{k+1}\rp=\\
=&-\hspace{-1.5em}\sum_{\substack{k_1+k_2=k+1\\1<k_1\\ 1 \le i \le k_1}}\hspace{-1.5em} (-1)^{\sum_{j=1}^{i-1}(|\a_j|+1)}\lp\m_{k_1}(\a_1,\ldots,\a_{i-1},\m_{k_2}(\a_i,\ldots,\a_{i+k_2-1}),\a_{i+k_2},\ldots,\a_k),\a_{k+1}\rp+\\
&+(-1)^{(|\a_{k+1}|+1)(|\m_k(\a_1,\ldots,\a_k)|+1)+1}\lp \m_1(\a_{k+1}),\m_k(\a_1,\ldots,\a_k)\rp\\
=&\hspace{-0.5em}\sum_{\substack{k_1+k_2=k+1\\1<k_1\\ 1 \le i \le k_1}}\hspace{-1.5em} (-1)^{1+\sum_{j=1}^{i-1}(|\a_j|+1)+\nu}\lp\m_{k_1}(\a_{i+k_2},\ldots,\a_k,\a_{k+1},\a_1,\ldots,\a_{i-1}),\m_{k_2}(\a_i,\ldots,\a_{i+k_2-1})\rp+\\
&+(-1)^{(|\a_{k+1}|+1)(\sum_{j=1}^k(|\a_j|+1)+1)+1}\lp \m_1(\a_{k+1}),\m_k(\a_1,\ldots,\a_k)\rp,
\end{align*}
with the sign $\nu$ as follows:
\begin{align*}
\nu=&
\sum_{j=i+k_2}^{k+1}(|\a_j|+1)\Big(\hspace{-0.5em}\sum_{\substack{m\ne j\\1\le m\le k+1\\m\not\in \{i,\ldots,k_2+i-1\}}}\hspace{-1em}(|\a_m|+1)+(|\mh(\a_i,\ldots,\a_{i+k_2-1})|+1)\Big)\\
\equiv&\sum_{j=i+k_2}^{k+1}(|\a_j|+1)\Big(\sum_{\substack{m\ne j\\1\le m\le k+1}}(|\a_m|+1)+1\Big)\pmod 2.
\end{align*}
The result now follows since
\[\nu(\a;k_1,k_2,i)=\nu(\a;k_1,k_2,i)+1+\sum_{j=1}^{i-1}(|\a_j|+1),\qquad \text{for }\;k_1>1,\]
and 
\[\nu(\a;k_1,k,k_1)=(|\a_{k+1}|+1)\left(\sum_{j=1}^k(|\a_j|+1)+1\right)+1,\qquad \text{for }\; k_1=1.\]
\end{proof}

\subsection{Extension of scalars}\label{ssec:extension_of_scalar}
Let $\mathcal{R}$ be a differential graded algebra over a field $\k$ with filtration $\varsigma_{\mathcal{R}}$ and let $\cC$ be an $\mathcal{R}$ bimodule with filtration $\varsigma_{\cC}.$ Let $(\cC, \{\m_k\}_{k\ge 0},\lp\;,\,\rp,\e)$ be an $n$-dimensional curved cyclic unital $A_{\infty}$ algebra, and let $(S,e_S)$ be a unital associative differential graded algebra over $\k$, where $e_S \in S$ is the unit. Let $F:S \to S$ be a graded cyclic $\k$ linear map, that is also a $Z(S)$ module homomorphism. 

Define $\hat{\mathcal{R}}:=S \otimes \mathcal{R},$ which is naturally a differential graded algebra, and define $\hat{\cC} := S \otimes \cC$, which is naturally a graded bimodule over $\hat{\mathcal{R}}$. We use the grading of $\cC[1]$ to determine the bimodule structure. So, for $t,s\in S$, $\a \in \cC$ and $b \in \mathcal{R}$ we have
\[
(t \otimes b) (s \otimes \alpha) = (-1)^{|b||s|}ts \otimes b\alpha, \qquad (s\otimes \alpha) (t\otimes b) = (-1)^{t(|\alpha|+1)} st \otimes \alpha b.
\]
Equip $\hat{\mathcal{R}}$ with the filtration $\varsigma_{\hat{\mathcal{R}}}$ induced by $\varsigma_\mathcal{R}$ and the trivial filtration on $S$, and equip $\hat{\cC}$ with the filtration $\varsigma_{\hat{\cC}}$ induced by $\varsigma_{\cC}$ and the trivial filtration on $S$.
Define maps $\mh_k:\hat{\cC}^{\otimes k}\to \hat{\cC}[2-k]$ by 
\begin{multline*}
    \mh_k\left(s_1 \otimes \alpha_1,\dots,s_k \otimes \alpha_k  \right)=\\
    =(-1)^{\sum_{j=1}^k |s_j|+\sum_{j=1}^k |s_j| \sum_{i=1}^{j-1}\left(|\alpha_i|+1\right)}s_1 \cdots s_k \otimes \m_k\left(\alpha_1,\dots,\alpha_k\right)+\delta_{1,k}\cdot ds_1 \otimes \a_1 ,
\end{multline*}
and
\[\mh_0=e_S\otimes \m_0 .\]
Define a pairing $\lp\;,\,\rp_F:\hat{\cC}\otimes \hat{\cC}\to \hat{\mathcal{R}}[-n]$ by
\[\lp s_1 \otimes \alpha_1, s_2 \otimes \alpha_2\rp_F=(-1)^{|s_2|(|\a_1|+1)}F(s_1 s_2)\otimes\lp \alpha_1, \alpha_2\rp.\]
\begin{prop}\label{prop:a_infity_extension}
The triple $(\{\mh_k\}_{k\ge 0},\lp\;,\,\rp_F,\e_s\otimes \e)$ is a $n$-dimensional cyclic unital $A_\infty$ structure over $\hat{\mathcal{R}}$ on $\hat{\cC}$. 
\end{prop}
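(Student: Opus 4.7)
The plan is to verify the ten conditions of Definition~\ref{dfn:cycunit} for the triple $(\{\mh_k\}_{k\ge 0},\lp\;,\,\rp_F,e_S\otimes\e)$ on the $\hat{\mathcal R}$ bimodule $\hat{\cC}$. Several of these conditions are essentially immediate: the filtration estimates~\ref{it:val} and~\ref{it:val2} hold because $S$ carries the trivial filtration; the unit conditions~\ref{it:unit1} and~\ref{it:unit2} reduce to the corresponding properties of $\m_k$ and $\e$ once $e_S$ is recognized as the unit of $S$; and~\ref{it:unit3} reduces to $F(e_S)\otimes\lp\m_0,\e\rp=0$. The $\hat{\mathcal R}$-multilinearity~\ref{it:lin} is obtained by expanding both sides using the bimodule sign conventions and comparing; the extra $ds_1\otimes\a_1$ terms arise from the differential on $S$ entering through $\mh_1$. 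The $Z(\hat{\mathcal R})$-bilinearity of $\lp\;,\,\rp_F$ required by~\ref{it:plin} uses precisely the hypothesis that $F$ is a $Z(S)$-module homomorphism, which lets elements of $Z(\hat{\mathcal R})$ pass through $F$ on the $S$-factor so that the bilinearity of $\lp\;,\,\rp$ on the $\cC$-side can be applied.

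For the graded symmetry~\ref{it:symm}, swapping the arguments on the $\cC$-side produces the sign already present for $\lp\;,\,\rp$, while on the $S$-side one must compare $F(s_1s_2)$ with $F(s_2s_1)$. Here the $n=2$ case of the graded cyclic property of $F$ yields $F(s_1s_2)=(-1)^{|s_1||s_2|}F(s_2s_1)$, and a short check using $|\hat\a_i|=|s_i|+|\a_i|$ produces the required sign $(-1)^{(|\hat\a_1|+1)(|\hat\a_2|+1)+1}$.

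The main body of work is the $A_\infty$ relations~\ref{it:a_infty} and the cyclic relation~\ref{it:cyclic}, which I expect to be the principal obstacle. For~\ref{it:a_infty}, I would substitute $\hat\a_j=s_j\otimes\a_j$, pull the tensor product apart, and collect the $S$-factors into the single product $s_1\cdots s_k$. The Koszul signs generated by moving $\m_{k_2}(\a_i,\ldots,\a_{i+k_2-1})$ through the appropriate $s$-factors in $\mh_{k_1}$, combined with the prefactors built into $\mh_{k_2}$ and the outer sign $(-1)^{\sum_{j<i}(|\hat\a_j|+1)}$, should telescope modulo~$2$ to exactly the sign $(-1)^{\sum_{j<i}(|\a_j|+1)}$ appearing in the $A_\infty$ relation for $\m$, so that the sum vanishes term by term. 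The case $k=1$ requires separately matching the $ds_1\otimes\a_1$ contribution with the differential on $\hat{\mathcal R}$.

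For~\ref{it:cyclic}, the same strategy applies, but the crucial new input is the graded cyclic property of $F$ applied with $n=k+1$: cycling $s_{k+1}$ to the front of the product $s_1\cdots s_ks_{k+1}$ inside $F$ produces exactly the sign needed to align with the cyclic relation on the $\cC$-side, which holds for $\m$ by hypothesis. The exceptional $k=1$ correction $d\lp\a_1,\a_2\rp$ lifts to $d\lp\hat\a_1,\hat\a_2\rp_F$ once one observes that the differential on $\hat{\mathcal R}=S\otimes\mathcal R$ interacts with $F$ only up to the Koszul signs already built into $\lp\;,\,\rp_F$. The main obstacle throughout is maintaining consistent sign bookkeeping; beyond graded cyclicity and $Z(S)$-linearity of $F$, no further structural input is needed, so the verification ultimately reduces to a careful Koszul-sign computation.
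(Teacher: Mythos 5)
Your proposal matches the paper's proof in both structure and substance: the paper likewise verifies the axioms of Definition~\ref{dfn:cycunit} one by one (Lemmas~\ref{lm:hm_lin}, \ref{lm:a_infty_hm}, \ref{lm:phatlin}, \ref{lm:phatsim}, \ref{lm:phat_cyclic}), handling the filtration and unit properties as immediate, using the $Z(S)$-module property of $F$ for bilinearity, the graded cyclicity of $F$ for symmetry and the cyclic relation, and Koszul-sign bookkeeping (with the $ds_i$ terms and the $k=1$ corrections treated separately) for the $A_\infty$ and cyclic identities. No essential difference in approach.
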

\begin{dfn}\label{dfn:sfext}
The cyclic unital $A_\infty$ algebra $(\hat{\mathcal{C}}, \{\mh_k\}_{k \ge 0}, \lp\;,\,\rp_F,\e_S \otimes \e)$ is called the $(S,F)$ \textbf{extension of scalars} of the cyclic unital $A_{\infty}$ algebra $(\cC, \{\m_k\}_{k\ge 0},\lp\;,\,\rp,\e)$.
\end{dfn}

The proof of Proposition~\ref{prop:a_infity_extension} is given in the end of this section based on the series of lemmas stated below. 
\begin{lm}\label{lm:hm_lin}
The operations $\mh_k$ are $\hat{\mathcal{R}}$-multilinear in the sense that for $a\in \hat{\mathcal{R}}$,
\begin{gather*}
    \mh_k(a\cdot\ah_1,\ldots,\ah_k)=
		(-1)^{|a|}
		a\cdot\m_k(\ah_1,\ldots,\ah_k)+\delta_{1,k}\cdot da\cdot\ah_1,\\
		\mh_k(\ah_1,\ldots,\ah_k \cdot a)=
		\m_k(\ah_1,\ldots,\ah_k)\cdot a+(-1)^{|\ah_1|}\delta_{1,k}\cdot \ah_1\cdot da,
\end{gather*}
		and
		\[
		\mh_k(\ah_1,\ldots,\ah_{i-1},a\cdot\ah_i,\ldots,\ah_k)=\mh_k(\ah_1,\ldots,\ah_{i-1}\cdot a,\ah_i,\ldots,\ah_k).
			\]
\end{lm}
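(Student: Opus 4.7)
The plan is to verify each of the three identities by direct computation on decomposable tensors $a = t \otimes b \in \hat{\mathcal{R}}$ and $\ah_j = s_j \otimes \a_j \in \hat{\cC}$, extending by $\k$-linearity. The essential ingredients are the $\mathcal{R}$-multilinearity of $\m_k$ (property~\ref{it:lin} of Definition~\ref{dfn:cycunit}), the graded Leibniz rule for the dga $S$, and the Koszul sign conventions built into the bimodule structure on $\hat{\cC}$ and into the sign defining $\mh_k$.

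I would begin with the middle identity since it is the cleanest. Expanding $a \cdot \ah_i$ and $\ah_{i-1} \cdot a$ via the bimodule formulas on $\hat{\cC}$ yields $(-1)^{|b||s_i|} t s_i \otimes b \a_i$ and $(-1)^{|t|(|\a_{i-1}|+1)} s_{i-1} t \otimes \a_{i-1} b$ respectively. After plugging into $\mh_k$, the Koszul signs arising from commuting $t$ past the remaining $s_j$'s agree on both sides, as does the resulting product of the $s_j$'s, so equality reduces to $\m_k(\ldots, \a_{i-1}, b \a_i, \ldots) = \m_k(\ldots, \a_{i-1} b, \a_i, \ldots)$, which is property~\ref{it:lin} of $\m_k$.

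For the left identity with $k \neq 1$, I would expand $\mh_k(a \cdot \ah_1, \ah_2, \ldots, \ah_k)$, which introduces a factor of $(-1)^{|b||s_1|}$ from the left action and produces $\m_k(b \a_1, \a_2, \ldots, \a_k)$ in the $\cC$-slot. Applying property~\ref{it:lin} to extract $b$ yields $(-1)^{|b|} b \m_k(\a_1, \ldots, \a_k)$, so comparing to $(-1)^{|a|} a \cdot \mh_k(\ah_1, \ldots, \ah_k)$ reduces to the exponent identity
\[
|b||s_1| + |b| + \sigma_1 \;\equiv\; |t| + |b| + \sigma_0 + |b| \textstyle\sum_j |s_j| \pmod 2,
\]
where $\sigma_0$ and $\sigma_1$ are the signs from the formula defining $\mh_k$ before and after substituting $a \cdot \ah_1$ for $\ah_1$. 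A direct comparison shows $\sigma_1 - \sigma_0 = |t| + |b|\sum_{j\geq 2}|s_j|$, which makes both sides agree. The right identity for $k \neq 1$ is entirely symmetric, with $(-1)^{|t|(|\a_k|+1)}$ from the right action replacing $(-1)^{|b||s_1|}$ and property~\ref{it:lin} extracting $b$ from the last slot of $\m_k$.

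The $k=1$ subcases are where the Leibniz corrections enter. I would apply the graded Leibniz rule $d(ts_1) = (dt) s_1 + (-1)^{|t|} t \, ds_1$ in $S$, together with property~\ref{it:lin} applied to $\m_1(b \a_1)$, which contributes a $db \cdot \a_1$ term. After regrouping, the extra contributions assemble precisely into $d(t \otimes b) \cdot \ah_1 = da \cdot \ah_1$ via $d(t \otimes b) = dt \otimes b + (-1)^{|t|} t \otimes db$, and symmetrically into $\ah_1 \cdot da$ on the right. The only real obstacle throughout is the Koszul sign bookkeeping, which is a finite but delicate calculation; no new conceptual input beyond property~\ref{it:lin} and the graded Leibniz rule on $S$ is required.
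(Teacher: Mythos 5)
Your proposal is correct and follows essentially the same route as the paper: direct verification on decomposable tensors using the bimodule sign conventions, property~\ref{it:lin} of the base algebra, and the graded Leibniz rule in $S\otimes\mathcal{R}$, with the $k=1$ Leibniz corrections assembling into $da\cdot\ah_1$ and $\ah_1\cdot da$ exactly as in the paper's explicit computation. The only organizational difference is that you treat general $k$ uniformly and isolate $k=1$, whereas the paper writes out $k=1$ for the first two identities and general $k$ for the third; the sign bookkeeping you outline (e.g.\ $\sigma_1-\sigma_0=|t|+|b|\sum_{j\ge 2}|s_j|$) checks out.
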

\begin{proof}
We will prove the first and second equations equation only for $k=1$, as the other cases are similar but easier. Let $t,s\in S, \a \in \cC$ and $ b \in \mathcal{R}$ be homogeneous elements. We compute,
\begin{align*}
    \mh_1&\left((t\otimes b)\cdot (s\otimes \a)\right) = \\ &=(-1)^{|b||s|}\mh_1\left(ts\otimes b\cdot\a\right)\\
    &=(-1)^{|b|||s|+|t|+|s|}ts\otimes\m_1(b\cdot\a)+(-1)^{|b||s|}d(ts)\otimes b\cdot \a\\
    &=(-1)^{|b|||s|+|t|+|s|}ts\otimes \left((-1)^{|b|}b\cdot \m_1(\a)+db\cdot \a \right)+(-1)^{|b||s|}d(ts)\otimes b\cdot\a\\
    &=(-1)^{|b|||s|+|t|+|s|}ts\otimes \left((-1)^{|b|}b\cdot \m_1(\a)+db\cdot \a \right)+\\
    &\quad+(-1)^{|b||s|}dt\cdot s\otimes b\cdot \a +(-1)^{|b||s|+|t|}tds \otimes b\cdot\alpha\\
    &=(-1)^{|t|+|s|+|b|}(t\otimes b) (s\otimes  \m_1(\a))+(-1)^{|t|}(t\otimes db)\cdot (s\otimes  \a) +\\
    &\quad+(dt\otimes b)\cdot  (s\otimes \a )+(-1)^{|b|+|t|}(t\otimes b) \cdot (ds \otimes \alpha)\\
    &=(-1)^{|t\otimes b|}(t\otimes b)\cdot \mh_1(s\otimes \a)+d(t\otimes b)\cdot (s\otimes \a).
\end{align*}
We also show the second equality by direct calculations:
\begin{align*}
    \mh_1\left( (s\otimes \a)\cdot (t\otimes b) \right)&=(-1)^{|t||\a|}\mh_1\left(st\otimes \a\cdot b\right)\\
    &=(-1)^{|t|||\a|+|t|+|s|}st\otimes\m_1(\a\cdot b)+(-1)^{|t||\a|}d(st)\otimes \a\cdot b\\
    &=(-1)^{|t|||\a|+|t|+|s|}st\otimes\left(\m_1(\a)\cdot b+(-1)^{|\a|}\a \cdot db\right)+\\
    &\qquad\qquad\qquad\qquad\qquad+((-1)^{|\a||t|+|s|}sdt+(-1)^{|\a||t|}(ds )t)\otimes \a\cdot b \\
    &=(-1)^{|s|}(s\otimes\m_1(\a)+ds\otimes \a)\cdot(t\otimes b)+(-1)^{|\a|+|s|}s\otimes\a\cdot d(t\otimes b)\\
    &=\mh_1(s\otimes \a)\cdot(t\otimes b)+(-1)^{|s|+|\a|}s\otimes \a\cdot (d(t\otimes b)).
\end{align*}

We will prove the third equality stated. Let $t,s_1,\ldots,s_k \in S$, $\a_1,\ldots, \a_k \in \cC$ and $ b \in \mathcal{R}$ be homogeneous elements. We compute
\begin{align*}
    \mh_k&(s_1\otimes \a_1,\ldots,t\otimes b\cdot s_i \otimes \a_i,\ldots, s_k\otimes \a_k) = \\
    &=(-1)^{|b||s_i|}\mh_k(s_1\otimes \a_1,\ldots, t s_i \otimes b \cdot\a_i,\ldots, s_k\otimes \a_k)\\
    &=(-1)^{|b||s_i|+|t|+\sum_{j=1}^k|s_k|+\sum_{j=1}^k |s_j| \sum_{i=1}^{j-1}\left(|\alpha_i|+1\right)+|t|\sum_{s=1}^{i-1}\left(|\alpha_s|+1\right) }s_1\cdots s_{i-1}ts_i \cdots s_k \otimes \\
    &\qquad\qquad\qquad\qquad\qquad\qquad\qquad\qquad\qquad\;\,\,\otimes \m_k(\a_1,\ldots, \a_{i-1}\cdot b,\a_i,\ldots,\a_k )\\
    &=(-1)^{|t|(|\a_{i-1}|+1)}\mh_k(s_1\otimes \a_1,\ldots, s_{i-1}t\otimes \a_{i-1}b,\ldots, s_k\otimes \a_k)\\
    &=\mh_k(s_1\otimes \a_1,\ldots, (s_{i-1}\otimes \a_{i-1})\cdot(t\otimes b),\ldots, s_k\otimes \a_k).
\end{align*}
\end{proof}
\begin{lm}\label{lm:a_infty_hm}
The operations $\{\mh_k^\gamma\}_{k\ge 0}$ define an $A_\infty$ structure on $\hat{C}$. That is,
\begin{equation*}\label{eq:ahat-infty}
\sum_{\substack{k_1+k_2=k+1\\1\le i\le k_1}}(-1)^{\sum_{j=1}^{i-1}(|\alpha_j|+1)}
\mh_{k_1}(\ah_1,\ldots,\ah_{i-1},\mh_{k_2}(\ah_i,\ldots,\ah_{i+k_2-1}), \ah_{i+k_2},\ldots,\ah_k)=0.
\end{equation*}
\end{lm}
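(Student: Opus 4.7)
The plan is to substitute the defining formula for $\mh_k$ into the $A_\infty$ expression, expand, and match the resulting terms against the $A_\infty$ relations for $\{\m_k\}$ on $\cC$, the graded Leibniz rule for $d$ on $S$, and associativity of the product on $S$. Writing $\ah_j = s_j \otimes \alpha_j$, I would first note that for $k_2 \neq 1$ the inner factor $\mh_{k_2}(\ah_i, \ldots, \ah_{i+k_2-1})$ is a single Koszul-signed multiplicative term $\pm\, s_i \cdots s_{i+k_2-1} \otimes \m_{k_2}(\alpha_i, \ldots, \alpha_{i+k_2-1})$, while for $k_2 = 1$ it is the sum of this term with the extra differential term $ds_i \otimes \alpha_i$. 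Plugging the result into $\mh_{k_1}$ and applying the definition once more produces, in each summand of the $A_\infty$ relation, two kinds of contributions: a \emph{multiplicative} contribution of the shape
\[
\pm\, s_1 \cdots s_k \otimes \m_{k_1}\bigl(\alpha_1, \ldots, \m_{k_2}(\alpha_i, \ldots, \alpha_{i+k_2-1}), \ldots, \alpha_k\bigr),
\]
and \emph{differential} contributions involving either $d(s_1 \cdots s_k)$, from the outer $\mh_1$ with $k_1 = 1$, or $ds_i$ for some $i$, from an inner $\mh_1$ with $k_2 = 1$.

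I would handle the multiplicative contributions first. The Koszul sign from the definition of $\mh$, together with the sign $(-1)^{\sum_{j<i}(|\alpha_j|+1)}$ from the $A_\infty$ relation, must be reorganized so that the total sign of each summand equals the sign appearing in the corresponding summand of the $A_\infty$ relation for $\{\m_k\}$ on $\cC$. This reduces to an elementary $\bmod\; 2$ calculation that tracks how many times each $s_j$ has been moved past each $\alpha_p$ in the course of rewriting the nested expression as $s_1 \cdots s_k$ tensored with a composition of $\m$'s. Once this sign identity is verified, the multiplicative contributions sum to $\pm\, s_1 \cdots s_k$ tensored with the full $A_\infty$ expression for $\{\m_k\}$ at $(\alpha_1, \ldots, \alpha_k)$, which vanishes by property~\ref{it:a_infty} of Definition~\ref{dfn:cycunit} applied in $\cC$.

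For the differential contributions, the outer piece $d(s_1 \cdots s_k) \otimes \m_k(\alpha_1, \ldots, \alpha_k)$ arises from the $k_1 = 1, k_2 = k$ summand, while the inner pieces $\pm\, s_1 \cdots (ds_i) \cdots s_k \otimes \m_k(\alpha_1, \ldots, \alpha_k)$ arise from the $k_1 = k, k_2 = 1$ summands as $i$ ranges over $\{1, \ldots, k\}$. Expanding $d(s_1 \cdots s_k)$ by the graded Leibniz rule in the dga $S$ and comparing Koszul signs shows that the outer piece equals the negative of the sum of the inner pieces, so these contributions cancel as well.

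The main obstacle is the sign bookkeeping in the last two paragraphs. Every time an $s_j$ is pushed past an $\alpha_p$ a Koszul sign appears, and verifying that the accumulated signs reduce precisely to the signs prescribed by the $A_\infty$ relation and by the Leibniz rule — in particular in the $k_2 = 1$ and $k_1 = 1$ corner cases, where the multiplicative and differential contributions coexist in the same summand — is tedious but entirely mechanical.
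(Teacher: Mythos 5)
Your proposal is correct and follows essentially the same route as the paper's proof: substitute the definition of $\mh_k$, separate the purely multiplicative Koszul-signed terms from the $ds_i$ terms, cancel the latter between the inner $k_2=1$ summands and the Leibniz expansion of $d(s_1\cdots s_k)$ arising in the outer $k_1=1$ summand, and observe that the multiplicative terms assemble into $\pm\, s_1\cdots s_k$ tensored with the $A_\infty$ relation for $\{\m_k\}$ in $\cC$. The sign bookkeeping you defer as mechanical is precisely what the paper carries out, the decisive point being that the accumulated sign $\sum_{j=1}^{k}|s_j|\sum_{p<j}(|\alpha_p|+1)$ (up to the prefactor already present in the relation) does not depend on $i,k_1,k_2$, so the summands factor through a common sign.
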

\begin{proof}
Let $ s_1,\ldots,s_k \in S$, $\a_1,\ldots, \a_k \in \cC$, be homogeneous elements. For $k_1=k$ and $k_2=1$, we compute
\begin{align}
    &\mh_{k}(s_1\otimes \alpha_1,\ldots,s_{i-1} \otimes \alpha_{i-1},\mh_{1}(s_i \otimes\alpha_i), \ldots,s_k \otimes\alpha_k)= \nonumber\\&=\mh_{k}(s_1 \otimes\alpha_1,\ldots,s_{i-1}\otimes \alpha_{i-1},(-1)^{|s_i|}s_i\otimes \m_{1}(\alpha_i)+ds_i \otimes \a_i,\ldots,s_k\otimes\alpha_k)\nonumber\\
    &=(-1)^{\sum_{j=1}^{i-1}|s_j|+\sum_{j=1}^{k}|s_j|\sum_{s=1}^{j-1}(|\alpha_s|+1)} s_1\cdots s_k\otimes  \m_k(\a_1,\ldots,\m_1(\a_i),\ldots,\a_k)+\nonumber\\
    &\quad+ (-1)^{1+\sum_{j=1}^k|s_j|+\sum_{j=1}^k |s_j|\sum_{s=1}^{j-1}(|\a_s|+1)+\sum_{j=1}^{i-1}(|\a_j|+1)}s_1\cdots ds_i \cdots s_k \otimes \m_k(\a_1,\ldots, \a_k).\label{eq:der_k_one}
\end{align}
We will now compute for $k_1=1$ and $k_2=k$,
\begin{align}
    &\mh_1\left(\mh_k(s_1\otimes \a_1,\ldots s_k \otimes \a_k)\right)=\nonumber\\
    &= (-1)^{\sum_{j=1}^k |s_j|+\sum_{j=1}^k |s_j| \sum_{i=1}^{j-1}\left(|\alpha_i|+1\right)}\mh_1(s_1\cdots s_k \otimes \m_k(\a_1,\ldots, \a_k))=\nonumber\\
    &=(-1)^{\sum_{j=1}^k |s_j| \sum_{i=1}^{j-1}\left(|\alpha_i|+1\right)}s_1\cdots s_k \otimes\m_1( \m_k(\a_1,\ldots, \a_k))+\nonumber\\
    &\quad+(-1)^{\sum_{j=1}^k |s_j|+\sum_{j=1}^k |s_j| \sum_{i=1}^{j-1}\left(|\alpha_i|+1\right)}d(s_1\cdots s_k)\otimes \m_k(\a_1,\ldots, \a_k)\nonumber\\
    &=(-1)^{\sum_{j=1}^k |s_j| \sum_{i=1}^{j-1}\left(|\alpha_i|+1\right)}s_1\cdots s_k \otimes\m_1( \m_k(\a_1,\ldots, \a_k))+\nonumber\\
    &\quad+(-1)^{\sum_{j=1}^k |s_j|+\sum_{j=1}^k |s_j| \sum_{i=1}^{j-1}\left(|\alpha_i|+1\right)}\sum_{i=1}^k (-1)^{\sum_{j=1}^{i-1}|s_j|}s_1\cdots ds_i \cdots s_k \otimes \m_k(\a_1,\ldots, \a_k)=\nonumber\\
    &=(-1)^{\sum_{j=1}^k |s_j| \sum_{i=1}^{j-1}\left(|\alpha_i|+1\right)}s_1\cdots s_k \otimes\m_1( \m_k(\a_1,\ldots, \a_k))+\nonumber\\
    &\quad+\sum_{i=1}^k (-1)^{\sum_{j=i}^k |s_j|+\sum_{j=1}^k |s_j| \sum_{i=1}^{j-1}\left(|\alpha_i|+1\right)}s_1\cdots ds_i \cdots s_k \otimes \m_k(\a_1,\ldots, \a_k)\label{eq:der_one_k}.  
\end{align}
Adding up~\eqref{eq:der_k_one} and~\eqref{eq:der_one_k} with the sign coming from the $A_\infty$ relations, we obtain
\begin{align}
    &\sum_{i=1}^k(-1)^{\sum_{j=1}^{i-1}(|s_j\otimes \a_j|+1)}\mh_{k}(s_1\otimes \alpha_1,\ldots,s_{i-1} \otimes \alpha_{i-1},\mh_{1}(s_i \otimes\alpha_i), \ldots,s_k \otimes\alpha_k)+\nonumber\\
    &\quad+\mh_1\left(\mh_k(s_1\otimes \a_1,\ldots s_k \otimes \a_k)\right)=\nonumber\\
    &=(-1)^{\sum_{j=1}^k |s_j| \sum_{i=1}^{j-1}\left(|\alpha_i|+1\right)}s_1\cdots s_k \otimes\m_1( \m_k(\a_1,\ldots, \a_k))+\nonumber\\
    &\quad+\sum_{i=1}^k(-1)^{\sum_{j=1}^{i-1}(|\a_j|+1)+\sum_{j=1}^{k}|s_j|\sum_{s=1}^{j-1}(|\alpha_s|+1)} s_1\cdots s_k\otimes  \m_k(\a_1,\ldots,\m_1(\a_i),\ldots,\a_k).\label{eq:mh_one_a_infity_sign_compute}
\end{align}
For  $ s_1,\ldots,s_k \in S$, $\a_1,\ldots, \a_k \in \cC$, homogeneous elements, and $k_1,k_2 \ne 1$, compute
\begin{align}
        &\mh_{k_1}(s_1\otimes \alpha_1,\ldots,s_{i-1} \otimes \alpha_{i-1},\mh_{k_2}(s_i \otimes\alpha_i,\ldots, s_{i+k_2-1} \otimes \alpha_{i+k_2-1}), \ldots,s_k \otimes\alpha_k)=\nonumber\\
        &=\mh_{k_1}(s_1 \otimes\alpha_1,\ldots,s_{i-1}\otimes \alpha_{i-1},(-1)^*\prod_{j=i}^{i+k_2-1}s_j\otimes \m_{k_2}(\alpha_i,\ldots,\alpha_{i+k_2-1}),\ldots,s_k\otimes\alpha_k)\nonumber\\
        &=(-1)^*(-1)^{**}\prod_{j=1}^k s_j\otimes \m_{k_1}(\alpha_1,\ldots, \alpha_{i-1}, \m_{k_2}(\alpha_i,\ldots,\alpha_{i+k_2-1}),\alpha_{i+k_2} ,\ldots,\alpha_k),\label{eq:mh_sign_compute}
\end{align}
with 
\begin{align*}
    *&= \sum_{j=i}^{i+k_2-1}|s_j| + \sum_{j=i}^{i+k_2-1}|s_j|\sum_{s=i}^{j-1}(|\alpha_s|+1),\\
    **&=\sum_{j=1}^k |s_j| + \sum_{j=1}^{i-1}|s_j|\sum_{s=1}^{j-1}(|\alpha_i|+1)+\left(\sum_{j=i}^{i+k_2-1}|s_j|\right)\sum_{s=1}^{i-1}(|\alpha_i|+1)+\\&\quad+\sum_{s=i+k_2}^k |s_s|\left(1+ \sum_{m=1}^{s-1}(|\alpha_m|+1)\right).
\end{align*}
Thus we get,
\begin{align*}
    *+** \equiv \sum_{j=1}^{i-1}|s_j|+\sum_{j=1}^{k}|s_j|\sum_{s=1}^{j-1}(|\alpha_s|+1)\mod{2}.
\end{align*}
Note that $\sum_{j=1}^{k}|s_j|\sum_{s=1}^{j-1}(|\alpha_s|+1)$ is not a function of $i,k_1,k_2$. We also observe that
$$\sum_{j=1}^{i-1}(|s_j \otimes \alpha_j|+1)=\sum_{j=1}^{i-1}|s_j|+\sum_{j=1}^{i-1}(|\alpha_j|+1).$$
If we abbreviate $\hat{\alpha_i}=s_i\otimes \alpha_i$, we get that adding~\eqref{eq:mh_sign_compute} with the signs coming from the $A_\infty$~relations to~\eqref{eq:mh_one_a_infity_sign_compute} results,
\begin{multline*}
    \sum_{\substack{k_1+k_2=k+1\\1\le i\le k_1}}(-1)^{\sum_{j=1}^{i-1}(|\hat{\alpha}_j|+1)}
\mh_{k_1}(\hat{\alpha}_1,\ldots,\hat{\alpha}_{i-1},\mh_{k_2}(\hat{\alpha}_i,\ldots,\hat{\alpha}_{i+k_2-1}), \hat{\alpha}_{i+k_2},\ldots,\hat{\alpha_k})=\\=
(-1)^{\sum_{j=1}^{k}|s_j|\sum_{s=1}^{j-1}(|\alpha_s|+1)} s_1 \cdots s_k\otimes\sum_{\substack{k_1+k_2=k+1\\1\le i\le k_1}}(-1)^{\sum_{j=1}^{i-1}(|\alpha_j|+1)}\\
\m_{k_1}(\alpha_1,\ldots,\alpha_{i-1},\m_{k_2}(\alpha_i,\ldots,\alpha_{i+k_2-1}), \alpha_{i+k_2},\ldots,\alpha_k)=0.
\end{multline*}
The last equality follows from the $A_\infty$ relations of $\cC$.
\end{proof}
\begin{lm}\label{lm:phatlin}
The pairing $\lp\;,\,\rp_F$ is $\hat{\mathcal{R}}$-bilinear in the sense that
\[
		\lp\alpha_1,a\cdot\alpha_2\rp_F=
\lp\alpha_1\cdot a,\alpha_2\rp_F,\qquad \forall a \in \hat{\mathcal{R}},
\]
		and
\[
\lp a\cdot \a_1,\a_2\rp_F=a\cdot \lp\a_1,\a_2\rp_F,\qquad \forall a \in Z(\hat{\mathcal{R}}).
\]
\end{lm}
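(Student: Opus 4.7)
My plan is to verify both identities by direct computation on simple tensor inputs $\alpha_i = s_i \otimes \beta_i$ with $s_i \in S$, $\beta_i \in \cC$, and $a = t \otimes b$ with $t \in S$, $b \in \mathcal{R}$, then extend by $\k$-linearity in each argument.

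For the first identity, I would expand the two bimodule actions using the formulas for $\hat{\cC}$ to obtain $a \cdot \alpha_2 = (-1)^{|b||s_2|} ts_2 \otimes b\beta_2$ and $\alpha_1 \cdot a = (-1)^{|t|(|\beta_1|+1)} s_1 t \otimes \beta_1 b$, and then apply the defining formula of $\lp\;,\,\rp_F$ to both sides. Both sides share the factor $F(s_1 t s_2)$ by associativity in $S$; one side carries $\lp \beta_1, b\beta_2\rp$ and the other $\lp \beta_1 b, \beta_2 \rp$, which are equal by property~\ref{it:plin} of $\lp\;,\,\rp$; the overall signs, namely $|b||s_2| + (|t|+|s_2|)(|\beta_1|+1)$ and $|t|(|\beta_1|+1) + |s_2|(|\beta_1|+|b|+1)$, agree after a short rearrangement. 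No centrality hypothesis on $a$ is needed.

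For the second identity, the analogous unwinding reduces the claim, on simple tensors $a = t \otimes b$, to the equality
\[
F(t s_1 s_2)\otimes \lp b\beta_1,\beta_2\rp \;=\; t\,F(s_1 s_2)\otimes b\,\lp\beta_1,\beta_2\rp,
\]
after the prefactor signs cancel as in the first identity. The tensor-$\mathcal{R}$ factor is handled by property~\ref{it:plin}, which requires $b \in Z(\mathcal{R})$, and the tensor-$S$ factor is handled by the assumption that $F$ is a $Z(S)$-module homomorphism, which requires $t \in Z(S)$. Thus the identity holds for simple tensors lying in $Z(S) \otimes Z(\mathcal{R})$.

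The main obstacle is to pass from this special class of simple tensors to an arbitrary $a \in Z(\hat{\mathcal{R}})$. I would argue degree-by-degree: writing the homogeneous part of $a$ of a fixed degree as a finite sum $\sum_i t_i \otimes b_i$ with $\{t_i\}$ linearly independent, the vanishing of the graded commutators $[a,\,1\otimes r]$ for all $r\in\mathcal{R}$ and $[a,\,u\otimes 1]$ for all $u\in S$, forced by $a\in Z(\hat{\mathcal{R}})$, yields by elementary linear algebra that each $b_i \in Z(\mathcal{R})$ and each $t_i \in Z(S)$. The identity for a general $a$ then follows from the case treated above by $\R$-linearity. The sign book-keeping is routine; this linear-algebra reduction is the only subtle ingredient.
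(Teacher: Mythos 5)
Your proof is correct and follows essentially the same route as the paper: both identities are verified on simple tensors, the first using only the definition of $\lp\;,\,\rp_F$ and the first half of property~\ref{it:plin}, and the second reducing to tensors $t\otimes b$ with $t\in Z(S)$, $b\in Z(\mathcal{R})$, where the $Z(S)$-module property of $F$ and the second half of property~\ref{it:plin} finish the computation. The only real difference is that the paper simply invokes the identification $Z(S\otimes_\k\mathcal{R})\simeq Z(S)\otimes_\k Z(\mathcal{R})$, whereas you prove it by hand, and there your linear algebra step is slightly imprecise: from a single decomposition $a=\sum_i t_i\otimes b_i$ with the $t_i$ linearly independent you do get every $b_i\in Z(\mathcal{R})$, but you cannot conclude that each $t_i\in Z(S)$ (for instance $a=(t_1-t_2)\otimes b$ written as $t_1\otimes b+t_2\otimes(-b)$ only forces $t_1-t_2$ to be central). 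The standard fix, which is all your linearity argument needs, is a second pass: having shown the $b_i$ are central, re-expand $a=\sum_j t_j'\otimes b_j'$ with the $b_j'$ a linearly independent spanning set of the $b_i$ (hence still in $Z(\mathcal{R})$); commuting with elements $u\otimes 1$ then forces each $t_j'\in Z(S)$, so $a$ is indeed a sum of simple tensors with both factors central, and the rest of your argument goes through.
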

\begin{proof}
The first equation is deduced from the definition of $\lp\;,\;\rp_F$. We will prove the second equation. 
As both $\mathcal{R}$ and $S$ are algebras over the field $\k$, we have 
\[
Z\left(S \otimes_{\k} \mathcal{R}\right) \simeq Z(S) \otimes_{\k} Z(\mathcal{R}).
\]
Thus it suffices to compute for $s_1, s_2 \in S,  \a_1, \a_2 \in \cC$, $a \in Z(S)$ and $ b \in Z(\mathcal{R})$,
\begin{align*}
\lp a \otimes b\cdot s_1\otimes \alpha_1,s_2\otimes \alpha_2\rp_F&=\lp(-1)^{|b||s_1|} a  s_1\otimes b\alpha_1,s_2\otimes \alpha_2\rp_F=\\
&=(-1)^{|b||s_1|+|s_2|(|\a_1|+|b|+1)}F(as_1\cdot s_2)\otimes \lp b\a_1, \a_2 \rp=\\
&=(-1)^{|b||s_1|+|s_2|(|\a_1|+|b|+1)}aF(s_1\cdot s_2)\otimes b\lp \a_1, \a_2 \rp=\\
&=(-1)^{|s_2|(|\alpha_1|+1)}a\otimes b\cdot F(s_1\cdot s_2)\otimes \lp \a_1, \a_2 \rp=\\
&=a\otimes b\cdot  \lp s_1\otimes \alpha_1,s_2\otimes \alpha_2\rp_F.
\end{align*}
\end{proof}
\begin{lm}\label{lm:phatsim}
$$\lp \ah_1, \ah_2 \rp_F=(-1)^{(|\ah_1|+1)(|\ah_2|+1)+1}\lp \ah_2 , \ah_1 \rp_F.$$
\end{lm}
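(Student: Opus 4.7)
The plan is to verify the identity directly on pure tensors $\ah_i = s_i \otimes \a_i$ for $i=1,2$, then extend by $\k$-bilinearity. Expanding the left-hand side by the definition of $\lp\;,\;\rp_F$ gives
\[
\lp \ah_1,\ah_2\rp_F = (-1)^{|s_2|(|\a_1|+1)}\,F(s_1 s_2)\otimes \lp\a_1,\a_2\rp.
\]

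For the right-hand side, I would first apply the definition of $\lp\;,\;\rp_F$ to $\lp\ah_2,\ah_1\rp_F$, obtaining the factor $(-1)^{|s_1|(|\a_2|+1)}F(s_2 s_1)\otimes\lp\a_2,\a_1\rp$. Then I would use the graded cyclicity of $F$ in the case $n=2$ to rewrite $F(s_2 s_1) = (-1)^{|s_1||s_2|}F(s_1 s_2)$, and apply property~\ref{it:symm} of Definition~\ref{dfn:cycunit} to rewrite $\lp\a_2,\a_1\rp = (-1)^{(|\a_1|+1)(|\a_2|+1)+1}\lp\a_1,\a_2\rp$. Together with the overall Koszul sign $(-1)^{(|\ah_1|+1)(|\ah_2|+1)+1}$ from the statement, the right-hand side becomes $(-1)^{\sigma}F(s_1 s_2)\otimes\lp\a_1,\a_2\rp$ with
\[
\sigma = (|\ah_1|+1)(|\ah_2|+1)+1+|s_1|(|\a_2|+1)+|s_1||s_2|+(|\a_1|+1)(|\a_2|+1)+1.
\]

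It then remains to check that $\sigma \equiv |s_2|(|\a_1|+1) \pmod 2$. Substituting $|\ah_i|=|s_i|+|\a_i|$ and expanding both quadratic products, every monomial except $|\a_1||s_2|$ and $|s_2|$ appears an even number of times and cancels, leaving exactly $|s_2|(|\a_1|+1)$. Hence the two sides agree.

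The proof is purely sign bookkeeping and presents no genuine obstacle; the only subtlety is keeping track of the convention $|\ah_i| = |s_i| + |\a_i|$ (unshifted) versus the shifted degrees $|\ah_i|+1$ and $|\a_i|+1$ that appear in the Koszul signs of the pairing and its cyclic symmetry. No use of the $A_\infty$ operations, the differential, or the filtration is required, so this lemma is logically independent of Lemmas~\ref{lm:hm_lin} and~\ref{lm:a_infty_hm} and can be inserted directly before the proof of Proposition~\ref{prop:a_infity_extension}.
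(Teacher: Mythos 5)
Your proof is correct and follows essentially the same route as the paper: expand both sides on pure tensors via the definition of $\lp\;,\;\rp_F$, use the graded cyclicity of $F$ on a two-fold product to exchange $s_1 s_2$ and $s_2 s_1$, apply property~\ref{it:symm} of the underlying pairing, and check the Koszul signs, which indeed reduce to $|s_2|(|\a_1|+1)$ as you compute. The only cosmetic remark is that "the case $n=2$" for cyclicity of $F$ should be phrased as cyclicity for a product of two factors, to avoid collision with the dimension $n$ of the paper.
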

\begin{proof}
For $s_1, s_2 \in S$ and $\a_1, \a_2 \in \cC$ homogeneous, we compute,
\begin{align*}
    \lp s_1 \otimes \alpha_1,s_2 \otimes \alpha_2 \rp_F&=(-1)^{|s_2|(|\alpha_1|+1)}F(s_1 \cdot s_2)\lp \alpha_1 ,\alpha_2\rp=\\
    &=(-1)^{|s_1||s_2|+|s_2|(|\alpha_1|+1)+(|\alpha_1|+1)(|\alpha_2+1)+1}F(s_2 \cdot s_1)\lp \alpha_2 ,\alpha_1\rp=\\
    &=(-1)^{(|\alpha_1|+|s_1|+1)(|\alpha_2|+|s_2|+1)+1}\lp s_2 \otimes \alpha_2,s_1 \otimes \alpha_1 \rp_F.
\end{align*}
\end{proof}

\begin{lm}\label{lm:phat_cyclic}
For any $\ah_1,\ldots,\ah_{k+1} \in \hat{\mathcal{C}}$ 
\begin{multline*}
\lp\mh_k(\ah_1,\ldots,\ah_k),\ah_{k+1}\rp_F=\\
=(-1)^{(|\ah_{k+1}|+1)\sum_{j=1}^{k}(|\ah_j|+1)}
\lp \mh_k(\ah_{k+1},\ah_1,\ldots,\ah_{k-1}),\ah_k\rp_F+\delta_{1,k}\cdot d\lp \ah_1, \ah_2\rp_F.
\end{multline*}
\end{lm}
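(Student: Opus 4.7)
The plan is to verify the identity by direct expansion after reducing to pure tensors, and then to apply the cyclic properties of $F$ and of the underlying $A_\infty$ structure on $\cC$.

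First, by $\k$-linearity of both sides in each argument, it suffices to treat the case $\ah_j = s_j \otimes \a_j$ with $s_j \in S$ and $\a_j \in \cC$ homogeneous. I would then separate the cases $k\ne 1$ and $k=1$.

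For $k\ne 1$, the differential term in $\mh_k$ is absent, so the left-hand side expands to an explicit sign times $F(s_1\cdots s_k \cdot s_{k+1})\otimes \lp\m_k(\a_1,\dots,\a_k),\a_{k+1}\rp$, and the right-hand side expands to an explicit sign times $F(s_{k+1}\cdot s_1\cdots s_k)\otimes \lp\m_k(\a_{k+1},\a_1,\dots,\a_{k-1}),\a_k\rp$. The graded cyclicity of $F$ converts $F(s_1\cdots s_{k+1})$ into $F(s_{k+1} s_1\cdots s_k)$ at the cost of a sign $(-1)^{|s_{k+1}|\sum_{j=1}^k|s_j|}$, and item~\ref{it:cyclic} of Definition~\ref{dfn:cycunit} applied to the $\cC$-valued pairings introduces the sign $(-1)^{(|\a_{k+1}|+1)\sum_{j=1}^k(|\a_j|+1)}$. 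The identity then reduces to a mod-$2$ congruence in the $|s_j|$ and $|\a_j|$, which I would verify using $|\ah_j|\equiv |s_j|+|\a_j|\pmod 2$ and the parity identity $|\m_k(\a_1,\dots,\a_k)|+1\equiv \sum_j(|\a_j|+1)+1\pmod 2$.

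For $k=1$, the extra term $ds_1\otimes \a_1$ in $\mh_1$ gives, in addition to the analogue of the computation above, a supplementary contribution $(-1)^{|s_2|(|\a_1|+1)}F(ds_1\cdot s_2)\otimes\lp\a_1,\a_2\rp$ on the LHS and a symmetric one on the RHS. I would then use that $F$ is a morphism of differential graded algebras, so $dF = Fd$, together with the Leibniz rule for $d$ on $S\otimes\mathcal{R}$, the graded cyclicity of $F$, and the symmetry $\lp\a_2,\a_1\rp=(-1)^{(|\a_1|+1)(|\a_2|+1)+1}\lp\a_1,\a_2\rp$ (item~\ref{it:symm}), to combine these supplementary contributions with the $d\lp\a_1,\a_2\rp$ term produced by the cyclic property of $\m_1$ on $\cC$, and to recognize the result as $d\lp\ah_1,\ah_2\rp_F$.

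The main obstacle is sign bookkeeping: there are several independent sign sources (the definition of $\mh_k$, the definition of $\lp\cdot,\cdot\rp_F$ on each side, the graded cyclicity of $F$, and the cyclicity of $\m_k$), and matching them requires disciplined separation of $s$-degrees from $\a$-degrees. Once the signs are aligned, the $k\ne 1$ case reduces in one step to the already-established cyclic properties, and the $k=1$ case reduces to the Leibniz rule as above.
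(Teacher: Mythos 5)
Your proposal is correct and takes essentially the same route as the paper's proof: reduce to pure tensors, use the graded cyclicity of $F$ together with property~\ref{it:cyclic} of Definition~\ref{dfn:cycunit} and a mod-$2$ sign check for $k\ne 1$, and for $k=1$ combine the extra $ds_i$ contributions via Lemma~\ref{lm:phatsim}, the relation $dF=Fd$, and the Leibniz rule so as to recognize $d\lp\ah_1,\ah_2\rp_F$. One small wording caveat: $F$ is not assumed to be a morphism of differential graded algebras, only a graded cyclic differential graded $\k$-linear map and $Z(S)$-module homomorphism, but the property you actually invoke, $dF=Fd$, is part of that hypothesis (and is vacuous in the application, where $S=\R\langle s\rangle$ carries the zero differential).
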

\begin{proof}
Let $ s_1,\ldots,s_k \in S$, $\a_1,\ldots, \a_k \in \cC$, be homogeneous elements. Assume first $k \ne 1$, and calculate
\begin{align}
    \lp \mh_k (s_{1}\otimes&\alpha_1,\dots , s_{k}\otimes \alpha_k),s_{k+1}\otimes \alpha_{k+1}\rp_F=\nonumber\\
    &=(-1)^{\eta_1} \lp s_1\cdots s_k\m_k\left(\alpha_1,\dots , \alpha_k\right),s_{k+1}\otimes \alpha_{k+1}\rp_F\nonumber\\
    &=
    (-1)^{\eta_1+\eta_2}
    F(s_1 \cdots s_{k+1})\lp \m_k\left(\alpha_1,\dots , \alpha_k\right), \alpha_{k+1}\rp \label{eq:product_compute}\\
    &=(-1)^{\eta_1+\eta_2+\eta_3}
    F(s_{k+1} \cdots s_{k})\lp \m_k\left(\alpha_1,\dots , \alpha_k\right), \alpha_{k+1}\rp.\nonumber\\
    \shortintertext{Employing the cyclic symmetry property~\ref{it:cyclic} of $\mathcal{C}$, we have}
    &=(-1)^{\eta_1+\eta_2+\eta_3+\eta_4}
    F(s_{k+1} \cdots s_{k})\lp \m_k\left(\alpha_{k+1}, \a_1, \dots , \alpha_{k-1}\right), \alpha_{k}\rp \nonumber,
\end{align}
with
\begin{align*}
    \eta_1&=\sum_{j=1}^k |s_j| + \sum_{j=1}^k |s_j|\sum_{i=1}^{j-1}(|\alpha_i|+1),\\
    \eta_2 &=|s_{k+1}|\sum_{j=1}^{k}(|\alpha_j|+1)+|s_{k+1}|,\\
    \eta_3 &=|s_{k+1}|\sum_{j=1}^{k}|s_j|, \\
    \eta_4 &= (|\alpha_{k+1}|+1)\sum_{j=1}^k(|\alpha_j|+1).
\end{align*}
Similarly to~\eqref{eq:product_compute} we compute,
\begin{align*}
    \lp \mh_k (s_{k+1}&\otimes\alpha_{k+1}, s_1 \otimes \a_1 \dots , s_{k-1}\otimes \alpha_{k-1}),s_{k}\otimes \alpha_{k}\rp_F=\\
    &=(-1)^{\eta_5+\eta_6}
    F(s_{k+1} \cdots s_{k})\lp \m_k\left(\alpha_{k+1}, \a_1, \dots , \alpha_{k-1}\right), \alpha_{k}\rp,
\end{align*}
with
\begin{align*}
    \eta_5 &= \sum_{j=1}^{k-1}|s_j| +|s_{k+1}| +\sum_{j=1}^{k-1}|s_j|\left(|\alpha_{k+1}|+1+\sum_{i=1}^{j-1}(|\alpha_{i}|+1)\right).\\
    \eta_6 &= |s_k|\sum_{j=1}^{k-1}(|\alpha_j|+1)+|s_k|(|\alpha_{k+1}|+1)+|s_k|
\end{align*}
Those computations imply that 
\begin{multline*}
    \lp \mh_k (s_{1}\otimes\alpha_1,\dots , s_{k}\otimes \alpha_k),s_{k+1}\otimes \alpha_{k+1}\rp_F=\\=(-1)^{\eta_1+\eta_2+\eta_3+\eta_4+\eta_5+\eta_6}\lp \mh_k (s_{k+1}\otimes\alpha_{k+1}, s_1 \otimes \a_1 \dots , s_{k-1}\otimes \alpha_{k-1}),s_{k}\otimes \alpha_{k}\rp_F.
\end{multline*}
Observe that
\[
\eta_1 + \eta_2 + \eta_5 + \eta_6 = |s_{k+1}|\sum_{j=1}^{k}(|\alpha_j|+1)  + \left(|\alpha_{k+1}|+1\right)\sum_{j=1}^{k}|s_j|.
\]
So,
\[
\eta_1+\eta_2+\eta_3+\eta_4+\eta_5+\eta_6=(|\alpha_{k+1}|+|s_{k+1}|+1)\sum_{j=1}^k(|\alpha_j|+|s_j|+1).
\]
Thus we have showed the result for $k> 1$. 

For $k=1$ we compute
\begin{align*}
    \lp\mh_1&(s_1 \otimes \alpha_1), s_2 \otimes \alpha_2 \rp_F=\\
    &=\lp (-1)^{|s_1|}s_1\otimes \m_1( \alpha_1)+ds_1\otimes \a_1 ,s_2 \otimes \alpha_2 \rp_F\\
    &=(-1)^{|s_1|+|s_2||\a_1|}F(s_1\cdot s_2)\otimes \lp \m_1(\a_1), \a_2\rp +(-1)^{|s_2|(|\a_1|+1)}F(ds_1\cdot s_2)\otimes \lp \a_1,  \a_2\rp. 
\end{align*}
We proceed using cyclic symmetry property~\ref{it:cyclic} of $\cC$ to develop the first summand of the equation above.
\begin{align*}
    (-1)^{|s_1|+|s_2||\a_1|}&F(s_1\cdot s_2)\otimes \lp \m_1(\a_1), \a_2\rp=\\
    &=(-1)^{|s_1|+|s_2||\a_1|}F(s_1\cdot s_2)\otimes \left((-1)^{(|\a_1|+1)(|\a_2|+1)}\lp \m_1(\a_2),\a_1\rp+d\lp \a_1, \a_2 \rp  \right)\\
    &=(-1)^{|s_1|+|s_2||\a_1|+|s_1||s_2|+(|\a_1|+1)(|\a_2|+1)}F(s_2 \cdot s_1)\otimes \lp \m_1(\a_2),\a_1\rp+\\
    &\quad+(-1)^{|s_1|+|s_2||\a_1|}F(s_1\cdot s_2)\otimes d\lp \a_1, \a_2 \rp \\
    &=(-1)^{(|s_1|+|\a_1|+1)(|s_2|+|\a_2|+1)} \lp (-1)^{|s_2|}s_2\otimes \m_1(\a_2),s_1 \otimes \a_1\rp_F+\\
    &\quad+(-1)^{|s_1|+|s_2||\a_1|}F(s_1\cdot s_2)\otimes d\lp \a_1, \a_2 \rp. 
\end{align*}
We use the equality
\[(-1)^{|s_2|}s_2\otimes \m_1(\a_2)=\mh_1(s_2 \otimes \a_2)-ds_2 \otimes \a_2,\]
to derive
\begin{align*}
    (-1&)^{(|s_1|+|\a_1|+1)(|s_2|+|\a_2|+1)} \lp (-1)^{|s_2|}s_2\otimes \m_1(\a_2),s_1 \otimes \a_1\rp_F=\\
    &=(-1)^{(|s_1|+|\a_1|+1)(|s_2|+|\a_2|+1)}\lp \mh_1(s_2 \otimes \a_2)-ds_2 \otimes \a_2,s_1 \otimes \a_1 \rp_F\\
    &=(-1)^{(|s_1|+|\a_1|+1)(|s_2|+|\a_2|+1)}\left(\lp \mh_1(s_2 \otimes \a_2),s_1 \otimes \a_1 \rp_F-\lp ds_2 \otimes \a_2,s_1 \otimes \a_1 \rp_F \right).
\end{align*}
Using Lemma~\ref{lm:phatsim}, we develop the second term further,
\begin{align*}
-(-1)^{(|s_1|+|\a_1|+1)(|s_2|+|\a_2|+1)}&\lp ds_2 \otimes \a_2,s_1 \otimes \a_1 \rp_F =  \\
    &=(-1)^{|s_1|+|\a_1|+1}\lp s_1 \otimes \a_1,ds_2 \otimes \a_2 \rp_F \\
    &=(-1)^{1+|s_1|+|\a_1|+(|\a_1|+1)(|s_2|+1)}F(s_1 \cdot ds_2)\otimes \lp \a_1, \a_2 \rp\\
    &=(-1)^{|s_1|+|s_2|(|\a_1|+1)}F(s_1 \cdot ds_2)\otimes \lp \a_1, \a_2 \rp.
\end{align*}
Combining the computations above we get,
\begin{align*}
    \lp\mh_1(s_1 \otimes \alpha_1), s_2 \otimes \alpha_2 \rp_F&=(-1)^{(|s_1|+|\a_1|+1)(|s_2|+|\a_2|+1)}\lp \mh_1(s_2 \otimes \a_2),s_1 \otimes \a_1 \rp_F+\\
    &\quad+(-1)^{|s_2|(|\a_1|+1)}F(ds_1\cdot s_2)\otimes \lp \a_1,  \a_2\rp+\\
    &\quad+(-1)^{|s_1|+|s_2|(|\a_1|+1)}F(s_1 \cdot ds_2)\otimes \lp \a_1, \a_2 \rp+\\
    &\quad+(-1)^{|s_1|+|s_2|+|s_2|(|\a_1|+1)}F(s_1\cdot s_2)\otimes d\lp \a_1, \a_2 \rp\\
    &=(-1)^{(|s_1|+|\a_1|+1)(|s_2|+|\a_2|+1)}\lp \mh_1(s_2 \otimes \a_2),s_1 \otimes \a_1 \rp_F+\\
    &\quad+d\lp s_1 \otimes \a_1, s_2\otimes \a_2 \rp_F.
\end{align*}
\end{proof}
\begin{proof}[Proof of Proposition~\ref{prop:a_infity_extension}] We will show that $(\{\mh_k\}_{k\ge 0},\lp\;,\,\rp_F,\e_s\otimes \e)$ satisfies the properties required to define a cyclic unital $A_\infty$ structure on $\hat{\cC}$. Lemma~\ref{lm:hm_lin} proves that the operators $\mh_k$ satisfy property~\ref{it:lin} of Definition~\ref{dfn:cycunit}. The operators $\mh_k$ satisfy the $A_\infty$ relation property~\ref{it:a_infty}, as shown in Lemma~\ref{lm:a_infty_hm}. The properties \ref{it:val},\ref{it:val2} hold as we chose the filtrations on $\hat{\cC}$ and $\hat{\mathcal{R}}$ to be given by the filtrations on $\cC$ and $\mathcal{R}$. Property~\ref{it:unit1} clearly follows from the corresponding property of $\cC$. Property~\ref{it:unit2} follows from the corresponding property of $\cC$. Property~\ref{it:plin} holds for $\lp\cdot,\cdot\rp_F$ by Lemma~\ref{lm:phatlin}. Property~\ref{it:symm} holds for $\lp\cdot,\cdot\rp_F$ by Lemma~\ref{lm:phatsim}. Property~\ref{it:cyclic} holds by Lemma~\ref{lm:phat_cyclic}. Property~\ref{it:unit3} follows from the definition of the extension and the corresponding property for $\cC$. 
\end{proof}
\begin{lm}\label{lm:cano_iso}
There is a canonical $\bar{\mathcal{R}}$ module isomorphism $ \bar{\hat{\cC}}\simeq S\otimes \bar{\cC} $ given by $  \overline{s \otimes \alpha}\mapsto s \otimes \bar\alpha$.
\end{lm}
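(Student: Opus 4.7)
The plan is to reduce the statement to a purely algebraic claim about how the residue construction interacts with the completed tensor product. By construction, the filtration $\varsigma_{\hat{\cC}}$ on $\hat{\cC} = S\otimes \cC$ is induced by $\varsigma_{\cC}$ together with the trivial filtration on $S$, meaning that nonzero elements of $S$ have filtration $0$. I would first verify the identity
\[
F_{\varsigma_{\hat{\cC}}}^{>0}\hat{\cC} \;=\; S \otimes F_{\varsigma_{\cC}}^{>0}\cC,
\]
where the tensor product is completed in the sense of Section~\ref{ssec:gen_not}. This follows from the definition of the induced filtration: an element of $\hat{\cC}$ has positive filtration if and only if it lies in the closure of finite sums $\sum s_i \otimes \alpha_i$ with each $\alpha_i$ of positive filtration.

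Once this is in hand, I would define the candidate isomorphism
\[
\Phi : \bar{\hat{\cC}} \lrarr S \otimes \bar{\cC}, \qquad \Phi(\overline{s\otimes \alpha}) = s\otimes \bar{\alpha},
\]
and its candidate inverse $\Psi(s\otimes \bar{\alpha}) = \overline{s\otimes \alpha}$, where $\alpha \in \cC$ is any lift of $\bar{\alpha}$. The well-definedness of $\Phi$ is immediate from the identity above, since if $s\otimes \alpha$ has positive filtration then $\alpha \in F^{>0}\cC$ and hence $\bar{\alpha} = 0$. The well-definedness of $\Psi$ follows from the fact that any two lifts of $\bar{\alpha}$ differ by an element of $F^{>0}\cC$, which in turn maps to $F^{>0}\hat{\cC}$, whose residue is zero. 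Equivalently, since $\k$ is a field and hence $S$ is a free $\k$-module, tensoring with $S$ is exact, so the short exact sequence $0 \to F^{>0}\cC \to \cC \to \bar{\cC} \to 0$ yields the isomorphism directly upon tensoring with $S$.

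It remains to check that $\Phi$ is $\bar{\mathcal{R}}$-linear on both sides. The $\mathcal{R}$-bimodule structure on $\hat{\cC}$ is given, for $b \in \mathcal{R}$, $s\in S$, $\alpha \in \cC$, by the formula $b\cdot (s\otimes \alpha) = (-1)^{|b||s|}s\otimes (b\alpha)$ and the analogous right action. Passing to residues, for $\bar{b}\in \bar{\mathcal{R}}$ we compute
\[
\Phi\bigl(\bar b \cdot \overline{s\otimes \alpha}\bigr) = \Phi\bigl(\overline{(-1)^{|b||s|}s\otimes b\alpha}\bigr) = (-1)^{|b||s|}\,s\otimes \overline{b\alpha} = \bar b \cdot \Phi\bigl(\overline{s\otimes \alpha}\bigr),
\]
where in the last step we use that $\bar{\cC}$ is an $\bar{\mathcal{R}}$-module (the action descends because $\varsigma_\cC(b\alpha) \geq \varsigma_\mathcal{R}(b) + \varsigma_\cC(\alpha)$), and $S\otimes \bar{\cC}$ inherits the induced bimodule structure. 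The right action is checked identically.

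The main thing to be careful about, and the only real obstacle, is the handling of the completed tensor product: one must confirm that completion commutes with the quotient by $F^{>0}\cC$ in this setting. This is automatic because the filtration on $S$ is trivial, so completion of $S\otimes \cC$ in the induced filtration is equivalent to applying $S\otimes (-)$ to the completion of $\cC$, and analogously for $S\otimes \bar{\cC}$. All other verifications are formal manipulations with the definitions.
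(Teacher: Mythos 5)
Your proposal is correct and follows essentially the same route as the paper: tensor the sequence $0 \to F^{>0}\cC \to \cC \to \bar{\cC} \to 0$ with $S$ and identify $F^{>0}\hat{\cC}$ with $S\otimes F^{>0}\cC$ using the triviality of the filtration on $S$ and the sup-min definition of the induced filtration on the completed tensor product. The additional verification of $\bar{\mathcal{R}}$-linearity is a harmless extra that the paper leaves implicit.
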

\begin{proof}
We observe the exact sequence 
\[\begin{tikzcd}
0 \arrow[r] & F^{>0}\mathcal{C} \arrow[r] & \mathcal{C} \arrow[r] & \bar{\mathcal{C}} \arrow[r] & 0
.\end{tikzcd}\]
Thus we obtain an exact sequence
\[
\begin{tikzcd}
S\otimes_\k F^{>0}\mathcal{C} \arrow[r,"a"] & S\otimes_\k\mathcal{C} \arrow[r] & S\otimes_\k\bar{\mathcal{C}} \arrow[r] & 0.
\end{tikzcd}
\] 
To conclude we show that
\[
\im a = F^{>0}(S\otimes_\k\mathcal{C}).
\]
Indeed, by definition, the filtration on the tensor product is given by 
\[\nu(\beta)=\sup_{\beta=\sum_i s_i\otimes \alpha_i}{\min_i \,(\nu_S(s_i)+\nu_{\cC}(\alpha_i))}.\]  As $\nu_S$ is trivial, $\beta \in F^{>0}(S\otimes_\k\mathcal{C})$  if and only if we can write $\beta = \sum_i s_i \otimes \alpha_i$ with $\nu(\alpha_i) > 0,$ which is equivalent to $\beta \in \im(a).$ 
\end{proof}
\begin{prop}\label{prop:sf_extension_arises_dga}
Let $(\hat{\mathcal{C}}, \{\mh_k\}_{k \ge 0}, \lp\;,\,\rp_F,\e_S \otimes \e)$ be an $(S, F)$ extension of a cyclic unital $A_{\infty}$ algebra $(\cC, \{\m_k\}_{k\ge 0},\lp\;,\,\rp,\e)$. If $(\bar{\cC}, \{\bar{\m}_k\}_{k\ge 0},\lp\;,\,\rp,\e)$ arises from a dga $(\Upsilon,d)$, then $(\bar{\hat{\mathcal{C}}}, \{\bar{\mh}_k\}_{k \ge 0}, \lp\;,\,\rp_F,\e_S \otimes \e)$ arises from the dga $(S\otimes\Upsilon,d\otimes id+id\otimes d)$ using the canonical isomorphism $\bar{\hat{\cC}}\to  S\otimes\Upsilon $ given in Lemma \ref{lm:cano_iso}.
\end{prop}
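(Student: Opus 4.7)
The plan is to verify directly, using the canonical $\bar{\mathcal{R}}$-module isomorphism $\bar{\hat{\cC}}\simeq S\otimes\Upsilon$ of Lemma~\ref{lm:cano_iso}, the three conditions imposed by Definition~\ref{dfn:A_infty_arises_dga} on $(\bar{\hat{\cC}},\{\bar{\mh}_k\},\lp\,,\,\rp_F,e_S\otimes e)$: that $\bar{\mh}_k=0$ for $k\neq 1,2$; that $\bar{\mh}_1$ coincides with the tensor-product differential $d\otimes\Id+\Id\otimes d$ on $S\otimes\Upsilon$; and that $\bar{\mh}_2$ encodes the graded product on $S\otimes\Upsilon$ via $\bar{\mh}_2(\hat\alpha_1,\hat\alpha_2)=(-1)^{|\hat\alpha_1|}\hat\alpha_1\hat\alpha_2$.

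The two vanishing claims are immediate from the definition of $\mh_k$. For $k=0$ we have $\mh_0=e_S\otimes\m_0$; since $(\bar\cC,\{\bar{\m}_k\})$ arises from a dga we have $\bar{\m}_0=0$, so $\bar{\mh}_0=0$. For $k\ge 3$ the formula defining $\mh_k$ is a single sign times $s_1\cdots s_k\otimes\m_k(\alpha_1,\dots,\alpha_k)$, and passage to residues gives $\bar{\m}_k=0$, so $\bar{\mh}_k=0$.

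The remaining two cases reduce to sign bookkeeping. For $k=1$, the identity $\mh_1(s\otimes\alpha)=(-1)^{|s|}s\otimes\m_1(\alpha)+ds\otimes\alpha$ descends to the residue; substituting $\bar{\m}_1=d_\Upsilon$ from Definition~\ref{dfn:A_infty_arises_dga} yields the standard Koszul differential $d_{S\otimes\Upsilon}(s\otimes\bar\alpha)=ds\otimes\bar\alpha+(-1)^{|s|}s\otimes d_\Upsilon\bar\alpha$. For $k=2$, I would combine three sign contributions: the defining sign $(-1)^{|s_1|+|s_2|+|s_2|(|\bar\alpha_1|+1)}$ of $\mh_2$, which reduces modulo~$2$ to $(-1)^{|s_1|+|s_2||\bar\alpha_1|}$; the dga conversion $\bar{\m}_2(\bar\alpha_1,\bar\alpha_2)=(-1)^{|\bar\alpha_1|}\bar\alpha_1\bar\alpha_2$; and the Koszul sign $(-1)^{|\bar\alpha_1||s_2|}$ appearing in $(s_1\otimes\bar\alpha_1)(s_2\otimes\bar\alpha_2)=(-1)^{|\bar\alpha_1||s_2|}s_1s_2\otimes\bar\alpha_1\bar\alpha_2$. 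Using $|\hat\alpha_j|=|s_j|+|\bar\alpha_j|$, both sides of the target identity collapse to $(-1)^{|s_1|+|s_2||\bar\alpha_1|}s_1s_2\otimes\bar{\m}_2(\bar\alpha_1,\bar\alpha_2)$.

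No conceptual obstacle arises: everything follows by direct substitution into the formulas defining $\mh_k$ together with the vanishing $\bar{\m}_k=0$ for $k\neq 1,2$. The only real work is the sign verification in the $k=2$ case, where three independent sign conventions (the definition of $\mh_k$, the $A_\infty$-to-dga conversion of Definition~\ref{dfn:A_infty_arises_dga}, and the Koszul product on a tensor product dga) must assemble compatibly; this is what I expect to be the main, but still routine, obstacle.
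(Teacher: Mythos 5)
Your proposal is correct and follows essentially the same route as the paper: a direct verification through the canonical isomorphism of Lemma~\ref{lm:cano_iso} that $\bar{\mh}_1$ is the Koszul differential on $S\otimes\Upsilon$, that $\bar{\mh}_2$ reproduces the graded product with the sign of Definition~\ref{dfn:A_infty_arises_dga}, and that $\bar{\mh}_k=0$ otherwise. Your sign bookkeeping in the $k=2$ case is consistent with the paper, which states this step only briefly ("in a similar way").
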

\begin{proof}
Let $\a_1, \a_2 \in \mathcal{C}$ and $s_1, s_2 \in S$ be homogeneous elements. First, recall that
\[\mh_1(s_1 \otimes \a_1) = (-1)^{|s_1|}s_1 \otimes \m_1(\a_1)+ds_1 \otimes \a_1.\]
It follows that under the identification $\overline{s \otimes \alpha}\mapsto s \otimes \bar\alpha$ we have
\[\bar{\mh}_1(\overline{s_1 \otimes \a_1}) :=\overline{\mh_1(s_1 \otimes \a_1)}=(-1)^{|s_1|}s_1\otimes \overline{\m_1(\alpha_1)}+ds_1\otimes \bar{\a}_1
.\]
As $(\bar{\cC}, \{\mbar_k\}_{k\ge 0},\lp\;,\,\rp,\e)$ arises from the dga $(\Upsilon,d)$ we showed that
\[\bar{\mh}_1(s_1 \otimes \bar{\a}_1)=(-1)^{|s_1|}s_1\otimes d\bar{\alpha}_1+ds_1\otimes \bar{\a}_1.\]
In a similar way, we can show that 
\[(s_1 \otimes \bar{\a}_1)(s_2 \otimes \bar{\a}_2) =(-1)^{|s_1 \otimes \bar{\a}_1|}\bar{\mh}_2(s_1 \otimes \bar{\a}_1,s_2 \otimes \bar{\a}_2),\quad \qquad \bar{\mh}_k=0, \quad k \neq 1,2.\]
\end{proof}
\begin{dfn}\label{dfn:induced_R_structure}
Let $(\hat{\mathcal{C}}, \{\mh_k\}_{k \ge 0}, \lp\;,\,\rp_F,\e_S \otimes \e)$ be an $(S, F)$ extension of a cyclic unital $A_{\infty}$ algebra $(\cC, \{\m_k\}_{k\ge 0},\lp\;,\,\rp,\e)$ over a commutative dga $\mathcal{R}$.
Suppose $\cC$ has an $\bar{\mathcal{R}}$ structure $g:\mathcal{R}\otimes_{\bar{\mathcal{R}}}\bar{\cC}\to \cC$ with inverse $f$. The \textbf{induced $\bar{\mathcal{R}}$ structure} on  $(\hat{\mathcal{C}}, \{\mh_k\}_{k \ge 0}, \lp\;,\,\rp_F,\e_S \otimes \e)$ is 
$$\hat{g}:\mathcal{R}\otimes_{\bar{\mathcal{R}}}\overline{\hat{\cC}}\to \hat{\cC}$$
given by the composition 
\begin{equation*}
\xymatrix{
 \mathcal{R}\otimes_{\bar{\mathcal{R}}}\overline{\hat{\cC}} = \mathcal{R}\otimes_{\bar{\mathcal{R}}}\overline{\left(S\otimes_{\k}\cC\right)} \ar[r]^(.6){\sim}
    & \mathcal{R}\otimes_{\bar{\mathcal{R}}}\left(S\otimes_{\k}\overline{\cC}\right)\ar[r]^\sim
    &  S\otimes_{\k}\left(\mathcal{R}\otimes_{\bar{\mathcal{R}}}\overline{\cC}\right)\ar[r]^(.7){\sim}_(.7){id\otimes g}
    & \hat{\cC}.
}
\end{equation*}
\end{dfn}
Recall the definition of the modules $\cC_E$ in Definition~\ref{dfn:frm}.
\begin{lm}\label{lm:m_one_boundary_op}
Let $(\cC, \{\m_k\}_{k\ge 0},\lp\;,\,\rp,\e)$ be a cyclic unital $A_{\infty}$ algebra over $\mathcal{R}$ such that the residue algebra $(\bar{\cC}, \{\bar{\m}_k\}_{k\ge 0},\lp\;,\,\rp,\e)$ arises from a dga $(\Upsilon,d)$. Assume that $\cC$ has an $\bar{\mathcal{R}}$ structure given by $g:\mathcal{R}\otimes \bar{\cC} \to \cC$ and let $f$ be its inverse.
Let $(\hat{\cC}, \{\hat{\m}_k\}_{k\ge 0},\lp\;,\,\rp_F,1\otimes\e)$ be an $(S,F)$ extension of $\cC$, and let $E\in \R_{\ge 0}$. Then the induced map $\mh_1:\hat{\cC}_{E} \to \hat{\cC}_{E}$ squares to zero.  
\end{lm}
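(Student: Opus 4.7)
The plan is to derive the claim directly from the arity-one $A_\infty$ relation in $\hat\cC$, combined with the positivity of the filtration of the curvature term $\mh_0$.

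First I would observe that $\mh_1$ descends to a well-defined endomorphism of $\hat\cC_E$. By property~\ref{it:val} of Definition~\ref{dfn:cycunit}, applied to the $A_\infty$ algebra $(\hat\cC,\{\mh_k\})$ furnished by Proposition~\ref{prop:a_infity_extension}, the map $\mh_1$ satisfies $\varsigma_{\hat\cC}(\mh_1(\ah)) \ge \varsigma_{\hat\cC}(\ah)$; hence it sends $F^E\hat\cC$ to $F^E\hat\cC$ and $F^{>E}\hat\cC$ to $F^{>E}\hat\cC$, so it induces a map $\hat\cC_E \to \hat\cC_E$.

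Next I would invoke the $A_\infty$ relation of arity one for $\hat\cC$, established in Lemma~\ref{lm:a_infty_hm}, which for any $\ah \in \hat\cC$ reads
\[
\mh_1(\mh_1(\ah)) + \mh_2(\mh_0, \ah) + (-1)^{|\ah|+1}\mh_2(\ah, \mh_0) = 0.
\]
To finish, I would show that the two curvature contributions lie in $F^{>E}\hat\cC$ whenever $\ah \in F^E\hat\cC$. Since $\mh_0 = e_S \otimes \m_0$ and the filtration on $\hat\cC = S \otimes \cC$ is induced from $\varsigma_\cC$ together with the trivial filtration on $S$, we have $\varsigma_{\hat\cC}(\mh_0) = \varsigma_\cC(\m_0) > 0$ by property~\ref{it:val} for $\cC$. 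Property~\ref{it:val} for the $A_\infty$ structure on $\hat\cC$, applied to $\mh_2$, then yields
\[
\varsigma_{\hat\cC}(\mh_2(\mh_0, \ah)) \ge \varsigma_{\hat\cC}(\mh_0) + \varsigma_{\hat\cC}(\ah) > E,
\]
and analogously $\varsigma_{\hat\cC}(\mh_2(\ah, \mh_0)) > E$. Reducing the arity-one relation modulo $F^{>E}\hat\cC$ therefore gives $\mh_1(\mh_1(\ah)) \equiv 0$ in $\hat\cC_E$, which is exactly the claim.

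There is no serious obstacle here: the argument is a direct application of the $A_\infty$ relation, and the only substantive input is that the curvature $\mh_0$ retains strictly positive filtration after extension of scalars, which is automatic from the definitions. The assumption that $\bar\cC$ arises from a dga is not needed for the squaring statement itself, but it figures into the broader context in which this lemma is used (cf. Proposition~\ref{prop:sf_extension_arises_dga}).
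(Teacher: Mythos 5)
Your proof is correct, but it follows a genuinely different route from the paper. The paper transports $\mh_1$ through the induced $\bar{\mathcal{R}}$-structure isomorphism $\hat f$: using Proposition~\ref{prop:sf_extension_arises_dga} it notes that $\bar{\hat\cC}$ arises from the dga $(\hat\Upsilon,\hat d)$, and then Proposition~\ref{prop:r_structure_energy_dga} gives $\hat f\circ\mh_1\equiv \hat d\circ\hat f \pmod{F^{>E}}$, so squaring to zero follows from $\hat d^2=0$; this is why the dga and $\bar{\mathcal{R}}$-structure hypotheses appear in the statement. You instead use the arity-one curved $A_\infty$ relation $\mh_1(\mh_1(\ah))+\mh_2(\mh_0,\ah)+(-1)^{|\ah|+1}\mh_2(\ah,\mh_0)=0$ together with $\varsigma_{\hat\cC}(\mh_0)>0$, so that both curvature terms land in $F^{>E}\hat\cC$ and vanish in $\hat\cC_E$. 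This is exactly the mechanism the paper itself employs later, in Lemma~\ref{lm:bounding_chain_bar_indu_boundary_op}, for the deformed operator $\m_1^b$, and your observation that the dga and $\bar{\mathcal{R}}$-structure hypotheses are superfluous for the bare squaring statement is accurate: your argument works for any curved cyclic unital $A_\infty$ algebra satisfying property~\ref{it:val}. The one point to be explicit about is that property~\ref{it:val} must be read multilinearly, i.e.\ $\varsigma_{\hat\cC}(\mh_k(\ah_1,\ldots,\ah_k))\ge\sum_i\varsigma_{\hat\cC}(\ah_i)$ (the filtration of the output dominates the tensor-product filtration of the input), which is the reading the paper uses in Lemma~\ref{lm:product_filitiration_it_val} and in the proof of Lemma~\ref{lm:bounding_chain_bar_indu_boundary_op}; with the weaker single-argument reading your strict inequality $>E$ would not follow. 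What the paper's longer route buys is the explicit identification of $\mh_{1,E}$ with the dga differential under $\hat f$, which it reuses elsewhere (e.g.\ Lemma~\ref{lm:twisted_f_chain_iso} and Proposition~\ref{prop:deformed_boundrary_operator}), whereas your route is shorter, more elementary, and proves a slightly more general fact.
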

\begin{proof}
We first note that $$(\hat{\cC}, \{\hat{\m}_k\}_{k\ge 0},\lp\;,\,\rp_F,1\otimes\e)$$ is a cyclic $A_\infty$ algebra over $\mathcal{R}$. Let $\hat{g}$ be the induced $\bar{\mathcal{R}}$ structure on it, and let $\hat{f}$ be its inverse. As $(\bar{\cC}, \{\bar{\m}_k\}_{k\ge 0},\lp\;,\,\rp,\e)$ arises from the dga $(\Upsilon,d)$, Proposition~\ref{prop:sf_extension_arises_dga} implies that $(\bar{\hat{\mathcal{C}}}, \{\bar{\mh}_k\}_{k \ge 0}, \lp\;,\,\rp_F,\e_S \otimes \e)$ arises from the dga $(\hat{\Upsilon},d\otimes id + id\otimes d)$. We will abbreviate $d\otimes id + id\otimes d$ by $\hat{d}$.  We see that all the conditions of Proposition~\ref{prop:r_structure_energy_dga} are satisfied, and we can deduce
\[\hat{f}\circ \mh_1\circ \mh_1(\alpha)=\hat{d}\circ \hat{f}\circ \mh_1(\alpha)=\hat{d}\circ \hat{d}\circ \hat{f}(\alpha)=0,\]
for all $\alpha \in \hat{\cC}_{E_i}$. 
\end{proof}
\section{Pseudo-completeness and the sababa property}
\subsection{Definitions}
In the following section, we introduce the property of pseudo-completeness of an $A_\infty$ algebra and show that relevant classes of $A_\infty$ algebras are pseudo-complete.
Before plunging into the definition, a word of explanation is in order.
The proof of Theorems~\ref{thm1} and~\ref{thm2} relies on an inductive argument based on obstruction theory applied to an $(S,F)$ extension of the de Rham Fukaya $A_\infty$ algebra of the Lagrangian submanifold $L.$ As explained in Remarks~\ref{rem:maurer_cartan_converge}, and~\ref{rem:pseudo_sababa} below, the analysis of the obstruction classes requires us to consider $S$ with the trivial filtration. Consequently, unit bounding cochains have trivial filtration, and a priori there is no reason the Maurer-Cartan equation should converge.  Pseudo-completeness guarantees convergence of the Maurer-Cartan equation and related infinite sums that appear in the obstruction theory computation. Precise statements are formulated in Remark~\ref{rem:psedu_converge_uniformly} Lemma~\ref{lm:m_bar_complex_converge}, and Corollary~\ref{cor:mgh_expo_converge} below.
\begin{dfn}
Let $(\cC,\m_k)$ be an $A_\infty$ algebra over $\mathcal{R}$, complete with respect to its filtration $\varsigma_1$. Let $\varsigma_2$ be another filtration with $\varsigma_2 \ge \varsigma_1$. We say that $(\cC,\m_k)$ is \textbf{pseudo-complete} with respect to $\varsigma_2$, if for all $E>0$ and $D \in \Z$ there exists $K>0$ such that for $\a_1,\,\ldots,\a_k \in \cC$, we have
\[\sum_{i=1}^k \varsigma_2(\a_i)>K,\, |\m_k(\a_1,\ldots,\a_k)|\ge D \implies \varsigma_1(\m_k(\a_1,\ldots,\a_k))> E. \]
\end{dfn}
\begin{rem}\label{rem:psedu_converge_uniformly}
Endow $\cC^{\otimes k}$ and $\cC$ with the topology induced by the filtration $\varsigma_2^{\otimes k}$ and $\varsigma_1$ respectively. Then pseudo-completeness is equivalent to the continuity of $\m_k:(\cC^{\otimes k})_{j+k}\to \cC$ for each degree $j$ uniformly in $k.$ 
\end{rem}
As mentioned above, we will use pseudo-completeness to show convergence of the Maurer-Cartan equation in an $(S,F)$ extension of the Fukaya $A_\infty$ algebra of the Lagrangian submanifold $L.$ However, as explained in Remark~\ref{rem:disaster_example} below, if we $(S,F)$ extend the whole algebra, the result is not pseudo-complete. Instead, we focus on a subalgebra that satisfies the sababa property in the following definition. Lemma~\ref{prop:saba_decend_psedu_comp} below guarantees that the $(S,F)$ extension of the sababa subalgebra is pseudocomplete. The sababa property for a subalgebra is closely related to the sababa property for a monoid used in~\cite{solomon2016point}.
\begin{dfn}\label{dfn:sababa_algebra}
A filtered differential graded algebra $\mathcal{R}$ is \textbf{sababa} if it is generated as an algebra by elements $\{\lambda_i\}_{i\in \N}$ with $\lim_{i \to \infty }\varsigma_{\mathcal{R}}(\lambda_i)=\infty$. A cyclic unital $A_{\infty}$ algebra $(\cC, \{\m_k\}_{k\ge 0},\lp\;,\,\rp,\e)$ over $\mathcal{R}$ is called \textbf{sababa} if $\mathcal{R}$ is sababa, $\cC$ has an $\bar{\mathcal{R}}$ structure, and there exists $h \in \Z$ such that if $\bar{\a} \in \bar{\cC}$ then $|\bar{\a}|\le h$. The constant $h$ is called the sababa constant.
\end{dfn}

\subsection{Preliminary lemmas}
\begin{lm}\label{lm:enrgy_degree_bound_sababa}
Let $(\cC, \{\m_k\}_{k\ge 0},\lp\;,\,\rp,\e)$ be a sababa cyclic unital $A_{\infty}$ algebra over a sababa $\mathcal{R}$. For all $E>0$ there exists $m\in \Z$ such that if $\a$ is a homogeneous element in $\cC$ then 
\[\varsigma_\cC(\a)< E \implies |\a|\le m.\]  
\end{lm}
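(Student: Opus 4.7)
The plan is to transfer the degree bound from $\overline{\cC}$ to $\cC$ through the $\overline{\mathcal{R}}$-structure, and then reduce to a statement about the sababa ring $\mathcal{R}$ itself.

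First I would take a homogeneous $\overline{\mathcal{R}}$-generating family $\{\bar\beta_\ell\}$ of $\overline{\cC}$, which by the sababa condition satisfies $|\bar\beta_\ell|\le h$. For a homogeneous $\a\in\cC$ with $\varsigma_\cC(\a)<E$, the inverse $f$ of the structure isomorphism $g:\mathcal{R}\otimes_{\overline{\mathcal{R}}}\overline{\cC}\to\cC$ writes $f(\a)=\sum_\ell r_\ell\otimes\bar\beta_\ell$ with each term homogeneous of degree $|\a|$; in particular $|r_\ell|+|\bar\beta_\ell|=|\a|$, so $|\a|\le|r_\ell|+h$ on any nonzero summand. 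Since $g$ preserves filtration and the residue filtration on $\overline{\cC}$ is trivial, $\varsigma_\cC(\a)<E$ forces some $r_{\ell_0}$ in the decomposition to satisfy $\varsigma_\mathcal{R}(r_{\ell_0})<E$. The problem is thus reduced to finding, for each $E>0$, a bound $m_\mathcal{R}(E)$ such that every homogeneous $r\in\mathcal{R}$ with $\varsigma_\mathcal{R}(r)<E$ satisfies $|r|\le m_\mathcal{R}(E)$.

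Next I would use the sababa condition on $\mathcal{R}$. Because $\varsigma_\mathcal{R}(\lambda_i)\to\infty$, only finitely many generators $\lambda_{i_1},\dots,\lambda_{i_N}$ have $\varsigma_\mathcal{R}(\lambda_{i_j})<E$, with maximum degree $M:=\max_j|\lambda_{i_j}|$. Any monomial of filtration below $E$ can only involve these generators, since a single factor of filtration at least $E$ would push the total filtration above $E$ by subadditivity. Provided the low-filtration generators carry strictly positive filtration bounded below by some $\epsilon>0$, such a monomial has length at most $E/\epsilon$ and therefore degree at most $ME/\epsilon$. By completeness, a homogeneous $r$ of filtration $<E$ agrees modulo filtration $\ge E$ with a linear combination of such monomials, so $|r|\le ME/\epsilon$. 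Setting $m:=\lfloor ME/\epsilon\rfloor+h$ completes the argument.

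The one delicate point is the positive-filtration hypothesis used in the last step: a sababa generator of zero filtration and nonzero degree would let monomials in it have unbounded degree, breaking the bound. In the sababa algebras constructed later in this paper, however, $\overline{\mathcal{R}}$ reduces to the ground field and all non-scalar generators (such as $t_i$ and $T^\beta$ with $\beta\ne\beta_0$) carry strictly positive filtration, so this hypothesis is automatic. Everything else is straightforward manipulation of filtered graded tensor products.
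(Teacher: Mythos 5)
Your overall route is the paper's: identify $\cC$ with $\mathcal{R}\otimes_{\bar{\mathcal{R}}}\overline{\cC}$ through the $\bar{\mathcal{R}}$-structure, bound the degree of the $\overline{\cC}$-factor by the sababa constant $h$, and bound the degree of the $\mathcal{R}$-factor by writing low-filtration elements in terms of the finitely many generators of filtration $<E$. However, the point you yourself flag as ``delicate'' is a genuine gap, not a side remark. The lemma is stated for an arbitrary sababa algebra, and Definition~\ref{dfn:sababa_algebra} does not exclude generators $\lambda_i$ with $\varsigma_{\mathcal{R}}(\lambda_i)=0$ and nonzero degree; your argument covers only the case in which every generator of filtration $<E$ has filtration bounded below by some $\epsilon>0$, and you discharge the remaining case by appealing to the particular sababa algebras constructed later. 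That does not prove the statement as written, and since the lemma feeds into the equally abstract Proposition~\ref{prop:saba_decend_psedu_comp}, the restriction propagates. Moreover, your intermediate reduction --- that every homogeneous $r\in\mathcal{R}$ with $\varsigma_{\mathcal{R}}(r)<E$ has degree bounded by a constant depending only on $E$ --- is false for a general sababa ring: powers $\lambda^k$ of a zero-filtration generator of positive degree have unbounded degree with no gain in filtration. So the general lemma cannot be obtained from a bound internal to $\mathcal{R}$; it needs the interaction with $\overline{\cC}$ through the constant $h$.

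The missing idea is the paper's absorption step. Writing the element (after the reduction) as $r\otimes\bar{\alpha}$ with $r=\lambda_{i_1}\cdots\lambda_{i_k}$ a monomial in the generators and $\bar{\alpha}\in\overline{\cC}$, whenever some factor has $\varsigma_{\mathcal{R}}(\lambda_{i_j})=0$ one deletes it from $r$ and replaces $\bar{\alpha}$ by $\pm\bar{\lambda}_{i_j}\bar{\alpha}\in\overline{\cC}$. The total degree is unchanged, while the degree of the new $\overline{\cC}$-factor is still at most $h$, because the sababa constant bounds \emph{all} of $\overline{\cC}$, not merely a chosen generating family --- this is exactly where the hypothesis on $\overline{\cC}$ neutralizes zero-filtration generators of nonzero degree. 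After finitely many such moves every remaining factor has strictly positive filtration; among the finitely many generators of filtration $<E$ these filtrations admit a positive lower bound $\epsilon$, and your counting argument (at most $E/\epsilon$ factors, hence degree at most $ME/\epsilon$, plus $h$) then applies verbatim. Adding this step closes the gap and yields the lemma in the stated generality.
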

\begin{proof}
We will use the $\bar{\mathcal{R}}$ structure to identify $\cC$ with $\mathcal{R}\otimes_{\bar{\mathcal{R}}}\bar{\cC}$. Let $r\in \mathcal{R}$ and $\bar{\a}\in \bar{\cC}$ be homogeneous elements with $\varsigma_{\cC}(r \otimes \bar{\a})<E$. We pick a set of generators $\{\lambda_i\}_{i\in \N}$ of $\mathcal{R}$ with $\lim_{i \to \infty }\varsigma_{\mathcal{R}}(\lambda_i)=\infty$, and we assume without loss of generality that we can write $r=\lambda_{i_1}\ldots \lambda_{i_k}$. We can also assume that $\varsigma_{\mathcal{R}}(\lambda_{i_j})>0$ for all $1\le j\le k$. Indeed, if for some $1\le j\le k$ we have $\varsigma_{\mathcal{R}}(\lambda_{i_j})=0$, we can replace
\[r  \mapsto \lambda_{i_1}\ldots\lambda_{i_{j-1}}\lambda_{i_{j+1}}\ldots \lambda_{i_k}, \qquad  \bar{\a} \mapsto  \pm\bar{\lambda}_{i_j} \bar{\a}.\]
Since $\lim_{i \to \infty }\varsigma_{\mathcal{R}}(\lambda_i)=\infty$, the set
\[A:=\left\{r=\lambda_{i_1}\ldots \lambda_{i_k}:\, k\in \N, \lambda_{i_j}\in \{\lambda_i\}_{i\in \N},\, \varsigma_{\mathcal{R}}(\lambda_{i_j})>0,\;\varsigma_{\mathcal{R}}(r)<E \right\}\]
is finite. Thus we have a bound $a>0$ such that if $r\in A$ then $|r|<a$. Thus we have
\[|r \otimes \bar{\a}|=|r|+|\bar{\a}|\le a+ h,\]
where $h$ is the sababa constant of $\cC$. Hence $m=a+h$ is the desired constant.
\end{proof}
\begin{lm}\label{lm:product_filitiration_it_val}
Let $(S,e_S)$ be a filtered unital associative differential graded algebra over $\k$ with a filtration $\varsigma_S$. Let $(\hat{\mathcal{C}}, \{\mh_k\}_{k \ge 0}, \lp\;,\,\rp_F,\e_S \otimes \e)$ be an $(S, F)$ extension of a cyclic unital $A_{\infty}$ algebra $(\cC, \{\m_k\}_{k\ge 0},\lp\;,\,\rp,\e)$. Then for all $\a_1,\ldots \a_k \in \hat{\cC}$ we have
\[\varsigma_{S}\otimes \varsigma_{\cC}(\mh_k(\a_1,\ldots \a_k ))\ge \sum_{i=1}^k \varsigma_{S}\otimes \varsigma_{\cC}(\a_i). \]
\end{lm}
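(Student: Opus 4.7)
The plan is to reduce the inequality to the case of simple tensors $\hat\alpha_i = s_i \otimes \alpha_i$ and then unpack the defining formula for $\mh_k$, invoking the filtration properties of $S$, $\cC$, and the $A_\infty$ structure separately.

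First I would treat the simple-tensor case. By the defining formula,
\[
\mh_k(s_1\otimes \alpha_1,\ldots,s_k\otimes \alpha_k) = \pm\, s_1\cdots s_k \otimes \m_k(\alpha_1,\ldots,\alpha_k) + \delta_{1,k}\, ds_1\otimes \alpha_1,
\]
so it suffices to bound each summand. For the main term I would combine three ingredients: the tensor product filtration inequality $\varsigma_S\otimes\varsigma_{\cC}(s\otimes\alpha)\ge \varsigma_S(s)+\varsigma_{\cC}(\alpha)$ for simple tensors; subadditivity of $\varsigma_S$ under multiplication, $\varsigma_S(s_1\cdots s_k)\ge \sum_i \varsigma_S(s_i)$; and property~\ref{it:val} of Definition~\ref{dfn:cycunit} interpreted on $\cC^{\otimes k}$ with its induced filtration, which gives $\varsigma_{\cC}(\m_k(\alpha_1,\ldots,\alpha_k))\ge \sum_i \varsigma_{\cC}(\alpha_i)$. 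Adding these yields the desired bound $\sum_i (\varsigma_S(s_i)+\varsigma_{\cC}(\alpha_i))$ on the main summand. For the boundary correction $ds_1\otimes \alpha_1$ that appears when $k=1$, the fact that $S$ is a filtered dga forces $\varsigma_S(ds_1)\ge \varsigma_S(s_1)$, giving the same bound on that summand, and the filtration axiom for sums closes the simple-tensor case.

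To pass from simple tensors to general elements of $\hat\cC$, I would, for each $\epsilon>0$, pick decompositions of each $\hat\alpha_i$ into simple tensors whose minimal simple-tensor filtration is within $\epsilon$ of $\varsigma_S\otimes\varsigma_{\cC}(\hat\alpha_i)$, then use $\R$-multilinearity of $\mh_k$ to expand $\mh_k(\hat\alpha_1,\ldots,\hat\alpha_k)$ as a finite sum of outputs coming from the simple-tensor case, and bound the resulting decomposition term by term via the previous paragraph. Since $\varsigma_S\otimes\varsigma_{\cC}$ on $\hat\cC$ is defined as a supremum over simple-tensor decompositions, the produced decomposition is a legitimate competitor, and sending $\epsilon\to 0$ yields the claimed inequality. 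I do not anticipate any real obstacle; the whole argument is essentially bookkeeping. The only mildly delicate point is the correct interpretation of property~\ref{it:val} as a statement about the tensor product filtration on $\cC^{\otimes k}$, which is what converts $\varsigma_{\cC}(\m_k(\alpha_1,\ldots,\alpha_k))\ge \varsigma_{\cC}(\alpha_1\otimes\cdots\otimes\alpha_k)$ into the summation bound $\ge \sum_i \varsigma_{\cC}(\alpha_i)$ needed to match the other factors.
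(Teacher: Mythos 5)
Your proposal is correct and follows essentially the same route as the paper: both reduce to simple-tensor decompositions and combine subadditivity of $\varsigma_S$ under products with property~\ref{it:val} of $\cC$, the paper merely phrasing the argument as a contradiction with a decomposition beating the target value, while you argue directly with $\epsilon$-optimal decompositions and, unlike the paper, explicitly note that the $\delta_{1,k}\,ds_1\otimes\alpha_1$ term is controlled by the filtered dga property $\varsigma_S(ds_1)\ge\varsigma_S(s_1)$. The only wording to adjust is that decompositions in the completed tensor product may be countably infinite rather than finite, but the filtration axiom for (convergent) sums handles this exactly as in the paper's proof.
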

\begin{proof}
Assume by contradiction that the lemma is false. Then there exist $\a_1, \ldots, \a_k \in \hat{\cC}$ such that 
\[\sum_{i=1}^k \varsigma_{S}\otimes \varsigma_{\cC}(\a_i) > \varsigma_{S}\otimes \varsigma_{\cC}(\mh_k(\a_1,\ldots \a_k )). \]
In particular, there exist  $
\{s_{ij}\}_{j \in \N}\subset S$, and $\{\eta_{ij}\}_{j \in \N}\subset \cC$, such that 
\[\a_i = \sum_{j\in \N}s_{ij}\otimes \eta_{ij},\]
and that
\[\sum_{i=1}^k \min_{j\in \N} \varsigma_{S}(s_{ij})+\varsigma_{\cC}(\eta_{ij})>\varsigma_{S}\otimes \varsigma_{\cC}(\mh_k(\a_1,\ldots \a_k )).\] 
As $S\otimes \cC$ is a filtered module we have \[\varsigma_{S}\otimes \varsigma_{\cC}(x+y)\ge \min(\varsigma_{S}\otimes \varsigma_{\cC}(x),\varsigma_{S}\otimes \varsigma_{\cC}(y)),\qquad x,y \in S\otimes \cC.\]
Thus we have 
\begin{align*}
    \varsigma_{S}\otimes \varsigma_{\cC}(\mh_k(\a_1,\ldots \a_k ))&\ge \min_{(j_1,\ldots,j_k)\in \N^k}\varsigma_{S}\otimes \varsigma_{\cC}(\mh_k(s_{1{j_1}}\otimes \eta_{i{j_1}},\ldots s_{k{j_k}}\otimes \eta_{k{j_k}} ))\\
    &=\min_{(j_1,\ldots,j_k)\in \N^k}\varsigma_{S}\otimes \varsigma_{\cC}(s_{1{j_1}}\ldots s_{k{j_k}}\otimes  \m_k(\eta_{1{j_1}},\ldots, \eta_{k{j_k}}))\\
    &\ge \min_{(j_1,\ldots,j_k)\in \N^k}\varsigma_{S}(s_{1{j_1}}\ldots s_{k{j_k}})+\varsigma_{\cC}(\m_k(\eta_{1{j_1}},\ldots, \eta_{k{j_k}}))\\
    &\ge \min_{(j_1,\ldots,j_k)\in \N^k}\sum_{i=1}^k \varsigma_{S}(s_{i{j_i}})+\sum_{i=1}^k \varsigma_{\cC}(\eta_{i{j_i}}), 
\end{align*}
where the last inequality follows from $S$ being a filtered ring and property~\ref{it:val} of the $A_\infty$ algebra $\cC$. Thus we have a contradiction as
\[\sum_{i=1}^k \min_{j\in \N} \varsigma_{S}(s_{ij})+\varsigma_{\cC}(\eta_{ij})\le \min_{(j_1,\ldots,j_k)\in \N^k}\sum_{i=1}^k \varsigma_{S}(s_{i{j_i}})+\sum_{i=1}^k \varsigma_{\cC}(\eta_{i{j_i}})\le  \varsigma_{S}\otimes \varsigma_{\cC}(\mh_k(\a_1,\ldots \a_k )).\]
\end{proof}

\subsection{Descending filtrations}
\begin{dfn}
A filtered unital associative differential graded algebra $(S,e_S)$ over $\k$ with a filtration $\varsigma_S$ is called \textbf{descending} if for all $s\in S$, 

A filtered unital associative differential graded algebra $(S,e_S)$ over $\k$ with a filtration $\varsigma_S$ is called \textbf{descending} if for all $s\in S$, 
\[|s|\le -\varsigma_S(s).\]
\end{dfn}
Let $(\hat{\cC}, \{\mh_k\}_{k\ge 0},\lp\;,\,\rp_F,\e_s\otimes\e)$ be an $(S,F)$ extension of a cyclic unital $A_{\infty}$ algebra over $\mathcal{R}$. For $m \in \Z$, we define the map $\pi_{S,m}:\hat{\cC}\to \bigoplus_{i\le m}S_{i}\otimes \cC$ to be the projection. Explicitly, for $s\in S_j,$ and $\a \in \mathcal{C}$, the map is defined by
\[\pi_{S,m}(s\otimes \alpha)=\begin{cases}
s\otimes \alpha, & j\le m\\
0, & otherwise,
\end{cases} \]
and extended linearly for arbitrary elements.
\begin{lm}\label{lm:decending_proj_non_trivial}
Let $(S, e_s)$ be a descending unital differential graded algebra over $\k$. Let \[(\hat{\mathcal{C}}, \{\mh_k\}_{k \ge 0}, \lp\;,\,\rp_F,\e_S \otimes \e)\]
be an $(S, F)$ extension of a cyclic unital $A_{\infty}$ algebra $(\cC, \{\m_k\}_{k\ge 0},\lp\;,\,\rp,\e)$  over $\mathcal{R}$. Let $E\in \R_{\ge 0}\cup\{\infty\}$, and
let $\{\a_n\}_{n\in \N} \in \hat{\cC} $ be a sequence such that, $\varsigma_{\hat{\cC}}(\a_n)\le E$ and
\[\varsigma_S \otimes \varsigma_{\cC}(\a_n)-\varsigma_{\hat{\cC}}(\a_n) \xrightarrow{n \to \infty}\infty. \]
Then for any $m \in \Z $, we have for $n$ large enough 
\[\pi_{S,m}(\a_n)\not\equiv 0 \pmod{F^{>E}\hat{\cC}},\]
\end{lm}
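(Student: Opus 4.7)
The plan is to decompose each $\a_n$ into $S$-homogeneous components and use the descending hypothesis to show that the components with $|s|>m$ contribute only to very high $\varsigma_{\hat{\cC}}$-filtration. Write $\a_n=\sum_{i}\a_n^{(i)}$ with $\a_n^{(i)}\in S_i\otimes\cC$. Since $S$ is descending, $|s|\le-\varsigma_S(s)\le 0$ for all $s\in S$, so $S_i=0$ whenever $i>0$ and the sum is supported on $i\le 0$. In particular, for $m\ge 0$ one has $\pi_{S,m}(\a_n)=\a_n$, and the conclusion is immediate from $\varsigma_{\hat{\cC}}(\a_n)\le E$. Henceforth I assume $m<0$.

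The central estimate is
\[
\varsigma_{\hat{\cC}}(\beta)\ge\varsigma_S\otimes\varsigma_{\cC}(\beta)+i,\qquad \beta\in S_i\otimes\cC.
\]
To verify it, observe that any decomposition $\beta=\sum_k s_k\otimes\eta_k$ with $|s_k|=i$ satisfies $\varsigma_S(s_k)\le -i$ by the descending hypothesis, so $\min_k\bigl(\varsigma_S(s_k)+\varsigma_{\cC}(\eta_k)\bigr)\le -i+\min_k\varsigma_{\cC}(\eta_k)$; taking the supremum over all such decompositions yields the inequality. Moreover, the homogeneous projection $\pi_i:\hat{\cC}\to S_i\otimes\cC$ is non-decreasing with respect to $\varsigma_S\otimes\varsigma_{\cC}$, so $\varsigma_S\otimes\varsigma_{\cC}(\a_n^{(i)})\ge\varsigma_S\otimes\varsigma_{\cC}(\a_n)$. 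Combining these facts gives, for every $i\le 0$,
\[
\varsigma_{\hat{\cC}}(\a_n^{(i)})\ge\varsigma_S\otimes\varsigma_{\cC}(\a_n)+i.
\]

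The residual $\a_n-\pi_{S,m}(\a_n)=\sum_{m<i\le 0}\a_n^{(i)}$ is a \emph{finite} sum, hence
\[
\varsigma_{\hat{\cC}}\bigl(\a_n-\pi_{S,m}(\a_n)\bigr)\ge\min_{m<i\le 0}\varsigma_{\hat{\cC}}(\a_n^{(i)})\ge\varsigma_S\otimes\varsigma_{\cC}(\a_n)+m+1.
\]
Assume for contradiction that $\pi_{S,m}(\a_n)\equiv 0\pmod{F^{>E}\hat{\cC}}$ for infinitely many $n$. When $E<\infty$, this means $\varsigma_{\hat{\cC}}(\pi_{S,m}(\a_n))>E$; since $\varsigma_{\hat{\cC}}(\a_n)\le E$ combined with the divergence hypothesis forces $\varsigma_S\otimes\varsigma_{\cC}(\a_n)\to\infty$, the bound above yields $\varsigma_{\hat{\cC}}(\a_n-\pi_{S,m}(\a_n))>E$ for $n$ large, so both summands in $\a_n=\pi_{S,m}(\a_n)+(\a_n-\pi_{S,m}(\a_n))$ lie in $F^{>E}\hat{\cC}$, contradicting $\varsigma_{\hat{\cC}}(\a_n)\le E$. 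When $E=\infty$, the hypothesis forces $\pi_{S,m}(\a_n)=0$, hence $\a_n=\a_n-\pi_{S,m}(\a_n)$ and the displayed bound gives $\varsigma_S\otimes\varsigma_{\cC}(\a_n)-\varsigma_{\hat{\cC}}(\a_n)\le-(m+1)$, again contradicting the divergence hypothesis. The main obstacle is establishing the key inequality $\varsigma_{\hat{\cC}}(\beta)\ge\varsigma_S\otimes\varsigma_{\cC}(\beta)+i$, which reverses the usual relation $\varsigma_{\hat{\cC}}\le\varsigma_S\otimes\varsigma_{\cC}$ on $S$-homogeneous components and is the sole step where the descending property intervenes.
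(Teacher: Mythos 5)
Your proof is correct, and it organizes the argument differently from the paper. The paper's proof fixes a single near-optimal decomposition $\a_n=\sum_i s_i\otimes\eta_i$ realizing $\varsigma_S\otimes\varsigma_{\cC}(\a_n)$ up to an error of $1$, locates an index $j_0$ with $\varsigma_{\cC}(\eta_{j_0})\le\varsigma_{\hat\cC}(\a_n)$ and $\varsigma_S(s_{j_0})>-m$ (hence $|s_{j_0}|<m$ by the descending property), and then runs an iterative search over further indices $j_1,j_2,\dots$ to rule out the possibility that the corresponding degree components are trivial modulo $F^{>E}\hat\cC$, terminating because $\varsigma_\cC(\eta_i)\to\infty$. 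You instead prove the uniform comparison $\varsigma_{\hat\cC}(\beta)\ge\varsigma_S\otimes\varsigma_{\cC}(\beta)+i$ on each graded piece $S_i\otimes\cC$, which shows at once that the finite tail $\sum_{m<i\le 0}\a_n^{(i)}$ has $\varsigma_{\hat\cC}$-filtration at least $\varsigma_S\otimes\varsigma_\cC(\a_n)+m+1\to\infty$, so it lies in $F^{>E}\hat\cC$ for large $n$, and the ultrametric inequality then forces $\pi_{S,m}(\a_n)\notin F^{>E}\hat\cC$; this replaces the paper's index-hunting iteration by a clean quantitative bound and handles $E=\infty$ explicitly. The only point to be aware of is that both your key inequality and your claim that $\varsigma_S\otimes\varsigma_\cC$ does not decrease under the projection to $S_i\otimes\cC$ implicitly use that projecting onto a homogeneous $S$-degree does not decrease the filtration (equivalently, that arbitrary decompositions may be replaced by $S$-homogeneous ones without lowering the defining minimum); this is not literally among the axioms of a filtered dga, but it holds for the intended $S=\R\langle s\rangle$ with its degreewise filtration, and the paper's own proof uses the same compatibility when it deduces nontriviality of $\pi_{S,m}(\a_n)$ from nontriviality of a single degree component, so this is a shared implicit convention rather than a gap in your argument.
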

\begin{proof}
With out loss of generality we can assume $m<0$. Pick $N$ such that for all $n>N$ we have 
\[\varsigma_S \otimes\varsigma_{\cC}(\a_n)-\varsigma_{\hat{\cC}}(\a_n) \geq -m + 1.\]
Thus there exist $\{\eta_i\}_{i\in \N}\subset \cC$, and $\{s_i\}_{i\in \N} \subset S$ such that  \[\varsigma_{\cC}(\eta_i) \xrightarrow{i\to \infty }\infty,\qquad \a_n=\sum_{i=1}^{\infty}s_i \otimes \eta_i,  \]
and
\[\min_{1\le i < \infty }\varsigma_{S}(s_i)+\varsigma_{\cC}(\eta_i) \geq \varsigma_S \otimes\varsigma_{\cC}(\a_n) -1.
\]
Then,
\[
\min_{1\le i < \infty }\varsigma_{S}(s_i)+\varsigma_{\cC}(\eta_i) \geq-m+\varsigma_{\hat{\cC}}(\a_n) \geq -m + \min_{1\le j < \infty }\varsigma_{\cC}(\eta_j).\]
Let $i_0$ and $j_0$ be indices that realize the minima above.
Then
\[\varsigma_{S}(s_{j_0}) +\varsigma_{\hat{\cC}}(\a_n)\ge \varsigma_{S}(s_{j_0}) +\varsigma_{\cC}(\eta_{j_0})\ge \varsigma_{S}(s_{i_0})+\varsigma_{\cC}(\eta_{i_0})>-m+\varsigma_{\hat{\cC}}(\a_n).\]
From the preceding inequality we get that $\varsigma_{S}(s_{j_0})>-m$. Since $S$ is descending we deduce $|s_{j_0}|<m$.
In the case that the projection of $\a_n$ to $S_{|s_{j_0}|}\otimes \cC $ is non-trivial modulo $F^{>E}\hat{\cC}$ we are done. Otherwise, we have 
\[\a_n=\sum_{ \substack{i\in \N:\\
|s_i|\ne|s_{j_0}|}}s_i \otimes \eta_i\pmod{F^{>E}\hat{\cC}}.\]
In the same manner as before, we can find another index $j_1\ne j_0$ such that $\varsigma_{\hat{\cC}}(\a_n)\ge \varsigma_{\cC}(\eta_{j_1}) $ and $|s_{j_1}|<m$. If the projection of $\a_n$ to $S_{|s_{j_1}|}\otimes \cC$ is trivial modulo $F^{>E}\hat{\cC}$ we repeat the argument and find another index $j_2$. Because $\varsigma_{\cC}(\eta_i) \xrightarrow{i\to \infty }\infty$ there has to be some index $j_l$, such that $\varsigma_{\hat{\cC}}(\a_n)\ge \varsigma_{\cC}(\eta_{j_l}) $, $|s_{j_l}|<m$ and the projection of $\a_n$ to $S_{|s_{j_l}|}\otimes \cC$ is modulo $F^{>E}\hat{\cC}$ non-trivial. We find that $\pi_{S,m}(\a_n)\ne 0 \pmod{F^{>E}\hat{\cC}}$ for all $n>N$. 
\end{proof}

\subsection{Pseudo-completeness of extension}
\begin{prop}\label{prop:saba_decend_psedu_comp}
Let $(S, e_s)$ be a descending unital differential graded algebra over $\k$.
Let $(\hat{\mathcal{C}}, \{\mh_k\}_{k \ge 0}, \lp\;,\,\rp_F,\e_S \otimes \e)$ be an $(S, F)$ extension of an $A_{\infty}$ algebra $(\cC, \{\m_k\}_{k\ge 0},\lp\;,\,\rp,\e)$  over $\mathcal{R}$. Assume also that $(\cC, \{\m_k\}_{k\ge 0},\lp\;,\,\rp,\e)$ is also a sababa cyclic unital $A_{\infty}$ algebra over some diffrential graded algebra $\mathcal{R}^\prime$.  Then the $A_\infty$ algebra $(\hat{\mathcal{C}}, \{\mh_k\}_{k \ge 0}, \lp\;,\,\rp_F,\e_S \otimes \e)$ is pseudo-complete with respect to $\varsigma_S\otimes \varsigma_{\cC}$. 
\end{prop}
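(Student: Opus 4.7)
The plan is to prove this by a clean degree-and-filtration tracking argument, reducing to the two key inputs already established: Lemma~\ref{lm:product_filitiration_it_val} (which controls $\varsigma_S\otimes\varsigma_{\cC}$ of $\mh_k$) and Lemma~\ref{lm:enrgy_degree_bound_sababa} (which, via the sababa hypothesis, converts filtration bounds into degree bounds in $\cC$). Fix $E>0$ and $D\in\Z$, and suppose given homogeneous $\ah_1,\dots,\ah_k\in\hat{\cC}$ with $\sum_{i=1}^k (\varsigma_S\otimes\varsigma_{\cC})(\ah_i)>K$ and $|\mh_k(\ah_1,\dots,\ah_k)|\ge D$. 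By Lemma~\ref{lm:product_filitiration_it_val}, $(\varsigma_S\otimes\varsigma_{\cC})(\mh_k(\ah_1,\dots,\ah_k))>K$, so we can choose a decomposition
\[
\mh_k(\ah_1,\dots,\ah_k)=\sum_{j}s_j\otimes\eta_j
\]
with $\min_j(\varsigma_S(s_j)+\varsigma_{\cC}(\eta_j))>K-1$. Since $\mh_k(\ah)$ is homogeneous of degree $d\ge D$ and the terms $s_j\otimes\eta_j$ may be taken homogeneous, we may discard all summands whose total degree $|s_j|+|\eta_j|$ is not equal to $d$; this projection only increases the minimum filtration, so the bound is preserved. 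The filtration on $\hat{\cC}$ which the algebra is complete with respect to is $\varsigma_1=\varsigma_S^{triv}\otimes\varsigma_{\cC}$, and it satisfies $\varsigma_1(\mh_k(\ah))\ge \min_j \varsigma_{\cC}(\eta_j)$. So it suffices to show that $\varsigma_{\cC}(\eta_j)>E$ for every $j$, provided $K$ is chosen large enough depending only on $E$ and $D$.

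Suppose towards a contradiction that some index $j$ has $\varsigma_{\cC}(\eta_j)\le E$. Then from $\varsigma_S(s_j)+\varsigma_{\cC}(\eta_j)>K-1$ we get $\varsigma_S(s_j)>K-1-E$, and since $S$ is descending,
\[
|s_j|\le -\varsigma_S(s_j)<E-K+1.
\]
Combined with $|s_j|+|\eta_j|=d\ge D$, this gives $|\eta_j|>D+K-E-1$. Now invoke Lemma~\ref{lm:enrgy_degree_bound_sababa} applied to the sababa structure on $(\cC,\m_k)$ over $\mathcal{R}'$: with the constant $E+1$ it produces an integer $m\in\Z$ such that $\varsigma_{\cC}(\eta)<E+1$ forces $|\eta|\le m$, equivalently $|\eta|>m$ forces $\varsigma_{\cC}(\eta)\ge E+1>E$. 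Choosing $K>m+E-D+1$ ensures $|\eta_j|>m$, hence $\varsigma_{\cC}(\eta_j)>E$, contradicting $\varsigma_{\cC}(\eta_j)\le E$. Therefore, for this choice of $K$ (which depends only on $E$ and $D$, not on $k$ or on the specific $\ah_i$), every $\eta_j$ in the chosen decomposition satisfies $\varsigma_{\cC}(\eta_j)>E$, giving $\varsigma_1(\mh_k(\ah))>E$ as required.

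The case $k=1$, where $\mh_1(s\otimes\a)=\pm s\otimes\m_1(\a)+ds\otimes\a$ includes an extra differential term, requires no separate treatment because the inequality in Lemma~\ref{lm:product_filitiration_it_val} already accounts for it (using $\varsigma_S(ds)\ge\varsigma_S(s)$ for a filtered dga and property~\ref{it:val} for $\m_1$), and the argument above uses only that inequality together with the existence of a decomposition realizing it. The one nontrivial ingredient is the homogeneity reduction, which is where the hypothesis $|\mh_k(\ah)|\ge D$ is crucial: without it, cancellations among mixed-degree terms could allow arbitrarily negative $|s_j|$ to coexist with bounded $|\eta_j|$, and the descending property of $S$ would not translate into a lower bound on $|\eta_j|$. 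I expect this to be the main conceptual point, but once one restricts to the homogeneous component of $\mh_k(\ah)$ the bookkeeping is routine and $K$ is given explicitly by $K>m+E-D+1$, with $m$ depending only on $E$.
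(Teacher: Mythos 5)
Your proof is correct, and it uses the same basic ingredients as the paper (Lemma~\ref{lm:product_filitiration_it_val} to push the $\varsigma_S\otimes\varsigma_{\cC}$ bound through $\mh_k$, the descending property of $S$, and Lemma~\ref{lm:enrgy_degree_bound_sababa} to convert a filtration bound into a degree bound), but it is organized differently. The paper argues by contradiction with a sequence of putative counterexamples and routes the degree bookkeeping through Lemma~\ref{lm:decending_proj_non_trivial}, which extracts a nontrivial projection to low $S$-degrees modulo $F^{>E}\hat{\cC}$ and only then produces a term $s_j\otimes\eta_j$ whose degree is simultaneously bounded above by $m+M$ and below by $D$. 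You instead work directly with a single near-optimal decomposition of $\mh_k(\ah_1,\dots,\ah_k)$ and run the interplay ``small $\varsigma_{\cC}(\eta_j)$ $\Rightarrow$ large $\varsigma_S(s_j)$ $\Rightarrow$ very negative $|s_j|$ $\Rightarrow$ very large $|\eta_j|$ $\Rightarrow$ contradiction with the sababa bound'' inline, which bypasses Lemma~\ref{lm:decending_proj_non_trivial} entirely and yields an explicit, uniform constant $K>m+E-D+1$; this is a modest gain in transparency and quantitativeness. The one place where you lean on implicit structure is the homogeneity reduction (splitting the decomposition into homogeneous terms of total degree $d$ without decreasing the minimal filtration, and using continuity of the degree-$d$ projection to discard the rest); the paper's own proof makes exactly the same implicit use of homogeneous representations when it invokes $D\le|s_j\otimes\eta_j|$, so this is consistent with the paper's level of rigor rather than a gap. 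Your remarks about $\varsigma_1$ being the filtration trivial on $S$, about why the $k=1$ term $ds_1\otimes\a_1$ needs no separate treatment, and about why the hypothesis $|\mh_k(\ah)|\ge D$ is essential are all accurate.
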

\begin{proof}
Assume by contradiction that the proposition is false. Then there exist $E>0$ and $D \in \Z$ such that for all $K>0$ there exist $\a_{1}^K,\ldots, \a_{k_K}^K \in \hat{\cC}$ with
\[\sum_{i=1}^{k_K}\varsigma_S\otimes \varsigma_{\cC}(\a^K_i)>K,\quad |\mh_{k_K}(\a^K_1,\ldots,\quad\a^K_{k_K})|\ge D,\quad \varsigma_{\hat{\cC}}(\mh_k(\a^K_1,\ldots,\a^K_{k_K}))\le E.\]
It follows from our assumption and Lemma~\ref{lm:product_filitiration_it_val} that
\[\varsigma_S\otimes \varsigma_{\cC}(\mh_k(\a^K_1,\ldots,\a^K_{k_K}))-\varsigma_{\hat{\cC}}((\mh_k(\a^K_1,\ldots,\a^K_{k_K}))\xrightarrow{K\to \infty} \infty.\] 
For all $m\in \Z$ we can use Lemma~\ref{lm:decending_proj_non_trivial} to find some $N,$ such that for $K>N$ we have 
\[\pi_{S,m}(\mh_k(\a^K_1,\ldots,\a^K_{k_K}))\not\equiv 0 \pmod{F^{>E}\hat{\cC}}.\]
Then for any representation 
 \[\pi_{S,m}(\mh_k(\a^K_1,\ldots,\a^K_{k_K}))=\sum_{i\in \N}s_i \otimes \eta_i,\] 
such that for all $i\in \N$ the elements $s_i \in S$ and $\eta_i \in \cC$ are homogeneous, there exists $j \in \N$ such that $\varsigma_{\cC}(\eta_j)<E$. 
By Lemma~\ref{lm:enrgy_degree_bound_sababa} there exists a bound $M$ on the degree of homogeneous elements $\eta$ of $\cC$ with $\varsigma_{\cC}(\eta)<E.$ Thus we get
\[|s_j \otimes \eta_j|\le m +M.\]
Choosing $m$ sufficiently small, this contradicts the lower bound
\[D \le |s_j \otimes \eta_j|.\]  
\end{proof}

\subsection{Convergence results}\label{sssec:convergence}
Let $(\cC,\{\m_k\}_{k\ge 0})$ be an $A_\infty$ algebra over $\mathcal{R}$, complete with respect to its filtration $\varsigma_1$ . Let $\varsigma_2$ be another filtration defined on $\cC$, and assume that $(\cC,\{\m_k\}_{k\ge 0})$ is pseudo-complete with respect to $\varsigma_2$. 
\begin{lm}\label{lm:m_bar_complex_converge}
    Let $b \in \cC$ with $\varsigma_2(b)>0$, and $|b|\ge 1$ then for any $x_1,\ldots, x_k \in\cC$ homogeneous,
    \[
   \lim_{\ell \to \infty} \min_{\ell_0 + \cdots + \ell_k = \ell} \varsigma_1( \m_{k+\ell_{0}+\cdots+\ell_{k}}(\underbrace{b, \cdots, b}_{\ell_{0}}, x_{1}, \underbrace{b, \cdots, b}_{\ell_{1}}, \cdots, \underbrace{b, \cdots, b}_{\ell_{k-1}}, x_{k}, \underbrace{b, \cdots, b}_{\ell_{k}})) = \infty.
    \]
\end{lm}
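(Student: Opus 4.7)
The plan is to reduce the statement directly to the definition of pseudo-completeness by producing two ingredients: a partition-independent lower bound on the $\varsigma_2$-sum of the inputs that grows with $\ell$, and a partition- and $\ell$-independent lower bound on the degree of the output. Once both are in hand, pseudo-completeness delivers the uniform conclusion.

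First, I would observe that in every term of the minimum, the inputs are exactly $\ell_0+\cdots+\ell_k=\ell$ copies of $b$ together with $x_1,\ldots,x_k$. Hence the total $\varsigma_2$-valuation of the inputs equals
\[
\ell\,\varsigma_2(b)+\sum_{i=1}^{k}\varsigma_2(x_i),
\]
which is independent of the partition $(\ell_0,\ldots,\ell_k)$ and, since $\varsigma_2(b)>0$, tends to $\infty$ as $\ell\to\infty$. Second, from the $A_\infty$ convention $\m_m\colon\cC^{\otimes m}\to\cC[2-m]$, the degree of the output is
\[
2-(k+\ell)+\ell|b|+\sum_{i=1}^{k}|x_i|=2-k+\ell(|b|-1)+\sum_{i=1}^{k}|x_i|,
\]
again independent of the partition. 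Using the hypothesis $|b|\ge 1$, this is bounded below by the constant $D:=2-k+\sum_{i=1}^{k}|x_i|$ uniformly in $\ell$ and in the partition.

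With these two facts, I would fix an arbitrary $E>0$, invoke pseudo-completeness with this $E$ and with $D$ as above to produce a threshold $K>0$, and then choose $L$ large enough that $L\,\varsigma_2(b)+\sum_i\varsigma_2(x_i)>K$, which is possible because $\varsigma_2(b)>0$. For every $\ell\ge L$ and every partition $\ell_0+\cdots+\ell_k=\ell$, both hypotheses of pseudo-completeness are satisfied by the inputs to the corresponding $\m_{k+\ell}$, so $\varsigma_1$ of the output exceeds $E$. Taking the minimum over partitions preserves this bound, and since $E$ was arbitrary the minimum tends to $\infty$.

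There is no real obstacle here; the only point to be careful about is that the degree bound $D$ is independent of $\ell$, which is precisely where the hypothesis $|b|\ge 1$ is used (otherwise $\ell(|b|-1)$ would drive the degree to $-\infty$ and pseudo-completeness could not be applied with a single $D$). Partition independence of both the $\varsigma_2$-sum and the output degree is what converts the pointwise pseudo-completeness bound into a uniform bound over all partitions, which is exactly what the min in the statement demands.
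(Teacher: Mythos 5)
Your proof is correct and follows essentially the same route as the paper: fix $E$, apply pseudo-completeness with a degree bound $D$ that is independent of $\ell$ (this is exactly where $|b|\ge 1$ enters), and note that the total $\varsigma_2$-valuation of the inputs is at least $\ell\,\varsigma_2(b)$, which exceeds the resulting threshold $K$ for all large $\ell$ uniformly in the partition. Your explicit computation of the output degree $2-k+\ell(|b|-1)+\sum_i|x_i|$ is in fact a slightly more careful choice of $D$ than the paper's, but the argument is the same.
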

\begin{proof}
  We will show that for $E>0$ there is only finitely many instances of $\ell_{0}, \cdots, \ell_{k}\in \mathbb{Z}_{\ge 0}$ with 
    \[\varsigma_1\left(\m_{k+\ell_{0}+\cdots+\ell_{k}}(\underbrace{b, \cdots, b}_{\ell_{0}}, x_{1}, \underbrace{b, \cdots, b}_{\ell_{1}}, \cdots, \underbrace{b, \cdots, b}_{\ell_{k-1}}, x_{k}, \underbrace{b, \cdots, b}_{\ell_{k}})\right)\le E.\]
    By pseudo-completeness of $(\cC_1, \m_k)$ with respect to $\varsigma_2$, for $D:=\sum_{i=1}^k |x_i|$ there exists $K>0$ such that for $\a_1,\ldots \a_m \in \cC_1$, we have
    \[\sum_{i=1}^m \varsigma_2(\a_i)>K,\, |\m_m(\a_1,\ldots,\a_m)|\ge D \implies \varsigma_1(\m_k(\a_1,\ldots,\a_m))> E. \]
    From the assumption on $b$ there exists $\ell \in \N$ such that $\sum_{i=1}^\ell \varsigma_2(b)>K,$ and
    \[\left|\m_{k+\ell_{0}+\cdots+\ell_{k}}(\underbrace{b, \cdots, b}_{\ell_{0}}, x_{1}, \underbrace{b, \cdots, b}_{\ell_{1}}, \cdots, \underbrace{b, \cdots, b}_{\ell_{k-1}}, x_{k}, \underbrace{b, \cdots, b}_{\ell_{k}})\right|\ge D.\]
    Thus, if $\sum_{i=0}^k \ell_i \ge \ell$ then
    \[\varsigma_1\left(\m_{k+\ell_{0}+\cdots+\ell_{k}}(\underbrace{b, \cdots, b}_{\ell_{0}}, x_{1}, \underbrace{b, \cdots, b}_{\ell_{1}}, \cdots, \underbrace{b, \cdots, b}_{\ell_{k-1}}, x_{k}, \underbrace{b, \cdots, b}_{\ell_{k}})\right)> E.\]
\end{proof}

Let $\overset{\circ}{\cC}$ be the filtered $\mathcal{R}$ module given by the non completed direct sum $\bigoplus_{m\in \Z}(\mathcal{C})_m$. For $b\in \cC$ with $\varsigma_2(b)>0$ and $|b|=1$, the above lemma, completeness of $\cC$, and property \ref{it:val} of the $A_\infty$ algebra allows us to define a bounded multilinear map $\overset{\circ}{\m}_k^b: \prod_{i=1}^k \overset{\circ}{\cC} \to \cC$ by 
\[\overset{\circ}{\m}_k^b\left(x_{1}, \cdots, x_{k}\right)=\sum_{\ell_{0}, \cdots, \ell_{k}\in \mathbb{Z}_{\ge 0}} \m_{k+\ell_{0}+\cdots+\ell_{k}}(\underbrace{b, \cdots, b}_{\ell_{0}}, x_{1}, \underbrace{b, \cdots, b}_{\ell_{1}}, \cdots, \underbrace{b, \cdots, b}_{\ell_{k-1}}, x_{k}, \underbrace{b, \cdots, b}_{\ell_{k}}) .\]
\begin{dfn}\label{dfn:bdef}
    For $b\in \cC$ with $\varsigma_2(b)>0$ and $|b|=1$, a \textbf{$b$ deformation} of the operator $\m_k:\cC^{\otimes k} \to \cC[2-k]$ is a bounded $\mathcal{R}$ linear map $\m_k^b:\cC^{\otimes k} \to \cC[2-k]$ such that the following diagram commutes
    \[\begin{tikzcd}
	{\prod_{i=1}^k\overset{\circ}{\mathcal{C}}} & {\mathcal{C}^{\otimes k}} \\
	& {\mathcal{C}}
	\arrow["{\overset{\circ}{\mathfrak{m}}_k^b}"', from=1-1, to=2-2]
	\arrow["\tau", from=1-1, to=1-2]
	\arrow["{\mathfrak{m}_k^b}", dashed, from=1-2, to=2-2]
\end{tikzcd},\]
    where $\tau:\prod_{i=1}^k \cC \to \cC^{\otimes k} $ is given by $(x_1,\ldots,x_k)\mapsto x_1\otimes\ldots \otimes x_k$. 
\end{dfn}
\begin{lm}\label{lm:unique_b_deform}
     For $b\in \cC$ with $\varsigma_2(b)>0$ and $|b|=1$, there exists a unique $b$ deformation of $\m_k:\cC^{\otimes k}\to \cC$ for each $k\ge 0$.
\end{lm}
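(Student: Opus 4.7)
My plan is to extend $\overset{\circ}{\m}_k^b$, which is multilinear on finite-degree inputs, to a bounded $\mathcal{R}$-linear map $\m_k^b$ on the completed tensor product $\cC^{\otimes k}$ using the universal property of the completed tensor product, and then to deduce uniqueness from density.

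First I would verify $\mathcal{R}$-multilinearity: each summand $\m_{k+|\vec{\ell}|}(b,\ldots,b,x_1,b,\ldots,b,\ldots,x_k,b,\ldots,b)$ is $\mathcal{R}$-multilinear in $(x_1,\ldots,x_k)$ by property~\ref{it:lin} of Definition~\ref{dfn:cycunit}, since the $x_i$ are flanked by other entries so that a scalar $a\in\mathcal{R}$ can be slid through without a $da$ boundary contribution (those only appear for $\m_1$). Lemma~\ref{lm:m_bar_complex_converge} guarantees convergence of the defining sum in the $\varsigma_1$-topology, and $\mathcal{R}$-multilinearity passes to this $\varsigma_1$-limit. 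Next I would derive the filtration bound. For homogeneous $x_i$, property~\ref{it:val} applied termwise gives
\[
\varsigma_1\bigl(\m_{k+|\vec{\ell}|}(b,\ldots,x_1,\ldots,x_k,\ldots,b)\bigr)\ge |\vec{\ell}|\,\varsigma_1(b)+\sum_{i=1}^{k}\varsigma_1(x_i)\ge \sum_{i=1}^{k}\varsigma_1(x_i),
\]
since $\varsigma_1(b)\ge 0$. Passing to the $\varsigma_1$-limit of the defining series then yields $\varsigma_1(\overset{\circ}{\m}_k^b(x_1,\ldots,x_k))\ge\sum_i\varsigma_1(x_i)$, and linearity over the finitely many homogeneous components of a general element of $\overset{\circ}{\cC}$ extends the bound to all inputs in $\overset{\circ}{\cC}$.

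The universal property of the completed tensor product of filtered $\mathcal{R}$-modules then produces a bounded $\mathcal{R}$-linear map $\m_k^b:\cC^{\otimes k}\to\cC[2-k]$ whose precomposition with $\tau$ equals $\overset{\circ}{\m}_k^b$, giving existence. Uniqueness follows because $\tau\bigl(\prod_i\overset{\circ}{\cC}\bigr)$ $\mathcal{R}$-linearly generates a $\varsigma_1$-dense subspace of $\cC^{\otimes k}$, and any two bounded $\mathcal{R}$-linear maps agreeing on a dense subspace coincide by continuity. I do not anticipate a major conceptual obstacle: the genuinely delicate point, namely convergence of the defining series when $\varsigma_1(b)=0$, is already packaged in Lemma~\ref{lm:m_bar_complex_converge} via pseudo-completeness. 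The one step requiring careful bookkeeping is checking that the simple-tensor bound $\varsigma_1\ge \sum_i \varsigma_1(x_i)$ upgrades correctly to the filtration on $\cC^{\otimes k}$ defined as a supremum over representations, which amounts to a formal unwinding of the completed tensor product construction.
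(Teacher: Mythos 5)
Your argument is essentially the paper's: the paper treats $k=0,1$ by convergence/continuity plus completeness of $\cC$ and delegates $k\ge 2$ to the cited result of Bosch--G\"untzer--Remmert on extending bounded multilinear maps over the completed tensor product, which is exactly the boundedness-plus-density extension argument you write out by hand. (Your side remark that the summands are strictly $\mathcal{R}$-multilinear is slightly off --- the bare $\m_1$ summand of $\m_1^b$ still produces a $da$ term --- but this plays no role, since existence and uniqueness only require boundedness and density.)
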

\begin{proof}
    For $k=0$, the statement follows from Lemma~\ref{lm:m_bar_complex_converge} and completeness of $\cC$. For $k=1$ the statement follows from continuity of $\overset{\circ}{\m}^b_1,$ and completeness of $\cC.$ For $k=2$, the statement follows by Proposition 1 in \cite[Section 2.1.7]{bosch1984non},  concerning bounded bilinear maps. The case $k>2$ is obtained by adjusting Proposition 1 in \cite[Section 2.1.7]{bosch1984non} to  bounded multilinear maps. 
\end{proof} 
\begin{lm}\label{lm:deformed_operator_explicit_formula}
    Let $x_1,\ldots x_k \in \cC$ then 
    \[\m_k^b\left(x_{1}, \cdots, x_{k}\right)=\sum_{\ell_{0}, \cdots, \ell_{k}\in \mathbb{Z}_{\ge 0}} \m_{k+\ell_{0}+\cdots+\ell_{k}}(\underbrace{b, \cdots, b}_{\ell_{0}}, x_{1}, \underbrace{b, \cdots, b}_{\ell_{1}}, \cdots, \underbrace{b, \cdots, b}_{\ell_{k-1}}, x_{k}, \underbrace{b, \cdots, b}_{\ell_{k}}) .\]
\end{lm}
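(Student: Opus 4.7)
The plan is to verify that the right-hand side of the claimed identity defines a bounded $\mathcal{R}$-multilinear map $F:\cC^{\otimes k}\to \cC$ that coincides with $\overset{\circ}{\m}^b_k$ on inputs from $\prod_i\overset{\circ}{\cC}$, and then to invoke the uniqueness clause of Lemma~\ref{lm:unique_b_deform} to conclude $F=\m_k^b$.

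First, I would establish convergence of the sum on the right-hand side for arbitrary $x_1,\ldots,x_k\in\cC$. For homogeneous $x_i$ this is immediate from Lemma~\ref{lm:m_bar_complex_converge}: the $\varsigma_1$-filtration of the $(\ell_0,\ldots,\ell_k)$-term goes to infinity as $\ell_0+\cdots+\ell_k\to\infty$, so the series converges by completeness of $\cC$ with respect to $\varsigma_1$. For general $x_i$, I would expand each $x_i$ into its homogeneous components and use property~\ref{it:val} together with completeness of $\cC$ to sum the resulting double series. Multilinearity of each $\m_{k+\sum\ell_j}$ then upgrades the pointwise formula to an $\mathcal{R}$-multilinear map $F:\prod_i \cC\to\cC$, which factors through $\cC^{\otimes k}$.

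Next, I would check that $F$ is bounded. Applying property~\ref{it:val} to each summand gives the term-wise estimate
\[
\varsigma_1\bigl(\m_{k+\sum\ell_j}(\underbrace{b,\ldots,b}_{\ell_0},x_1,\ldots,x_k,\underbrace{b,\ldots,b}_{\ell_k})\bigr)\ge \sum_{i=1}^{k}\varsigma_1(x_i)+\Bigl(\sum_{j=0}^{k}\ell_j\Bigr)\varsigma_1(b)\ge \sum_{i=1}^{k}\varsigma_1(x_i),
\]
so $\varsigma_1(F(x_1,\ldots,x_k))\ge \sum_{i}\varsigma_1(x_i)$, yielding boundedness in the required sense.

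Finally, when all $x_i\in\overset{\circ}{\cC}$, the defining series for $F$ agrees term-by-term with that of $\overset{\circ}{\m}^b_k$, so the diagram of Definition~\ref{dfn:bdef} commutes. Therefore $F$ is a $b$-deformation of $\m_k$, and Lemma~\ref{lm:unique_b_deform} forces $F=\m_k^b$, which is the claimed formula. The main nuisance is the bookkeeping for the two interleaved infinite summations, one over tuples $(\ell_0,\ldots,\ell_k)$ and one over the homogeneous components of the $x_i$, but both are governed by $\varsigma_1$ and completeness, so no genuine obstacle arises.
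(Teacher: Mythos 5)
Your proposal is correct, but it is organized differently from the paper's argument. The paper proves the formula by direct computation: it writes each $x_i$ as a convergent sum of elements of $\overset{\circ}{\cC}$, uses boundedness (hence continuity) of $\m_k^b$ to expand $\m_k^b(x_1,\ldots,x_k)$ over these components, applies the defining series of $\overset{\circ}{\m}{}^b_k$ to each component tuple, and then interchanges the two infinite summations by citing the Bosch--G\"untzer--Remmert result on reordering summable families, recombining at the end via boundedness of the $\m_{k+\ell}$. You instead take the right-hand side as a candidate operator, verify it is itself a bounded $b$-deformation (convergence from Lemma~\ref{lm:m_bar_complex_converge} plus property~\ref{it:val} and completeness, the filtration estimate $\varsigma_1(F(x))\ge\sum_i\varsigma_1(x_i)$, agreement with $\overset{\circ}{\m}{}^b_k$ on $\prod_i\overset{\circ}{\cC}$), and conclude by the uniqueness clause of Lemma~\ref{lm:unique_b_deform}. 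What your route buys is that no explicit interchange of summation is needed: the analytic content is delegated to the uniqueness statement, whose proof already rests on the same BGR machinery for bounded multilinear maps; the same machinery is also what justifies your step ``factors through $\cC^{\otimes k}$,'' which you pass over quickly. What it costs is that you must establish convergence of the series for arbitrary, non-homogeneous inputs; there the threshold in Lemma~\ref{lm:m_bar_complex_converge} depends on the degrees of the homogeneous components, and one needs the observation (implicit in your appeal to property~\ref{it:val}) that for any energy bound $E$ only finitely many component tuples can contribute terms of filtration $\le E$, so the finitely many relevant thresholds can be handled uniformly, with the ultrametric inequality passing the bound to the full inputs. With that bookkeeping spelled out, your argument is complete and yields the same statement.
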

\begin{proof}
    Write $x_i=\sum_{j=1}^\infty \a_j^i $ with $\a_j^i \in \overset{\circ}{\cC}$. As boundness implies continuity for filtered bilinear maps, we have
    \begin{align*}
        \m_k^b(x_{1}, &\cdots, x_{k}) = \\
        &= \sum_{j_1,\ldots j_k \in \N}\m^b_k(\a_{j_1}^1,\ldots ,\a_{j_k}^k)\\
        &=\sum_{j_1,\ldots j_k \in \N}\sum_{\ell_{0}, \cdots, \ell_{k}\in \mathbb{Z}_{\ge 0}} \m_{k+\ell_{0}+\cdots+\ell_{k}}(\underbrace{b, \cdots, b}_{\ell_{0}}, \a_{j_1}^1, \underbrace{b, \cdots, b}_{\ell_{1}}, \cdots, \underbrace{b, \cdots, b}_{\ell_{k-1}}, \a_{j_k}^k, \underbrace{b, \cdots, b}_{\ell_{k}}). 
    \shortintertext{By Proposition 2 in \cite[Section 1.1.8]{bosch1984non} we can change the order of summation, and get}\\
    &=\sum_{\ell_{0}, \cdots, \ell_{k}\in \mathbb{Z}_{\ge 0}} \sum_{j_1,\ldots j_k \in \N}\m_{k+\ell_{0}+\cdots+\ell_{k}}(\underbrace{b, \cdots, b}_{\ell_{0}}, \a_{j_1}^1, \underbrace{b, \cdots, b}_{\ell_{1}}, \cdots, \underbrace{b, \cdots, b}_{\ell_{k-1}}, \a_{j_k}^k, \underbrace{b, \cdots, b}_{\ell_{k}})\\
    &=\sum_{\ell_{0}, \cdots, \ell_{k}\in \mathbb{Z}_{\ge 0}} \m_{k+\ell_{0}+\cdots+\ell_{k}}(\underbrace{b, \cdots, b}_{\ell_{0}}, x_{1}, \underbrace{b, \cdots, b}_{\ell_{1}}, \cdots, \underbrace{b, \cdots, b}_{\ell_{k-1}}, x_{k}, \underbrace{b, \cdots, b}_{\ell_{k}}).
\end{align*}
To obtain the last equality, we used the boundedness of the operators $\{\m_k\}_{k\ge 0}$.      
\end{proof}

The Lemma~\ref{lm:deformed_operator_explicit_formula} allows us to use the axioms of the $n$-dimensional curved cyclic unital $A_\infty$ algebra $(\cC, \{\m_k\}_{k\ge 0},\lp\;,\,\rp,\e)$ and deduce the corollary below. 
\begin{cor}
    Let $(\cC, \{\m_k\}_{k\ge 0},\lp\;,\,\rp,\e)$ be an $n$-dimensional curved cyclic unital $A_{\infty}$ algebra. Assume that the left and right module structures of $\cC[1]$ over $Z(\mathcal{R})$ coincide. Let $b$ be an element of $ \cC$ with $\varsigma_2(b)>0$ and $|b|=1$. Then, $(\cC, \{\m_k^b\}_{k\ge 0},\lp\;,\,\rp,\e)$ is a $n$-dimensional curved cyclic unital $A_{\infty}$ algebra over $Z(\mathcal{R})$ if $\overline{\m_0^b}=0$.
\end{cor}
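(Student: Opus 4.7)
The plan is to verify each of the ten axioms of Definition~\ref{dfn:cycunit} for the quadruple $(\cC, \{\m_k^b\}_{k\ge 0}, \lp\,,\,\rp, \e)$, viewed as a cyclic unital $A_\infty$ structure over $Z(\mathcal{R})$ rather than $\mathcal{R}$. The essential tool is the explicit formula
\[
\m_k^b(x_1,\ldots,x_k)=\sum_{\ell_0,\ldots,\ell_k\ge 0}\m_{k+\sum\ell_j}(\underbrace{b,\ldots,b}_{\ell_0},x_1,\underbrace{b,\ldots,b}_{\ell_1},\ldots,x_k,\underbrace{b,\ldots,b}_{\ell_k})
\]
provided by Lemma~\ref{lm:deformed_operator_explicit_formula}, which is convergent by Lemma~\ref{lm:m_bar_complex_converge} and the assumption that $\cC$ is pseudo-complete with respect to $\varsigma_2$. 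Each axiom will be reduced to the corresponding axiom for $\m$ by manipulating each term in this convergent expansion; continuity and unique extension (Lemma~\ref{lm:unique_b_deform}) let us justify all interchanges of sums.

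For axiom~\ref{it:lin} (multilinearity over $Z(\mathcal{R})$), the third relation of \ref{it:lin} for $\m$ shifts a scalar $a\in Z(\mathcal{R})$ from the left of the argument $x_i$ onto the right of the preceding slot, which may contain $b$. To pull $a$ across the string $b^{\ell_{i-1}}$ and further to the left, we use the hypothesis that left and right $Z(\mathcal{R})$-actions on $\cC[1]$ coincide, combined with $|b|=1$, so that $a$ commutes with $b$ up to the expected Koszul sign. This is precisely where the restriction to $Z(\mathcal{R})$ becomes necessary: for arbitrary $a\in\mathcal{R}$ the graded commutation with $b$ fails. The derivative corrections appearing when $k=1$ are accounted for by the lone $\m_1$ contribution on the outside.

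The heart of the proof is axiom~\ref{it:a_infty}, the $A_\infty$ relation. Expanding $\m_{k_1}^b(\ldots,\m_{k_2}^b(\ldots),\ldots)$ by the formula produces a nested double sum. I will reorganize it by recording, for each summand, the total word
\[
(b,\ldots,b,x_{j_1},b,\ldots,b,x_{j_2},\ldots,x_{j_k},b,\ldots,b)
\]
together with the choice of a consecutive sub-word to which the inner operation applies. For fixed word and fixed sub-word, summing over $(k_1,k_2,i)$ compatible with this data reproduces (up to the standard $A_\infty$ sign $(-1)^{\sum_{j<i}(|\ah_j|+1)}$) exactly one full $A_\infty$ relation of $\m$. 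Since each of those relations vanishes, the sum over configurations is zero. This combinatorial bookkeeping is routine but sign-sensitive, and it will be the main technical obstacle of the proof; crucially the parity $|b|+1=0\pmod 2$ ensures that inserting or deleting a $b$ does not alter the $A_\infty$ signs.

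Axioms~\ref{it:val}, \ref{it:unit1}, \ref{it:unit2} are short. For~\ref{it:val} with $k\ge1$, the estimate $\varsigma_1(\m_k^b(\ldots))\ge\sum\varsigma_1(x_i)$ follows termwise, and for $k=0$ the hypothesis $\overline{\m_0^b}=0$ is exactly $\varsigma_1(\m_0^b)>0$. For~\ref{it:unit1} with $k\ge 3$, every summand has arity $\ge 3$ and contains $\e$ in a non-extremal slot, so each vanishes by~\ref{it:unit1} for $\m$. For $k=2$ only the term $\ell_0=\ell_1=\ell_2=0$ survives, which gives~\ref{it:unit2}. For $k=1$, the expansion collapses via $\m_2(\e,b)+\m_2(b,\e)=b-b=0$, using $|b|=1$.

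The pairing axioms~\ref{it:plin}, \ref{it:val2}, \ref{it:symm} are inherited since $\lp\,,\,\rp$ is unchanged; \ref{it:plin} is restricted to $Z(\mathcal{R})$ which is fine. Cyclic symmetry~\ref{it:cyclic} is checked by pairing the expansion of $\m_k^b(\ldots)$ with $\a_{k+1}$ and applying~\ref{it:cyclic} for $\m$ termwise; cyclically rotating the word through one $b$ is sign-neutral thanks to $|b|+1\equiv 0$, so the rotations of the $b$'s combine correctly. Finally for~\ref{it:unit3}, termwise application of~\ref{it:cyclic} for $\m$ reduces $\lp\m_0^b,\e\rp=\sum_l\lp\m_l(b^l),\e\rp$ to contributions from $l=2$ only, which give $\lp\m_2(\e,b),b\rp=\lp b,b\rp$; this vanishes because~\ref{it:symm} forces $\lp b,b\rp=-\lp b,b\rp$ when $|b|$ is odd. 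The remaining boundary-type contribution must be shown to cancel using Proposition~\ref{prop:super_cyclic_structure_equation} applied to $\m_l(b^l)$ against $\e$ and summed over $l$; this cancellation, together with $\overline{\m_0^b}=0$, finishes the verification.
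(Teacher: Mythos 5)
The paper gives essentially no proof of this corollary -- it is stated as an immediate consequence of Lemma~\ref{lm:deformed_operator_explicit_formula} and the axioms -- so your plan of verifying the axioms of Definition~\ref{dfn:cycunit} termwise in the convergent expansion is exactly the intended route. Your handling of \ref{it:lin} (using that the left and right $Z(\mathcal{R})$-actions on $\cC[1]$ coincide to slide a central scalar across the inserted $b$'s), of the regrouping argument for \ref{it:a_infty}, and of \ref{it:val}, \ref{it:unit1}, \ref{it:unit2}, \ref{it:plin}--\ref{it:cyclic} (with the key observation that $|b|+1\equiv 0$ makes insertions and cyclic rotations through $b$ sign-neutral) is correct in outline, and the identification of $\overline{\m_0^b}=0$ with $\varsigma_1(\m_0^b)>0$ is the right reading of that hypothesis.

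The gap is in your verification of \ref{it:unit3}. Expanding $\lp \m_0^b,\e\rp=\sum_{l\ge 0}\lp\m_l(b^{\otimes l}),\e\rp$ and applying \ref{it:cyclic} for $\m$ termwise, the $l\ge 3$ terms vanish by \ref{it:unit1}, the $l=2$ term gives $\lp b,b\rp=0$ by \ref{it:symm} since $|b|+1$ is even, and the $l=0$ term vanishes by \ref{it:unit3} for $\m$; but the $l=1$ term gives, by the $k=1$ case of \ref{it:cyclic}, $\lp\m_1(b),\e\rp=\lp\m_1(\e),b\rp+d\lp b,\e\rp=d\lp b,\e\rp$. This is precisely the ``remaining boundary-type contribution'' you defer, and the tools you cite do not dispose of it: Proposition~\ref{prop:super_cyclic_structure_equation} only computes $d$ of expressions of the form $\lp\m_k(\cdots),\cdot\rp$, not the pairing itself, and $\overline{\m_0^b}=0$ only gives positive filtration, not vanishing. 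From the stated hypotheses alone $d\lp b,\e\rp$ need not be zero (take, e.g., an $A_\infty$ algebra arising from a dga over a ground dga $\mathcal{R}$ with nontrivial differential and $b$ with $\int_L b$ not $d_{\mathcal{R}}$-closed). So to complete the argument you must either add the observation that $d\lp b,\e\rp=0$ holds in the situation where the corollary is used (for instance when the differential of the ground ring vanishes, as it does for $\Rdh$ in the Fukaya application), or restrict the conclusion accordingly; as written, your final step does not go through.
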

\begin{rem}
    We do not exclude the case that $\varsigma_1(b)=0$. So,
    the condition $\overline{\m_0^b}=0$ is not trivial.
\end{rem}
Let $(S, e_s)$ be a descending unital differential graded algebra over $\k$.
Let 
\[(\hat{\mathcal{C}}, \{\mh_k\}_{k \ge 0}, \lp\;,\,\rp_F,\e_S \otimes \e)\] 
be an $(S, F)$ extension of a cyclic unital $A_{\infty}$ algebra $(\cC, \{\m_k\}_{k\ge 0},\lp\;,\,\rp,\e)$  over $\mathcal{R}$. Assume also that $(\cC, \{\m_k\}_{k\ge 0},\lp\;,\,\rp,\e)$ is also a sababa cyclic unital $A_{\infty}$ algebra over some differential graded algebra $\mathcal{R}^\prime$.
By Proposition~\ref{prop:saba_decend_psedu_comp} $(\hat{\mathcal{C}}, \{\mh_k\}_{k \ge 0}, \lp\;,\,\rp_F,\e_S \otimes \e)$ is pseudo-complete with respect to $\varsigma_{S}\otimes \varsigma_{\cC}$. Thus, given $b \in \hat{\cC}$ with $\varsigma_{S}\otimes\varsigma_{\cC}(b)>0$ and $|b|=1$, Lemma~\ref{lm:unique_b_deform} guarantees the existence and uniqueness of the $b$ deformed operators $\{\mh_k^b\}_{k\ge 0}$.
\begin{cor}\label{cor:mkbh_converges}
Let $(S, e_s)$ be a descending unital differential graded algebra over $\k$.
Let $(\hat{\mathcal{C}}, \{\mh_k\}_{k \ge 0}, \lp\;,\,\rp_F,\e_S \otimes \e)$ be an $(S, F)$ extension of a cyclic unital $A_{\infty}$ algebra $(\cC, \{\m_k\}_{k\ge 0},\lp\;,\,\rp,\e)$  over $\mathcal{R}$ that is sababa over a subalgebra $\mathcal{R}' \subset \mathcal{R}$. Assume that the left and right module structures of $\cC[1]$ over $\mathcal{R}$ coincide. Let $b \in \hat{\cC},$ with $\varsigma_{S}\otimes\varsigma_{\cC}(b)>0$, and $|b|= 1$. Then, $(\hat{\mathcal{C}}, \{\mh_k^b\}_{k \ge 0}, \lp\;,\,\rp_F,\e_S \otimes \e)$ is cyclic unital $A_{\infty}$ algebra over $Z\left(\hat{\mathcal{R}}\right)$ if $\overline{\mh_0^b}=0$.
\end{cor}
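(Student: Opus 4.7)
The plan is to reduce the corollary to the unnamed corollary stated immediately above, which gives the analogous conclusion for a general pseudo-complete $n$-dimensional curved cyclic unital $A_\infty$ algebra equipped with an element $b$ of positive second filtration and degree~$1$. So we need to verify the hypotheses of that corollary for the $(S,F)$ extension $(\hat{\cC},\{\mh_k\}_{k\ge 0},\lp\;,\,\rp_F,\e_S\otimes \e)$, taking $\varsigma_1 = \varsigma_{\hat{\cC}}$ (induced by $\varsigma_{\cC}$ and the trivial filtration on $S$) and $\varsigma_2 = \varsigma_S \otimes \varsigma_{\cC}$.

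First, Proposition~\ref{prop:a_infity_extension} gives that $(\hat{\cC},\{\mh_k\}_{k\ge 0},\lp\;,\,\rp_F,\e_S\otimes\e)$ is an $n$-dimensional curved cyclic unital $A_\infty$ algebra over $\hat{\mathcal{R}} = S \otimes \mathcal{R}$. Second, by hypothesis $(\cC,\{\m_k\}_{k\ge 0},\lp\;,\,\rp,\e)$ is sababa over $\mathcal{R}'$ and $S$ is descending, so Proposition~\ref{prop:saba_decend_psedu_comp} tells us $(\hat{\cC},\{\mh_k\}_{k\ge 0})$ is pseudo-complete with respect to $\varsigma_S\otimes\varsigma_{\cC}$. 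The element $b \in \hat{\cC}$ satisfies $\varsigma_S\otimes \varsigma_\cC(b) > 0$ and $|b|=1$ by assumption, and $\overline{\mh_0^b} = 0$ by hypothesis as well. Hence Lemma~\ref{lm:unique_b_deform} produces the $b$-deformed operators $\{\mh_k^b\}_{k\ge 0}$.

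It remains to check the coincidence of the left and right $Z(\hat{\mathcal{R}})$-module structures on $\hat{\cC}[1]$ required by the preceding corollary. Recall from the discussion before Lemma~\ref{lm:phatlin} that $Z(\hat{\mathcal{R}}) = Z(S) \otimes_{\k} Z(\mathcal{R})$. Using the explicit bimodule formulas
\[
(t \otimes b)(s \otimes \alpha) = (-1)^{|b||s|} ts \otimes b\alpha, \qquad (s \otimes \alpha)(t \otimes b) = (-1)^{|t|(|\alpha|+1)} st \otimes \alpha b,
\]
together with the shifted sign in $\cC[1]$, the central elements $t \otimes b \in Z(\hat{\mathcal{R}})$ act on $s \otimes \alpha \in \hat{\cC}[1]$ from the left and right by formulas that agree precisely when the left and right $Z(\mathcal{R})$-actions on $\cC[1]$ coincide and $t \in Z(S)$ commutes past $s$ with the graded sign. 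The former holds by hypothesis, the latter is exactly centrality of $t$ in $S$, so the two module structures on $\hat{\cC}[1]$ over $Z(\hat{\mathcal{R}})$ indeed coincide.

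With all hypotheses verified, the unnamed corollary preceding Corollary~\ref{cor:mkbh_converges} applies and yields that $(\hat{\cC},\{\mh_k^b\}_{k\ge 0},\lp\;,\,\rp_F,\e_S\otimes\e)$ is a cyclic unital $A_\infty$ algebra over $Z(\hat{\mathcal{R}})$, as claimed. The only non-routine piece is the centrality check, which boils down to bookkeeping of the Koszul signs in the extension-of-scalars bimodule; everything else is a direct invocation of the results already established in this section.
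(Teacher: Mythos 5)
Your proposal is correct and follows essentially the same route as the paper: the paper's (implicit) proof is exactly the chain you use, namely Proposition~\ref{prop:saba_decend_psedu_comp} for pseudo-completeness of the $(S,F)$ extension, Lemma~\ref{lm:unique_b_deform} for the deformed operators, and the unnamed corollary following Lemma~\ref{lm:deformed_operator_explicit_formula} applied to $(\hat{\cC},\hat{\mathcal{R}})$. The only difference is that you spell out the Koszul-sign verification that the left and right $Z(\hat{\mathcal{R}})$-module structures on $\hat{\cC}[1]$ coincide, a point the paper asserts without detail (cf.\ the analogous claim in Proposition~\ref{prop:deformed_boundrary_operator}), and your bookkeeping there is consistent.
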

\begin{prop}\label{prop:deformed_boundrary_operator}
Let $(S, e)$ be a descending unital differential graded algebra over $\k$. Let $(\cC, \{\m_k\}_{k\ge 0},\lp\;,\,\rp,\e)$ be a cyclic unital $A_{\infty}$ algebra over $\mathcal{R}$ such that the residue algebra $(\bar{\cC}, \{\bar{\m}_k\}_{k\ge 0},\lp\;,\,\rp,\e)$ arises from a dga $(\Upsilon,d)$.  Assume that the left and right module structures of $\cC[1]$ over $\mathcal{R}$ coincide. Assume also that $\cC$ has an $\bar{\mathcal{R}}$ structure given by $g:\mathcal{R}\otimes \bar{\cC} \to \cC$ and let $f$ be its inverse.
Let $(\hat{\cC}, \{\hat{\m}_k\}_{k\ge 0},\lp\;,\,\rp_F,1\otimes\e)$ be an $(S,F)$ extension of $\cC$, let $\hat{g}$ be the induced $\bar{\mathcal{R}}$ structure on it, and let $\hat{f}$ be its inverse. Let $b\in \hat{\cC}$ with $|b|=1$, and $\varsigma_{S}\otimes\varsigma_{\cC}(b)>0$.
Let $\alpha \in \hat{\cC}_E$ be a homogeneous element, then
\[
\hat{f}\left(\mh^b_1(\alpha)\right)
= 
d\hat{f}(\a)-[\hat{f}(b),\hat{f}(\a)],
\]
where the product on the right hand side is induced by the algebra structure of $\mathcal{R} \otimes \hat\Upsilon$ given by the isomorphism $\bar{\hat{\cC}} \to \hat{\Upsilon}$ of Lemma~\ref{lm:cano_iso}.
\end{prop}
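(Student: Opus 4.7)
The plan is to expand $\mh_1^b(\alpha)$ via Lemma~\ref{lm:deformed_operator_explicit_formula} as
\[
\mh_1^b(\alpha) = \sum_{\ell_0, \ell_1 \ge 0} \mh_{1+\ell_0+\ell_1}(\underbrace{b,\ldots,b}_{\ell_0}, \alpha, \underbrace{b,\ldots,b}_{\ell_1}),
\]
the sum converging by Lemma~\ref{lm:m_bar_complex_converge}, whose pseudo-completeness hypothesis is supplied by Proposition~\ref{prop:saba_decend_psedu_comp}. By Proposition~\ref{prop:sf_extension_arises_dga}, the residue of $(\hat\cC,\{\mh_k\})$ arises from the dga $(\hat\Upsilon,\hat d)$ with $\hat\Upsilon = S\otimes \Upsilon$ and $\hat d = d\otimes \Id + \Id\otimes d$, and the induced $\bar{\mathcal R}$ structure on $\hat\cC$ is the one from Definition~\ref{dfn:induced_R_structure}. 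Since the left and right $\mathcal R$-module structures on $\cC[1]$ coincide, the analogous property holds for $\hat\cC[1]$, so the hypotheses of Proposition~\ref{prop:r_structure_energy_dga} are satisfied for $\hat\cC$. This yields the term-by-term approximations $\hat f(\mh_1(\xi)) \equiv \hat d\hat f(\xi)$, $\hat f(\mh_2(\xi_1,\xi_2))\equiv (-1)^{|\xi_1|}\hat f(\xi_1)\hat f(\xi_2)$, and $\hat f(\mh_k(\xi_1,\ldots,\xi_k)) \equiv 0$ for $k>2$, each modulo $F^{>E'}\hat\cC$ with $E' = \sum_i\varsigma_{\hat\cC}(\xi_i)$.

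I would then apply these approximations to the expansion of $\mh_1^b(\alpha)$, grouping summands by $k := 1+\ell_0+\ell_1$. The $k=1$ contribution yields $\hat f(\mh_1(\alpha))\equiv \hat d\hat f(\alpha) \pmod{F^{>E}\hat\cC}$. The two $k=2$ contributions sum to
\[
\hat f(\mh_2(b,\alpha))+\hat f(\mh_2(\alpha,b)) \equiv -\hat f(b)\hat f(\alpha)+(-1)^{|\alpha|}\hat f(\alpha)\hat f(b) \pmod{F^{>E+\varsigma_{\hat\cC}(b)}\hat\cC},
\]
and since $|\hat f(b)|=|b|=1$ the graded commutator is $[\hat f(b),\hat f(\alpha)] = \hat f(b)\hat f(\alpha)-(-1)^{|\alpha|}\hat f(\alpha)\hat f(b)$, so this contribution equals $-[\hat f(b),\hat f(\alpha)]$ to the stated order. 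Each $k\ge 3$ summand lies in $F^{>E+(\ell_0+\ell_1)\varsigma_{\hat\cC}(b)}\hat\cC \subseteq F^{>E}\hat\cC$. Assembling these pieces modulo $F^{>E}\hat\cC$ produces the desired equality in the quotient $\hat\cC_E$, identified with $(\mathcal R\otimes\hat\Upsilon)_E$ via $\hat f$.

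The main obstacle I anticipate is handling the infinite $k\ge 3$ tail cleanly: each individual summand lies in $F^{>E}\hat\cC$ by Proposition~\ref{prop:r_structure_energy_dga}, but one must verify that their convergent infinite sum remains in $F^{>E}\hat\cC$. This reduces to closedness of $F^{>E}\hat\cC$ under the limits guaranteed by pseudo-completeness (Proposition~\ref{prop:saba_decend_psedu_comp} and Lemma~\ref{lm:m_bar_complex_converge}). A secondary bookkeeping issue is verifying signs so that the two $k=2$ terms reassemble exactly as $-[\hat f(b),\hat f(\alpha)]$ under the convention $\m_2(\xi_1,\xi_2) = (-1)^{|\xi_1|}\xi_1\xi_2$ from Definition~\ref{dfn:A_infty_arises_dga}.
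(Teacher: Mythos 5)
Your proposal is correct and follows essentially the same route as the paper: verify via Proposition~\ref{prop:sf_extension_arises_dga} and the coincidence of left/right module structures that Proposition~\ref{prop:r_structure_energy_dga} applies to the extension $\hat{\cC}$, then expand $\mh_1^b(\alpha)$ through Lemma~\ref{lm:deformed_operator_explicit_formula} so that the $k=1$ and $k=2$ terms assemble to $d\hat{f}(\alpha)-[\hat{f}(b),\hat{f}(\alpha)]$ while the $k\ge 3$ terms fall into $F^{>E}\hat{\cC}$. Your sign bookkeeping for the two $k=2$ terms matches the paper's congruence $d\hat f(\alpha)+(-1)^{|b|}\hat f(b)\hat f(\alpha)+(-1)^{|\alpha|}\hat f(\alpha)\hat f(b)$ with $|b|=1$, and the tail concern you raise is harmless since the filtrations of the summands tend to infinity, so only finitely many lie below any threshold above $E$.
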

\begin{proof}
We first note that $$(\hat{\cC}, \{\hat{\m}_k\}_{k\ge 0},\lp\;,\,\rp_F,\e_S\otimes\e)$$ is a cyclic $A_\infty$ algebra over $\mathcal{R}$. As the left and right  $\mathcal{R}$ module structures on $\cC[1]$ coincide, so do the left and right $\mathcal{R}$ module structures on $\hat{\cC}[1]$. As $(\bar{\cC}, \{\bar{\m}_k\}_{k\ge 0},\lp\;,\,\rp,\e)$ arises from the dga $(\Upsilon,d)$, Proposition~\ref{prop:sf_extension_arises_dga} implies that $(\bar{\hat{\mathcal{C}}}, \{\bar{\mh}_k\}_{k \ge 0}, \lp\;,\,\rp_F,\e_S \otimes \e)$ arises from the dga $(\hat{\Upsilon},id\otimes d)$. Thus the conditions of Proposition~\ref{prop:r_structure_energy_dga} are satisfied, and we can deduce
\[\hat{f}(\mh_1^b(\a))\equiv df(\a)+ (-1)^{|b|}\hat{f}(b)\hat{f}(\a)+(-1)^{|\a|}\hat{f}(\a)\hat{f}(b)\pmod{F^{>E} \mathcal{R}\otimes_{\bar{\mathcal{R}}}\overline{\hat{\cC}}}
\]
where the product is given by the natural product on $\mathcal{R}\otimes_{\bar{\mathcal{R}}} \bar{\hat{\mathcal{C}}}$. The equation above is equivalent to the desired formula as $|b|=1$.
\end{proof}
\subsection{Opposites and duality.}\label{ssec:opposite_duality}
The following section is based on~\cite{solomon2020Involutions}. Let $\cC, \D$, be graded complete filtered vector spaces. An $A_{\infty}$ pre-homomorphism
\[
\mathfrak{f}: \cC \rightarrow \mathcal{D}
\]

is a collection of maps
\[
\mathfrak{f}_k: \cC^{\otimes k} \rightarrow \mathcal{D}, \quad k \geq 0
\]
of degree $1-k$ such that $\inf _{v \neq 0}(\varsigma_{\mathcal{D}}\mathfrak{f}_k(v))-\varsigma_{\cC^{\otimes k}}(v))\ge 0$ with strict inequality for $k=0$.

Let $\left(\cC, \mathfrak{m}^\cC\right),\left(\D, \mathfrak{m}^\D\right)$, be $A_{\infty}$ algebras. An $A_{\infty}$ pre-homomorphism $\mathfrak{f}: \cC \rightarrow \D$ is called an $A_{\infty}$ homomorphism if
$$
\begin{aligned}
& \quad \sum_{\substack{l \\
r_1+\cdots+r_l=k}} \mathfrak{m}_l^D\left(\mathfrak{f}_{r_1}\left(\alpha_1, \ldots, \alpha_{r_1}\right), \ldots, \mathfrak{f}_{r_l}\left(\alpha_{k-r_l+1}, \ldots, \alpha_k\right)\right)= \\
& \quad=\sum_{\substack{k_1+k_2=k+1 \\
1 \leq i \leq k_1}}(-1)^{\sum_{j=1}^{i-1}\left(\left|\alpha_j\right|+1\right)} \mathfrak{f}_{k_1}\left(\alpha_1, \ldots, \alpha_{i-1}, \mathfrak{m}_{k_2}^C\left(\alpha_i, \ldots, \alpha_{i+k_2-1}\right), \ldots, \alpha_k\right).
\end{aligned}
$$
Let $\alpha=\left(\alpha_1, \ldots, \alpha_k\right)$ be a list of homogeneous elements $\alpha_j \in \cC$. Let $\sigma$ be a permutation of the set $\{1, \ldots, k\}$. We use the notation

$$
s_\sigma(\alpha)=\sum_{\substack{i<j \\ \sigma(i)>\sigma(j)}}\left(\left|\alpha_{\sigma(i)}\right|+1\right)\left(\left|\alpha_{\sigma(j)}\right|+1\right) .
$$

For the rest of the paper, we fix $\tau$ to be the permutation $(1, \ldots, k) \mapsto(k, k-1, \ldots, 1)$.
Given an $A_{\infty}$ algebra $( \cC, \mathfrak{m} )$, the opposite $A_{\infty}$ algebra structure $\mathfrak{m}^{o p}$ on $\cC$ is defined by

$$
\mathfrak{m}_k^{o p}\left(\alpha_1, \ldots, \alpha_k\right)=(-1)^{s_\tau(\alpha)+k+1} \mathfrak{m}_k\left(\alpha_k, \ldots, \alpha_1\right) .
$$

A straightforward calculation shows that $\mathfrak{m}_k^{o p}$ is indeed an $A_{\infty}$ algebra.

\begin{dfn}\label{dfn:d_self_dual}
Let $\cC$ be a graded complete filtered $\mathcal{R}$ vector space and let $\mathfrak{d}: \cC \rightarrow \cC$ be an involution, that is, a linear map of degree zero such that $\mathfrak{d}^2=$ Id. In particular, $\mathfrak{d}$ is a strict $A_{\infty}$ prehomomorphism. An $A_{\infty}$ structure $\mathfrak{m}$ on $\cC$ is called \textbf{$\mathfrak{d}$ self-dual} if $\mathfrak{d}:\left(\cC, \mathfrak{m}^{o p}\right) \rightarrow(\cC, \mathfrak{m})$ is an $A_{\infty}$ homomorphism.
\end{dfn}

For a set automorphism $\invs : Z \to Z,$ let $Z^\invs \subset Z$ denote the subset fixed pointwise by $\invs.$ 
Let $\mathfrak{d}: \cC \rightarrow \cC$ be an involution, and assume that $(\cC, \mathfrak{m})$ is $\mathfrak{d}$ self-dual.
\begin{lm}\label{lm:sd_m_zero}
    Let $b \in (\mathcal{C}_1)^{-\mathfrak{d}}$ with $\varsigma_2(b) > 0$. Then $\m_0^b \in (\mathcal{C}_2)^{-\mathfrak{d}}.$
\end{lm}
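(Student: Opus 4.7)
The plan is to use the explicit formula for the $b$-deformed operator together with the $A_\infty$ homomorphism condition defining self-duality, applied termwise.

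By Lemma~\ref{lm:deformed_operator_explicit_formula} we have
\[
\m_0^b = \sum_{\ell \ge 0} \m_\ell(\underbrace{b,\ldots,b}_{\ell}).
\]
Since $|b|=1$ and $|\m_\ell|=2-\ell$, each summand has degree $2$, so $\m_0^b$ lies in the degree $2$ part of $\cC$ assuming it converges, which it does by pseudo-completeness (and the hypothesis $\varsigma_2(b)>0$) together with Lemma~\ref{lm:m_bar_complex_converge}. To prove the $-\mathfrak{d}$-invariance, I would show termwise that $\mathfrak{d}(\m_\ell(b,\ldots,b)) = -\m_\ell(b,\ldots,b)$ for every $\ell \ge 0$ and then invoke continuity of $\mathfrak{d}$ with respect to $\varsigma_1$, which is automatic since $\mathfrak{d}$ is a strict $A_\infty$ pre-homomorphism and hence preserves the filtration.

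For the termwise claim, the key input is that $\mathfrak{d} : (\cC,\m^{op}) \to (\cC,\m)$ is an $A_\infty$ homomorphism. Because $\mathfrak{d}$ is strict (so $\mathfrak{d}_k = 0$ for $k\ne 1$), the $A_\infty$ homomorphism relations collapse to
\[
\mathfrak{d}\bigl(\m_k^{op}(\a_1,\ldots,\a_k)\bigr) = \m_k(\mathfrak{d}(\a_1),\ldots,\mathfrak{d}(\a_k))
\]
for all $k \ge 0$. Specialize to $\a_1 = \cdots = \a_k = b$. Since $|b|+1 = 2$, the sign $s_\tau(b,\ldots,b) = \binom{k}{2}\cdot 4$ is even, so by the definition of the opposite structure
\[
\m_k^{op}(b,\ldots,b) = (-1)^{k+1}\m_k(b,\ldots,b).
\]
On the other hand $\mathfrak{d}(b) = -b$ and $\k$-multilinearity of $\m_k$ give $\m_k(\mathfrak{d}(b),\ldots,\mathfrak{d}(b)) = (-1)^k \m_k(b,\ldots,b)$. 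Comparing the two sides of the homomorphism equation yields
\[
(-1)^{k+1}\mathfrak{d}\bigl(\m_k(b,\ldots,b)\bigr) = (-1)^k \m_k(b,\ldots,b),
\]
i.e.\ $\mathfrak{d}(\m_k(b,\ldots,b)) = -\m_k(b,\ldots,b)$, as desired.

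Summing over $\ell$ and using that $\mathfrak{d}$ commutes with convergent sums in $\cC$ gives $\mathfrak{d}(\m_0^b) = -\m_0^b$, completing the proof. There is no real obstacle here; the only subtlety is the sign bookkeeping in $s_\tau$, which is trivialized by the fact that $|b|+1$ is even.
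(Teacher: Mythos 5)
Your proof is correct and follows essentially the same route as the paper: both reduce the statement to the termwise identity $\mathfrak{d}(\m_k(b,\ldots,b)) = -\m_k(b,\ldots,b)$, obtained from the strict self-duality homomorphism relation together with the observation that $s_\tau(b,\ldots,b)$ is even because $|b|+1=2$, and then conclude via the explicit formula of Lemma~\ref{lm:deformed_operator_explicit_formula}. The extra remarks you include on convergence and the degree count are consistent with the paper's setting and do not change the argument.
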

\begin{proof}
The fact that $(\mathcal{C}, \mathfrak{m})$ is $\mathfrak{d}$-self-dual combined with the assumption that $|b|=1$ implies that 
\[
\mathfrak{d} \circ \m_k(b,\dots,b)=(-1)^{k+1+s_\tau(\a_1,\dots,\a_k)}\m_k\left(\mathfrak{d}(b),\dots,\mathfrak{d}(b)\right)=-\m_k(b,\dots,b),
\]
and that $\mathfrak{d}\m_0=-\m_0.$ 
Thus, the result follows from Lemma~\ref{lm:deformed_operator_explicit_formula}.
\end{proof}
\begin{lm}\label{lm:sd_equivariance}
Let $(\mathcal{C}, \mathfrak{m})$ be a $\mathfrak{d}$-self-dual $A_\infty$ algebra, and let $b \in (\mathcal{C}_1)^{-\mathfrak{d}}$ with $\varsigma_2(b) > 0$. Then the deformed operator $\mathfrak{m}_1^b$ is $\mathfrak{d}$-equivariant, i.e.,
\[\mathfrak{d} \circ \mathfrak{m}_1^b = \mathfrak{m}_1^b \circ \mathfrak{d}.\]
\end{lm}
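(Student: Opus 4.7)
The plan is to apply the explicit formula for $\mathfrak{m}_1^b$ from Lemma~\ref{lm:deformed_operator_explicit_formula}, namely
\[
\mathfrak{m}_1^b(x) = \sum_{\ell_0, \ell_1 \ge 0} \mathfrak{m}_{1+\ell_0+\ell_1}(\underbrace{b,\ldots,b}_{\ell_0}, x, \underbrace{b,\ldots,b}_{\ell_1}),
\]
and then push $\mathfrak{d}$ through the sum term by term using $\mathfrak{d}$ self-duality. By Definition~\ref{dfn:d_self_dual}, applying $\mathfrak{d}$ to each summand turns $\mathfrak{m}_k$ into $\mathfrak{m}_k^{op}$ of the $\mathfrak{d}$-images of the inputs, yielding a sign $(-1)^{k+1+s_\tau(\alpha)}$ together with reversal of the argument list and an application of $\mathfrak{d}$ to each entry.

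The next step is a sign computation. The crucial input is that $|b|=1$, so $|b|+1$ is even. Consequently every pair of the form $(b,b)$ and every pair $(b,x)$ or $(x,b)$ contributes $0 \pmod 2$ to $s_\tau$, so $s_\tau \equiv 0 \pmod 2$ for the argument lists appearing in the sum. Using $b \in \mathcal{C}^{-\mathfrak{d}}$, each of the $\ell_0+\ell_1$ occurrences of $b$ contributes a sign $-1$ when $\mathfrak{d}$ is applied, giving a total sign $(-1)^{\ell_0+\ell_1}$. Combined with $(-1)^{k+1}$ for $k = 1+\ell_0+\ell_1$, the overall sign is $(-1)^{2(1+\ell_0+\ell_1)} = 1$.

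After reversing the argument list, the $\ell_0$ copies of $b$ that were to the left of $x$ appear to the right of $\mathfrak{d}(x)$, and the $\ell_1$ copies that were to the right appear to the left. Reindexing the sum by swapping $(\ell_0, \ell_1) \leftrightarrow (\ell_1, \ell_0)$ produces exactly the formula for $\mathfrak{m}_1^b(\mathfrak{d}(x))$. The rearrangement of summations is legitimate because pseudo-completeness (Proposition~\ref{prop:saba_decend_psedu_comp}) guarantees absolute convergence of both series in the filtration topology.

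There is no real obstacle here: all the nontrivial signs arising from the self-duality relation vanish modulo $2$ thanks to the parity of $|b|$, so the proof reduces to bookkeeping of the reversal plus a single change of summation variables. The only point requiring minor care is confirming that $s_\tau$ indeed vanishes modulo $2$ on lists consisting of one element $x$ and several copies of $b$, for which the parity of $|b|+1$ handles every pair.
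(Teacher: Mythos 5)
Your proof is correct and follows essentially the same route as the paper: apply the explicit formula of Lemma~\ref{lm:deformed_operator_explicit_formula}, use $\mathfrak{d}$ self-duality term by term, observe that $|b|=1$ makes $s_\tau$ vanish modulo $2$ so the sign $(-1)^{k+1}$ cancels against the $(-1)^{\ell_0+\ell_1}$ coming from $\mathfrak{d}(b)=-b$, and reindex the sum by swapping $\ell_0$ and $\ell_1$. The sign bookkeeping and the convergence remark match the paper's argument.
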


\begin{proof}
For any $x \in \mathcal{C}$, using Lemma~\ref{lm:deformed_operator_explicit_formula} and the fact that $(\mathcal{C}, \mathfrak{m})$ is $\mathfrak{d}$-self-dual combined with the assumption that $|b|=1$:
\begin{align*}
\mathfrak{d} \circ \mathfrak{m}_1^b(x) &= \sum_{\ell_0, \ell_1 \geq 0} \mathfrak{d} \circ \mathfrak{m}_{1+\ell_0+\ell_1}(\underbrace{b, \ldots, b}_{\ell_0}, x, \underbrace{b, \ldots, b}_{\ell_1})\\
&= \sum_{\ell_0, \ell_1 \geq 0} (-1)^{\ell_0+\ell_1} \mathfrak{m}_{1+\ell_0+\ell_1}(\underbrace{-b, \ldots, -b}_{\ell_1}, \mathfrak{d}(x), \underbrace{-b, \ldots, -b}_{\ell_0})\\
&= \mathfrak{m}_1^b(\mathfrak{d}(x)).
\end{align*}
\end{proof}

\begin{cor}\label{cor:preserved_subspace}
The subspace $(\mathcal{C})^{-\mathfrak{d}}$ is preserved by $\mathfrak{m}_1^b$.
\end{cor}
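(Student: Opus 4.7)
The plan is essentially to apply Lemma~\ref{lm:sd_equivariance} directly; the corollary is a one-line consequence. Recall that by definition $(\mathcal{C})^{-\mathfrak{d}}$ consists of those $x \in \mathcal{C}$ with $\mathfrak{d}(x) = -x$. I would start by fixing an arbitrary $x \in (\mathcal{C})^{-\mathfrak{d}}$ and aim to show that $\mathfrak{m}_1^b(x)$ again satisfies $\mathfrak{d}(\mathfrak{m}_1^b(x)) = -\mathfrak{m}_1^b(x)$.

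To do this, I would invoke Lemma~\ref{lm:sd_equivariance}, which gives the equivariance identity $\mathfrak{d} \circ \mathfrak{m}_1^b = \mathfrak{m}_1^b \circ \mathfrak{d}$ under precisely the hypotheses of the corollary (namely $b \in (\mathcal{C}_1)^{-\mathfrak{d}}$ with $\varsigma_2(b) > 0$, so that $\mathfrak{m}_1^b$ is defined and the $b$-deformation retains the $\mathfrak{d}$-symmetry of $\mathfrak{m}$). Applying this to our chosen $x$ yields
\[
\mathfrak{d}\bigl(\mathfrak{m}_1^b(x)\bigr) \;=\; \mathfrak{m}_1^b\bigl(\mathfrak{d}(x)\bigr) \;=\; \mathfrak{m}_1^b(-x) \;=\; -\mathfrak{m}_1^b(x),
\]
using $\mathbb{R}$-linearity of $\mathfrak{m}_1^b$ in the last step. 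Hence $\mathfrak{m}_1^b(x) \in (\mathcal{C})^{-\mathfrak{d}}$, which is exactly what needs to be shown.

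There is no real obstacle here: the substantive work was already absorbed into Lemma~\ref{lm:sd_equivariance}, where the $\mathfrak{d}$-self-duality of $(\mathcal{C}, \mathfrak{m})$ combined with $|b|=1$ and $\mathfrak{d}(b) = -b$ conspired to produce the signs that make the $b$-weighted sum $\mathfrak{d}$-equivariant. The corollary simply observes that $\mathfrak{d}$-equivariance plus $\mathbb{R}$-linearity immediately imply that the $(-1)$-eigenspace of $\mathfrak{d}$ is invariant. No further convergence, sababa, or extension-of-scalars considerations are needed, since they are already built into the existence of $\mathfrak{m}_1^b$ via Lemma~\ref{lm:unique_b_deform} and Proposition~\ref{prop:saba_decend_psedu_comp}.
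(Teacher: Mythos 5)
Your proposal is correct and follows exactly the paper's own argument: apply the $\mathfrak{d}$-equivariance of $\mathfrak{m}_1^b$ from Lemma~\ref{lm:sd_equivariance} to an element of the $(-1)$-eigenspace and use linearity to conclude it stays in that eigenspace. Nothing further is needed.
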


\begin{proof}
If $\mathfrak{d}(\eta) = -\eta$, then by Lemma~\ref{lm:sd_equivariance}:
\[\mathfrak{d}(\mathfrak{m}_1^b(\eta)) = \mathfrak{m}_1^b(\mathfrak{d}(\eta)) = \mathfrak{m}_1^b(-\eta) = -\mathfrak{m}_1^b(\eta).\]
\end{proof}

\begin{cor}\label{cor:primitive_exists}
$\mathfrak{m}_1^b(C)^{-\mathfrak{d}} = \mathfrak{m}_1^b(C^{-\mathfrak{d}}).$
\end{cor}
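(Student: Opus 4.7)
The plan is to prove the two inclusions separately, with the nontrivial direction reduced to an eigenspace decomposition argument using the fact that $\mathfrak{d}$ is an involution on a vector space over a field of characteristic zero.

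First I would dispose of the inclusion $\mathfrak{m}_1^b(\cC^{-\mathfrak{d}}) \subseteq \mathfrak{m}_1^b(\cC)^{-\mathfrak{d}}$. Any element in the left hand side is manifestly in $\mathfrak{m}_1^b(\cC)$, and by Corollary~\ref{cor:preserved_subspace} it lies in $\cC^{-\mathfrak{d}}$. So it lies in the intersection $\mathfrak{m}_1^b(\cC) \cap \cC^{-\mathfrak{d}} = \mathfrak{m}_1^b(\cC)^{-\mathfrak{d}}$.

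For the reverse inclusion, I would use that $\mathfrak{d}^2 = \mathrm{Id}$ and the projectors $\pi^{\pm} := \tfrac{1}{2}(\mathrm{Id} \pm \mathfrak{d})$ give a direct sum decomposition $\cC = \cC^{\mathfrak{d}} \oplus \cC^{-\mathfrak{d}}.$ Given $y \in \mathfrak{m}_1^b(\cC)^{-\mathfrak{d}}$, pick $x \in \cC$ with $\mathfrak{m}_1^b(x) = y$, and split $x = x^+ + x^-$ with $x^\pm \in \cC^{\pm\mathfrak{d}}$. By Lemma~\ref{lm:sd_equivariance}, $\mathfrak{m}_1^b$ commutes with $\mathfrak{d}$, so it preserves each eigenspace; thus $\mathfrak{m}_1^b(x^+) \in \cC^{\mathfrak{d}}$ and $\mathfrak{m}_1^b(x^-) \in \cC^{-\mathfrak{d}}.$ Since $y = \mathfrak{m}_1^b(x^+) + \mathfrak{m}_1^b(x^-)$ lies in $\cC^{-\mathfrak{d}}$ and the eigenspace decomposition is unique, we must have $\mathfrak{m}_1^b(x^+) = 0$ and hence $y = \mathfrak{m}_1^b(x^-) \in \mathfrak{m}_1^b(\cC^{-\mathfrak{d}})$.

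There is no real obstacle here; the corollary is essentially a formal consequence of the fact, already recorded in Lemma~\ref{lm:sd_equivariance}, that $\mathfrak{m}_1^b$ is $\mathfrak{d}$-equivariant, combined with the splitting of $\cC$ into $\mathfrak{d}$-eigenspaces. The only thing worth noting explicitly is that the splitting is available because the ground field $\k$ has characteristic different from $2$, so the projectors $\tfrac{1}{2}(\mathrm{Id} \pm \mathfrak{d})$ make sense.
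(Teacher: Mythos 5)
Your proof is correct and follows essentially the same route as the paper: the key step, splitting $x$ into $\mathfrak{d}$-eigencomponents and using the equivariance of Lemma~\ref{lm:sd_equivariance}, is exactly the paper's antisymmetrization $y = \tfrac{1}{2}(x - \mathfrak{d}(x))$, which is your $x^-$. The only cosmetic difference is that you spell out the easy inclusion via Corollary~\ref{cor:preserved_subspace}, which the paper leaves implicit.
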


\begin{proof}
If $\mathfrak{m}_1^b(x) = \eta$ for some $x \in \mathcal{C}$, then by Lemma~\ref{lm:sd_equivariance}, $\mathfrak{m}_1^b(\mathfrak{d}(x)) = \mathfrak{d}(\eta) = -\eta$. Thus $y = \frac{x - \mathfrak{d}(x)}{2} \in \mathcal{C}$ satisfies $\mathfrak{d}(y) = -y$ and 
\[\mathfrak{m}_1^b(y) = \frac{\mathfrak{m}_1^b(x) - \mathfrak{m}_1^b(\mathfrak{d}(x))}{2} = \frac{\eta - (-\eta)}{2} = \eta.\]
\end{proof}

\subsection{Obstruction theory}\label{ssec:obs_theory}
Let $(\cC, \{\m_k\}_{k\ge 0},\lp\;,\,\rp,\e)$ be an $n$-dimensional curved cyclic unital $A_{\infty}$ algebra complete with respect to its filtration $\varsigma_1$ . Let $\varsigma_2$ be another filtration defined on $\cC$, and assume that $(\cC, \{\m_k\}_{k\ge 0},\lp\;,\,\rp,\e)$ is pseudo-complete with respect to $\varsigma_2$. Recall the definitions of the  modules $\mathfrak{\cC}_E$ from Definition~\ref{dfn:frm}. Assume that the left and right module structures of $\cC[1]$ over $Z(\mathcal{R})$ coincide. 
Given $b$ of degree $1$ with $\varsigma_2(b)>0$, then $\m^b_1$ is a well defined degree $1$ operator on $\cC.$ It follows from the $A_\infty$ relation that for $\a\in \cC$ we have $\m^b_1\circ\m^b_1(\a)=\m^b_2(\m_0^b,\a)+(-1)^{(|\a|+1)(|\m_0^b|+1)}\m^b_2(\a,\m_0^b).$ So, if $\m_0^b=0$ then $\m^b_1\circ\m^b_1$ is a boundary operator. Thus it is natural to consider the \textbf{Maurer-Cartan equation},
\[
\m_0^b=0,\qquad b\in \cC^1, \quad \varsigma_2(b)>0.
\]
 Define the $\mathcal{R}$ module 
\[\cC_{\not \e}:= \frac{\cC}{Z(\ker d_{\mathcal{R}})\cdot e}.\]
 
\begin{lm}\label{lm:C_not_e_inf_alg}
        Let $(\cC, \{\m_k\}_{k\ge 0},\lp\;,\,\rp,\e)$ be an $n$-dimensional curved cyclic unital $A_{\infty}$ algebra. Then the induced operators $\m_k:\cC_{\not \e}^{\otimes k}\to \cC_{\not \e}$ given by
        \[\m_k\left([\a_1],\ldots,[\a_k]\right)=[\m_k\left(\a_1,\ldots,\a_k\right)],\]
        are well defined for $k\ne 2.$
\end{lm}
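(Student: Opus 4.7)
The plan is to verify, case by case on $k$, that replacing any $\a_i$ by $\a_i + c_i\e$ with $c_i \in Z(\ker d_\mathcal{R})$ changes $\m_k(\a_1,\ldots,\a_k)$ only by an element of $Z(\ker d_\mathcal{R}) \cdot \e$. Equivalently, expanding multilinearly over $\k$, every term carrying at least one $c_i\e$ input must vanish or land in $Z(\ker d_\mathcal{R}) \cdot \e$. The case $k=0$ is immediate since $\m_0$ is a fixed element of $\cC$ needing no input.

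For $k=1$, additivity gives $\m_1(\a+c\e) - \m_1(\a) = \m_1(c\e)$, and the $k=1$ instance of the derivation rule in property~\ref{it:lin} yields
\[\m_1(c\e) = (-1)^{|c|} c\cdot\m_1(\e) + dc\cdot\e.\]
The first summand vanishes by property~\ref{it:unit1}, and the second vanishes because $c \in \ker d_\mathcal{R}$ forces $dc = 0$. Hence $\m_1$ descends.

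For $k \ge 3$, fix a term in the multilinear expansion containing at least one $c_i\e$ input and choose any such $i$. If $i=1$, the first equation of property~\ref{it:lin} applies with no $dc$ correction (since $k \ne 1$), producing $(-1)^{|c_1|} c_1 \cdot \m_k(\e,\beta_2,\ldots,\beta_k) = 0$ by property~\ref{it:unit1}. If $i \ge 2$, the third equation of property~\ref{it:lin} rewrites the entry as $\m_k(\beta_1,\ldots,\beta_{i-1}\cdot c_i,\e,\beta_{i+1},\ldots,\beta_k)$, also zero by property~\ref{it:unit1}. So only the $\m_k(\a_1,\ldots,\a_k)$ summand survives, proving well-definedness for $k \ne 2$.

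The exclusion of $k = 2$ is transparent from the same framework: property~\ref{it:unit1} kills $\e$-inputs only when $k \ne 0, 2$, and indeed $\m_2(c\e, \a) = (-1)^{|c|} c \cdot \a$ does not generally lie in $Z(\ker d_\mathcal{R}) \cdot \e$. No substantive obstacle is anticipated, as the whole argument is a bookkeeping exercise with signs and the $A_\infty$ linearity axioms of Definition~\ref{dfn:cycunit}; the only point requiring attention is that the quotient is taken by $Z(\ker d_\mathcal{R})\cdot\e$ rather than $\mathcal{R}\cdot\e$, which is precisely what allows the $k=1$ argument to close via $dc=0$.
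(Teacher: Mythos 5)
Your argument is correct and is exactly what the paper intends: its proof of this lemma is a one-line appeal to the properties of Definition~\ref{dfn:cycunit}, and your case analysis (linearity property~\ref{it:lin} to pull out $c$, property~\ref{it:unit1} to kill $\e$-inputs for $k\neq 0,2$, and $dc=0$ to dispose of the $\delta_{1,k}$ correction) fills in precisely those details, including the correct explanation of why $k=2$ must be excluded.
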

\begin{proof}
    Follows form the properties of a cyclic $A_\infty$ algebra given in Definition~\ref{dfn:cycunit}. 
\end{proof}
\begin{dfn}\label{dfn:bounding_cochain}
    Let $(\cC, \{\m_k\}_{k\ge 0},\lp\;,\,\rp,\e)$ be an $n$-dimensional curved cyclic $A_{\infty}$ algebra. An element $b \in \cC$ of degree $1$ with $\varsigma_2(b)>0$ is a \textbf{bounding cochain} if it solves the Maurer-Cartan equation in the quotient modules $\cC_{\not \e}$ and $\overline{\cC}$. 
\end{dfn}
\begin{rem}
    In the literature the term bounding cochain is sometimes reserved for a solution of the Maurer-Cartan equation in $\cC$ and the definition above corresponds to weak bounding chains. 
    For $b\in \cC^1$ a bounding chain, we do not assume that $\varsigma_1(b)> 0$. So, the condition that $b$ solves the Maurer-Cartan equation in the quotient module $\overline{\cC}$ is not automatic. 
    If the filtration $\varsigma_1$ is trivial, then the requirement for a bounding cochain to solve the Maurer-Cartan equation in the quotient module $\cC_{\not \e}$ is trivial.
\end{rem}
Let $\mathfrak{d}: \cC \rightarrow \cC$ be an involution, and assume that $(\cC, \mathfrak{m})$ is $\mathfrak{d}$ self-dual. Let $\invs$ denote $-\mathfrak{d}$ or the identity map on $\cC$.
Let $b \in \cC^\invs$ of degree $1$ with $\varsigma_2(b)>0$ be a solution to  the Maurer-Cartan equation in the quotient in the quotient module $\overline{\cC}$. Recall the definitions of $\m^b_{k,E}$ from Section~\ref{ssec:Cyc_A_infty_alg}.
\begin{lm}\label{lm:bounding_chain_bar_indu_boundary_op}
    Let $b\in \cC^\invs$ be a bounding cochain for the induced $A_\infty$ algebra $\overline{\cC}$. Then, the induced operator $\m^b_{1,E}$ on $\cC_{E}^\invs$ is a boundary operator.
\end{lm}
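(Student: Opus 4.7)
The plan is to establish the well-definedness of $\m^b_{1,E}$ on $\cC^\invs_E$ and then derive the squaring-to-zero property from the $k=1$ $A_\infty$ relation for the $b$-deformed operators, exploiting the fact that $\m^b_0$ has strictly positive $\varsigma_1$-filtration once $b$ is a bounding cochain for the residue algebra.

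For well-definedness, the operator $\m^b_1:\cC\to\cC$ exists and is unique as a $b$-deformation by Lemma~\ref{lm:unique_b_deform}, with the explicit formula provided by Lemma~\ref{lm:deformed_operator_explicit_formula}. Applying property~\ref{it:val} to each term $\m_{1+\ell_0+\ell_1}(b^{\otimes\ell_0},\a,b^{\otimes\ell_1})$ in that formula gives $\varsigma_1(\m^b_1(\a))\ge \varsigma_1(\a)$, so $\m^b_1$ preserves both $F^{E}\cC$ and $F^{>E}\cC$, and therefore descends to an operator on $\cC_E$. For $\invs=-\mathfrak{d}$, Corollary~\ref{cor:preserved_subspace} shows that $\m^b_1$ preserves $\cC^{\invs}$ (using $b\in \cC^\invs$), while the case $\invs=\Id$ is automatic. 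Combining these two statements produces the induced operator $\m^b_{1,E}:\cC^\invs_E\to \cC^\invs_E$.

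To see that $\m^b_{1,E}$ squares to zero, I invoke the $k=1$ $A_\infty$ relation for the $b$-deformed operators, which is a formal consequence of the original $A_\infty$ relations together with the convergence ensured by pseudo-completeness (as in Lemma~\ref{lm:deformed_operator_explicit_formula}):
\[
\m^b_1(\m^b_1(\a)) + \m^b_2(\m^b_0,\a) + (-1)^{|\a|+1}\m^b_2(\a,\m^b_0) = 0.
\]
Since $b$ is a bounding cochain for the induced algebra $\overline{\cC}$, whose associated filtration is trivial, the Maurer-Cartan equation in the relevant quotient of $\overline{\cC}$ forces $\overline{\m^b_0}=0$, and hence $\varsigma_1(\m^b_0)>0$. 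The same filtration estimate applied to $\m^b_2$ then gives, for any representative $\a\in F^E\cC$,
\[
\varsigma_1(\m^b_2(\m^b_0,\a))\ge \varsigma_1(\m^b_0)+\varsigma_1(\a) > E,
\]
and analogously for the other correction term. Thus both correction terms lie in $F^{>E}\cC$ and vanish in $\cC_E$, yielding $\m^b_{1,E}\circ\m^b_{1,E}=0$.

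The main obstacle is extracting the correct content of the bounding cochain hypothesis, namely that $\overline{\m^b_0}=0$ rather than merely a central multiple of $\e$. Once this identification is made, the argument reduces to the standard derivation of the squaring-to-zero property of $\m_1$ in an $A_\infty$ algebra with vanishing curvature, combined with routine filtration bookkeeping supplied by Lemma~\ref{lm:deformed_operator_explicit_formula} and property~\ref{it:val}.
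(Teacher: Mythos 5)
Your proposal is correct and follows essentially the same route as the paper: both derive the identity $\m_1^b\circ\m_1^b(\a)=\pm\m_2^b(\m_0^b,\a)\pm\m_2^b(\a,\m_0^b)$ from the deformed $A_\infty$ relations, use that the bounding-cochain hypothesis gives $\overline{\m_0^b}=0$ hence $\varsigma_1(\m_0^b)>0$, push the correction terms into $F^{>E}\cC$ via property~\ref{it:val}, and invoke Corollary~\ref{cor:preserved_subspace} for the case $\invs=-\mathfrak{d}$. The extra well-definedness discussion you include is harmless and consistent with the paper's setup.
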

\begin{proof}
    By the assumption $\overline{\m_0^b}=0.$ Let $\alpha$ be an element of $F^{\ge E}\cC$ representing an element of $\cC_{E}^\invs.$ It follows from properties~\ref{it:lin},\ref{it:a_infty},\ref{it:unit2} of the cyclic $A_\infty$ algebra that
    \[
        -\m_1^b(\m_1^b(\a))=\m^b_2(\m_0^b,\a)+(-1)^{(|\a|+1)}\m^b_2(\a,\m_0^b).
\]
    Observe that $\varsigma_1(\m_0^b)>0$. So it follows from property~\ref{it:val} of the cyclic $A_\infty$ structure of $\cC$ that 
    \[\varsigma_1\left(\m^b_2(\m_0^b,\a)+(-1)^{(|\a|+1)}\m^b_2(\a,\m_0^b)\right)>E.\] 
    The claim now follows from the definition of $\cC_E,$ when $\invs$ is the identity. When $\invs=-\mathfrak{d}$, we also need to show that $\m_1^b$ preserves $\cC_E^{\invs}$, and this follows from Corollary~\ref{cor:preserved_subspace}.
\end{proof}
Observe that property~\ref{it:val} implies that the induced operators $\{\m_k\}_{k\ge 0}$ on the quotient module $\frac{\mathcal{C}}{F^{>E_{\ell}}\mathcal{C}}$ are well defined, and they define an $A_\infty$ algebra on $\frac{\mathcal{C}}{F^{>E_{\ell}}\mathcal{C}}$. Property~\ref{it:unit2} combined with property~\ref{it:val} of Definition~\ref{dfn:cycunit} implies that the induced $A_\infty$ algebra on $\frac{\mathcal{C}}{F^{>E_{\ell}}\mathcal{C}}$ is cyclic unital.
Assume that $\im \varsigma_1$ can be ordered
$\{E_0<E_1<E_2 <\ldots\}\subset \R$.
Let $b\in \cC^\invs$ be a bounding cochain for the induced $A_\infty$ algebra $\frac{\mathcal{C}}{F^{>E_{\ell}}\mathcal{C}}$. It follows from the discreteness of $\im\varsigma_1$ and Lemma~\ref{lm:sd_m_zero} that $\m_0^b\in F^{ E_{\ell+1}}\cC^\invs_{\not \e}.$ Let $\m^b_{0,E_{\ell+1}} \in \cC^\invs_{\not \e,E_{\ell+1}}$ denote the image of $\m_0^b.$
\begin{lm}\label{lm:o_is_class_HCe}
We have $\m_{1,E_{\ell+1}}^b(\m^b_{0,E_{\ell+1}})=0.$ 
\end{lm}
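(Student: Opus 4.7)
The plan is to derive the identity $\m_1^b(\m_0^b)=0$ directly in $\cC$ and then descend it to the quotient $\cC^\invs_{\not\e, E_{\ell+1}}$. Since $b$ is a bounding cochain for the induced algebra on $\cC/F^{>E_\ell}\cC$, it in particular solves the Maurer--Cartan equation in $\overline{\cC}$, so $\overline{\m_0^b}=0$. The (unlabeled) corollary following Lemma~\ref{lm:deformed_operator_explicit_formula} then guarantees that $(\cC,\{\m_k^b\}_{k\ge 0},\lp\;,\,\rp,\e)$ is itself a cyclic unital $A_\infty$ algebra over $Z(\mathcal{R})$; convergence of the defining infinite sums is ensured by pseudo-completeness together with Lemma~\ref{lm:m_bar_complex_converge}. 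The $k=0$ instance of the $A_\infty$ relation for these deformed operators has a single summand, producing the desired identity $\m_1^b(\m_0^b)=0$ as an equality in $\cC$.

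Descending this identity to $\cC^\invs_{\not\e, E_{\ell+1}}$ requires verifying that $\m_1^b$ passes to each of the three quotients involved. Preservation of $\cC^\invs$ is trivial when $\invs=\Id$ and is Corollary~\ref{cor:preserved_subspace} when $\invs=-\mathfrak{d}$; preservation of the filtration is property~\ref{it:val}. For the quotient by $Z(\ker d_\mathcal{R})\cdot\e$, I will use $\mathcal{R}$-bilinearity (property~\ref{it:lin}) to reduce to showing $\m_1^b(\e)=0$: for $a\in Z(\ker d_\mathcal{R})$,
\[
\m_1^b(a\cdot\e) = \pm a\cdot\m_1^b(\e) + (da)\cdot\e = \pm a\cdot\m_1^b(\e).
\]
In the expansion $\m_1^b(\e) = \sum_{\ell_0,\ell_1\ge 0}\m_{1+\ell_0+\ell_1}(b^{\otimes \ell_0},\e,b^{\otimes \ell_1})$, the unit axiom~\ref{it:unit1} annihilates every term except those with $1+\ell_0+\ell_1=2$, leaving $\m_2(\e,b)+\m_2(b,\e) = b + (-1)^{|b|}b = 0$ since $|b|=1$.

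Finally, writing $\m_0^b = c\cdot\e + \alpha$ with $c\in Z(\ker d_\mathcal{R})$ and $\alpha\in F^{\ge E_{\ell+1}}\cC^\invs$ — permitted by the Maurer--Cartan hypothesis at energy $E_\ell$ together with discreteness of $\im\varsigma_1$, and consistent with $\invs$-equivariance via Lemma~\ref{lm:sd_m_zero} — the class $\m^b_{0, E_{\ell+1}}$ is represented by $\alpha$, and
\[
\m_1^b(\alpha) = \m_1^b(\m_0^b) - \m_1^b(c\cdot\e) = 0
\]
in $\cC$. Projecting to $\cC^\invs_{\not\e, E_{\ell+1}}$ yields $\m^b_{1,E_{\ell+1}}(\m^b_{0,E_{\ell+1}})=0$. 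The substantive content — the $A_\infty$ structure on the $b$-deformed operators and the convergence underlying it — has already been established in the previous subsection, so the only delicate point here is disposing of the $c\cdot\e$ contribution; this is handled cleanly by the unit axioms.
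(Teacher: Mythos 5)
Your proof is correct and follows essentially the same route as the paper's: the core identity $\m_1^b(\m_0^b)=0$ coming from the $A_\infty$ relations for the $b$-deformed operators, then descent to $\cC^\invs_{\not\e,E_{\ell+1}}$ via the definition of the induced operator. The extra verifications you supply (that $\m_1^b(\e)=0$ kills the $c\cdot\e$ contribution, and the filtration/$\invs$-equivariance bookkeeping) are details the paper leaves implicit, and they check out.
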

\begin{proof}
    It follows from the $A_\infty$ relations that $\m_1^b(\m_0^b)=0.$ So, by the definition of the induced operator $\m_{1,E_{\ell+1}}^b(\m^b_{0,E_{\ell+1}})=0.$ 
\end{proof}
Let $b\in \cC^{\invs}$ be a bounding cochain for the induced $A_\infty$ algebra $\frac{\mathcal{C}}{F^{>E_{\ell}}\mathcal{C}}$. Abbreviate by $\mathfrak{o}$ the class of $\m_{0,E_{\ell+1}}^b$ in $H^2\left(\cC_{\not \e, E_{\ell+1}}^{\invs},\m^b_1\right)$.
\begin{lm}\label{lm:obs_induction}
    Suppose $\mathfrak{o}=0$. Then there exists $\xi \in F^{E_{\ell+1}}\cC^{\invs}$ of degree $1$ such that if we set 
\[
b' = b + \xi,\]
then $b'\in \cC^{\invs}$ is a bounding cochain for the induced $A_\infty$ algebra $\frac{\cC}{F^{>E_{\ell+1}}}$.
\end{lm}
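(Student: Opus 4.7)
The plan is to find $\xi\in F^{E_{\ell+1}}\cC^{\invs}$ of degree $1$ that solves the linearized Maurer-Cartan equation $\m_1^b(\xi)\equiv -\m_0^b$ modulo $F^{>E_{\ell+1}}\cC+Z(\ker d_{\mathcal{R}})\cdot\e$, and then to verify that the higher-order corrections to $\m_0^{b+\xi}$ automatically lie in $F^{>E_{\ell+1}}\cC$. The existence of $\xi$ will be produced from the hypothesis $\mathfrak{o}=0$, and the verification reduces the full Maurer-Cartan expansion to its linear term using property~\ref{it:val} of Definition~\ref{dfn:cycunit}.

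Concretely, since $\mathfrak{o}=[\m_{0,E_{\ell+1}}^b]=0$ in $H^2(\cC^{\invs}_{\not\e,E_{\ell+1}},\m_1^b)$, we obtain a class $[\zeta]\in(\cC^{\invs}_{\not\e,E_{\ell+1}})_1$ with $\m_{1,E_{\ell+1}}^b([\zeta])=-\m_{0,E_{\ell+1}}^b$ (negate if needed). When $\invs=-\mathfrak{d}$, the averaging step from the proof of Corollary~\ref{cor:primitive_exists} produces a $\mathfrak{d}$-antiinvariant representative, so in either case $[\zeta]$ is of type $\invs$. We then lift $[\zeta]$ successively to a representative $\tilde\zeta\in F^{E_{\ell+1}}\cC^{\invs}_{\not\e}$ and then to $\xi\in F^{E_{\ell+1}}\cC^{\invs}$ of degree $1$, using that both quotient maps (by $Z(\ker d_{\mathcal{R}})\cdot\e$ and onto the filtration quotient) respect the filtration and the $\invs$-action. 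Set $b':=b+\xi$; then $|b'|=1$, $b'\in\cC^{\invs}$, and $\varsigma_2(b')>0$ since $\varsigma_2(b)>0$ while $\varsigma_2(\xi)\ge\varsigma_1(\xi)\ge E_{\ell+1}>0$.

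By Lemma~\ref{lm:deformed_operator_explicit_formula} together with pseudo-completeness, $\m_0^{b'}=\sum_k\m_k((b+\xi)^{\otimes k})$ converges in $\cC$. Expanding $(b+\xi)^{\otimes k}$ by multilinearity and grouping the resulting terms according to the number $j$ of $\xi$-entries, the $j=0$ and $j=1$ contributions assemble (by Lemma~\ref{lm:deformed_operator_explicit_formula} for $k=0,1$) into $\m_0^b$ and $\m_1^b(\xi)$ respectively, while every summand with $j\ge 2$ has at least two arguments of $\varsigma_1$-filtration $\ge E_{\ell+1}$, so by property~\ref{it:val} its $\varsigma_1$-filtration is at least $2E_{\ell+1}>E_{\ell+1}$. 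Therefore $\m_0^{b'}\equiv\m_0^b+\m_1^b(\xi)\pmod{F^{>E_{\ell+1}}\cC}$, and this right-hand side vanishes modulo $F^{>E_{\ell+1}}\cC+Z(\ker d_{\mathcal{R}})\cdot\e$ by our choice of $\xi$. Since $\varsigma_1(\xi)>0$ we have $\bar\xi=0$, so the Maurer-Cartan condition on $\overline{\cC}$ is inherited from $b$. Hence $b'$ is a bounding cochain for $\cC/F^{>E_{\ell+1}}\cC$. The main technical point is the equivariant lifting of $[\zeta]$ through the two successive quotients while respecting both the filtration and the involution symmetry; once $\xi$ is produced, the collapse of the Maurer-Cartan expansion to linear order modulo $F^{>E_{\ell+1}}\cC$ is forced directly by property~\ref{it:val}.
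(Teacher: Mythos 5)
Your proof is correct and follows essentially the same route as the paper: use $\mathfrak{o}=0$ to produce a degree-$1$ primitive $\xi\in F^{E_{\ell+1}}\cC^{\invs}$ with $\m_1^b(\xi)\equiv-\m_0^b$ modulo $F^{>E_{\ell+1}}\cC_{\not\e}$, then invoke property~\ref{it:val} to see that all terms of $\m_0^{b+\xi}$ involving $\xi$ at least twice lie in $F^{>E_{\ell+1}}\cC$, so the Maurer--Cartan expansion collapses to $\m_0^b+\m_1^b(\xi)\equiv 0$. The additional care you take with the $\invs$-equivariant lifting through the quotients and with the residue condition on $\overline{\cC}$ is consistent with the paper's (terser) argument and does not change the method.
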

\begin{proof}
Since $\mathfrak{o} = 0,$ we can choose $\xi \in F^{E}\cC^{\invs}$ such that $\m^b_1(\xi) \equiv -\m_0^b \pmod{F^{>E}\cC_{\not\e}}.$ It follows from Property~\ref{it:val} of Definition~\ref{dfn:cycunit} that
\[
\m^{b + \xi}_0 \equiv \m^{b}_0 + \m^{b}_1(\xi) \equiv 0 \pmod{F^{>E}\cC_{\not\e}}.
\]
\end{proof}
\begin{lm}\label{lm:twisted_f_chain_iso}
    Let $(\cC, \{\m_k\}_{k\ge 0},\lp\;,\,\rp,\e)$ be a cyclic unital $A_{\infty}$ algebra over $\mathcal{R}$ such that the residue algebra $(\bar{\cC}, \{\bar{\m}_k\}_{k\ge 0},\lp\;,\,\rp,\e)$ arises from a dga.   Assume that $\cC$ has an $\bar{\mathcal{R}}$ structure given by $g:\mathcal{R}\otimes \bar{\cC} \to \cC$ and let $f$ be its inverse. Let $b$ be a bounding cochain for the induced $A_\infty$ algebra $\overline{\cC}$. Then, $f$ induces an isomorphism of cochain complexes between $\left(\mathcal{C}_E,\m_{1,E}^b\right)$ and $\left(\mathcal{R}_E\otimes \overline{\cC},d_{\mathcal{R}}\otimes \Id+\Id\otimes (\bar{\m}_1)^{\bar{b}}\right).$
\end{lm}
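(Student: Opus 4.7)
The plan is to verify two things: that $f$ descends to an $\bar{\mathcal{R}}$-module isomorphism between $\cC_E$ and $\mathcal{R}_E \otimes \bar{\cC}$, and that this isomorphism intertwines $\m^b_{1,E}$ with $d_{\mathcal{R}}\otimes \Id+\Id\otimes (\bar{\m}_1)^{\bar{b}}$. The first is essentially formal: since $g:\mathcal{R}\otimes_{\bar{\mathcal{R}}}\bar{\cC}\to\cC$ is a filtered $\bar{\mathcal{R}}$-module isomorphism and $\bar{\cC}$ carries the trivial filtration, the filtration on the tensor product satisfies $F^E(\mathcal{R}\otimes_{\bar{\mathcal{R}}}\bar{\cC}) = F^E\mathcal{R}\otimes_{\bar{\mathcal{R}}}\bar{\cC}$, so passing to the $E$-subquotient yields the claimed isomorphism $f_E:\cC_E\xrightarrow{\sim}\mathcal{R}_E\otimes\bar{\cC}$. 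The content is in the chain map property.

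For the chain map property, fix a homogeneous $\a\in F^E\cC$ and expand via Lemma~\ref{lm:deformed_operator_explicit_formula}:
\[
\m_1^b(\a) = \sum_{\ell_0,\ell_1\ge 0}\m_{1+\ell_0+\ell_1}(\underbrace{b,\ldots,b}_{\ell_0},\,\a,\,\underbrace{b,\ldots,b}_{\ell_1}).
\]
I will then apply Proposition~\ref{prop:r_structure_energy_dga} term-by-term. The $\ell_0=\ell_1=0$ summand contributes $f(\m_1(\a))\equiv df(\a) \pmod{F^{>E}\cC}$, where $d=d_{\mathcal{R}}\otimes \Id+\Id\otimes \bar{\m}_1$ is the total dga differential on $\mathcal{R}\otimes_{\bar{\mathcal{R}}}\bar{\cC}$ under the identification of Proposition~\ref{prop:sf_extension_arises_dga}. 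The two $\ell_0+\ell_1=1$ summands contribute $-f(b)f(\a) + (-1)^{|\a|}f(\a)f(b) = -[f(b),f(\a)]$ modulo $F^{>E}\cC$, by the $k=2$ case of Proposition~\ref{prop:r_structure_energy_dga}. The $\ell_0+\ell_1\ge 2$ summands vanish modulo $F^{>E}\cC$ by the $k>2$ case. Combining gives
\[
f(\m_1^b(\a))\equiv df(\a)-[f(b),f(\a)] \pmod{F^{>E}\cC}.
\]

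To match this with the right-hand side of the claimed identity, note that by Lemma~\ref{lm:residue_zero_equiv}, $f(b)\equiv 1\otimes \bar{b}\pmod{F^{>0}}$, whence $[f(b),f(\a)]\equiv [1\otimes\bar{b},f(\a)]\pmod{F^{>E}}$ because the discrepancy lies in $F^{>0}\cdot F^E\subset F^{>E}$. Since $\bar{\cC}$ arises from a dga, $\bar{\m}_k=0$ for $k\ge 3$, so the explicit formula of Lemma~\ref{lm:deformed_operator_explicit_formula} applied in $\bar{\cC}$ collapses to $(\bar{\m}_1)^{\bar{b}}(x)=\bar{\m}_1(x)-[\bar{b},x]$. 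Consequently $(d_{\mathcal{R}}\otimes \Id+\Id\otimes(\bar{\m}_1)^{\bar{b}})(f(\a)) = df(\a)-[1\otimes\bar{b},f(\a)]$, matching the formula above modulo $F^{>E}\cC$. Descending to $\cC_E$ and $\mathcal{R}_E\otimes\bar{\cC}$ yields the desired chain map property.

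The main technical obstacle is sign-bookkeeping when assembling the $\ell_0+\ell_1=1$ contributions into a graded commutator and ensuring compatibility with the Koszul signs in $d_{\mathcal{R}}\otimes\Id+\Id\otimes\bar{\m}_1$ on the tensor product. A subsidiary point is that the $k=2$ case of Proposition~\ref{prop:r_structure_energy_dga} requires the left and right $\mathcal{R}$-module structures on $\cC[1]$ to agree; this is part of the standing hypothesis of Section~\ref{ssec:obs_theory} and so applies here. The rest of the argument is a direct combination of Lemmas~\ref{lm:residue_zero_equiv} and~\ref{lm:deformed_operator_explicit_formula} with Propositions~\ref{prop:r_structure_energy_dga} and~\ref{prop:sf_extension_arises_dga}.
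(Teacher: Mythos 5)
Your proof is correct, but it takes a somewhat different route from the paper's. The paper argues structurally: it notes that $\overline{\m^b_k}=(\bar\m_k)^{\bar b}$ and that $(\bar\m_k)^{\bar b}=\bar\m_k$ for $k\neq 1$ (since $\bar\cC$ arises from a dga), so the residue of the $b$-deformed algebra $(\cC,\{\m^b_k\})$ again arises from a dga, now with differential $(\bar\m_1)^{\bar b}$; it then applies Proposition~\ref{prop:r_structure_energy_dga} once, in its $k=1$ case only, to the deformed algebra, and the chain-map property drops out. You instead expand $\m^b_1$ via Lemma~\ref{lm:deformed_operator_explicit_formula} and apply Proposition~\ref{prop:r_structure_energy_dga} term by term ($k=1$, $k=2$, $k>2$) to the undeformed algebra, assembling $f(\m^b_1(\a))\equiv df(\a)-[f(b),f(\a)]$ and then matching with the twisted differential via Lemma~\ref{lm:residue_zero_equiv}; this is in effect a re-derivation of the paper's Proposition~\ref{prop:deformed_boundrary_operator} in the present setting. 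The trade-offs: your route needs the $k=2$ hypothesis that the left and right $\mathcal{R}$-module structures on $\cC[1]$ coincide, which the paper's route avoids entirely; your appeal to the standing hypothesis of Section~\ref{ssec:obs_theory} (stated over $Z(\mathcal{R})$) is legitimate here only because the presence of an $\bar{\mathcal{R}}$ structure forces $\mathcal{R}$ to be commutative (Definition~\ref{dfn:bar_structre}), so $Z(\mathcal{R})=\mathcal{R}$ — it would have been cleaner to say this explicitly. Two further small points you gloss over but which do go through: the terms with $\ell_0+\ell_1\ge 2$ form an infinite family, so you need that a $\varsigma_1$-convergent sum of elements of $F^{>E}\cC$ stays in $F^{>E}\cC$ (true by sub-additivity of the filtration together with Lemma~\ref{lm:m_bar_complex_converge}); and the congruences should be read in $\mathcal{R}\otimes_{\bar{\mathcal{R}}}\bar\cC$ rather than in $\cC$, after which descending to the $E$-subquotients identifies the induced differential with $d_{\mathcal{R}}\otimes\Id+\Id\otimes(\bar\m_1)^{\bar b}$ as claimed.
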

\begin{proof}
It follows from the definition of an $\bar{\mathcal{R}}$ structure, Definition~\ref{dfn:bar_structre}, that $f$ induces an isomorphism of $\bar{\mathcal{R}}$-modules 
\[
\mathcal{C}_E \to \mathcal{R}_E\otimes \overline{\cC}.
\]
It follows from the definition of the residue algebra and Lemma~\ref{lm:deformed_operator_explicit_formula} that $\overline{\m_k^{b}}=(\bar{\m}_k)^{\bar{b}}$. We can use the explicit formula given in Lemma~\ref{lm:deformed_operator_explicit_formula} to deduce that $(\bar{\m}_k)^{\bar{b}}=\bar{\m}_k$ for $k\ne 1$, as $(\bar{\cC}, \{\bar{\m}_k\}_{k\ge 0},\lp\;,\,\rp,\e)$ arises from a dga. It follows that the residue algebra of $(\cC, \{\m_k^b\}_{k\ge 0},\lp\;,\,\rp,\e)$ arises from a dga. So, Proposition~\ref{prop:r_structure_energy_dga} implies that $f$ is a chain map, and the result follows. 
\end{proof}

\subsection{The ring \texorpdfstring{$\mathbb{R} \langle s \rangle $}.}\label{ssec:the_ring_S}
Define $S:= \mathbb{R}\langle s  \rangle$ to be the ring of non-commutative polynomials in one variable where the grading on $S$ is given by $|s|=1-n$. Define a filtration $\nu_S:S\to \R_{0\ge}\cup\{\infty \}$ by 
\[\nu_S\left(\sum_{i=0}^m a_i s^i\right)=\inf_{a_i\ne 0}-|s^i|=\inf_{a_i\ne 0}i(n-1).\]
Recall that $n>1$, so this is not the trivial filtration.

Define a $\Z_2$ grading on $S$ by, for homogeneous elements $a \in S,$  
\[
\degev{a}=|a| \mod 2.
\]
Let $\Upsilon$ be a graded commutative algebra, and write $\hat{\Upsilon}:=S\otimes\Upsilon$. We equip $\hat{\Upsilon}$ with the $\mathbb{Z}_2$ grading induced by the $\mathbb{Z}_2$ grading of $S,$ and we denote it also by $\degev{\cdot}$. For example if $s$ is a homogeneous element in $S$, then for any $r \in \Upsilon$ we have
$$\degev{s \otimes r}\equiv |s| \mod 2.$$

\begin{lm}\label{lm:sign_multipication_order}
 Let $s_1, s_2 \in S$ and $\alpha, \xi \in \Upsilon$ homogeneous elements. Then the multiplication on $\hat{\Upsilon}$ satisfies,
\[(s_1 \otimes \alpha ) (s_2 \otimes \xi)=(-1)^{(|\alpha\otimes s_1|)(|\xi\otimes s_2|)+\degev{s_1}\degev{s_2}}(s_2 \otimes \xi )( s_1 \otimes \alpha).\]
\end{lm}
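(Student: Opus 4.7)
The plan is to unwind both products using the standard sign rule for the tensor product of graded algebras recalled earlier in the paper, exploit graded commutativity of $\Upsilon$, and use the key fact that homogeneous elements of $S=\R\langle s\rangle$ commute in $S$ itself. Specifically, any homogeneous element of $S$ is a scalar multiple of a power of $s$, so $s_1 s_2 = s_2 s_1$ in $S$, even though $S$ is a priori non-commutative. This is what will let the sign computation close up.

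First I would apply the definition of the product on $\hat{\Upsilon}=S\otimes\Upsilon$ to each side. This gives
\[
(s_1\otimes\a)(s_2\otimes\xi)=(-1)^{|s_2||\a|}\,s_1 s_2\otimes\a\xi,
\qquad
(s_2\otimes\xi)(s_1\otimes\a)=(-1)^{|s_1||\xi|}\,s_2 s_1\otimes\xi\a.
\]
Then I would invoke graded commutativity of $\Upsilon$ to write $\xi\a=(-1)^{|\a||\xi|}\a\xi$, and the observation above that $s_1 s_2=s_2 s_1\in S$. This reduces the claim to the congruence
\[
|s_2||\a|\equiv|s_1||\xi|+|\a||\xi|+(|\a|+|s_1|)(|\xi|+|s_2|)+\degev{s_1}\degev{s_2}\pmod{2}.
\]

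The last step is to expand $(|\a|+|s_1|)(|\xi|+|s_2|)$ and cancel. All terms except $|s_1||s_2|$ cancel, and since $s_1, s_2$ are homogeneous, $|s_i|\equiv\degev{s_i}\pmod 2$, so $|s_1||s_2|\equiv\degev{s_1}\degev{s_2}\pmod 2$ as well, closing the identity. The only step requiring even a moment's thought is the commutativity $s_1 s_2=s_2 s_1$ in $S$, which might appear suspicious given that $S$ is introduced as a non-commutative polynomial ring; but since $S$ has only one generator $s$, every homogeneous element is a scalar multiple of a power of $s$, and these commute trivially. This is where the non-commutativity of $S$ is harmless at the level of pairs of homogeneous elements, and it is what allows the correction $\degev{s_1}\degev{s_2}$ in the sign to make sense as a $\Z_2$-graded commutativity statement for $\hat\Upsilon$.
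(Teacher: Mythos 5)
Your proof is correct and follows essentially the same route as the paper's: expand both products via the graded tensor product sign rule, use graded commutativity of $\Upsilon$ and the fact that powers of the single generator $s$ commute in $S$, then verify the sign congruence using $|s_i|\equiv\degev{s_i}\pmod 2$. No gaps.
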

\begin{proof}
We compute the product of the elements $s_1 \otimes \alpha, s_2 \otimes \xi \in S \otimes \Upsilon$,
\[(s_1 \otimes \alpha) (s_2 \otimes \xi)=(-1)^{|\alpha||s_2|}s_1 s_2 \otimes(\alpha  \xi)=(-1)^{|\alpha||s_2|+|\xi||\alpha|}s_1 s_2\otimes(\xi  \alpha).\]
If we change the order of multiplication, we get
\[(s_2 \otimes \xi )( s_1 \otimes \alpha)=(-1)^{|\xi||s_1|}s_2  s_1\otimes(\xi  \alpha).\]
Remembering that in $S$ we have $s_1 s_2=s_2 s_1$, we conclude that
\[(s_1 \otimes \alpha ) (s_2 \otimes \xi)=(-1)^{(|\alpha\otimes s_1|)(|\xi\otimes s_2|)+\degev{s_1}\degev{s_2}}(s_2 \otimes \xi )( s_1 \otimes \alpha).\]
\end{proof}
\begin{cor}\label{cor:center_of_hat_algebra}
For $\alpha_1,\alpha_2 \in \hat{\Upsilon}$
\begin{equation}\label{eq:bracket_C_hat}
    [\alpha_1,\alpha_2]=\begin{cases}
    0, & \degev{\alpha_1}\cdot\degev{\alpha_2}=0\\
    2\alpha_1 \cdot\alpha_2, & otherwise.
    \end{cases}
\end{equation}
It follows that the center of $\hat{\Upsilon}$ is given by
\[Z(\hat{\Upsilon})=\{x\in \hat{\Upsilon}\big| \;\degev{x}=0 \}.\]
\end{cor}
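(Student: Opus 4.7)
My plan is to deduce the bracket formula directly from Lemma~\ref{lm:sign_multipication_order} on elementary tensors and then extend by bilinearity, and finally use the bracket formula plus injectivity of right multiplication by $s$ to identify the center.

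For the first claim, I would start with $\a_1 = s_1\otimes \alpha$ and $\a_2 = s_2\otimes \xi$ homogeneous. By Lemma~\ref{lm:sign_multipication_order}, the two products differ only by the sign $(-1)^{|\a_1||\a_2| + \degev{s_1}\degev{s_2}}$. Unpacking the definition of the graded bracket $[\a_1,\a_2] = \a_1\a_2 - (-1)^{|\a_1||\a_2|}\a_2\a_1$ and substituting, the factors of $(-1)^{|\a_1||\a_2|}$ cancel and I am left with
\[
[\a_1,\a_2] = \bigl(1 - (-1)^{\degev{s_1}\degev{s_2}}\bigr)\,\a_1\a_2,
\]
which is $0$ when $\degev{s_1}\degev{s_2}=0$ and $2\a_1\a_2$ when $\degev{s_1}\degev{s_2}=1$. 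Since the $\Z_2$ grading $\degev{\cdot}$ on $\hat\Upsilon$ is the one induced from $S$, and since the bracket is bilinear, I would then decompose arbitrary $\a_1,\a_2$ into their $\degev{}$ even and odd parts; the even--even, even--odd, and odd--even brackets all vanish, while the odd--odd bracket gives twice the product, which matches~\eqref{eq:bracket_C_hat}.

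For the characterization of the center, the inclusion $\{x : \degev{x}=0\}\subset Z(\hat\Upsilon)$ is immediate from~\eqref{eq:bracket_C_hat}, because any such $x$ has $\degev{x}\cdot\degev{y}=0$ for every $y$. For the converse, I would take $x\in Z(\hat\Upsilon)$, write $x=x^{\mathrm{ev}}+x^{\mathrm{od}}$ according to the $\Z_2$ grading, and apply the bracket formula to $[x,s]$. Since $\degev{s}=1$, I get $[x^{\mathrm{ev}},s]=0$ and $[x^{\mathrm{od}},s]=2x^{\mathrm{od}}s$, so centrality forces $x^{\mathrm{od}}s=0$. Because $S=\R\langle s\rangle$ is free in $s$ and $\hat\Upsilon = S\otimes\Upsilon$, right multiplication by $s$ is injective on $\hat\Upsilon$, so $x^{\mathrm{od}}=0$ and hence $\degev{x}=0$.

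The computation is essentially routine, so I do not anticipate a serious obstacle; the only mild subtlety is ensuring that all sign contributions collapse exactly to $(-1)^{\degev{s_1}\degev{s_2}}$ and then noting explicitly the injectivity of $\cdot\, s$ so that the converse in the center statement actually forces $x^{\mathrm{od}}=0$ rather than just $x^{\mathrm{od}}s=0$.
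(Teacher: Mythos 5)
Your argument is correct and is exactly the route the paper intends: the corollary is stated without proof as an immediate consequence of Lemma~\ref{lm:sign_multipication_order}, and your computation $[\a_1,\a_2]=\bigl(1-(-1)^{\degev{s_1}\degev{s_2}}\bigr)\a_1\a_2$ together with bilinear extension over the $\Z_2$-decomposition is the intended derivation. Your explicit treatment of the converse inclusion for the center, via $[x,s\otimes 1]$ and injectivity of right multiplication by $s$ (using the unit of $\Upsilon$), just fills in a detail the paper leaves implicit.
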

We define the $\R$ module homomorphism $F:S \to  S$ to be the projection to the odd part. Concretely, the morphism is given on homogeneous elements by
$$F(a s^i)= \begin{cases}
0, & i \equiv 0 \mod{2},\\
a s^i, & otherwise
.\end{cases}$$
\begin{lm}\label{lm:gradedcyclic}
The homomorphism $F:S\to  S$ is a graded cyclic $Z(S)$ module homomorphism.
\end{lm}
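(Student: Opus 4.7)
The plan is to reduce both claims to elementary parity considerations, exploiting the fact that although $s$ is declared noncommutative (to prevent the graded-commutativity relation $s^2=0$ that would otherwise follow from the odd degree of $s$), the ring $S = \R\langle s\rangle$ has a single generator and is therefore commutative as an ungraded ring. Since $n$ is even, $|s| = 1-n$ is odd, so $|s^i|$ has the same parity as $i$, and the decomposition $S = S^{even}\oplus S^{odd}$ coincides with the decomposition into even and odd powers of $s$. The map $F$ is then the projection onto $S^{odd}$ along $S^{even}$.

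First I would verify that $Z(S) = S^{even} = \R\langle s^2\rangle$. For $s^i, s^j \in S$, since the two elements commute in the ungraded sense, the graded commutator equals $[s^i,s^j] = s^{i+j}\bigl(1 - (-1)^{ij(1-n)^2}\bigr) = s^{i+j}\bigl(1-(-1)^{ij}\bigr)$, which vanishes iff $ij$ is even. Hence $s^i$ is central iff $i$ is even, in agreement with the notation $x^0\in \R\langle s^2\rangle$ fixed in Section~\ref{subsubsec:a_infty}. For the $Z(S)$-module property, multiplication by any element of $Z(S)$ shifts degrees by an even amount and therefore preserves the splitting $S = S^{even}\oplus S^{odd}$; consequently it commutes with the projection $F$, yielding $F(zx) = zF(x)$ and $F(xz) = F(x)z$ for all $z\in Z(S)$ and $x\in S$.

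For graded cyclicity, I would observe that $s_1\cdots s_n = s_n s_1\cdots s_{n-1}$ as elements of $S$ by ungraded commutativity, so $F(s_1\cdots s_n)$ and $F(s_n s_1\cdots s_{n-1})$ are equal as elements of $S$ and in particular vanish or are nonzero simultaneously. It therefore suffices to show that the prefactor $(-1)^{|s_n|\sum_{i=1}^{n-1}|s_i|}$ equals $+1$ whenever the common value is nonzero. Reducing to monomials $s_j = s^{i_j}$, this sign is $(-1)^{i_n(i_1+\cdots+i_{n-1})}$ because $|s|$ is odd. Nonvanishing of $F$ forces $i_1+\cdots+i_n$ to be odd, which means exactly one of $i_n$ and $i_1+\cdots+i_{n-1}$ is odd; so their product is even and the sign is $+1$.

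No real obstacle is anticipated. The lemma is essentially a compatibility check between the grading convention $|s|=1-n$, the ungraded commutativity of $S$, and the even/odd decomposition; the only thing to be careful about is not conflating the ungraded product on $S$ (which is symmetric) with the graded commutator (which is sensitive to the odd degree of $s$ and is what defines $Z(S)$).
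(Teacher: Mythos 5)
Your proof is correct and follows essentially the same route as the paper's: ungraded commutativity of the one-variable ring gives $s_1\cdots s_n=s_ns_1\cdots s_{n-1}$, the parity count (exactly one of $\degev{s_n}$ and $\degev{s_1\cdots s_{n-1}}$ is odd when $F$ does not vanish) kills the sign, and the $Z(S)$-linearity comes from identifying the center with the even part, which the paper obtains by citing Corollary~\ref{cor:center_of_hat_algebra} rather than recomputing the graded commutator as you do. The only point worth keeping explicit, as you did, is the standing assumption that $n$ is even so that $|s|$ is odd and the projection onto odd powers of $s$ agrees with the projection onto the graded-odd part.
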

\begin{proof}
Let $s_1,\dots, s_n \in S$ be homogeneous elements. If $s_1 \cdots s_n$ is an odd element in $S$ then $\degev{s_1 \cdots s_{n-1}}\degev{s_n}=0$. Thus 
\[F(s_1 \cdots s_n)=s_1 \cdots s_n =s_n \cdot s_1 \cdots s_{n-1}=(-1)^{|s_n|\sum_{i=1}^{n-1}|s_i|}F(s_n \cdot s_1 \cdots s_{n-1}).\]
If $s_1 \cdots s_n$ is an even element in $S$ then the equality above also holds. Hence  $F:S\to  S$ is a graded cyclic homomorphism.
We use Corollary~\ref{cor:center_of_hat_algebra} to deduce that $F:S \to S$ is a $Z(S)-\operatorname{mod}$ homomorphism.
\end{proof}

\section{Fukaya \texorpdfstring{$A_\infty$}{A-infinity} algebras}\label{Fukaya algebras}
In this section we recall definitions and results concerning Fukaya $A_\infty$-algebras from ~\cite{solomon2016differential}. The notation, as well as sign and orientation conventions, are the same as in~\cite{solomon2016differential}. Then, using the results of Section~\ref{ssec:extension_of_scalar}, we extend scalars by the non-commutative ring $S = R\langle s\rangle$ in Section~\ref{ssec:fukaya_scalar_ext}.
In Section~\ref{sssec:sababa_property}, we restrict scalars to a sababa $A_\infty$ algebra so that the scalar extension by $S$ becomes pseudo-complete (Proposition~\ref{prop:saba_decend_psedu_comp}). This pseudo-completeness is the key property that allows us in Section~\ref{ssec:fukaya_bounding_gauge} to use the notions of bounding cochains developed in Section~\ref{ssec:obs_theory} in the setting of Fukaya $A_\infty$-algebras. Section~\ref{ssec:discrete_filtration} introduces the discrete filtration and finite energy quotients, which provide the technical framework needed to adapt the abstract obstruction theory from Section~\ref{ssec:obs_theory} to the Fukaya $A_\infty$ context of Section~\ref{sec:bd_chains}. When an anti-symplectic involution is present, Section~\ref{ssec:real} establishes the self-duality properties required for the real case.

\subsection{Construction}\label{ssec:constr}
Throughout this work $(X,\omega)$ is a symplectic manifold, $L\subset X$ is a connected Lagrangian submanifold with relative spin structure $\s$, and $J$ is an $\omega$-tame almost complex structure. Let $\dim_\R X=2n$.
Write $\beta_0:=0\in \sly$.

A $J$-holomorphic genus-$0$ open stable map to $(X,L)$ of degree $\beta \in \sly$ with one boundary component, $k+1$ boundary marked points, and $l$ interior marked points, is a quadruple $(\Sigma, u,\vec{z},\vec{w})$ as follows. The domain $\Sigma$ is a genus-$0$ nodal Riemann surface with boundary consisting of one connected component. The map
\[
u: (\Sigma,\d\Sigma) \to (X,L)
\]
is continuous, $J$-holomorphic on each irreducible component of $\Sigma,$ and $[u_*([\Sigma,\partial\Sigma])] = \beta.$
The marked points are denoted by
\[
\vec{z} = (z_0,\ldots,z_k), \qquad \vec{w} = (w_1,\ldots,w_l),
\]
with $z_j \in \partial \Sigma, \, w_j \in int(\Sigma),$ distinct. The labeling of the marked points $z_j$ respects the cyclic order given by the orientation of $\partial \Sigma$ induced by the complex orientation of $\Sigma.$ Stability means that if $\Sigma_i$ is an irreducible component of $\Sigma,$ then either $u|_{\Sigma_i}$ is non-constant, or the combined number of marked points and nodal points on $\Sigma_i$ is at least $3.$  An isomorphism of open stable maps $(\Sigma,u,\vec{z},\vec{w})$ and $(\Sigma',u',\vec{z}',\vec{w}')$ is a homeomorphism $\theta : \Sigma \to \Sigma'$, biholomorphic on each irreducible component, such that
\[
u = u' \circ \theta, \qquad\qquad  z_j' = \theta(z_j), \quad j = 0,\ldots,k, \qquad w_j' = \theta(w_j), \quad j = 1,\ldots,l.
\]

Let $\M_{k+1,l}(\beta) = \M_{k+1,l}(\beta;J)$ denote the moduli space of $J$-holomorphic genus zero open stable maps to $(X,L)$ of degree $\beta$ with one boundary component, $k+1$ boundary marked points, and $l$ internal marked points.
Let the evaluation maps
\begin{gather*}
evb_j^\beta:\M_{k+1,l}(\beta)\to L, \qquad  \qquad j=0,\ldots,k, \\
evi_j^\beta:\M_{k+1,l}(\beta) \to X, \qquad \qquad j=1,\ldots,l,
\end{gather*}
be given by $evb_j^\beta((\Sigma,u,\vec{z},\vec{w}))=u(z_j)$ and $evi_j^\beta((\Sigma,u,\vec{z},\vec{w}))= u(w_j).$
We may omit the superscript $\beta$ when the omission does not create ambiguity.

We assume that $\M_{k+1,l}(\beta)$ is a smooth orbifold with corners and that $evb_0^{\beta}$ is a proper submersion.

Let $\gamma\in \mI_QD$ be a closed form with $|\gamma|=2$. For example, given closed differential forms $\gamma_j\in A^*(X,L)$ for $j=0,\ldots,N,$ take $t_j$ of degree $2-|\gamma_j|$ and $\gamma:=\sum_{j=0}^Nt_j\gamma_j$.
Define
\[
\mg_k:C^{\otimes k} \lrarr C
\]
by
\begin{multline*}
\mg_k(\alpha_1,\ldots,\alpha_k):=\\
=\delta_{k,1}\cdot d\alpha_1+(-1)^{\varepsilon(\alpha)}
\sum_{\substack{\beta\in\sly\\l\ge0}}T^{\beta}\frac{1}{l!}{evb_0^\beta}_* (\bigwedge_{j=1}^l(evi_j^\beta)^*\gamma\wedge
\bigwedge_{j=1}^k (evb_j^\beta)^*\alpha_j
).
\end{multline*}
with
\[
\varepsilon(\alpha)
:=\sum_{j=1}^kj(|\alpha_j|+1)+1.
\]
The push-forward $(evb_0)_*$ is defined by integration over the fiber; it is well-defined because $evb_0$ is a proper submersion. 

Denote by $\langle\;,\;\rangle$ the signed Poincar\'e pairing
\begin{equation}\label{eq:pairing}
\langle\xi,\eta\rangle:=(-1)^{|\eta|}\int_L\xi\wedge\eta.
\end{equation}
Equip $C$ with the diagonal bimodule structure over $R$ with respect to grading of~$C[1].$
\begin{thm}[{\cite[Theorem 1]{solomon2016differential}}]\label{thm:solomon}
    $(C,\{\mg_k\}_{k\ge 0},\langle\;,\,\rangle,1)$ is a cyclic unital $A_\infty$ algebra in the sense of Definition~\ref{dfn:cycunit}.
\end{thm}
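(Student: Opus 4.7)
The plan is to recognize that this statement is essentially a reformulation of Theorem~1 of~\cite{solomon2016differential}, so the proof strategy follows that paper closely. I will verify the ten axioms of Definition~\ref{dfn:cycunit} grouped into four categories: algebraic/linearity axioms, the $A_\infty$ relations, unit axioms, and cyclic/pairing axioms.

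First, the linearity axioms \ref{it:lin}, \ref{it:plin}, \ref{it:val2}, \ref{it:symm} for the operations and the pairing will follow from standard naturality properties of integration over the fiber (the projection formula along a proper submersion), combined with the sign conventions for the Poincar\'e pairing on $A^*(L)$ and the Leibniz rule that produces the $\delta_{1,k}\cdot d\alpha_1$ term. The filtration axiom \ref{it:val} is a consequence of Gromov compactness: only finitely many $\beta\in\sly$ contribute to any bounded-energy window, and each contribution carries $T^\beta$ of positive $\omega$-degree whenever $\beta\neq\beta_0$; the assumption $\gamma\in\mI_Q D$ then forces $\nu(\mg_0)>0$, since the only way to obtain a $\beta_0$-term is through $\gamma,$ which itself has positive filtration.

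The heart of the argument is the $A_\infty$ relation \ref{it:a_infty}. My plan is to apply Stokes' theorem to the iterated pushforward defining $\mg_k$. The codimension-one boundary of $\M_{k+1,l}(\beta)$ decomposes as fiber products $\M_{k_1+1,l_1}(\beta_1)\times_L \M_{k_2+1,l_2}(\beta_2)$ corresponding to nodal degenerations along a boundary node, and each such stratum contributes exactly one term of the form $\mg_{k_1}(\ldots,\mg_{k_2}(\ldots),\ldots)$. The cyclic symmetry axiom \ref{it:cyclic} will follow analogously from the $\Z_{k+1}$ action on $\M_{k+1,l}(\beta)$ permuting boundary marked points, with the $d$-term for $k=1$ absorbing the $\beta_0$ contribution. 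The unit axioms \ref{it:unit1}, \ref{it:unit2}, \ref{it:unit3} will follow from the observation that inserting $1\in A^0(L)$ at a boundary marked point $z_i$ factorizes the pushforward through a forgetful map $\M_{k+1,l}(\beta)\to\M_{k,l}(\beta)$ with positive-dimensional fiber, so the result vanishes except in the degenerate $k=2,\beta=\beta_0$ case where only the constant-disc stratum contributes and recovers $\alpha$ up to a sign.

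The main obstacle I anticipate is not conceptual but bookkeeping: tracking signs consistently through the grading shift on $C[1]$, the orientations induced on $\M_{k+1,l}(\beta)$ by the relative spin structure $\s,$ and the cyclic reordering of boundary points. Rather than re-derive the sign matches, my strategy is to isolate them into modular lemmas and invoke the detailed computations of~\cite[Chapter~8]{fukaya2010lagrangian} and~\cite{solomon2016differential} where they are carried out. Since the construction of $C,\{\mg_k\},\langle\;,\;\rangle,1$ is literally that of~\cite{solomon2016differential}, no new analytic input is required and the proof reduces to citation together with the verification outlined above.
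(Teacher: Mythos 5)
Your proposal is correct and takes essentially the same route as the paper: the paper proves this statement purely by citing \cite[Theorem 1]{solomon2016differential}, and your outline (Stokes' theorem over the boundary strata of $\M_{k+1,l}(\beta)$ for the $A_\infty$ relations, forgetful maps for the unit axioms, Gromov compactness for the filtration, and deferring the sign bookkeeping to \cite[Chapter~8]{fukaya2010lagrangian} and \cite{solomon2016differential}) is precisely the argument carried out in that reference. The only point genuinely specific to the present paper—that the diagonal $R$-bimodule structure on $C[1]$ satisfies the bimodule-style axioms of Definition~\ref{dfn:cycunit}, which generalize \cite[Definition~1.1]{solomon2016differential} to noncommutative ground rings—is immediate here because $R$ is graded commutative, so reducing to the citation is legitimate.
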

For $\alpha \in C$ and $k \in \N$ recall from Section~\ref{ssec:gen_not} the notation $(\alpha)_k$ for the component of differential form degree~$k$. 
\begin{cl}[Top degree {\cite[Proposition 3.12]{solomon2016differential}}]\label{no_top_deg}
    Let $\a_1,\dots \a_k\in C$ then
\[\left(\mg_k(\a_1,\dots,\a_k)\right)_n=\begin{cases}
    \left(\gamma|_{L}\right)_n,\qquad& k=0,\\
    \left(d\a_1\right)_n,\qquad& k=1,\\
    \left((-1)^{|\a_1|}\a_1\wedge \a_2\right)_n, \qquad& k=2,\\
    0,\qquad& otherwise.
\end{cases}\]
\end{cl}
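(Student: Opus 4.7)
I would analyze each term in the defining sum $\mg_k(\alpha_1,\ldots,\alpha_k)=\delta_{k,1}d\alpha_1+(-1)^{\varepsilon(\alpha)}\sum_{\beta,l}T^\beta\frac{1}{l!}(evb_0^\beta)_*(\omega_{\beta,l})$, where $\omega_{\beta,l}:=\bigwedge_j(evi_j^\beta)^*\gamma\wedge\bigwedge_j(evb_j^\beta)^*\alpha_j$. The classical $\delta_{k,1}$ term contributes $(d\alpha_1)_n$ when $k=1$. For each pushforward summand, since $(evb_0^\beta)_*$ decreases form degree by $\dim\M_{k+1,l}(\beta)-n=\mu(\beta)+k+2l-2$, producing a form of top degree $n$ on $L$ requires $\omega_{\beta,l}$ to be a top form on $\M_{k+1,l}(\beta)$.

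For the constant class $\beta=\beta_0$, the moduli space factors as $\M_{k+1,l}(\beta_0)\cong\M_{k+1,l}(pt)\times L$, where $\M_{k+1,l}(pt)$ is the Deligne--Mumford moduli of stable domains, and every evaluation map $evb_j^{\beta_0},evi_j^{\beta_0}$ is projection to $L$. Hence $\omega_{\beta_0,l}$ is the pullback of $\alpha_1\wedge\cdots\wedge\alpha_k\wedge(\gamma|_L)^{\wedge l}$ from $L$, and by the projection formula the pushforward equals $\bigl(\int_{\M_{k+1,l}(pt)}1\bigr)\cdot\alpha_1\wedge\cdots\wedge\alpha_k\wedge(\gamma|_L)^{\wedge l}$. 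Integrating a constant over the positive-dimensional $\M_{k+1,l}(pt)$ produces a form of negative degree, hence zero, unless $\dim\M_{k+1,l}(pt)=k+2l-2=0$, which (using stability $k+1+2l\geq 3$) isolates $(k,l)\in\{(2,0),(0,1)\}$. Direct identification---$\M_{3,0}(\beta_0)\cong L$ with all $evb_j=\mathrm{Id}$, and $\M_{1,1}(\beta_0)\cong L$ with $evi_1=evb_0=\mathrm{Id}$---produces $\alpha_1\wedge\alpha_2$ and $\gamma|_L$ respectively, and the sign $(-1)^{\varepsilon(\alpha)}$ simplifies to $(-1)^{|\alpha_1|}$ in the $k=2$ case, matching the claim.

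For the non-constant case $\beta\neq\beta_0$, I would use the forgetful map $f:\M_{k+1,l}(\beta)\to\M_{1,0}(\beta)$ that drops all marked points except $z_0$. Fibers of $f$ have dimension $k+2l$ and parameterize the positions of the boundary marked points on the $1$-dimensional $\partial\Sigma$ and the interior marked points on the $2$-dimensional $\Sigma$. Restricted to a fiber, $(evb_j)^*\alpha_j$ becomes a $|\alpha_j|_A$-form on a $1$-dim circle (via the loop $u|_{\partial\Sigma}$) and hence vanishes unless $|\alpha_j|_A\leq 1$; analogously $(evi_j)^*\gamma$ restricted to a fiber has form degree at most $2$. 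Therefore the fiber-vertical part of $\omega_{\beta,l}$ has total degree at most $k+2l$, and equality---needed for $f_*\omega_{\beta,l}$ to be nonzero---forces $\sum|\alpha_j|_A+\sum|\gamma_{i_j}|_A=k+2l$. Combined with the top-form-on-$\M$ condition $\sum|\alpha_j|_A+\sum|\gamma_{i_j}|_A=n+\mu(\beta)+k+2l-2$, this yields $\mu(\beta)=2-n$. For $n\geq 2$ this forces $\mu(\beta)\leq 0$; under the standing regularity assumption $\mu(\beta)\geq 0$ on non-constant $J$-holomorphic disks, the only possibility is the degenerate edge case $n=2,\mu(\beta)=0$, in which $\dim\M_{1,0}(\beta)=0<n$ contradicts the submersion hypothesis on $evb_0^\beta$. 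Hence no $\beta\neq\beta_0$ contribution survives in top degree.

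The main obstacle is the non-constant case, specifically the fiber-vertical dimension bound: the key point is that the marked points vary on strictly lower-dimensional loci of the domain ($\partial\Sigma$ for boundary, $\Sigma$ for interior), which bounds the fiber-vertical form degree of $\omega_{\beta,l}$ by $k+2l$ and, combined with the top-form-on-$\M$ dimension matching, leaves only an edge case excluded by the submersion assumption on $evb_0^\beta$. Once this vanishing is in place, the classification of surviving $\beta=\beta_0$ contributions via the projection formula is routine.
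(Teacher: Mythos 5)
Your handling of the constant class $\beta=\beta_0$ (projection formula on $\M_{k+1,l}(\beta_0)\cong \M_{k+1,l}(pt)\times L$, isolating $(k,l)=(2,0),(0,1)$) is essentially right, modulo the orientation conventions that pin down the signs, which you assert rather than check. The genuine gap is in the case $\beta\neq\beta_0$, which is exactly the part carrying the content of the statement. Your criterion is wrong: nonvanishing of an integration over the fibers of $f:\M_{k+1,l}(\beta)\to\M_{1,0}(\beta)$ does not require the restriction of $\omega_{\beta,l}$ to a fiber to be a top form on that fiber. Fiber integration picks out the components of full \emph{vertical} degree, and these components may in addition carry arbitrary horizontal degree; a pullback $(evb_j)^*\a_j$ with $|\a_j|_A\ge 2$ has vanishing restriction to the fiber but perfectly nonzero mixed components of vertical degree $1$ and horizontal degree $|\a_j|_A-1$. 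Hence your equation $\sum_j|\a_j|_A+\sum_j|\gamma_{i_j}|_A=k+2l$ is not forced (indeed it is incompatible with the genuinely relevant necessary condition $\sum\deg=\dim\M_{k+1,l}(\beta)$ except in degenerate situations), so the conclusion $\mu(\beta)=2-n$ and everything downstream of it collapses. In addition, the final step invokes ``$\mu(\beta)\ge 0$ for non-constant $J$-holomorphic disks,'' which is not among the paper's standing assumptions (only smoothness of the moduli and that $evb_0^\beta$ is a proper submersion are assumed).

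The missing idea — and the argument behind the cited result (the present paper gives no proof, it quotes \cite[Proposition 3.12]{solomon2016differential}) — is to forget the \emph{output} point $z_0$ rather than all the other points. The integrand $\omega_{\beta,l}$ involves only $evb_1,\dots,evb_k$ and $evi_1,\dots,evi_l$, and these factor through the forgetful map $\pi:\M_{k+1,l}(\beta)\to\M_{k,l}(\beta)$, which exists whenever $\beta\neq\beta_0$, or $\beta=\beta_0$ and $k+2l\ge 3$. Thus $\omega_{\beta,l}=\pi^*\omega'$ with $\dim\M_{k,l}(\beta)=\dim\M_{k+1,l}(\beta)-1$, so $\omega_{\beta,l}$ has no component of top degree on $\M_{k+1,l}(\beta)$, and therefore $(evb_0^\beta)_*\omega_{\beta,l}$ has no component of degree $n$ — no dimension count on $\mu(\beta)$ is needed. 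The excluded cases are exactly $\beta=\beta_0$ with $(k,l)\in\{(2,0),(0,1)\}$, which is the classical analysis you already carried out; together with the $\delta_{k,1}\,d\a_1$ term this yields the statement.
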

 In light of Definition~\ref{dfn:A_infty_arises_dga}, we can reformulate~\cite[Theorem 3-(3)]{solomon2016differential} as follows.
\begin{prop}\label{prop:fukaya_infity_arises_dga}
The $A_\infty$ algebra $(\overline{C},\{\bar{\m}^{\gamma}_k\}_{k\ge 0},\langle\;,\,\rangle,1)$ (resp. $(\overline{\mathfrak{C}},\{\bar{\mt}^{\gt}_k\}_{k\ge 0},\langle\;,\,\rangle,1)$) arises from the dga of differential forms $( A^*(L),d)$ (resp. $(A^*(I \times L),d)$).
\end{prop}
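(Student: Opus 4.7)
The plan is to show that the residue $\overline C$ carries precisely the de~Rham dga of $L.$ First, I would observe that since $\nu(T^\beta) = \omega(\beta)>0$ for $\beta\ne\beta_0$ and $\nu(t_j)\ge 1,$ the residue coefficient ring is $\overline R = \R$ and $\overline C \simeq A^*(L).$ In the defining sum for $\mg_k,$ every term indexed by $\beta\ne\beta_0$ or $l\ge 1$ has strictly positive $\nu$-filtration: the former directly from $\nu(T^\beta),$ the latter because $\gamma\in\mI_Q D$ forces each factor $(evi_j^\beta)^*\gamma$ to carry some $t_j.$ Hence modulo $F^{>0}C$ only the $\beta=\beta_0,\,l=0$ contribution survives, and
\[
\overline{\mg_k}(\alpha_1,\ldots,\alpha_k) = \delta_{k,1}\,d\alpha_1 + (-1)^{\varepsilon(\alpha)}(evb_0^{\beta_0})_*\bigwedge_{j=1}^k(evb_j^{\beta_0})^*\alpha_j,
\]
with the push-forward taken over $\M_{k+1,0}(\beta_0),$ the moduli of constant stable maps.

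Next I would analyze this constant-map moduli. Stability requires at least three special points on every component, so $\M_{1,0}(\beta_0)$ and $\M_{2,0}(\beta_0)$ are empty, yielding $\overline{\mg_0}=0$ and $\overline{\mg_1}(\alpha)=d\alpha.$ For $k=2,$ the disk with three marked boundary points has no automorphisms, so $\M_{3,0}(\beta_0)$ is canonically diffeomorphic to $L$ via the image point, and all three $evb_j$ coincide with this identification; the zero-dimensional fiber integration returns the integrand unchanged, and since $\varepsilon(\alpha_1,\alpha_2)=|\alpha_1|+2|\alpha_2|+4\equiv|\alpha_1|\pmod 2,$ one obtains $\overline{\mg_2}(\alpha_1,\alpha_2)=(-1)^{|\alpha_1|}\alpha_1\wedge\alpha_2,$ which under Definition~\ref{dfn:A_infty_arises_dga} produces the wedge product as underlying multiplication. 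For $k\ge 3,$ the relative dimension of $evb_0:\M_{k+1,0}(\beta_0)\to L$ is $k-2>0,$ and all $evb_j$ still coincide with $evb_0$ because the image of a constant map is a single point. So the integrand equals $evb_0^*(\alpha_1\wedge\cdots\wedge\alpha_k),$ a pulled-back form, and the projection formula gives $(evb_0)_*(evb_0^*\eta)=\eta\wedge(evb_0)_*(1)=0$ since the fiber has positive dimension. Therefore $\overline{\mg_k}=0$ for $k\ge 3.$

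The parallel statement for $(\overline{\mathfrak{C}},\{\bar{\mt}^{\gt}_k\})$ follows by the same argument with $L$ replaced by $I\times L$: the coefficient-ring filtration assigns positive valuation to $T^\beta$ with $\beta\ne\beta_0$ and to the formal variables just as before, and the combinatorial analysis of constant-map moduli depends only on domain stability and the fact that all boundary evaluations factor through a single point, both unchanged by the base swap. The reference to Claim~\ref{no_top_deg} serves as a sanity check on the $n$-degree component of each computation above.

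The hard part will be the sign bookkeeping at $k=2$: one must verify that the canonical identification $\M_{3,0}(\beta_0)\cong L$ is orientation-preserving with respect to the relative spin structure $\s,$ and that the integration-over-fiber convention of~\cite{solomon2016differential} produces $(evb_0)_*(evb_0^*\eta)=\eta$ with no hidden sign. Given the careful orientation setup there, I expect these verifications to be routine but to require pinning down the precise diffeomorphism $\M_{3,0}(\beta_0)\to L$ and its induced orientation.
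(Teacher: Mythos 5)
Your argument is correct, but it takes a different route from the paper: the paper does not prove this proposition by a fresh computation at all — it simply observes that, in light of Definition~\ref{dfn:A_infty_arises_dga}, the statement is a reformulation of \cite[Theorem 3-(3)]{solomon2016differential}, where the zero-energy (constant-map) part of the structure maps is already identified with $(d,\wedge)$ in exactly the sign conventions used here. What you do instead is re-derive that cited result from scratch: you correctly identify $\overline R=\R$, $\overline C\simeq A^*(L)$, note that every term with $\beta\neq\beta_0$ or $l\ge 1$ lies in $F^{>0}C$ (the latter because $\gamma\in\mI_QD$), and then analyze the constant-map moduli: emptiness of $\M_{1,0}(\beta_0)$ and $\M_{2,0}(\beta_0)$ gives $\bar\m_0=0$, $\bar\m_1=d$; the identification $\M_{3,0}(\beta_0)\cong L$ with $\varepsilon\equiv|\alpha_1|$ gives $\bar\m_2(\alpha_1,\alpha_2)=(-1)^{|\alpha_1|}\alpha_1\wedge\alpha_2$, i.e.\ the wedge product after the sign twist in Definition~\ref{dfn:A_infty_arises_dga}; and for $k\ge 3$ the projection formula kills the push-forward of a form pulled back through $evb_0$, since the fiber $\overline{\M}_{k+1}$ has dimension $k-2>0$. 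This is consistent with Claim~\ref{no_top_deg} and with the $\mCdh$ case verbatim with $I\times L$ in place of $L$. The trade-off: the paper's citation buys precisely the one point you leave open, namely that the identification $\M_{3,0}(\beta_0)\cong L$ is orientation-preserving for the conventions of \cite{solomon2016differential} (so that no extra sign appears in $(evb_0)_*$); your route buys a self-contained proof at the cost of having to redo that orientation bookkeeping, which is carried out in the cited paper and is indeed routine but not free.
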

\subsection{Pseudoisotopies}\label{pseudoisot}
Set
\begin{equation}\label{eq:mR}
\mC:=A^*(I\times L;R), \qquad \mD:=A^*(I\times X,I\times L;Q), \qquad \mR := A^*(I;R).
\end{equation}
The filtration $\nu$ induces filtrations on $\mR, \mathfrak{C}$, and $\mathfrak{D}$, which we still denote by $\nu$. For $t \in[0,1]$ and letting $M$ be either $L$ or the point, denote by
$$
j_t: M \rightarrow[0,1] \times M
$$
the inclusion $j_t(p)=(t, p)$.
We construct a family of $A_\infty$ structures on $\mC$. Let $\{J_t\}_{t\in I}$ be a path in $\J$ from $J = J_0$ to $J'=J_1.$
For each $\beta, k, l,$ set
\[
\Mt_{k+1,l}(\beta):=\{(t,u,\vec{z},\vec{w})\,|\, (u,\vec{z},\vec{w})\in\M_{k+1,l}(\beta;J_t)\}.
\]
We have evaluation maps
\begin{gather*}
\evbt_j:\Mt_{k+1,l}(\beta)\lrarr I\times L, \quad j\in\{0,\ldots,k\},\\
\evbt_j(t,u,\vec{z},\vec{w}):=(t,u(z_j)),
\end{gather*}
and
\begin{gather*}
\evit_j:\Mt_{k+1,l}(\beta)\lrarr I\times X, \quad j\in\{1,\ldots,l\}\\
\evit_j(t,u,\vec{z},\vec{w}):=(t,u(w_j)).
\end{gather*}
It follows from the assumption on $\J$ that all $\Mt_{k+1,l}(\beta)$ are smooth orbifolds with corners, and $\evbt_0$ is a proper submersion.
\begin{ex}
In the special case when $J_t=J = J'$ for all $t\in I$, we have $\Mt_{k+1,l}(\beta)=I\times\M_{k+1,l}(\beta;J)$. The evaluation maps in this case are $\evbt_j=\Id \times evb_j$ and $\evit_j=\Id\times evi_j.$
\end{ex}

Let
\[
p:I\times L\lrarr I,\qquad p_\M: \Mt_{k+1,l}(\beta)\lrarr I
\]
denote the projections.

For each closed $\gt\in \mI_Q\mD$ with $|\gt|=2,$ define structure maps
\[
\mgt_k:\mC^{\otimes k}\lrarr \mC
\]
by
\begin{multline*}
\mgt_k(\at_1,\ldots,\at_k):=\\
=\delta_{k,1}\cdot d\at_1+(-1)^{\varepsilon(\at)}
\sum_{\substack{\beta\in\sly\\l\ge0}}T^{\beta}\frac{1}{l!}\left({{\evbt}_{0}^{\beta}}\right)_* (\bigwedge_{j=1}^l(\evit_j^\beta)^*\gt\wedge
\bigwedge_{j=1}^k (\evbt_j^\beta)^*\at_j
).
\end{multline*}

Define a pairing on $\mC$:
\[
\ll\,,\,\gg:\mC\otimes\mC\lrarr \mathfrak{R},\qquad
\ll\tilde{\xi},\tilde{\eta}\gg:=(-1)^{|\etat|}p_*(\tilde{\xi}\wedge\tilde{\eta}).
\]
Note that
\begin{equation}\label{eq:pseudopair}
\ll\xit,\etat\gg=(-1)^{|\etat|}p_*(\xit\wedge\etat)
=(-1)^{|\etat|+|\etat|\cdot|\xit|}p_*(\etat\wedge\xit)
=(-1)^{(|\etat|+1)(|\xit|+1)+1}\ll\etat,\xit\gg.
\end{equation}
Equip $\mC$ with the diagonal bimodule structure over $\mR$ with respect to grading of~$\mC[1].$
In the proof of \cite[Theorem 2]{solomon2016differential}, the following theorem is proved.
\begin{thm}\label{thm:mt_cyclic_A_infty}
     $(\mC,\{\mgt_k\}_{k\ge 0},\ll\;,\,\gg,1)$ is a cyclic unital $A_\infty$ algebra for each closed $\gt\in \mI_Q\mD$ with $|\gt|=2.$
\end{thm}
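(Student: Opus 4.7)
The plan is to adapt the proof of Theorem~\ref{thm:solomon} verbatim, replacing each moduli space $\M_{k+1,l}(\beta)$ by its pseudoisotopy analogue $\Mt_{k+1,l}(\beta)$, each evaluation map $evb_j^\beta, evi_j^\beta$ by the corresponding $\evbt_j^\beta, \evit_j^\beta$, and $L$ by $I\times L$. The crucial observation is that the hypotheses carry over: $\Mt_{k+1,l}(\beta)$ is a smooth orbifold with corners, and $\evbt_0^\beta$ is a proper submersion onto $I\times L$, so integration over the fiber for $\evbt_0^\beta$ is defined and obeys the same formal calculus as on $\M_{k+1,l}(\beta)$. Moreover, the differential on $\mC = A^*(I\times L;R)$ is the full de Rham differential, which already incorporates the $\partial/\partial t$ direction, so no extra interval-boundary terms appear.

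First I would verify the $\mathcal{R}$-multilinearity~\ref{it:lin}, the valuation bound~\ref{it:val}, the pairing bilinearity~\ref{it:plin}, and the filtration bound~\ref{it:val2}: these follow directly from the defining formulas, the projection formula for $p_*$, and the Novikov condition $\omega(\beta_i)\to\infty$. The symmetry~\ref{it:symm} of the pairing is exactly~\eqref{eq:pseudopair}.

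Next I would establish the $A_\infty$ relations~\ref{it:a_infty} and the cyclic symmetry~\ref{it:cyclic} by applying Stokes' theorem for fiber integration along $\evbt_0^\beta$. The codimension-one boundary strata of the fibers of $\evbt_0^\beta$ come from nodal degenerations of disks (the interval endpoints $t=0,1$ become part of the interior of $I\times L$ under $\evbt_0^\beta$, not fiber boundaries), and the gluing map at each boundary stratum is, up to sign, a fiber product of two moduli spaces $\Mt_{k_1+1,l_1}(\beta_1)\times_{I\times L}\Mt_{k_2+1,l_2}(\beta_2)$. Expanding $\mgt_k(\mgt_{k'}(\cdots),\cdots)$ by this fiber product, and grouping the contributions of all boundary strata, reproduces the signed $A_\infty$ sum; closedness of $\gt$ ensures the interior $d\gt$ terms vanish. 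The cyclic symmetry similarly follows from the cyclic $\Z_{k+1}$-action on boundary marked points together with the symmetry properties of the evaluation maps, and the sign bookkeeping reduces to the same combinatorial lemma used in~\cite{solomon2016differential}.

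The unit axioms~\ref{it:unit1}, \ref{it:unit2}, and~\ref{it:unit3} are handled via the forgetful map $\Mt_{k+1,l}(\beta)\to \Mt_{k,l}(\beta)$ that drops the marked point carrying the constant input $1$: this map has positive-dimensional fibers unless $k=2$ and $\beta=\beta_0$, in which case a direct computation identifies $\mgt_2(1,\at)$ and $\mgt_2(\at,1)$ with $\pm\at$, and $(\mgt_0)_n = (\gt|_{I\times L})_n = 0$ since $|\gt|=2<n+1$, giving~\ref{it:unit3}. The main obstacle, and also the only subtle point, is tracking the signs in the cyclic relation~\ref{it:cyclic} and the $A_\infty$ relation~\ref{it:a_infty} once the fiber direction over $I$ is included: the $\varepsilon(\at)$ sign and the signs produced by Stokes' theorem for fiber integration must match the abstract signs of Definition~\ref{dfn:cycunit}. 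Since these signs are independent of whether one integrates over $L$ or $I\times L$ (the interval direction contributes an extra factor that cancels against the shift in form degree), the sign verification from~\cite{solomon2016differential} carries over without modification.
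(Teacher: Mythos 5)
The paper does not actually prove this theorem: it records that the statement is established in the proof of \cite[Theorem 2]{solomon2016differential}, and that proof is precisely the adaptation you outline — the same moduli spaces $\Mt_{k+1,l}(\beta)$ and evaluation maps, the same Stokes argument for fiber integration along $\evbt_0$ in which only the nodal strata enter as vertical boundary, and the same forgetful-map treatment of the unit. So your overall route is the intended one, and the only real work (which the cited reference carries out) is the sign bookkeeping you defer to it.

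Two of your justifications, however, would not survive as written. First, the strata of $\Mt_{k+1,l}(\beta)$ over $t=0,1$ do not map to the interior of $I\times L$; they map onto $\{0,1\}\times L=\partial(I\times L)$. The correct reason they produce no correction term is that the Stokes formula for the proper submersion $\evbt_0$ only involves the vertical part of $\partial\Mt_{k+1,l}(\beta)$ (the part lying over interior points of the target), and the $t\in\{0,1\}$ strata are horizontal. Second, your argument for~\ref{it:unit3} conflates degrees: Proposition~\ref{cl:qt_no_top_deg} controls the degree-$(n+1)$ part of $\mgt_0$, not the degree-$n$ part, and on $I\times L$ the pairing $\ll\mgt_0,1\gg=p_*(\mgt_0)$ has two components in $A^*(I;R)$ — a $1$-form on $I$ coming from $(\mgt_0)_{n+1}$, which indeed vanishes because $(\gt|_{I\times L})_{n+1}=0$, and a function on $I$ coming from $(\mgt_0)_{n}$, which is \emph{not} equal to $(\gt|_{I\times L})_n$ in general, since pushforwards from $\Mt_{1,l}(\beta)$ with $\beta\ne\beta_0$ can contribute in form degree $n$. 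The function part vanishes for a different reason: base change for the submersion $\evbt_0$ along $j_t:L\to I\times L$ gives $j_t^*\mgt_0=\m_0^{j_t^*\gt,J_t}$ for every $t\in I$, and then $\int_L\bigl(\m_0^{j_t^*\gt}\bigr)_n=\int_L\bigl(j_t^*\gt|_L\bigr)_n=0$ by the top-degree statement on $L$. With these repairs your sketch coincides with the proof the paper cites.
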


\begin{dfn}
Let $S_1=(\m,\lp\;,\,\rp,\e)$ and $S_2=(\m',\lp\;,\,\rp',\e')$ be cyclic unital $A_\infty$ structures on $C$.
A cyclic unital \textbf{pseudoisotopy} from $S_1$ to $S_2$ is a cyclic unital $A_\infty$ structure $(\mt,\lpt\;,\rpt,\tilde{\e})$ on the $\mathfrak{R}$-module $\mC$ such that
 for all $\at_j\in \mC$ and all $k\ge 0$,
\begin{gather*}
j_0^*\tilde{\m}_k(\at_1,\ldots,\at_k)=\m_k(j_0^*\at_1,\ldots,j_0^*\at_k),\\
j_1^*\tilde{\m}_k(\at_1,\ldots,\at_k)=\m'_k(j_1^*\at_1,\ldots,j_1^*\at_k),
\end{gather*}
and
\begin{gather*}
j_0^*\lpt\at_1,\at_2\rpt=\lp j_0^*\at_1,j_0^*\at_2\rp,\qquad j_0^*\tilde{\e}=\e,\\
j_1^*\lpt\at_1,\at_2\rpt=\lp j_1^*\at_1,j_1^*\at_2\rp',\qquad j_1^*\tilde{\e}=\e'.
\end{gather*}
\end{dfn}
Write $\mgp$ for the operations defined in Section~\ref{ssec:constr} using the almost complex structure $J'$ and a closed form $\gamma' \in \mI_QD$ with $\deg_D \gamma' = 2.$
\begin{thm}[{\cite[Theorem 2]{solomon2016differential}}]\label{thm:pseduoisotopy}
    Assume $[\gamma]=\left[\gamma^{\prime}\right] \in \widehat{H}^*(X, L ; Q)$, and let $\xi\in (\mI_QD)_1$
be such that
\[
\gamma'-\gamma=d\xi
\]
and define
\begin{equation}\label{eq:gt_psedu_def}
    \gt:=\gamma+t(\gamma'-\gamma)+dt\wedge\xi\in (\mI_Q\mD)_2.
\end{equation}
Then, $(\mgt, \ll\,,\,\gg, 1)$ is a  pseudoisotopy from $\left(\mg,\langle\,,\,\rangle, 1\right)$ to $\left(\mgp,\langle\,,\,\rangle, 1\right)$.
\end{thm}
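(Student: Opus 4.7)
The plan is to produce the pseudoisotopy in three steps: (i) verify that $\tg$ is an admissible interior constraint so that Theorem~\ref{thm:mt_cyclic_A_infty} supplies the cyclic unital $A_\infty$ structure $(\mgt^{\tg},\ll\,,\,\gg,1)$ on $\mC$, (ii) check the boundary conditions at $t=0,1$ for the structure maps, and (iii) check the boundary conditions for the pairing and the unit.

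For step (i), I would observe that $\tg$ lies in $\mI_Q\mD$ because $\gamma,\gamma',\xi \in \mI_QD$ and $\mI_Q\mD$ is closed under wedge with $dt$. The degree count is immediate: $|\gamma|=|\gamma'|=2$, $|t|=0$, $|dt|=1$, and $|\xi|=1$. Closedness is a direct calculation using $d\gamma = d\gamma' = 0$ and $\gamma'-\gamma = d\xi$:
\begin{equation*}
d\tg = dt\wedge(\gamma'-\gamma) - dt\wedge d\xi = 0.
\end{equation*}
Given this, Theorem~\ref{thm:mt_cyclic_A_infty} directly yields that $(\mC,\{\mgt^{\tg}_k\}_{k\ge 0},\ll\,,\,\gg,1)$ is a cyclic unital $A_\infty$ structure, which accounts for the hard analytic content.

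For step (ii), I would establish the endpoint compatibility by functoriality of fiber integration with respect to base change. For $t_0 \in \{0,1\}$, the Cartesian diagram
\begin{equation*}
\begin{tikzcd}
\M_{k+1,l}(\beta;J_{t_0}) \arrow[r] \arrow[d,"evb_0^\beta"'] & \Mt_{k+1,l}(\beta) \arrow[d,"\evbt_0"] \\
L \arrow[r,"j_{t_0}"] & I\times L
\end{tikzcd}
\end{equation*}
together with properness of $\evbt_0$ gives $j_{t_0}^* \circ (\evbt_0)_* = (evb_0^\beta)_* \circ \iota_{t_0}^*$, where $\iota_{t_0}$ is the inclusion of the fiber. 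I would also use $j_{t_0}^* \evit_j^* = (evi_j^\beta)^* \circ \iota_{t_0}^*$ and $j_{t_0}^*\evbt_j^* = (evb_j^\beta)^*\circ \iota_{t_0}^*$. Finally, $j_0^*\tg = \gamma$ and $j_1^*\tg = \gamma'$ since $j_{t_0}^*(dt) = 0$. Combining these and noting that $j_{t_0}^*$ commutes with the $\delta_{k,1}\cdot d$ term, I obtain
\begin{equation*}
j_{t_0}^*\mgt^{\tg}_k(\at_1,\ldots,\at_k) = \m^{\gamma_{t_0}}_k(j_{t_0}^*\at_1,\ldots,j_{t_0}^*\at_k),
\end{equation*}
with $\gamma_0 = \gamma$ and $\gamma_1 = \gamma'$, which is the required boundary condition on $\mgt^{\tg}$.

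For step (iii), the pairing condition is a consequence of $j_{t_0}^* \circ p_* = \Id_L{}_*\circ \iota_{t_0}^* = \iota_{t_0}^*$ applied to the definition of $\ll\,,\,\gg$, giving $j_{t_0}^*\ll\xit,\etat\gg = \langle j_{t_0}^*\xit, j_{t_0}^*\etat\rangle$; the unit condition $j_{t_0}^* 1 = 1$ is trivial. The main obstacle I anticipate is step (ii): care is needed with orientations and signs to check that the fiber-product base-change identity $j_{t_0}^*(\evbt_0)_* = (evb_0^\beta)_*\iota_{t_0}^*$ holds with the sign conventions used to orient $\Mt_{k+1,l}(\beta)$ from the relative spin structure $\s$, but this is a routine (if technical) verification given the setup of \cite{solomon2016differential}.
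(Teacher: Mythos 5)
Your proposal is correct and follows essentially the same route as the paper, which simply quotes this statement from \cite[Theorem 2]{solomon2016differential}: the cited proof likewise checks that $\gt$ is a closed element of $(\mI_Q\mD)_2$, obtains the family cyclic unital $A_\infty$ structure on $\mC$ (Theorem~\ref{thm:mt_cyclic_A_infty}) as the hard input, and gets the endpoint conditions from base change of integration over the fiber along $j_0,j_1$ together with $j_0^*\gt=\gamma$, $j_1^*\gt=\gamma'$. The only quibble is notational: in your step (iii) the base-change identity should read $j_{t_0}^*\circ p_*=\left(\int_L\right)\circ j_{t_0}^*$ (pushforward of the fiber $L$ to a point), not $\Id_{L}{}_*\circ\iota_{t_0}^*$, but the conclusion $j_{t_0}^*\ll\xit,\etat\gg=\langle j_{t_0}^*\xit,j_{t_0}^*\etat\rangle$ you draw from it is the right one.
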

\begin{cl}[Top degree {\cite[Proposition 4.17]{solomon2016differential}}]\label{cl:qt_no_top_deg}
Let $\at_1,\dots \at_k\in C$ then
\[\left(\mgt_k(\at_1,\dots,\at_k)\right)_{n+1}=\begin{cases}
    \left(\gt|_{I\times L}\right)_{n+1},\qquad& k=0,\\
    \left(d\at_1\right)_{n+1},\qquad& k=1,\\
    \left((-1)^{|\at_1|}\at_1\wedge \at_2\right)_{n+1}, \qquad& k=2,\\
    0,\qquad& otherwise.
\end{cases}\]
\end{cl}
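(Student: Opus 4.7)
The plan is to adapt the proof of the analogous non-pseudoisotopy statement, Claim~\ref{no_top_deg} (i.e., Proposition~3.12 of~\cite{solomon2016differential}), by tracking the extra $dt$-direction introduced by the pseudoisotopy parameter. A form on $I \times L$ of degree $n+1$ has the shape $dt \wedge \pi_L^* \eta(t)$ for some $t$-family $\eta(t)$ of top forms on $L$, so it suffices to identify $j_{t_0}^* \iota_{\partial_t}\mgt_k(\at_1,\ldots,\at_k)$ for each $t_0 \in I$ and match it against the claimed right-hand side sliced at $t_0$.

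I would begin by decomposing $\at_j = \at_j^{(0)} + dt \wedge \at_j^{(1)}$ and $\gt = \gt^{(0)} + dt \wedge \gt^{(1)}$ into $dt$-free components. Since all evaluation maps $\evbt_j, \evit_j$ share the $I$-coordinate with the projection $p_\M : \Mt_{k+1,l}(\beta) \to I$, the pullbacks $(\evbt_j)^* dt$ and $(\evit_j)^* dt$ all coincide with $p_\M^* dt$, so any nonvanishing term in the integrand of $\mgt_k$ contains at most one such factor. Writing the integrand as $A + p_\M^* dt \wedge B$ with $A, B$ both $p_\M^* dt$-free, the identity $p_I \circ \evbt_0 = p_\M$ gives $(\evbt_0)_*(p_\M^* dt \wedge B) = dt \wedge (\evbt_0)_* B$, while $(\evbt_0)_* A$ is $dt$-free. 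Only the former type of term can contribute in degree $n+1$.

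Next I would invoke base change along the fiber square $\M_{k+1,l}(\beta; J_{t_0}) \hookrightarrow \Mt_{k+1,l}(\beta) \to I \times L \hookleftarrow \{t_0\} \times L$ to identify $j_{t_0}^* (\evbt_0)_* B = (evb_0)_*(\jmath_{t_0}^* B)$, where $\jmath_{t_0}$ denotes the slice inclusion and $evb_0$ is the non-pseudoisotopy evaluation on $\M_{k+1,l}(\beta; J_{t_0})$. Expanding $B$, its restriction $\jmath_{t_0}^* B$ is a sum of integrands appearing in the non-pseudoisotopy operator $\mg_{k}^{\gamma}$ associated to $J_{t_0}$, applied to appropriate slices of the $\at_j^{(0)}, \at_j^{(1)}, \gt^{(0)}, \gt^{(1)}$, with exactly one slot supplying the originally-extracted $dt$-factor via its $(1)$-component.

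Applying Claim~\ref{no_top_deg} slice by slice, the top-degree $n$ part of $(evb_0)_*(\jmath_{t_0}^* B)$ on $L$ vanishes unless $k \le 2$ with the specific $\beta = \beta_0$ configurations enumerated there; the surviving summands give the restriction of $\gt$ for $k=0$, an exterior derivative for $k=1$, and a wedge with sign $(-1)^{|\at_1|}$ for $k=2$. Reassembling the $dt$-factor and varying $t_0 \in I$, we recover precisely $(\gt|_{I \times L})_{n+1}$, $(d\at_1)_{n+1}$, and $((-1)^{|\at_1|}\at_1 \wedge \at_2)_{n+1}$, with zero in the remaining cases. The main obstacle is sign-tracking: one must verify that the Koszul signs accumulated when extracting $dt$ from various positions of the integrand, combined with the overall $(-1)^{\varepsilon(\at)}$ prefactor in the definition of $\mgt_k$ and the signs built into Claim~\ref{no_top_deg}, reassemble consistently with no residual contributions from positive-$l$ constant-disk strata or from $\beta \neq \beta_0$ configurations surviving at top form degree.
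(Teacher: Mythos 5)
The paper itself does not prove this claim: it is quoted directly from \cite[Proposition 4.17]{solomon2016differential}, whose argument runs on $\Mt_{k+1,l}(\beta)$ itself. Every evaluation map other than $\evbt_0$ factors through the map forgetting $z_0$, so the integrand has no component of top degree $\dim\Mt_{k+1,l}(\beta)$ whenever that forgetful map is defined; hence the pushforward has no degree-$(n+1)$ part, and the finitely many unstable configurations ($\beta=\beta_0$ with $(k,l)=(0,1)$ or $(2,0)$, plus the explicit $\delta_{k,1}\,d\at_1$ term) give exactly the classical answers. Your plan — slice at $t_0$, base-change, and quote Claim~\ref{no_top_deg} — is a different route, and it has a genuine gap at its first bookkeeping step.

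You assert that, after writing $\at_j=\at_j^{(0)}+dt\wedge\at_j^{(1)}$ and $\gt=\gt^{(0)}+dt\wedge\gt^{(1)}$, a $dt$ can only enter the integrand through an explicit $(1)$-component, so that the $dt$-part of the output is computed by slice operators in which exactly one slot carries a $(1)$-component. This is false when $J_t$ genuinely varies: although $p\circ\evbt_j=p_\M$, the pullback $\evbt_j^*\at_j^{(0)}$ of a $dt$-free form is in general not $dt$-free on $\Mt_{k+1,l}(\beta)$, because $d\evbt_j$ applied to a lift $\tilde V$ of $\partial_t$ along $\evbt_0$ has a nontrivial $TL$- (resp.\ $TX$-) component $u_j$. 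Concretely, $\iota_{\partial_t}(\evbt_0)_*(\mathrm{integrand})=(\evbt_0)_*(\iota_{\tilde V}(\mathrm{integrand}))$, and contracting the $(0)$-components produces extra terms $\evbt_j^*(\iota_{u_j}\at_j^{(0)})$ and $\evit_j^*(\iota_{u_j}\gt^{(0)})$; these encode the $t$-dependence of the family and are not pullbacks of fixed forms on $L$ or $X$, so they are not integrands of any operator $\mg_k$ at $J_{t_0}$, and Claim~\ref{no_top_deg} cannot be applied to them. They do vanish in degree $n+1$, but only by the same forgetful-map/dimension count on $\Mt$ that the direct proof uses, at which point the slicing buys nothing. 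Two smaller problems: even among the terms you do keep, when the $dt$ comes from the interior input the slice integrand carries $\gt^{(1)}(t_0)$ (total degree $1$, not closed) at one interior point and $\gt^{(0)}(t_0)$ at the others, a mixed configuration outside the scope of Claim~\ref{no_top_deg} as stated — and the surviving $k=0$ term is exactly of this type; and for $k=1$ no moduli summand survives at all, since $(d\at_1)_{n+1}$ comes from the explicitly added $\delta_{k,1}$-term of $\mgt_1$, not from the slice claim.
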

\begin{prop}[{\cite[Proposition 4.15]{solomon2016differential}}]\label{prop:fukaya_infity_arises_dga_dt}
The $A_\infty$ algebra $(\overline{\mathfrak{C}},\{\bar{\mt}^{\gt}_k\}_{k\ge 0},\langle\;,\,\rangle,1)$ arises from the dga of differential forms  $(A^*(I \times L),d)$.
\end{prop}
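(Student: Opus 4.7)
The plan is to argue directly from the definition of $\mgt_k$ that, after passing to the residue module $\overline{\mathfrak{C}}$, all terms coming from $J_t$-holomorphic disks that are either non-constant or carry interior constraints drop out, leaving only the explicit $d\at_1$ summand and the wedge product on constant three-pointed disks. This mirrors the argument for Proposition~\ref{prop:fukaya_infity_arises_dga}, and in fact is literally \cite[Proposition 4.15]{solomon2016differential}.

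First I would note that the filtration $\nu$ on $\mathfrak{R}$ and $\mathfrak{C}$ satisfies $\nu(T^\beta) = \omega(\beta) > 0$ whenever $\beta \ne \beta_0$, and $\nu(t_j) > 0$ for each formal variable. Since $\gt \in \mI_Q \mathfrak{D}$, its image in the residue vanishes. Consequently, in the definition
\[
\mgt_k(\at_1,\ldots,\at_k)=\delta_{k,1}\cdot d\at_1+(-1)^{\varepsilon(\at)}\sum_{\beta,l}T^{\beta}\frac{1}{l!}(\evbt_0^\beta)_*\Big(\bigwedge_{j=1}^l(\evit_j^\beta)^*\gt\wedge\bigwedge_{j=1}^k (\evbt_j^\beta)^*\at_j\Big),
\]
only the summand with $\beta=\beta_0$ and $l=0$ can survive in $\overline{\mathfrak{C}}$.

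Next I would analyze the surviving constant-map contribution case by case. For $\beta=\beta_0$, $l=0$, the moduli space is $\tilde{\M}_{k+1,0}(\beta_0)=I\times \M_{k+1,0}\times L$ when the configuration is stable, i.e.\ $k\ge 2$; for $k=0,1$ the domain is unstable and the moduli space is empty. Thus for $k=0$ the residue is $0$; for $k=1$ the residue is exactly $d\at_1$. For $k=2$, the factor $\M_{3,0}$ is a point, so $\evbt_0=\evbt_1=\evbt_2$ is a $0$-dimensional-fibered proper submersion onto $I\times L$, and the pushforward of $\evbt_1^*\at_1\wedge\evbt_2^*\at_2$ is simply $\at_1\wedge\at_2$. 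Computing the sign $\varepsilon(\at)=(|\at_1|+1)+2(|\at_2|+1)+1\equiv|\at_1|\pmod 2$ yields $\bar{\mt}^{\gt}_2(\at_1,\at_2)=(-1)^{|\at_1|}\at_1\wedge\at_2$, as required.

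The main (and only interesting) step is the vanishing for $k\ge 3$, which I expect to be the principal obstacle if one wanted to be fully rigorous. Here $\M_{k+1,0}$ has strictly positive dimension $k-2$, and all evaluation maps $\evbt_j$ on $\tilde{\M}_{k+1,0}(\beta_0)$ factor through the projection $\pi:I\times \M_{k+1,0}\times L\to I\times L$. Therefore
\[
\bigwedge_{j=1}^k (\evbt_j^\beta)^*\at_j=\pi^*\!\Big(\bigwedge_{j=1}^k\at_j\Big),
\]
and by the projection formula
\[
(\evbt_0)_*\pi^*\!\Big(\bigwedge_{j=1}^k\at_j\Big)=\Big(\bigwedge_{j=1}^k\at_j\Big)\wedge \pi_*(1)=0,
\]
since the fibers of $\pi$ have positive dimension. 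Combined with the matching statement for the pairing and unit (which are inherited from $A^*(I\times L)$ tautologically), this proves that $(\overline{\mathfrak{C}},\{\bar{\mt}^{\gt}_k\}_{k\ge 0},\langle\,,\,\rangle,1)$ arises from $(A^*(I\times L),d)$ in the sense of Definition~\ref{dfn:A_infty_arises_dga}.
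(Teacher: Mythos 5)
The paper gives no proof of this proposition at all---it is quoted directly from \cite[Proposition 4.15]{solomon2016differential}---and your zero-energy argument (positive filtration kills all $\beta\neq\beta_0$ and all $l\geq 1$ terms, constant-map moduli are empty for $k\leq 1$, a point-fibered pushforward for $k=2$, and vanishing of the fiber integral of a pulled-back form over the positive-dimensional fibers $\M_{k+1,0}$ for $k\geq 3$) is correct and is essentially the same argument by which the cited result is proved, in parallel with Proposition~\ref{prop:fukaya_infity_arises_dga}. The only point you assert rather than verify is the precise sign $(-1)^{|\at_1|}$ in the $k=2$ case, which depends on the orientation conventions for $\Mt_{3,0}(\beta_0)\cong I\times L$ inherited from the relative spin structure; this is routine but is exactly what the citation to \cite{solomon2016differential} supplies.
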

\begin{lm}[{\cite[Lemma 4.9]{solomon2016differential}}]\label{lm:d_ll_gg}
For any $\xit,\etat\in A^*(I\times L),$
\[
(-1)^{\deg \xit+\deg \etat +n}
\int_I d\ll\xit,\etat\gg=\langle j_1^*\xit,j_1^*\etat\rangle- \langle j_0^*\xit,j_0^*\etat\rangle.
\]
\end{lm}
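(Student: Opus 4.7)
The plan is to reduce the identity to an elementary application of Stokes' theorem on the interval $I$, once one observes that only a single bi-degree contributes.

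First I would observe that $\mathfrak{R} = A^*(I;R)$ is concentrated in de Rham degrees $0$ and $1$, so the left-hand side is automatically zero unless $\ll \xit,\etat\gg$ is a degree-zero form on $I$. Since $p:I\times L \to I$ is fiber integration over the $n$-dimensional closed fiber $L$, the form $p_*(\xit\wedge\etat)$ lives in degree $|\xit|+|\etat|-n$. Hence $\ll\xit,\etat\gg$ is a function on $I$ exactly when $|\xit|+|\etat|=n$, a $1$-form on $I$ (so killed by $d$) when $|\xit|+|\etat|=n+1$, and zero otherwise. In the latter two cases the right-hand side also vanishes: for $|\xit|+|\etat|\neq n$ the $n$-form $j_t^*\xit\wedge j_t^*\etat$ on $L$ has the wrong degree. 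Thus one is reduced to the case $|\xit|+|\etat|=n$, and in this case the prefactor $(-1)^{|\xit|+|\etat|+n}=(-1)^{2n}=1$, so it suffices to establish
\[
\int_I d\ll\xit,\etat\gg = \langle j_1^*\xit,j_1^*\etat\rangle - \langle j_0^*\xit,j_0^*\etat\rangle.
\]

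Next, since $\ll\xit,\etat\gg$ is a smooth function on $I$, the fundamental theorem of calculus (Stokes' theorem on $I$) gives
\[
\int_I d\ll\xit,\etat\gg = \ll\xit,\etat\gg\!\big|_{t=1} - \ll\xit,\etat\gg\!\big|_{t=0}.
\]
The key identification is that for each $t_0\in I$, evaluating the fiber integral at $t_0$ is the same as integrating the restriction to the fiber, i.e.
\[
p_*(\xit\wedge\etat)\big|_{t=t_0} = \int_L j_{t_0}^*(\xit\wedge\etat) = \int_L j_{t_0}^*\xit\wedge j_{t_0}^*\etat.
\]
Multiplying by $(-1)^{|\etat|}$ and using $|j_{t_0}^*\etat|=|\etat|$ gives $\ll\xit,\etat\gg|_{t_0}=\langle j_{t_0}^*\xit,j_{t_0}^*\etat\rangle$, and substituting $t_0=0,1$ completes the proof.

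The only genuinely delicate point is the compatibility of restriction-to-the-fiber with fiber integration for a trivial fibration; once this is noted, everything else is bookkeeping of degrees and signs. I would present the argument as a short case analysis followed by these two displayed identities, so the main obstacle is purely notational rather than mathematical.
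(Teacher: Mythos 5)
Your argument is correct. Note that the paper does not prove this lemma itself — it is imported verbatim from the cited reference — so there is no in-text proof to compare against; your write-up is the natural direct verification. The degree bookkeeping is right: $p_*(\xit\wedge\etat)$ has degree $|\xit|+|\etat|-n$ on $I$, so $\int_I d\ll\xit,\etat\gg$ can only be nonzero when $|\xit|+|\etat|=n$, and in exactly that case the prefactor $(-1)^{|\xit|+|\etat|+n}=1$, while for all other degrees both sides vanish (the right-hand side because $\int_L$ kills forms of degree $\neq n$). Two small points worth making explicit: first, the fundamental-theorem-of-calculus step is consistent with the signed Stokes convention used in this circle of papers, since $\ll\xit,\etat\gg$ is a degree-zero form, so the sign $(-1)^{\dim I+|\xi|+1}$ is $+1$; second, the "delicate" step you flag — that $\bigl(p_*(\xit\wedge\etat)\bigr)(t_0)=\int_L j_{t_0}^*(\xit\wedge\etat)$ — is convention-free precisely because the push-forward here is a $0$-form, so no reordering of base and fiber directions (the usual source of signs in fiber-integration conventions) can enter. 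With those remarks your proof is complete and matches the statement, including the sign.
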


\subsection{Extension of scalars}\label{ssec:fukaya_scalar_ext}
\subsubsection{Definitions}\label{sssec:fukaya_scalar_ext_dfn}
Recall the definitions of $R, C,\mR,\mC$ from Section~\ref{subsubsec:a_infty}, and~\eqref{eq:mR}. Also recall the Definition~\ref{dfn:bar_structre} of an $\bar{\mathcal{N}}$ structure on a filtered commutative unital ring $\mathcal{N}$, and an $\bar{\mathcal{N}}$ structure on a filtered $\mathcal{N}$ module $\cC$. In order to define an $\bar{R}$ structure on $R$, observe that we have an isomorphism $\R \overset{\sim}\to \bar{R}$ given by $a\mapsto \overline{aT^{\beta_0}}$. As $R$ is a unital algebra over $\R$ there exist a canonical map $k:\R \to R$. It is straightforward to check that 
$$\bar{R}\simeq \R \overset{k}{\rightarrow} R$$ 
defines a $\bar{R}$ structure on $R$. 
We have an isomorphism $A^*(L) \overset{\sim}\to \overline{C}$ given by the inclusion, and the composition 
\[R\otimes_{\bar{R}}\overline{C}\simeq R \otimes_{\R}A^* (L)\simeq C \]
defines a $\bar{R}$ structure on $C$. We may also think of $(\mC,\{\mgt_k\}_{k\ge 0},\langle\langle\;,\,\rangle\rangle,1)$ as a curved cyclic $A_\infty$ algebra over $R$ using the homomorphism $R \to \mR.$ With this ground diffrential graded algebra on $\mC$. So, with the same $\bar R$ structure on $R$ we define the $\bar{R}$ structure on $\mC$ by
\[R\otimes_{\bar{R}} \overline{\mC}\simeq R \otimes_{\R} A^*(I\times L)\simeq \mC.  \]

\subsubsection{Extension of scalars}
Recall the Definition~\ref{dfn:sfext} concerning $(S,F)$ extensions and take $S$ and $F$ as in Section~\ref{ssec:the_ring_S}. Recall the definition of the cyclic unital $A_\infty$ algebra $(\hat{C},\{\mgh_k\}_{k\ge 0},\langle\;,\,\rangle_F,1\otimes 1)$ from Section~\ref{subsubsec:a_infty} and observe it is the $(S,F)$ extension of $(C,\{\mg_k\}_{k\ge 0},\langle\;,\,\rangle,1)$ from Theorem~\ref{thm:solomon}. Similarly, define $(\hat{C},\{\mh_k^{\gamma'}\}_{k\ge 0},\langle\;,\,\rangle_F,1\otimes 1)$ to be the $(S,F)$ extension of $(C,\{\mgp_k\}_{k\ge 0},\langle\;,\,\rangle,1)$ from Theorem~\ref{thm:pseduoisotopy}, and define $(\hat{\mC},\{ \hat{\mt}_k^{\gt} \}_{k\ge 0},\ll\;,\,\gg_F,1)$ to be the $(S,F)$ extension of $(\mC,\{ \mt_k^{\gt} \}_{k\ge 0},\ll\;,\,\gg,1)$ from Theorem~\ref{thm:mt_cyclic_A_infty}. Then, the following is a corollary of Proposition~\ref{prop:a_infity_extension}.
\begin{prop}
$(\hat{C},\{\mgh_k\}_{k\ge 0},\langle\;,\,\rangle_F,1)$ and $(\hat{\mC},\{ \hat{\mt}_k^{\gt} \}_{k\ge 0},\ll\;,\,\gg_F,1)$ are $n$-dimensional cyclic unital $A_\infty$ algebras. 
\end{prop}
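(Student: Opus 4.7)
The plan is to verify the hypotheses of Proposition~\ref{prop:a_infity_extension} for each of the two $A_\infty$ algebras and then invoke it directly. The key point is that the present statement is formulated precisely as an $(S,F)$ extension in the sense of Definition~\ref{dfn:sfext}, with $(S,F)$ chosen as in Section~\ref{ssec:the_ring_S}.

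First I would check the algebraic data. For the first algebra, the base $n$-dimensional curved cyclic unital $A_\infty$ algebra is $(C,\{\mg_k\}_{k \ge 0},\langle\;,\,\rangle,1)$ from Theorem~\ref{thm:solomon}, viewed as an $A_\infty$ algebra over $R$. For the second, the base is $(\mC,\{\mgt_k\}_{k \ge 0},\ll\;,\,\gg,1)$ from Theorem~\ref{thm:mt_cyclic_A_infty}, which is an $n$-dimensional curved cyclic unital $A_\infty$ algebra over $\mR$; via the canonical map $R \to \mR$, we may equally regard it as an $A_\infty$ algebra over $R$, with bimodule structure on $\mC$ preserved. Either way, the hypotheses of Proposition~\ref{prop:a_infity_extension} on the ``input'' side are in place.

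Next I would check the data on the $S$ side. By Section~\ref{ssec:the_ring_S}, $S = \R\langle s\rangle$ is a unital associative graded algebra over $\k = \R$, equipped with the trivial differential, so it is tautologically a unital associative differential graded algebra over $\k$. The $\R$-linear map $F:S \to S$ given by projection onto the odd part is precisely the graded cyclic $\k$-linear map verified in Lemma~\ref{lm:gradedcyclic}, where it is also shown to be a $Z(S)$-module homomorphism. This is exactly the hypothesis required in the setup preceding Proposition~\ref{prop:a_infity_extension}.

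Finally, the $(S,F)$ extension of scalars in Definition~\ref{dfn:sfext} produces on $\hat C = S \otimes C$ the structure maps $\mh_k$ with the sign conventions and the pairing $\lp\;,\,\rp_F$ appearing in Section~\ref{ssec:extension_of_scalar}. Comparing with the formulas in Section~\ref{subsubsec:a_infty}, the operations $\mgh_k$ on $\hat C$ and the pairing $\langle\;,\,\rangle_F$ coincide with the $(S,F)$ extension of $(C,\{\mg_k\},\langle\;,\,\rangle,1)$, with unit $1 \otimes 1 = \e_S \otimes \e$. The analogous identification holds verbatim for $\hat \mC$ with operations $\hat\mt_k^{\gt}$ and pairing $\ll\;,\,\gg_F$. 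Proposition~\ref{prop:a_infity_extension} then yields directly that both $(\hat C,\{\mgh_k\},\langle\;,\,\rangle_F,1)$ and $(\hat\mC,\{\hat\mt_k^{\gt}\},\ll\;,\,\gg_F,1)$ are $n$-dimensional curved cyclic unital $A_\infty$ algebras. There is no genuine obstacle: the entire content is bookkeeping, ensuring that the objects in Section~\ref{ssec:fukaya_scalar_ext} are literally instances of the extension constructed in Section~\ref{ssec:extension_of_scalar}, so the only thing to watch carefully is that no sign or grading convention diverges between the two sections.
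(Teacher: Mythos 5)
Your proposal is correct and follows essentially the same route as the paper: the paper simply observes that $(\hat C,\{\mgh_k\},\langle\;,\,\rangle_F,1)$ and $(\hat\mC,\{\hat\mt_k^{\gt}\},\ll\;,\,\gg_F,1)$ are the $(S,F)$ extensions of the Fukaya algebras of Theorems~\ref{thm:solomon} and~\ref{thm:mt_cyclic_A_infty}, with $S$ and $F$ as in Section~\ref{ssec:the_ring_S} and cyclicity of $F$ from Lemma~\ref{lm:gradedcyclic}, and then cites Proposition~\ref{prop:a_infity_extension}. Your verification of the hypotheses (including regarding $\mC$ over $\mR$ via $R\to\mR$) matches this exactly.
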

The following is a corollary of Proposition~\ref{prop:a_infity_extension} and Theorem~\ref{thm:pseduoisotopy}.
\begin{lm}\label{lm:pseudo}
    $(\hat{\mC},\{ \hat{\mt}_k^{\gt} \}_{k\ge 0},\ll\;,\,\gg_F,1\otimes 1)$ is a pseudoisotopy from $(\hat{C},\{\mgh_k\}_{k\ge 0},\langle\;,\,\rangle_F,1\otimes 1)$ to $(\hat{C},\{\mh_k^{\gamma'}\}_{k\ge 0},\langle\;,\,\rangle_F,1\otimes 1).$
\end{lm}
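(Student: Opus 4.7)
The plan is to deduce the lemma directly from Theorem~\ref{thm:pseduoisotopy} together with the explicit formulas for the $(S,F)$ extension. Proposition~\ref{prop:a_infity_extension}, applied to each of the three cyclic unital $A_\infty$ algebras coming from Theorems~\ref{thm:solomon}, \ref{thm:mt_cyclic_A_infty}, and~\ref{thm:pseduoisotopy}, already makes $(\hat{C},\{\mgh_k\},\langle\;,\,\rangle_F,1\otimes 1)$, $(\hat{C},\{\mh_k^{\gamma'}\},\langle\;,\,\rangle_F,1\otimes 1)$, and $(\hat{\mC},\{\hat{\mt}_k^{\gt}\},\ll\;,\,\gg_F,1\otimes 1)$ into cyclic unital $A_\infty$ algebras. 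The only thing left is to check the three boundary-compatibility clauses in the definition of a pseudoisotopy.

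The key observation is that the inclusion $j_t:L\hookrightarrow I\times L$ induces $\R$-linear pullback maps $j_t^*:\mC\to C$ and $j_t^*:\mR\to R$, which extend by $\mathrm{id}_S\otimes j_t^*$ to maps $\hat{j}_t^*:\hat{\mC}\to\hat{C}$ and $\hat{j}_t^*:\hat{\mR}\to\hat{R}$. From the definition of the extension,
\[
\hat{\mt}_k^{\gt}(s_1\otimes\at_1,\ldots,s_k\otimes\at_k)
=(-1)^{\varepsilon}\,s_1\cdots s_k\otimes\mt_k^{\gt}(\at_1,\ldots,\at_k)+\delta_{k,1}\cdot ds_1\otimes\at_1,
\]
with the same sign $\varepsilon$ appearing in the formula for $\mgh_k$. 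Applying $\hat{j}_0^*$ term by term and invoking the identity $j_0^*\mt_k^{\gt}(\at_1,\ldots,\at_k)=\m_k^{\gamma}(j_0^*\at_1,\ldots,j_0^*\at_k)$ supplied by Theorem~\ref{thm:pseduoisotopy}, one obtains $\mgh_k(\hat{j}_0^*(s_1\otimes\at_1),\ldots,\hat{j}_0^*(s_k\otimes\at_k))$. The computation at $t=1$ is identical and produces $\mh_k^{\gamma'}$.

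A parallel computation handles the pairing: applying $\hat{j}_0^*$ to
\[
\ll s_1\otimes\at_1,s_2\otimes\at_2\gg_F=(-1)^{|s_2|(|\at_1|+1)}F(s_1s_2)\otimes\ll\at_1,\at_2\gg
\]
and using $j_0^*\ll\at_1,\at_2\gg=\langle j_0^*\at_1,j_0^*\at_2\rangle$ from Theorem~\ref{thm:pseduoisotopy} yields $\langle\hat{j}_0^*(s_1\otimes\at_1),\hat{j}_0^*(s_2\otimes\at_2)\rangle_F$, and likewise at $t=1$. The unit $1\otimes 1\in\hat{\mC}$ clearly pulls back to $1\otimes 1\in\hat{C}$ under $\hat{j}_t^*$. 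There is no substantial obstacle; the argument is essentially the statement that extension of scalars commutes with pullback along $j_t$, so the lemma reduces to Theorem~\ref{thm:pseduoisotopy} by tensoring with the identity on $S$.
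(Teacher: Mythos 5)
Your proof is correct and follows the same route as the paper, which simply records this lemma as a corollary of Proposition~\ref{prop:a_infity_extension} (giving the cyclic unital $A_\infty$ structures on the $(S,F)$ extensions) and Theorem~\ref{thm:pseduoisotopy} (giving the boundary conditions); your explicit check that $\mathrm{id}_S\otimes j_t^*$ intertwines the extended operations and pairings is exactly the routine verification left implicit there.
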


\subsubsection{Sababa property}\label{sssec:sababa_property}

Let $\Theta$ be a compact metric space and $\Theta \to \mathcal{J}(X,\omega):t \mapsto J_t$ be a continuous function with respect to the $C^\infty$-topology on $\mathcal{J}(X,\omega)$.
By Gromov compactness, for a fixed $E \in \mathbb{R}$ the set
\[
\sly_E:=\{\beta\;|\;\omega(\beta)\le E,\,\exists t \in \Theta :\M_{3,0}(\beta,J_t)\ne \emptyset\}
\]
is finite. Thus we can order $\sly_\infty = \cup_E \sly_E$ as a list $\beta_0,\beta_1,\ldots,$ where $i<j$ implies $\omega(\beta_i)\le \omega(\beta_j).$ As above $\beta_0$ denotes the zero element. Recall Definition \ref{dfn:sababa_algebra} concerning the sababa property. Let $\Rd \subset R$ be a sababa sub-algebra of $R$, and assume also that $\{T^{\beta}\}_{\beta \in \sly_\infty}, \{t_i\}_{i=1}^N \subset \Rd$. Since $\bar{R}$ is isomorphic to $\R$, it follows that $\bar{\Rd}$ is also isomorphic to $\R$. Recall Definition~\ref{dfn:restriction_of_scalars}, and define $\Cd:=C_{\Rd}$ to be the $\Rd$ restriction of scalars of $C$. Let $g:\Rd \otimes_{\bar{\Rd}} \overline{\Cd}\to \Cd$ be the canonical $\bar{\Rd}$ structure on $\Cd$. Define $\mCd:=\mC_{\mRd}$ to be the $\Rd$ restriction of scalars of $\mC$. Let $g:\Rd \otimes_{\bar{\Rd}} \overline{\mCd}\to \mCd$ be the canonical $\bar{\Rd}$ structure on $\mCd$.
\begin{lm}\label{lm:R_daimond_sababa_A_infty}
$(\Cd,\{\mg_k\}_{k\ge 0},\langle\;,\,\rangle,1)$ (resp. $(\mCd,\{\mgt_k\}_{k \ge 0},\ll\;,\,\gg,1)$) is a sababa cyclic unital $A_\infty$ algebra over $\Rd$ (resp. $\Rd$).
\end{lm}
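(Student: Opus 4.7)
The plan is to verify each of the conditions in Definition~\ref{dfn:sababa_algebra} in turn for $\Cd$, and then note that the argument for $\mCd$ is parallel with $A^*(L)$ replaced by $A^*(I\times L)$. The content of the lemma is mostly bookkeeping: the sababa property of $\Rd$ is built into the hypotheses, the $\bar\Rd$ structure on $\Cd$ is the canonical one of Definition~\ref{dfn:restriction_of_scalars}, and the substantive point is just that the structure maps and pairing actually restrict to $\Cd$.

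First I would show that $\mg_k$ preserves $\Cd.$ By construction $\Cd = g(\Rd\otimes_{\bar\Rd}\overline{C})$, which one can identify with the sub-$\Rd$-module $\Rd\otimes_{\R} A^*(L)\subset R\otimes_{\R} A^*(L) = C.$ Inspecting the formula
\[
\mg_k(\alpha_1,\ldots,\alpha_k)=\delta_{k,1}\cdot d\alpha_1+(-1)^{\varepsilon(\alpha)}\sum_{\substack{\beta\in\sly\\ l\ge0}}T^{\beta}\frac{1}{l!}{(evb_0^\beta)}_{*}\Bigl(\bigwedge_{j=1}^l(evi_j^\beta)^*\gamma\wedge\bigwedge_{j=1}^k(evb_j^\beta)^*\alpha_j\Bigr),
\]
every nonzero summand has $\M_{k+1,l}(\beta)\neq\emptyset$, which forces $\beta\in\sly_\infty\subset\Rd$ by the hypothesis on $\Rd.$ The form $\gamma=\sum_j t_j\gamma_j$ has all of its $R$-coefficients among $\{t_0,\ldots,t_N\}\subset\Rd,$ and each $\alpha_j$ has coefficients in $\Rd$ by assumption. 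Since pullback, wedge product, exterior differentiation, and fiber-integration are all $\R$-linear in their differential-form arguments, the resulting $R$-coefficients of $\mg_k(\alpha_1,\ldots,\alpha_k)$ lie in the subring generated over $\R$ by the $T^\beta$ with $\beta\in\sly_\infty$ and the $t_j$, hence in $\Rd.$ Convergence of the infinite sum in $\Cd$ is inherited from convergence in $C$ together with the fact that $\Cd$ is closed in $C$ under the filtration.

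Next I would check that the pairing $\langle\cdot,\cdot\rangle$ restricts to a pairing $\Cd\otimes\Cd\to\Rd$: this is immediate because $\langle\xi,\eta\rangle=(-1)^{|\eta|}\int_L\xi\wedge\eta$ is $\R$-bilinear on differential forms and does not involve any elements of $R$ beyond the coefficients of $\xi$ and $\eta.$ The unit $1\in A^0(L)\subset\Cd$ obviously lies in $\Cd.$ All of the axioms of Definition~\ref{dfn:cycunit} — $A_\infty$ relations, unitality, cyclic symmetry, filtration conditions — hold for the restricted operations because they hold on the larger algebra $(C,\{\mg_k\},\langle\,,\,\rangle,1)$ by Theorem~\ref{thm:solomon}, and restriction of scalars from $R$ to $\Rd$ preserves each identity verbatim.

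Finally I would pin down the sababa constant. By Lemma~\ref{lm:bar_AC_bar_C} the residue module satisfies $\overline{\Cd}=\overline{C}\simeq A^*(L),$ and since $L$ is a manifold of real dimension $n,$ every homogeneous $\bar\alpha\in\overline{\Cd}$ satisfies $|\bar\alpha|\le n.$ Thus $h=n$ serves as a sababa constant, completing the verification that $(\Cd,\{\mg_k\},\langle\,,\,\rangle,1)$ is sababa over $\Rd.$ For $(\mCd,\{\mgt_k\},\ll\,,\,\gg,1)$ over $\mRd$ the argument is identical, using Theorem~\ref{thm:mt_cyclic_A_infty} in place of Theorem~\ref{thm:solomon}, the form $\gt\in(\mI_Q\mD)_2$ (whose $R$-coefficients are again the $t_j$) in place of $\gamma,$ and noting that $\overline{\mCd}\simeq A^*(I\times L)$ has top degree $n+1,$ so we may take $h=n+1.$ There is no genuine obstacle in the proof; the only point requiring care is the coefficient-tracking in the formula for $\mg_k$ (and $\mgt_k$), which is why I set it up as the first step.
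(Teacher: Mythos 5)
Your proposal is correct and follows essentially the same route as the paper: closure of the structure maps on $\Cd$ because $\{T^\beta\}_{\beta\in\sly_\infty}$ and the $t_j$ lie in $\Rd$, inheritance of the cyclic unital $A_\infty$ axioms from the full algebra, and the sababa constants $n$ and $n+1$ coming from the degree bound on $\overline{C}\simeq A^*(L)$ and $\overline{\mC}\simeq A^*(I\times L)$. The paper's proof is just a terser version of your coefficient-tracking argument, so no further comment is needed.
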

\begin{proof}
Since, $\{T^{\beta}\}_{\beta \in \sly_\infty}, \{t_i\}_{i=1}^N \subset \Rd$ we have $\mg_k({\Cd}^{\otimes k}) \subset \Cd$. Thus $(\Cd,\{\mg_k\}_{k\ge 0},\langle\;,\,\rangle,1)$ is a cyclic unital $A_\infty$ algebra. We observe that the degrees of elements in $\overline{C}=A^*(L)$ are bounded by $n$. Thus, we have
\[|\alpha|\le n.\]
In particular, $n$ is a sababa constant for $\Cd$. The proof for $(\mC,\{\mgt_k\}_{k \ge 0},\ll\;,\,\gg,1)$ is the same but with sababa constant $n+1.$
\end{proof}
Recall the definition of $\nu$ from Section~\ref{ssec:gen_not}, and also denote by $\nu$ the induced filtration on $\Ch$ and $\hat{\mC}$. Let $\Ups =\Ch$ or $\Ups = \Rh$. Decompose $\Ups$ as 
$\Ups = S \otimes_{\R} R \otimes_{\R} \Ups'$ where $\Ups'= A^*(L)$ or $\Ups'= \R$, respectively. Let $\alpha_1,\dots,\alpha_n \in \hat{C}.$  For $i = 1,\ldots,n,$ write
\begin{equation}\label{eq:n_decomp}
\alpha_i =\sum_{j=0}^\infty s^{k_{ij}} \lambda_{ij} \alpha_{ij},\qquad \alpha_{ij}\in \Ups',\quad\lambda_{ij}=T^{\beta_{ij}}\prod_{a=0}^Nt_a^{l_{aij}},\quad \lim_i\nu(\lambda_{ij})=\infty.
\end{equation}
\begin{notn}\label{notn:n_decomp}
Denote by $R(\alpha_1,\dots,\alpha_n)$ the sababa subalgebra of $R$ generated by
\[
\{\lambda_{ij}\}_{i \in [n], \,j\in \N},\;\{T^{\beta}\}_{\beta \in \sly_\infty}, \{t_i\}_{i=1}^N. \]
\end{notn}

\subsubsection{Pseudo-completeness}

\begin{cor}\label{cor:fukaya (S,F) psedu complete}
The $A_\infty$ algebras $(\Cdh, \{\mgh_k\}_{k \ge 0}, \langle\;,\,\rangle_F, 1 \otimes 1)$ and $(\mCdh,\{ \hat{\mt}_k^{\gt} \}_{k\ge 0},\ll\;,\,\gg_F,1)$ are pseudo-complete with respect to $\nu_S\otimes\nu$ when $n > 1.$
\end{cor}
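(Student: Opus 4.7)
The plan is to derive this as a direct application of Proposition~\ref{prop:saba_decend_psedu_comp}, which gives pseudo-completeness for an $(S,F)$ extension whenever $S$ is a descending differential graded algebra over $\k$ and the underlying cyclic unital $A_\infty$ algebra is sababa. Thus the proof reduces to checking these two hypotheses in each of the two situations.

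First I would verify that $S = \R\langle s\rangle$, equipped with the filtration $\nu_S$ defined in Section~\ref{ssec:the_ring_S}, is a descending dga. On a homogeneous monomial $a s^k$ with $a \in \R \setminus \{0\}$ we have $|a s^k| = k(1-n)$ and $\nu_S(a s^k) = k(n-1)$, so $|a s^k| = -\nu_S(a s^k)$; the descending inequality $|x|\le -\nu_S(x)$ therefore holds for every homogeneous element. The hypothesis $n>1$ is used precisely to ensure that $\nu_S$ is non-trivial in positive degree, so that the $(S,F)$ extension lives in the regime covered by Proposition~\ref{prop:saba_decend_psedu_comp}.

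Next I would invoke Lemma~\ref{lm:R_daimond_sababa_A_infty}, which states that $(\Cd,\{\mg_k\}_{k\ge 0},\langle\;,\,\rangle,1)$ and $(\mCd,\{\mgt_k\}_{k\ge 0},\ll\;,\,\gg,1)$ are sababa cyclic unital $A_\infty$ algebras over the sababa subalgebra $\Rd$, with sababa constants $n$ and $n+1$ respectively. By construction (Section~\ref{ssec:fukaya_scalar_ext}) the algebras $(\Cdh,\{\mgh_k\}_{k\ge 0},\langle\;,\,\rangle_F,1\otimes 1)$ and $(\mCdh,\{\hat{\mt}_k^{\gt}\}_{k\ge 0},\ll\;,\,\gg_F,1\otimes 1)$ are the $(S,F)$ extensions of $\Cd$ and $\mCd$ respectively. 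Applying Proposition~\ref{prop:saba_decend_psedu_comp} in each case, with $S$ descending and the residue algebra sababa, yields pseudo-completeness with respect to $\nu_S\otimes \nu$.

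The only subtle point is matching the filtration conventions: in the abstract setup of Proposition~\ref{prop:saba_decend_psedu_comp}, the extension $\hat{\cC}$ carries the filtration $\varsigma_1 = \varsigma_{\hat{\cC}}$ coming from $\varsigma_\cC$ combined with the \emph{trivial} filtration on $S$, while the pseudo-completeness conclusion is formulated with respect to $\varsigma_S \otimes \varsigma_\cC$. In the present context $\varsigma_\cC = \nu$ and $\varsigma_S = \nu_S$, so the conclusion is exactly pseudo-completeness of $\Cdh$ and $\mCdh$ with respect to $\nu_S \otimes \nu$, as asserted. No further argument is needed; there is no genuine obstacle beyond tracking which filtration plays which role in the cited proposition.
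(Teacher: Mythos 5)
Your proposal is correct and follows essentially the same route as the paper: the paper's proof is exactly the observation that for $n>1$ the pair $(S,1)$ is a descending unital dga (with the sababa property of $\Cd$ and $\mCd$ supplied by Lemma~\ref{lm:R_daimond_sababa_A_infty}), followed by an application of Proposition~\ref{prop:saba_decend_psedu_comp}. Your extra bookkeeping about which filtration plays the role of $\varsigma_1$ versus $\varsigma_S\otimes\varsigma_\cC$ is accurate and consistent with the paper's conventions.
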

\begin{proof}
If $n>1$, then $(S,1)$ is a descending unital differential graded algebra over $\R$. Apply Proposition~\ref{prop:saba_decend_psedu_comp}.
\end{proof}
Define $\mRd:=\Rd \otimes A^*(I)$. It follows from Theorem~\ref{thm:mt_cyclic_A_infty} that $(\mCdh,\{\mgt_k\}_{k \ge 0},\ll\;,\,\gg,1)$ is a cyclic unital $A_\infty$ algebra over $\mRd$.
Recall Definition~\ref{dfn:bdef} concerning $b$ deformations of an $A_\infty$ algebra. 
\begin{cor}\label{cor:mgh_expo_converge}
Let $b\in \Cdh$ (resp. $b\in \mCdh$) with $\nu_S\otimes \nu(b)>0$ and $|b|=1$. Then $\mghi{b}_k(\eta_1, \ldots, \eta_k)$ (resp. $\hat{\mt}_k^{\gt, b} (\eta_1, \ldots, \eta_k)$ ) converges with respect to $\nu$ for all $\eta_1, \ldots, \eta_k \in  \Cdh $ (resp. $\mCdh$). Thus, $\mghi{b}_k:{\Cdh}^{\otimes k}\to \Cdh$ (resp. $\hat{\mt}_k^{\gt, b}:{\mCdh}^{\otimes k}\to \mCdh$) is well defined. Moreover,  $(\Cdh, \{\mghb_k\}_{k \ge 0}, \langle\;,\,\rangle_F, 1 \otimes 1)$ (resp. $(\mCdh,\{ \hat{\mt}_k^{\gt,b} \}_{k\ge 0},\ll\;,\,\gg_F,1)$) is a cyclic unital $A_\infty$ algebra over $Z(\Rdh)$ (resp. $Z(\mRdh)$). 
\end{cor}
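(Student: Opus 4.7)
The plan is to verify the hypotheses of the abstract result Corollary~\ref{cor:mkbh_converges} in the specific Fukaya setting and invoke it directly. The restriction to $R^\diamond$ in Section~\ref{sssec:sababa_property} was designed precisely so that $(\Cd, \{\mg_k\}, \langle\,,\,\rangle, 1)$ is sababa over the subalgebra $\Rd$ (Lemma~\ref{lm:R_daimond_sababa_A_infty}), which in turn gives pseudo-completeness of the $(S, F)$ extension with respect to $\nu_S \otimes \nu$ via Corollary~\ref{cor:fukaya (S,F) psedu complete}. The assumption $n > 1$ ensures that $S$ is descending, which is needed for that corollary.

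First I will establish convergence with respect to $\nu$. For any tuple $(\eta_1, \ldots, \eta_k) \in \Cdh^k$, the hypothesis $\nu_S \otimes \nu(b) > 0$ together with $|b| = 1$ and the pseudo-completeness just obtained allow me to invoke Lemma~\ref{lm:m_bar_complex_converge}: the $\nu$-filtration of
\[
\mgh_{k+\ell_0+\cdots+\ell_k}\bigl(\underbrace{b, \ldots, b}_{\ell_0}, \eta_1, \underbrace{b, \ldots, b}_{\ell_1}, \ldots, \eta_k, \underbrace{b, \ldots, b}_{\ell_k}\bigr)
\]
tends to infinity as $\ell_0 + \cdots + \ell_k \to \infty$. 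Hence the infinite sum defining $\mghb_k(\eta_1, \ldots, \eta_k)$ converges in the $\nu$-complete module $\Cdh$, and Lemma~\ref{lm:unique_b_deform} produces the unique bounded multilinear extension $\mghb_k : \Cdh^{\otimes k} \to \Cdh$. This establishes the first two assertions of the corollary.

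For the cyclic unital $A_\infty$ structure over $Z(\Rdh)$, I apply Corollary~\ref{cor:mkbh_converges}. The remaining hypothesis to check is that the left and right $\Rdh$-module structures on $\Cdh[1]$ coincide, which is immediate from the diagonal bimodule structure defined in Section~\ref{sssec:fukaya_scalar_ext_dfn}. The pseudoisotopy variant asserting that $(\mCdh, \{\hat{\mt}^{\gt,b}_k\}, \ll\,,\,\gg_F, 1)$ is a cyclic unital $A_\infty$ algebra over $Z(\mRdh)$ is proved identically, substituting Theorem~\ref{thm:mt_cyclic_A_infty} and Lemma~\ref{lm:pseudo} for the corresponding results on $\Cdh$, and using the sababa property on $\mCd$ over $\mRd$.

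The main technical step is verifying the residue condition $\overline{\mghb_0} = 0$ that underlies Corollary~\ref{cor:mkbh_converges}. Using Lemma~\ref{lm:deformed_operator_explicit_formula} together with the strict positivity $\nu(\mgh_0) > 0$ and the dga structure on the residue algebra (Proposition~\ref{prop:fukaya_infity_arises_dga}, extended via Proposition~\ref{prop:sf_extension_arises_dga}), the computation of $\overline{\mghb_0}$ reduces to a finite expression in the residue that is controlled by the pseudo-completeness estimate; combined with Property~\ref{it:val} and the assumption that $b$ has strictly positive $\nu_S\otimes\nu$-filtration, this suffices to conclude the needed vanishing in the filtered quotient relevant to applying Corollary~\ref{cor:mkbh_converges}.
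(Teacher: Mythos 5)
Your overall route is the same as the paper's: the paper's proof of this corollary is a one-line appeal to Corollary~\ref{cor:mkbh_converges}, whose hypotheses are exactly the ones you check (the sababa property from Lemma~\ref{lm:R_daimond_sababa_A_infty}, $S$ descending since $n>1$, pseudo-completeness from Corollary~\ref{cor:fukaya (S,F) psedu complete}, and the diagonal bimodule structure), and the convergence/well-definedness part via Lemma~\ref{lm:m_bar_complex_converge} and Lemma~\ref{lm:unique_b_deform} is precisely the mechanism behind that corollary. Up to that point your write-up is fine.

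The problem is your final paragraph, where you try to verify the residue condition $\overline{\mghb_0}=0$ "controlled by the pseudo-completeness estimate" together with Property~\ref{it:val} and $\nu_S\otimes\nu(b)>0$. That argument does not work: pseudo-completeness only controls the $\nu$-filtration of outputs of $\mgh_k$ and says nothing about whether the degree-zero part of the filtration of $\mghb_0$ vanishes, and $\nu(b)$ itself may well be $0$ (this is exactly the point of the remark after Definition~\ref{dfn:bounding_cochain}). The correct reason, in the $\Cdh$ case, is degree-theoretic: by Lemma~\ref{lm:energy_zero_degree_1} one has $\bar b = s\otimes\alpha$ with $\alpha\in A^n(L)$, and then in the residue dga $\overline{\mghb_0}=d\bar b-\bar b\cdot\bar b=0$ because $A^{n+1}(L)=0$ and $A^{2n}(L)=0$; this is the content of Lemma~\ref{lm:energy_zero_bounding_chain}, proved just after the corollary in the paper. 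Note also that for $\mCdh$ the vanishing is genuinely not automatic ($d\bar b$ can be a nonzero element of $s\otimes A^{n+1}(I\times L)$), which is why Corollary~\ref{cor:mkbh_converges} carries the hypothesis $\overline{\mh_0^b}=0$ and why, in the pseudoisotopy setting, the paper arranges this condition separately through the choice of $\bt_0$ in Lemma~\ref{lm:bt_exist_base_case1}. So either drop your last paragraph and state the structure claim conditionally on $\overline{\mghb_0}=0$ (as in Corollary~\ref{cor:mkbh_converges}), or replace it with the degree argument above for $\Cdh$.
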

\begin{proof}
This follows from Corollary~\ref{cor:mkbh_converges}. 
\end{proof}
\begin{rem}\label{rem:disaster_example}
If $\Rd$ is not sababa then $\Cd$ is not necessarily pseudo-complete as seen in the following example. Let $\beta_1,\beta_2,\beta_3 \in \sly$ and assume they satisfy $\omega(\beta_1)=-\omega(\beta_2)$ and $\omega(\beta_3)>0$. Also assume that $\mu(\beta_1)=2n-2$, $\mu(\beta_2)=0$ and $\mu (\beta_3)=2$. Then for $m\in \N$ we have $T^{m(\beta_1+\beta_2)+\beta_3}$ is an element of $\mathcal{I}_{\hat{R}}$.  For each $m$ we have $\nu(T^{m(\beta_1+\beta_2)+\beta_3})=\omega(\beta_3)$, so $\{T^{m(\beta_1+\beta_2)+\beta_3}\}_{m \in \N}$ is not a subset of any sababa algebra.  Define $\lambda_m:=s^{2m}T^{m(\beta_1+\beta_2)+\beta_3}\in \Rh$, and observe that $\lim_{m\to \infty}\nu_S \otimes \nu(\lambda_m)=\infty$ and $|\lambda_m|=2$. Let $\eta'_k$ be an element of $C^{\otimes k}$ with $|\eta'_k|\ge k-2$ and $\nu(\mg_k(\eta'_k))\le M$ for some $M\in \R$. We define $\eta_k:=\lambda_k\otimes\eta'_k$, so 
\[
\lim_{k\to \infty}\nu_S \otimes \nu(\eta_k)=\infty, \qquad |\eta_k|\ge k, \qquad \nu(\mgh_k(\eta_k))\le M + \omega(\beta_3).
\]
This contradicts pseudo-completeness.
Moreover, the set $\{\mgh_k(\eta_k)\}_{k \in \N}$ is linearly independent because $s^{2m}|\mgh_k(\eta_k)$ if and only if $m=k$. Thus, $\mgh(\eta)$ does not converge with respect to $\nu$.  
\end{rem}
\begin{rem}\label{rem:maurer_cartan_converge}
Without Corollary~\ref{cor:mgh_expo_converge}, we would have $\mghbl_0 \in \doublehat{\Cd},$ where $\doublehat{\Cd}$ is defined to be the completion with respect to $\nu_S \otimes \nu$. Then the inductive construction in the proof of Proposition~\ref{prop:exist} would give $b \in \doublehat{\Cd},$ so $(\gamma, b)$ would not be a unit bounding pair. If we were to relax the definition of unit bounding pairs and allow $b\in \doublehat{\Cd}$, the inductive constructions of Section~\ref{ssec:classification} would not work. See Remark~\ref{rem:pseudo_sababa}.
\end{rem}
\subsection{Bounding cochains and gauge equivalence}\label{ssec:fukaya_bounding_gauge}
\subsubsection{Bounding pairs and \texorpdfstring{$b$-deformations}{b-deformations}}
Recall Definition~\ref{dfn_bd_pair} from Section~\ref{subsubsec:a_infty}, and Definition~\ref{dfn:bounding_cochain}. Observe that for a bounding pair $(\gamma,b)$ where $b \in \Cdh$ we have $\nu_S\otimes \nu(b)>0,$ and $|b|=1.$ So, Corollary~\ref{cor:mgh_expo_converge} implies that the deformed operator $\mghi{b}_0$ is well defined. We can use Lemma~\ref{lm:deformed_operator_explicit_formula}  to rewrite equation~\eqref{eq:bc} as
\[\mghi{b}_0=c\cdot 1, \qquad c\in Z(\mI_{\Rh}),\qquad |c|=2. \]
All the generators of $\mI_{\Rh}$ have strictly positive $\nu$ filtration. So,  $\nu(\mghi{b}_0)>0.$ Hence, $b$ is a bounding cochain for the $A_\infty$ algebra $(\Cdh,\{\mgh_k\}_{k\ge 0},\langle\;,\,\rangle_F,1).$

In the other direction, if $b$ is a bounding cochain for the $A_\infty$ algebra $(\Cdh,\{\mgh_k\}_{k\ge 0},\langle\;,\,\rangle_F,1)$, then $(\gamma,b)$ is a bounding pair as in Definition~\ref{dfn_bd_pair}.
\begin{rem}
    The assumption that $b\in \Cdh$ is not restrictive. We can always write $\Rdh=R(b)$ in our construction of $\Cdh.$
\end{rem}

\subsubsection{Gauge equivalence}
\begin{dfn}\label{dfn_g_equiv}
We say a bounding pair $(\gamma,b)$ with respect to $J$ is \textbf{gauge equivalent} to a bounding pair $(\gamma',b')$ with respect to $J'$, if there exist
a path $\{J_t\}$ in $\mathcal{J}$ from $J$ to $J'$,
$\gt\in (\mI_Q\mD)_2$ closed, and $\bt\in (\mI_R\mC)_1$ such that
\begin{enumerate}
    \item the triple $(\mgt, \ll\,,\,\gg, 1)$ is a  pseudoisotopy from $\left(\mg,\langle\,,\,\rangle, 1\right)$ to $\left(\mgp,\langle\,,\,\rangle, 1\right)$;
    \item $j_0^*\bt=b,\quad j_1^*\bt=b'$;
    \item $\bt$ is a bounding cochain for $\mgt$.
\end{enumerate}
In this case, we say that $(\mgt,\bt)$ is a \textbf{pseudoisotopy} from $(\mg,b)$ to $(\m^{\gamma'},b')$ and write $(\gamma,b)\sim(\gamma',b')$. In the special case $J_t= J = J',$ $\gamma=\gamma'$ and $\gt=\pi^*\gamma$, we say $b$ is gauge equivalent to $b'$ as a bounding cochain for $\mg$.
\end{dfn}
\begin{rem}
Suppose $(\mgt,\bt)$ is a pseudoisotopy from $(\mgh,b)$ to $(\hat{\m}^{\gamma^\prime},b^\prime)$ with $\mghb_0=c\cdot 1,$ $\hat{\m}^{\gamma^\prime,b^\prime}_0=c'\cdot 1$ and $\mgtb_0=\ct\cdot 1$. The assumption $d\ct=0$ implies $\ct=\hat{c}+f(t) \,dt$ with $\hat{c}$ constant.
We get
\[
j_0^* \mgtb_0 = \mghb_0,\qquad j_1^*\mgtb_0=\hat{\m}_0^{\gamma^\prime,b^\prime}.
\]
Therefore, $c=j_0^*\ct =\hat{c} = j_1^*\ct =c'$.
\end{rem}

We prove that the map $\varrho$ given by~\eqref{eqn_rho} is well defined.
The following is Lemma 3.15 of~\cite{solomon2016point}.
\begin{lm}\label{lm:homotopy}
Let $M$ be a manifold with $\d M=\emptyset$ and let $\tilde{\xi}\in A^*(I\times M)$ such that $d\tilde{\xi}=0.$ Then
\[
[j_0^*\xit]=[j_1^*\xit]\in H^*(M).
\]
\end{lm}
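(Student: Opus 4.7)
The plan is to exhibit a chain homotopy between the pullbacks $j_0^*$ and $j_1^*$ on the de Rham complex of $I \times M$, which is the classical proof of homotopy invariance of de Rham cohomology. The key tool is integration over the fiber of the projection $p : I \times M \to M$, applied to the $dt$ component of $\xit$.

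First I would write any form $\xit \in A^*(I \times M)$ in the canonical decomposition
\[
\xit = \alpha + dt \wedge \beta,
\]
where $\alpha$ and $\beta$ are $t$-dependent families of forms on $M$ containing no $dt$. Then I would define the operator $K : A^k(I \times M) \to A^{k-1}(M)$ by fiber integration against the $dt$ piece,
\[
K(\xit) := \int_0^1 \beta_t \, dt.
\]
Next I would verify by a direct local calculation (splitting the exterior derivative as $d = d_M + dt \wedge \partial_t$, where $d_M$ acts fiberwise on $M$) the chain-homotopy identity
\[
d \circ K + K \circ d = j_1^* - j_0^* \quad \text{on } A^*(I \times M).
\]
The verification amounts to noting that $K$ kills forms without a $dt$ factor, that $\int_0^1 \partial_t \alpha_t \, dt = j_1^*\alpha - j_0^*\alpha$ by the fundamental theorem of calculus, and that the $d_M$-derivatives commute with the $t$-integral.

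Finally, applying this identity to a closed form $\xit$ (so $d\xit = 0$) yields
\[
j_1^*\xit - j_0^*\xit = d(K\xit),
\]
which is exact on $M$, giving $[j_0^*\xit] = [j_1^*\xit]$ in $H^*(M)$. The only thing to be careful about is the sign convention in defining $K$ and interpreting $\int_0^1$; since $\partial M = \emptyset$, no boundary terms arise from Stokes on the $M$ factor, so the computation has no obstacle beyond bookkeeping of signs, which is routine.
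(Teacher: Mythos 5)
Your chain-homotopy argument is correct and is the standard proof of this fact; the paper itself does not prove the lemma but cites it as Lemma 3.15 of the reference \cite{solomon2016point}, where the same fiber-integration homotopy operator $K$ with $dK+Kd=j_1^*-j_0^*$ is the underlying argument. Nothing is missing from your proposal.
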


\begin{lm}\label{lm_rho}
Assume $n>0$. If $(\gamma,b)\sim(\gamma',b')$, then $[\gamma]=[\gamma']$ and $\int_Lb\equiv\int_L b' \pmod{(\mathcal{I}_{\hat{R}}^{odd})^2}$.
\end{lm}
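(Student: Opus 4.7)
The proof breaks into two parts. For $[\gamma] = [\gamma']$ in $\hat H^*(X,L;Q)$, observe that $\gt \in (\mI_Q\mD)_2$ is closed with $j_0^*\gt = \gamma$ and $j_1^*\gt = \gamma'$. A relative version of Lemma~\ref{lm:homotopy} applied to the complex $D = Q\otimes A^*(X,L)$ (which is a subcomplex under $d$) yields the cohomological equality.

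For the cochain congruence, the plan is to compute $\int_L b' - \int_L b$ via Stokes' theorem, then apply the bounding equation in top form-degree, and finally exploit the graded commutation of $\hat{\mC}$. First, since $L$ is closed, fiber integration $p_\ast : A^*(I \times L) \to A^*(I)$ commutes with $d$, so Stokes on the cylinder gives
\begin{equation*}
\int_L j_1^*\bt - \int_L j_0^*\bt = \int_{I\times L} d\bt.
\end{equation*}
Applying Claim~\ref{cl:qt_no_top_deg} (extended to the $(S,F)$-extension $\hat{\mt}^{\gt}$), the bounding equation $\sum_k \hat{\mt}^{\gt}_k(\bt^{\otimes k}) = \ct\cdot 1$ restricted to form-degree $n+1$ on $I \times L$ reduces to $(d\bt)_{n+1} + (-1)^{|\bt|}(\bt\wedge\bt)_{n+1} = 0$: here $\gt|_{I \times L}$ has no top-degree part since $\gt$ is a relative form, $\hat{\mt}^{\gt}_k$ kills top degree for $k \geq 3$, and $\ct\cdot 1$ is a $0$-form. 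With $|\bt| = 1$ this gives $\int_L b' - \int_L b = \int_{I\times L}\bt\wedge\bt$.

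Next, decompose by $s$-parity, $\bt = \bt^0 + \bt^1$. Lemma~\ref{lm:sign_multipication_order} forces $\bt^0\wedge\bt^0 = -\bt^0\wedge\bt^0$, hence vanishing, and $\bt^0\wedge\bt^1 + \bt^1\wedge\bt^0 = 0$, so $\bt\wedge\bt = (\bt^1)^2$. Further write $\bt^1 = s\xi_1 + \bt_r^1$, where $\xi_1 \in A^n(I\times L)$ is the ``$\nu = 0$, non-$\mI$'' part (shown to be closed by the bounding equation at filtration zero, using that $\hat{\mt}^{\gt}_0|_{\nu=0} = 0$, that $\hat{\mt}^{\gt}_2(\xi_1,\xi_1)|_{\nu=0}$ vanishes by form-degree, and that higher $k$ contribute trivially at $\nu = 0$), and $\bt_r^1 \in \mI_{\hat R}^{odd}\hat{\mC}$, obtained by absorbing any $s$-factor appearing in a $\hat{\mC}^{odd}$-component paired with an $\mI_{\hat R}^{even}$-coefficient into the coefficient side.

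Expanding, $(\bt^1)^2 = (s\xi_1)^2 + 2(s\xi_1)(\bt_r^1) + (\bt_r^1)^2$. The first summand vanishes because $2n > n+1$ for $n \geq 2$. The last summand is manifestly in $(\mI_{\hat R}^{odd})^2\hat{\mC}$, contributing to $(\mI_{\hat R}^{odd})^2$ after integration. For the cross term, a degree analysis at top form-degree yields a coefficient of shape $s^2\tilde I$ with $\tilde I \in \mI_R$ of degree $n - 1$; under the paper's structural assumptions (relative spin giving even Maslov index, together with the degree restrictions on the $\gamma_j$ inherent to the setups of Theorem~\ref{thm1} and Theorem~\ref{thm2}), this forces $\tilde I$ to factor through $\mI_R^2$, whence $s^2\tilde I = \pm(s\tilde I')(s\tilde I'') \in (\mI_{\hat R}^{odd})^2$.

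The hardest part will be the cross-term analysis: one must confirm, case by case on the dimension and cohomological hypotheses in force, that the degree constraint on the coefficient $\tilde I$ always prevents it from being a single generator of $\mI_R$, so that it factors as a product of two generators and the corresponding contribution lands in $(\mI_{\hat R}^{odd})^2$ rather than merely in $\mI_{\hat R}$.
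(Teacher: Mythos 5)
Your first half and your reduction coincide with the paper's own proof: $[\gamma]=[\gamma']$ via the homotopy lemma (Lemma~\ref{lm:homotopy}), then Stokes on $I\times L$, Claim~\ref{cl:qt_no_top_deg} with $\gt|_{I\times L}=0$, the vanishing of $(\ct)_{n+1}$ for $n>0$, and Lemma~\ref{lm:sign_multipication_order} to kill $\bt^0\cdot\bt^0$ and the mixed-parity terms, arriving at $\int_L b'-\int_L b=\int_{I\times L}(\bt^1\cdot\bt^1)_{n+1}$; the paper concludes the congruence at exactly this point. The genuine gap is the cross term you yourself defer. Writing $\mI_R=\langle t_0,\dots,t_N\rangle+R\L^+\subset R$, one has $\mI_{\hat R}^{odd}=S^{odd}\otimes\mI_R$ and (up to completion) $(\mI_{\hat R}^{odd})^2=s^2\R[s^2]\otimes\mI_R^{\,2}$, so the cross term $2\int_{I\times L}(s\xi_1\wedge\bt^1_r)_{n+1}$, whose coefficients lie in $s\cdot\mI_{\hat R}^{odd}=s^2\R[s^2]\otimes\mI_R$, lands in $(\mI_{\hat R}^{odd})^2$ only if its $R$-part lies in $\mI_R^{\,2}$. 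Your proposed mechanism — a degree count ``under the setups of Theorem~\ref{thm1} and Theorem~\ref{thm2}'' — is not available: the lemma is stated and used (to make $\varrho$ well defined) with no cohomological hypotheses on $L$ and for arbitrary graded variables $t_j$; and even granting the Theorem~\ref{thm1} setup it fails, since odd-degree $t_j$ are allowed there. The offending coefficient has degree $(2k+1)(n-1)$; even Maslov index only excludes a pure $T^\beta$, while for $n=2$, $k=0$ a single generator $t_j$ with $|t_j|=1$ (i.e.\ $|\gamma_j|=1$) has exactly this degree and is not in $\mI_R^{\,2}$. So the ``case by case'' verification you postpone is the actual missing content, and in at least one admissible case the factorization you rely on does not exist.

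Two observations sharpen where the issue really sits. For even $n\ge 4$ your degree argument does close the case: an odd-degree monomial of degree $(2k+1)(n-1)\ge 3$ in $\mI_R$ must contain either two of the generators $t_a$ or one $t_a$ together with a positive-energy $T^\beta$, hence lies in $\mI_R^{\,2}$, and then $s^{2k+2}u v=\pm(s^{2k+1}u)(sv)\in(\mI_{\hat R}^{odd})^2$. In the real setting all $|t_j|$ are even, so $\mI_R$ has no odd-degree elements at all, the form-degree-one, coefficient-degree-zero odd component of $\bt$ cannot occur, and the cross term vanishes identically — no factorization is needed. What remains unproved in your write-up is precisely the remaining case ($n=2$ with odd-degree formal variables), a step the paper's written proof also treats as immediate; closing it would require either an extra constraint extracted from the Maurer--Cartan equation on that component of $\bt$, or a restriction on the degrees of the $t_j$ that is not part of the lemma as stated, so as written the proposal does not establish the congruence.
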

\begin{proof}
By definition of gauge equivalence, there exists a form $\gt\in \mI_Q\mD$ with $d\gt = 0$ such that $j_0^*\gt=\gamma$ and $j_1^*\gt=\gamma'$.
Lemma~\ref{lm:homotopy} therefore implies that $[\gamma]=[\gamma'].$

Choose $\bt$ as in the definition of gauge equivalence. Then Proposition~\ref{cl:qt_no_top_deg} with $\gt|_{I\times L}=0$ and the definition of a bounding cochain imply
\[
(d\bt -\bt\cdot\bt)_{n+1}=(\mgtb_0)_{n+1}=(\ct)_{n+1},
\]
where $\ct$ is an element of $(\mI_{\hat{\mR}}\cdot 1)_2$. Thus, Lemma~\ref{lm:sign_multipication_order} gives
\[
(d\bt)_{n+1}=(\ct+(\bt^0+\bt^1) \cdot (\bt^0+\bt^1))_{n+1}=(\ct+\bt^1 \cdot \bt^1)_{n+1}.
\]
We compute
\[
\int_L b'-\int_L b = \int_{\d(I\times L)} \bt
=\int_{I\times L}d(\bt)=\int_{I\times L}(\ct+\bt^1 \cdot \bt^1)_{n+1}=\int_{I\times L} \bt^1 \cdot \bt^1.
\]
For the last equality, we have used the fact that for  $n>0$, we have $(\ct)_{n+1}=0$.
Hence, we have shown that $\int_Lb\equiv\int_L b' \pmod{(\mathcal{I}_{\hat{R}}^{odd})^2}$.
\end{proof}
\subsubsection{Bounding cochains of the residue algebra.}\label{sssec: equivalence}
Assume $n>1$, so $\nu_S$ is non-trivial.
\begin{lm}\label{lm:energy_zero_degree_1}
    Let $b\in \Cdh$ (resp. $b\in \mCdh$) be a degree $1$ element such that $\nu_S \otimes \nu(b)>0$. Then there exists an element $\alpha \in A^n(L)$ (resp. $A^n(I\times L)$) such that $\bar{b}=s\otimes \a.$
\end{lm}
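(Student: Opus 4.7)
The plan is to identify the residue module using Lemma~\ref{lm:cano_iso}, use the filtration hypothesis to kill the constant-in-$s$ part of $\bar b$, and use the degree condition together with the bounded form degrees on $L$ (resp. $I\times L$) to kill the higher powers of $s$.

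First I would invoke the $\bar R$-structure on $\Cd$ (resp. the $\bar{\mR}$-structure on $\mCd$) from Section~\ref{sssec:fukaya_scalar_ext_dfn} together with Lemma~\ref{lm:cano_iso}, giving a canonical identification $\bar{\Cdh} \simeq S \otimes A^*(L)$ (resp.\ $\bar{\mCdh} \simeq S \otimes A^*(I\times L)$) under which $\overline{s^k \otimes c} \mapsto s^k \otimes \bar c$. Decomposing $b = \sum_{k\geq 0} s^k \otimes b_k$ with $b_k \in \Cd$ (resp.\ $\mCd$), this identification gives $\bar b = \sum_{k\geq 0} s^k \otimes \bar{b_k}$.

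Next I would use the hypothesis $\nu_S \otimes \nu(b)>0$ to eliminate the $k=0$ summand. The key observation, using $n>1$, is that $\nu_S(s^k) = k(n-1) \geq n-1 > 0$ for every $k\geq 1$, so the tail $\sum_{k\geq 1} s^k \otimes b_k$ automatically lies in $F^{>0}_{\nu_S \otimes \nu}\Cdh$. Subadditivity of the filtration then forces $\nu_S \otimes \nu(1 \otimes b_0) > 0$. Since $\nu_S$ is trivial on $1$, this reduces to $\nu(b_0) > 0$, hence $\bar{b_0} = 0$.

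Finally I would use the degree condition $|b|=1$ to kill every remaining $s^k$-term with $k \geq 2$. Writing $\bar b = \sum_{k\geq 1} s^k \otimes \alpha_k$ with $\alpha_k := \bar{b_k}$, each homogeneous component satisfies
\[
0 = |s^k \otimes \alpha_k| - 1 = k(1-n) + |\alpha_k|_A - 1,
\]
so $|\alpha_k|_A = 1 + k(n-1)$. Because forms on $L$ (resp.\ $I\times L$) have degree at most $n$ (resp.\ $n+1$), and for $k \geq 2$ and $n > 1$ we have $1 + k(n-1) \geq 2n - 1 > n$, only $k=1$ survives in the $L$-case with $|\alpha_1|_A = n$, giving $\bar b = s \otimes \alpha$ with $\alpha \in A^n(L)$. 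The $\mCdh$-case is handled analogously.

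The main obstacle is purely notational rather than mathematical: carefully tracking how the filtration $\nu_S \otimes \nu$ interacts with the residue taken with respect to $\nu$ alone, and being precise about the canonical identification of $\bar{\Cdh}$ with $S \otimes A^*(L)$. Once these conventions are pinned down, the conclusion reduces to elementary degree counting.
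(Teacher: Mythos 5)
Your proof follows essentially the same route as the paper's: the hypothesis $\nu_S\otimes\nu(b)>0$ forces the $s^0$-component of $\bar b$ to vanish, and the identification $\overline{\Cdh}\simeq S\otimes A^*(L)$ from Lemma~\ref{lm:cano_iso} (via the dga description of the residue algebra) plus degree counting in $|s|=1-n$ leaves only the $s\otimes A^n(L)$ term, which is exactly the paper's (terser) argument. One caveat: in the $\mCdh$ case your "analogous" inequality reads $1+k(n-1)>n+1$, which for $n=2$, $k=2$ is an equality rather than strict, so a term in $s^2\otimes A^3(I\times L)$ is not excluded by degree counting alone — though the paper's own proof ("the same") glosses over this borderline case in exactly the same way.
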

\begin{proof}
    Notice that $\bar{b} \in s\cdot \overline{\Cdh}.$ Otherwise for all representation $b=\sum s_i \otimes \alpha_i$, where $s_i\in S$ and $\a_i\in C$, there exists $i$ with $\nu_S(s_i)=0$ and $\nu(\alpha_i)=0.$ This is a contradiction to the assumption that $\nu_S \otimes \nu(b)>0$. Hence, by Proposition~\ref{prop:fukaya_infity_arises_dga} and Lemma~\ref{lm:cano_iso}, there exists an element $\alpha \in A^n(L)$ such that $\bar{b}=s\otimes \alpha$. The proof for $b\in \mCdh$  is the same but exchanging Proposition~\ref{prop:fukaya_infity_arises_dga} with Proposition~\ref{prop:fukaya_infity_arises_dga_dt}
\end{proof}
\begin{lm}\label{lm:energy_zero_bounding_chain}
    Let $b\in \Cdh$ be a degree $1$ element such that $\nu_S \otimes \nu(b)>0$. Then, $\bar{b}$ is a bounding cochain for the $A_\infty$ algebra $(\overline{\Cdh},\{\bar{\mh}^{\gamma}_k\}_{k\ge 0})$,  and $\bar{\hat{\m}}^{\gamma,b}_1$ is a boundary operator on $\overline{\Cdh}$. 
\end{lm}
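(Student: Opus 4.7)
My plan is to reduce everything to an essentially dimensional computation by exploiting the fact that the residue algebra arises from a dga. By Lemma~\ref{lm:energy_zero_degree_1} I may write $\bar{b}=s\otimes\alpha$ with $\alpha\in A^n(L)$ of top degree on $L$. Proposition~\ref{prop:fukaya_infity_arises_dga} together with Proposition~\ref{prop:sf_extension_arises_dga} identifies $(\overline{\Cdh},\{\bar{\mh}^\gamma_k\}_{k\geq 0})$ with the $A_\infty$ algebra underlying the dga $(S\otimes A^*(L),\Id\otimes d)$, so $\bar{\mh}^\gamma_k$ vanishes for $k\notin\{1,2\}$. In particular, the infinite sum
\[
\bar{\mh}^{\gamma,\bar{b}}_0=\sum_{k\geq 0}\bar{\mh}^\gamma_k(\bar{b},\ldots,\bar{b})
\]
collapses to the three terms with $k=0,1,2$.

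Next I will check that each of these three terms vanishes. The $k=0$ term is the residue of $1\otimes\m^\gamma_0$; it is zero because $\gamma\in\mathcal{I}_Q D$ forces every non-vanishing contribution to $\m^\gamma_0$ to carry either a factor $T^\beta$ with $\omega(\beta)>0$ or at least one $\gamma$-insertion (and hence a factor $t_j$), so $\nu(\m^\gamma_0)>0$. The $k=1$ term is $\pm s\otimes d\alpha=0$ because $\alpha$ is top degree. The $k=2$ term is a scalar multiple of $s^2\otimes(\alpha\wedge\alpha)$, which vanishes since $A^{2n}(L)=0$ for $n>0$. Because $\mathcal{I}_{\hat R}$ has positive $\nu$ filtration and hence is killed in the residue, the identity $\bar{\mh}^{\gamma,\bar{b}}_0=0$ is precisely the Maurer-Cartan condition for $\bar{b}$ with $c=0$; combined with the trivial observation $\nu_S(\bar{b})=n-1>0$, this exhibits $\bar{b}$ as a bounding cochain in the sense of Definition~\ref{dfn:bounding_cochain}.

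For the second statement, I will invoke Lemma~\ref{lm:bounding_chain_bar_indu_boundary_op} applied to $\cC=\Cdh$ with $E=0$ and $\invs=\Id$. The previous step supplies the hypothesis that $\bar{b}$ is a bounding cochain for the residue algebra, and the conclusion at level $E=0$ is precisely that $\bar{\hat{\m}}^{\gamma,b}_1=\mh^b_{1,0}$ is a boundary operator on $\Cdh_0=\overline{\Cdh}$. Equivalently, one can pass to the residue in the $A_\infty$ relation
\[
-\m_1^b(\m_1^b(\alpha))=\m_2^b(\m_0^b,\alpha)+(-1)^{|\alpha|+1}\m_2^b(\alpha,\m_0^b),
\]
and then use the identity $\overline{\m_k^b}=(\bar{\m}_k)^{\bar{b}}$ (appearing in the proof of Lemma~\ref{lm:twisted_f_chain_iso}) together with $\overline{\mh^{\gamma,b}_0}=0$ from the previous paragraph.

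I do not anticipate any substantive obstacle, as every cancellation is forced by dimensional reasons. The only mild bookkeeping point I will take care with is verifying $\nu(\m^\gamma_0)>0$: one must note that $\M_{1,0}(\beta_0)$ is excluded from the defining sum by stability, so every surviving term either has strictly positive symplectic area or contains at least one interior marked point carrying a factor from $\gamma\in\mathcal{I}_Q D$.
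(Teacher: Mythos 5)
Your proof is correct and follows essentially the same route as the paper: write $\bar b = s\otimes\alpha$ with $\alpha\in A^n(L)$ via Lemma~\ref{lm:energy_zero_degree_1}, reduce to the dga picture via Propositions~\ref{prop:fukaya_infity_arises_dga} and~\ref{prop:sf_extension_arises_dga} so that $\bar{\hat{\m}}^{\gamma,\bar b}_0=d\bar b-\bar b\cdot\bar b=0$ by top-degree reasons, and then conclude with Lemma~\ref{lm:bounding_chain_bar_indu_boundary_op}. Your extra verification that the $k=0$ residue term vanishes is already built into the statement that the residue algebra arises from a dga (and into property~\ref{it:val}), so it is harmless but redundant.
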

\begin{proof}
      It follows from Proposition~\ref{prop:fukaya_infity_arises_dga} and Proposition \ref{prop:sf_extension_arises_dga} that
    $\bar{\hat{\m}}^{\gamma,b}_0=d\bar{b}-\bar{b}\cdot \bar{b}.$ By Lemma~\ref{lm:energy_zero_degree_1} $\bar{b}=s\otimes \a$ for some $\alpha\in A^n(L)$. It follows that, 
    \[d\bar{b}-\bar{b}\cdot \bar{b}=0.\]
    So, $\bar{b}$ is a bounding cochain for the $A_\infty$ algebra $(\overline{\Cdh},\{\bar{\mh}^{\gamma}_k\}_{k\ge 0})$, and by Lemma~\ref{lm:bounding_chain_bar_indu_boundary_op} $\bar{\hat{\m}}^{\gamma,b}_1$ is a boundary operator on $\overline{\Cdh}$. 
\end{proof}

\begin{cor}\label{cor:bar_fukaya_deformed_boundry}
    Assume $b\in \Cdh$ (resp. $b\in \mCdh$) is a bounding cochain for the $A_\infty$ algebra $(\overline{\Cdh},\{\bar{\mh}^{\gamma}_k\}_{k\ge 0})$ (resp. $(\overline{\mCdh},\{\bar{\mt}^{\gt}_k\}_{k\ge 0})$). Then, for homogeneous elements $\eta \in \overline{\Cdh}$ (resp. $\eta \in \overline{\mCdh}$), we have $\bar{\hat{\m}}^{\gamma,b}_1(\eta)=\Id \otimes d\eta -[\bar{b},\eta]$ (resp. $\bar{\hat{\mt}}^{\gamma,b}_1(\eta)=\Id \otimes d\eta -[\bar{b},\eta]$) is a boundary operator.
\end{cor}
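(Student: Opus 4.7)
The plan is to obtain Corollary~\ref{cor:bar_fukaya_deformed_boundry} as the specialization of Proposition~\ref{prop:deformed_boundrary_operator} and Lemma~\ref{lm:bounding_chain_bar_indu_boundary_op} to the residue ($E=0$) level of the Fukaya $(S,F)$ extension. The corollary is essentially already packaged by those two abstract results; the work is in checking that their hypotheses are met in the Fukaya setting and translating the conclusions via the canonical identifications.

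First I would verify the hypotheses of Proposition~\ref{prop:deformed_boundrary_operator}. Since $n>1$, the ring $S$ from Section~\ref{ssec:the_ring_S} is a descending unital dga. The cyclic unital $A_\infty$ algebra $(\Cd,\{\mg_k\},\langle\,,\,\rangle,1)$ is equipped with the $\bar R$-structure $g:R\otimes_{\bar R}\overline{\Cd}\to \Cd$ constructed in Section~\ref{sssec:fukaya_scalar_ext_dfn}, and its residue arises from the dga $(A^*(L),d)$ by Proposition~\ref{prop:fukaya_infity_arises_dga}. Because $R$ is graded commutative and $\Cd[1]$ carries the diagonal bimodule structure coming from multiplication in $R\otimes A^*(L)$, the left and right $R$-module actions on $\Cd[1]$ coincide. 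Finally, by Definition~\ref{dfn:bounding_cochain} a bounding cochain $b$ satisfies $|b|=1$ and $\nu_S\otimes\nu(b)>0$, so the deformed operators $\mh_k^{\gamma,b}$ are well defined by Corollary~\ref{cor:mgh_expo_converge}.

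Next I would apply Proposition~\ref{prop:deformed_boundrary_operator} with $E=0$ to any lift $\eta\in \Cdh$ of a class in $\Cdh_0=\overline{\Cdh}$, obtaining
\[
\hat f\bigl(\mh_1^{\gamma,b}(\eta)\bigr)\equiv d\hat f(\eta) - [\hat f(b),\hat f(\eta)]\pmod{F^{>0}\bigl(\Rh\otimes_{\bar{\Rh}}\overline{\Cdh}\bigr)}.
\]
Passing to the residue and identifying $\overline{\Cdh}\simeq \bar{\Rh}\otimes_{\bar{\Rh}}\overline{\Cdh}\simeq S\otimes A^*(L)$ via Lemma~\ref{lm:cano_iso}, under which $\bar{\hat f}$ becomes the identity by Lemma~\ref{lm:residue_id} applied to the induced $\bar{\Rh}$-structure $\hat g$ of Definition~\ref{dfn:induced_R_structure}, yields
\[
\bar{\mh}_1^{\gamma,b}(\eta)=\Id\otimes d\eta - [\bar b,\eta].
\]
The derivation $d$ on the right-hand side is the differential of the dga $(\hat\Upsilon,\Id\otimes d)$ furnished by Proposition~\ref{prop:sf_extension_arises_dga}; the tensor factor $\Id\otimes d$ reflects the fact that $S$ carries the trivial differential, so Proposition~\ref{prop:sf_extension_arises_dga} gives the dga structure with differential $\Id\otimes d$ rather than $d\otimes\Id+\Id\otimes d$.

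Finally, to see that $\bar{\mh}_1^{\gamma,b}$ squares to zero, I would invoke Lemma~\ref{lm:bounding_chain_bar_indu_boundary_op} with $E=0$ and $\invs=\Id$; its hypothesis—that $b$ is a bounding cochain for the residue algebra—is exactly the hypothesis of the corollary. The case of $\mCdh$ proceeds identically, replacing Proposition~\ref{prop:fukaya_infity_arises_dga} by Proposition~\ref{prop:fukaya_infity_arises_dga_dt} and $A^*(L)$ by $A^*(I\times L)$. The only mildly delicate bookkeeping is the translation of the $d$ appearing in Proposition~\ref{prop:deformed_boundrary_operator} into $\Id\otimes d$ on the residue, which is precisely where Proposition~\ref{prop:sf_extension_arises_dga} is invoked; no extensive computation is required.
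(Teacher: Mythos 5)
Your proposal is correct and follows essentially the same route as the paper: verify the hypotheses of Proposition~\ref{prop:deformed_boundrary_operator} (descending $S$, the residue arising from $(A^*(L),d)$ via Proposition~\ref{prop:fukaya_infity_arises_dga}, coinciding left/right module structures, and the $\bar{\Rd}$ structure) and then pass to the residue, where $\hat f$ induces the identity. Your explicit appeal to Lemma~\ref{lm:bounding_chain_bar_indu_boundary_op} for the boundary-operator claim is a harmless addition to what the paper leaves implicit.
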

\begin{proof}
    By Proposition~\ref{prop:fukaya_infity_arises_dga}, the $A_\infty$ algebra $(\overline{\Cd},\{\bar{\m}^{\gamma}_k\}_{k\ge 0},\langle\;,\,\rangle,1)$ arises from the dga of differential forms $(A^*(L),d)$, and the assumptions of Proposition~\ref{prop:deformed_boundrary_operator} hold. That is
    \begin{enumerate}
        \item $(S,1)$ is a descending unital differential graded algebra over $\R$.
        \item $(\overline{\Cd},\{\bar{\m}^{\gamma}_k\}_{k\ge 0},\langle\;,\,\rangle,1)$ is a cyclic unital $A_\infty$ algebra that arises from the dga $(A^*(L),d)$.
        \item The left and right module structure of $\Cd[1]$ over $\Rd$ concide.
        \item $\Cd$ has an $\bar{\Rd}$ structure $g$ with inverse $f$.
    \end{enumerate}
    By definition of an $\bar{\Rd}$  structure, $\hat{f}$ induces the identity on $\overline{\Cdh}$. So, the result follows. The proof for $(\overline{\mCdh},\{\bar{\mt}^{\gt}_k\}_{k\ge 0})$ is the same.
\end{proof}

\subsubsection{Bounding cochains of the residue algebra of a pseudoisotopy}\label{sssec:psedu_residu_obstruct}

Throughout this section, we assume that $(\mgt, \ll\,,\,\gg, 1)$ is a pseudoisotopy from $(\mg, \langle\,,\,\rangle, 1)$ to $(\mgp, \langle\,,\,\rangle, 1)$. 

We let $b, b^\prime \in \Cdh$ denote bounding cochains for the $A_\infty$ algebras $(\hat{C}, \{\mgh_k\}_{k\ge 0}, \langle\;\,,\,\rangle_F, 1\otimes 1)$ and $(\hat{C}, \mh^{\gamma^\prime}, \langle\,,\,\rangle_F, 1\otimes 1)$, respectively, and assume that $\int_L \hat{f}(\bar{b}) = \int_L \hat{f}(\bar{b}^\prime)$.
\begin{lm}\label{lm:bt_exist_base_case1}
 There exists an element $\bt_0 \in \left(\mCdh\right)_1$, such that $\bar{\bt}_0$ is a bounding cochain with respect to the $A_\infty$ algebra $(\overline{\hat{\mC}},\{ \bar{\mt}_k^{\gt} \}_{k\ge 0},\ll\;,\,\gg_F,1)$, and $j_0^*(\bt_0)=b$, $j_1^*(\bt_0)=b^{\prime}$. 
\end{lm}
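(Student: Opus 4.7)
The plan is to build $\bt_0$ by explicit interpolation between $\pi^*b$ and $\pi^*b'$, together with a correction term whose residue accounts for the cohomological difference between the top-degree forms appearing in $\bar b$ and $\bar b'$.

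First, I would combine Lemma~\ref{lm:energy_zero_degree_1} with the canonical isomorphism $\overline{\Cdh}\simeq S\otimes A^*(L)$ of Lemma~\ref{lm:cano_iso} to write $\bar b = s\otimes \alpha$ and $\bar b' = s\otimes \alpha'$ for some $\alpha,\alpha'\in A^n(L)$; being top-degree on $L$, they are automatically closed. Unwinding the induced $\bar{\Rdh}$-structure $\hat g$, the hypothesis on the integrals of $\hat f(\bar b)$ and $\hat f(\bar b')$ reduces to $\int_L\alpha=\int_L\alpha'$. Since $H^n(L;\R)\simeq\R$ is detected by integration, this yields $\beta\in A^{n-1}(L)$ with $\alpha'-\alpha=-d\beta$.

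Next, I would choose a smooth function $\psi\colon I\to[0,1]$ with $\psi(0)=0$ and $\psi(1)=1$, let $\pi\colon I\times L\to L$ denote the projection, and set
\[
\bt_0 := (1-\psi(t))\,\pi^*b + \psi(t)\,\pi^*b' - s\otimes \psi'(t)\,dt\wedge \pi^*\beta \in (\mCdh)_1.
\]
Since $j_t^*dt=0$, direct verification gives $j_0^*\bt_0=b$ and $j_1^*\bt_0=b'$. Under Lemma~\ref{lm:cano_iso} the residue $\overline{\bt_0}$ is identified with $s\otimes \eta\in S\otimes A^*(I\times L)$, where
\[
\eta := (1-\psi(t))\,\pi^*\alpha + \psi(t)\,\pi^*\alpha' - \psi'(t)\,dt\wedge \pi^*\beta,
\]
and a short calculation using $\alpha'-\alpha=-d\beta$ shows $d\eta=0$.

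Finally, I would verify that $\overline{\bt_0}$ is a bounding cochain for $(\overline{\mCdh},\{\bar{\hat{\mt}}^{\gt}_k\})$ by the pseudoisotopy analog of Lemma~\ref{lm:energy_zero_bounding_chain}. Combining Proposition~\ref{prop:fukaya_infity_arises_dga_dt} with Proposition~\ref{prop:sf_extension_arises_dga} identifies this residue algebra with the dga $(S\otimes A^*(I\times L),\,\mathrm{id}\otimes d)$, so the residue Maurer--Cartan equation reduces to $d(s\otimes\eta)-(s\otimes\eta)\cdot(s\otimes\eta)=0$. The first term vanishes because $d\eta=0$, and the second vanishes for dimensional reasons, since $\eta\wedge\eta\in A^{2n}(I\times L)=0$ whenever $2n>n+1$, which holds under the standing assumption $n\ge 2$. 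The main technical point is the pseudoisotopy analog of Lemma~\ref{lm:energy_zero_bounding_chain} itself, which goes through mutatis mutandis once the dga description of the residue algebra is in place, provided $\nu_S\otimes\nu(\bt_0)>0$ so that Corollary~\ref{cor:mgh_expo_converge} applies; this positivity is inherited from the corresponding bounds on $b$ and $b'$ together with $\nu_S(s)=n-1>0$.
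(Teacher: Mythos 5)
Your proposal is correct and follows essentially the same route as the paper: write $\bar b=s\otimes\alpha$, $\bar b'=s\otimes\alpha'$ via Lemma~\ref{lm:energy_zero_degree_1}, use Poincar\'e duality and the integral hypothesis to produce a primitive, and take a $t$-interpolation of $b$ and $b'$ plus an $s\otimes(\cdot)\wedge dt$ correction, checking the residue Maurer--Cartan equation through the dga description of $\overline{\mCdh}$ (Propositions~\ref{prop:fukaya_infity_arises_dga_dt} and~\ref{prop:sf_extension_arises_dga}) and the vanishing of the quadratic term for $n>1$. The only differences are cosmetic: a general cutoff $\psi(t)$ in place of linear interpolation, and a remark on convergence that is not actually needed for the residue statement.
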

\begin{proof}
    It follows from Proposition~\ref{prop:fukaya_infity_arises_dga} and Proposition \ref{prop:sf_extension_arises_dga} that
    $\bar{\mt}^{\gamma,\bt}_0=d\bar{\bt}-\bar{\bt}\cdot \bar{\bt}$ for all $\bt\in \left(\mCdh\right)_1.$  Lemma~~\ref{lm:energy_zero_degree_1} gives $\bar{b}=s\otimes \a$ for some $\alpha\in A^n(L)$ and $\bar{b}^\prime=s\otimes \a^\prime$ for some $\alpha^\prime \in A^n(L)$. By Poincar\'e duality and the assumption $\int_L \hat{f}\left(\bar{b}\right)= \int_L \hat{f}\left(\bar{b}^\prime \right),$ there exists $\eta \in A^{n-1}(L)$ such that $ d\eta =(-1)^{1-n}(\a-\a^\prime).$ 
Define 
    \[
    \bt_0:=t b-(1-t)b^\prime +\hat{g}\left(1 \otimes s\otimes \eta\wedge dt\right).
    \]
Using Lemma~\ref{lm:residue_id} and recalling that $n>1$, it follows that
    \[d\bar{\bt}_0-\bar{\bt}_0\cdot \bar{\bt}_0= -s\otimes((\a-\a^\prime)\wedge dt)+d \left(s\otimes \eta\wedge dt\right)=0.\]
\end{proof}

Define the submodule $\mB \subset\mCdh$ over the dga $Z(\mRdh)$ by
\[\mB:=\left\{a\in \mCdh: j_i^*(a)\in Z(\Rdh)\cdot 1 \text{ for } i=0,1\right\}.\] 

\begin{lm}\label{lm:bt_exist_base_case}
Let $\bt_{0}\in \left(\mCdh\right)_1$ be as in Lemma~\ref{lm:bt_exist_base_case1}. Then the operators $\{\mgti{\bt_0}_k\}_{k\ge0}$ preserve $\mB$ and thus define an $A_\infty$ algebra structure $(\mB,\{ \mgti{\bt_0}_k\}_{k\ge0},\ll\;,\,\gg_F,1)$ over the dga $Z(\mRdh)$.
\end{lm}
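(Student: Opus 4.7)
The plan is to reduce preservation of $\mB$ to a unit-axiom calculation at the endpoints $t=0,1$, and then to inherit the $A_\infty$ axioms on $\mB$ from the ambient cyclic unital $A_\infty$ structure on $\mCdh$ over $Z(\mRdh)$.

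First, given $\at_1,\dots,\at_k\in\mB$, I expand $\mgti{\bt_0}_k(\at_1,\dots,\at_k)$ as the infinite sum provided by Lemma~\ref{lm:deformed_operator_explicit_formula}, pass $j_0^*$ (respectively $j_1^*$) through the sum using the continuity of pullback with respect to $\nu$, and invoke the pseudoisotopy relations of Lemma~\ref{lm:pseudo} together with $j_0^*\bt_0=b$ and $j_1^*\bt_0=b'$ to obtain
\begin{gather*}
j_0^*\mgti{\bt_0}_k(\at_1,\dots,\at_k)=\mghb_k(j_0^*\at_1,\dots,j_0^*\at_k),\\
j_1^*\mgti{\bt_0}_k(\at_1,\dots,\at_k)=\mh^{\gamma',b'}_k(j_1^*\at_1,\dots,j_1^*\at_k).
\end{gather*}
Writing $j_0^*\at_j=r_j\cdot 1$ with $r_j\in Z(\Rdh)$, it then suffices to show that
\[
\mghb_k(r_1\cdot 1,\dots,r_k\cdot 1)\in Z(\Rdh)\cdot 1 \quad \text{for every } k\ge 0,
\]
and the analogous assertion at $t=1$.

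By Corollary~\ref{cor:mgh_expo_converge}, $(\Cdh,\{\mghb_k\}_{k\ge 0},\langle\,,\,\rangle_F,1)$ is a cyclic unital $A_\infty$ algebra over $Z(\Rdh)$, so the unit axioms \ref{it:unit1}--\ref{it:unit2} together with the $Z(\Rdh)$-linearity \ref{it:lin} of Definition~\ref{dfn:cycunit} hold for $\mghb$. For $k=0$, the bounding cochain condition gives $\mghb_0=c\cdot 1$ with $c\in Z(\Rdh)$. For $k=1$, $Z(\Rdh)$-linearity and $\mghb_1(1)=0$ yield $\mghb_1(r\cdot 1)=(-1)^{|r|}r\cdot\mghb_1(1)+dr\cdot 1=0$, since $\Rdh$ has trivial differential. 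For $k=2$, unitality gives $\mghb_2(r_1\cdot 1,r_2\cdot 1)=(-1)^{|r_1|}r_1\cdot\mghb_2(1,r_2\cdot 1)=(-1)^{|r_1|}r_1r_2\cdot 1\in Z(\Rdh)\cdot 1$. For $k\ge 3$, pulling $r_1$ out of the first slot and applying \ref{it:unit1} gives $\mghb_k(r_1\cdot 1,\dots,r_k\cdot 1)=(-1)^{|r_1|}r_1\cdot\mghb_k(1,r_2\cdot 1,\dots,r_k\cdot 1)=0$. The same argument applies at $t=1$ with $\mh^{\gamma',b'}$ in place of $\mghb$. Hence $\mgti{\bt_0}_k$ preserves $\mB$.

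Finally, to establish the cyclic unital $A_\infty$ algebra structure on $(\mB,\{\mgti{\bt_0}_k\}_{k\ge 0},\ll\;,\,\gg_F,1)$ over $Z(\mRdh)$, I would observe that $1\in\mB$ because $j_i^*1=1\in Z(\Rdh)\cdot 1$, and that $\mB$ is a $Z(\mRdh)$-submodule of $\mCdh$ because $j_i^*$ is a dga homomorphism sending $Z(\mRdh)$ into $Z(\Rdh)$, and $Z(\Rdh)\cdot 1$ is closed under left multiplication by $Z(\Rdh)$. All the axioms of Definition~\ref{dfn:cycunit} are then inherited verbatim from the cyclic unital $A_\infty$ structure on $(\mCdh,\{\mgti{\bt_0}_k\}_{k\ge 0},\ll\;,\,\gg_F,1)$ over $Z(\mRdh)$ provided by Corollary~\ref{cor:mgh_expo_converge}. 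The main technical point is making rigorous the commutation of $j_i^*$ with the infinite series defining $\mgti{\bt_0}_k$; once this is handled via continuity with respect to $\nu$ and pseudo-completeness, the remainder reduces to the routine unit-axiom calculation above.
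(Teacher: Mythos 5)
Your proposal is correct and follows essentially the same route as the paper: pull $j_i^*$ through the deformed operators via the pseudoisotopy relations of Lemma~\ref{lm:pseudo}, then use the bounding cochain condition for $k=0$, the unit axioms and $Z(\Rdh)$-linearity for $k\neq 0,2$, and unitality for $k=2$ to land in $Z(\Rdh)\cdot 1$, after which the $A_\infty$ structure on $\mB$ is inherited from Corollary~\ref{cor:mgh_expo_converge}. The only difference is that you spell out the commutation of $j_i^*$ with the infinite series defining $\mgti{\bt_0}_k$ (via Lemma~\ref{lm:deformed_operator_explicit_formula} and continuity), a point the paper leaves implicit in its citation of Lemma~\ref{lm:pseudo}.
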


\begin{proof}
     Corollary~\ref{cor:mkbh_converges} implies that $\{\mgti{\bt_0}_k\}_{k\ge 0}$ is an $A_\infty$ algebra over $\mCdh$.
    It remains to show that the operators $\{\mgti{\bt_0}_k\}_{k\ge 0}$ preserve $\mB$.
Let $a_1,\ldots,a_k \in \mB.$
    By Lemma~\ref{lm:pseudo}, 
\[
    j_0^{*}\mgti{\bt_0}_k(a_1,\ldots,a_k)=\mghi{b}_k\left(j_0^*(a_1),\ldots,j^*_0(a_k)\right).
\]
Since $b$ is a bounding cochain, it follows that
\[
j^*_0\mgti{\bt_0}_0 \in Z(\Rdh)\cdot 1.
\]
Since the $\mghb_k$ are operators of an $A_\infty$ algebra over $Z(\Rd)$, by the definition of $\mB$ and the unit property, 
\[
j_0^*\mgti{\bt_0}_k(a_1,\ldots,a_k) = 0, \qquad k\ne 0, 2.
\]
Also, the unit property implies 
\[
j^*_0\mgti{\bt_0}_2(a_1, a_2)=j_0^*(a_1)j_0^*(a_2) \in Z(\Rdh)\cdot 1.
\]
Similarly, $j_1^{*}\mgtbl_k(a_1,\ldots,a_k)\in Z(\Rdh) \cdot 1,$ and the result follows.   
\end{proof}

\subsection{Discrete filtration and finite energy quotients}\label{ssec:discrete_filtration}
Because $(\Cd,\{\mg_k\}_{k\ge 0},\langle\;,\,\rangle,1)$ (resp. 
 $(\mCd,\{\mgt_k\}_{k \ge 0},\ll\;,\,\gg,1)$) is sababa, the set $\nu(\Cdh) = \nu(\Cd)\subset \R$ ($\nu(\mCdh) = \nu(\mCd)\subset \R$) can be written 
\begin{equation}\label{eq:energy_filtration}
\nu(\Cdh)=\{0=E_0<E_1<E_2< \dots \},    
\end{equation}
with $\lim_{i \to \infty}E_i=\infty$.  
\begin{rem}\label{rem:induc_cons_desc}
The above discreteness of $\nu$ is important for inductive constructions. Consider the following general problem. Let $\mathfrak{p}:\mathcal{N}\to \mathcal{M}$ be a morphism of complete $\R$ filtered modules. Suppose we are searching for solutions to the  equation
\begin{equation}\label{eq:genral_operator_equation}
    \mathfrak{p}(\eta)=c.
\end{equation}
Assume that equation~\eqref{eq:genral_operator_equation} has the property that if $\eta$ is a solution modulo $F^{>E} \mathcal{M}$ 
for all $E<E'$, then there exists $\eta' \in F^{E'}\mathcal{N}$ such that 
\[
\mathfrak{p}(\eta+\eta' )\equiv c \pmod{F^{E'} \mathcal{M}}.
\]
If $\mathcal{M}$ has a discrete filtration $\{0=E_0<E_1<E_2< \dots \}$, this property of $\mathfrak{p}$ assures us that if we are given $\eta_0 \in \mathcal{N}$ that is a solution to equation~\eqref{eq:genral_operator_equation} modulo $F^{>0} \mathcal{M},$ then we can find a genuine solution to the equation. Indeed the property of $\mathfrak{p}$ implies the existence of elements $\eta_i \in F^{E_i}\mathcal{N}$ such that $\eta_{(l)}=\sum_{i=0}^{l}\eta_i$ is a solution to equation~\eqref{eq:genral_operator_equation} modulo $F^{E_l} \mathcal{M}$, for all $l\in \N$. Hence, since $\mathcal{N}$ is complete, we have that $\eta_{\infty}:=\lim_{l \to \infty}\eta_{(l)}$ is a genuine solution to equation~\eqref{eq:genral_operator_equation}.
\end{rem}

For $i \in \N$ recall the definitions of $\Rd_{ E_i}, \Rdh_{ E_i}, \Cdh_{E_i}$ (resp. $\mRd_{ E_i}, \mRdh_{ E_i}, \mCdh_{ E_i}$) from Definition~\ref{dfn:frm}.
Let $\hat{g}:\Rd \otimes_{\bar{\Rd}}\overline{\Cdh}\to \Cdh $ (resp. $\hat{g}:\Rd \otimes_{\bar{\mRd}}\overline{\mCdh}\to \mCdh $) be the induced $\bar{\Rd}$  structure on $\Cdh$ (resp. $\mCdh$) as in Definition~\ref{dfn:induced_R_structure}, and let $\hat{f}$ be its inverse.
Let $b\in \Cdh$ (resp. $b\in \mCdh$) be a bounding cochain for the residue $A_\infty$ algebra.
By Corollary~\ref{cor:mgh_expo_converge} the map $\mghb_1:\Cdh \to \Cdh$ (resp. $\hat{\mt}_1^{\gt, b}:{\mCdh} \to \mCdh$) is well defined.
By Lemma~\ref{lm:bounding_chain_bar_indu_boundary_op} the induced map $\mghb_1: \Cdh_{E_i} \to \Cdh_{E_i}$ (resp. $\hat{\mt}_1^{\gt, b}:{\mCdh}_{E_i} \to \mCdh_{E_i}$) is a boundary operator. 
Define the integral $\int_L:\Rd\otimes\overline{\Cdh}\to \Rdh$ by the composition \[\begin{tikzcd}
{\Rd\otimes\overline{\Cdh}} && {\Rd\otimes S\otimes A^*(L)} && {\Rd\otimes S\otimes \R}\cong \Rdh	
	\arrow["\Id", from=1-1, to=1-3]
	\arrow["{\Id\otimes\int_L}", from=1-3, to=1-5]
\end{tikzcd}.\] 
Define the integral $\int_{I\times L}:\Rd\otimes\overline{\mCdh}\to \Rdh$ by the composition
\[\begin{tikzcd}
{\Rd\otimes\overline{\mCdh}} && {\Rd\otimes S\otimes A^*(I\times L)} && {\Rd\otimes S\otimes \R}\cong \Rdh	
	\arrow["\text{\ref{lm:cano_iso}}", from=1-1, to=1-3]
	\arrow["{\Id\otimes\int_{I\times L}}", from=1-3, to=1-5]
\end{tikzcd}.\]  
In what follows let $\mathcal{C}=\Cdh,\mCdh$ and $\m_k=\mg_k,\mgt_k.$ Consider $\R$ as a graded module concentrated in degree $0$. Then, the module $\hom_{\mathbb{R}}(\Rd_E,\R)$ has a natural grading.  Denote the module $\hom_{\mathbb{R}}(\Rd_E,\R)$ by ${\Rd_{E}}^{\vee}$. For a homogeneous element $r \in {\Rd_{E}}^{\vee},$ let $\hat{\pi}_r : \Rd_{E}\otimes \overline{\mathcal{C}} \to \overline{\mathcal{C}}[|r|]$ be given by $\lambda \otimes \alpha \mapsto r(\lambda)\alpha.$
Let
\begin{equation}\label{eq:pir}
\pi_r : \mathcal{C}_{E} \to \overline{\mathcal{C}}[|r|]
\end{equation}
be given by $\hat\pi_r\circ \hat f.$ Observe that the map extend naturally to a map on the quotients $\pi_r : \cC_{\not \e , E} \to \overline{\cC_{\not \e}}[|r|].$

\begin{lm}\label{lm:pi_r_chain}
    Let $r$ be a homogeneous element in ${\Rd_{E}}^{\vee}$. Then, 
    \[
    \pi_r:(\mathcal{C}_{E},\m^b_{1,E})\to (\overline{\mathcal{C}}[|r|],(-1)^{|r|}\m^b_1)
    \]
    is a cochain map. The same result holds for the map  on the quotients $\pi_r : \cC_{\not \e , E} \to \overline{\cC_{\not \e}}[|r|].$ 
\end{lm}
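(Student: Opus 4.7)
The strategy is to factor $\pi_r = \hat{\pi}_r \circ \hat{f}$ and reduce the cochain-map property to a direct sign calculation, using Lemma~\ref{lm:twisted_f_chain_iso} to handle the differential on $\mathcal{C}_E$ and leaving only a clean computation in the tensor-product target.

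First I would verify that the hypotheses of Lemma~\ref{lm:twisted_f_chain_iso} are met with $\mathcal{R} = \Rd$. The residue algebra $(\overline{\hat{\mathcal{C}}}, \{\bar{\m}_k\})$ arises from a dga by combining Proposition~\ref{prop:sf_extension_arises_dga} with Proposition~\ref{prop:fukaya_infity_arises_dga} (for $\hat{\mathcal{C}} = \Cdh$) or Proposition~\ref{prop:fukaya_infity_arises_dga_dt} (for $\hat{\mathcal{C}} = \mCdh$), and $\bar b$ is a bounding cochain for the residue algebra by Lemma~\ref{lm:energy_zero_bounding_chain}. Lemma~\ref{lm:twisted_f_chain_iso} then gives an isomorphism of cochain complexes
\[
\hat{f}:(\hat{\mathcal{C}}_E, \m^b_{1,E}) \overset{\sim}{\longrightarrow} (\Rd_E \otimes \overline{\hat{\mathcal{C}}},\ d_{\Rd} \otimes \Id + \Id \otimes \bar{\m}^b_1).
\]
Since $\Rd$ is a subring of the Novikov ring $R$ and carries trivial differential, the term $d_\Rd \otimes \Id$ vanishes, leaving the differential as $\Id \otimes \bar{\m}^b_1$, which acts on elementary tensors with the Koszul sign $(-1)^{|\lambda|}$.

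Second, it now suffices to show that $\hat{\pi}_r$ is a cochain map of degree $|r|$ from $(\Rd_E \otimes \overline{\hat{\mathcal{C}}},\, \Id \otimes \bar{\m}^b_1)$ to $(\overline{\hat{\mathcal{C}}}[|r|],\, (-1)^{|r|} \bar{\m}^b_1)$. For a homogeneous elementary tensor $\lambda \otimes \alpha$, a direct computation gives
\[
\hat{\pi}_r \circ (\Id \otimes \bar{\m}^b_1)(\lambda \otimes \alpha) = (-1)^{|\lambda|} r(\lambda)\,\bar{\m}^b_1(\alpha),
\qquad
(-1)^{|r|}\bar{\m}^b_1 \circ \hat{\pi}_r(\lambda \otimes \alpha) = (-1)^{|r|} r(\lambda)\,\bar{\m}^b_1(\alpha),
\]
where I use that $r(\lambda) \in \R$ is a scalar and hence commutes with $\bar{\m}^b_1$ without sign. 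Because $r : \Rd_E \to \R$ has degree $|r|$ and $\R$ is concentrated in degree $0$, the scalar $r(\lambda)$ vanishes unless $|\lambda| + |r| = 0$; whenever $r(\lambda) \neq 0$ this forces $|\lambda| \equiv |r| \pmod{2}$, so the two expressions coincide. Composing with $\hat f$ then gives the cochain map property of $\pi_r$.

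For the statement on the quotients, the map $\hat{f}$ sends $\rho \cdot 1$ to an element of the form $\rho \otimes 1$ in $\Rd_E \otimes_{\bar{\Rd}} \overline{\hat{\mathcal{C}}}$ (up to the Koszul signs from the identification $\overline{\hat{\mathcal{C}}} \cong S \otimes \overline{\mathcal{C}}$ of Lemma~\ref{lm:cano_iso}), so $\hat{\pi}_r$ carries the submodule $Z(\ker d_{\hat{\Rd}})\cdot 1$ into the corresponding submodule in $\overline{\hat{\mathcal{C}}}[|r|]$ which is quotiented to form $\overline{\cC_{\not\e}}[|r|]$; hence $\pi_r$ descends, and since $\m^b_1$ descends to $\cC_{\not\e}$ by Lemma~\ref{lm:C_not_e_inf_alg}, the cochain-map property is inherited. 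The main obstacle in executing this plan is not conceptual but bookkeeping: one must track the Koszul sign $(-1)^{|\lambda|}$ coming from $\Id \otimes \bar{\m}^b_1$ against the prescribed target sign $(-1)^{|r|}$, and see that they match precisely because of the degree constraint $|\lambda| = -|r|$ forced by $r(\lambda) \neq 0$.
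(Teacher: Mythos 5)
Your proposal is correct and follows essentially the same route as the paper: both factor $\pi_r = \hat\pi_r\circ\hat f$, invoke Lemma~\ref{lm:twisted_f_chain_iso} to see that $\hat f$ is a cochain isomorphism, use the triviality of the differential on $\Rd_E$, and account for the sign $(-1)^{|r|}$ coming from the shift (the paper packages your explicit Koszul-sign check as the composition of the cochain map $r\otimes\Id$ with the natural isomorphism $\R[|r|]\otimes\overline{\cC}\to\overline{\cC}[|r|]$, which encodes the same degree constraint $|\lambda|=-|r|$). The only cosmetic difference is that the hypothesis that $b$ is a bounding cochain for the residue algebra is a standing assumption in this section, so your appeal to Lemma~\ref{lm:energy_zero_bounding_chain} is unnecessary (and would not cover the $\mCdh$ case), but this does not affect the argument.
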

\begin{proof}
     Let $\cC$ be a cochain complex. Then there exists a natural cochain complex isomorphism $\mathbb{R}[k]\otimes \cC \to \cC[k]$ given on pure tensors by $a\otimes b \mapsto ab$. As the differential on $\Rd_E$ is trivial, the assignment $a\mapsto r(a)$ defines a cochain map $\eta:\Rd_E\to  \mathbb{R}[|r|]$. So, $\eta \otimes Id$ is a cochain map from $\Rd_E \otimes \overline{\mathcal{C}} \to \mathbb{R}[|r|]\otimes \overline{\mathcal{C}}$. By Lemma~\ref{lm:twisted_f_chain_iso} (possibly with $b$ trivial) $\hat{f}$ is a cochain complex isomorphism. So, $\pi_r$ is a cochain complex map, as it can be defined by composing cochain maps.
\end{proof}
\begin{lm}\label{lm:basis_exact}
     A cochain $\a\in \left(\mathcal{C}_{\not e, E}\right)_*$ is $\m^{b}_1$ exact if and only if for all homogeneous $r \in {\Rd_{E}}^{\vee}$, the cochain $\pi_{r}\left(\a\right)\in \overline{\mathcal{C}_{\not \e}}_{*+|r|}$ is $\bar{\m}^{b}_1$ exact.
\end{lm}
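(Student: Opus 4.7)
The forward implication is essentially automatic: if $\alpha = \m^b_{1,E}(\beta)$ for some $\beta \in \cC_{\not \e, E}$, then by Lemma~\ref{lm:pi_r_chain} we have $\pi_r(\alpha) = (-1)^{|r|}\bar{\m}^b_1(\pi_r(\beta))$, which is $\bar{\m}^b_1$-exact since $\pi_r(\beta) \in \overline{\cC_{\not \e}}[|r|]$.

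For the reverse implication, the plan is to exploit the sababa structure to reduce to a finite-dimensional bookkeeping problem. By Lemma~\ref{lm:enrgy_degree_bound_sababa}, the space $\Rd_E$ is a finite-dimensional $\R$-vector space (the filtration of $\Rd$ is discrete by equation~\eqref{eq:energy_filtration} and elements of bounded filtration have bounded degree). So I will choose a homogeneous $\R$-basis $\{\lambda_i\}_{i=1}^m$ of $\Rd_E$ with dual basis $\{\lambda_i^\vee\}_{i=1}^m$ of ${\Rd_E}^\vee$. Under the isomorphism of Lemma~\ref{lm:twisted_f_chain_iso}, which (since $R$ carries the trivial differential) identifies $(\mathcal{C}_E,\m_{1,E}^b)$ with $(\Rd_E \otimes \overline{\cC}, \Id \otimes \bar{\m}_1^b)$ and descends to the corresponding identification between $\cC_{\not \e, E}$ and $\Rd_E \otimes \overline{\cC_{\not \e}}$, I will write
\[
\hat f(\alpha) = \sum_{i=1}^m \lambda_i \otimes \alpha_i, \qquad \alpha_i \in \overline{\cC_{\not \e}}.
\]
By the construction in~\eqref{eq:pir}, the projection $\hat{\pi}_{\lambda_i^\vee} \circ \hat f$ sends $\alpha$ to $\alpha_i$, so $\pi_{\lambda_i^\vee}(\alpha) = \alpha_i$.

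The hypothesis then gives primitives $\beta_i \in \overline{\cC_{\not \e}}$ with $\bar{\m}_1^b(\beta_i) = \alpha_i$ for each $i$. Setting $\beta := \hat g\bigl(\sum_i \lambda_i \otimes \beta_i\bigr) \in \cC_{\not \e, E}$ and computing via the isomorphism,
\[
\hat f\bigl(\m^b_{1,E}(\beta)\bigr) = (\Id \otimes \bar{\m}_1^b)\Bigl(\sum_i \lambda_i \otimes \beta_i\Bigr) = \sum_i \lambda_i \otimes \alpha_i = \hat f(\alpha),
\]
so $\m^b_{1,E}(\beta) = \alpha$, as required.

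The main obstacle I anticipate is verifying the compatibility with the $\cC_{\not \e}$ quotient: namely, that the isomorphism of Lemma~\ref{lm:twisted_f_chain_iso}, which is stated for $\cC_E$, descends cleanly to $\cC_{\not \e, E}$, and that the projection $\pi_r$ likewise factors through this quotient. This reduces to checking that the subspace $Z(\ker d_R) \cdot e \subset \cC$ is compatible with the $\bar R^\diamond$-structure, i.e., that under $\hat f$ it corresponds to the subspace $\Rd_E \otimes (\R \cdot \bar e) \subset \Rd_E \otimes \overline{\cC}$. Once this identification is made, the rest of the argument is a straightforward basis manipulation.
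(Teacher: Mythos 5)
Your argument is correct and follows essentially the same route as the paper's proof: the forward direction via Lemma~\ref{lm:pi_r_chain}, and the reverse direction by choosing a finite homogeneous basis of $\Rd_E$ with dual basis, taking primitives of the components $\pi_{\lambda_i^\vee}(\a)$, and assembling them into a primitive of $\hat f(\a)$ through the chain isomorphism of Lemma~\ref{lm:twisted_f_chain_iso}, with the passage to the quotient $\cC_{\not \e}$ handled exactly as you anticipate (the paper simply observes that $\pi_r$ and the chain-map statement descend). The only slip is the Koszul sign of $\Id\otimes\bar{\m}^{b}_1$ acting on $\Rd_E\otimes\overline{\cC_{\not \e}}$: your primitive should be $\hat g\bigl(\sum_i(-1)^{|\lambda_i|}\lambda_i\otimes\beta_i\bigr)$ — the paper's proof inserts the corresponding factor $(-1)^{|r_i|}$ — a harmless adjustment that does not affect the conclusion.
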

\begin{proof}
    Pick a basis $\{r_i\}_{i=1}^m$ for $\Rd_{E}$ as a real vector space and let $\{r_{i}^\vee\}_{i=1}^m$ be its dual.  Let $\eta^i$ be the primitives of the cochains $\pi_{r_i^\vee}\left(\a\right)$. Then, $(-1)^{|r_i|}r_i\otimes \eta^i$ is a primitive for $\hat{f}(\alpha)$. By Lemma~\ref{lm:twisted_f_chain_iso} it follows that $\alpha$ is exact. The other direction follows from Lemma~\ref{lm:pi_r_chain}.
\end{proof}
\begin{lm}\label{lm:dt_basis_exact}
     $\a\in \left((\mB_{\not \e}^{\invs})_E\right)_*$ is $\mgti{\bt}_1:\mB_{E}^{\invs}\mapsto \mB_{E}^{\invs}$ exact, if and only if for all homogeneous $r \in {\Rd_{E}}^{\vee}$, the element $\pi_r\left(\a\right)\in \overline{\mB_{\not \e}}^{*}[|r|]^{\invs}$ is $\bar{\mt}^{\bar{\bt}}_1$ exact.
\end{lm}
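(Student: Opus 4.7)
The plan is to adapt the proof of Lemma~\ref{lm:basis_exact} to the pseudoisotopy setting, where the extra subtleties are (i) working inside the submodule $\mB_{\not\e}^\invs$ rather than the full module $\mCdh_{\not\e}$, and (ii) accommodating the involution $\invs$. The underlying strategy is the same: pick an $\R$-basis $\{r_i\}_{i=1}^m$ of $\Rd_E$ with dual basis $\{r_i^\vee\}_{i=1}^m \subset {\Rd_E}^\vee$, identify $(\mB_{\not\e}^\invs)_E$ with a suitable tensor product via $\hat f$, and recombine primitives chosen component-wise.

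For the \textbf{only if} direction, assume $\alpha = \mgti{\bt}_1(\beta)$ for some $\beta \in (\mB_{\not\e}^\invs)_E$. I would first observe that the analogue of Lemma~\ref{lm:pi_r_chain} for the pseudoisotopy $(\mCdh, \mgt^{\bt})$ holds by the same argument --- it relies only on $\hat f$ being a chain isomorphism via Lemma~\ref{lm:twisted_f_chain_iso}, together with $\Rd_E$ carrying the trivial differential --- and I would restrict this chain map to the subcomplex $(\mB_{\not\e}^\invs)_E$, noting that $\pi_r$ sends $\mB$ into $\overline{\mB}$ (because $\pi_r$ is built from the $\bar R$ structure, which is compatible with $j_0^*, j_1^*$) and preserves the $\invs$-fixed part. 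Applying this chain map to $\alpha = \mgti{\bt}_1(\beta)$ yields $\pi_r(\alpha) = (-1)^{|r|}\bar{\mt}^{\bar\bt}_1(\pi_r(\beta))$, exhibiting $\pi_r(\alpha)$ as exact in $\overline{\mB_{\not\e}}^{\invs}[|r|]$.

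For the \textbf{if} direction, pick primitives $\eta^i \in \overline{\mB_{\not\e}}^\invs$ of $\pi_{r_i^\vee}(\alpha)$ under $\bar{\mt}^{\bar\bt}_1$, and set
\[
\tilde\beta := \sum_{i=1}^m (-1)^{|r_i|}\, r_i \otimes \eta^i \in \Rd_E \otimes \overline{\mCdh}.
\]
Because each $\eta^i$ lies in $\overline{\mB}^\invs$, one checks that $\tilde\beta$ lies in the analogous tensor-subspace corresponding to $\mB_{\not\e}^\invs$ (both the $\mB$ condition and the $\invs$ condition pass through the sum since the $r_i$ are central scalars). The twisted version of Lemma~\ref{lm:twisted_f_chain_iso} then shows that $\hat g(\tilde\beta) \in (\mB_{\not\e}^\invs)_E$ satisfies $\mgti{\bt}_1(\hat g(\tilde\beta)) = \alpha$, since $\hat f \circ \mgti{\bt}_1 \circ \hat g$ matches the componentwise differential on $\Rd_E \otimes \overline{\mB_{\not\e}}^\invs$ via Proposition~\ref{prop:r_structure_energy_dga} and Proposition~\ref{prop:fukaya_infity_arises_dga_dt}.

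The main obstacle is verifying that the reconstructed primitive $\hat g(\tilde\beta)$ really lands in $\mB_{\not\e}^\invs$ and not merely in $\mCdh_{\not\e}$. For the $\invs$-structure this follows from Corollary~\ref{cor:preserved_subspace} and Corollary~\ref{cor:primitive_exists} applied to the deformed differential $\bar{\mt}^{\bar\bt}_1$ (after averaging $\eta^i$ under $\invs$ if necessary, at the cost of an innocuous factor of $\tfrac12$). For the $\mB$-condition one checks that $j_0^*$ and $j_1^*$ commute with $\hat g$ and $\pi_r$, so the boundary conditions transport from $\overline{\mB}$ to $\mB_{\not\e}$ without loss; this is the only step that uses the specific form of $\mB$, and it works because $Z(\Rdh)\cdot 1 \subset \mCdh$ is preserved by the $\bar R$-structure.
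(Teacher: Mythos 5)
Your proposal is correct and follows essentially the same route as the paper: the paper's proof simply observes that the induced $\bar{\Rd}$ structure sends $\mB$ to $\Rd\otimes\overline{\mB}$ and then repeats the basis/dual-basis argument of Lemma~\ref{lm:basis_exact}, using $\pi_r$ as a chain map for one direction and reassembling the primitive as $\sum_i(-1)^{|r_i|}r_i\otimes\eta^i$ via Lemma~\ref{lm:twisted_f_chain_iso} for the other. Your additional care with the $\invs$-fixed part (averaging via Corollary~\ref{cor:primitive_exists}) and with the $\mB$-condition under $j_0^*,j_1^*$ just makes explicit what the paper leaves implicit.
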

\begin{proof}
    We observe that induced $\bar{\Rd}$ structure sends $\mB$ to $\Rd \otimes \overline{\mB}$. Thus, the proof is the same as~\ref{lm:basis_exact}.
\end{proof}
\begin{rem}\label{rem:obstruction_reduction}
     Define the operator $\mathfrak{p}:\left(\mathcal{I}_{\Rdh}\Cdh\right)_1\to \left(\Cdh_{\not \e}\right)_2$ by 
    \[\eta \mapsto \mghi{b+\eta}_0. \]
    In Section~\ref{sec:bd_chains}, our goal is to establish a bounding cochain by resolving $\mathfrak{p}(\eta)=0.$ As described in Remark~\ref{rem:induc_cons_desc}, we aim to demonstrate that the problem operator $\mathfrak{p}:\left(\mathcal{I}_{\Rdh}\Cdh\right)_1\to \left(\Cdh_{\not \e}\right)_2$ exhibits the characteristic that, if $\eta$ is a solution modulo $F^{>E} \left(\Cdh_{\not \e}\right)_2$, then
for all $E<E'$, then there exists $\eta' \in F^{E'}\left(\mathcal{I}_{\Rdh}\Cdh\right)_1$ such that 
\[
\mathfrak{p}(\eta+\eta' )\equiv 0\pmod{F^{E'}  \left(\Cdh_{\not \e}\right)_2}.
\]
Given Lemma~\ref{lm:obs_induction}, this property ultimately becomes a cohomological issue within the cochain complexes $(\Cdh_{\not \e,E},\mghb_{1,E})$. The findings presented in this section indicate that our primary concern is to understand $H^{*}\left(\overline{\Cdh_{\not \e}},\bar{\hat{\m}}^{\gamma,b}_1\right)$.
\end{rem}

\subsection{Anti-symplectic involutions and self-duality}\label{ssec:real}
Let $\phi:X\to X$ be an anti-symplectic involution, that is, $\phi^*\omega=-\omega.$ Let $L\subset \fix(\phi)$ and $J\in\J_\phi$. In particular, $\phi^*J=-J$.
For the entire Section~\ref{ssec:real}, we take $\ssly_L \subset  H_2(X,L;\Z)$ with $\Im(\Id+\phi_*) \subset \ssly_L,$ so $\phi_*$ acts on $\sly_L = H_2(X,L;Z)/\ssly_L$ as $-\Id.$ Also, we take $\deg t_j\in 2\Z$ for all $j=0,\ldots,N$.

Recall the action of $\phi^*$ on $\L,Q,R,C,D,$ defined in equation~\eqref{eq:phi*ext} and the definition of real elements~\eqref{eq:relt}. For a group $Z$ on which a function $\invs$ acts, let $Z^{\invs} \subset Z$ denote the elements fixed by $\invs$. Recall  Definition~\ref{dfn:d_self_dual} of a self dual $A_\infty$ algebra.
\begin{prop}[{\cite[Proposition 4.5]{solomon2016point}}]\label{prop:sd}
Suppose the relative spin structure $\s$ on $L$ is in fact a spin structure, and $\gamma \in \mathcal{I}_Q D$ is real. Then, for all $\alpha = (\alpha_1,\ldots,\alpha_k)$ with $\alpha_j \in C,$ we have
\[
\phi^* \mg_k(\alpha_1,\ldots,\alpha_k) = (-1)^{1+k + s_\tau^{[1]}(\alpha)}\mg_k(\phi^*\alpha_k,\ldots,\phi^*\alpha_1).
\]
That is, $(C,\mg)$ is self-dual.
\end{prop}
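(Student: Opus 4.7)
The plan is to exploit the involution $\Phi_\beta$ on moduli spaces induced by $\phi$ and carry out a careful sign computation. Since $\phi^* J = -J$, the assignment
\[
(\Sigma, u, (z_0, z_1, \ldots, z_k), (w_1, \ldots, w_l)) \longmapsto (\overline{\Sigma}, \phi \circ u, (z_0, z_k, z_{k-1}, \ldots, z_1), (w_1, \ldots, w_l))
\]
defines a diffeomorphism $\Phi_\beta : \M_{k+1,l}(\beta) \to \M_{k+1,l}(-\beta)$, using $\phi_* = -\Id$ on $\sly$. Because $L \subset \fix(\phi)$, the evaluation maps satisfy $evb_0^{-\beta} \circ \Phi_\beta = evb_0^\beta$, $evb_i^{-\beta} \circ \Phi_\beta = evb_{k+1-i}^\beta$ for $1 \le i \le k$, and $evi_j^{-\beta} \circ \Phi_\beta = \phi \circ evi_j^\beta$. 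First I would apply $\phi^*$ to the defining formula for $\mg_k$, substitute these relations, and change variables in the sum over $\beta \in \sly$ to replace $\beta$ by $-\beta$.

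The second step is to collect the signs arising from this substitution. The reality of $\gamma$ gives $\phi^*\gamma = -\gamma$, contributing $(-1)^l$ from the $l$-fold wedge product in the interior insertions. The action $\phi^* T^\beta = (-1)^{\mu(\beta)/2} T^\beta$ contributes a further $\beta$-dependent sign. Reversing the order of the wedge $\bigwedge_{j=1}^k (evb_j^\beta)^*\alpha_j$ to align with the reversed evaluation maps produces a Koszul sign whose shifted-degree version (after combining with the $(-1)^{\varepsilon(\alpha)}$ in the definition of $\mg_k$ before and after reordering) is exactly $s_\tau^{[1]}(\alpha)$. The push-forward ${evb_0^{-\beta}}_*$ and $\Phi_\beta^*$ commute up to the relative orientation sign of $\Phi_\beta$.

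The key step is the orientation comparison for $\Phi_\beta$: this is where the hypothesis that $\s$ is an honest spin structure, rather than merely relative spin, enters. Under the spin hypothesis, the argument of [fukaya2010lagrangian, Chapter~8] (see also [fukaya2017involution]) shows that $\Phi_\beta$ changes the canonical orientation on $\M_{k+1,l}(\beta)$ by an explicit sign $\epsilon(\beta,k,l,n)$, which when combined with $(-1)^{\mu(\beta)/2}$ from $\phi^* T^\beta$ eliminates all $\beta$-dependence, using the Maslov-index dimension formula for $\M_{k+1,l}(\beta)$ and the parity of $n$. What remains is a purely combinatorial sign in $k$ and $l$ that collapses to $(-1)^{1+k}$, matching the claim.

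The main obstacle is the orientation bookkeeping for $\Phi_\beta$, since the rest is routine Koszul accounting. The cleanest way to execute the plan is to follow the proof of [solomon2016point, Proposition~4.5] verbatim: none of its sign calculations use any special feature of the commutative setting of that paper, the coefficients $T^\beta$ and $t_j$ appear only as scalars pulled outside integrals, and their behavior under $\phi^*$ is prescribed by equation~\eqref{eq:phi*ext}. Consequently the entire argument transports to the present setting without modification, yielding the stated self-duality identity.
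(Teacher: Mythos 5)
The paper contains no proof of this proposition at all---it is imported verbatim as Proposition~4.5 of~\cite{solomon2016point}---so your plan (sketch the conjugation-involution argument, with the spin hypothesis entering only through the orientation comparison, and then transport that proof unchanged) is exactly the paper's treatment and is correct. One slip in your sketch worth noting: since $\ssly_L \supset \Im(\Id+\phi_*)$, the involution $(\Sigma,u)\mapsto(\overline{\Sigma},\phi\circ u)$ preserves the class in $\sly$ (domain conjugation contributes $-u_*[\Sigma,\partial\Sigma]$ and $\phi_*=-\Id$ on $\sly$), so it maps $\M_{k+1,l}(\beta)$ to itself and no reindexing $\beta\mapsto-\beta$ of the Novikov sum occurs---indeed $T^{-\beta}\notin\L$ when $\omega(\beta)>0$---and the factor $(-1)^{\mu(\beta)/2}$ coming from $\phi^*T^\beta$ must instead be cancelled against the $\beta$-dependent orientation sign of the involution.
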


Define $\phi^*:S \to S$ by $$\phi^*(a)=(-1)^{\frac{|a|(|a|-1)}{2}}a$$
\begin{lm}
If $a,b \in S$, then $\phi^*(ab)=(-1)^{|a||b|}\phi^*(b)\cdot \phi^*(a),$ that is, $\phi^*$ is a homomorphism from $S$ to $S^{op}$.
\end{lm}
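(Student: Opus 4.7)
The plan is a short sign computation. Since both sides of the identity
\[
\phi^*(ab)=(-1)^{|a||b|}\phi^*(b)\cdot\phi^*(a)
\]
are $\R$-bilinear in $(a,b),$ I would first reduce to the case of homogeneous $a,b\in S.$ Next I would exploit the fact that $S=\R\langle s\rangle$ is a polynomial ring in the single variable $s,$ so its underlying multiplication is commutative; in particular $ab=ba.$ Under this simplification the identity to be verified collapses to the purely numerical congruence
\[
\frac{(|a|+|b|)(|a|+|b|-1)}{2}\;\equiv\;|a||b|+\frac{|a|(|a|-1)}{2}+\frac{|b|(|b|-1)}{2}\pmod{2},
\]
because by definition $\phi^*(x)=(-1)^{|x|(|x|-1)/2}x$ on homogeneous $x\in S.$

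The remaining congruence is in fact an equality of integers, coming from the standard binomial identity $\binom{m+n}{2}=\binom{m}{2}+\binom{n}{2}+mn$ applied with $m=|a|,$ $n=|b|.$ Substituting the definitions of $\phi^*(ab),$ $\phi^*(a),$ $\phi^*(b)$ and using $ab=ba$ then gives the claimed formula. There is no real obstacle here beyond keeping track of the degree arithmetic; the content of the lemma is precisely that the quadratic function $x\mapsto x(x-1)/2\pmod 2$ has the Koszul-type coboundary $|a||b|$ relating its values on sums. Once that observation is in hand, the proof is a single short display.
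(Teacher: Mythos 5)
Your proof is correct and matches the paper's argument: the paper likewise reduces the claim to the integer identity $\frac{(|a|+|b|)(|a|+|b|-1)}{2}+\frac{|a|(|a|-1)}{2}+\frac{|b|(|b|-1)}{2}=|a||b|$, with commutativity of $S=\R\langle s\rangle$ (a polynomial ring in one variable) and homogeneity handled implicitly. No gaps.
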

\begin{proof}
We use the algebraic identity for $a,b \in \mathbb{Z},$
\[\frac{(a+b)(a+b-1)}{2}+\frac{a(a-1)}{2}+\frac{b(b-1)}{2}=ab,\]
to deduce the equation.
\end{proof}
\begin{cor}\label{cor:involotion_sign}
For $x_1,\dots,x_k \in S$ we have $\phi^*(x_1\cdots x_k)=(-1)^{\sum_{i<j}|x_i||x_j|}\phi^*(x_k)\cdots \phi^*(x_1)$.
\end{cor}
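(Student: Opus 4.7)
The plan is to prove this by induction on $k$, using the preceding lemma as the base case and inductive step. The statement for $k=1$ is trivial, and for $k=2$ it is exactly the preceding lemma $\phi^*(ab) = (-1)^{|a||b|}\phi^*(b)\phi^*(a)$.

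For the inductive step, assume the formula holds for products of $k-1$ factors. I would write $x_1\cdots x_k = (x_1\cdots x_{k-1}) \cdot x_k$ and apply the preceding lemma with $a = x_1\cdots x_{k-1}$ and $b = x_k$ to obtain
\[
\phi^*(x_1\cdots x_k) = (-1)^{|x_1\cdots x_{k-1}|\,|x_k|}\,\phi^*(x_k)\,\phi^*(x_1\cdots x_{k-1}).
\]
Then the inductive hypothesis rewrites $\phi^*(x_1\cdots x_{k-1})$ as $(-1)^{\sum_{i<j\le k-1}|x_i||x_j|}\phi^*(x_{k-1})\cdots\phi^*(x_1)$. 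Since $|x_1\cdots x_{k-1}| = \sum_{i=1}^{k-1}|x_i|$, the exponent arising from the lemma is $\sum_{i<k}|x_i||x_k|$, and combining this with the inductive sign gives exactly $\sum_{i<j\le k}|x_i||x_j|$, as required.

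There is no real obstacle here; the only thing to be careful about is bookkeeping of the sign, namely verifying the identity
\[
|x_k|\sum_{i=1}^{k-1}|x_i| \;+\; \sum_{i<j\le k-1}|x_i||x_j| \;=\; \sum_{i<j\le k}|x_i||x_j|,
\]
which is immediate from separating the pairs $(i,j)$ with $j=k$ from those with $j<k$. So the corollary follows from the preceding lemma by a one-line induction.
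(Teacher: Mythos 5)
Your proof is correct, and it is exactly the intended argument: the paper gives no separate proof for this corollary, treating it as following from the preceding lemma by the same straightforward induction and sign bookkeeping you carry out.
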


\begin{prop}\label{prop:sd_super}
Suppose the relative spin structure $\s$ on $L$ is a spin structure, and $\gamma \in \mathcal{I}_Q D$ is real. Then, for all $\alpha = (\alpha_1,\ldots,\alpha_k)$ with $\alpha_j \in C,$ we have
\[
\phi^* \mgh_k(\alpha_1,\ldots,\alpha_k) = (-1)^{1+k + s_\tau^{[1]}(\alpha)}\mgh_k( \phi^*\alpha_k,\ldots, \phi^*\alpha_1).
\]
That is, $(\Ch,\mgh)$ is $\phi^*$ self-dual.
\end{prop}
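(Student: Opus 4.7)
The plan is to reduce the $\phi^*$-self-duality of $(\Ch,\mgh)$ to that of $(C,\mg)$ already furnished by Proposition~\ref{prop:sd}, by isolating the sign contributions coming from the $S$-factor. Fix homogeneous elements $\hat\alpha_j = s_j \otimes \alpha_j \in \Ch$ for $j=1,\ldots,k$. For $k \ne 1$ the definition of $\mgh_k$ gives
\[
\mgh_k(\hat\alpha_1,\ldots,\hat\alpha_k) = (-1)^{\epsilon(s,\alpha)}\, s_1\cdots s_k \otimes \mg_k(\alpha_1,\ldots,\alpha_k),
\qquad \epsilon(s,\alpha) := \sum_{j=1}^k|s_j| + \sum_{j=1}^k|s_j|\sum_{i=1}^{j-1}(|\alpha_i|+1).
\]
First I would apply $\phi^*$ factor-wise to the right-hand side. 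Corollary~\ref{cor:involotion_sign} contributes a Koszul sign $(-1)^{\sum_{i<j}|s_i||s_j|}$ in reversing the $S$-product, while Proposition~\ref{prop:sd} contributes $(-1)^{1+k+s_\tau^{[1]}(\alpha)}$ in reversing the $C$-arguments, where $s_\tau^{[1]}(\alpha)=\sum_{i<j}(|\alpha_i|+1)(|\alpha_j|+1)$.

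Next I would directly expand the target expression $\mgh_k(\phi^*\hat\alpha_k,\ldots,\phi^*\hat\alpha_1)$ using the same definition of $\mgh_k$ (noting $|\phi^* s_j|=|s_j|$ and $|\phi^*\alpha_j|=|\alpha_j|$), obtaining the same tensor $\phi^*(s_k)\cdots\phi^*(s_1)\otimes\mg_k(\phi^*\alpha_k,\ldots,\phi^*\alpha_1)$ with a sign $(-1)^{\epsilon'(s,\alpha)}$ where
\[
\epsilon'(s,\alpha) = \sum_{j=1}^k|s_j| + \sum_{j=1}^k|s_j|\sum_{i=j+1}^k(|\alpha_i|+1),
\]
after re-indexing. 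Matching both sides then reduces to the congruence
\[
\epsilon(s,\alpha) + \sum_{i<j}|s_i||s_j| + s_\tau^{[1]}(\alpha) \equiv \epsilon'(s,\alpha) + s_\tau^{[1]}(\hat\alpha) \pmod 2,
\]
and a direct expansion of $s_\tau^{[1]}(\hat\alpha) = \sum_{i<j}(|s_i|+|\alpha_i|+1)(|s_j|+|\alpha_j|+1)$ cancels the $|s_i||s_j|$ terms and leaves $-2\sum_{i<j}|s_i|(|\alpha_j|+1)$, which is manifestly even, so the identity holds.

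For $k=1$ the extra summand $ds_1\otimes\alpha_1$ in $\mgh_1$ and the corresponding $d\alpha_1$ already present inside $\mg_1$ must be accounted for; since $\phi^*$ commutes with the differential on $S$ (equivalently, $\phi^*$ is a morphism of dgas), and since $s_\tau^{[1]}$ vanishes on a single entry, the $k=1$ identity collapses to the two separate statements $\phi^*(d s_1\otimes\alpha_1) = d(\phi^* s_1)\otimes \phi^*\alpha_1$ and $\phi^*\mg_1(\alpha_1) = \mg_1(\phi^*\alpha_1)$, both of which follow from Proposition~\ref{prop:sd} and the definition of $\phi^*$ on $S$. The case $k=0$ is immediate from $\mgh_0 = 1\otimes\mg_0$ and $\phi^*(1) = 1$.

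The main obstacle is the sign bookkeeping: three independent sources of Koszul signs (the defining sign of $\mgh_k$, the reversal sign on $S$ from Corollary~\ref{cor:involotion_sign}, and the self-duality sign of $\mg_k$ from Proposition~\ref{prop:sd}) must conspire to reproduce $(-1)^{1+k+s_\tau^{[1]}(\hat\alpha)}$. The cleanest organization is to group contributions into pairs indexed by $i<j$ and use the parity identity above; care is required because $s_\tau^{[1]}$ is built from the full degrees $|\hat\alpha_j|=|s_j|+|\alpha_j|$, whereas $\epsilon$ mixes $|s_j|$ with $|\alpha_i|+1$ rather than with $|\alpha_i|$.
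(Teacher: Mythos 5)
Your proposal is correct and takes essentially the same route as the paper's proof: apply $\phi^*$ to the defining formula of $\mgh_k$, use Corollary~\ref{cor:involotion_sign} to reverse the $S$-factors and Proposition~\ref{prop:sd} to reverse the $C$-factors, expand $\mgh_k(\phi^*\hat\alpha_k,\ldots,\phi^*\hat\alpha_1)$, and check the resulting mod-$2$ sign identity, which is exactly the simplification carried out in the paper. The only caveat is that your $k=1$ step, phrased as ``$\phi^*$ commutes with the differential on $S$,'' holds here only because $S=\R\langle s\rangle$ carries the trivial differential (for a nontrivial $d_S$ the sign $(-1)^{|a|(|a|-1)/2}$ would not commute with a degree shift), but in the present setting this is harmless and the paper simply omits these edge cases.
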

\begin{proof}
Let $x_1 \otimes \alpha_1, \dots ,x_k \otimes \alpha_k \in \hat{C}$. By definition of the extension of scalars and Corollary~\ref{cor:involotion_sign},
\begin{align*}
    \phi^*&\left(\mgh_k(x_1 \otimes \alpha_1, \dots ,x_k \otimes \alpha_k)\right) = \\
    &=(-1)^{\sum_{i=1}^k|x_i| +\sum_{i=1}^k |x_i|\sum_{j=1}^{i-1}(|\alpha_j|+1)}\phi^*(x_1 \cdots x_k)\otimes \phi^*\left(\mg_k(\alpha_1,\dots, \alpha_k)\right) \\
    &=(-1)^{\sum_{i<j}|x_i||x_j|+\sum_{i=1}^k|x_i| +\sum_{i=1}^k |x_i|\sum_{j=1}^{i-1}(|\alpha_j|+1)}\phi^*(x_k) \cdots \phi^*(x_1)\otimes \phi^*\left(\mg_k(\alpha_1,\dots, \alpha_k)\right). \\
\shortintertext{Abbreviating $a=\sum_{i<j}|x_i||x_j|+\sum_{i=1}^k|x_i| +\sum_{i=1}^k |x_i|\sum_{j=1}^{i-1}(|\alpha_j|+1)$ and invoking Proposition~\ref{prop:sd}, we continue}
    & = (-1)^{a+1+k+s_\tau^{[1]}(\a)}\phi^*(x_k) \cdots \phi^*(x_1)\otimes\mg_k(\phi^*\alpha_k,\ldots,\phi^*\alpha_1).
\end{align*}
On the other hand, we have 
\begin{multline*}
    \mgh_k(\phi^*(x_k) \otimes\phi^*\alpha_k,\ldots,\phi^*(x_1)\otimes \phi^*\alpha_1)=\\
    (-1)^{\sum_{i=1}^k|x_i| +\sum_{i=1}^k |x_i|\sum_{j=i+1}^{k}(|\alpha_j|+1)}\phi^*(x_k)\cdots \phi^*(x_1)\otimes\mg_k(\phi^*\alpha_k,\ldots,\phi^*\alpha_1).
\end{multline*}
Combining the preceding computations, 
\begin{multline*}
         \phi^*\left( \mgh_k(x_1 \otimes \alpha_1, \dots ,x_k \otimes \alpha_k)\right)=\\
        (-1)^{a+1+k+x_\tau^{[1]}(\a)+\sum_{i=1}^k|x_i| +\sum_{i=1}^k |x_i|\sum_{j=i+1}^{k}(|\alpha_j|+1)}\mgh_k(\phi^*(x_k) \otimes\phi^*\alpha_k,\ldots,\phi^*(x_1)\otimes \phi^*\alpha_1).
\end{multline*}
We simplify the sign as follows.
\begin{align*}
    a&+1+k+s_\tau^{[1]}(\a)+\sum_{i=1}^k|x_i| +\sum_{i=1}^k |x_i|\sum_{j=i+1}^{k}(|\alpha_j|+1) \equiv\\
    &\equiv \sum_{i<j}|x_i||x_j| +\sum_{i=1}^k |x_i|\sum_{j=1}^{i-1}(|\alpha_j|+1)+1+k+s_\tau^{[1]}(\a) +\sum_{i=1}^k |x_i|\sum_{j=i+1}^{k}(|\alpha_j|+1)\pmod{2}.
\shortintertext{Changing the order of summation, we continue}
    &\equiv 1+k+s_\tau^{[1]}(\a)+
    \sum_{i<j}|x_i|(|x_j \otimes\alpha_j|+1) + \sum_{j<i}(|\alpha_j|+1)|x_i|\\
    &\equiv 1+k+
    \sum_{i<j}|x_i|(|x_j \otimes\alpha_j|+1) + \sum_{j<i}(|\alpha_j|+1)(|x_i|+|\alpha_i|+1)\\
    &\equiv 1+k+
    \sum_{i<j}(|x_i\otimes \alpha_i|+1)(|x_j \otimes\alpha_j|+1)\pmod{2}.
\end{align*}
The proposition follows.
\end{proof}
\begin{cor}\label{cor:real_subcomp}
    Let $b\in \left(\Cdh\right)^{-\phi^*}$ with $\nu_S\otimes \nu(b)>0$ and $|b|=1$. Then, $\mghi{b}_1(\eta)\in \left(\Cdh\right)^{-\phi^*}$  for all  $\eta \in \left(\Cdh\right)^{-\phi^*}$. So, if $b$ is a bounding cochain for the residue algebra, then the real elements $\left(\Cdh_E\right)^{-\phi^*}$ form a sub-complex of $\left(\Cdh_E,\mgiek{b}{1}{E}\right)$. 
\end{cor}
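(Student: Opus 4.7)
The plan is to reduce this corollary to a direct application of the abstract self-duality machinery developed in Section~\ref{ssec:opposite_duality}. First I would observe that $\phi^*$, defined on the generators $T^\beta, t_j, s$ via equation~\eqref{eq:phi*ext} and acting as pullback on $A^*(L)$, extends to a degree-zero $\R$-linear involution of $\Cdh = S \otimes R \otimes A^*(L)$. Because the action on each generator is multiplication by a sign and pullback preserves the differential-form grading, $\phi^*$ preserves both the grading and the filtration $\nu_S \otimes \nu$, and so descends to each finite-energy quotient $\Cdh_E$.

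Next I would invoke Proposition~\ref{prop:sd_super}, which asserts that $(\Cdh, \mgh)$ is $\phi^*$ self-dual in the sense of Definition~\ref{dfn:d_self_dual}. The hypotheses $b \in (\Cdh)^{-\phi^*}$, $|b|=1$, and $\nu_S \otimes \nu(b) > 0$ match exactly the hypotheses of Lemma~\ref{lm:sd_equivariance} and Corollary~\ref{cor:preserved_subspace} (applied with $\mathfrak{d} = \phi^*$, $\mathcal{C} = \Cdh$, $\mathfrak{m} = \mgh$, $\varsigma_1 = \nu$, and $\varsigma_2 = \nu_S \otimes \nu$). Corollary~\ref{cor:preserved_subspace} then yields the first claim immediately: $\mghi{b}_1\bigl((\Cdh)^{-\phi^*}\bigr) \subset (\Cdh)^{-\phi^*}$.

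For the sub-complex statement, assume in addition that $b$ is a bounding cochain for the residue algebra. Lemma~\ref{lm:bounding_chain_bar_indu_boundary_op} (or equivalently Lemma~\ref{lm:m_one_boundary_op}) then guarantees that the induced operator $\mgiek{b}{1}{E}$ on $\Cdh_E$ squares to zero. Since $\phi^*$ preserves the filtration, the subspace $(\Cdh_E)^{-\phi^*}$ is well-defined, and by the first part of the corollary it is stable under $\mgiek{b}{1}{E}$. The vanishing of the square of the operator on the ambient space automatically descends to this subspace, completing the proof.

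I do not anticipate any serious obstacle; the content of the corollary is essentially the specialization of Corollary~\ref{cor:preserved_subspace} to the geometric setting of Fukaya $A_\infty$ algebras, once Proposition~\ref{prop:sd_super} is in hand. The only minor bookkeeping is checking that $\phi^*$ is compatible with filtration and grading on $\Cdh$, which is immediate from the explicit formulas~\eqref{eq:phi*ext}.
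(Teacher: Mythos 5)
Your proposal is correct and follows essentially the same route as the paper, whose proof simply cites Corollary~\ref{cor:preserved_subspace} (resting on Proposition~\ref{prop:sd_super} and Lemma~\ref{lm:sd_equivariance}); your additional checks on grading, filtration, and the descent of the squared-zero property to $\Cdh_E$ are just the bookkeeping the paper leaves implicit.
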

\begin{proof}
        This follows from Corollary~\ref{cor:preserved_subspace}.
\end{proof}
\begin{lm}\label{lm:reR}
A homogeneous element $a \in \hat{R}$ is real if and only if $\deg a \equiv 2$ or $3 \pmod 4 $
\end{lm}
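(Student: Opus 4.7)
The strategy is to reduce to a sign computation on monomials. Every homogeneous element of $\hat R$ is an $\R$-linear combination of monomials of the form
\[
m = s^k T^\beta \prod_{i=0}^N t_i^{l_i},
\]
and from~\eqref{eq:phi*ext} together with Corollary~\ref{cor:involotion_sign}, $\phi^*$ sends each such monomial to a real scalar multiple of itself. Since distinct monomials are $\R$-linearly independent in $\hat R$, a homogeneous element is real iff every monomial appearing in it is individually real. So it suffices to compute the sign of $\phi^*(m)/m$ for an arbitrary monomial and verify that it depends only on $|m| \pmod 4$.

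Iterating Corollary~\ref{cor:involotion_sign} on $s^k = s\cdot s\cdots s$, and combining with the defining formulas for $\phi^*(t_i)$ and $\phi^*(T^\beta)$ and the graded commutativity of $R$, I would derive
\[
\phi^*(m) = (-1)^{\epsilon_k + D_R/2}\, m,
\]
where
\[
\epsilon_k = \tbinom{k}{2}|s|^2 + k|s|(|s|-1)/2, \qquad D_R = \mu(\beta) + \sum_i l_i|t_i|, \qquad |m| = k|s| + D_R.
\]
Because $n$ is even, $|s| = 1-n$ is odd, so $|s|^2 \equiv 1 \pmod 2$ and $|s|(|s|-1)/2 = n(n-1)/2$; thus $\epsilon_k$ reduces modulo $2$ to $\binom{k}{2} + k\cdot n(n-1)/2$.

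The conclusion follows by a case analysis on $(k \bmod 4,\, D_R \bmod 4)$, with $D_R$ even by the conventions of the real setting. Translating via $|m| \equiv k(1-n) + D_R \pmod 4$, one checks the eight cases and finds that $\epsilon_k + D_R/2$ is odd precisely when $|m| \equiv 2$ or $3 \pmod 4$; the same table also confirms that the sign is consistent across different monomials of equal degree, which is what was needed to pass from the monomial version to the original statement. The argument runs uniformly for $n\equiv 0$ and $n\equiv 2\pmod 4$, the two possibilities for even $n$, since in each case the $n(n-1)/2$ term shifts $\epsilon_k$ and $|s|\pmod 4$ in compatible ways. The only delicate step is the modulo-$2$ reduction of $\epsilon_k$; once this is in hand the proof amounts to routine sign bookkeeping, and I expect no conceptual obstacle.
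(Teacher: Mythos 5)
Your proposal is correct and takes essentially the same approach as the paper: both reduce to the sign of $\phi^*$ on a monomial $s^k\otimes T^\beta\prod_i t_i^{l_i}$, using that $R$ sits in even degree so the resulting sign depends only on the total degree. The only difference is how the parity is finished: the paper uses the identity $\tfrac{a(a-1)}{2}+\tfrac{b(b-1)}{2}\equiv\tfrac{(a+b)(a+b-1)}{2}+ab\pmod 2$ to get the sign $(-1)^{\deg(\cdot)(\deg(\cdot)-1)/2}$ directly, whereas you note that your $\epsilon_k$ equals $\tfrac{k|s|(k|s|-1)}{2}$ and verify the same statement by the residue case table, which indeed closes.
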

\begin{proof}
For $t \in R$ and $x \in S$ homogeneous we compute
\[\phi^*(x)\otimes \phi^*(t)=(-1)^{\frac{|x|(|x|-1)}{2}}x\otimes \left((-1)^{\frac{|t|(|t|-1)}{2}}t\right )=(-1)^{\frac{|x|(|x|-1)}{2}+\frac{|t|(|t|-1)}{2}}x\otimes t\]
We recall that \[\frac{|x|(|x|-1)}{2}+\frac{|t|(|t|-1)}{2}=\frac{(|x|+|t|)(|x|+|t|-1)}{2}+|t||x|.\]
As the elements of $R$ are of even degree, we can deduce that,
\[
\phi^* \otimes \phi^* (x \otimes t)=(-1)^\frac{(|x\otimes t|)(|s \otimes t|-1)}{2}x \otimes t.
\]
Thus $x \otimes t$ are real if and only if $|x \otimes t| \equiv 2$ or $3 \pmod 4 $
\end{proof}

For $v \in \Z$ and  $a\in S$ homogeneous, denote by $a_{v} \in S[v]$ the element corresponding to $a.$ 
\begin{dfn}\label{dfn:inv_shift}
Define $\phi^*:S[v]\to S[v]$ by
\[\phi^*(a_{v})=(-1)^{\frac{(|a|-v)(|a|-v-1)}{2}}a_{v}=(-1)^{\frac{(|a_{v}|)(|a_{v}|-1)}{2}}a_{v}.\]
For $\R$ algebras $\Ups$ define $\phi^*:\hat{\Ups}[v]\to \hat{\Ups}[v]$ by identifying $\hat{\Ups}[v]\cong S[v]\otimes \Ups$ and tensoring $\phi^*$ on $S[v]$ with the identity map on $\Ups.$
\end{dfn}
For homogeneous $r \in {\Rd_{E}}^{\vee},$ recall the definition of $\pi_r$ from equation~\eqref{eq:pir}. The preceding definition of $\phi^*$ is motivated by the following lemma.
\begin{lm}\label{lm:pi_r_phi_equivariant}
 $\pi_r:\Cdh_{E}\to \overline{\Cdh}[|r|]$ is $\phi^*$ equivariant.
\end{lm}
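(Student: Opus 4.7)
The plan is to decompose $\pi_r = \hat{\pi}_r \circ \hat{f}$ and prove $\phi^*$-equivariance of each factor in turn, being careful to track how the grading shift on the target interacts with $\phi^*$. The first factor will turn out to be automatic from the construction of the induced $\overline{R}$ structure, while the second reduces to a sign check that is precisely what the shifted action of Definition~\ref{dfn:inv_shift} is designed to accommodate.

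First I will verify that $\hat{f} : \Cdh_E \to \Rd_E \otimes_{\overline{\Rd}} \overline{\Cdh}$ is $\phi^*$-equivariant when the target carries the diagonal action $\phi^*_{\Rd} \otimes \phi^*_{\overline{\Cdh}}$. Unfolding Definition~\ref{dfn:induced_R_structure} together with the isomorphism of Lemma~\ref{lm:cano_iso} and the identification $\overline{\Rd} \cong \R$, the map $\hat{g}$ becomes, under the natural identifications $\Cdh \cong S \otimes R \otimes A^*(L)$ and $\Rd \otimes_{\overline{\Rd}} \overline{\Cdh} \cong \Rd \otimes S \otimes A^*(L)$, the rearrangement $\lambda \otimes s \otimes \beta \mapsto s \otimes \lambda \otimes \beta$. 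Since all homogeneous elements of $R$ have even degree (the generators $T^\beta$ and $t_j$ have degrees $\mu(\beta)$ and $|t_j|$, both even), no Koszul sign is incurred from this rearrangement; moreover, $\phi^*$ acts trivially on $A^*(L)$ because $\phi|_L = \Id_L$. Hence $\hat{g}$, and therefore its inverse $\hat{f}$, commutes with $\phi^*$.

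Next I handle the projection $\hat{\pi}_r : \Rd_E \otimes \overline{\Cdh} \to \overline{\Cdh}[|r|]$. Take homogeneous $\lambda \in \Rd_E$ and $s \otimes \beta \in S \otimes A^*(L) \cong \overline{\Cdh}$. A nonzero contribution $\hat{\pi}_r(\lambda \otimes s \otimes \beta) = r(\lambda)\, s \otimes \beta$ requires $|\lambda| = -|r|$ so that $r(\lambda) \in \R$ sits in degree~$0$; since $|\lambda|$ is even, $|r|$ is even as well, and $\phi^*(\lambda) = (-1)^{|\lambda|/2}\lambda = (-1)^{|r|/2}\lambda$. Consequently,
\[
\hat{\pi}_r\bigl(\phi^*(\lambda \otimes s \otimes \beta)\bigr) = (-1)^{|r|/2 + |s|(|s|-1)/2}\, r(\lambda)\, s \otimes \beta,
\]
whereas by Definition~\ref{dfn:inv_shift} applied to $\overline{\Cdh}[|r|] \cong S[|r|] \otimes A^*(L)$,
\[
\phi^*_{[|r|]}\bigl(\hat{\pi}_r(\lambda \otimes s \otimes \beta)\bigr) = (-1)^{(|s|-|r|)(|s|-|r|-1)/2}\, r(\lambda)\, s \otimes \beta.
\]
Equality of the two signs reduces to the congruence
\[
|r|/2 + |s|(|s|-1)/2 \equiv (|s|-|r|)(|s|-|r|-1)/2 \pmod 2,
\]
which follows from the elementary identity $(a-b)(a-b-1)/2 - a(a-1)/2 = -ab + b(b+1)/2$ applied with $a = |s|$ and $b = |r|$, using that $b$ even makes both $ab$ and $b^2/2$ even integers.

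The only genuine content is the final sign verification; everything else is a direct unpacking of the definitions. The hard part, such as it is, lies in choosing the right bookkeeping so that the signs match cleanly—but the calibration of Definition~\ref{dfn:inv_shift} was introduced exactly to make this work.
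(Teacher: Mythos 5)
Your proof is correct and is essentially the paper's argument: both unwind $\pi_r=\hat{\pi}_r\circ \hat f$ on a homogeneous decomposition, use that $\phi^*$ acts trivially on $A^*(L)$ and that $\Rd$ is concentrated in even degree in the real setting, and match the resulting sign against the shifted action of Definition~\ref{dfn:inv_shift}. The only difference is organizational: the paper does it as one computation using the exact identity $\tfrac{a(a-1)}{2}+\tfrac{b(b-1)}{2}+ab=\tfrac{(a+b)(a+b-1)}{2}$ with $a=|s|$, $b=|\lambda|$, which is equivalent (mod $2$, using $|\lambda|=-|r|$ even) to the congruence you verify.
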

\begin{proof}
By the definition of $\Cdh$, for a homogeneous element $\a \in \Cdh_{E}$, we can write 
\[\a=\left[\sum_{j=0}^{m} s^{k_j}\otimes \lambda_j \otimes \eta\right],\]
where $s$ is the generator of $S$, and  $\lambda_j\in \Rd$ and $\eta \in A^*(L),$ are homogeneous elements. Recalling that $\phi^*$ acts trivially on forms on $L$, we compute 
\begin{align*}
    \pi_r\left(\phi^*(\a)\right))&=\pi_r \left(\left[\sum_{i=0}^{m} \phi^*(s^{k_j})\otimes \phi^*(\lambda_j) \otimes \eta\right]\right)\\
    &=\sum_{i=0}^{m}(-1)^{|s^{k_j}||\lambda_j|}r\left(\left[\phi^*(\lambda_j) \right] \right)\phi^*(s^{k_j})_{|r|}\otimes  \eta\\
    &=\sum_{i=0}^{m}(-1)^{|s^{k_j}||\lambda_j|+\frac{|s^{k_j}|(|s^{k_j}|-1)}{2}+\frac{|\lambda_j|(|\lambda_j|-1)}{2}}r\left(\left[\lambda_j \right] \right)(s^{k_j})_{|r|}\otimes  \eta\\
    &=\sum_{i=0}^{m}(-1)^{\frac{(|s^{k_j}|+|\lambda_j|)(|s^{k_j}|+|\lambda_j|-1)}{2}}r\left(\left[\lambda_j \right] \right)(s^{k_j})_{|r|}\otimes  \eta.\\
    \shortintertext{Observe that for $j$ such that $r([\lambda_j])\ne 0$ then $|\lambda_j|=-|r|.$ So, $r\left(\left[\lambda_j \right] \right)(s^{k_j})_{|r|}\in S[|r|]_{|s^{k_j}|+|\lambda_j|}.$ Hence, we continue}
    &=\sum_{i=0}^{m}\phi^*\left(r\left(\left[\lambda_j \right] \right)(s^{k_j})_{|r|}\otimes  \eta\right).
\end{align*}      
\end{proof}

\begin{lm}\label{lm:Real_pi_r_mod_deg}
    Let $s\in S$ be a homogeneous element of degree $-p$, and let $r\in {\Rd_E}^\vee$ be a homogeneous element. Assume that $s\otimes \a \in \Cdh_E$ is real, where $\a\in \Cd_E.$ Then, 
    \[\pi_r\left(s\otimes \a\right)\ne 0\implies |r|+p\equiv 1,2 \pmod 4 .\]
\end{lm}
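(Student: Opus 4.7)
\medskip

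\noindent\emph{Plan of proof.} The strategy is to use the $\phi^*$-equivariance of $\pi_r$ (Lemma~\ref{lm:pi_r_phi_equivariant}) together with an explicit monomial decomposition of $\a$, and then reduce the realness condition to an arithmetic constraint modulo $4$.

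First, I would expand $\a \in \Cd_E$ in the basis: pick monomials $\lambda_j = T^{\beta_j}\prod_a t_a^{l_{aj}}$ in $\Rd$ and basis forms $\eta_j \in A^*(L)$ so that $\a$ is represented by $\sum_j c_j\,\lambda_j\otimes \eta_j$ with $c_j\in \R$. Using the formula derived in the proof of Lemma~\ref{lm:pi_r_phi_equivariant},
\[
\pi_r(s\otimes \a) \;=\; \sum_j (-1)^{|s||\lambda_j|}\,c_j\,r([\lambda_j])\,s_{|r|}\otimes \eta_j \;\in\; \bar{\Cdh}[|r|].
\]
The assumption $\pi_r(s\otimes \a)\ne 0$ produces at least one index $j_0$ with $c_{j_0}\,r([\lambda_{j_0}])\ne 0$. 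Since $r$ is homogeneous of degree $|r|$ and nonzero only on degree $-|r|$ elements of $\Rd_E$, this forces $|\lambda_{j_0}| = -|r|$.

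Next, I would exploit the fact that $\phi^*$ acts diagonally on each basis element of $\hat{C}= S\otimes R\otimes A^*(L)$. Concretely, since $|\lambda_j|$ is even and $\phi^*$ acts trivially on $A^*(L)$, the calculation in the proof of Lemma~\ref{lm:reR} gives
\[
\phi^*(s\otimes \lambda_j\otimes \eta_j) \;=\; (-1)^{(|s|+|\lambda_j|)(|s|+|\lambda_j|-1)/2}\,(s\otimes \lambda_j\otimes \eta_j).
\]
Because the $s\otimes \lambda_j\otimes \eta_j$ are linearly independent basis elements, the hypothesis that $s\otimes \a$ is real (i.e.\ $\phi^*(s\otimes\a) = -(s\otimes\a)$) breaks up termwise: for every index $j$ with $c_j\ne 0$, the sign $(-1)^{(|s|+|\lambda_j|)(|s|+|\lambda_j|-1)/2}$ must equal $-1$.

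Applying this to $j_0$ with $|s| = -p$ and $|\lambda_{j_0}| = -|r|$, I obtain the condition that $(p+|r|)(p+|r|+1)/2$ is odd, i.e.\ $(p+|r|)(p+|r|+1)\equiv 2 \pmod 4$. A direct check of the four residue classes of $m := p+|r|$ modulo $4$ shows that $m(m+1)\equiv 2\pmod 4$ exactly when $m\equiv 1$ or $2\pmod 4$, which is the desired conclusion. The only mild subtlety, and the point that most deserves care in the write-up, is the termwise decomposition of the realness condition and the sign $(-1)^{(|s|+|\lambda|)(|s|+|\lambda|-1)/2}$ for $\phi^*$ on $\hat{C}$; once these sign bookkeeping steps are in place, the remainder of the argument reduces to the elementary mod~$4$ computation above.
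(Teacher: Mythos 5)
Your proposal is correct and follows essentially the same route as the paper: decompose $\a$ into monomials of $\Rd$ tensored with forms, observe that $\pi_r(s\otimes\a)\ne 0$ forces some monomial $\lambda_{j_0}$ with $r([\lambda_{j_0}])\ne 0$ and hence $|\lambda_{j_0}|=-|r|$, and then use termwise realness (legitimate since $\phi^*$ acts trivially on $A^*(L)$ and diagonally on the monomials) to get the congruence. The only cosmetic difference is that the paper cites Lemma~\ref{lm:reR} (real $\iff$ degree $\equiv 2,3 \pmod 4$) for the final step, whereas you rederive that mod-$4$ computation inline; the content is identical.
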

\begin{proof}
Write 
\[
\alpha =\left[ \sum_i f_i \otimes \alpha_i
\right],
\]
where $0\neq f_i \in \Rd$ and $\alpha_i \in A^*(L),$ are homogeneous. Assume that $\pi_r\left(s\otimes \a\right)\ne 0.$ Then there exists $i$ such that $r([f_i])\ne 0.$ It follows that $|f_i|=-|r|$. Since $\phi^*$ acts trivially on forms on $L$ and $s\otimes \alpha$ is real, it follows that $s\otimes f_i$ is real. Thus, Lemma~\ref{lm:reR} implies that 
\[|r|+p=-|f_i|+p  \equiv 1,2 \pmod 4.\]
\end{proof}

\begin{lm}\label{lm:phi_star_cochainmap}
    Assume that $b$ is a real bounding cochain of the residue $A_\infty$ algebra. Then  $\phi^*:(\overline{\Cdh}[v],\mghb_1[v])\to (\overline{\Cdh}[v],\mghb_1[v])$ is a cochain map. 
\end{lm}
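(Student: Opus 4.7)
The plan is to reduce the claim to Lemma~\ref{lm:sd_equivariance} applied to the residue algebra, then transfer the resulting commutation to the shifted complex.

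First I would invoke Proposition~\ref{prop:sd_super} to get that the $(S,F)$ extension $(\hat C, \mgh)$ is $\phi^*$-self-dual. Since $\phi^*$ multiplies homogeneous elements by $\pm 1,$ it preserves the filtration $\nu_S\otimes\nu,$ so the self-duality descends to the residue $A_\infty$ algebra $(\overline{\hat C}, \overline{\mgh}).$ The induced $\phi^*$ on $\overline{\hat C}$ remains an involution of degree zero, as $\tfrac{|a|(|a|-1)}{2}\cdot 2$ is always even in the formula of Definition~\ref{dfn:inv_shift}.

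Second, because $b$ is real, its residue $\bar b$ is real as well, i.e.\ $\bar b \in (\overline{\hat C}_1)^{-\phi^*}.$ By Lemma~\ref{lm:energy_zero_degree_1} we have $\bar b = s\otimes \beta$ for some $\beta \in A^n(L),$ so $\nu_S(\bar b)=n-1>0.$ The hypotheses of Lemma~\ref{lm:sd_equivariance} with $\mathfrak d := \phi^*$ are therefore satisfied, and that lemma yields
\[
\phi^* \circ \overline{\mghb}_1 \;=\; \overline{\mghb}_1 \circ \phi^* \qquad \text{on } \overline{\hat C}.
\]
(Here I use Lemma~\ref{lm:twisted_f_chain_iso}, which identifies $\overline{\mghb}_1$ with $(\overline{\mgh})_1^{\bar b}.$)

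Third, I would transfer this commutation to the shifted complex $(\overline{\hat C}[v],\,\mghb_1[v]).$ Decompose $\overline{\mghb}_1 = (\Id\otimes d) - [\bar b,\cdot\,]$ via Corollary~\ref{cor:bar_fukaya_deformed_boundry}. The first summand preserves the $S$-degree, so on each component $s^k\otimes A^*(L)$ it commutes automatically with the scalar action of $\phi^*$ prescribed by Definition~\ref{dfn:inv_shift}, regardless of the shift $v.$ For the commutator piece, I would use Corollary~\ref{cor:involotion_sign} (super-anti-homomorphism property of $\phi^*$) and $\phi^*(\bar b) = -\bar b$ to obtain a formula of the form $\phi^*([\bar b,\eta]) = [\bar b, \phi^*(\eta)],$ as in the $v=0$ case; the degree-dependent sign change between $\phi^*$ on $\overline{\hat C}$ and $\phi^*$ on $\overline{\hat C}[v]$ matches the sign change between input and output of $[\bar b,\cdot\,]$ in the shifted grading because $\bar b$ is concentrated in a single $S$-degree. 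Combining the two summands gives $\phi^*\circ \mghb_1[v] = \mghb_1[v]\circ \phi^*$ on $\overline{\hat C}[v].$

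The main obstacle I expect is the sign bookkeeping in the last step: Definition~\ref{dfn:inv_shift} assigns $\phi^*$ on $S[v]$ a sign depending on the shifted degree $|a|-v,$ which does \emph{not} differ from the unshifted $\phi^*$ by a global constant. The fact that the discrepancy is precisely absorbed by the super-anti-homomorphism sign when $\phi^*$ is pushed across $\bar b = s\otimes\beta$ (and by the graded-commutativity of $A^*(L),$ given that $\beta$ has top form-degree $n$) is the technical heart of the verification.
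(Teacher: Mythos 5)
Your first two steps are sound, and they reach the unshifted statement by a genuinely different route than the paper: you deduce $\phi^*\circ\bar{\mh}^{\gamma,b}_1=\bar{\mh}^{\gamma,b}_1\circ\phi^*$ on $\overline{\Cdh}$ from Proposition~\ref{prop:sd_super} together with Lemma~\ref{lm:sd_equivariance} (the same mechanism the paper uses for Corollary~\ref{cor:real_subcomp}), whereas the paper proves the lemma by a direct computation with the dga description $\bar{\mh}^{\gamma,b}_1(\eta)=\Id\otimes d\eta-[\bar b,\eta]$ of Corollary~\ref{cor:bar_fukaya_deformed_boundry}, the structure of $S$, and Lemma~\ref{lm:energy_zero_degree_1}. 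For $v=0$ your argument is a clean alternative.

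The gap is step 3, which you explicitly leave unverified: for $v\neq 0$ that verification \emph{is} the lemma, and the heuristic you offer (that the shift discrepancy is absorbed because $\bar b$ sits in a single $S$-degree) does not suffice. By Definition~\ref{dfn:inv_shift} and the identity $\tfrac{(x+y)(x+y-1)}{2}=\tfrac{x(x-1)}{2}+\tfrac{y(y-1)}{2}+xy$, the shifted involution differs from the unshifted one by a global sign times the degree-dependent factor $(-1)^{v|a|}$ in the $S$-degree $|a|$. Since $[\bar b,\cdot\,]$ shifts the $S$-degree by $|s|=1-n$, which is odd for $n$ even, this extra factor \emph{anti}-commutes with the bracket on $\Z_2$-odd elements whenever $v$ is odd; so the shifted case is not a formal consequence of the $v=0$ case, and the $d$-part cannot compensate because it lives in a different $S$-degree component. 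One genuinely needs the case analysis the paper performs: the bracket vanishes when $\degev{\eta}=0$ by Corollary~\ref{cor:center_of_hat_algebra}; reality of $b$ forces $\bar b=0$ when $n\equiv 0\pmod 4$ by Lemma~\ref{lm:reR}; and in the remaining case $n\equiv 2\pmod 4$ one checks the mod-$4$ congruences comparing $\phi^*$ on $s^{j}$ and on $s^{j+1}$ in the shifted grading, a check that succeeds exactly in the congruence classes of $v$ that arise later (in the real setting $v=|r|$ is even). Your outline names none of these ingredients — in particular it never uses reality of $b$ beyond $\phi^*(\bar b)=-\bar b$ — so as written the proof is incomplete precisely where the hypotheses do the work.
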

\begin{proof}
    We observe that by Corollary~\ref{cor:bar_fukaya_deformed_boundry} the result will follow if 
    \[\phi^*([\bar b,\eta])=[\bar b,\phi^*(\eta)],\]
    for all $\eta \in \overline{\Cdh}[v]\cong S\otimes A^{*}(L)[v].$ Assume first that $n$ is odd. Observe that $|s|=1-n$, and thus $S$ is graded by $2\mathbb{Z}$. It follows from Corollary~\ref{cor:center_of_hat_algebra} that
    \[\phi^*([\bar b,\eta])=\phi^*(0)=0=[\bar b,\phi^*(\eta)].\] So, the results holds for $n$ odd.
    
    Assume that $n$ is even. By Lemma~\ref{lm:energy_zero_degree_1} there exists $\a\in A^n\left(L\right)$ such that $\bar{b}=s\otimes \a.$ By Lemma~\ref{lm:reR} $\a$ is  trivial if $n\equiv 0 \pmod 4.$ Thus, the result holds for $n\equiv 0 \pmod 4.$
    
    We now specialize to the remaining case that $n\equiv 2 \pmod 4$.
    It's enough to assume that $\eta=s^j\otimes \tilde{\eta}$ with $\tilde{\eta}\in A^*(L).$    Corollary~\ref{cor:center_of_hat_algebra} implies that the above equality holds if $\degev{\eta}=0.$ So, assume that $j$ is odd. Observe that 
    \[3j+v\equiv 1\pmod 4 \implies 3(j+1)+v\equiv0  \pmod 4,\]
    and
    \[3j+v\equiv 3\pmod 4 \implies 3(j+1)+v\equiv 2 \pmod 4.\]
    It follows that $s\phi^*(s^j)=\phi^*(s^{j+1})$.
    Thus, $\phi^*$ changes the sign of $[\bar b,\eta]$ if and only if it changes the sign of $\eta$, and the result follows.
\end{proof}
As a corollary of Lemma~\ref{lm:pi_r_phi_equivariant}, and Lemma~\ref{lm:phi_star_cochainmap} we have the following lemma.
\begin{lm}\label{lm:Real_res_chain_map}
    Let $r\in \Rdh_E$ be a homogeneous element, then the restriction 
    \[\pi_r:\left((\Cdh_{E})^{-\phi^*},\mgiek{b}{1}{E}\right)\to\left(\overline{\Cdh}[|r|]^{-\phi^*},\bar{\mh}^{\gamma,b}_1 \right).\]
    defines a chain map. Moreover, the map descends to the quotients $\pi_r:\left(\Cdh_{\not \e, E}\right)^{-\phi^*}\to\overline{\Cdh_{\not \e}}[|r|]^{-\phi^*}.$
\end{lm}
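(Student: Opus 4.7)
The plan is to assemble this lemma as a direct corollary of Lemmas~\ref{lm:pi_r_chain}, \ref{lm:pi_r_phi_equivariant}, \ref{lm:phi_star_cochainmap}, together with Corollary~\ref{cor:real_subcomp}, all of which have been established. There is no new computation required; the work is bookkeeping about restriction to fixed-point subspaces and descent to the unit quotient.

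First I would recall that Lemma~\ref{lm:pi_r_chain} already tells us $\pi_r : (\Cdh_E, \mgiek{b}{1}{E}) \to (\overline{\Cdh}[|r|], (-1)^{|r|}\bar{\mh}^{\gamma,b}_1)$ is a cochain map on the full (non-real) complexes. Next I would show that the restriction to $-\phi^*$-fixed subspaces is well defined: by Lemma~\ref{lm:pi_r_phi_equivariant}, $\pi_r$ is $\phi^*$-equivariant, so it carries $(\Cdh_E)^{-\phi^*}$ into $\overline{\Cdh}[|r|]^{-\phi^*}$. To be sure that both sides genuinely form subcomplexes, I would invoke Corollary~\ref{cor:real_subcomp} on the source, and Lemma~\ref{lm:phi_star_cochainmap} on the target: since $\phi^*$ commutes with the differential $\bar{\mh}^{\gamma,b}_1$ on $\overline{\Cdh}[|r|]$, the $-\phi^*$-eigenspace is preserved by this differential. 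Because the restriction of a chain map to a pair of subcomplexes is automatically a chain map, this completes the first assertion.

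For the descent to the quotients by $Z(\ker d_{\Rdh})\cdot 1$, I would observe that $\pi_r = \hat{\pi}_r \circ \hat{f}$ is constructed from the $\bar{\mathcal{R}}$-structure isomorphism $\hat{f}$ followed by an $\bar{\mathcal{R}}$-linear coefficient extraction; both steps carry the unit subspace to the unit subspace. This is precisely the observation made immediately after the definition of $\pi_r$ in equation~\eqref{eq:pir}, and it lifts verbatim to the real subcomplex because the unit $1$ is itself $\phi^*$-invariant (hence lies in the complement of the $-\phi^*$-eigenspace in a $\phi^*$-compatible way). Consequently $\pi_r$ descends to a cochain map $(\Cdh_{\not \e, E})^{-\phi^*}\to \overline{\Cdh_{\not \e}}[|r|]^{-\phi^*}$, as claimed.

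Since every ingredient has been proved, no step is genuinely an obstacle; the only minor care needed is in confirming that the $-\phi^*$ decomposition is compatible with the unit quotient, but this is immediate from the $\R$-linearity of $\phi^*$ and the fact that $\phi^* 1 = 1$.
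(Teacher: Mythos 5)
Your proposal is correct and follows essentially the same route as the paper, which presents this lemma precisely as a corollary of Lemma~\ref{lm:pi_r_phi_equivariant} and Lemma~\ref{lm:phi_star_cochainmap}, with Lemma~\ref{lm:pi_r_chain} supplying the chain-map property and the remark after equation~\eqref{eq:pir} supplying descent to the unit quotient. The only detail worth noting is the sign $(-1)^{|r|}$ in Lemma~\ref{lm:pi_r_chain}, which is harmless here since in the real setting $\Rd$ is concentrated in even degree.
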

\begin{prop}[{\cite[Proposition 4.13]{solomon2016point}}]\label{prop:sd_dt}
Suppose the relative spin structure $\s$ on $L$ is a spin structure, and $\gt \in \mathcal{I}_Q \mathfrak{D}$ is real. Then, for all $\at = (\at_1,\ldots,\at_k)$ with $\at_j \in \mathfrak{C},$ we have
\[
\phi^* \mgt_k(\at_1,\ldots,\at_k) = (-1)^{1+k + s_\tau^{[1]}(\at)}\mgt_k(\phi^*\at_k,\ldots,\phi^*\at_1).
\]
\end{prop}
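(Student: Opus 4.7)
The plan is to reduce this to a direct adaptation of Proposition~\ref{prop:sd}, making the necessary modifications to account for the $I$-parameter in the pseudoisotopy setting. The operation $\mgt_k$ has the same shape as $\mg_k$, with moduli spaces $\Mt_{k+1,l}(\beta)$ in place of $\M_{k+1,l}(\beta)$, evaluations $\evbt_j, \evit_j$ landing in $I\times L$ and $I\times X$, and integration over the fiber of the proper submersion $\evbt_0^\beta$. Since every $J_t$ in the path lies in $\J_\phi$, we have $\phi^*J_t = -J_t$ for all $t \in I$, so fiberwise the situation is identical to that of Proposition~\ref{prop:sd}.

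First I would define an involution $\tilde\phi$ on $\Mt_{k+1,l}(\beta)$ by sending $(t,u,\vec{z},\vec{w})$ to $(t,\phi\circ u \circ c, c(\vec{z}), c(\vec{w}))$, where $c$ is complex conjugation on the domain Riemann surface. This is well-defined because $\phi^*J_t = -J_t$ for each $t$, preserves the projection to $I$, and sends the degree $\beta$ to $\phi_*\beta$, which equals $\beta$ in $\sly_L$ by our standing assumption $\Im(\Id+\phi_*) \subset \ssly_L$. The complex conjugation $c$ reverses the cyclic order of boundary marked points, so $\evbt_j \circ \tilde\phi = (\Id_I \times \phi) \circ \evbt_{k-j}$ (interpreting indices modulo $k+1$), and $\evit_j \circ \tilde\phi = (\Id_I \times \phi) \circ \evit_{\sigma(j)}$ for some permutation $\sigma$ that is absorbed by the symmetrization $\frac{1}{l!}$.

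Next I would compute the sign by which $\tilde\phi$ changes the orientation of $\Mt_{k+1,l}(\beta)$. Since the factor $I$ is fixed pointwise by the action of $\phi$ and $\tilde\phi$ commutes with the projection to $I$, the $I$-direction contributes trivially to the sign. Moreover, $\s$ is a genuine spin structure, so the relative-spin correction vanishes. Thus the orientation change is precisely the one computed in the proof of Proposition~\ref{prop:sd} in \cite{solomon2016point}, and the remaining bookkeeping is identical: one applies $\phi^*$ to the defining formula for $\mgt_k$, uses the equivariance of the evaluation maps, the reality $\phi^*\gt = -\gt$ (paired with the $l$ factors of $\gt$ and summed via $\frac{1}{l!}$ over $\sigma$), and the orientation-reversal signs on the fiber of $\evbt_0^\beta$ to assemble the required identity.

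The hard part is keeping track of the orientation signs under $\tilde\phi$, and in particular verifying that the sign $(-1)^{1+k+s_\tau^{[1]}(\at)}$ obtained here is exactly the one in Proposition~\ref{prop:sd}, with no $I$-dependent correction. The key observation making this routine is that $\tilde\phi$ acts as the identity on $I$, so the orientation of $\Mt_{k+1,l}(\beta)$ splits (up to sign) as a product of the orientation of $I$ with the orientation of the fibers $\M_{k+1,l}(\beta; J_t)$, and only the latter is affected. Once this is verified, plugging into the defining formula for $\mgt_k$, relabeling marked points via $\tilde\phi$, and collecting the signs yields exactly the asserted equality. Since this is a word-for-word transcription of the argument in \cite[Proposition~4.13]{solomon2016point} (the pseudoisotopy analogue of \cite[Proposition~4.5]{solomon2016point}), one can simply refer to the cited argument rather than repeat it.
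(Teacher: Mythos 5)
Your proposal is correct and matches the paper's treatment: the paper gives no independent proof of this statement, simply quoting it as \cite[Proposition 4.13]{solomon2016point}, which is exactly where your sketch ends up. Your outline of the conjugation involution on $\Mt_{k+1,l}(\beta)$ and the sign bookkeeping is the standard argument behind that citation (modulo small imprecisions, e.g.\ the conjugated map has degree $-\phi_*\beta$, which equals $\beta$ only after passing to $\sly_L$), so deferring to the cited result is the same route the paper takes.
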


We now extend this self-duality to the extension of scalars $(\mCdh, \hat{\mt}^{\gt})$.

\begin{prop}\label{prop:sd_super_dt}
Suppose the relative spin structure $\s$ on $L$ is a spin structure, and $\gt \in \mathcal{I}_Q \mathfrak{D}$ is real. Then, for all $\at = (\at_1,\ldots,\at_k)$ with $\at_j \in \mCdh,$ we have
\[
\phi^* \hat{\mt}^{\gt}_k(\at_1,\ldots,\at_k) = (-1)^{1+k + s_\tau^{[1]}(\at)}\hat{\mt}^{\gt}_k( \phi^*\at_k,\ldots, \phi^*\at_1).
\]
\end{prop}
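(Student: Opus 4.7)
The plan is to follow the proof of Proposition~\ref{prop:sd_super} essentially verbatim, with Proposition~\ref{prop:sd_dt} playing the role that Proposition~\ref{prop:sd} played there. The point is that the $(S,F)$ extension of scalars is defined by the same formula for both $(C,\mg)$ and $(\mC,\mgt)$, so the sign bookkeeping under $\phi^*$ is identical.

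First I will take pure tensors $x_1 \otimes \at_1, \ldots, x_k \otimes \at_k$ with $x_i \in S$ homogeneous and $\at_i \in \mCd$, and expand $\phi^*\bigl(\hat{\mt}^{\gt}_k(x_1 \otimes \at_1, \ldots, x_k \otimes \at_k)\bigr)$ using the definition of $\hat{\mt}^{\gt}_k$ from Section~\ref{ssec:extension_of_scalar}. This gives the sign $(-1)^{\sum_i |x_i| + \sum_i |x_i| \sum_{j<i} (|\at_j|+1)}$ together with $\phi^*(x_1 \cdots x_k) \otimes \phi^*(\mgt_k(\at_1,\ldots,\at_k))$. Next I invoke Corollary~\ref{cor:involotion_sign} to write $\phi^*(x_1 \cdots x_k) = (-1)^{\sum_{i<j}|x_i||x_j|} \phi^*(x_k) \cdots \phi^*(x_1)$, and apply Proposition~\ref{prop:sd_dt} to $\phi^*(\mgt_k(\at_1,\ldots,\at_k))$ to get $(-1)^{1+k+s_\tau^{[1]}(\at)} \mgt_k(\phi^*\at_k,\ldots,\phi^*\at_1)$.

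On the other side, I expand $\hat{\mt}^{\gt}_k(\phi^*(x_k)\otimes\phi^*\at_k,\ldots,\phi^*(x_1)\otimes\phi^*\at_1)$ directly from the definition; this produces the sign $(-1)^{\sum_i |x_i| + \sum_i |x_i| \sum_{j>i}(|\at_j|+1)}$ times $\phi^*(x_k) \cdots \phi^*(x_1) \otimes \mgt_k(\phi^*\at_k,\ldots,\phi^*\at_1)$. Then I compare the two expressions and verify that the difference of the exponents collapses modulo $2$ to $1+k+s_\tau^{[1]}(\hat{\at})$ where $\hat{\at}_i = x_i \otimes \at_i$, using the identity
\[
\sum_{i<j}|x_i||x_j| + \sum_i |x_i|\Bigl(\sum_{j<i}(|\at_j|+1) + \sum_{j>i}(|\at_j|+1)\Bigr) \equiv \sum_{i<j}(|x_i|+|\at_i|+1)(|x_j|+|\at_j|+1) + s_\tau^{[1]}(\at) \pmod 2,
\]
which is precisely the algebraic manipulation carried out at the end of the proof of Proposition~\ref{prop:sd_super}.

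Since the $S$-component of the tensor and the sign contributions from the extension of scalars do not interact with whether the underlying cochains live on $L$ or on $I \times L$, there is no genuinely new content. The only place where the nature of $\mgt$ versus $\mg$ matters is in invoking self-duality of the underlying $A_\infty$ structure, which is exactly the content of Proposition~\ref{prop:sd_dt}. Consequently the argument works equally well for pseudoisotopies. There is no real obstacle; the only delicate part is the sign calculation, and since it is a literal transcription of the one already completed in Proposition~\ref{prop:sd_super}, it can be written by simply replacing $\mg$ with $\mgt$, $\alpha_j$ with $\at_j$, and $\gamma$ with $\gt$ throughout.
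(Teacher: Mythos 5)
Your proposal is correct and is exactly the route the paper takes: the paper's proof simply declares the argument identical to that of Proposition~\ref{prop:sd_super}, substituting Proposition~\ref{prop:sd_dt} for Proposition~\ref{prop:sd}, which is the same substitution and sign bookkeeping you describe.
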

\begin{proof}
The proof is identical to that of Proposition~\ref{prop:sd_super}, utilizing the properties of the involution on $S$ and substituting Proposition~\ref{prop:sd} with Proposition~\ref{prop:sd_dt}.
\end{proof}

\begin{cor}\label{cor:real_subcomp_dt}
   Let $\bt\in \left(\mCdh\right)^{-\phi^*}$ with $\nu_S\otimes \nu(\bt)>0$ and $|\bt|=1$. Then, $\hat{\mt}^{\gt, \bt}_1(\eta)\in \left(\mCdh\right)^{-\phi^*}$  for all  $\eta \in \left(\mCdh\right)^{-\phi^*}$. So, if $\bt$ is a bounding cochain for the residue algebra, then the real elements $\left(\mCdh_E\right)^{-\phi^*}$ form a sub-complex of $\left(\mCdh_E,\hat{\mt}^{\gt, \bt}_{1,E}\right)$.
\end{cor}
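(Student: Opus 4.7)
The plan is to mirror the one-line proof of Corollary~\ref{cor:real_subcomp}, now working in the pseudoisotopy algebra $(\mCdh, \hat{\mt}^{\gt})$ in place of $(\Cdh, \mgh)$. The entire statement is an instance of the abstract Corollary~\ref{cor:preserved_subspace}, once the two needed ingredients — $\phi^*$ self-duality and the existence of the $\bt$-deformation — are checked in this context.

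First, I would verify that $\phi^*:\mCdh\to\mCdh$ is an involution of degree zero: the action on $S$ defined by~\eqref{eq:phi*ext} is a sign, so it squares to the identity, and on forms on $I\times X$ restricted to $I\times L$ it squares to the identity because $\phi$ does. Second, since we are in a real setting $\gt$ is real, so Proposition~\ref{prop:sd_super_dt} gives
\[
\phi^* \hat{\mt}^{\gt}_k(\at_1,\ldots,\at_k) = (-1)^{1+k + s_\tau^{[1]}(\at)}\hat{\mt}^{\gt}_k(\phi^*\at_k,\ldots,\phi^*\at_1),
\]
which is exactly the statement that $(\mCdh,\hat{\mt}^{\gt})$ is $\phi^*$ self-dual in the sense of Definition~\ref{dfn:d_self_dual} (a direct reindexing identifies this with the condition that $\phi^*:(\mCdh,(\hat{\mt}^{\gt})^{op})\to(\mCdh,\hat{\mt}^{\gt})$ is a strict $A_\infty$ homomorphism). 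Third, the hypotheses $\nu_S\otimes\nu(\bt)>0$ and $|\bt|=1$, combined with the pseudo-completeness of $(\mCdh,\hat{\mt}^{\gt})$ guaranteed by Corollary~\ref{cor:fukaya (S,F) psedu complete}, ensure via Corollary~\ref{cor:mgh_expo_converge} (equivalently Lemma~\ref{lm:unique_b_deform}) that the deformed operator $\hat{\mt}^{\gt,\bt}_1:\mCdh\to\mCdh$ is well defined.

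With these ingredients, Corollary~\ref{cor:preserved_subspace} applied to the $\phi^*$ self-dual algebra $(\mCdh,\hat{\mt}^{\gt})$ with involution $\phi^*$ and element $\bt\in(\mCdh)_1^{-\phi^*}$ yields $\hat{\mt}^{\gt,\bt}_1(\eta)\in(\mCdh)^{-\phi^*}$ for every $\eta\in(\mCdh)^{-\phi^*}$, which is the first assertion.

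For the second assertion, I would additionally assume $\bt$ is a bounding cochain for the residue algebra $(\overline{\mCdh},\{\bar{\hat{\mt}}^{\gt}_k\}_{k\ge 0})$. Then Lemma~\ref{lm:bounding_chain_bar_indu_boundary_op} (taking $\invs=\Id$) implies that the induced operator $\hat{\mt}^{\gt,\bt}_{1,E}$ on $\mCdh_E$ squares to zero. Combined with the preservation property established above, the restriction of $\hat{\mt}^{\gt,\bt}_{1,E}$ to $(\mCdh_E)^{-\phi^*}$ is a well-defined boundary operator, so $(\mCdh_E)^{-\phi^*}$ is a subcomplex of $(\mCdh_E,\hat{\mt}^{\gt,\bt}_{1,E})$. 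No genuine obstacle arises: the only non-routine input is the pseudoisotopy version of self-duality, which is supplied by Proposition~\ref{prop:sd_super_dt}; the rest is a literal transcription of the proof of Corollary~\ref{cor:real_subcomp}.
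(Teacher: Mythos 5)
Your proposal is correct and follows essentially the same route as the paper, which simply applies Corollary~\ref{cor:preserved_subspace} to the algebra $(\mCdh, \hat{\mt}^{\gt})$, with Proposition~\ref{prop:sd_super_dt} supplying the self-duality; your additional verifications (involutivity of $\phi^*$, convergence of the deformed operator, and Lemma~\ref{lm:bounding_chain_bar_indu_boundary_op} for the subcomplex claim) are just the implicit ingredients spelled out.
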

\begin{proof}
        This follows from Corollary~\ref{cor:preserved_subspace} applied to the algebra $(\mCdh, \hat{\mt}^{\gt})$.
\end{proof}
We observe that the maps $j_i^*:\mC \to C$ preserve the $\phi^*$ invariant subspaces.
\section{Spectral sequence}\label{sec:spectral_sequence}
In this section, we introduce spectral sequences associated with the filtered differential graded modules derived from our $A_\infty$ algebras. These spectral sequences are crucial tools for analyzing the obstruction theory in Section~\ref{sec:bd_chains}, both for constructing bounding pairs (Theorem~\ref{thm:spectral_conv}) and for constructing gauge equivalences via pseudoisotopies (Theorem~\ref{thm:spectral_conv_psedu}). 

We recall several definitions from earlier sections. The modules $\Cdh$ over $\Rdh$ and $\mCdh$ over $\mRdh$ are defined in Section~\ref{sssec:sababa_property}, and the $\mRdh$ module $\mB$ is defined in Section~\ref{sssec:psedu_residu_obstruct}. Corollary~\ref{cor:bar_fukaya_deformed_boundry} endows $\overline{\Cdh}$, and $\overline{\mCdh}$ with a cochain complex structure over $\R$, if they are given bounding cochains.  For an $\mathcal{R}$ module $\mathcal{C}$, the notation $\cC_{\not \e}$ was introduced in Section~\ref{ssec:obs_theory}. Note that while the unit elements of both $\mB$ equal $1$, we will continue to denote them by $\e$ for consistency with earlier notation.

Throughout this section, we maintain the same assumptions as in Section~\ref{sssec:psedu_residu_obstruct}: the triple $(\mgt, \ll\,,\,\gg, 1)$ is a pseudoisotopy from $(\mg, \langle\,,\,\rangle, 1)$ to $(\mgp, \langle\,,\,\rangle, 1)$; $b, b^\prime \in \Cdh$ are bounding cochains for the $A_\infty$ algebras $(\hat{C}, \{\mgh_k\}_{k\ge 0}, \langle\;\,,\,\rangle_F, 1\otimes 1)$ and $(\hat{C}, \mh^{\gamma^\prime}, \langle\,,\,\rangle_F, 1\otimes 1)$, respectively; and $\int_L \hat{f}(\bar{b}) = \int_L \hat{f}(\bar{b}^\prime)$. We also choose $\bt=\bt_{0}\in \left(\mCdh\right)_1$ as in Lemma~\ref{lm:bt_exist_base_case1}, which ensures that $H^*\left(\overline{\mB},\bar{\mt}^{\gt,\bt}_1\right)$ is well-defined by Lemma~\ref{lm:bt_exist_base_case}.

Understanding the cohomology groups $H^*\left(\overline{\Cdh_{\not \e}},\bar{\mh}^{\gamma,b}_1\right)$ and $H^*\left(\overline{\mB},\bar{\mt}^{\gt,\bt}_1\right)$ computed by these spectral sequences allows us to determine whether obstructions to constructing bounding cochains or constructing pseudoisotopies vanish, particularly by examining specific components identified by the filtration (see Corollaries~\ref{cor:even_top_deg} and~\ref{cor:dt_even_top_deg}).

\begin{dfn}
    A spectral sequence (of cohomological type) is a collection of differential bigraded modules $\left\{E_r^{*, *}, d_r\right\}$, where $r=0,1, \ldots$ The differentials are of bidegree $(r,1-r)$ and for all $p, q, r,$ we have $E_{r+1}^{p, q} \simeq H^{p, q}\left(E_r^{*, *}, d_r\right)$.
\end{dfn}

We state the main theorems first and defer their proofs to Section~\ref{subsec:proofs_of_main_theorems}, after presenting the necessary supporting lemmas and general spectral sequence machinery.

\begin{thm}\label{thm:spectral_conv}
Let $b\in \Cdh$ be a degree $1$ element such that $\nu_S \otimes \nu(b)>0$. There exists a spectral sequence $(E_r^{p,q}\left(\overline{\Cdh_{\not \e}}\right),d_r)$ that converges to $H^*\left(\overline{\Cdh_{\not \e}},\bar{\mh}^{\gamma,b}_1\right)$ and satisfies
\[
E_1^{p,q} \simeq \begin{cases}
0,&\left(p\in 2\N \bigwedge q+2p=0\right),\\
S_{-p}\otimes H^{q+2p}(L,d),& \text{otherwise.}
\end{cases}
\]
If $\int_L \bar{b}=\overline{\int_L \hat{f}(b)}\ne 0$ and $n$ is even, then
\[
E_\infty^{p,q} =\begin{cases}
0,&\left( q+2p=0 \right)\bigvee \left(p\in 2\N+2\bigwedge q+2p=n\right),\\
E_1^{p,q},& \text{otherwise.}
\end{cases}
\]
Otherwise (if $\int_L \bar{b} = 0$ or $n$ is odd), the spectral sequence degenerates at the $E_1$ page, so
\[
E_\infty^{p,q}=E_1^{p,q}.
\]
\end{thm}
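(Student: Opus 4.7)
The plan is to realize the cohomology $H^*(\overline{\Cdh_{\not \e}}, \bar{\mh}^{\gamma,b}_1)$ as the abutment of a filtration by powers of the formal variable $s$ and then analyze the resulting spectral sequence page by page. First I will identify the residue complex explicitly. By Lemma~\ref{lm:cano_iso} we have $\overline{\Cdh} \simeq S \otimes A^*(L)$, and by Propositions~\ref{prop:sf_extension_arises_dga} and~\ref{prop:fukaya_infity_arises_dga} this residue algebra arises from the $(S,F)$-extension of the de~Rham dga $(A^*(L),d)$. Lemma~\ref{lm:energy_zero_degree_1} gives $\bar b = s\otimes\alpha$ with $\alpha\in A^n(L)$, and Corollary~\ref{cor:bar_fukaya_deformed_boundry} yields
\[
D := \bar{\mh}^{\gamma,b}_1(\eta) = d\eta - [\bar b,\eta].
\]
A direct computation using Lemma~\ref{lm:sign_multipication_order}, or equivalently Corollary~\ref{cor:center_of_hat_algebra}, shows that $[\bar b, s^k\otimes\omega]=0$ whenever $n$ is odd or $k$ is even, while $[\bar b, s^k\otimes\omega]=2\,s^{k+1}\otimes(\alpha\wedge\omega)$ when $n$ is even and $k$ is odd. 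Thus $D = d + D_1$ where $D_1 := -[\bar b,\cdot]$ either vanishes identically or shifts the $s$-power by exactly one.

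Next I introduce the decreasing filtration $F^p\overline{\Cdh}$ defined by the requirement that $\sum_k s^k\otimes\omega_k \in F^p$ iff $\omega_k = 0$ whenever $k(n-1)<p$. Since $d$ preserves the $s$-power and $D_1$ raises it by exactly one, $D$ preserves $F^p$ and $D_1$ shifts the filtration index by exactly $n-1$. The filtration descends to $\overline{\Cdh_{\not \e}}$, and since $A^*(L)$ is concentrated in degrees $\leq n$, each element of fixed total degree has only finitely many nonzero $s$-components; the filtration is therefore bounded in every total degree, so the spectral sequence converges to $H^*(\overline{\Cdh_{\not \e}}, D)$. The graded piece $F^p/F^{p+1}$ is nonzero only when $p = k(n-1)$, in which case $E_0^{p,q} \simeq \R s^k \otimes A^{q+2p}(L)$. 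The quotient by $Z(\ker d_S)\cdot \e = S^{\mathrm{even}}\cdot 1$ removes the line $\R s^k\otimes\R\cdot 1$ precisely when $s^k\in Z(S)=S^{\mathrm{even}}$, equivalently when $p = k(n-1)\in 2\N$ and $q+2p = 0$. Since $d_0 = d$, passing to de~Rham cohomology produces exactly the claimed $E_1^{p,q}$.

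The higher differentials are controlled by $D_1$. Because $D = d + D_1$ has only two pieces with filtration shifts $0$ and $n-1$, the standard zigzag construction forces $d_r = 0$ for $1\leq r\leq n-2$, while $d_{n-1}$ is induced by $D_1$. Moreover $D_1^2 = 0$ on the nose since the image of $D_1$ lies in the even-$k$ summands on which $D_1$ vanishes, so no higher corrections arise and $d_r = 0$ for all $r\geq n$; thus $E_\infty = E_n$ is obtained by taking $d_{n-1}$-cohomology of $E_1$. If $n$ is odd then $D_1\equiv 0$ and the sequence degenerates at $E_1$. If $n$ is even and $\int_L\alpha = 0$, then $\alpha$ is $d$-exact as a top form, so $[\alpha\wedge\omega]=0$ in $H^*(L)$ for every closed $\omega$, giving $d_{n-1}=0$ and degeneration at $E_1$. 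If $n$ is even and $\int_L\alpha\neq 0$, then $[\alpha]$ is a nonzero class in $H^n(L)$. For entries with $q+2p>0$ the product $\alpha\wedge\omega$ vanishes by dimension, so $d_{n-1}$ acts trivially there; for $\omega = 1\in H^0(L)$ at odd $k$ we have
\[
d_{n-1}([s^k\otimes 1]) = -2\,[s^{k+1}\otimes\alpha] \neq 0,
\]
so the form-degree-$0$ generators at odd multiples $k(n-1)$ of $n-1$ die, and, under the implicit hypothesis (in force in both settings of Theorems~\ref{thm1} and~\ref{thm2}) that $[\alpha]$ generates $H^n(L)$, their targets at the positive even multiples $p=(k+1)(n-1)\in 2\N+2$ also die as images of $d_{n-1}$. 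This produces exactly the claimed vanishing pattern on $E_\infty$.

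The main obstacle will be the careful parity and sign bookkeeping: verifying that the quotient by $Z(\ker d_S)\cdot \e$ interacts with the filtration as needed to reproduce the stated exception on the $E_1$ page, confirming that the zigzag identifications give no spurious higher differentials beyond $d_0$ and $d_{n-1}$, and tracking the assumption $H^n(L)\cong\R$ that is implicit when asserting the vanishing on $E_\infty$ in the non-degenerate case.
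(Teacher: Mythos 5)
Your proposal is correct and its skeleton coincides with the paper's proof: filter by powers of $s$ (the $\nu_S$ filtration), identify $E_1\simeq S\otimes H^*(L)$ using that the residue algebra arises from $(A^*(L),d)$ and $\bar b=s\otimes\alpha$ with $\alpha\in A^n(L)$, observe that the only possible differential is $d_{n-1}$, compute it as $-2[\bar b]\cup(\cdot)$, and decide degeneration versus the stated kill pattern according to the parity of $n$ and $\int_L\bar b$. Two routes differ in the details, and both of your choices are legitimate. First, you treat the quotient by $Z(S)\cdot 1$ directly at the level of $E_0/E_1$ of the quotient filtration, whereas the paper runs the spectral sequence for $\overline{\Cdh}$ and transfers to $\overline{\Cdh_{\not\e}}$ at the end via the exact sequence $0\to Z(S)\cdot 1\to\overline{\Cdh}\to\overline{\Cdh_{\not\e}}\to 0$ together with Proposition~\ref{prop:spec_exact_sequ} and Lemma~\ref{lm:E_infty_page_center}; your direct computation of the graded pieces of the quotient is equivalent and arguably more transparent. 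Second, for the vanishing of $d_r$ with $r\ge n$ the paper uses a pure degree count (the target would involve $H^{2p+q+r+1}(L)$ with $2p+q+r+1>n$), while you argue via the structure of the twisted differential.

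On that second point, tighten the wording: the fact that $D_1^2=0$ is \emph{not} by itself a reason for the absence of higher differentials (a first-quadrant double complex has commuting square-zero differentials and still supports arbitrarily high $d_r$). What does make your argument work is the stronger statement you give in the same sentence: $D_1$ annihilates the even-$s$-power columns, the image of $D_1$ lies in those columns, and $d$ preserves the $s$-power, so in any zigzag the correction term $x_1$ solving $dx_1=-D_1x_0$ can be taken in an even column and hence $D_1x_1=0$; consequently every class surviving to $E_n$ admits a representative with $Dx\in F^{\infty}$ and all $d_r$, $r\ge n$, vanish. Either spell this out or simply replace it by the paper's degree argument. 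Finally, the fact that $[\alpha]$ generates $H^n(L)$ is not an extra hypothesis: $L$ is a closed, connected, orientable $n$-manifold (orientability comes with the relative spin structure), so $H^n(L;\R)\cong\R$ and any top class with nonzero integral generates it; the paper uses exactly this when it inverts the cap product with $[L]$ in its computation of $d_{n-1}$.
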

\begin{thm}\label{thm:spectral_conv_psedu}
There exists a spectral sequence  $(E_r^{p,q}(\overline{\mB}),d_r)$ that converges to $H^*\left(\overline{\mB},\bar{\mt}^{\gt,\bt}_1\right)$. Its $E_r$ page, for $r\ge 1$, is related to the $E_r$ page of the spectral sequence for $(\overline{\Cdh_{\not \e}},\bar{\hat{\m}}^{\gamma,b}_1)$ (from Theorem~\ref{thm:spectral_conv}) by:
\[
E_r^{p,q}\left(\overline{\mB}\right)\cong \begin{cases}
E_r^{p,q-1}\left(\overline{\Cdh_{\not \e}}\right), & 2p+q\ne 0, \text{ or } p\in 2\N+1,\\
S_{-p}, & \text{otherwise.}
\end{cases}
.\]
In particular, if $\int_L \bar{b}=\overline{\int_L \hat{f}(b)}\ne 0$ and $n$ is even, then
\[
E_\infty^{p,q}\left(\overline{\mB}\right)\cong \begin{cases}
    0,&\left(q+2p=1\right)\bigvee\left( q+2p=n+1\bigwedge p\in 2\N+2\right),\\
    E_1^{p,q}\left(\overline{\mB}\right), & \text{otherwise.}
\end{cases}\]
Otherwise (if $\int_L \bar{b}=\overline{\int_L \hat{f}(b)}\ne 0$ or $n$ is odd), the spectral sequence degenerates at the $E_1$ page.
\end{thm}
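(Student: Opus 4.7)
The proof will follow the same strategy as Theorem~\ref{thm:spectral_conv}, adapted to the setting of $\overline{\mB}$. I will construct the spectral sequence using the filtration by powers of the generator $s \in S$: set $F^p \overline{\mB}$ to be the submodule spanned by $s^j \otimes \tilde\eta$ with $j \ge p$, where $\tilde\eta \in A^*(I\times L)$ satisfies $j_0^*\tilde\eta, j_1^*\tilde\eta \in \R \cdot 1$. By Corollary~\ref{cor:bar_fukaya_deformed_boundry}, the differential $\bar{\mt}^{\gt,\bt}_1 = d_{I\times L} - [\bar\bt,\cdot]$ is compatible with this filtration since $[\bar\bt,\cdot]$ strictly raises $s$-degree (the $s$-degree of $\bar\bt$ is at least $1$ by Lemma~\ref{lm:energy_zero_degree_1}). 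Completeness of $\overline{\mB}$ with respect to this filtration ensures convergence of the resulting spectral sequence.

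To compute the $E_1$ page, observe that the $E_0$ differential is just $d_{I\times L}$. I will use the short exact sequence of de Rham complexes
\begin{equation*}
0 \to A^*(I\times L, \{0,1\}\times L) \to A^*(I\times L)^{\mB} \xrightarrow{j_0^*\oplus j_1^*} \R \oplus \R \to 0,
\end{equation*}
where the quotient is concentrated in degree zero and records the (real) boundary values, while $A^*(I\times L)^{\mB}$ denotes forms whose restrictions to $\{0,1\}\times L$ lie in $\R\cdot 1$. Fiber integration $\int_I : A^*(I\times L,\{0,1\}\times L) \to A^{*-1}(L)$ is a quasi-isomorphism (by $H^k(I\times L,\partial I\times L) \simeq H^{k-1}(L)$), yielding the desired shift $q \mapsto q-1$ relative to the complex computing $E_1$ in Theorem~\ref{thm:spectral_conv}. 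The extra summand $S_{-p}$ in the ``otherwise'' case $2p+q=0$ with $p\in 2\N$ arises from the kernel of the connecting homomorphism on the quotient $\R\oplus \R$ concentrated in degree zero, which survives precisely in the parity range where $\e$ would have been quotiented out in $\overline{\Cdh_{\not\e}}$; when $p\in 2\N+1$ this potential summand is excluded by parity from contributing to $\overline{\mB}$.

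To transfer the comparison to higher pages $r \ge 1$, I will construct the integration map
\begin{equation*}
\int_I : \overline{\mB} \to \overline{\Cdh_{\not\e}}[-1]
\end{equation*}
at the level of filtered complexes and verify, using Lemma~\ref{lm:d_ll_gg} together with the explicit formula for $\bar\bt$ from Lemma~\ref{lm:bt_exist_base_case1}, that it is a chain map after passing to the quotient by the boundary-constant subcomplex accounting for the $S_{-p}$ piece. Iterating the induced morphism of spectral sequences page by page gives the isomorphism on $E_r$ in the generic range. The $E_\infty$ conclusion, and in particular the vanishing when $\int_L\bar b \ne 0$ and $n$ is even, then follows directly by transporting the corresponding statement of Theorem~\ref{thm:spectral_conv} through the shift $q\mapsto q-1$, with the surviving differential landing in bidegree $(q+2p=n+1,\, p\in 2\N+2)$.

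The main obstacle will be verifying the chain-level compatibility of $\int_I$ with the deformed operator $\bar{\mt}^{\gt,\bt}_1$ past the $E_1$ page, since $\bar\bt$ is genuinely a form on $I\times L$ rather than the pullback of a form on $L$. Concretely, the commutator $[\bar\bt,\cdot]$ does not commute with $\int_I$ on the nose, and I will need to analyze the correction terms (which involve the $dt$-component of $\bar\bt$) and show they contribute only to lower filtration, so that they vanish on each $E_r$. This analysis is where the explicit form $\bt_0 = tb - (1-t)b' + \hat g(1\otimes s\otimes \eta\wedge dt)$ used in Lemma~\ref{lm:bt_exist_base_case1} is essential, and where the parity constraints distinguishing the two cases in the statement enter most delicately.
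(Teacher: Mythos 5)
Your filtration, convergence argument, and $E_1$ computation are essentially the paper's (the paper packages the $E_1$ identification through the mapping cone of $(j_0^*,j_1^*)$ and the relative forms $A^*(I\times L,\partial I\times L)$, which is the same content as your long exact sequence; note only that the boundary values of elements of $\overline{\mB}$ lie in $Z(S)\cdot 1$, not $\R\cdot 1$, so the quotient term is present only for even $s$-degree, as you partly acknowledge). The genuine gap is the step you yourself flag: transferring the comparison to pages $r\ge 1$ via fiber integration $\int_I:\overline{\mB}\to\overline{\Cdh_{\not\e}}[-1]$. This map is not a chain map for the twisted differentials, and the failure terms are \emph{not} of deeper $\nu_S$-filtration: both $[\bar\bt,\cdot]$ and $[\bar b,\cdot]$ shift the $s$-filtration by exactly the degree of $s$, hence so does their mismatch. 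Concretely, with $\bar\bt = s\otimes\bigl(t\alpha-(1-t)\alpha'+\eta\wedge dt\bigr)$ as in Lemma~\ref{lm:bt_exist_base_case1} and $\tilde\mu=\pi^*\mu$ a form pulled back from $L$, one has $[\bar b,\int_I\tilde\mu]=0$ while $\int_I[\bar\bt,\tilde\mu]=\pm 2\,s\otimes\eta\wedge\mu$, which is generically nonzero and of the same filtration jump. So the correction terms survive on the associated graded; in particular on the only page with a possibly nontrivial differential, $r=n-1$, where $d_{n-1}$ is cup product with the class of $\bar\bt$ (resp.\ $\bar b$) by Lemma~\ref{lm:differential_formula_ell_page}, compatibility of $\int_I$ with $d_{n-1}$ would require $\int_I(\bar\bt\wedge\cdot)\equiv \bar b\wedge\int_I(\cdot)$ modulo deeper filtration, which fails because $\bar\bt$ is not pulled back from $L$. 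Your proposed fix ("corrections contribute only to lower filtration") is therefore false as stated, and without it you have no morphism of filtered complexes to feed into Theorem~\ref{thm:spectralquasi_iso_iso}.

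The paper avoids this exact difficulty by never integrating over $I$: it uses the pullbacks $j_0^*,j_1^*$, which \emph{are} filtered chain maps for the twisted differentials (they are algebra maps sending $\bar\bt$ to $\bar b$, $\bar b'$), forms the cone of $(j_0^*,j_1^*)$, and includes $\overline{\mB}_0$ into that cone as a filtered chain map inducing an isomorphism on $E_0$ (Lemma~\ref{lm:B_zero_cone_quasi_iso}, Corollaries~\ref{cor:spec_isom} and~\ref{cor:B_zero_Cdh_spectral_iso}); Theorem~\ref{thm:spectralquasi_iso_iso} then gives the comparison on all pages with no correction-term analysis. A second, smaller omission in your proposal: even granting the comparison for $\overline{\mB}_0$, passing from $\overline{\mB}_0$ to $\overline{\mB}$ at the exceptional bidegrees ($2p+q\in\{0,1\}$, $p$ even) requires showing the extra $S_{-p}\cdot 1$ classes neither support nor receive differentials on every page $r\ge 1$; this is the page-by-page induction of Lemma~\ref{lm:B_zero_almost_iso_B}, which your argument does not supply. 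To repair your route you would either have to adopt the cone/pullback mechanism or explicitly compute the connecting homomorphism of the sequence $0\to\overline{\mB}_0\to\overline{\mCdh}\to\overline{\Cdh}\oplus\overline{\Cdh}\to 0$ on each page, which is precisely what the paper's remark says it introduces the cone to avoid.
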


\subsection{Spectral sequence preliminaries}

We recall the standard construction and properties of spectral sequences associated with filtered complexes.

\subsubsection{Spectral sequence of filtered complex}
Let $\left(A, d, F^\bullet\right)$ be a filtered differential graded module. Following the proof of \cite[Theorem 2.6]{mccleary2001user}, for~$r \geq -1,$ let
\begin{align}
Z_r^{p,q} &= F^pA^{p+q} \cap d^{-1}(F^{p+r}A^{p+q+1}) \notag\\
B_r^{p,q} &= F^pA^{p+q}\cap d(F^{p-r}A^{p+q-1}) \label{eq:ssnot_repeat}\\
Z_\infty^{p,q} &= \ker d \cap F^pA^{p+q} \notag\\
B_\infty^{p,q} &= \im d \cap F^pA^{p+q}. \notag
\end{align}
For $r \geq 0,$ let
\begin{equation}\label{eq:ssd}
E_r^{p,q} = Z_r^{p,q}/(Z_{r-1}^{p+1,q-1} + B_{r-1}^{p,q})
\end{equation}
and let $d_r : E_r^{p,q} \to E_r^{p+r,q-r+1}$ be the map induced by $d.$

The following is a consequence of the proof of~\cite[Theorem 2.6]{mccleary2001user}.
\begin{thm}\label{thm:filtered_spec_conv}
The sequence of  differential bigraded modules $\left\{E_r^{*, *}, d_r\right\}$ above is a spectral sequence. Moreover,
$$
E_0^{p, q} = F^p A^{p+q} / F^{p+1} A^{p+q},
$$
and $d_0 : E_0^{p,q} \to E_0^{p,q+1}$ is the map induced by $d.$
Suppose further that the filtration is bounded, that is, for each dimension $n$, there are values $s=s(n)$ and $t=t(n)$ such that $$
\{0\} \subset F^s A^n \subset F^{s-1} A^n \subset \cdots \subset F^{t+1} A^n \subset F^t A^n=A^n.
$$
Then, the spectral sequence converges to $H(A, d)$, that is,
$$
E_{\infty}^{p, q} \cong F^p H^{p+q}(A, d) / F^{p+1} H^{p+q}(A, d).
$$
\end{thm}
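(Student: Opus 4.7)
The plan is to follow the classical treatment as in McCleary~\cite{mccleary2001user}, since the theorem is really an axiomatic statement about any filtered differential graded module whose differential respects the filtration (as implicit in the definition). I would break the argument into four steps: (i) verify the bidegrees and well-definedness of $d_r$, (ii) identify $E_0$ and $d_0$, (iii) prove $E_{r+1}\cong H(E_r,d_r)$, and (iv) establish convergence under the boundedness hypothesis.

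First, I would note that from the very definition~\eqref{eq:ssnot_repeat} of $Z_r^{p,q}$, an element $x\in Z_r^{p,q}$ satisfies $dx\in F^{p+r}A^{p+q+1}\cap d^{-1}(0)\subset Z_{r-1}^{p+r+1,q-r}$ (using $d^2=0$), and in particular $dx \in Z_r^{p+r,q-r+1}$, so $d$ induces a map of bidegree $(r,1-r)$ between the $Z$-modules. A short diagram chase shows that $d$ sends $Z_{r-1}^{p+1,q-1}+B_{r-1}^{p,q}$ into $Z_{r-1}^{p+r+1,q-r}+B_{r-1}^{p+r,q-r+1}$, so $d_r$ descends to a well-defined map on $E_r^{p,q}$ with the claimed bidegree, and $d_r^2=0$ is immediate from $d^2=0$.

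For step (ii), the identification of $E_0^{p,q}$ uses the fact that $d$ preserves the filtration, so $d^{-1}(F^pA^{p+q+1})\supset F^pA^{p+q}$ and hence $Z_0^{p,q}=F^pA^{p+q}$, $Z_{-1}^{p+1,q-1}=F^{p+1}A^{p+q}$, and $B_{-1}^{p,q}\subset F^{p+1}A^{p+q}$. Thus $E_0^{p,q}=F^pA^{p+q}/F^{p+1}A^{p+q}$, and $d_0$ is manifestly induced by $d$. Step (iii), the isomorphism $E_{r+1}^{p,q}\cong H^{p,q}(E_r^{*,*},d_r)$, is the main calculation, but it is entirely formal: one identifies $\ker(d_r)$ on $E_r^{p,q}$ with the image of $Z_{r+1}^{p,q}$ and $\operatorname{im}(d_r)$ into $E_r^{p,q}$ with the image of $B_r^{p+1,q}$ (after combining with lower-$r$ cycles and boundaries), then quotients to match the definition of $E_{r+1}^{p,q}$. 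This is the bookkeeping step where the exact formulas of~\eqref{eq:ssd} are designed to make the identification work, and it is the one non-trivial computation; I expect this to be the main obstacle in writing the proof cleanly, but it is purely algebraic and follows McCleary verbatim.

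Finally, for step (iv), under the boundedness hypothesis, for each fixed $(p,q)$ there exists $r_0$ so large that $F^{p+r_0}A^{p+q+1}=0$ and $F^{p-r_0}A^{p+q-1}=A^{p+q-1}$. Then $Z_r^{p,q}=Z_\infty^{p,q}=\ker d\cap F^pA^{p+q}$ and $B_r^{p,q}=B_\infty^{p,q}=\operatorname{im} d\cap F^pA^{p+q}$ for $r\ge r_0$, so $E_\infty^{p,q}=Z_\infty^{p,q}/(Z_\infty^{p+1,q-1}+B_\infty^{p,q})$. To compare with the filtration on cohomology defined by $F^pH^n(A,d)=\operatorname{im}(H^n(F^pA)\to H^n(A))$, observe that $F^pH^{p+q}$ is represented by cycles in $F^pA^{p+q}$, giving $F^pH^{p+q}\cong Z_\infty^{p,q}/B_\infty^{p,q}$, and the subgroup $F^{p+1}H^{p+q}$ corresponds to $(Z_\infty^{p+1,q-1}+B_\infty^{p,q})/B_\infty^{p,q}$. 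Taking the quotient yields $F^pH^{p+q}/F^{p+1}H^{p+q}\cong E_\infty^{p,q}$, completing the proof. Since this entire argument is classical and the paper cites~\cite[Theorem~2.6]{mccleary2001user} as the source, the only real work is to verify that the definitions~\eqref{eq:ssnot_repeat} and~\eqref{eq:ssd} match McCleary's conventions, which is a routine check.
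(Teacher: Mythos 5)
Your proposal is correct and is essentially the paper's own argument: the paper proves this theorem simply by invoking the proof of \cite[Theorem 2.6]{mccleary2001user}, whose standard steps (well-definedness of $d_r$, identification of $E_0$ and $d_0$, the isomorphism $E_{r+1}\cong H(E_r,d_r)$, and stabilization plus comparison with the filtration on $H(A,d)$ under boundedness) are exactly what you outline. The only blemish is the intermediate inclusion ``$F^{p+r}A^{p+q+1}\cap d^{-1}(0)\subset Z_{r-1}^{p+r+1,q-r}$,'' which is off by one filtration step, but your subsequent statements ($dx\in Z_r^{p+r,q-r+1}$ and the diagram chase on the denominator) are the correct ones, so the sketch stands.
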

Let $(A_1, d_1, F_1^\bullet), (A_2, d_2, F_2^\bullet)$ be filtered differential graded modules. A map $f:(A_1, d_1, F_1^\bullet)\to (A_2, d_2, F_2^\bullet)$ is a morphism of filtered differential graded modules if $f$ is a cochain map and $f(F_1^p A_1) \subseteq F_2^p A_2$. 
Let $\{(E_r^{*,*}, d_r)\}$ and $\{(\bar{E}_r^{*,*}, \bar{d}_r)\}$ be two spectral sequences. A morphism of spectral sequences is a collection of maps of differential bigraded modules
\[
f_r: E_r^{*,*} \longrightarrow \bar{E}_r^{*,*}, \quad r = 0, 1, 2, \ldots,
\]
such that for each $r \geq 0$, the map $f_{r+1}:E_{r+1}^{*,*}\cong H^{*,*}(E_r{*,*},d_r)\to \bar E_{r+1}^{*,*}\cong H^{*,*}(\bar{E}^{*,*}_r,\bar d_r)$ is the map induced by $f_r$ on cohomology.

\begin{thm}[{\cite[Theorem 3.5]{mccleary2001user}}]\label{thm:spectralquasi_iso_iso}
A morphism of filtered differential graded modules
$$
\phi:(A_1, d_1, F_1^\bullet)\to (A_2, d_2, F_2^\bullet),
$$
determines a morphism of the associated spectral sequences. If, for some $n$, $\phi_n: E_n^{*,*}(A_1) \rightarrow E_n^{*,*}(A_2)$ is an isomorphism of bigraded modules, then for $n\le \ell\le \infty$ the bigraded morphism $\phi_\ell: E_\ell^{*,*}(A_1) \rightarrow E_\ell^{*,*}(A_2)$ is an isomorphism. If the filtrations are bounded, then $\phi$ induces an isomorphism $H(\phi): H(A_a, d_1) \rightarrow H(A_2, d_2)$.
\end{thm}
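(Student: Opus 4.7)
The plan is to construct the induced morphism of spectral sequences directly from the explicit description given by~\eqref{eq:ssnot_repeat} and~\eqref{eq:ssd}, then propagate the isomorphism from page $n$ to all later pages and to cohomology.

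First, I would observe that since $\phi$ is a cochain map that preserves the filtration, it carries each of the subspaces defined in~\eqref{eq:ssnot_repeat} to its counterpart for $A_2$: concretely, $\phi(Z_r^{p,q}(A_1)) \subseteq Z_r^{p,q}(A_2)$ because $\phi$ commutes with $d$ and preserves $F^{p+r}$, and $\phi(B_r^{p,q}(A_1)) \subseteq B_r^{p,q}(A_2)$ for the same reasons. Passing to the quotient~\eqref{eq:ssd} yields a well-defined map $\phi_r : E_r^{p,q}(A_1) \to E_r^{p,q}(A_2)$ of bigraded modules. Since $d_r$ on each side is induced by $d$, and $\phi$ commutes with $d$, the collection $\{\phi_r\}$ commutes with the differentials $d_r$. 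Moreover, because $E_{r+1}$ is the cohomology of $(E_r, d_r)$ and $\phi_{r+1}$ is the map induced by $\phi_r$ on this cohomology (this can be checked at the level of representatives in $Z_{r+1}^{p,q}$), the family $\{\phi_r\}$ is indeed a morphism of spectral sequences.

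Next, I would argue by induction on $\ell \geq n$ that $\phi_\ell$ is an isomorphism. The base case $\ell = n$ is the hypothesis. For the inductive step, since $\phi_\ell$ is an isomorphism of cochain complexes $(E_\ell^{*,*}(A_1), d_\ell) \to (E_\ell^{*,*}(A_2), d_\ell)$ (being a bijection that commutes with the differentials), it induces an isomorphism on cohomology, which is exactly $\phi_{\ell+1}$. To pass to $E_\infty$, one uses the standard identification
\[
E_\infty^{p,q} \;\cong\; \bigl(\textstyle\bigcap_r Z_r^{p,q}\bigr) \big/ \bigl(\textstyle\bigcup_r B_r^{p,q} + Z_\infty^{p+1,q-1}\bigr),
\]
and deduces that $\phi_\infty$ is an isomorphism from the fact that each $\phi_r$ is (in the bounded filtration case the intersections and unions stabilize, so no limit issue arises).

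Finally, under the boundedness hypothesis, Theorem~\ref{thm:filtered_spec_conv} gives $E_\infty^{p,q}(A_i) \cong F^p H^{p+q}(A_i)/F^{p+1}H^{p+q}(A_i)$, and $\phi_\infty$ is identified with the map induced by $H(\phi)$ on these associated graded pieces. The last step is therefore the standard filtration argument: fix degree $m$ and use the finite filtration $\{0\} = F^{s+1}H^m \subseteq F^s H^m \subseteq \cdots \subseteq F^t H^m = H^m$; an induction on $p$ decreasing from $s$ to $t$ using the five lemma applied to the short exact sequences $0 \to F^{p+1}H^m \to F^p H^m \to F^pH^m/F^{p+1}H^m \to 0$ shows that $H(\phi)$ is an isomorphism on each $F^p H^m$, and in particular on $H^m$ itself. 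The main subtlety (and the only place care is needed) is verifying that $\phi_{r+1}$ genuinely agrees with the cohomology map induced by $\phi_r$; this is a direct check on representatives which I would record as a lemma before the induction. Since this theorem is classical (\cite[Theorem 3.5]{mccleary2001user}), I would likely cite rather than reproduce the details.
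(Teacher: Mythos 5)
Your proposal is correct and is just the standard textbook argument for this classical result, which the paper does not prove itself but simply cites from \cite[Theorem 3.5]{mccleary2001user}; your construction of $\phi_r$ from the filtered chain map, the induction on pages, and the bounded-filtration/five-lemma step to pass from $E_\infty$ to $H(\phi)$ are all as expected. Your handling of the only delicate point --- that $\phi_\infty$ being an isomorphism requires the stabilization coming from boundedness, and that $\phi_{r+1}$ must be checked on representatives to agree with the map induced by $\phi_r$ --- is appropriate, and citing the reference rather than reproducing the details is exactly what the paper does.
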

\begin{prop}\label{prop:spec_exact_sequ}
     Let $(A_i, d_i, F_i^\bullet)$ be filtered differential graded modules for $i=1, 2, 3$. Suppose there exists a short exact sequence of filtered differential graded modules:
    \[
    0 \to (A_1, d_1, F_1^\bullet) \xrightarrow{f} (A_2, d_2, F_2^\bullet) \xrightarrow{h} (A_3, d_3, F_3^\bullet) \to 0.
    \]
    Assume $f,$ and $h$ is strictly filtered, i.e., $f^{-1}(F_2^p A_2) = F_1^p A_1$ for all $p$.
    Then we obtain a short exact sequence of bigraded differential modules
    \[
    0 \to (E_0^{*,*}(A_1),d_{1,0}) \xrightarrow{f_0} (E_0^{*,*}(A_2),d_{2,0}) \xrightarrow{h_0} (E_0^{*,*}(A_3),d_{3,0}) \to 0.
    \]
    In particular the assumption hold if if the filtration on $A_1$ is the induced filtration from $A_2$ and the filtration on $A_3$ is the quotient filtration.
    
    If in addition for all $r \geq 1$ and all $p, q$, the map induced by $h$ on the cycles $Z_r^{p,q}(A_2) \to Z_r^{p,q}(A_3)$ is surjective, then the induced maps $f_r$ and $h_r$ on the $E_r$-pages of the spectral sequences form a short exact sequence of bigraded differential modules:
    \[
    0 \to (E_r^{*,*}(A_1),d_{1r}) \xrightarrow{f_r} (E_r^{*,*}(A_2),d_{2r}) \xrightarrow{h_r} (E_r^{*,*}(A_3),d_{3r}) \to 0.
    \]
\end{prop}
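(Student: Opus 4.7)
The plan is to prove the $E_0$ statement directly from the filtered module hypotheses, then bootstrap to higher pages by an inductive argument using the long exact sequence in cohomology that comes from a short exact sequence of chain complexes.

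For the $E_0$ statement, I would observe that $E_0^{p,q}(A_i)=F_i^p A_i^{p+q}/F_i^{p+1}A_i^{p+q}$, and the induced maps $f_0,h_0$ are well defined because $f,h$ preserve the filtrations. The strict filtration of $f$ ($f^{-1}(F_2^p A_2)=F_1^p A_1$) immediately gives injectivity of $f_0$: if $f(x)\in F_2^{p+1}A_2$ for $x\in F_1^p A_1$, then $x\in f^{-1}(F_2^{p+1}A_2)=F_1^{p+1}A_1$. Surjectivity of $h_0$ and exactness in the middle follow by a short diagram chase using the strict filtration of $h$ (which in turn holds because we can take $A_3$ with the quotient filtration, as the final sentence of the proposition suggests). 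This establishes the first short exact sequence, and since $d_0$ is induced by $d$, it is a short exact sequence of cochain complexes over $\R$.

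For higher pages, I would proceed by induction on $r$. Given a short exact sequence of cochain complexes $0\to (E_r(A_1),d_{1,r})\to (E_r(A_2),d_{2,r})\to (E_r(A_3),d_{3,r})\to 0$, the associated long exact sequence in cohomology together with the identification $E_{r+1}\cong H(E_r,d_r)$ (Theorem~\ref{thm:filtered_spec_conv}) yields
\begin{equation*}
\cdots\to E_{r+1}^{p,q}(A_1)\xrightarrow{f_{r+1}} E_{r+1}^{p,q}(A_2)\xrightarrow{h_{r+1}} E_{r+1}^{p,q}(A_3)\xrightarrow{\delta} E_{r+1}^{p+1,q}(A_1)\to\cdots
\end{equation*}
This collapses to a short exact sequence at the $E_{r+1}$ page precisely when the connecting map $\delta$ vanishes, equivalently when $h_{r+1}$ is surjective in every bidegree. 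To establish this, I would take a class $[z]\in E_{r+1}^{p,q}(A_3)$, represented by some element of $E_r^{p,q}(A_3)$ that is a $d_r$-cocycle. A short chase using the defining formulas~\eqref{eq:ssnot_repeat}-\eqref{eq:ssd} and the fact that $dz$ can be modified by elements of $Z_{r-1}^{p+1,q-1}(A_3)$ shows that $[z]$ can in fact be represented by an actual element $\zeta\in Z_{r+1}^{p,q}(A_3)$. By the hypothesis on cycles, $\zeta=h(\tilde\zeta)$ for some $\tilde\zeta\in Z_{r+1}^{p,q}(A_2)$, and the class of $\tilde\zeta$ in $E_{r+1}^{p,q}(A_2)$ is a preimage of $[z]$ under $h_{r+1}$.

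The main technical point, and the one that requires the most care, is the claim that every $d_r$-cocycle in $E_r^{p,q}(A_3)$ can be represented by an element of $Z_{r+1}^{p,q}(A_3)$ (rather than merely $Z_r^{p,q}(A_3)$), which is what allows the cycle-lifting hypothesis to be applied. This is a standard diagram chase with the definitions of $Z_r$ and $B_r$, but the interplay between the filtration degree of $z$ itself, the filtration degree of $dz$, and the ambiguity modulo $Z_{r-1}^{p+1,q-1}+B_{r-1}^{p,q}$ must be tracked carefully. Once this is in hand, the rest of the induction, including well-definedness of $f_r$ and $h_r$ at each stage, follows automatically from the LES machinery.
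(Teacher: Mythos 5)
Your proposal is correct and follows essentially the same route as the paper: strictness of the maps yields the short exact sequence on $E_0$, and the higher pages are handled by induction, applying the long exact sequence in $d_{r}$-cohomology to the exact sequence of the previous page and using the cycle-lifting hypothesis to make the right-hand map surjective, which kills the connecting homomorphism. The one simplification you miss is that, with the paper's definition $E_r^{p,q}=Z_r^{p,q}/(Z_{r-1}^{p+1,q-1}+B_{r-1}^{p,q})$, surjectivity of $h_r$ on $E_r$ follows immediately from surjectivity on $Z_r^{p,q}$, so the ``main technical point'' you flag (re-representing $d_r$-cocycles by elements of $Z_{r+1}^{p,q}(A_3)$) is unnecessary; also note that the connecting map has bidegree $(r,1-r)$, landing in $E_{r+1}^{p+r,q-r+1}(A_1)$ rather than $E_{r+1}^{p+1,q}(A_1)$.
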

\begin{proof}
By the assumption that $f$ and $h$ are strictly filtered and~\cite[\href{https://stacks.math.columbia.edu/tag/0124}{Tag 0124},\href{https://stacks.math.columbia.edu/tag/0127}{Tag 0127}]{stacks-project}, we obtain a short exact sequence
\begin{equation*}
0 \to E_{0}^{p,q}(A_1) \xrightarrow{f_0^{p,q}} E_{0}^{p,q}(A_2) \xrightarrow{h_0^{p,q}} E_{0}^{p,q}(A_3) \to 0.
\end{equation*}
This is the case $r = 0$ of the proposition.

For $r \geq 1,$ if the map induced by $h$ on the cycles $Z_{r}^{p,q}(A_2) \to Z_{r}^{p,q}(A_3)$ is surjective, then so is the induced map $h_{r} : E_{r}^{p,q}(A_2) \to E_{r}^{p,q}(A_3)$. By induction we have an exact sequence,
\begin{equation*}
0 \to (E_{r-1}^{p,q}(A_1),d_{1r}) \xrightarrow{f_{r-1}} (E_{r-1}^{p,q}(A_2),d_{2r}) \xrightarrow{h_{r-1}} (E_{r-1}^{p,q}(A_3),d_{3r}) \to 0.
\end{equation*}
Using the long exact sequence in cohomology with respect to $d_{r-1}$  together with surjectivity of $h_r = H(h_{r-1})$ gives case $r$ of the proposition.
\end{proof}

\subsubsection{Twisted cohomology}\label{sssec:twistedcoh}
Let $A$ be a unital dga. Define
\[
[\cdot,\cdot] : A \times A \to A,
\]
by
$
[a,x] = ax -(-1)^{|a||x|} xa$.
For $a \in A_1,$  define 
\[
d_{twist} : A \to A[1]
\]
by
$d_{twist}(x)=dx-[a,x]$. 
\begin{lm}\label{lm:twist_d_defiend}
If $da-\frac{[a,a]}{2}\in Z(A)$, then $d_{twist}$ is a boundary map. 
\end{lm}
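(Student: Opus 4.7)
The plan is a direct computation of $d_{twist}^2$, using only the graded Leibniz rule for $d$ and the identity $|a|=1$. First I would expand
\[
d_{twist}^2(x) \;=\; d_{twist}\bigl(dx-[a,x]\bigr) \;=\; -d[a,x] \;-\; [a,dx] \;+\; [a,[a,x]],
\]
using $d^2=0$ on the first term. The second step is to establish the graded Leibniz rule for the bracket in the special form relevant here, namely
\[
d[a,x] \;=\; [da,x] \;-\; [a,dx],
\]
which is a routine unpacking of $d(ax)=(da)x-a\,dx$ and $d(xa)=(dx)a+(-1)^{|x|}x\,da$ together with the definition of $[\cdot,\cdot]$ (the sign $-[a,dx]$ on the right is what one obtains because $|a|=1$ makes $|dx|$ differ in parity from $|x|$).

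The third step is to evaluate the iterated bracket $[a,[a,x]]$. Since $|a|=1$, a direct expansion of $a(ax-(-1)^{|x|}xa) - (-1)^{|x|+1}(ax-(-1)^{|x|}xa)a$ collapses the cross terms and leaves $a^2x - xa^2 = [a^2,x]$. Because $|a|=1$, we also have $[a,a]=2a^2$, and hence
\[
[a,[a,x]] \;=\; \bigl[\tfrac{[a,a]}{2},x\bigr].
\]
Substituting the last two displays into the first gives
\[
d_{twist}^2(x) \;=\; -[da,x] + \bigl[\tfrac{[a,a]}{2},x\bigr] \;=\; -\bigl[da-\tfrac{[a,a]}{2},\,x\bigr],
\]
and the hypothesis $da-\frac{[a,a]}{2}\in Z(A)$ forces this bracket to vanish for every $x$.

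The only place where care is genuinely needed is bookkeeping of graded signs, in particular verifying the Leibniz rule $d[a,x]=[da,x]-[a,dx]$ with the correct sign (the $-$ rather than $+$ comes precisely from $|a|$ being odd) and the collapse $[a,[a,x]]=[a^2,x]$, which is a graded Jacobi computation specialized to an odd element. Apart from these sign checks the argument is entirely formal.
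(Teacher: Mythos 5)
Your proof is correct and follows essentially the same route as the paper: a direct computation of $d_{twist}^2(x)=-d[a,x]-[a,dx]+[a,[a,x]]$ followed by identifying the result as $-\bigl[da-\tfrac{[a,a]}{2},x\bigr]$, which vanishes by the centrality hypothesis. The only cosmetic difference is that you collapse the iterated bracket by direct expansion using $|a|=1$ (giving $[a,[a,x]]=[a^2,x]$ and $[a,a]=2a^2$), whereas the paper obtains the same identity from the graded Jacobi identity; both sign checks are fine.
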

    \begin{proof}
          For any $x\in A$ the graded Jacobi identity implies
    \[(-1)^{|x|}[a,[a,x]] - [a,[x,a]]+(-1)^{|x|}[x,[a,a]]=0.\]
    By graded anti-symmetry this can be rewritten
    \[(-1)^{|x|}[a,[a,x]]+(-1)^{|x|}[a,[a,x]]+(-1)^{|x|}[x,[a,a]]=0.\]
    The above equation implies that 
    \[[a,[a,x]]=-[x,\frac{[a,a]}{2}].\]
    We compute
    \begin{align*}
        d_{twist}\circ d_{twist}(x)&=d_{twist}\left( dx-[a,x]\right)\\
        &=-[a,dx]-d[a,x]+[a,[a,x]]\\
        &=-(da) x+x (da)+[a,[a,x]]\\
        &=-[da,x]+[a,[a,x]]\\
        &=-[da,x]+[\frac{[a,a]}{2},x]\\
        &=0.
    \end{align*}
    \end{proof}
    \begin{lm}
        If $da-\frac{[a,a]}{2}\in Z(A)$, then $(A,\wedge, d_{twist})$ is a unital dga. 
    \end{lm}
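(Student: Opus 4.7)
The plan is to verify the three dga axioms for $(A, \cdot, d_{twist})$: the graded Leibniz rule, $d_{twist}^2 = 0$, and $d_{twist}(1) = 0$. The associativity of multiplication and the unit property $1 \cdot x = x \cdot 1 = x$ are inherited from the dga structure of $A$, so they require no verification.

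The crux is the graded Leibniz rule for $d_{twist}$. Since $d$ is already a derivation, this reduces to the identity
\[
[a, xy] = [a, x]\, y + (-1)^{|x|} x\, [a, y], \qquad |a|=1,
\]
which is a direct calculation from the definition of the graded commutator. First I would expand $[a,x]y + (-1)^{|x|} x[a,y]$, observe that the two middle terms of the form $(-1)^{|x|} x a y$ cancel, and match the result with $a(xy) - (-1)^{|xy|}(xy)a = [a, xy]$. This shows
\[
d_{twist}(xy) = d_{twist}(x)\, y + (-1)^{|x|} x\, d_{twist}(y).
\]

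The relation $d_{twist}^2 = 0$ under the hypothesis $da - \tfrac{[a,a]}{2} \in Z(A)$ is exactly the content of the previous lemma (Lemma~\ref{lm:twist_d_defiend}), so I would simply cite it. For the unit, I would compute $d_{twist}(1) = d(1) - [a,1] = 0 - (a - a) = 0$, using that $d$ annihilates the unit of the original dga and that $1$ commutes with every element.

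The proof is essentially routine with no real obstacle; the only point worth stating with care is the graded Leibniz computation, because signs have to come out right. I expect the entire argument to fit in a few lines.
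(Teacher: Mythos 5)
Your proposal is correct and follows essentially the same route as the paper: the only substantive check is the graded Leibniz rule, which the paper establishes by exactly the expansion you describe (the cross terms $(-1)^{|x|}x\wedge a\wedge y$ cancel, yielding $[a,xy]=[a,x]y+(-1)^{|x|}x[a,y]$ for $|a|=1$), with $d_{twist}^2=0$ coming from the preceding lemma and the unit handled trivially.
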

    \begin{proof}
        Let $\a,\beta \in A$, then by definition
        \begin{multline*}
            d_{twist}(\a\wedge \beta )= d\left(\a\wedge \beta \right) -[a,\a\wedge \beta ]\\=d\alpha \wedge \beta + (-1)^{|\a|} \a \wedge d\beta - \left( a \wedge (\alpha \wedge \beta) + (-1)^{|\a|+|\beta|+1} (\a \wedge \beta) \wedge a \right).
        \end{multline*}
        We procced with the computations
        \begin{align*}
            &=d\alpha\wedge\beta +(-1)^{|\alpha|}\alpha\wedge d\beta-\left([a,\alpha]\wedge\beta +(-1)^{|\alpha|}\alpha \wedge \left(a\wedge \beta\right)+ (-1)^{|\alpha|+|\beta|+1}\alpha\wedge(\beta\wedge a)\right)\\
            &=d\alpha\wedge\beta +(-1)^{|\alpha|}\alpha\wedge d\beta-\left([a,\alpha]\wedge\beta +(-1)^{|\alpha|}\alpha \wedge [a,\beta]\right)\\
            &=d_{twist}(\alpha)\wedge \beta +(-1)^{|\alpha|}\alpha \wedge d_{twist}(\beta)
        \end{align*}
        
    \end{proof}
Let $(S,\e)$ be a unital graded $\k$ algebra with $S_0=\k\cdot \e$ and $S_i=\{0\}$ for $i>0.$ Observe that any element in $s\in S$ can be written $s=\sum_{i=n}^{0}s_i$ where $n$ is a negative integer and $s_i\in S_{i}.$ Define a filtration $\nu_S$ on $S$ by
\[\nu_S(\sum_{i=0}^{-\infty}s_i)=-\min_{s_i\ne 0}i.\]
So, if $S\not\cong \k$ then $\nu_S$ defines a non trivial filtration. For a $\k$ vector space $\mathcal{N}$, also denote by $\nu_S$ the filtration on $S\otimes \mathcal{N}$ defined by tensoring $\nu_S$ with the trivial filtration on $\mathcal{N}.$
Let $A$ be a differential graded algebra over $\k$. Let $\hat{A}$ be the $S$ extension of scalars of $A$. Endow $\hat{A}$ with the filtration $\nu_S$. We give $\hat{A}$ the obvious filtered differential graded  structure over $\k$.  Observe that $Z(\hat{A})=Z(S)\otimes_\k Z(A)$.

Let $b\in \hat{A}_1$ be an element such that $\nu_S(b)>0$ and $db-\frac{[b,b]}{2}\in Z(\hat{A})$. By Lemma~\ref{lm:twist_d_defiend} the differential $d_{twist}$ defined by $b$ is a boundary map. 
Assume that $A$ is bounded from above by $m\in \N$. So, $\hat{A}_1=\bigoplus_{i=0}^{m-1}S_{-i}\otimes A_{1+i}$, and we can write $b=\sum_{i=0}^{m-1}b_i$ with $b_i\in S_{-i}\otimes A_{1+i}.$ The assumption $\nu_S(b)>0$ implies that $b_0=0$.

\begin{thm}\label{thm:hat_spec_seq}
There exists a spectral sequence that converges to $H^*(\hat{A},d_{twist})$ with $E_{1}^{p,q}\cong S_{-p}\otimes H^{2p+q}(A)$. Moreover, each element $\alpha \in E_r^{p,q}$ can be represented by an element $x\in Z_r^{p,q}$ such that \[x=\sum^{\infty}_{i=0}x_i,\qquad x_i\in S_{-p-i}\otimes A_{2p+i+q},\] 
\[
d\sum_{i=0}^{r-1}x_i-\sum_{\substack{1\le j\le m-1\\ 0\le i < r-j}}[b_j,x_i]=0.
\]
Furthermore,
\[
d_r\left(\a\right)=d_r\left([x]\right)=\left[-\sum_{0 \le i \le r-1}[b_{r-i},x_i]+F^{p+r+1} \hat{A}\right],
\]
where $b_{r-i}=0$ if $r-i<1$ or $r-i>m-1.$
\end{thm}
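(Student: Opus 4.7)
The plan is to apply Theorem~\ref{thm:filtered_spec_conv} to the filtered cochain complex $(\hat{A}, d_{twist}, F^\bullet)$ with the decreasing filtration $F^p\hat{A} := \bigoplus_{i \geq p} S_{-i} \otimes A$ induced by $\nu_S$. First I would verify that $d_{twist}$ preserves this filtration: the $d$-part acts only on the $A$-factor and so preserves $S$-grading, while each $[b_j, \cdot]$ with $b_j \in S_{-j} \otimes A_{1+j}$ strictly raises $\nu_S$ by $j \geq 1$. Because $A$ is bounded ($A_i = 0$ for $i \geq m$) and $S$ is concentrated in non-positive degrees, the filtration on each cohomological degree is bounded: if $\alpha \in \hat{A}$ has total degree $n$, then $\alpha \in F^0\hat{A}$ automatically, and the degree-$n$ part of $F^{m-n}\hat{A}$ is zero. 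Theorem~\ref{thm:filtered_spec_conv} then provides a spectral sequence converging to $H^*(\hat{A}, d_{twist})$.

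Next I would compute the first two pages. The quotient $E_0^{p,q}$ identifies canonically with $S_{-p} \otimes A_{2p+q}$, and since every $[b_j, \cdot]$ strictly raises filtration, its contribution to $d_{twist}$ vanishes modulo $F^{p+1}$; hence $d_0 = \Id \otimes d$. Taking cohomology yields $E_1^{p,q} \cong S_{-p} \otimes H^{2p+q}(A, d)$, as asserted.

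For the higher pages, expand any $x \in F^p\hat{A}$ of cohomological degree $p+q$ into its homogeneous components $x = \sum_{i \geq 0} x_i$ with $x_i \in S_{-p-i} \otimes A_{2p+q+i}$ (a finite sum since $A$ is bounded). A direct calculation yields that the filtration-$(p+k)$ component of $d_{twist}(x)$ equals $dx_k - \sum_{i+j = k,\, j \geq 1,\, i \geq 0}[b_j, x_i]$, and since distinct filtration levels lie in distinct $S$-gradings, the condition $x \in Z_r^{p,q}$ is equivalent to the simultaneous vanishing of this component for each $k = 0, 1, \ldots, r-1$. The single equation displayed in the theorem is precisely the sum of these $r$ decoupled equalities. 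To derive the $d_r$ formula, replace $x$ by its truncation $x' = \sum_{i=0}^{r-1} x_i$; since $x - x' \in F^{p+r}\hat{A} \subset Z_{r-1}^{p+1,q-1}$, both $x$ and $x'$ represent the same class in $E_r^{p,q}$. The filtration-$(p+r)$ component of $d_{twist}(x')$ has no $dx_r$ term and reduces to $-\sum_{i=0}^{r-1}[b_{r-i}, x_i]$, matching the stated formula with the convention $b_{r-i} = 0$ outside $1 \leq r - i \leq m-1$.

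The main bookkeeping challenge is tracking the simultaneous interaction of cohomological degree, $S$-grading, and the bracket-induced filtration shifts, together with the verification that the decomposition by $S$-grading uniquely separates the level-$k$ equations. Once this decoupling is in hand, the formulas follow essentially tautologically from $d_{twist} = d - [b, \cdot]$, and the well-definedness of the induced differential on each $E_r^{p,q}$ is the standard feature of the filtered spectral sequence supplied by Theorem~\ref{thm:filtered_spec_conv}.
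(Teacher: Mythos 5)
Your proposal is correct and takes essentially the same route as the paper: the spectral sequence is the one induced by the $\nu_S$-filtration (the paper phrases this via an exact couple and McCleary's results, identified with the filtered-complex sequence of Theorem~\ref{thm:filtered_spec_conv}), $d_0=\Id\otimes d$ because $b_0=0$ gives $E_1^{p,q}\cong S_{-p}\otimes H^{2p+q}(A)$, and the representative equation and the $d_r$ formula follow by decomposing $d_{twist}(x)$ into its $S$-graded components, with your truncation of $x$ to $\sum_{i=0}^{r-1}x_i$ being a clean way of seeing why the $dx_r$ term drops out (the paper handles this implicitly through the exact-couple description). The only slip is an off-by-one in the boundedness check --- it is $F^{m-n+1}\hat{A}_n$, not $F^{m-n}\hat{A}_n$, that vanishes --- which does not affect the argument.
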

\begin{proof}
    Observe the long exact sequence
    % https://q.uiver.app/#q=WzAsOCxbMSwwLCJIXntwK3F9XFxsZWZ0KEZee3ArMX0gXFxoYXR7TX1cXHJpZ2h0KSAiXSxbMywwLCJIXntwK3F9XFxsZWZ0KEZecCBcXGhhdHtNfVxccmlnaHQpICJdLFs1LDAsIkhee3ArcX1cXGxlZnQoRl5wIFxcaGF0e019IC8gRl57cCsxfSBcXGhhdHtNfVxccmlnaHQpIl0sWzAsMCwiXFxsZG90cyJdLFsxLDEsIkhee3ArcSsxfVxcbGVmdChGXntwKzF9IFxcaGF0e019XFxyaWdodCkiXSxbMywxLCJIXntwK3ErMX1cXGxlZnQoRl57cH0gXFxoYXR7TX1cXHJpZ2h0KSJdLFs1LDEsIkhee3ArcSsxfVxcbGVmdChGXnAgXFxoYXR7TX0gLyBGXntwKzF9IFxcaGF0e019XFxyaWdodCkiXSxbNiwxLCJcXGxkb3RzIl0sWzAsMSwiaSJdLFsxLDIsImoiXSxbMywwLCJrIl0sWzIsNCwiayIsMl0sWzQsNSwiaSJdLFs1LDYsImoiXSxbNiw3LCJrIl1d
\[\begin{tikzcd}[column sep=small]
	\ldots & {H^{p+q}\left(F^{p+1} \hat{A}\right) } && {H^{p+q}\left(F^p \hat{A}\right) } && {H^{p+q}\left(F^p \hat{A} / F^{p+1} \hat{A}\right)} \\
	& {H^{p+q+1}\left(F^{p+1} \hat{A}\right)} && {H^{p+q+1}\left(F^{p} \hat{A}\right)} && {H^{p+q+1}\left(F^p \hat{A} / F^{p+1} \hat{A}\right)} & \ldots
	\arrow["k", from=1-1, to=1-2]
	\arrow["i", from=1-2, to=1-4]
	\arrow["j", from=1-4, to=1-6]
	\arrow["k"', from=1-6, to=2-2]
	\arrow["i", from=2-2, to=2-4]
	\arrow["j", from=2-4, to=2-6]
	\arrow["k", from=2-6, to=2-7]
\end{tikzcd}\]
where $k$ is the connecting homomorphism. 
Define the bigraded modules 
\[E_{1}^{p, q}=H^{p+q}\left(F^p \hat{A}/F^{p+1} \hat{A}\right),\]
and $D^{p, q}=H^{p+q}\left(F^p \hat{A}\right)$. This gives an exact couple from the long exact sequences:
\[\begin{tikzcd}
	{D^{p+1,q-1}} && {D^{p,q}} \\
	& {D^{p+1,q}} \\
	&& {E_1^{p,q}}
	\arrow["i", from=1-1, to=1-3]
	\arrow["j", from=1-3, to=3-3]
	\arrow["k", from=3-3, to=2-2]
\end{tikzcd}.\]
By~\cite[Theorem 2.6]{mccleary2001user} This yields a spectral sequence $(E_{r}^{*,*},d_r)$. By ~\cite[Proposition 2.11]{mccleary2001user} the spectral sequence is the same as Theorem 5. By Theorem~\ref{thm:filtered_spec_conv} $d_0$ is induced form $d_{twist}$. We observe that $d_0=d$ as $\nu_S(b)>0$. So, $E_{1}^{p,q}\cong S_{-p}\otimes H^{2p+q}(A),$ and the spectral sequence converges. 

By~\cite[Proposition 2.9]{mccleary2001user} 
\[E_r^{p, *}\cong \frac{k^{-1}\left(\operatorname{im} i^{r-1}: D^{p+r, *} \longrightarrow D^{p+1, *}\right)}{j\left(\operatorname{ker} i^{r-1}: D^{p, *} \longrightarrow D^{p-r+1, *}\right)}\]
with differential induced on the representation by $j \circ k.$

Suppose $z$ is in $H^{p+q}\left(F^p \hat{A} / F^{p+1} \hat{A}\right)=E_1^{p, q}$. Then  $z=\left[x+F^{p+1} \hat{A}\right]$ with $x$ in $F^p \hat{A}$ and $d_{twist}(x)$ in $F^{p+1} \hat{A}$. The boundary homomorphism $k$ in the long exact sequence that is part of the exact couple can be described explicitly by

$$
k\left(\left[x+F^{p+1} \hat{A}\right]\right)=[d_{twist}(x)] \quad \text { in } \quad H^{p+q+1}\left(F^{p+1} \hat{A}\right) .
$$

Thus $\left[x+F^{p+1} \hat{A}\right]$ is in $k^{-1}\left(\operatorname{im} i^{r-1}\right)$ if and only if $[d_{twist}(x)]$ is in im $i^{r-1}$ if and only if $d_{twist}(x)$ is in $F^{p+r} \hat{A}_{p+q+1}$. For $x\in F^{p} \hat{A}$ a representative of an element in $k^{-1}\left(\operatorname{im} i^{r-1}\right)$ we decompose as 
\[x=\sum^{\infty}_{i=0}x_i,\qquad x_i\in S_{-p-i}\otimes A_{2p+i+q}.\]  
We have
\[
d_{twist}(x)=d\sum_{i=0}^{\infty}x_i-[\sum_{j=1}^{m-1}b_j,\sum_{i=0}^{\infty}x_i].
\]
So, the condition that $d_{twist}(x) \in F^{p+r} \hat{A}_{p+q+1}$ is equivalent to
\[
d\sum_{i=0}^{r-1}x_i-\sum_{\substack{1\le j\le m-1\\ 0\le i < r-j}}[b_j,x_i]=0.
\]
It follows that
\[
d_{twist}(x) \in d x_r-\sum_{0 \le i \le r-1}[b_{r-i},x_i] + F^{p+r+1}\hat A.
\]
Thus,
\[
j\circ k \left(\left[x+F^{p+1} \hat{A}\right] \right)=\left[-\sum_{0 \le i \le r-1}[b_{r-i},x_i]+F^{p+r+1} \hat{A}\right],
\]
where $b_{r-i}=0$ if $r-i<1$ or $r-i>m-1.$ 
\end{proof}
\begin{dfn}
    A spectral sequence of algebras over $R$ is a spectral sequence, $\left\{E_r^{*, *}, d_r\right\}$ together with algebra structures $\psi_r: E_r \otimes_R E_r \rightarrow E_r$ for each $r$, such that $\psi_{r+1}$ can be written as the composite

$$
\begin{aligned}
\psi_{r+1}: E_{r+1} \otimes_R E_{r+1} \xrightarrow{\cong} & H\left(E_r\right) \otimes_R H\left(E_r\right) \\
& \xrightarrow[p]{\longrightarrow} H\left(E_r \otimes E_r\right) \xrightarrow[H\left(\psi_r\right)]{ } H\left(E_r\right) \underset{\cong}{\longrightarrow} E_{r+1},
\end{aligned}
$$
where the homomorphism $p$ is given by $p([u] \otimes[v])=[u \otimes v]$.
\end{dfn}
\begin{thm}\label{thm:filtered_algebra_spectral_sequence}\cite[Theorem 2.14]{mccleary2001user}
     Suppose $(A, d, F)$ is a filtered differential graded algebra over $R$ with product $\psi: A \otimes_R A \rightarrow A$. Suppose that the product satisfies the condition for all $p, q$,  
$$
\psi\left(F^p A \otimes_R F^q A\right) \subset F^{p+q} A.
$$
Then, the spectral sequence associated with $(A, d, F)$ is a spectral sequence of algebras. If the filtration on $A$ is bounded, then the spectral sequence converges to $H(A, d)$ as an algebra.
\end{thm}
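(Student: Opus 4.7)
The plan is to construct an $R$-bilinear product on each page $E_r^{*,*}$ of the spectral sequence from \eqref{eq:ssd} induced by $\psi$, verify the derivation property of $d_r$, and then check the compatibility square required by the definition of a spectral sequence of algebras. First, I will show that for representatives $x\in Z_r^{p,q}$ and $y\in Z_r^{p',q'}$, the product $\psi(x,y)$ lies in $Z_r^{p+p',q+q'}$. The filtration condition $\psi(x,y)\in F^{p+p'}A$ is immediate from the hypothesis on $\psi$. For the differential condition, apply the graded Leibniz rule $d\psi(x,y)=\psi(dx,y)+(-1)^{|x|}\psi(x,dy)$; since $dx\in F^{p+r}A$ and $dy\in F^{p'+r}A$, both summands lie in $F^{p+p'+r}A$ by multiplicativity of the filtration.

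Next, I will check that the product descends to the quotient defining $E_r^{p+p',q+q'}$. If $x$ is replaced by an element of $Z_{r-1}^{p+1,q-1}$, then directly $\psi(x,y)\in F^{p+p'+1}A$ with $d\psi(x,y)\in F^{p+p'+r}A$ by the same Leibniz computation, so $\psi(x,y)\in Z_{r-1}^{p+p'+1,q+q'-1}$. If instead $x=dw$ with $w\in F^{p-r+1}A$, rewrite
\[
\psi(x,y)=d\psi(w,y)-(-1)^{|w|}\psi(w,dy).
\]
Since $\psi(w,y)\in F^{p+p'-r+1}A$ with $d\psi(w,y)\in F^{p+p'}A$, the first term lies in $B_{r-1}^{p+p',q+q'}$. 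For the second, $\psi(w,dy)\in F^{p+p'+1}A$ and its differential equals $\psi(dw,dy)=\psi(x,dy)\in F^{p+p'+r}A$, placing it in $Z_{r-1}^{p+p'+1,q+q'-1}$. By symmetry, the same argument handles perturbations of $y$. Hence the product is well-defined on $E_r^{*,*}$.

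Third, since $d_r$ is induced by $d$ on representatives and $d$ is a derivation for $\psi$, the operator $d_r$ satisfies the graded Leibniz rule on $E_r^{*,*}$; therefore the product descends to a product $\psi_{r+1}$ on $H^{*,*}(E_r^{*,*},d_r)\cong E_{r+1}^{*,*}$. The compatibility square in the definition of a spectral sequence of algebras then follows by tracking representatives: given classes in $E_{r+1}$, lift each to a representative in $Z_{r+1}^{p,q}\subset Z_r^{p,q}$, multiply using $\psi_r$, and note that the result represents both $\psi_{r+1}$ of the original classes and the class obtained via the composite with the isomorphism $E_{r+1}\cong H(E_r,d_r)$. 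The main obstacle I expect is precisely this bookkeeping for the quotient relations, especially the $B_{r-1}$ case where the Leibniz identity must be combined with two filtration estimates to land in the correct subgroup; the cases $r=0,1$ are easier and serve as a sanity check.

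Finally, for the convergence statement, define $F^pH^n(A,d):=\mathrm{im}\bigl(H^n(F^pA)\to H^n(A)\bigr)$, which is a multiplicative filtration on $H(A,d)$ by the hypothesis on $\psi$. Theorem~\ref{thm:filtered_spec_conv} identifies $E_\infty^{p,q}\cong F^pH^{p+q}/F^{p+1}H^{p+q}$, and the product constructed on $E_\infty$ as the limit of the products on the pages agrees, under this identification, with the product on the associated graded of $(H(A,d),\psi)$. Thus the spectral sequence converges to $H(A,d)$ as an algebra.
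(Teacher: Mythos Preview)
The paper does not prove this theorem; it is stated with a citation to \cite[Theorem 2.14]{mccleary2001user} and no proof is given in the paper. Your argument is the standard one and is essentially correct, so there is nothing to compare against here.
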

\begin{lm}\label{lm:differential_formula_ell_page}
    Assume $b\in Z_{1}^{\ell,1-\ell}$, then $E_{\ell}=E_1$. Moreover, the differential $d_{\ell}:E_{\ell}^{p,q}\to E_{\ell}^{p+\ell,q+1-\ell}$ is given by
\[d_\ell (\a) = -[[b], \a],\]
where $[b]$ denotes the class represented by $b$ in $E_\ell^{\ell, 1-\ell} = E_1^{\ell, 1-\ell}$, and the commutator $[\cdot, \cdot]$ is taken with respect to the algebra structure on the $E_1$ page (induced by the product $\psi$ on $A$).
\end{lm}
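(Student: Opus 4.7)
The plan is to combine the explicit differential formula of Theorem~\ref{thm:hat_spec_seq} with the multiplicative structure from Theorem~\ref{thm:filtered_algebra_spectral_sequence}. The hypothesis $b\in Z_1^{\ell,1-\ell}\subset F^\ell \hat A$ immediately gives $b_j=0$ for $j<\ell$, so $b\equiv b_\ell\pmod{F^{\ell+1}\hat A}$. This is the only input we need from the hypothesis; the ``$Z_1$'' part (namely $db\in F^{\ell+1}$) is automatic since $b$ is a $d_{twist}$-cocycle on $\hat A$ up to a center term that does not change filtration.

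First I would establish $E_\ell=E_1$. By Theorem~\ref{thm:hat_spec_seq}, for any $\alpha=[x]\in E_r^{p,q}$ with $x=\sum x_i$ we have
\[
d_r(\alpha)=\Bigl[-\sum_{0\le i\le r-1}[b_{r-i},x_i]+F^{p+r+1}\hat A\Bigr].
\]
If $r<\ell$, then each index $r-i$ appearing satisfies $r-i\le r<\ell$, hence $b_{r-i}=0$ and $d_r=0$. Consequently $E_{r+1}=E_r$ for all $1\le r<\ell$, proving $E_\ell=E_1$ as bigraded modules. In particular, the class of $b$ in $E_\ell^{\ell,1-\ell}$ is the same as its class $[b]$ in $E_1^{\ell,1-\ell}$, and it is represented by $b_\ell$.

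Next I would compute $d_\ell$. Applying the same formula with $r=\ell$, the only index $i\in\{0,\dots,\ell-1\}$ for which $b_{\ell-i}\ne 0$ is $i=0$, since $b_j=0$ for $j<\ell$. Thus
\[
d_\ell(\alpha)=\bigl[-[b_\ell,x_0]+F^{p+\ell+1}\hat A\bigr].
\]
Finally I would match this with $-[[b],\alpha]$ computed in the $E_1$-page algebra structure. Theorem~\ref{thm:filtered_algebra_spectral_sequence} applies because the product on $\hat A$ respects the filtration $\nu_S$, and the induced product on $E_1^{*,*}$ is represented on cosets by multiplication in $\hat A$. Hence $[b]\cdot \alpha$ in $E_1^{p+\ell,q+1-\ell}$ is represented by $bx\equiv b_\ell x_0\pmod{F^{p+\ell+1}\hat A}$, and similarly $\alpha\cdot [b]$ by $x_0 b_\ell$. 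Using $|[b]|=1$, the graded commutator gives
\[
[[b],\alpha]=\bigl[b_\ell x_0-(-1)^{|x_0|}x_0 b_\ell+F^{p+\ell+1}\hat A\bigr]=\bigl[[b_\ell,x_0]+F^{p+\ell+1}\hat A\bigr],
\]
whence $d_\ell(\alpha)=-[[b],\alpha]$.

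The only non-routine step is the identification of the $E_1$-page product with the residual multiplication used above, and the verification that the coset representatives behave as expected modulo $F^{p+\ell+1}$; both follow directly from the compatibility of $\psi$ with the filtration and from $b\equiv b_\ell$, $x\equiv x_0$ modulo higher filtration. No new estimates or convergence arguments are needed, so I expect the write-up to be short once these identifications are spelled out.
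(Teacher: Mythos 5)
Your proof is correct and follows essentially the same route as the paper's: both arguments read off $d_r=0$ for $r<\ell$ from the explicit formula in Theorem~\ref{thm:hat_spec_seq} (since $b_j=0$ for $j<\ell$), conclude $E_\ell=E_1$, and then identify $d_\ell(\alpha)=\bigl[-[b_\ell,x_0]\bigr]$ with $-[[b],\alpha]$ via the multiplicative structure of Theorem~\ref{thm:filtered_algebra_spectral_sequence}. The only difference is cosmetic (you keep the full representative $x=\sum_i x_i$ while the paper takes a $d$-closed representative concentrated in $S_{-p}\otimes A_{2p+q}$), and your aside that the $Z_1$-condition on $b$ is ``automatic'' is not needed and not quite justified, but it plays no role in the argument.
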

\begin{proof}
    We observe that Theorem~\ref{thm:hat_spec_seq} implies that if $b \in F^{\ell} \hat{A}_1$, then $d_i = 0$ for $1 \leq i \leq \ell - 1$. It follows that an element $\alpha \in E_{\ell}^{p,q}$ can be represented by $x \in S_{-p} \otimes A_{2p + q}$ such that $dx = 0$. Therefore, 
    \begin{align*} 
    d_{\ell}(\alpha) = d_{\ell}([x]) &= \left[dx - [b,x]\right] \\ &= \left[-\psi(b,x) + (-1)^{|x|}\psi(x,b)\right] \\ &= -\psi_1([b],[x]) + (-1)^{|x|}\psi_1([x],[b]), 
    \end{align*}
    where the last equality follows from Theorem~\ref{thm:filtered_algebra_spectral_sequence}.
\end{proof}
\subsubsection{Mapping cones}
\begin{dfn}\label{dfn:abstract_mapping_cone}
Let $(C, d_C)$ and $(D, d_D)$ be two cochain complexes. Let $f: C \to D$ be a cochain map.
The \textbf{mapping cone} of $f$, denoted by $\operatorname{Cone}(f)$ is a cochain complex defined as follows. For each integer $n$, the module in degree $n$ is the direct sum:
    \[
    \operatorname{Cone}(f)_n = D_{n-1} \oplus C_{n} .
    \]
The differential $d_{Cone}: \operatorname{Cone}(f)_n \to \operatorname{Cone}(f)_{n+1}$ is defined for an element $(d, c) \in D_{n-1}\oplus C_{n} $ by,
    \[
    d_{Cone}(d, c) = (-d_D^{n-1}(d) + f^{n}(c), d_C^{n}(c)).
    \]
The mapping cone fits into a short exact sequence of cochain complexes:
\begin{equation}\label{eq:sescone}
0 \longrightarrow D[-1] \xrightarrow{h} \operatorname{Cone}(f) \xrightarrow{p} C \longrightarrow 0,
\end{equation}
where $h(d) = (d, 0)$ for $d \in D[-1]_n$, $p(d, c) = c$ for $(d, c) \in \operatorname{Cone}(f)_n = D_{n-1} \oplus C_n$.
\end{dfn}
For filtered cochain complexs $(A,d,F)$ and $(B,d^\prime,F^\prime)$ we define the filtration on $A\oplus B$ by $F_{A\oplus B}^p(A\oplus B)_n=F^p A\oplus F^{\prime p} B$. We also define $F^p A[n]_m=F^{p}A_{n+m}$. For $f:A\to B$
define the filtration   on 
$\operatorname{Cone}\left(f\right)$ by
\[F^p\operatorname{Cone}\left(f\right)=F^{\prime p}B[-1]\oplus F^{p}
A.\]
Observe that 
\begin{equation}\label{eq:sssh}
E_r^{p,q}(A[n]) = E_r^{p,q+n}(A)
\end{equation}
because $Z_r^{p,q}(A[n])=Z_r^{p,q+n}(A)$ and $B_r^{p,q}(A[n])=B_r^{p,q+n}(A)$.
\begin{lm}
\label{lm:cone_sequ_exact_generalized}
Let $f: C \to D$ be a morphism of filtered chain complexes. Let $f_r: E_r^{p,q}(C) \to E_r^{p,q}(D)$ be the map induced by $f$ on the $r$-th pages of the associated spectral sequences. If $f_r$ is injective for all $r \geq 1$, then there exists a short exact sequence of complexes:

\[\begin{tikzcd}[column sep=scriptsize]
	0 & {E_{r}^{p,q}(C)} && {E_{r}^{p,q}(D)} && {E_{r}^{p,q}(\operatorname{Cone}(f)[1])} & 0.
	\arrow[from=1-1, to=1-2]
	\arrow["{f_r}", from=1-2, to=1-4]
	\arrow["{h[1]_r}", from=1-4, to=1-6]
	\arrow[from=1-6, to=1-7]
\end{tikzcd}\]
\end{lm}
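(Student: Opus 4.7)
The plan is to derive the desired short exact sequence from the tautological short exact sequence of filtered cochain complexes
\[
0 \to D \xrightarrow{h[1]} \operatorname{Cone}(f)[1] \xrightarrow{p[1]} C[1] \to 0
\]
obtained by shifting~\eqref{eq:sescone} by one. With the cone filtration given explicitly by $F^p \operatorname{Cone}(f)[1]_n = F^p D_n \oplus F^p C_{n+1}$, the map $h[1]$ is the strict inclusion as the first summand and $p[1]$ is the strict projection onto the second summand. Proposition~\ref{prop:spec_exact_sequ} therefore yields a short exact sequence of bigraded differential modules at the $E_0$ stage, and the snake lemma produces a long exact sequence on the $E_1$ pages.

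The crux of the proof is to identify the connecting homomorphism of this long exact sequence with $f_1$, up to sign, using the identification $E_r^{p,q}(C[1]) = E_r^{p,q+1}(C)$ from~\eqref{eq:sssh}. Given a representative cycle $c \in F^p C[1]_{p+q}$ for a class in $E_1^{p,q}(C[1])$, the standard snake-lemma computation lifts $c$ to $(0,c) \in F^p \operatorname{Cone}(f)[1]_{p+q}$ and applies the shifted cone differential, producing $(-f(c),-d_C c)$. Since $d_C c \in F^{p+1} C$, modulo $F^{p+1} \operatorname{Cone}(f)[1]$ this equals $h[1](-f(c))$. Hence the connecting map coincides with $-f_1$ after the identification. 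Because $f_1$ is injective by hypothesis, the connecting map is injective on the left and, one step later in the long exact sequence, has injective image on the right, so the long exact sequence breaks into the desired short exact sequence at level $r=1$.

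For the inductive step, suppose the short exact sequence of complexes
\[
0 \to E_r^{p,q}(C) \xrightarrow{f_r} E_r^{p,q}(D) \xrightarrow{h[1]_r} E_r^{p,q}(\operatorname{Cone}(f)[1]) \to 0
\]
holds at page $r$. The maps $f_r$ and $h[1]_r$ commute with $d_r$, so this is genuinely a short exact sequence of complexes, and the snake lemma yields a long exact sequence on $E_{r+1} = H(E_r,d_r)$. By the standard naturality of spectral sequences, the induced maps on cohomology are precisely $f_{r+1}$ and $h[1]_{r+1}$. The connecting homomorphism $\partial_r : E_{r+1}^{p,q}(\operatorname{Cone}(f)[1]) \to E_{r+1}^{p+r,q-r+1}(C)$ factors exactness with the subsequent $f_{r+1}$, which is injective by hypothesis, forcing $\partial_r = 0$. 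The long exact sequence therefore breaks into the required short exact sequence at level $r+1$, completing the induction.

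The only step that is not purely formal is the identification of the $E_1$ connecting map with $-f_1$; the remainder is routine diagram chasing driven by the injectivity of each $f_r$.
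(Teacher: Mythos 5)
Your proof is correct and follows essentially the same route as the paper: apply Proposition~\ref{prop:spec_exact_sequ} to the (strictly filtered) cone sequence, identify the $E_1$ connecting homomorphism with $\pm f_1$ via the shift identification~\eqref{eq:sssh}, and then induct, using injectivity of $f_r$ to force the connecting maps of each long exact sequence to vanish. The only differences are cosmetic: you shift the cone sequence before applying the proposition and spell out the sign in the connecting-map computation, whereas the paper shifts afterwards and leaves that step as a short computation.
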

\begin{proof}
By definition of the filtration on $Cone(f),$ the morphisms in the exact sequence~\eqref{eq:sescone} are strictly filtered.
By Proposition~\ref{prop:spec_exact_sequ},
we obtain a long exact sequence
    % https://q.uiver.app/#q=WzAsNixbMiwwLCJFX3sxfV57cCxxfVxcbGVmdChcXG92ZXJsaW5le1xcQ2RofVxcb3BsdXNcXG92ZXJsaW5le1xcQ2RofVstMV1dXFxyaWdodCkiXSxbMCwwLCJcXGxkb3RzIl0sWzEsMCwiRV97MX1ee3AscS0xfVxcbGVmdChcXG92ZXJsaW5le1xcbUNkaH1cXHJpZ2h0KSJdLFszLDAsIkVfezF9XntwLHF9XFxsZWZ0KFxcb3BlcmF0b3JuYW1le0NvbmV9XFxsZWZ0KChqXipfMCwgal8xXiopXFxyaWdodClcXHJpZ2h0KSJdLFs0LDAsIkVfezF9XntwLHF9XFxsZWZ0KFxcb3ZlcmxpbmV7XFxtQ2RofVxccmlnaHQpIl0sWzUsMCwiXFxsZG90cyJdLFswLDMsImhfKiJdLFsyLDAsIlxccGFydGlhbCJdLFszLDQsInBfKiJdLFsxLDJdLFs0LDVdXQ==&macro_url=https%3A%2F%2Fgist.githubusercontent.com%2Feladkosloff%2F1bed2afb01df49328bc5753a24c8ccd0%2Fraw%2Fbcb3cc566c628562646c94eea676fc32a27c56d1%2FLatexcommandElad
\[
\begin{tikzcd}[column sep=scriptsize]
	\ldots & {E_{1}^{p,q}\left(C\right)} & {E_{1}^{p,q+1}\left(D[-1]\right)} & {E_{1}^{p,q+1}\left(\operatorname{Cone}\left(f\right)\right)} & {E_{1}^{p,q+1}\left(C\right)} & \ldots,
	\arrow[from=1-1, to=1-2]
	\arrow["{\partial}", from=1-2, to=1-3]
	\arrow["{h_1}", from=1-3, to=1-4]
	\arrow["{p_1}", from=1-4, to=1-5]
	\arrow[from=1-5, to=1-6]
\end{tikzcd}
\]
By definition of $Cone(f)$ and a short computation, the boundary homomorphism $\partial$ coincides with the induced map $f_1 : E_1^{p,q}(C) \to E_1^{p,q}(D) = E_1^{p,q+1}(D[-1])$ given by Theorem~\ref{thm:spectralquasi_iso_iso}.
Since $f_1$ is injective by assumption, keeping in mind equation~\eqref{eq:sssh}, we obtain a short exact sequence 
% https://q.uiver.app/#q=WzAsNSxbMiwwLCJFX3sxfV57cCxxfVxcbGVmdChcXG92ZXJsaW5le1xcQ2RofVxcb3BsdXNcXG92ZXJsaW5le1xcQ2RofVstMV1dXFxyaWdodCkiXSxbMSwwLCJFX3sxfV57cCxxfVxcbGVmdChcXG92ZXJsaW5le1xcbUNkaH1cXHJpZ2h0KVstMV0iXSxbMywwLCJFX3sxfV57cCxxfVxcbGVmdChcXG9wZXJhdG9ybmFtZXtDb25lfVxcbGVmdCgoal4qXzAsIGpfMV4qKVxccmlnaHQpXFxyaWdodCkiXSxbMCwwLCIwIl0sWzQsMCwiMCJdLFswLDIsImhfMSJdLFsxLDAsIihqXzBeKixqXzFeKilfMSJdLFszLDFdLFsyLDRdXQ==&macro_url=https%3A%2F%2Fgist.githubusercontent.com%2Feladkosloff%2F1bed2afb01df49328bc5753a24c8ccd0%2Fraw%2Fbcb3cc566c628562646c94eea676fc32a27c56d1%2FLatexcommandElad
\[\begin{tikzcd}[column sep=scriptsize]
	0 & {E_{1}^{p,q}\left(C\right)} & {E_{1}^{p,q}\left(D\right)} & {E_{1}^{p,q}\left(\operatorname{Cone}\left(f\right)[1]\right)} & 0,
	\arrow[from=1-1, to=1-2]
	\arrow["{f_1}", from=1-2, to=1-3]
	\arrow["{h[1]_1}", from=1-3, to=1-4]
	\arrow[from=1-4, to=1-5]
\end{tikzcd}\]
and the base of the induction holds.

By the induction hypothesis for $r-1$, there arises a long exact sequence from the short exact sequence of the $r-1$ pages, 
\[\begin{tikzpicture}[descr/.style={fill=white,inner sep=1.0pt}]
        \matrix (m) [
            matrix of math nodes,
            row sep=3.0em,
            column sep=1.5em,
            text height=1.5ex, text depth=0.25ex
        ]
        {& & &\ldots  \\ & {E_{r}^{p,q}\left(C\right)} & {E_{r}^{p,q}\left(D\right)} & {E_{r}^{p,q}\left(\operatorname{Cone}\left(f\right)[1]\right)} \\
            & {E_{r}^{p+r-1,q+2-r}\left(C\right)} & {E_{r}^{p+r-1,q+2-r}\left(D\right)} & {E_{r}^{p+r-1,q+2-r}\left(\operatorname{Cone}\left(f\right)[1]\right)}  \\ 
             &\ldots & & \\
        };

        \path[overlay,->, font=\scriptsize,>=latex]
        (m-1-4) edge[out=355,in=175] node[descr,yshift=0.3ex] {$\partial$} (m-2-2)
        (m-2-2) edge node[above,yshift=0.0ex]{${f_r}$} (m-2-3)
        (m-2-3) edge node[above,yshift=0.0ex]{${h[1]_r}$} (m-2-4)
        (m-2-4) edge[out=355,in=175] node[descr,yshift=0.3ex] {$\partial$} (m-3-2)
        (m-3-2) edge node[above,yshift=0.3ex]{${f_r}$} (m-3-3)
        (m-3-3) edge node[above,yshift=0.3ex]{${h[1]_r}$} (m-3-4)
        (m-3-4) edge[out=355,in=175] node[descr,yshift=0.3ex] {$\partial$} (m-4-2)
        ;
\end{tikzpicture}\]
By assumption, $f_r$ is injective. Thus, the long exact sequence splits into short exact sequences, and the result follows by induction.
\end{proof}
\subsection{Application to \texorpdfstring{$\nu_S$}{nu\_S} filtration}
We apply the general theory to the specific filtration $\nu_S$ on $\overline{\Cdh}$ and $\overline{\mCdh}$. By abuse of notation, we write $\mgt$ instead of $\hat{\mt}^{\gt}$. 
The relevant differentials for the spectral sequence are $\bar{\mh}^{\gamma,b}_1$ and $\bar{\mt}^{\gt,\bt}_1$ respectively. By Corollary~\ref{cor:bar_fukaya_deformed_boundry}, we have 
\begin{equation}\label{eq:m1bdt}
\bar{\mh}^{\gamma,b}_1(\eta) = (\Id \otimes d)\eta -[\bar{b},\eta]
\end{equation}
and similarly for $\bar{\mt}^{\gt,\bt}_1$. Thus, these differentials are a special case of the differential $d_{twist}$ from Section~\ref{sssec:twistedcoh}.

\subsubsection{Base calculations}
\begin{lm}\label{lm:Fpexplicit}
The canonical isomorphism of Lemma~\ref{lm:cano_iso} induces an isomorphism
\begin{align*}
    F_{\nu_S}^p \overline{\Cdh}_h&\simeq\bigoplus_{\substack{\ell \ge p\\-\ell+m=h\\0\le m\le n}}S_{-\ell}\otimes A^m(L)
= \bigoplus_{\substack{\ell \ge p\\-h\le \ell \le n-h}}S_{-\ell}\otimes A^{h+\ell}(L)\\
F_{\nu_S}^p \overline{\mCdh}_h&\simeq\bigoplus_{\substack{\ell \ge p\\-\ell+m=h\\0\le m\le n+1}}S_{-\ell}\otimes A^m(I\times L)
= \bigoplus_{\substack{\ell \ge p\\-h\le \ell \le n+1-h}}S_{-\ell}\otimes A^{h+\ell}(I\times L)
\end{align*}

\end{lm}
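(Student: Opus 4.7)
The plan is to trace through the canonical identifications and unwind the definitions of the grading and filtration. First I would apply Lemma~\ref{lm:cano_iso} to the $(S,F)$ extension $\Cdh$ to obtain a canonical isomorphism $\overline{\Cdh}\simeq S\otimes \overline{\Cd}$; combining with the $\bar R$-structure on $\Cd$ recorded in Section~\ref{sssec:fukaya_scalar_ext_dfn} gives $\overline{\Cd}\simeq A^*(L)$, and hence $\overline{\Cdh}\simeq S\otimes A^*(L)$. Analogously $\overline{\mCdh}\simeq S\otimes A^*(I\times L)$. Both identifications are graded and, since $\nu_S$ is trivial on $R$ and on the de Rham complexes, they intertwine the $\nu_S$ filtration on the left with the tensor-product filtration induced from $\nu_S$ on the $S$-factor on the right.

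Next I would decompose $S$ into graded pieces. From $|s|=1-n$ one has $s^i\in S_{-i(n-1)}$ with $\nu_S(s^i)=i(n-1)$. Writing $\ell:=\nu_S(s^i)=-|s^i|$, the summand $S_{-\ell}$ is the one-dimensional $\R$-span of $s^{\ell/(n-1)}$ when $\ell\in (n-1)\Z_{\ge 0}$, and vanishes otherwise. Consequently
\[
F^p_{\nu_S}S=\bigoplus_{\ell\ge p}S_{-\ell}.
\]

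In the third step I would read off the degree $h$ component. In $S\otimes A^*(L)$ a pure tensor $s^i\otimes \omega$ with $\omega\in A^m(L)$ carries total grading $-\ell+m$, so the degree $h$ part of $F^p_{\nu_S}\overline{\Cdh}$ is the direct sum over those $(\ell,m)$ with $\ell\ge p$, $0\le m\le n$, and $-\ell+m=h$, which is precisely the first displayed expression. Eliminating $m=h+\ell$ against the bound $0\le m\le n$ converts these constraints into $-h\le\ell\le n-h$, producing the equivalent second form.

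The $\overline{\mCdh}$ case is identical, except that $A^*(I\times L)$ is concentrated in form degrees $0\le m\le n+1$, shifting the upper bound on $\ell$ to $n+1-h$. The argument is pure bookkeeping, so there is no substantive obstacle; the only care required is to keep the two distinct filtrations on $\Cdh$ separate, namely the algebra filtration $\nu$ (trivial on $S$) used to form the residue, and the $\nu_S$ filtration (trivial on $R$ and on the forms) that survives to the residue and is the one appearing in the statement.
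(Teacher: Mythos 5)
Your proposal is correct and follows the same route as the paper, which simply performs this calculation directly: identify $\overline{\Cdh}\simeq S\otimes A^*(L)$ (resp. $\overline{\mCdh}\simeq S\otimes A^*(I\times L)$) via Lemma~\ref{lm:cano_iso}, read off the $\nu_S$-filtration and total degree on pure tensors, and eliminate the form degree against its bounds. Your additional remarks on the one-dimensionality of the graded pieces of $S$ and on keeping the filtrations $\nu$ and $\nu_S$ separate are accurate bookkeeping that the paper leaves implicit.
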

\begin{proof}
We calculate,
\[
F_{\nu_S}^p \overline{\Cdh}_h\simeq \bigoplus_{\substack{\ell \ge p\\-\ell+m=h\\0\le m\le n}}S_{-\ell}\otimes A^m(L)=\bigoplus_{\substack{\ell \ge p\\-h\le \ell \le n-h}}S_{-\ell}\otimes A^{h+\ell}(L).
\]
\end{proof}

\begin{lm}\label{lm:S_filration_modu}
For every $h\in \Z$,
\[F_{\nu_S}^{-h}\overline{\Cdh}_h=\overline{\Cdh}_h,\qquad F_{\nu_S}^{n-h}\overline{\Cdh}_h\simeq S_{h-n}\otimes A^{n}(L),\qquad F_{\nu_S}^{n-h+1}\overline{\Cdh}_h=0, \]
and
\[F_{\nu_S}^{-h}\left(\overline{\Cdh_{\not \e}}\right)_h=\left(\overline{\Cdh_{\not \e}}\right)_h,\qquad F_{\nu_S}^{n-h}\left(\overline{\Cdh_{\not \e}}\right)_h \simeq S_{h-n}\otimes A^{n}(L),\qquad F_{\nu_S}^{n-h+1}\left(\overline{\Cdh_{\not \e}}\right)_h=0.\]
In particular, the filtration induced by $\nu_S$ on $\overline{\Cdh_{\not \e}}$ is bounded in all degrees. Similar statements hold for $\overline{\mCdh}$ and $\overline{\mB}$.
\end{lm}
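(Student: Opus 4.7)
The proof will proceed by direct substitution into the explicit formulas for $F_{\nu_S}^p\overline{\Cdh}_h$ and $F_{\nu_S}^p\overline{\mCdh}_h$ supplied by Lemma~\ref{lm:Fpexplicit}, and then transferring the conclusions to the quotient $\overline{\Cdh_{\not\e}}$ and the submodule $\overline{\mB}$.

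First I will establish the three equalities for $\overline{\Cdh}_h$ by specializing the index range in Lemma~\ref{lm:Fpexplicit}. For $p=-h$ the constraint $\ell\geq p$ is automatic under $-h\leq\ell\leq n-h$, so the sum reduces to $\bigoplus_{-h\leq\ell\leq n-h}S_{-\ell}\otimes A^{h+\ell}(L)=\overline{\Cdh}_h$. For $p=n-h$ the two constraints $\ell\geq n-h$ and $\ell\leq n-h$ force $\ell=n-h$, giving the single summand $S_{h-n}\otimes A^n(L)$. For $p=n-h+1$ the constraints are incompatible, so the filtration piece is trivial. The very same substitution, carried out with $A^*(I\times L)$ and upper bound $n+1$ in place of $n$, yields the analogous statements for $\overline{\mCdh}_h$: $F^{-h}=\overline{\mCdh}_h$, $F^{n+1-h}\simeq S_{h-n-1}\otimes A^{n+1}(I\times L)$, and $F^{n+2-h}=0$.

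Next I will pass to $\overline{\Cdh_{\not\e}}$. The key observation is that the ideal $Z(\ker d_{\bar{\hat R}})\cdot\e$ that we quotient out lies entirely in $S\otimes A^0(L)$, because $\e=1\in A^0(L)$ and all scalars act by tensoring. Under the direct-sum decomposition of Lemma~\ref{lm:Fpexplicit} this lies in the summand with $h+\ell=0$, that is $\ell=-h$. Since we always assume $n>0$, this summand is distinct from the top summand $\ell=n-h$. Consequently, forming the quotient leaves the top summand and the vanishing at $p=n-h+1$ unchanged, and the full module in filtration $-h$ remains the full module. This gives the three claims for $\overline{\Cdh_{\not\e}}_h$ stated in the lemma.

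Finally I will treat $\overline{\mCdh}$ and $\overline{\mB}$. For $\overline{\mCdh}$ the argument of the previous paragraphs adapts verbatim with $n$ replaced by $n+1$. For $\overline{\mB}$, I will use that $\mB$ is a filtered $Z(\mRdh)$-submodule of $\mCdh$ cut out by the condition $j_i^*a\in Z(\Rdh)\cdot 1$ for $i=0,1$; this condition is preserved by the filtration, the top form-degree piece satisfies it trivially (since $j_i^*$ drops the $dt$-component into $A^{n+1}(L)=0$ only when needed), and the ideal $Z(\ker d)\cdot\e$ sits in form degree $0$. Thus the same three-step substitution into Lemma~\ref{lm:Fpexplicit} produces the claimed filtration bounds for $\overline{\mB}$. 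I do not expect any real obstacle here: the entire lemma is bookkeeping on the index ranges of Lemma~\ref{lm:Fpexplicit} together with the elementary observation that quotienting by the unit ideal only touches the $A^0$-component, which is disjoint from the top-degree extremal summand.
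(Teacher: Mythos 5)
Your proposal is correct and follows essentially the same route as the paper: the paper's proof likewise just substitutes $p=-h$, $n-h$, $n-h+1$ into the explicit decomposition of Lemma~\ref{lm:Fpexplicit} and then observes the same computation holds in the quotient, since the ideal being quotiented lives in the $\ell=-h$ (form-degree $0$) summand, disjoint from the extremal one for $n>0$. Your extra remarks on $\overline{\mCdh}$ and $\overline{\mB}$ go slightly beyond the paper's two-sentence proof but are consistent with it.
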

\begin{proof}
The first line is obtained by inserting $p=-h,n-h,n-h+1$ in the isomorphism of Lemma~\ref{lm:Fpexplicit} above. The second line follows by doing the same computation in the quotient~$\Cdh_{\not \e}$.
\end{proof}
Observe that $Z(S)\cdot e\in \ker\left(\bar{\mh}^{\gamma,b}_1\right)$ by the Properties \ref{it:a_infty} and ~\ref{it:unit1} of Definition~\ref{dfn:cycunit}.
\begin{lm}\label{lm:E_infty_page_center}
    For $0\le\ell\le \infty$ we have $E_{\ell}^{p,q}\left(Z(S)\cdot 1\right)=S_{-p}\cdot 1$, if $2p+q=0$ and $p\in 2\N.$ Otherwise, it is trivial.
\end{lm}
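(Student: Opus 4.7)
The plan is to observe that $Z(S) \cdot 1$ is a sub-complex on which the restricted differential $\bar{\mh}^{\gamma,b}_1$ vanishes identically, so the associated spectral sequence degenerates at $E_0$, and then to compute $E_0$ directly from the $\nu_S$-filtration.

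For the first step, by Corollary~\ref{cor:bar_fukaya_deformed_boundry},
\[
\bar{\mh}^{\gamma,b}_1(z \otimes 1) = (\Id \otimes d)(z \otimes 1) - [\bar b,\, z \otimes 1].
\]
The de Rham term vanishes because $d(1) = 0$ in $A^*(L)$, and the bracket vanishes because $z \otimes 1$ lies in the center of $\hat{A}$: by Corollary~\ref{cor:center_of_hat_algebra} this is equivalent to $\degev{z \otimes 1} = 0$, which holds since $z \in Z(S)$ has even total degree. (This is also the observation recorded just before the lemma, derived from the unit properties.) Consequently the induced differential on the sub-complex is zero, hence $d_0 = 0$, and by induction $d_\ell = 0$ with $E_{\ell+1} = E_\ell$ for every $\ell \geq 0$, so $E_\ell = E_0$ for all $0 \leq \ell \leq \infty$.

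For the second step, I would compute $E_0^{p,q} = F_{\nu_S}^{p}(Z(S) \cdot 1)^{p+q}/F_{\nu_S}^{p+1}(Z(S) \cdot 1)^{p+q}$ from the definitions. A homogeneous element $z \otimes 1$ satisfies $|z \otimes 1| = |z|$ and $\nu_S(z \otimes 1) = -|z|$, so $F^p(Z(S) \cdot 1)^h$ equals $Z(S)_h \cdot 1$ when $h \leq -p$ and is zero otherwise. The associated graded quotient is therefore non-zero only for $p + q = -p$, i.e.\ $2p + q = 0$, in which case it equals $Z(S)_{-p} \cdot 1$. Corollary~\ref{cor:center_of_hat_algebra} then identifies $Z(S)$ with the even-degree part of $S$, so $Z(S)_{-p} = S_{-p}$ when $-p$ is even and $Z(S)_{-p} = 0$ otherwise; since $|s^i| = i(1-n) \leq 0$ forces $S_{-p} = 0$ whenever $p < 0$, the non-vanishing condition simplifies to $p \in 2\N$. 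Combining the three observations yields the claimed formula.

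No step looks like a real obstacle once one spots that the whole line $Z(S) \cdot 1$ consists of cycles; the only place requiring a little care is matching the $\nu_S$-filtration degree to the cochain degree, which forces the concentration along the anti-diagonal $q = -2p$.
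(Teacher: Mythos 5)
Your proposal is correct and follows essentially the same route as the paper: the paper likewise notes that the twisted differential $(\Id\otimes d)-[\bar b,\cdot]$ vanishes identically on $Z(S)\cdot 1$ (so the spectral sequence is just the associated graded of the $\nu_S$-filtration at every page), and then identifies $E_0^{p,q}\cong Z(S)_{-p}\otimes(\R)_{2p+q}$, which is $S_{-p}\cdot 1$ exactly when $2p+q=0$ and $p\in 2\N$. Your explicit bookkeeping of $\nu_S(z\otimes 1)=-|z|$ and the parity argument via Corollary~\ref{cor:center_of_hat_algebra} matches the paper's use of equation~\eqref{eq:m1bdt} and $Z(S)=\bigoplus_i S_{-2i}$.
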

\begin{proof}
    By equation~\eqref{eq:m1bdt} the differential on $Z(S)\cdot 1$ is trivial. By Corollary~\ref{cor:center_of_hat_algebra} $Z(S)=\bigoplus_{i\in \N}S_{-2i}$. Theorem~\ref{thm:filtered_spec_conv} implies that 
    \[E_{\ell}^{p,q}\left(Z(S)\cdot 1\right)=E_{0}^{p,q}\left(Z(S)\cdot 1\right)=\frac{F_{\nu_S}^p \left(Z(S)\cdot 1\right)_{p+q}}{F_{\nu_S}^{p+1} \left(Z(S)\cdot 1\right)_{p+q}}\cong Z(S)_{-p}\otimes (\R)_{2p+q}.\]
    The claim follows by combining the above observations.
\end{proof}

Let $\overline{\mB}_0$ be the submodule of $\left(\overline{\mCdh}\right)$ defined by
\[\left\{a\in \overline{\mCdh}: j_i^*(a)=0 \textit{ for } i=0,1\right\}.\] 
The submodule $\overline{\mB}_0$ is preserved by $\overline{\mgtb_1}$ and thus is a subcomplex. Recall Definition~\ref{dfn:inv_shift} concerning the action of an anti-symplectic involution $\phi$ on shifted complexes. In particular, taking the subcomplex fixed by $-\phi^*$ does not commute with shifts.
For $v \in \Z$ and $\invs=Id, -\phi^*$, let $\mathcal{A}$  denote the cochain complex $\overline{\Cdh}[v]^{\invs}, \overline{\mCdh}[v]^{\invs}$ or $\overline{\mB}_0[v]^{\invs}$ with boundary operator $\m_1=\bar{\mh}^{\gamma,b}_1[v], \bar{\mt}^{\gt,\bt}[v]$ or $\bar{\mt}^{\gt,\bt}[v]$. When $\invs = -\phi^*$, we additionally assume that $b$ and $\bt$ are real, so that Lemma~\ref{lm:phi_star_cochainmap} ensures $\mathcal{A}$ is indeed a complex.  Let $\mathcal{L} = L, I \times L$ or $(I \times L,\partial I \times L)$. Let $(E_0^{p,q}(\mathcal{A}), d_0)$ be the $E_0$ page of the spectral sequence associated with $(\mathcal{A}, \m_1, F_{\nu_S})$. Define $A^{p,q}(\mathcal{L}) := S_{-p}[v]^{\invs} \otimes A^{q+2p}(\mathcal{L})$. Let 
\[
\iota_{p,q}:A^{p,q}(\mathcal{L})\to E_0^{p,q}(\mathcal{A})
\]
denote the map induced by the inverse of the canonical isomorphism  of Lemma~\ref{lm:cano_iso} composed with the quotient map $\mathcal{A} \to E_0(\mathcal A)$. 

\begin{lm}\label{lm:d_zero_induced_mbar_one}
The operator $d_0 : E_0^{p,q}(\mathcal{A}) \to E_0^{p,q}(\mathcal{A})$ agrees with the map induced from $\m_1:F^{p}_{\nu_S}(\mathcal{A})_{p+*}\to F^{p}_{\nu_S}(\mathcal{A})_{p+*+1}.$ 
\end{lm}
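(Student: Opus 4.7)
The plan is to deduce this lemma directly from Theorem~\ref{thm:filtered_spec_conv}, once I have verified that $\m_1$ preserves the filtration $F^p_{\nu_S}$ on $\mathcal{A}$. Indeed, the $E_0$ differential of the spectral sequence attached to any filtered complex is by construction the map induced by the differential on the quotient $F^p/F^{p+1}$; so the whole content of the lemma is the well-definedness of this induced map, i.e., that $\m_1(F^p_{\nu_S}\mathcal{A}) \subseteq F^p_{\nu_S}\mathcal{A}$.

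To check filtration preservation, I will use Corollary~\ref{cor:bar_fukaya_deformed_boundry}, which in all three cases ($\mathcal{A}=\overline{\Cdh}[v]^{\invs},\,\overline{\mCdh}[v]^{\invs}$, or $\overline{\mB}_0[v]^{\invs}$) expresses $\m_1$ as
\[
\m_1(\eta) \;=\; (\Id \otimes d)\eta \;-\; [\bar{b},\eta]
\]
(with $\bar{b}$ replaced by $\bar{\bt}$ in the pseudoisotopy cases). Under the canonical identification of Lemma~\ref{lm:cano_iso}, the operator $\Id\otimes d$ acts only on the differential-form factor and so does not alter the $S$-degree, hence preserves $F^p_{\nu_S}$. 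For the bracket, Lemma~\ref{lm:energy_zero_degree_1} gives $\bar{b}=s\otimes\alpha$ (resp.\ $\bar{\bt}=s\otimes\tilde{\alpha}$) with $|s|=1-n$, so $\nu_S(\bar{b})=n-1>0$ since $n>1$; therefore $[\bar{b},\eta]\in F^{p+n-1}_{\nu_S}\mathcal{A}\subset F^p_{\nu_S}\mathcal{A}$ whenever $\eta\in F^p_{\nu_S}\mathcal{A}$. In the case $\invs=-\phi^*$, Lemma~\ref{lm:phi_star_cochainmap} guarantees that $\m_1$ restricts to the $-\phi^*$-fixed subspace, and the filtration argument above applies there verbatim since $F^p_{\nu_S}$ is compatible with the $\phi^*$-decomposition.

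For $\mathcal{A}=\overline{\mB}_0[v]^{\invs}$ one additional point is needed: $\m_1$ must preserve the vanishing condition $j_0^*=j_1^*=0$. But by the pseudoisotopy property (Lemma~\ref{lm:pseudo}), restriction commutes with the $A_\infty$ operations, so $j_i^*\mgti{\bt}_1(\eta) = \mgi{b_i}_1(j_i^*\eta) = 0$ whenever $j_i^*\eta=0$, and the same holds after deformation and after passing to the residue. Combining these observations shows that $\m_1$ restricts to a degree $1$ map $F^p_{\nu_S}\mathcal{A}\to F^p_{\nu_S}\mathcal{A}$ of cohomological degree raising by one in all three cases, and then Theorem~\ref{thm:filtered_spec_conv} immediately identifies the induced map on $E_0^{p,q}=F^p\mathcal{A}_{p+q}/F^{p+1}\mathcal{A}_{p+q}$ with $d_0$. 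There is no real obstacle here; the only item requiring care is the uniform treatment of the three variants of $\mathcal{A}$ and, in the real case, checking compatibility of the filtration with taking $-\phi^*$-fixed points.
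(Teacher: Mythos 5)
Your proposal proves only the tautological half of the lemma and misses its substantive content. As the lemma is actually used (see the proof of Lemma~\ref{lm:E_0_isom_diff_forms}, which cites it to say that $d_0$ is the map induced from $\bar{\mh}^{\gamma}_1$), the point is that $d_0$ on $E_0^{p,\ast}(\mathcal{A})$ agrees with the map induced by the \emph{undeformed} residue operator $\bar{\mh}^{\gamma}_1$ (equivalently $\Id\otimes d$ under Lemma~\ref{lm:cano_iso}); in other words, the $b$-deformation contributes nothing on the $E_0$ page. This is exactly what the paper's proof establishes: taking a representative $\alpha'\in\Cdh$, it uses the explicit formula of Lemma~\ref{lm:deformed_operator_explicit_formula} together with property~\ref{it:val} of Definition~\ref{dfn:cycunit} and the hypothesis $\nu_S\otimes\nu(b)>0$ to show that every term of $\mh^{\gamma,b}_1(\alpha')$ containing an insertion of $b$ either dies in the residue (positive $\nu$) or lies in $F^{p+1}_{\nu_S}$ (positive $\nu_S$), so that $[\overline{\mh^{\gamma,b}_1(\alpha')}]=[\bar{\mh}^{\gamma}_1(\overline{\alpha'})]$, and then Theorem~\ref{thm:filtered_spec_conv} identifies the left-hand side with $d_0$. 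Your reading, that the whole content is well-definedness of the induced map of the deformed differential, reduces the lemma to a restatement of Theorem~\ref{thm:filtered_spec_conv}; moreover, the filtration preservation you verify is automatic, since $\nu_S$ takes values in $\R_{\ge 0}$ and is multiplicative, so $[\bar b,\eta]\in F^{p}_{\nu_S}\mathcal{A}$ for $\eta\in F^{p}_{\nu_S}\mathcal{A}$ needs no estimate at all. As written, a reader of Lemma~\ref{lm:E_0_isom_diff_forms} would be left without the fact that the bracket term has dropped out of $d_0$.

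The repair is one sentence away from what you already wrote: your estimate $\nu_S(\bar b)=n-1\ge 1$ gives $[\bar b,\eta]\in F^{p+n-1}_{\nu_S}\mathcal{A}\subseteq F^{p+1}_{\nu_S}\mathcal{A}$, so the bracket term in the formula of Corollary~\ref{cor:bar_fukaya_deformed_boundry} vanishes in $E_0^{p,\ast}$ and the map induced by the deformed differential coincides with the map induced by $\Id\otimes d=\bar{\mh}^{\gamma}_1$. With that sentence added, your argument becomes a correct proof of the intended statement, by a mildly different route from the paper: you work with the residue formula of Corollary~\ref{cor:bar_fukaya_deformed_boundry} (which requires the dga identification and, in the $\invs=-\phi^*$ and $\overline{\mB}_0$ cases, the compatibility checks you indicate), whereas the paper argues directly on $\Cdh$ from the explicit deformed-operator expansion, which handles all three cases and both filtrations ($\nu$ and $\nu_S$) in a single step.
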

\begin{proof}
     We write the proof for $\mathcal{A}=\overline{\Cdh}$. The other cases are similar. Let $\a^\prime \in \Cdh$ represent an element $\a \in E_0$. As $\nu_S\otimes \nu(b)>0$, it follows from Lemma~\ref{lm:deformed_operator_explicit_formula} and property~\ref{it:val} of Definition~\ref{dfn:cycunit} that 
     \[[\overline{\mh^{\gamma,b}_1(\a^\prime)}]=\left[\bar{\mh}^{\gamma}_1(\overline{\alpha^{\prime}}) \right].\] 
     By Theorem~\ref{thm:filtered_spec_conv} $d_0(\a)=[\overline{\mh^{\gamma,b}_1(\a^\prime)}]$, and the result follows.
\end{proof}
\begin{lm}\label{lm:E_0_isom_diff_forms}
The map $\iota_{p,\bullet}: (A^{p,\bullet}(\mathcal{L}),\Id\otimes d)\to (E_0^{p,\bullet}(\mathcal{A}),d_0)$ is a cochain isomorphism.
\end{lm}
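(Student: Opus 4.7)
The plan is to combine the explicit description of the $\nu_S$-filtration quotients with the explicit formula for the deformed differential on the residue algebra.

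First I would identify the $E_0$ page. By Lemma~\ref{lm:Fpexplicit} (and its analogues for $\overline{\mCdh}$ and $\overline{\mB}_0$, the latter obtained by restricting to forms vanishing on $\partial I\times L$), the canonical isomorphism of Lemma~\ref{lm:cano_iso} gives
\[
\frac{F^{p}_{\nu_S}\mathcal{A}_{p+q}}{F^{p+1}_{\nu_S}\mathcal{A}_{p+q}}\;\cong\;S_{-p}[v]^{\invs}\otimes A^{q+2p}(\mathcal{L}),
\]
where $\mathcal{L}=L$, $I\times L$, or $(I\times L,\partial I\times L)$ in the three cases. In the involution case $\invs=-\phi^*$, compatibility with the $\invs$-fixed subspace uses Definition~\ref{dfn:inv_shift} together with the fact that $\phi^*$ acts trivially on $A^*(L)$ (respectively $A^*(I\times L)$), so only the $S[v]$-factor carries the non-trivial action. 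This gives that $\iota_{p,q}$ is a well-defined isomorphism of modules at each bidegree.

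Next I would show $\iota_{p,\bullet}$ intertwines the differentials. By Lemma~\ref{lm:d_zero_induced_mbar_one}, $d_0$ on $E_0^{p,q}(\mathcal{A})$ is induced by the operator $\mathfrak{m}_1$ on $\mathcal{A}$. By Corollary~\ref{cor:bar_fukaya_deformed_boundry},
\[
\mathfrak{m}_1(\eta)=(\Id\otimes d)\eta-[\bar{b},\eta].
\]
The assumption $\nu_S\otimes\nu(b)>0$ combined with Lemma~\ref{lm:energy_zero_degree_1} shows that $\bar{b}$ has the form $s\otimes\alpha$ with $\alpha$ a top-degree form, so in particular $\nu_S(\bar{b})\ge 1$. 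Since the multiplication on $\overline{\Cdh}$ (respectively $\overline{\mCdh}$) inherits the grading of the tensor product and $\nu_S$ is sub-multiplicative under this product, $[\bar{b},\eta]\in F^{p+1}_{\nu_S}\mathcal{A}$ whenever $\eta\in F^{p}_{\nu_S}\mathcal{A}$. Consequently, on the associated graded the commutator term vanishes and $d_0$ is induced purely by $\Id\otimes d$, which corresponds under $\iota_{p,\bullet}$ to the exterior derivative on $A^{*}(\mathcal{L})$.

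The main (but mild) obstacle will be keeping the sign conventions of the shift $S[v]^{\invs}$ consistent with the grading conventions used in the filtration $\nu_S$ and with the action of $\phi^*$ on $S[v]$ from Definition~\ref{dfn:inv_shift}; this is where one must be careful, especially for the $\overline{\mB}_0$ case where one must also check that the identification respects the vanishing condition on $\partial I\times L$. Once these book-keeping checks are done, combining the module isomorphism of the first step with the differential identification of the second step yields the claimed cochain isomorphism.
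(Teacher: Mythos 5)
Your proposal is correct and follows essentially the same route as the paper: identify the associated graded pieces via Lemma~\ref{lm:Fpexplicit} (with the observation that $\phi^*$ acts only through the $S[v]$-factor), and identify $d_0$ with $\Id\otimes d$. The only cosmetic difference is in how the $b$-dependence is discharged: the paper reads Lemma~\ref{lm:d_zero_induced_mbar_one} as giving the undeformed $\bar{\mh}^{\gamma}_1$ and then invokes Proposition~\ref{prop:sf_extension_arises_dga}, while you keep the deformed differential, use Corollary~\ref{cor:bar_fukaya_deformed_boundry} and Lemma~\ref{lm:energy_zero_degree_1} to see that the commutator $[\bar b,\cdot]$ raises the $\nu_S$-filtration by $n-1\ge 1$ and hence dies on $E_0$ --- which is exactly the content of the proof of Lemma~\ref{lm:d_zero_induced_mbar_one}, so the two arguments coincide in substance.
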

\begin{proof}
     We write the proof for $\mathcal{A}=\overline{\Cdh}$. The other cases are similar. It follows from Lemma~\ref{lm:Fpexplicit} that $\iota_{p,q}$ is an isomorphism of modules. By Lemma~\ref{lm:d_zero_induced_mbar_one}, the differential $d_0:E_0^{p,*}\to E_0^{p,*+1}$ is the map induced from $\bar{\mh}^{\gamma}_1:F^{p}_{\nu_S}\overline{\Cdh}_{p+*}\to F^{p}_{\nu_S}\overline{\Cdh}_{p+*+1}$. It follows from Proposition~\ref{prop:sf_extension_arises_dga} that 
\[
d_0 \circ \iota_{p,\bullet}=\iota_{p,\bullet+1} \circ \left(\Id \otimes d \right) .
\]
So, $\tilde{\iota}_{p,\bullet}$ is a cochain map.
\end{proof}

\subsubsection{The spectral sequence for twisted relative cohomology}
Here, we collect results that are used to analyze the spectral sequence for $(\overline{\mB}_0, \bar{\mt}^{\gt,\bt}_1)$ culminating in Corollary~\ref{cor:B_zero_Cdh_spectral_iso}.

\begin{cor}\label{cor:C_mC_retract}
    Let $i=1$ or $i=0$. The map $j_i^*:\left(\overline{\mCdh}[v]^{\invs},\bar{\mt}^{\gt,\bt}_1\right)\to \left(\overline{\Cdh}[v]^{\invs},\bar{\mh}_1^{\gamma,j_i^*(\bt)}\right)$ induces an isomorphism $E_1^{p,q}(\overline{\mCdh}[v]^{\invs})\to E_1^{p,q}(\overline{\Cdh}[v]^{\invs}).$
\end{cor}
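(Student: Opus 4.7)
The plan is to apply Theorem~\ref{thm:spectralquasi_iso_iso}: I would show that $j_i^*$ is a morphism of filtered cochain complexes, identify the induced map on $E_1$ explicitly via Lemma~\ref{lm:E_0_isom_diff_forms}, and then invoke the homotopy equivalence $j_i : L \hookrightarrow I\times L$ to conclude that the induced map is an isomorphism.

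First, I would verify that $j_i^*$ is a cochain map between the two complexes with the correct deformed differentials. Filtration preservation is automatic because $j_i^*$ acts as the identity on the $S$-factor and as de Rham pullback on the form factor, neither of which changes $\nu_S$. For the cochain map property, Corollary~\ref{cor:bar_fukaya_deformed_boundry} presents both differentials in the explicit form $(\Id\otimes d)(-)-[\bar{\bt},-]$ and $(\Id\otimes d)(-)-[\overline{j_i^*(\bt)},-]$. Since $j_i^*$ is a dga homomorphism (pullback commutes with $d$ and with wedge, hence also with the graded commutator on $S\otimes A^*(\cdot)$), and since $\overline{j_i^*(\bt)}=j_i^*(\bar{\bt})$ by Lemma~\ref{lm:cano_iso}, the two differentials intertwine with $j_i^*$. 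When $\invs=-\phi^*$, the map $j_i^*$ preserves the $-\phi^*$-invariant part because $L\subset\fix(\phi)$ makes $j_i$ trivially $\phi$-equivariant, and $\phi^*$ acts on the $S[v]$ factor only.

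Second, using the isomorphism of Lemma~\ref{lm:E_0_isom_diff_forms}, I would identify
\[
E_0^{p,\bullet}(\overline{\mCdh}[v]^{\invs}) \cong S_{-p}[v]^{\invs}\otimes A^{\bullet+2p}(I\times L), \qquad E_0^{p,\bullet}(\overline{\Cdh}[v]^{\invs}) \cong S_{-p}[v]^{\invs}\otimes A^{\bullet+2p}(L),
\]
both carrying the differential $\Id\otimes d$. Under these identifications, the map induced by $j_i^*$ on $E_0$ is $\Id\otimes j_i^*$, so the map induced on $E_1$ after taking cohomology is
\[
\Id\otimes j_i^* : S_{-p}[v]^{\invs}\otimes H^{q+2p}(I\times L;\R) \longrightarrow S_{-p}[v]^{\invs}\otimes H^{q+2p}(L;\R).
\]

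Finally, since $j_i : L\hookrightarrow I\times L$ is a homotopy equivalence with the projection as homotopy inverse, $j_i^*$ is an isomorphism on de Rham cohomology, and tensoring with the identity on $S_{-p}[v]^{\invs}$ preserves the isomorphism property. Hence the induced map on $E_1^{p,q}$ is an isomorphism, as required. There is no substantial obstacle in this argument; the only care required is bookkeeping the compatibility of $j_i^*$ with the $\bt$-deformation on residues and with the shift-and-involution subspace, both of which reduce to already-established lemmas.
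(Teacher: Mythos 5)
Your proposal is correct and follows essentially the same route as the paper: identify the $E_0$/$E_1$ pages via Lemma~\ref{lm:E_0_isom_diff_forms} so that the induced map on $E_1$ becomes $\Id\otimes j_i^*$ on $S_{-p}[v]^{\invs}\otimes H^{q+2p}(\cdot)$, and conclude from $j_i$ being a deformation retract of $I\times L$ onto $L$. Your extra verifications (that $j_i^*$ is a filtered cochain map for the twisted differentials and preserves the $-\phi^*$-invariant parts) are details the paper leaves implicit rather than a genuinely different argument.
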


\begin{proof}
    Consider the commutative diagram 
\[\begin{tikzcd}
	{H^q\left(A^{p,\bullet}(I\times L)\right)} && {H^q\left(A^{p,\bullet}( L)\right)} \\
	\\
	{E_1^{p,q}(\overline{\mCdh})} && {E_1^{p,q}(\overline{\Cdh})}
	\arrow["{j_i^*}"', from=1-1, to=1-3]
	\arrow["{(\iota_{p,q})_*}", from=1-1, to=3-1]
	\arrow["{(\iota_{p,q})_*}"', from=1-3, to=3-3]
	\arrow["{j_i^*}", from=3-1, to=3-3]
\end{tikzcd}\]
The vertical maps are isomorphisms by Lemma~\ref{lm:E_0_isom_diff_forms}. Observe that $j_i$ is a deformation retract of topological spaces. Thus, the top horizontal map is an isomorphism. It follows that the bottom horizontal map is also an isomorphism.
\end{proof}
\begin{cor}\label{cor:spec_isom}
    For $\ell\ge 1$, the map $j_i^*:\left(\overline{\mCdh}[v]^{\invs},\bar{\mt}^{\gt,\bt}_1\right)\to \left(\overline{\Cdh}[v]^{\invs},\bar{\mh}_1^{\gamma,j_i^*(\bt)}\right)$ induces an isomorphism $E_\ell^{p,q}(\overline{\mCdh}[v]^{\invs})\to E_\ell^{p,q}(\overline{\Cdh}[v]^{\invs})$.
\end{cor}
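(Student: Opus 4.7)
The proof is immediate from the results already in hand. First I would observe that $j_i^*$ defines a morphism of filtered differential graded modules: it is a cochain map between the relevant complexes (this uses the pseudoisotopy properties from Lemma~\ref{lm:pseudo} together with the fact that $j_i^*(\bt) = b$ or $b'$), it respects the $\nu_S$ filtration trivially since it acts as the identity on the $S$ factor and via pullback on the $A^*(I \times L)$ factor without changing the $S$-degree, and it intertwines the residue differentials by Corollary~\ref{cor:bar_fukaya_deformed_boundry} (noting that $j_i^*[\bar\bt, \cdot] = [j_i^*\bar\bt, j_i^*(\cdot)] = [\bar b, j_i^*(\cdot)]$ and similarly for $b'$).

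Second, Corollary~\ref{cor:C_mC_retract} has already established that the induced map $(j_i^*)_1: E_1^{p,q}(\overline{\mCdh}[v]^{\invs}) \to E_1^{p,q}(\overline{\Cdh}[v]^{\invs})$ is an isomorphism.

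The result then follows directly by applying Theorem~\ref{thm:spectralquasi_iso_iso} with $n = 1$: an isomorphism on some finite page propagates to an isomorphism on all subsequent pages, so $(j_i^*)_\ell$ is an isomorphism for every $1 \leq \ell \leq \infty$. No additional work or calculation is required; essentially the entire content lies in Corollary~\ref{cor:C_mC_retract} and the general spectral sequence comparison theorem. There is no main obstacle — this is a bookkeeping corollary meant to upgrade the $E_1$ statement to all pages so that it can be invoked in later obstruction-theoretic arguments.
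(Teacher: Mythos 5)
Your proof is correct and follows essentially the same route as the paper: the paper's proof is exactly the one-line combination of Corollary~\ref{cor:C_mC_retract} (the $E_1$ isomorphism) with Theorem~\ref{thm:spectralquasi_iso_iso}. Your additional verification that $j_i^*$ is a morphism of filtered differential graded modules is a reasonable extra detail that the paper leaves implicit.
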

\begin{proof}
    Corollary~\ref{cor:C_mC_retract} and Theorem~\ref{thm:spectralquasi_iso_iso} imply the result. 
\end{proof}
In light of Corollary \ref{cor:spec_isom}, the spectral sequence associated with $(\overline{\Cdh},\bar{\mh}_1^{\gamma,j_i^*(\bt)},F_{\nu_S})$ 
is independent up to isomorphism of the choice of $i\in \{0,1\}$ for $\ell\ge 1$. Thus, by abuse of notation, we write $E_\ell^{p,q}(\overline{\Cdh})$ for both.

\begin{cor}\label{cor:cone_seq_exact_E_infty}
For the filtered cochain complex map \[(j_0^*,j_1^*):\left(\overline{\mCdh}[v]^{\invs},\bar{\mt}^{\gt,\bt}_1\right)\to \left(\overline{\Cdh}[v]^{\invs}\oplus\overline{\Cdh}[v]^{\invs},\bar{\mh}_1^{\gamma,j_0^*(\bt)}[v]\oplus \bar{\mh}_1^{\gamma,j_1^*(\bt)}[v]\right),\]
we have for $1\le \ell\le \infty,$
\[
E_{\ell}^{p,q+1}\left(\operatorname{Cone}\left((j^*_0, j_1^*)\right)\right)\cong {E_{\ell}^{p,q}\left(\overline{\Cdh}[v]^{\invs}\right)}.\]
\end{cor}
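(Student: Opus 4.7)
The plan is to invoke Lemma~\ref{lm:cone_sequ_exact_generalized} applied to $f = (j_0^*, j_1^*)$, with $C = \overline{\mCdh}[v]^{\invs}$ and $D = \overline{\Cdh}[v]^{\invs} \oplus \overline{\Cdh}[v]^{\invs}$, and then to identify the cokernel that appears. First I would verify the hypothesis of Lemma~\ref{lm:cone_sequ_exact_generalized}: for each $\ell \ge 1$, Corollary~\ref{cor:spec_isom} tells us that both $(j_0^*)_\ell$ and $(j_1^*)_\ell$ are isomorphisms $E_\ell^{p,q}(\overline{\mCdh}[v]^{\invs}) \xrightarrow{\sim} E_\ell^{p,q}(\overline{\Cdh}[v]^{\invs})$. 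Since composition of $(j_0^*, j_1^*)_\ell$ with the projection onto the first factor is an isomorphism, the map $(j_0^*, j_1^*)_\ell$ is injective, so the hypothesis of Lemma~\ref{lm:cone_sequ_exact_generalized} holds.

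Applying Lemma~\ref{lm:cone_sequ_exact_generalized} therefore produces, for each $\ell \ge 1$, a short exact sequence
\[
0 \to E_\ell^{p,q}(\overline{\mCdh}[v]^{\invs}) \xrightarrow{(j_0^*, j_1^*)_\ell} E_\ell^{p,q}(\overline{\Cdh}[v]^{\invs}) \oplus E_\ell^{p,q}(\overline{\Cdh}[v]^{\invs}) \to E_\ell^{p,q}(\operatorname{Cone}(f)[1]) \to 0.
\]
Next I would identify the cokernel. Because $(j_0^*)_\ell$ is an isomorphism, the map $\Psi : E_\ell^{p,q}(\overline{\Cdh}[v]^{\invs})^{\oplus 2} \to E_\ell^{p,q}(\overline{\Cdh}[v]^{\invs})$ defined by $(a, b) \mapsto b - (j_1^*)_\ell \circ ((j_0^*)_\ell)^{-1}(a)$ is surjective with kernel equal to the image of $(j_0^*, j_1^*)_\ell$. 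Hence $\Psi$ descends to an isomorphism from the cokernel onto $E_\ell^{p,q}(\overline{\Cdh}[v]^{\invs})$. Combining with the short exact sequence yields $E_\ell^{p,q}(\operatorname{Cone}(f)[1]) \cong E_\ell^{p,q}(\overline{\Cdh}[v]^{\invs})$, and the shift identity~\eqref{eq:sssh} then rewrites the left-hand side as $E_\ell^{p, q+1}(\operatorname{Cone}(f))$, giving the claim for $1 \le \ell < \infty$.

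For the case $\ell = \infty$, I would argue via the boundedness of the $\nu_S$-filtration established in Lemma~\ref{lm:S_filration_modu}: since the filtration is bounded in each total degree on $\overline{\Cdh}[v]^{\invs}$, $\overline{\mCdh}[v]^{\invs}$, and consequently on $\operatorname{Cone}(f)$, each bidegree of each spectral sequence stabilizes after finitely many pages. The isomorphism obtained on the stabilized page coincides with the isomorphism on $E_\infty$ at that bidegree, so the identification passes to $\ell = \infty$.

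The only step that requires a moment of care is the identification of the cokernel of $(j_0^*, j_1^*)_\ell$ with $E_\ell^{p,q}(\overline{\Cdh}[v]^{\invs})$, which I would pin down using the explicit splitting provided by the isomorphism $(j_0^*)_\ell$; everything else is a direct invocation of previously established machinery.
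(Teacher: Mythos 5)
Your proposal is correct and follows essentially the same route as the paper: verify the injectivity hypothesis of Lemma~\ref{lm:cone_sequ_exact_generalized} via Corollary~\ref{cor:spec_isom}, extract the short exact sequence on each $E_\ell$ page, identify the cokernel with one copy of $E_\ell^{p,q}\left(\overline{\Cdh}[v]^{\invs}\right)$ using the isomorphisms $(j_i^*)_\ell$, and pass to $\ell=\infty$ using the boundedness of the filtration from Lemma~\ref{lm:S_filration_modu}. The only difference is that you spell out the cokernel identification explicitly (via the splitting map $\Psi$), a step the paper leaves implicit with the phrase that the result follows from Corollary~\ref{cor:spec_isom}.
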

\begin{proof}
    The filtration on $\overline{\mCdh}[v]$ and $\overline{\Cdh}[v]$ is bounded in each degree by Lemma~\ref{lm:S_filration_modu}, and hence the spectral sequence converges for each $p$ and $q$ for some $r$. By Corollary~\ref{cor:spec_isom}, the assumptions of Lemma~\ref{lm:cone_sequ_exact_generalized} are satisfied. Hence, there exists an exact sequence of modules
    % https://q.uiver.app/#q=WzAsNSxbMywwLCJFX3tcXGluZnR5fV57cCxxfVxcbGVmdChcXG92ZXJsaW5le1xcQ2RofVxcb3BsdXNcXG92ZXJsaW5le1xcQ2RofVxccmlnaHQpIl0sWzUsMCwiRV97XFxpbmZ0eX1ee3ArMSxxfVxcbGVmdChcXG9wZXJhdG9ybmFtZXtDb25lfVxcbGVmdCgoal4qXzAsIGpfMV4qKVxccmlnaHQpXFxyaWdodCkiXSxbMSwwLCJFX3tcXGluZnR5fV57cCxxfVxcbGVmdChcXG92ZXJsaW5le1xcbUNkaH1cXHJpZ2h0KSJdLFswLDAsIjAiXSxbNiwwLCIwIl0sWzAsMV0sWzIsMF0sWzMsMl0sWzEsNF1d&macro_url=https%3A%2F%2Fgist.githubusercontent.com%2Feladkosloff%2F1bed2afb01df49328bc5753a24c8ccd0%2Fraw%2Fbcb3cc566c628562646c94eea676fc32a27c56d1%2FLatexcommandElad
\[\begin{tikzcd}[column sep=scriptsize]
	0 & {E_{\ell}^{p,q}\left(\overline{\mCdh}[v]^{\invs}\right)} & {E_{\ell}^{p,q}\left(\overline{\Cdh}[v]^{\invs}\oplus\overline{\Cdh}[v]^{\invs}\right)} & {E_{\ell}^{p,q+1}\left(\operatorname{Cone}\left((j^*_0, j_1^*)\right)\right)} & 0.
	\arrow[from=1-1, to=1-2]
	\arrow["{(j_0^*,j_1^*)}", from=1-2, to=1-3]
      \arrow[from=1-3, to=1-4]
	\arrow[from=1-4, to=1-5]
\end{tikzcd}\]
The results now follow from Corollary~\ref{cor:spec_isom}.
\end{proof}

\begin{lm}\label{lm:B_zero_cone_quasi_iso}
    The inclusion $\iota:\overline{\mB}_0[v]^{\invs}\to \operatorname{Cone}\left((j^*_0, j_1^*)\right)$ is a filtered morphism.  Moreover, it induces an isomorphism $E_r^{p,q}\left(\overline{\mB}_0[v]^{\invs}\right)\cong E_\ell^{p,q}\left(\operatorname{Cone}\left((j^*_0, j_1^*)\right)\right),$ for all $\ell\ge 1.$ 
\end{lm}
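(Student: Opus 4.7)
The plan is to establish an isomorphism on $E_1$ pages via the Five Lemma and then invoke Theorem~\ref{thm:spectralquasi_iso_iso} to propagate it to all $E_\ell$ for $\ell\ge 1$. First, the map $\iota$ is filtered since $\iota(a)=(0,a)$ lies in $F^p\operatorname{Cone}((j_0^*,j_1^*))$ exactly when $a\in F^p\overline{\mCdh}[v]^{\invs}$, while $\overline{\mB}_0[v]^{\invs}$ carries the filtration induced from $\overline{\mCdh}[v]^{\invs}$. It is a chain map since $d_{\operatorname{Cone}}(0,a)=((j_0^*,j_1^*)(a),\,da)=(0,da)$ for $a\in\overline{\mB}_0$.

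The key input is the short exact sequence of filtered complexes
\begin{equation*}
0\longrightarrow\overline{\mB}_0[v]^{\invs}\longrightarrow\overline{\mCdh}[v]^{\invs}\xrightarrow{(j_0^*,j_1^*)}\overline{\Cdh}[v]^{\invs}\oplus\overline{\Cdh}[v]^{\invs}\longrightarrow 0,
\end{equation*}
in which both morphisms are strictly filtered. For the surjection, any element $(s^k\otimes\alpha,\,s^k\otimes\beta)\in F^p\bigl(\overline{\Cdh}[v]^{\invs}\oplus\overline{\Cdh}[v]^{\invs}\bigr)$ admits the explicit filtered lift $s^k\otimes\bigl((1-t)\alpha+t\beta\bigr)\in F^p\overline{\mCdh}[v]^{\invs}$, where $t$ and $1-t$ are pulled back from the $I$-factor and, if needed, the lift is averaged against $\phi^*$ to land in the $\invs$-invariant subspace. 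Applying Proposition~\ref{prop:spec_exact_sequ} yields a short exact sequence on $E_0$ pages, and taking $d_0$-cohomology produces a long exact sequence on $E_1$ pages. Alongside the long exact sequence arising from the cone short exact sequence $0\to\bigl(\overline{\Cdh}[v]^{\invs}\oplus\overline{\Cdh}[v]^{\invs}\bigr)[-1]\xrightarrow{h}\operatorname{Cone}((j_0^*,j_1^*))\xrightarrow{p}\overline{\mCdh}[v]^{\invs}\to 0$, the inclusion $\iota$ induces a morphism between the two long exact sequences.

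To apply the Five Lemma, I need commutativity of this morphism. The right-hand squares commute since $p\circ\iota$ is the inclusion $\overline{\mB}_0[v]^{\invs}\hookrightarrow\overline{\mCdh}[v]^{\invs}$. The connecting-map squares require a representative calculation: given $[(\beta_0,\beta_1)]\in E_1^{p,q-1}(\overline{\Cdh}[v]^{\invs}\oplus\overline{\Cdh}[v]^{\invs})$, the lift $\tilde\beta=s^k\otimes((1-t)\beta_0+t\beta_1)$ gives $\delta_A([(\beta_0,\beta_1)])=[s^k\otimes dt\wedge(\beta_1-\beta_0)]$, while the cone-differential identity $d_{\operatorname{Cone}}(0,\tilde\beta)=((\beta_0,\beta_1),\,s^k\otimes dt\wedge(\beta_1-\beta_0))$ exhibits $\iota_1\circ\delta_A=-h_1$ on $E_1(\operatorname{Cone})$. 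Since Corollary~\ref{cor:spec_isom} guarantees injectivity of $(j_0^*,j_1^*)_1$ on $E_1$, the Five Lemma applied with identity vertical maps at the $\overline{\mCdh}$ and $\overline{\Cdh}\oplus\overline{\Cdh}$ columns yields that $\iota_1$ is an isomorphism, and Theorem~\ref{thm:spectralquasi_iso_iso} then extends this to $\iota_\ell$ for all $\ell\ge 1$.

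The main obstacle is the sign-tracked verification of connecting-map compatibility together with ensuring that the strictly filtered lift survives the restriction to the $\invs$-fixed subspace; the remainder is a direct application of the general spectral sequence machinery collected earlier in the section.
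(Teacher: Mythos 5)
Your proof is correct, but it takes a genuinely different route from the paper's. The paper works entirely at the $E_0$ level: using Lemma~\ref{lm:E_0_isom_diff_forms} it identifies $E_0^{p,\bullet}(\overline{\mB}_0[v]^{\invs})$ with the relative de Rham complex $A^{p,\bullet}(I\times L,\partial I\times L)$ and $E_0^{p,\bullet}(\operatorname{Cone}((j_0^*,j_1^*)))$ with $\operatorname{Cone}((\rho_0^*,\rho_1^*))$, then quotes the standard quasi-isomorphism from relative forms to the mapping cone of the restriction maps to get the $E_1$ isomorphism, and finishes with Theorem~\ref{thm:spectralquasi_iso_iso} exactly as you do. You instead run the long exact sequence on $E_1$ coming from $0\to\overline{\mB}_0[v]^{\invs}\to\overline{\mCdh}[v]^{\invs}\to\overline{\Cdh}[v]^{\invs}\oplus\overline{\Cdh}[v]^{\invs}\to 0$ (via Proposition~\ref{prop:spec_exact_sequ}, with your explicit strictly filtered, $\invs$-invariant lift $s^k\otimes((1-t)\alpha+t\beta)$, which is fine since $\phi^*$ acts trivially on the form factor and commutes with $j_i^*$) against the cone long exact sequence, and apply the Five Lemma. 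This is notable because the paper's remark after Corollary~\ref{cor:B_zero_Cdh_spectral_iso} says the cone was introduced precisely to avoid the boundary homomorphism of that short exact sequence; your argument shows one only needs its compatibility with $h_1$ at the level of representatives, not its full computation, at the cost of some sign bookkeeping. Two small points to tighten: (i) the outer squares of your Five Lemma window compare $(j_0^*,j_1^*)_1$ in one row with the connecting map of the cone sequence in the other, so you implicitly use the identification of that connecting map with $(j_0^*,j_1^*)_1$ (the ``short computation'' in the proof of Lemma~\ref{lm:cone_sequ_exact_generalized}); state it or cite it. (ii) Since your squares commute only up to sign, note that negating a connecting map in one row preserves exactness, so the Five Lemma still applies; and the injectivity of $(j_0^*,j_1^*)_1$ from Corollary~\ref{cor:spec_isom} is not actually needed once the four outer vertical maps are identities. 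Neither point is a gap; the paper's route buys brevity by outsourcing the content to the classical relative-forms/cone quasi-isomorphism, while yours is more self-contained.
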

\begin{proof}
    Fix $p$ and let $\rho_t^*: A^{p,\bullet}(I\times L) \to A^{p,\bullet}(L)$ for $t=0,1$ denote the restriction maps. Consider the following commutative diagram.
    \[
    \begin{tikzcd}
        {A^{p,\bullet}(I\times L,\partial I\times L)} && {E_0^{p,\bullet}\left(\overline{\mB}_0\right)} \\
        \\
        {\operatorname{Cone}\left((\rho_0^*, \rho_1^*)\right)} && {E_0^{p,\bullet}\left(\operatorname{Cone}\left((j^*_0, j_1^*)\right)\right)}
        \arrow["{\iota_{p,\bullet}}", from=1-1, to=1-3]
        \arrow["i"', from=1-1, to=3-1]
        \arrow["i", from=1-3, to=3-3]
        \arrow["{\iota_{p,\bullet}[-1]\oplus\iota_{p,\bullet}[-1]\oplus\iota_{p,\bullet}}"', from=3-1, to=3-3]
    \end{tikzcd}
    \]
    It follows from Lemma~\ref{lm:E_0_isom_diff_forms} that the horizontal arrows are isomorphisms. The left vertical arrow is the standard quasi-isomorphism taking a relative form to the mapping cone of the restrictions. Consequently, the right vertical arrow is a quasi-isomorphism. The result follows by applying Theorem~\ref{thm:spectralquasi_iso_iso} to the morphism $i: \overline{\mB}_0\to \operatorname{Cone}\left((j^*_0, j_1^*)\right)$.
\end{proof}
\begin{cor}\label{cor:B_zero_Cdh_spectral_iso}
    For all $1 \le \ell \le \infty$, there is an isomorphism
    \[
    E_\ell^{p,q}\left(\overline{\mB}_0[v]^{\invs}\right) \cong E_\ell^{p,q-1}\left(\overline{\Cdh}[v]^{\invs}\right).
    \]
\end{cor}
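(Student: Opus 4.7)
The plan is to chain together the two isomorphisms established in the preceding lemma and corollary. By Lemma~\ref{lm:B_zero_cone_quasi_iso}, the inclusion $\iota: \overline{\mB}_0[v]^{\invs} \to \operatorname{Cone}\bigl((j_0^*, j_1^*)\bigr)$ induces an isomorphism
\[
E_\ell^{p,q}\left(\overline{\mB}_0[v]^{\invs}\right) \xrightarrow{\;\sim\;} E_\ell^{p,q}\left(\operatorname{Cone}\bigl((j_0^*, j_1^*)\bigr)\right)
\]
for all $\ell \geq 1$. By Corollary~\ref{cor:cone_seq_exact_E_infty}, applied with $q$ replaced by $q-1$, we have
\[
E_\ell^{p,q}\left(\operatorname{Cone}\bigl((j_0^*, j_1^*)\bigr)\right) \cong E_\ell^{p,q-1}\left(\overline{\Cdh}[v]^{\invs}\right)
\]
for $1 \leq \ell \leq \infty$. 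Composing these two isomorphisms yields the desired statement.

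There is no real obstacle here beyond verifying that the ranges of $\ell$ match up: Lemma~\ref{lm:B_zero_cone_quasi_iso} provides the isomorphism for all $\ell \geq 1$, including $\ell = \infty$ (via the bounded filtration in each degree guaranteed by Lemma~\ref{lm:S_filration_modu} together with Theorem~\ref{thm:spectralquasi_iso_iso}), while Corollary~\ref{cor:cone_seq_exact_E_infty} explicitly handles $1 \leq \ell \leq \infty$. Thus the composition is valid for the full range asserted in the statement.
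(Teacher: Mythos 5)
Your proposal is correct and matches the paper's own proof, which likewise obtains the isomorphism by composing Lemma~\ref{lm:B_zero_cone_quasi_iso} with Corollary~\ref{cor:cone_seq_exact_E_infty} (with the index shift $q \mapsto q-1$). Nothing further is needed.
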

\begin{proof}
    This follows from Lemma~\ref{lm:B_zero_cone_quasi_iso} and Corollary~\ref{cor:cone_seq_exact_E_infty}.
\end{proof}
\begin{rem}
It is possible to calculate the spectral sequence for $(\overline{\mB}_0, \bar{\mt}^{\gt,\bt}_1)$ using Corollary~\ref{cor:spec_isom} and the short exact sequence $0 \to \overline{\mB}_0 \to \overline{\mCdh} \to \overline{\Cdh} \oplus \overline{\Cdh} \to 0.$ The boundary homomorphism of the associated long exact sequence reflects the twisted differential. We use the cone complex instead to avoid explicitly calculating this boundary homomorphism.
\end{rem}

\subsubsection{Relations between the spectral sequences of the pseudoisotopy}
\begin{lm}\label{lm:shrtexct_B_B_Z}
    We have a short exact sequence % https://q.uiver.app/#q=WzAsNSxbMSwwLCJcXG92ZXJsaW5le1xcbUJ9XzAiXSxbMiwwLCJcXG92ZXJsaW5le1xcbUJ9Il0sWzAsMCwiMCJdLFszLDAsIlooUylcXGNkb3QgZVxcb3BsdXMgWihTKVxcY2RvdCBlIl0sWzQsMCwiMCJdLFswLDEsIlxcaW90YSIsMCx7InN0eWxlIjp7InRhaWwiOnsibmFtZSI6Imhvb2siLCJzaWRlIjoidG9wIn19fV0sWzIsMF0sWzEsMywiKGpeKl8wLGpeKl8xKSJdLFszLDRdXQ==&macro_url=https%3A%2F%2Fgist.githubusercontent.com%2Feladkosloff%2F1bed2afb01df49328bc5753a24c8ccd0%2Fraw%2Fbcb3cc566c628562646c94eea676fc32a27c56d1%2FLatexcommandElad
\[\begin{tikzcd}[column sep=scriptsize]
	0 & {\overline{\mB}_0[v]^{\invs}} & {\overline{\mB}[v]^{\invs}} & {Z(S)\cdot e[v]^{\invs}\oplus Z(S)\cdot e[v]^{\invs}} & 0
	\arrow[from=1-1, to=1-2]
	\arrow["\varkappa", hook, from=1-2, to=1-3]
	\arrow["{(j^*_0,j^*_1)}", from=1-3, to=1-4]
	\arrow[from=1-4, to=1-5]
\end{tikzcd}\]
\end{lm}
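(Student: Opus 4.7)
The plan is to verify exactness at each of the three nontrivial positions, noting that all but one of them is essentially formal. Exactness at $\overline{\mB}_0[v]^{\invs}$ is immediate: $\varkappa$ is the inclusion of a submodule, so it is injective. Exactness at $\overline{\mB}[v]^{\invs}$ is a direct consequence of the definitions: using the $\bar{R}$ structure on $\mCdh$ and the residue identifications of Lemma~\ref{lm:cano_iso} together with Proposition~\ref{prop:fukaya_infity_arises_dga_dt}, $\overline{\mB}$ consists exactly of those classes whose $j_i^*$ restrictions lie in $Z(S)\cdot e$, while $\overline{\mB}_0$ is by definition cut out by the vanishing of both restrictions. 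So the image of $\varkappa$ equals the kernel of $(j_0^*,j_1^*)$.

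The only real content is therefore surjectivity of $(j_0^*,j_1^*)$. Given $(c_0,c_1)\in Z(S)\cdot e[v]^{\invs}\oplus Z(S)\cdot e[v]^{\invs}$, I would define the explicit lift
\[
\bar{a} := p^*(1-t)\cdot c_0 \,+\, p^*(t)\cdot c_1 \;\in\; \overline{\mCdh}[v]^{\invs},
\]
where $t$ is the standard coordinate on $I$ and $p:I\times L\to I$ is the projection, with $c_0,c_1$ interpreted as elements of $\overline{\mCdh}$ pulled back from $\overline{\Cdh}$ along $p$. Then $j_0^*(\bar a)=c_0$ and $j_1^*(\bar a)=c_1$ by construction, so both restrictions lie in $Z(S)\cdot e$, placing $\bar a\in\overline{\mB}[v]^{\invs}$. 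Hence $(j_0^*,j_1^*)$ is surjective.

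The verification that $\bar a$ actually belongs to $\overline{\mB}[v]^{\invs}$ splits into two compatibility checks. The shift $[v]$ is purely a grading operation commuting with all structures, so it reduces the $[v]$-shifted statement to the unshifted one. For the involution case $\invs=-\phi^*$, one uses that $\phi^*$ acts trivially on the interval factor $I$ (so $p^*t$ and $p^*(1-t)$ are $\phi^*$-fixed) and that multiplication by $\phi^*$-fixed smooth functions preserves the $(-\phi^*)$-eigenspace. Since $c_0,c_1$ were taken in the $(-\phi^*)$-eigenspace, $\bar a$ is also $(-\phi^*)$-fixed.

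The main (mild) obstacle is bookkeeping: one must carefully reconcile the various identifications at play, namely $\overline{\mCdh}\simeq S\otimes A^*(I\times L)$, the identification of $\overline{Z(\Rdh)\cdot 1}$ with $Z(S)\cdot e$, and the sign convention for $\phi^*$ on the shifted complex from Definition~\ref{dfn:inv_shift}. Once these are unwound, the explicit lift above works verbatim, and no analytic or cohomological input is required.
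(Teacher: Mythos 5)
Your module-level argument is fine and in fact fills in a point the paper leaves implicit: the identification of $\overline{\mB}$ with the classes in $\overline{\mCdh}$ whose restrictions $j_i^*$ lie in $Z(S)\cdot e$, exactness in the middle, and the explicit interpolating lift $p^*(1-t)\cdot c_0+p^*(t)\cdot c_1$ for surjectivity (which is equivariant for $-\phi^*$ and compatible with the shift $[v]$, as you note). As a statement about modules, nothing in your proposal fails.

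However, there is a gap relative to what the lemma actually needs to deliver, and relative to what the paper's proof consists of: the sequence must be a short exact sequence of \emph{cochain complexes}, since it is fed into the long exact sequence machinery of Proposition~\ref{prop:spec_exact_sequ} in Lemma~\ref{lm:B_zero_almost_iso_B}. You never verify compatibility with the differentials. Concretely, one must check that $Z(S)\cdot e\oplus Z(S)\cdot e$ is a subcomplex of $\overline{\Cdh}[v]^{\invs}\oplus\overline{\Cdh}[v]^{\invs}$, so that the third term carries the (zero) induced differential and $(j_0^*,j_1^*)$ is a chain map; this is exactly the content of the paper's proof. By Corollary~\ref{cor:bar_fukaya_deformed_boundry} the relevant differential is $\eta\mapsto (\Id\otimes d)\eta-[\bar{\bt},\eta]$, and by Corollary~\ref{cor:center_of_hat_algebra} the bracket vanishes on the even elements $Z(S)\cdot e$, while $\Id\otimes d$ kills them as well; hence $Z(S)\cdot e$ is annihilated by the differential. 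One also uses that $j_i^*$ intertwines $\bar{\mt}^{\gt,\bt}_1$ with the deformed differentials on $\overline{\Cdh}$ (the pseudoisotopy restriction property), so that $\varkappa$ and $(j_0^*,j_1^*)$ are chain maps. Without this step the exactness you prove is only an exactness of graded modules and cannot be used where the lemma is invoked; adding the check above (two or three lines, citing the two corollaries) closes the gap.
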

\begin{proof}
By Corollary~\ref{cor:bar_fukaya_deformed_boundry}, we have $\bar{\mt}^{\gamma,\bt}_1(\eta) = (\Id \otimes d)\eta -[\bar{\bt},\eta].$
    It follows from Corollary~\ref{cor:center_of_hat_algebra} that $Z(S)\cdot e\oplus Z(S)\cdot e$ is a subcomplex of $\overline{\Cdh}\oplus\overline{\Cdh}$. So, the short exact sequence of modules
    \[\begin{tikzcd}[column sep=scriptsize]
	0 & {\overline{\mB}_0^{\invs}} & {\overline{\mB}^{\invs}} & {Z(S)^{\invs}\cdot e\oplus Z(S)^{\invs}\cdot e} & 0,
	\arrow[from=1-1, to=1-2]
	\arrow["\varkappa", hook, from=1-2, to=1-3]
	\arrow["{(j^*_0,j^*_1)}", from=1-3, to=1-4]
	\arrow[from=1-4, to=1-5]
    \end{tikzcd}\]
    is a short exact sequence of cochain complexes.
\end{proof}

\begin{lm}\label{lm:B_zero_almost_iso_B}
     Let $\ell\ge 1$ be an integer.
      The map $\varkappa_\ell^{p,q}:E_\ell^{p,q}\left(\overline{\mB}_0[v]^{\invs}\right)\to E_\ell^{p,q}\left(\overline{\mB}[v]^{\invs}\right)$ is an isomorphism if $2p+q+v \ne 0,1$ or $p$ is odd. Moreover $E_{\ell}^{p,q}\left(\overline{\mB}[v]^{\invs}\right)=0$, if $p$ is even and $2p+q+v=1.$ If $p$ is even and $2p+q+v=0$, then $E_{\ell}^{p,q}\left(\overline{\mB}[v]^{\invs}\right)\simeq\left(S[v]^{\invs}\right)_{-p-v}\otimes 1$. 
\end{lm}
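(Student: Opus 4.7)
The approach is to use the short exact sequence of Lemma~\ref{lm:shrtexct_B_B_Z}, after shifting by $[v]$ and restricting to $\invs$-invariants, together with its associated spectral-sequence long exact sequence. First, I would apply Proposition~\ref{prop:spec_exact_sequ}: the inclusion $\varkappa$ is strictly filtered because $\overline{\mB}_0$ inherits the $\nu_S$-filtration, and the quotient $(j_0^*,j_1^*)$ is strictly filtered because any $(s_0 e, s_1 e) \in Z(S)\cdot e^{\oplus 2}$ of filtration~$p$ admits the lift $(1-t)s_0 e + t s_1 e \in \overline{\mB}$ of the same filtration. This yields a short exact sequence on $E_0$-pages, and taking $d_0$-cohomology produces the long exact sequence
\[
\cdots \xrightarrow{\delta} E_1^{p,q}(\overline{\mB}_0[v]^{\invs}) \xrightarrow{\varkappa_1} E_1^{p,q}(\overline{\mB}[v]^{\invs}) \to E_1^{p,q}(Z(S)\cdot e[v]^{\invs\oplus 2}) \xrightarrow{\delta} E_1^{p,q+1}(\overline{\mB}_0[v]^{\invs}) \to \cdots
\]

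Since $s\cdot e$ is central for $s\in Z(S)$ and $\bar{\mt}_1^{\gt,\bt} = d_{I}\otimes\Id - [\bar{\bt},\cdot]$ by Corollary~\ref{cor:bar_fukaya_deformed_boundry}, the differential vanishes identically on $Z(S)\cdot e[v]^{\invs}$. Hence its spectral sequence degenerates at $E_1$, and combining Lemma~\ref{lm:E_infty_page_center} with the shift formula~\eqref{eq:sssh} and passing to $\invs$-invariants gives
\[
E_r^{p,q}(Z(S)\cdot e[v]^{\invs}) = \begin{cases}(S[v]^{\invs})_{-p-v}, & p\in 2\N,\ 2p+q+v=0,\\ 0, & \text{otherwise,}\end{cases}
\]
for all $r\ge 1$. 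In the generic case ($2p+q+v\notin\{0,1\}$ or $p$ odd), both the preceding and the succeeding quotient entries in the long exact sequence vanish, so $\varkappa_1^{p,q}$ is an isomorphism at once.

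For the exceptional cases I would compute the connecting homomorphism~$\delta$ explicitly. Given $(s_0 e, s_1 e)$ with $s_i\in Z(S)$, the lift $\tilde{\eta}=(1-t)s_0 e + t s_1 e\in\overline{\mB}$ satisfies $\bar{\mt}_1^{\gt,\bt}(\tilde{\eta})=dt\,(s_1-s_0)e\in\overline{\mB}_0$, since the bracket term collapses by centrality. Under the identification of Corollary~\ref{cor:B_zero_Cdh_spectral_iso} (integration along $I$), this class corresponds to the class of $(s_1-s_0)e$ in $E_1^{p,q}(\overline{\Cdh}[v]^{\invs})$; consequently $\delta$ annihilates the diagonal and is injective on the antidiagonal, with image a copy of $(S[v]^{\invs})_{-p-v}$. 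Reading off the long exact sequence: for $p$ even and $2p+q+v=0$, one gets $E_1^{p,q}(\overline{\mB}[v]^{\invs})\cong(S[v]^{\invs})_{-p-v}$ (the surviving diagonal); for $p$ even and $2p+q+v=1$, one gets $E_1^{p,q}(\overline{\mB}[v]^{\invs})=0$ (squeezed between the surjective $\delta$ on the incoming side and the vanishing term on the outgoing side).

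To extend the conclusion to all pages $\ell>1$, I would use that the constant $t$-independent cycles $s\cdot e\in\overline{\mB}$ for $s\in Z(S)$ are annihilated by $\bar{\mt}_1^{\gt,\bt}$, so they lift the diagonal of $Z(S)\cdot e^{\oplus 2}$ to permanent cocycles at every $Z_r$ level, ensuring surjectivity onto the relevant part of $E_r(Z(S)\cdot e^{\oplus 2})$; the antidiagonal part, already killed on $E_1$ by $\delta$, cannot obstruct the inductive propagation. Proposition~\ref{prop:spec_exact_sequ} then applies at every page and the same case analysis repeats uniformly. The main obstacle will be making the identification of $\delta$ rigorous in the third paragraph, namely verifying that the integration-along-$I$ isomorphism of Corollary~\ref{cor:B_zero_Cdh_spectral_iso} sends the concrete cocycle $dt\,(s_1-s_0)e$ to the class of $(s_1-s_0)e$ in the appropriately twisted copy of $(S[v]^{\invs})_{-p-v}$; a secondary subtlety is the careful interplay between the $[v]$-shift and the $\invs$-action (Definition~\ref{dfn:inv_shift}) in both the computation of $E_1^{p,q}$ for $Z(S)\cdot e[v]^{\invs}$ and in the verification that the diagonal survives all higher differentials.
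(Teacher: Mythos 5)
Your $E_1$-page analysis is essentially the paper's base case: the same short exact sequence of Lemma~\ref{lm:shrtexct_B_B_Z}, the same four-way case division, with the only difference that you compute the connecting homomorphism explicitly where the paper either identifies $E_1^{p,q}(\overline{\mB}[v]^{\invs})$ directly inside the ambient $E_1$ (for $p$ even, $2p+q+v=0$) or uses exactness plus a dimension count (for $p$ even, $2p+q+v=1$). The identification of $\delta$ that you flag as the "main obstacle" is manageable: all you actually need is that $[dt\,(s_1-s_0)e]$ is nonzero in $E_1^{p,q+1}(\overline{\mB}_0[v]^{\invs})$, which follows from Lemma~\ref{lm:E_0_isom_diff_forms} applied to $\mathcal{L}=(I\times L,\partial I\times L)$, since this class corresponds to $(s_1-s_0)\otimes[dt]$ and $[dt]$ generates $H^1(I\times L,\partial I\times L)$.

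The genuine gap is the passage to $\ell>1$. Write $K=Z(S)\cdot e[v]^{\invs}\oplus Z(S)\cdot e[v]^{\invs}$. Proposition~\ref{prop:spec_exact_sequ} only produces short exact sequences of $E_r$-pages for $r\ge 1$ under the hypothesis that $(j_0^*,j_1^*)$ is surjective on $r$-cycles $Z_r^{p,q}(\overline{\mB}[v]^{\invs})\to Z_r^{p,q}(K)$, and this hypothesis fails already for $r=1$ in the exceptional bidegrees: any lift of an antidiagonal element $(0,se)$ has the form $tse+\xi$ with $\xi\in\overline{\mB}_0[v]^{\invs}$, and its differential $dt\,se+\bar{\mt}^{\gt,\bt}_1(\xi)$ cannot lie in $F_{\nu_S}^{p+1}$, because that would make $[dt\,se]$ a $d_0$-coboundary in $E_0(\overline{\mB}_0[v]^{\invs})$, contradicting the nonvanishing just noted; only the diagonal lifts to permanent cocycles. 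Indeed the conclusion of Proposition~\ref{prop:spec_exact_sequ} would force $E_r^{p,q}(\overline{\mB}[v]^{\invs})\to E_r^{p,q}(K)$ to be surjective, which contradicts your own $E_1$ computation that the image is only the diagonal when $p$ is even and $2p+q+v=0$. So "the same case analysis repeats uniformly" does not go through, and there is no long exact sequence of $E_r$-pages for $r\ge 2$ to read off. The paper closes this step differently: it runs an induction on the page applied directly to the comparison map $\varkappa_\ell$, using the exact-couple comparison lemma \cite[Lemma 1.2.3]{peschke2022exactcouplesspectralsequences} — $\varkappa_{\ell+1}^{p,q}$ is surjective once $\varkappa_{\ell}^{p+\ell,q+1-\ell}$ is injective, and it is injective once either $\varkappa_{\ell}$ is surjective at the source bidegree $(p-\ell,q-1+\ell)$ of the incoming differential, or that source is the exceptional spot, where the incoming $d_\ell$ vanishes because the generator $s^{p'/(n-1)}\otimes 1$ of $E_1^{p',q'}(\overline{\mB}[v]^{\invs})$ is an honest $\bar{\mt}^{\gt,\bt}_1$-cocycle. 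You need an argument of this comparison type (or some substitute) to propagate your correct $E_1$ statement to all pages $\ell\ge 1$.
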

\begin{proof}
    For simplicity, we assume that $v=0$. The proof for $v\ne 0$ is similar. We proceed by induction on $\ell$.
    
    Base case $\ell=1$:
    We determine the behavior of $\varkappa_1^{p,q}$ using the long exact sequence associated with the short exact sequence in Lemma~\ref{lm:shrtexct_B_B_Z}:
    \[
    \begin{tikzcd}[column sep=small]
        \cdots \to E_{1}^{p,q-1}\left(K\right) \xrightarrow{\partial} E_{1}^{p,q}\left(\overline{\mB}_0^{\invs}\right) \xrightarrow{\varkappa_1} E_{1}^{p,q}\left(\overline{\mB}^{\invs}\right) \xrightarrow{(j^*_0,j^*_1)} E_{1}^{p,q}\left(K\right) \to \cdots
    \end{tikzcd}
    \]
    where $K = Z(S)^{\invs}\cdot e \oplus Z(S)^{\invs}\cdot e$. We analyze the map $\varkappa_1^{p,q}$ in four cases:
    \begin{itemize}
        \item If $p$ is odd: The terms $E_{1}^{p,q}(K)$ and $E_{1}^{p,q-1}(K)$ vanish by Lemma~\ref{lm:E_infty_page_center}. Thus, $\varkappa_1^{p,q}$ is an isomorphism.
        
        \item If $p$ is even and $2p+q \ne 0, 1$: The condition $2p+q \ne 0$ ensures $E_{1}^{p,q}(K)=0$ by Lemma~\ref{lm:E_infty_page_center}. Similarly, the condition $2p+q \ne 1$ implies  $2p+(q-1) \ne 0$, so $E_{1}^{p,q-1}(K)=0$. Consequently, $\varkappa_1^{p,q}$ is an isomorphism.
        
        \item If $p$ is even and $2p+q=0$: Lemma~\ref{lm:Fpexplicit} implies $E_0^{p,q-1}\left(\overline{\mB}^{\invs}\right)=E_0^{p,q-1}\left(\overline{\mC}^\invs\right)=0$, so
        \[
        E_{1}^{p,q}\left(\overline{\mB}^{\invs}\right)=\ker d_0|_{\overline{\mB}}\subset \ker d_0=E_1^{p,q}\left(\overline{\mC}^\invs\right)\subset E_0^{p,q}\left(\overline{\mC}^\invs\right).
        \]
        By Lemma~\ref{lm:E_0_isom_diff_forms}, we have $E_1^{p,q}\left(\overline{\mC}^\invs\right)= \iota_{p,q}((S^{\invs})_{-p}\otimes 1) \subset E_1^{p,q}\left(\overline{\mB}^{\invs}\right)$. Thus, $E_{1}^{p,q}\left(\overline{\mB}^{\invs}\right)\simeq (S^{\invs})_{-p}\otimes 1$.
        
        \item If $p$ is even and $2p+q=1$: By Corollary~\ref{cor:B_zero_Cdh_spectral_iso}, and Lemma~\ref{lm:E_0_isom_diff_forms} we identify $E_{1}^{p,q}\left(\overline{\mB}_0^{\invs}\right)\cong (S^{\invs})_{-p}\otimes 1$ and $E_{1}^{p,q-1}\left(\overline{\mB}_0^{\invs}\right)=0$. By Lemma~\ref{lm:E_infty_page_center}, we have $E_1^{p,q-1}\left(K\right)\cong (S^{\invs})_{-p}\otimes 1 \oplus (S^{\invs})_{-p}\otimes 1$ and by the result of the preceding case, $E_1^{p,q-1}\left(\overline{\mB}^{\invs}\right)\cong (S^{\invs})_{-p}\otimes 1$. Substituting these calculations into the relevant segment of the exact sequence yields:
        \[0\to(S^{\invs})_{-p}\otimes 1 \xrightarrow{(j^*_0,j^*_1)} (S^{\invs})_{-p}\otimes 1 \oplus (S^{\invs})_{-p}\otimes 1 \xrightarrow{\partial} (S^{\invs})_{-p}\otimes 1 \xrightarrow{\varkappa_1} E_{1}^{p,q}\left(\overline{\mB}^{\invs}\right) \to 0.\] 
Exactness and finite dimensionality implies $E_{1}^{p,q}\left(\overline{\mB}^{\invs}\right)=0$.
    \end{itemize}
    This establishes the base case.

    Inductive step:
    Assume by induction that $\varkappa_\ell^{p,q}$ is an isomorphism if $2p+q \ne 0, 1$ or if $p$ is odd. We assume $2p+q \ge 0$, otherwise $E_{\ell}^{p,q}(\overline{\mB}_{0})=E_{\ell}^{p,q}(\overline{\mB})=0$ by Lemma~\ref{lm:Fpexplicit} and Theorem~\ref{thm:filtered_spec_conv}.
    
    By \cite[Lemma 1.2.3]{peschke2022exactcouplesspectralsequences}, $\varkappa_{\ell+1}^{p,q}$ is an epimorphism if $\varkappa_{\ell}^{p+\ell,q+1-\ell}$ is a monomorphism. By the induction hypothesis $\varkappa_{\ell}^{p+\ell,q+1-\ell}$ is a monomorphism if $2(p+\ell) + (q+1-\ell)\ne 0,1$. This is indeed the case since
    \[
    2(p+\ell) + (q+1-\ell) = 2p+q + 1 + \ell > 1.
    \]
    Thus, $\varkappa_{\ell+1}^{p,q}$ is an epimorphism.

    We now show that $\varkappa_{\ell+1}^{p,q}$ is a monomorphism.
    Let $(p', q') = (p-\ell, q-1+\ell)$. We distinguish two cases based on the domain of the relevant differential.
    \begin{itemize}
    \item Case 1: $p'$ is even and $2p'+q'=0$.
    It follows from the base of the induction that if $(S^{\invs})_{-p'}\otimes 1\neq 0$, then $s^{\frac{p^\prime}{n-1}}\otimes 1$ is a generator of $E_1^{p',q'}(\overline{\mB}).$ We calculate using equation~\eqref{eq:m1bdt}
    \[\bar{\mt}^{\gt,\bt}_1(s^{\frac{p^\prime}{n-1}}\otimes 1)=d(s^{\frac{p^\prime}{n-1}}\otimes 1)-[\bar{\bt},s^{\frac{p^\prime}{n-1}}\otimes 1]=0.\]
    Thus $d_\ell:E_1^{p',q'}(\overline{\mB})\to E_1^{p,q}(\overline{\mB})$ is trivial.
    Since the differential entering $E_\ell^{p,q}$ vanishes, no new relations are introduced in the quotient, and the isomorphism $\varkappa_{\ell}^{p,q}$ descends to a monomorphism $\varkappa_{\ell+1}^{p,q}$.

    \item Case 2: All other cases.
    Here, either $2p'+q' \ne 0$ or $p'$ is odd. The induction hypothesis applies, meaning $\varkappa_{\ell}^{p',q'}$ is an epimorphsim. By \cite[Lemma 1.2.3]{peschke2022exactcouplesspectralsequences}, the surjectivity of the map on the domain of the differential implies that $\varkappa_{\ell+1}^{p,q}$ is a monomorphism.
\end{itemize}

    We have proven that $\varkappa_{\ell+1}^{p,q}$ is both a monomorphism and an epimorphism if $2p+q+v \ne 0,1$ or $p$ is odd. The proof also explicitly verified the cases for the vanishing and specific module structure when $2p+q+v \in \{0,1\}$ and $p$ is even.
\end{proof}
\begin{lm}\label{lm:B_not_e_almost_iso}
      Let $Q:\overline{\mB}[v]^{\invs}\to \overline{\mB}_{\not \e}[v]^{\invs}$ be the quotient map, and let $1\le \ell \le \infty$. Then, for $p$ odd, or $2p+q+v\ne 0$, the map $Q_{\ell}^{p,q}:E_\ell^{p,q}\left(\overline{\mB}[v]^{\invs}\right)\to E_\ell^{p,q}\left(\overline{\mB}_{\not \e}[v]^{\invs}\right)$ is an isomorphism. For $p$ even and $2p+q+v=0$, the map $Q_{\ell}^{p,q}:E_\ell^{p,q}\left(\overline{\mB}[v]^{\invs}\right)\to E_\ell^{p,q}\left(\overline{\mB}_{\not \e}[v]^{\invs}\right)$ is trivial.
\end{lm}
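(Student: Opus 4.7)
The plan is to analyze $Q$ through the short exact sequence of filtered cochain complexes
\[
0 \to Z(S)^{\invs}\cdot e[v] \xrightarrow{\iota} \overline{\mB}^{\invs}[v] \xrightarrow{Q} \overline{\mB}_{\not\e}^{\invs}[v] \to 0,
\]
with $Z(S)^{\invs}\cdot e[v]$ carrying the induced $\nu_S$-filtration and $\overline{\mB}_{\not\e}^{\invs}[v]$ the quotient one. First I would verify that $\iota$ is strictly filtered and that $\bar{\mt}^{\gt,\bt}_1$ vanishes on $Z(S)^{\invs}\cdot e[v]$: by Corollary~\ref{cor:bar_fukaya_deformed_boundry} this differential is $(\Id\otimes d)-[\bar{\bt},\cdot\,]$, the first summand kills the unit, and the second vanishes on $s\cdot e$ with $s\in Z(S)$ thanks to the unit axiom and Corollary~\ref{cor:center_of_hat_algebra}. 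Proposition~\ref{prop:spec_exact_sequ} then gives a short exact sequence on $E_0$ pages, whose $d_0$-cohomology is the long exact sequence
\[
\cdots\to E_1^{p,q}(A)\xrightarrow{\iota_1}E_1^{p,q}(B)\xrightarrow{Q_1}E_1^{p,q}(C)\xrightarrow{\partial_1}E_1^{p,q+1}(A)\to\cdots,
\]
where I abbreviate $A=Z(S)^{\invs}\cdot e[v]$, $B=\overline{\mB}^{\invs}[v]$, $C=\overline{\mB}_{\not\e}^{\invs}[v]$.

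Since $A$ has trivial differential, $E_r^{p,q}(A)=E_0^{p,q}(A)$ for all $r$. A filtration count using $|s^k|=k(1-n)$ together with Definition~\ref{dfn:inv_shift} shows that $E_r^{p,q}(A)$ is supported on the diagonal $2p+q+v=0$ and equals $(Z(S)^{\invs}[v])_{-p-v}\cdot e$ there. Combining Corollary~\ref{cor:center_of_hat_algebra} with $|s^k|=k(1-n)$ shows that $Z(S)_{-p}=0$ for $p$ odd and $Z(S)_{-p}=S_{-p}$ for $p$ even, whence $(Z(S)^{\invs}[v])_{-p-v}=(S[v]^{\invs})_{-p-v}$ for $p$ even. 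By Lemma~\ref{lm:B_zero_almost_iso_B} this is exactly $E_r^{p,q}(B)$ on the diagonal, so $\iota_r^{p,q}$ is an isomorphism there.

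The claim at $\ell=1$ then follows by exactness. If $p$ is odd, or if $p$ is even with $2p+q+v\notin\{0,-1\}$, then $E_1^{p,q}(A)$ and $E_1^{p,q+1}(A)$ both vanish and $Q_1^{p,q}$ is an isomorphism. If $p$ is even and $2p+q+v=0$, then $\iota_1^{p,q}$ is surjective onto $E_1^{p,q}(B)$, forcing $Q_1^{p,q}=0$. The subtle remaining case is $p$ even with $2p+q+v=-1$: here $E_1^{p,q+1}(A)\neq 0$, but the identification above applied at the shifted bidegree forces $\iota_1^{p,q+1}$ to be an isomorphism, so $\partial_1^{p,q}=0$ and $Q_1^{p,q}$ is again an isomorphism.

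Finally, to propagate from $\ell=1$ to all $\ell\geq 1$ I would run an induction on $\ell$ modeled on the proof of Lemma~\ref{lm:B_zero_almost_iso_B}, using \cite[Lemma 1.2.3]{peschke2022exactcouplesspectralsequences}. The key simplification is that the higher differentials $d_r$ on $E_r(A)$ all vanish for bidegree reasons, because $E_r(A)$ is concentrated on $2p+q+v=0$ while $d_r$ has bidegree $(r,1-r)$. The main obstacle will be controlling this induction step: I need to ensure that at each page the image of $\iota_r$ still coincides with the unit submodule of $E_r(B)$, which reduces to checking that no differential $d_{r'}$ in the $B$-spectral sequence maps into or out of this submodule for $1\le r'<r$, and this in turn follows from the vanishing of $d_{r'}$ on $E_{r'}(A)$ via naturality of the long exact sequence.
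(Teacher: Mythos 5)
Your overall architecture — a short exact sequence whose kernel is the unit subcomplex, the induced long exact sequence on $E_1$, and a page-by-page induction modeled on Lemma~\ref{lm:B_zero_almost_iso_B} — is the same as the paper's. However, there is a genuine gap at the first step: you misidentify the kernel of $Q$. By the definition of $\cC_{\not \e}$ in Section~\ref{ssec:obs_theory} (applied to $\mB$, whose ground dga is $Z(\mRdh)$ by Lemma~\ref{lm:bt_exist_base_case}), the subcomplex being quotiented is $Z(\ker d_{\mRdh})\cdot \e$, and after passing to residues this is $\bigl(Z(S)\otimes \ker (d_{A^*(I)})\bigr)\cdot \e \cong Z(S)\cdot \e \,\oplus\, \bigl(Z(S)\otimes A^1(I)\bigr)\cdot \e$, not merely $Z(S)^{\invs}\cdot \e[v]$. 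The extra summand really lives in $\overline{\mB}$: an element such as $s^{2i}\otimes f(t)\,dt\cdot \e$ is closed, is central because its $\Z_2$-degree coming from $S$ is even (Corollary~\ref{cor:center_of_hat_algebra}), and satisfies $j_0^*=j_1^*=0$, so it belongs to $\overline{\mB}$ and is annihilated by $Q$. These classes occupy exactly the bidegrees with $p$ even and $2p+q+v=1$, the row adjacent to the diagonal you analyze.

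As a result, the short exact sequence you work with computes the spectral sequence of $\overline{\mB}[v]^{\invs}/\bigl(Z(S)^{\invs}\cdot \e[v]\bigr)$, which surjects onto $\overline{\mB}_{\not \e}[v]^{\invs}$ but is a different filtered complex, and you never compare the two; so your conclusions are about the wrong target. In particular, the case that actually requires care — $p$ even, $2p+q+v=1$, where the kernel's $E_1$ equals $S_{-p}\otimes A^1(I)\neq 0$ and one must invoke Lemma~\ref{lm:B_zero_almost_iso_B} to see $E_1^{p,q}\bigl(\overline{\mB}[v]^{\invs}\bigr)=0$, whence both source and target of $Q_1^{p,q}$ vanish — never appears in your argument, while your ``subtle remaining case'' $2p+q+v=-1$ is vacuous, since all complexes involved vanish in negative form degree by Lemma~\ref{lm:S_filration_modu}. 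To repair the proof, replace your kernel by $\bigl(\overline{Z(\ker d_{\mRdh})}\cdot \e\bigr)^{\invs}$, compute its $E_1$ (two nonzero rows, $2p+q+v=0$ and $2p+q+v=1$, both with $p$ even), and redo the case analysis at $E_1$ before running the page induction; this is precisely the route the paper takes.
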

\begin{proof}
    For simplicity, we assume that $v=0$. The proof for $v\ne 0$ is similar.
    By definition, we have a short exact of modules
    \[
        0\to (\overline{\ker\left(d_{\mRdh} \right)}\cdot \e)^\kappa \hookrightarrow \overline{\mB}^\kappa \overset{Q}{\longrightarrow} \overline{\mB}_{\not \e}^\kappa \to 0.
    \]
    By Lemma~\ref{lm:C_not_e_inf_alg} the sequence above is an exact sequence of complexes where the differential on $\overline{\ker\left(d_{\mRdh} \right)}\cdot \e$ is trivial.

    Recall that $S$ has a trivial differential. Using Proposition~\ref{prop:sf_extension_arises_dga} with $\cC=\mRd$, we deduce that $\overline{\ker\left(d_{\mRdh} \right)}\simeq S\otimes \overline{\ker(d_{\Rd})}$. As both $S$ and $\ker(d_{\Rd})$ are algebras over a field, we obtain:
    \[
        Z(\overline{\ker\left(d_{\mRdh} \right)}) = Z(S)\otimes \ker (d_{A^*(I)}).
    \]
    Since $Z(S)=\bigoplus_{i\in \N}S_{-2i}$, Theorem~\ref{thm:filtered_spec_conv} implies that for $0\le \ell\le \infty$:
    \begin{align}
        E_{\ell}^{p,q}\left(\overline{\ker\left(d_{\mRdh} \right)}\cdot \e\right)
        &\cong \frac{F_{\nu_S}^p \left(Z(S)\otimes \ker (d_{A^*(I)})\right)_{p+q}}{F_{\nu_S}^{p+1} \left(Z(S)\otimes \ker (d_{A^*(I)})\right)_{p+q}} \nonumber\\
        &\cong\begin{cases}
            S_{-p}\otimes 1, & p\in 2\N,\; 2p+q=0,\\
            S_{-p}\otimes A^1(I), & p\in 2\N,\; 2p+q=1,\\
            0, & \text{otherwise}.
        \end{cases}\label{eq:}
    \end{align}

    We use this computation to analyze the map $Q_1^{p,q}$.
    The short exact sequence of complexes induces a long exact sequence on the $E_1$ page:
    \[
    \begin{tikzcd}[column sep=small]
        E_{1}^{p,q-1}\left(\overline{\mB}_{\not \e}\right) \xrightarrow{\partial} E_{1}^{p,q}\left(\overline{\ker\left(d_{\mRdh} \right)}\cdot \e\right) \to E_{1}^{p,q}\left(\overline{\mB}\right) \xrightarrow{Q_1^{p,q}} E_{1}^{p,q}\left(\overline{\mB}_{\not \e}\right) \xrightarrow{\partial} E_{1}^{p,q+1}\left(\overline{\ker\left(d_{\mRdh} \right)}\cdot \e\right) 
    \end{tikzcd}
    \]
    We analyze $Q_1^{p,q}$ by distinguishing four cases distinguished by $2p+q,$ which is the differential form degree, and the parity of $p.$

    \begin{itemize}
        \item Case 1: $2p+q<0$. By Lemma~\ref{lm:Fpexplicit} with $h=p+q$ and Theorem~\ref{thm:filtered_spec_conv} we have $E_{1}^{p,q}\left(\overline{\mB}_{\not \e}\right)=E_{1}^{p,q}\left(\overline{\mB}\right)=0.$
        \item Case 2: $p$ is even and $2p+q=0$.
        From Lemma~\ref{lm:B_zero_almost_iso_B}, we know $E_{1}^{p,q}(\overline{\mB}^{\invs}) \cong \R$. By the above computation, $E_{1}^{p,q}(\overline{\ker(d_{\mRdh})}\cdot \e) \cong \R$ as well. $E_{1}^{p,q-1}\left(\overline{\mB}_{\not \e}\right)=0$ as we have $2p+q-1<0$. So, the map between $E_{1}^{p,q}((\overline{\ker(d_{\mRdh})}\cdot \e)^\kappa)$ and $E_{1}^{p,q}(\overline{\mB}^{\invs})$ is an isomorphism, which forces the map $Q_1^{p,q}$ to be the zero map.

        \item Case 3: $p$ is even and $2p+q=1$.
        We have $E_{1}^{p,q+1}\left(\overline{\ker\left(d_{\mRdh} \right)}\cdot \e\right)\cong 0$, so $Q_1^{p,q}$ is surjective. From Lemma~\ref{lm:B_zero_almost_iso_B}, we have $E_{1}^{p,q}(\overline{\mB}^{\invs})=0$. Consequently,  $Q_1^{p,q}$ is an isomorphism of trivial modules. 
        
        \item Case 4: All other cases ($p$ odd, or $p$ even with $2p+q >1$).
        In this range, the calculation above shows that $E_{1}^{p,q}(\overline{\ker(d_{\mRdh})}\cdot \e) = 0$ and $E_{1}^{p,q+1}(\overline{\ker(d_{\mRdh})}\cdot \e) = 0$.
        Thus, $Q_1^{p,q}$ is an isomorphism.
    \end{itemize}

    This establishes the result for $\ell=1$. The argument for general $\ell$ proceeds by induction as in the proof of Lemma~\ref{lm:B_zero_almost_iso_B}.
\end{proof}

\subsection{Proofs of main theorems}\label{subsec:proofs_of_main_theorems}

\begin{proof}[Proof of Theorem~\ref{thm:spectral_conv}]
First, we establish the $E_1$ page of the spectral sequence. From Theorem~\ref{thm:hat_spec_seq}, we obtain 
\[E_1^{p,q}\left(\overline{\Cdh}\right)\cong S_{-p}\otimes H^{q+2p}(L,d).\]

Next, we show that $E_1^{p,q} = E_{n-1}^{p,q}$, i.e., the differentials $d_r$ for $1 \leq r < n-1$ are all trivial. This follows from Lemma~\ref{lm:energy_zero_degree_1} and Lemma~\ref{lm:differential_formula_ell_page}.

We then demonstrate that $E_n^{p,q} = E_\infty^{p,q}$, i.e., the spectral sequence degenerates at the $E_n$ page. Observe that for any pair $(p,q)$ where $q+2p \neq 0$, either $E_1^{p,q} = 0$ or $E_1^{p+n-1,q+2-n} = 0$. This is because $E_1^{p+n-1,q+2-n} \cong S_{-(p+n-1)} \otimes H^{q+2p+n}(L,d)$, which requires $0 \leq q+2p+n \leq n$ to be non-zero, implying $q+2p = 0$.

A similar argument shows that for any $r \geq n$, the differential $d_r: E_r^{p,q} \to E_r^{p+r,q-r+1}$ must be trivial since the target involves a cohomology degree of $q+2p+r+1 > n$, which exceeds the dimension of $L$. Hence $E_n^{p,q} = E_\infty^{p,q}$.

The critical step is computing the differential $d_{n-1}: E_{n-1}^{p,q} \to E_{n-1}^{p+n-1,q+2-n}$. For pairs $(p,q)$ where $2p+q \neq 0$, our earlier discussion shows that $d_{n-1}$ is trivial.

Let us focus on the case where $2p+q = 0$. From Theorem~\ref{thm:hat_spec_seq} and Corollary~\ref{cor:center_of_hat_algebra}, we can deduce that $d_{n-1}$ is trivial if either $n$ is odd or $p$ is even. For the remaining case (when $n$ is even and $p$ is odd), Lemma~\ref{lm:differential_formula_ell_page} and Corollary~\ref{cor:center_of_hat_algebra} tell us that the differential is given by:
\[d_{n-1}(a) = -2[\bar{b}] \cup a.\]

To determine whether this map is an isomorphism, we consider $h: S_{-p} \to S_{-p+1-n}$ defined by $a \mapsto a \cdot (-2\int_L \bar{b})$. This map can be expressed as a composition:
\[a \mapsto a \otimes 1 \xrightarrow{d_{n-1}} -2a \cdot 1 \cup [\bar{b}] \xrightarrow{\cap [L]} -2a \int_L \bar{b}.\]

Since multiplication by $s$ is bijective, all maps in this composition are isomorphisms except possibly $d_{n-1}$. Therefore, $h$ is an isomorphism if and only if $d_{n-1}$ is. We conclude that $d_{n-1}$ is an isomorphism precisely when $\int_L \bar{b} \neq 0$, and otherwise, it is the zero map.

Finally, note that the exact sequence 
\[0 \to Z(S) \cdot 1 \to \overline{\Cdh} \to \overline{\Cdh_{\not \e}} \to 0\]
satisfies the assumptions of Proposition~\ref{prop:spec_exact_sequ}. The result now follows from Lemma~\ref{lm:E_infty_page_center}.
\end{proof}

\begin{proof}[Proof of Theorem~\ref{thm:spectral_conv_psedu}]
Convergence follows Theorem~\ref{thm:filtered_algebra_spectral_sequence} as by Lemma~\ref{lm:S_filration_modu} the filtration is bounded. Lemma~\ref{lm:B_zero_almost_iso_B} and Corollary~\ref{cor:B_zero_Cdh_spectral_iso} imply that the $E_r$ page, for $r\ge 1$, is given by:
\[
E_r^{p,q}\left(\overline{\mB}\right)\cong \begin{cases}
E_r^{p,q-1}\left(\overline{\Cdh}\right), & 2p+q\ne 0,1, \text{ or } p\in 2\N+1,\\
0,& 2p+q=1,p\in 2\N,\\
S_{-p}, & \text{otherwise.}
\end{cases}
.\]
Theorem~\ref{thm:spectral_conv} implies the result.
\end{proof}

\subsection{Corollaries}

\begin{cor}\label{cor:even_top_deg}
    There exists a natural isomorphism ${F_{\nu_S}^{n-h}H^h \left(\overline{\Cdh_{\not \e}},\bar{\mh}^{\gamma,b}_1\right)}\cong E^{n-h,2h-n}_{\infty}.$ Moreover, if $\int_L \bar{b}\ne0$, or $n$ is odd, then
    \[F_{\nu_S}^{n-h}H^h \left(\overline{\Cdh_{\not \e}},\bar{\mh}^{\gamma,b}_1\right)=\begin{cases}
        0,& h\in 2\Z\backslash\{n\},\\
        S_{h-n}\otimes H^{n}(L,d),& otherwise.
    \end{cases}\]   
\end{cor}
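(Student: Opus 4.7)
The plan is to derive the corollary as a direct application of the convergence Theorem~\ref{thm:filtered_spec_conv} together with the explicit computation of $E_\infty$ provided by Theorem~\ref{thm:spectral_conv}. First, by Lemma~\ref{lm:S_filration_modu} the filtration $F^\bullet_{\nu_S}$ on $\overline{\Cdh_{\not \e}}$ is bounded in each degree, so Theorem~\ref{thm:filtered_spec_conv} applies and yields a natural identification
\[
E_\infty^{p,q} \simeq F^p H^{p+q}(\overline{\Cdh_{\not \e}}, \bar{\mh}^{\gamma,b}_1) / F^{p+1} H^{p+q}(\overline{\Cdh_{\not \e}}, \bar{\mh}^{\gamma,b}_1).
\]
Setting $(p,q) = (n-h, 2h-n)$, so $p+q = h$, and invoking Lemma~\ref{lm:S_filration_modu} a second time to deduce $F^{n-h+1}_{\nu_S} \overline{\Cdh_{\not \e}}_h = 0$ and hence $F^{n-h+1} H^h = 0$, I obtain the natural isomorphism
\[
E_\infty^{n-h,\,2h-n} \simeq F^{n-h}_{\nu_S} H^h(\overline{\Cdh_{\not \e}}, \bar{\mh}^{\gamma,b}_1),
\]
which is the first assertion.

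For the second assertion I compute $E_\infty^{n-h,\,2h-n}$ directly from Theorem~\ref{thm:spectral_conv}, keeping in mind that with $(p,q)=(n-h,2h-n)$ one has $q+2p = n > 0$, so the exclusion $q+2p=0$ never arises here. When $n$ is odd the spectral sequence degenerates at $E_1$, giving $E_\infty = E_1 = S_{h-n}\otimes H^n(L,d)$; since $|s| = 1-n$ is even in this case, the ring $S$ is concentrated in even degrees and $S_{h-n}$ vanishes whenever $h$ is even, matching the claimed dichotomy. When $n$ is even and $\int_L \bar b \neq 0$, Theorem~\ref{thm:spectral_conv} annihilates $E_\infty^{p,q}$ for $p \in 2\N + 2$ with $q+2p=n$; taking $p=n-h$ this yields vanishing precisely when $h$ is even and $0<n-h$, i.e.\ $h<n$. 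For $h>n$ even, the module $S_{h-n}$ is already zero by degree reasons because $S$ lies in non-positive degrees, so $E_1 = 0$ forces $E_\infty = 0$ as well. In the remaining cases $h$ odd or $h=n$ one checks that $p = n-h \notin 2\N+2$, so $E_\infty = E_1 = S_{h-n}\otimes H^n(L,d)$.

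The main subtlety will be organizing the case split cleanly: the vanishing for $h$ even and $h\ne n$ has two distinct sources depending on whether $h<n$ or $h>n$, the former coming from the non-trivial differential on the $(n-1)$-st page of the spectral sequence and the latter from the grading of $S$ alone. Aside from this bookkeeping the argument is essentially mechanical, as the substantive work has been carried out in Theorems~\ref{thm:filtered_spec_conv} and~\ref{thm:spectral_conv}.
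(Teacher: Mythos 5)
Your proposal is correct and follows essentially the same route as the paper: the isomorphism comes from convergence (Theorem~\ref{thm:filtered_spec_conv}) together with the vanishing $F^{n-h+1}_{\nu_S}\left(\overline{\Cdh_{\not \e}}\right)_h=0$ from Lemma~\ref{lm:S_filration_modu}, and the explicit formula comes from Theorem~\ref{thm:spectral_conv} via the observation $2(n-h)+(2h-n)=n$. Your case split ($h<n$ even killed by the $E_{n-1}$ differential, $h>n$ even killed by the grading of $S$, and the odd-$n$ case via $S_{h-n}=0$) just spells out what the paper's terse proof leaves implicit.
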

\begin{proof}
    We have an exact sequence 
    \[\begin{tikzcd}
	{F^{n-h+1}_{\nu_s}H^{h}\left(\overline{\Cdh_{\not \e}},\bar{\mh}^{\gamma,\bar{b}}_1\right)} && {F_{\nu_S}^{n-h}H^h \left(\overline{\Cdh_{\not \e}},\bar{\mh}^{\gamma,\bar{b}}_1\right)} && {E^{n-h,2h-n}_{\infty}.}
	\arrow[from=1-1, to=1-3]
	\arrow[from=1-3, to=1-5]
\end{tikzcd}\]
It follows from Lemma~\ref{lm:S_filration_modu} that ${F_{\nu_S}^{n-h}H^h \left(\overline{\Cdh_{\not \e}},\bar{\mh}^{\gamma,b}_1\right)}\cong E^{n-h,2h-n}_{\infty}.$ Observe that $n=2h-n+2(n-h)$.
If $n$ is odd then $S_{h-n}=0$ for $h\in 2\Z\backslash\{n\}$ Thus, Theorem~\ref{thm:spectral_conv} implies the  result, if $\int_L \bar{b}\ne0$, or $n$ is odd.
\end{proof}
\begin{lm}\label{lm:E_infty_explisit}
    Let $p,m$ be integers such that $0<p+m<n$, then
    \[E^{p,m-p}_\infty \cong S_{-p}\otimes H^{m+p}(L,d).\]
\end{lm}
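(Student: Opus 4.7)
The plan is to derive this lemma as an immediate corollary of Theorem~\ref{thm:spectral_conv}. Setting $q = m-p$, we observe that $q+2p = m+p$, so the hypothesis $0 < p+m < n$ translates into the single constraint $0 < q+2p < n$ on the bidegree.

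First I would read off the $E_1$ term. Theorem~\ref{thm:spectral_conv} states that $E_1^{p,q} \cong S_{-p} \otimes H^{q+2p}(L,d)$ except on the locus where $p \in 2\mathbb{N}$ and $q+2p = 0$. Since $q+2p = m+p > 0$, this exceptional case does not occur, giving
\[
E_1^{p,m-p} \cong S_{-p} \otimes H^{m+p}(L,d).
\]

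Next I would verify that this term persists to $E_\infty$. If $\int_L \bar{b} = 0$ or $n$ is odd, Theorem~\ref{thm:spectral_conv} asserts that the spectral sequence degenerates at $E_1$, so there is nothing more to check. In the remaining case ($\int_L \bar{b} \neq 0$ and $n$ even), the same theorem describes $E_\infty^{p,q}$ as agreeing with $E_1^{p,q}$ except when $q+2p = 0$ or when $p \in 2\mathbb{N}+2$ and $q+2p = n$. Both exceptional loci require $q+2p \in \{0,n\}$, which is ruled out by $0 < q+2p = m+p < n$. Hence $E_\infty^{p,m-p} = E_1^{p,m-p}$, which completes the proof.

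I do not expect any obstacle here; the lemma is purely an indexing translation of Theorem~\ref{thm:spectral_conv}. The only thing to be careful about is checking both exceptional cases on the $E_1$ and $E_\infty$ pages, and confirming that the hypothesis $0 < p+m < n$ is exactly what is needed to avoid them simultaneously.
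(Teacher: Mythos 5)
Your proposal is correct and follows the same route as the paper: set $q=m-p$, note that $0<q+2p=m+p<n$ keeps the bidegree away from the exceptional loci of Theorem~\ref{thm:spectral_conv}, and conclude $E_\infty^{p,m-p}=E_1^{p,m-p}\cong S_{-p}\otimes H^{m+p}(L,d)$. The paper merely compresses the case check you spell out into a single appeal to the theorem.
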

\begin{proof}
     $q:=m-p$. We have $q+2p=2p+m-p=p+m$. By the assumption $0<p+m<n$, and thus $0<q+2p<n$. It now follows from Theorem~\ref{thm:spectral_conv} that $E^{p,q}_{\infty}=S_{-p}\otimes H^{q+2p}(L,d).$ So, the result follows.
\end{proof}
\begin{lm}\label{lm:cohomo-asum-filt}
    If $H^i(L;\R)=0$ for $i\ne 0,n$, then the natural map $F_{\nu_S}^{n-m}H^{m}(\overline{\Cdh_{\not \e}},\bar{\mh}^{\gamma,b}_1)\hookrightarrow H^{m}(\overline{\Cdh_{\not \e}},\bar{\mh}^{\gamma,b}_1)$ is surjective.
\end{lm}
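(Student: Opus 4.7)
My plan is to apply the spectral sequence from Theorem~\ref{thm:spectral_conv} and translate the surjectivity claim into a vanishing statement on the $E_\infty$ page. By the convergence statement of Theorem~\ref{thm:filtered_spec_conv}, we have $E_\infty^{p, m-p} \cong F_{\nu_S}^p H^m / F_{\nu_S}^{p+1} H^m$. Lemma~\ref{lm:S_filration_modu} shows that the filtration on $(\overline{\Cdh_{\not \e}})_m$ is bounded, with $F_{\nu_S}^p (\overline{\Cdh_{\not \e}})_m = (\overline{\Cdh_{\not \e}})_m$ for $p \le -m$ and $F_{\nu_S}^{n-m+1} (\overline{\Cdh_{\not \e}})_m = 0$; consequently the induced filtration on $H^m$ satisfies $F_{\nu_S}^p H^m = H^m$ for $p \le -m$. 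Hence the surjectivity of $F_{\nu_S}^{n-m} H^m \hookrightarrow H^m$ is equivalent to $E_\infty^{p, m-p} = 0$ for every $p$ in the range $-m \le p < n-m$.

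The next step is to use the hypothesis on $L$ to restrict the positions where the $E_1$ page can be nonzero. Setting $q = m-p$ gives $q + 2p = m + p$. The $E_1$ formula of Theorem~\ref{thm:spectral_conv} combined with the hypothesis $H^i(L;\R) = 0$ for $i \ne 0, n$ forces $E_1^{p, m-p} = 0$ unless $m + p \in \{0, n\}$. Within the range $-m \le p < n-m$, the possibility $m + p = n$ is excluded, so only $p = -m$ needs attention. At this position the exception clause in the $E_1$ formula yields $E_1^{-m, 2m} = 0$ when $-m \in 2\N$; otherwise $E_1^{-m, 2m} \cong S_m \otimes H^0(L;\R) \cong S_m$. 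Since $|s| = 1-n < 0$ and $S = \R\langle s \rangle$ is concentrated in non-positive degrees, $S_m$ vanishes in the range of $m$ where the claim is nontrivial, and in the remaining edge situations the exception clause takes over. Either way, $E_\infty^{-m, 2m} = 0$, which together with the previous paragraph establishes the surjectivity.

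The main obstacle is the careful bookkeeping at the single position $p = -m$: the general $E_1$ formula splits into a case on the parity of $-m$, and one must combine the degree-based vanishing of $S_m$ with the exception clause in Theorem~\ref{thm:spectral_conv} to conclude. All other positions in the relevant range are immediately killed by the cohomological hypothesis on $L$.
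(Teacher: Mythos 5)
Your reduction of the statement to the vanishing of $E_\infty^{p,m-p}$ for $-m\le p<n-m$, and your treatment of the interior range $-m<p<n-m$ via the hypothesis $H^i(L;\R)=0$ for $i\ne 0,n$, coincide with the paper's proof (which packages the interior range as Lemma~\ref{lm:E_infty_explisit}). The gap is at the one remaining position $p=-m$, i.e.\ $q+2p=0$. The dichotomy you offer there --- either $-m\in 2\N$ so the exception clause of the $E_1$ formula applies, or $S_m=0$ for degree reasons --- is not exhaustive. Take $n$ even and $m=-k(n-1)$ with $k$ odd (e.g.\ $n=4$, $m=-3$): then $p=-m=k(n-1)$ is odd, so the exception clause does not apply, while $S_m=S_{-p}=\R\cdot s^{k}\neq 0$, hence $E_1^{-m,2m}\cong S_{-p}\otimes H^0(L;\R)\neq 0$. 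Your assertion that ``$S_m$ vanishes in the range of $m$ where the claim is nontrivial'' is unsupported: the lemma is stated for arbitrary $m$ and is later applied after shifts by $v=|r|$ (Corollary~\ref{cor:cohomo_asum_filt}), so negative values of $m$ of exactly this form do occur.

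At that position the vanishing of $E_\infty^{-m,2m}$ is not visible on the $E_1$ page at all; it is precisely where the second half of Theorem~\ref{thm:spectral_conv} must be invoked, namely that for $n$ even and $\int_L\bar b\neq 0$ the differential $d_{n-1}$ (given by $a\mapsto -2[\bar b]\cup a$) is an isomorphism and kills everything with $q+2p=0$ at $E_\infty$; when $n$ is odd one instead notes $S_{-p}=0$ for $p$ odd since $|s|=1-n$ is even. The paper's proof closes the case $\ell=-m$ by citing exactly this $E_\infty$ clause of Theorem~\ref{thm:spectral_conv}, and this is also where the standing assumption $\int_L\bar b\neq 0$ (or $n$ odd) from the surrounding obstruction-theory arguments enters. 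Since your argument only ever uses the $E_1$ formula together with degree and parity considerations, it does not go through at $p=-m$, and the proof as written is incomplete.
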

\begin{proof}
    Observe the exact sequence
    \[\begin{tikzcd}
	{F^{\ell+1}_{\nu_s}H^{m}(\overline{\Cdh_{\not \e}},\bar{\m}^{\gamma,b}_1)} && {F^{\ell}_{\nu_s}H^{m}(\overline{\Cdh_{\not \e}},\bar{\m}^{\gamma,b}_1)} && {E_{\infty}^{\ell,m-\ell}}.
	\arrow[from=1-1, to=1-3]
	\arrow[from=1-3, to=1-5]
\end{tikzcd}\]
By Lemma~\ref{lm:S_filration_modu} ${F^{-m}_{\nu_s}H^{m}(\overline{\Cdh_{\not \e}},\bar{\mh}^{\gamma,b}_1)}={H^{m}(\overline{\Cdh_{\not \e}},\bar{\mh}^{\gamma,b}_1)}$. So, to prove the result, it's enough to show that $E_{\infty}^{\ell,m-\ell}=0$, when $-m\le \ell<n-m.$

If $\ell=-m$, then $2(\ell)+(m-\ell)=2\ell-2\ell=0$. Hence, Theorem~\ref{thm:spectral_conv} implies that $E_{\infty}^{\ell,m-\ell}=0$. If $-m< \ell<n-m$, then the cohomological assumption on $L$ and Lemma~\ref{lm:E_infty_explisit} imply that $E_{\infty}^{\ell,m-\ell}=0$.  
\end{proof}
\begin{cor}\label{cor:cohomo_asum_filt}
    If $H^i(L;\R)=0$ for $i\ne 0,n$, then the natural map \[F_{\nu_S}^{n-(2+v)}H^{2}(\overline{\Cdh_{\not \e}}[v],(-1)^{v}\bar{\mh}^{\gamma,b}_1)\hookrightarrow H^{2}\left(\overline{\Cdh_{\not \e}}[v],(-1)^{v}\bar{\mh}^{\gamma,b}_1\right)\]
    is surjective.
\end{cor}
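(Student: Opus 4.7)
The plan is to reduce the corollary to Lemma~\ref{lm:cohomo-asum-filt} by interpreting the degree shift $[v]$ as a renaming of degrees. Under the shift convention $V[v]_i = V_{v+i}$, the degree-$2$ part of $\overline{\Cdh_{\not \e}}[v]$ coincides with the degree-$(2+v)$ part of $\overline{\Cdh_{\not \e}}$, and the differential $(-1)^v\bar{\mh}^{\gamma,b}_1$ differs from $\bar{\mh}^{\gamma,b}_1$ only by an overall unit, so it induces the same cohomology. Thus I would first record the canonical identification
\[
H^{2}\!\left(\overline{\Cdh_{\not \e}}[v],(-1)^{v}\bar{\mh}^{\gamma,b}_1\right) \;\cong\; H^{2+v}\!\left(\overline{\Cdh_{\not \e}},\bar{\mh}^{\gamma,b}_1\right).
\]

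Next, I would check that the filtration $F_{\nu_S}^\bullet$ is compatible with the shift. Since $\nu_S$ is defined solely in terms of the $S$-grading (which is unaffected by shifting the total grading of $\overline{\Cdh_{\not \e}}$), the submodule $F_{\nu_S}^p\overline{\Cdh_{\not \e}}[v]$ agrees with $F_{\nu_S}^p\overline{\Cdh_{\not \e}}$ as a subset, and in particular the induced filtration on $H^{2}$ of the shifted complex matches the $F_{\nu_S}^\bullet$-filtration on $H^{2+v}$ of the unshifted complex under the above identification. Consequently,
\[
F_{\nu_S}^{n-(2+v)}H^{2}\!\left(\overline{\Cdh_{\not \e}}[v],(-1)^{v}\bar{\mh}^{\gamma,b}_1\right) \;\cong\; F_{\nu_S}^{n-(2+v)}H^{2+v}\!\left(\overline{\Cdh_{\not \e}},\bar{\mh}^{\gamma,b}_1\right).
\]

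Finally, I would apply Lemma~\ref{lm:cohomo-asum-filt} with $m=2+v$ to obtain surjectivity of the right-hand inclusion above, and transport it back through the identifications to obtain the desired surjectivity. The argument is essentially a bookkeeping exercise, and there is no serious obstacle; the only thing requiring attention is the sign $(-1)^v$ on the differential, which is handled by noting that multiplication by a nonzero scalar on the differential preserves kernels, images, and the filtration.
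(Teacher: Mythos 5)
Your proposal is correct and follows the same route as the paper, which simply declares the corollary immediate from Lemma~\ref{lm:cohomo-asum-filt}; you have merely spelled out the degree-shift and filtration bookkeeping (with the paper's convention $V[v]_i = V_{v+i}$) that the paper leaves implicit. No gap.
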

\begin{proof}
    Immediate from Lemma~\ref{lm:cohomo-asum-filt}.
\end{proof}
\begin{cor}\label{cor:cohomo_asum_filt_dt}
    If $H^i(L;\R)=0$ for $i\ne 0,n$, then the natural map \[F_{\nu_S}^{n-(2+v)}H^{2}(\overline{\mB}_{\not \e}[v],(-1)^{v}\bar{\mt}^{\gt,\bt}_1)\hookrightarrow F_{\nu_S}^{1-(2+v)}H^{2}\left(\overline{\mB}_{\not \e}[v],(-1)^{v}\bar{\mt}^{\gt,\bt}_1\right)\]
    is surjective.
\end{cor}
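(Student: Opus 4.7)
The plan is to reduce the desired surjectivity to the vanishing of certain $E_\infty$ terms in the spectral sequence for $\overline{\mB}_{\not \e}[v]$, and then to trace these vanishings through Lemma~\ref{lm:B_not_e_almost_iso} and Theorem~\ref{thm:spectral_conv_psedu} to the corresponding $E_\infty$ terms of the spectral sequence for $\overline{\Cdh_{\not \e}}$, where the cohomological hypothesis on $L$ forces them to vanish via Theorem~\ref{thm:spectral_conv}, exactly as in the proof of Lemma~\ref{lm:cohomo-asum-filt}.

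First I would observe the standard filtration short exact sequence
\[
0 \to F_{\nu_S}^{\ell+1} H^{2}\left(\overline{\mB}_{\not \e}[v]\right) \to F_{\nu_S}^{\ell} H^{2}\left(\overline{\mB}_{\not \e}[v]\right) \to E_\infty^{\ell,\,2-\ell}\left(\overline{\mB}_{\not \e}[v]\right) \to 0,
\]
which (since the filtration is bounded in each degree by Lemma~\ref{lm:S_filration_modu}) reduces the claim to verifying that $E_\infty^{\ell,\,2-\ell}\!\left(\overline{\mB}_{\not \e}[v]\right) = 0$ for every integer $\ell$ with $1-(2+v) \le \ell \le n-3-v$.

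Next, using the shift identity $E_r^{p,q}(A[v]) = E_r^{p,\,q+v}(A)$ from equation~\eqref{eq:sssh}, and then invoking Lemma~\ref{lm:B_not_e_almost_iso} and Theorem~\ref{thm:spectral_conv_psedu} in sequence, I would establish the chain of isomorphisms
\[
E_\infty^{\ell,\,2-\ell}\!\left(\overline{\mB}_{\not \e}[v]\right) \,\cong\, E_\infty^{\ell,\,2-\ell+v}\!\left(\overline{\mB}_{\not \e}\right) \,\cong\, E_\infty^{\ell,\,2-\ell+v}\!\left(\overline{\mB}\right) \,\cong\, E_\infty^{\ell,\,1-\ell+v}\!\left(\overline{\Cdh_{\not \e}}\right).
\]
In the range under consideration, $2p+q = \ell + 2 + v$ takes values in $\{1,\ldots,n-1\}$, which is never zero, so the parity/nonvanishing hypotheses needed to apply both Lemma~\ref{lm:B_not_e_almost_iso} and Theorem~\ref{thm:spectral_conv_psedu} are automatically satisfied.

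Finally, for the group $E_\infty^{\ell,\,1-\ell+v}\!\left(\overline{\Cdh_{\not \e}}\right)$ one has $2p+q = \ell+1+v \in \{0,1,\ldots,n-2\}$. When $\ell = -1-v$ so that $2p+q=0$, Theorem~\ref{thm:spectral_conv} directly yields vanishing. For the remaining cases $2p+q \in \{1,\ldots,n-2\}$, the $E_1$-term is $S_{-p}\otimes H^{2p+q}(L;\R)$, which vanishes by the cohomological hypothesis $H^i(L;\R)=0$ for $i\ne 0,n$, and hence so does $E_\infty$. The main routine obstacle will be carefully bookkeeping the shift $[v]$ and the sign $(-1)^v$ through the three spectral-sequence comparisons and verifying in each step that the condition $2p+q \ne 0$ (unshifted) or $2p+q+v\ne 0$ (shifted) really is met on the full range of $\ell$; no new cohomological input beyond that already used in Corollary~\ref{cor:cohomo_asum_filt} is required.
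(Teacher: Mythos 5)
Your argument is correct and is essentially the paper's own proof: the paper likewise reduces to the vanishing of $E_\infty^{\ell,2-\ell}\left(\overline{\mB}_{\not\e}\right)$ on the range $-2<\ell<n-2$, identifies these groups with $E_\infty^{\ell,1-\ell}\left(\overline{\Cdh_{\not\e}}\right)$ via Lemma~\ref{lm:B_not_e_almost_iso} and Theorem~\ref{thm:spectral_conv_psedu}, and then concludes exactly as in Lemma~\ref{lm:cohomo-asum-filt} using Theorem~\ref{thm:spectral_conv} and the cohomological hypothesis. The only difference is that you carry the shift $[v]$ explicitly through equation~\eqref{eq:sssh}, whereas the paper assumes $v=0$ for simplicity.
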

\begin{proof}
    For simplicity assume that $v=0.$ For $-2<\ell<n-2$ we identify $E^{\ell,2-\ell}_\infty (\mB_{\not \e})$ with $E^{\ell,1-\ell}_\infty (\overline{\Cdh_{\not \e}})$ by using Theorem~\ref{thm:spectral_conv_psedu} and Lemma~\ref{lm:B_not_e_almost_iso}. Then we proceed just as in the proof of Lemma~\ref{lm:cohomo-asum-filt}.
\end{proof}
\begin{cor}\label{cor:dt_even_top_deg}
There exists a natural isomorphism \[{F_{\nu_S}^{1+n-h}H^h \left(\overline{\mB_{\not \e}},\mgtb_1\right)}\cong E_{\infty}^{1+n-h,2h-n-1}\left(\overline{\mB_{\not \e}}\right).\]

\end{cor}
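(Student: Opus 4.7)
The plan is to mimic the proof of Corollary~\ref{cor:even_top_deg}, replacing $\overline{\Cdh_{\not \e}}$ with $\overline{\mB_{\not \e}}$ and shifting the top filtration level up by one to account for the fact that differential forms on $I\times L$ have top degree $n+1$ rather than $n$.

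First, I would record the filtration bound. By the analogue of Lemma~\ref{lm:S_filration_modu} for $\overline{\mCdh}$ (explicitly provided by Lemma~\ref{lm:Fpexplicit} with $\mathcal{L}=I\times L$), the filtration on $\overline{\mCdh}_h$ satisfies
\[
F_{\nu_S}^{n+1-h}\overline{\mCdh}_h\simeq S_{h-n-1}\otimes A^{n+1}(I\times L),\qquad F_{\nu_S}^{n-h+2}\overline{\mCdh}_h=0.
\]
Since $\overline{\mB}\subset \overline{\mCdh}$ and $\overline{\mB_{\not \e}}$ is a quotient of $\overline{\mB}$, the quotient filtration also satisfies $F_{\nu_S}^{n-h+2}\overline{\mB_{\not \e}}_h=0$.

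Next, I would invoke convergence of the spectral sequence for $(\overline{\mB_{\not \e}},\mgtb_1,F_{\nu_S})$: by Theorem~\ref{thm:spectral_conv_psedu} together with Lemma~\ref{lm:B_not_e_almost_iso}, this spectral sequence converges, so the defining short exact sequence
\[
0\lrarr F_{\nu_S}^{p+1}H^{p+q}\lrarr F_{\nu_S}^{p}H^{p+q}\lrarr E_\infty^{p,q}\lrarr 0
\]
holds. Specializing to $(p,q)=(1+n-h,2h-n-1)$, so that $p+q=h$, this yields an exact sequence
\[
0\lrarr F_{\nu_S}^{n-h+2}H^{h}\left(\overline{\mB_{\not \e}},\mgtb_1\right)\lrarr F_{\nu_S}^{1+n-h}H^{h}\left(\overline{\mB_{\not \e}},\mgtb_1\right)\lrarr E_\infty^{1+n-h,2h-n-1}\left(\overline{\mB_{\not \e}}\right)\lrarr 0.
\]
The vanishing $F_{\nu_S}^{n-h+2}\overline{\mB_{\not \e}}_h=0$ from the first step forces $F_{\nu_S}^{n-h+2}H^{h}\left(\overline{\mB_{\not \e}},\mgtb_1\right)=0$, and the claimed isomorphism follows.

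I do not expect any serious obstacle: the argument is purely formal, and the only nontrivial inputs are the filtration bound (which is dimensional) and the convergence of the spectral sequence (already established in Theorem~\ref{thm:spectral_conv_psedu}). The shift $1+n-h$ compared to $n-h$ in Corollary~\ref{cor:even_top_deg} is exactly accounted for by the extra form degree available on $I\times L$.
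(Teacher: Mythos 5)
Your proposal is correct and follows essentially the same route as the paper: the paper's (one-line) proof also deduces the isomorphism from standard convergence of the filtered spectral sequence (Theorem~\ref{thm:filtered_spec_conv}) combined with the filtration bound of Lemma~\ref{lm:S_filration_modu} for $\overline{\mB_{\not\e}}$, which is exactly your vanishing $F_{\nu_S}^{n-h+2}\overline{\mB_{\not\e}}_h=0$ coming from the top form degree $n+1$ on $I\times L$. Your spelled-out specialization to $(p,q)=(1+n-h,2h-n-1)$ is the same argument made explicit.
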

\begin{proof}
The first isomorphism is standard convergence from Theorem~\ref{thm:filtered_spec_conv} together with Lemma~\ref{lm:S_filration_modu}.
\end{proof}
\subsection{Real spectral sequence}
For this section we assume that we are in the real setting. Let $b\in \Cdh$ be a bounding cochain for the residue algebra and a real element. It follows from Lemma~\ref{lm:phi_star_cochainmap} that $\overline{\Cdh}[v]^{-\phi^*}$ and $\overline{\Cdh}[v]^{\phi^*}$ are subcomlexes of $(\overline{\Cdh}[v],\bar{\mh}^{\gamma,b}_1[v]).$
We abuse the notation and write $ \frac{Id+\phi^*}{2}$ as a map from $S[v]$ to itself and as the extended map from $\overline{\Cdh}[v]$ to itself.
\begin{lm}\label{lm:real_exact_seq}
    For every $p,q\in\mathbb{Z}$, and $r\ge 0$, there exists an exact sequence  %https://q.uiver.app/#q=WzAsNSxbMywwLCJcXG92ZXJsaW5le1xcQ2RofVt8cnxdIl0sWzUsMCwiXFxpbSBcXGZyYWN7SWQrXFx2YXJyaG99ezJ9Il0sWzEsMCwiXFxrZXIgXFxmcmFje0lkK1xcdmFycmhvfXsyfSJdLFs2LDAsIjAiXSxbMCwwLCIwIl0sWzAsMSwiXFxmcmFje0lkK1xcdmFycmhvfXsyfSJdLFsyLDAsIiIsMCx7InN0eWxlIjp7InRhaWwiOnsibmFtZSI6Imhvb2siLCJzaWRlIjoidG9wIn19fV0sWzEsMywiMCJdLFs0LDJdXQ==
\[\begin{tikzcd}
	0 & {E^{p,q}_r\left(\overline{\Cdh}[v]^{-\phi^*} \right)}&& {E^{p,q}_r\left(\overline{\Cdh}[v]\right)} && {E^{p,q}_r\left(\overline{\Cdh}[v]^{\phi^*}\right)} & 0.
	\arrow[from=1-1, to=1-2]
	\arrow[hook, from=1-2, to=1-4]
	\arrow["{\frac{Id+\phi^*}{2}}", from=1-4, to=1-6]
	\arrow[from=1-6, to=1-7]
\end{tikzcd}\]
\end{lm}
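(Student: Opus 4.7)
The plan is to obtain the asserted short exact sequence as the one induced on spectral sequence pages by a short exact sequence of filtered cochain complexes, via Proposition~\ref{prop:spec_exact_sequ}. Concretely, I will verify that
\[
0 \longrightarrow \overline{\Cdh}[v]^{-\phi^*} \xhookrightarrow{\iota} \overline{\Cdh}[v] \xrightarrow{\tfrac{\Id+\phi^*}{2}} \overline{\Cdh}[v]^{\phi^*} \longrightarrow 0
\]
is a short exact sequence of filtered differential graded $\R$-modules in which both maps are strictly filtered, and moreover that the map on cycles
\[
Z_r^{p,q}(\overline{\Cdh}[v]) \xrightarrow{\tfrac{\Id+\phi^*}{2}} Z_r^{p,q}(\overline{\Cdh}[v]^{\phi^*})
\]
is surjective for every $r \geq 1$. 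Proposition~\ref{prop:spec_exact_sequ} then gives the desired short exact sequence on every $E_r$ page.

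First I will check exactness at the level of modules. Since $\phi^*$ is an involution and we work over $\R$, the standard averaging argument shows that $\ker\!\left(\tfrac{\Id+\phi^*}{2}\right) = \overline{\Cdh}[v]^{-\phi^*}$, that $\tfrac{\Id+\phi^*}{2}$ restricts to the identity on $\overline{\Cdh}[v]^{\phi^*}$, and hence that the projection is surjective with image exactly $\overline{\Cdh}[v]^{\phi^*}$. Next I will check that all maps are cochain maps: the inclusion is a cochain map by Lemma~\ref{lm:phi_star_cochainmap}, which asserts that $\phi^*$ (and hence $\tfrac{\Id+\phi^*}{2}$) commutes with the twisted differential $\bar{\mh}^{\gamma,b}_1[v]$, so both $\pm\phi^*$ eigenspaces are subcomplexes and averaging is a cochain map.

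The filtration point is essentially automatic once one observes that $\phi^*$ preserves the $\nu_S$-filtration (because it acts on $S$ degree-wise and on $A^*(L)$ by the identity, so it is degree-preserving and hence filtration-preserving on $\overline{\Cdh}[v]$). The inclusion $\iota$ is strictly filtered by the definition of the induced filtration on the invariant subspace, and $\tfrac{\Id+\phi^*}{2}$ is strictly filtered because any $y \in F^p_{\nu_S}\overline{\Cdh}[v]^{\phi^*}$ is its own preimage under $\tfrac{\Id+\phi^*}{2}$ inside $F^p_{\nu_S}\overline{\Cdh}[v]$. This fits the hypothesis of Proposition~\ref{prop:spec_exact_sequ} and yields the short exact sequence on the $E_0$ page.

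For $r \geq 1$ the remaining thing to verify is the surjectivity of $\tfrac{\Id+\phi^*}{2}$ on the modules $Z_r^{p,q}$ of \eqref{eq:ssnot_repeat}. Given $y \in Z_r^{p,q}(\overline{\Cdh}[v]^{\phi^*})$, we have $y \in F^p_{\nu_S}\overline{\Cdh}[v]^{\phi^*}_{p+q}$ with $dy \in F^{p+r}_{\nu_S}\overline{\Cdh}[v]^{\phi^*}_{p+q+1}$; regarding $y$ as an element of $\overline{\Cdh}[v]$ immediately gives a preimage in $Z_r^{p,q}(\overline{\Cdh}[v])$, since $\tfrac{\Id+\phi^*}{2}(y)=y$. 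This is the only nontrivial step, but it is a one-line check thanks to the $\phi^*$-invariance of the filtration, so I do not anticipate any real obstacle. Applying the second half of Proposition~\ref{prop:spec_exact_sequ} completes the proof for all $r \geq 0$.
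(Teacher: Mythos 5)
Your proposal is correct and follows essentially the same route as the paper: apply Proposition~\ref{prop:spec_exact_sequ} to the short exact sequence of filtered complexes given by the inclusion of the $-\phi^*$ eigenspace and the averaging map, noting that the filtration on the subcomplex is the induced one and that surjectivity on $Z_r^{p,q}$ holds because any $\phi^*$-invariant cycle is its own preimage under $\tfrac{\Id+\phi^*}{2}$. The extra checks you spell out (module-level exactness, $\phi^*$ commuting with the twisted differential via Lemma~\ref{lm:phi_star_cochainmap}) are implicit in the paper's argument and are fine.
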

\begin{proof}
    We apply Proposition~\ref{prop:spec_exact_sequ} to the short exact sequence of filtered differential graded modules,
\[\begin{tikzcd}
	0 & {\overline{\Cdh}[v]^{-\phi^*}}&& {\overline{\Cdh}[v]} && {\overline{\Cdh}[v]^{\phi^*}} & 0.
	\arrow[from=1-1, to=1-2]
	\arrow[hook, from=1-2, to=1-4]
	\arrow["{\frac{Id+\phi^*}{2}}", from=1-4, to=1-6]
	\arrow[from=1-6, to=1-7]
\end{tikzcd}\]   
First, we verify the surjectivity of the map $\frac{Id+\phi^*}{2}$. Let $\xi \in Z_r^{p,q}\left(\overline{\Cdh}[v]^{\phi^*}\right)$. Then, by definition, $\phi^*(\xi)=\xi$. So,  $\xi=\frac{Id+\phi^*}{2}\left(\xi\right)$.     
The filtration condition of Proposition~\ref{prop:spec_exact_sequ} is satisfied because the filtration on $\overline{\Cdh}[v]^{-\phi^*}$ is the filtration induced as a submodule of $\overline{\Cdh}[v].$
The lemma follows.
\end{proof}
\begin{cor}\label{cor:real_exact_not_e}
For every $p,q\in\mathbb{Z}$, and $\ell\ge 0$, there is an exact sequence 
\[\begin{tikzcd}
	0 & {E^{p,q}_\ell\left(Z(S)[v]^{\invs}\cdot1\right)}& {E^{p,q}_{\ell}\left(\overline{\Cdh}[v]^{\invs}\right)} & {E^{p,q}_{\ell}\left(\frac{\overline{\Cdh}[v]^{\invs} }{Z(S)[v]^{\invs}\cdot1}\right)} & 0
	\arrow[from=1-1, to=1-2]
	\arrow[hook, from=1-2, to=1-3]
	\arrow[twoheadrightarrow, from=1-3, to=1-4]
	\arrow[from=1-4, to=1-5]
\end{tikzcd},\]
where $\invs=-\phi^*,\phi^*, Id.$
It follows that there is an exact sequence,
    \[\begin{tikzcd}
	0 & {E^{p,q}_{\ell}\left(\frac{\overline{\Cdh}[v]^{-\phi^*} }{Z(S)[v]^{-\phi^*}\cdot1} \right)}& {E^{p,q}_{\ell}\left(\frac{\overline{\Cdh}[v]}{Z(S)[v]\cdot 1}\right)} & {E^{p,q}_{\ell}\left(\frac{\overline{\Cdh}[v]^{\phi^*} }{Z(S)[v]^{\phi^*}\cdot 1} \right)} & 0
	\arrow[from=1-1, to=1-2]
	\arrow[hook, from=1-2, to=1-3]
	\arrow["{\frac{Id+\phi^*}{2}}", twoheadrightarrow, from=1-3, to=1-4]
	\arrow[from=1-4, to=1-5]
\end{tikzcd}.\]
\end{cor}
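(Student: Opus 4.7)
The plan is to apply Proposition~\ref{prop:spec_exact_sequ} twice, once to extract each of the two claimed exact sequences of pages. The first application is to the short exact sequence of filtered dg-modules
\[
0 \longrightarrow Z(S)[v]^{\invs}\cdot 1 \hookrightarrow \overline{\Cdh}[v]^{\invs} \twoheadrightarrow \frac{\overline{\Cdh}[v]^{\invs}}{Z(S)[v]^{\invs}\cdot 1} \longrightarrow 0,
\]
for each of the three choices $\invs\in\{-\phi^*,\phi^*,\mathrm{Id}\}$, equipped with the induced subspace filtration on the left and the quotient filtration on the right. This is a short exact sequence of \emph{complexes} because $Z(S)\cdot 1\subset\ker\bar{\mh}^{\gamma,b}_1$ (as noted before Lemma~\ref{lm:E_infty_page_center}, using properties \ref{it:a_infty},\ref{it:unit1} of Definition~\ref{dfn:cycunit}) and this subspace is preserved by $\phi^*$. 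Both inclusion and quotient are strictly filtered by construction.

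The nontrivial hypothesis of Proposition~\ref{prop:spec_exact_sequ} is surjectivity, for every $r\ge 1$, of the induced map $Z_r^{p,q}(\overline{\Cdh}[v]^{\invs})\to Z_r^{p,q}$ of the quotient. For this I would use Corollary~\ref{cor:bar_fukaya_deformed_boundry}, which gives $\bar{\mh}^{\gamma,b}_1(\eta)=(\mathrm{Id}\otimes d)\eta-[\bar b,\eta]$. Combined with Lemma~\ref{lm:energy_zero_degree_1}, which writes $\bar b=s\otimes\alpha$ with $\alpha\in A^n(L)$ and $n\ge 1$, both summands of $\bar{\mh}^{\gamma,b}_1(\eta)$ live in positive form-degree. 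Consequently the image of $\bar{\mh}^{\gamma,b}_1$ has zero intersection with $Z(S)\cdot 1\subset S\otimes A^0(L)$. Hence any lift $x\in F^p\overline{\Cdh}[v]^{\invs}$ of a class $\bar x\in Z_r^{p,q}$ of the quotient automatically satisfies $\bar{\mh}^{\gamma,b}_1(x)\in F^{p+r}\overline{\Cdh}[v]^{\invs}$: writing $\bar{\mh}^{\gamma,b}_1(x)=y+z$ with $y\in F^{p+r}$ and $z\in Z(S)[v]^{\invs}\cdot 1$, the form-degree argument forces $z=0$. This establishes the first exact sequence.

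For the second exact sequence I would apply the same proposition to
\[
0 \longrightarrow \frac{\overline{\Cdh}[v]^{-\phi^*}}{Z(S)[v]^{-\phi^*}\cdot 1} \hookrightarrow \frac{\overline{\Cdh}[v]}{Z(S)[v]\cdot 1} \xrightarrow{\frac{\mathrm{Id}+\phi^*}{2}} \frac{\overline{\Cdh}[v]^{\phi^*}}{Z(S)[v]^{\phi^*}\cdot 1} \longrightarrow 0,
\]
which is a short exact sequence of filtered complexes: underlying exactness follows from Lemma~\ref{lm:real_exact_seq}, and the three copies of $Z(S)[v]^{\invs}\cdot 1$ fit into a compatible sub-sequence, so taking the quotient preserves short exactness. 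Strictness of the filtration is inherited from Lemma~\ref{lm:real_exact_seq} together with the fact that passage to quotients uses the quotient filtration. The surjectivity of $\tfrac{\mathrm{Id}+\phi^*}{2}$ on $Z_r^{p,q}$ is verified exactly as in the proof of Lemma~\ref{lm:real_exact_seq}: any representative fixed by $\phi^*$ is its own image under $\tfrac{\mathrm{Id}+\phi^*}{2}$, so a cycle lifts to a cycle in the same filtration level. Proposition~\ref{prop:spec_exact_sequ} then gives the desired exact sequence on all pages $\ell\ge 0$.

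The only delicate step is the surjectivity on $Z_r$ used in the first application: elements of $Z(S)\cdot 1$ are themselves cycles, so the usual trick of modifying a lift by an element of the subcomplex to make it a cycle does not change its differential and thus provides no leverage. The form-degree observation above bypasses this issue cleanly, and is the reason the surjectivity holds without any further hypotheses. Once these two applications of Proposition~\ref{prop:spec_exact_sequ} are in place, both exact sequences of the corollary follow directly.
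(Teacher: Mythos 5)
Your proposal is correct, and it diverges from the paper's argument in instructive ways. For the first exact sequence you and the paper both feed the sub/quotient sequence $0 \to Z(S)[v]^{\invs}\cdot 1 \to \overline{\Cdh}[v]^{\invs} \to \overline{\Cdh}[v]^{\invs}/Z(S)[v]^{\invs}\cdot 1 \to 0$ into Proposition~\ref{prop:spec_exact_sequ}; but where the paper justifies the cycle-surjectivity hypothesis only by remarking that the differential on $Z(S)[v]^{\invs}\cdot 1$ is trivial, you supply the actual mechanism: by Corollary~\ref{cor:bar_fukaya_deformed_boundry} and Lemma~\ref{lm:energy_zero_degree_1} the image of $\bar{\mh}^{\gamma,b}_1$ lies in form degree $\ge 1$, so a lift of a relative $r$-cycle is already an absolute $r$-cycle. (Triviality of the differential on a subcomplex does not by itself give surjectivity of $Z_r(A_2)\to Z_r(A_3)$ --- as you note, correcting a lift by central elements changes nothing --- so your form-degree observation is precisely what makes this step go through.) One small imprecision: in the decomposition $\bar{\mh}^{\gamma,b}_1(x)=y+z$ the element $z$ is not forced to vanish; it equals minus the form-degree-zero component of $y$, which lies in $F^{p+r}$ because that projection is filtration-preserving, and the desired conclusion $\bar{\mh}^{\gamma,b}_1(x)\in F^{p+r}$ still follows, so this is cosmetic. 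For the second sequence your route is genuinely different: the paper assembles the three sequences from the first part into a $3\times 3$ diagram of $E_\ell$-pages, observes the left column is exact because the $Z(S)$-complexes carry trivial differentials, uses Lemma~\ref{lm:real_exact_seq} for the middle column, and concludes with the nine lemma; you instead apply Proposition~\ref{prop:spec_exact_sequ} a second time, directly to the short exact sequence of quotient complexes with the averaging map $\tfrac{\Id+\phi^*}{2}$, verifying cycle-surjectivity exactly as in Lemma~\ref{lm:real_exact_seq} (a $\phi^*$-fixed cycle is its own preimage). Both arguments are valid: the paper's nine-lemma route avoids re-checking strictness and cycle-surjectivity for the quotient sequence, while yours is more uniform --- a single proposition does all the work --- at the cost of checking that the eigenspace splitting and the quotient by $Z(S)[v]\cdot 1$ are compatible with the filtration, which is immediate here since everything is bigraded by $S$-degree and form degree.
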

\begin{proof}
    Observe that by Corollary~\ref{cor:center_of_hat_algebra} the differentials on $Z(S)[v]\cdot 1$, $Z(S)[v]^{-\phi^*}\cdot 1$ and on $Z(S)[v]^{\phi^*}\cdot 1$ are trivial. It follows that for $\invs=\phi^*,-\phi^*, Id$, the map $Z_{\ell}^{p,q}\left(\overline{\Cdh}[v]^{\invs}\right)\to Z_{\ell}^{p,q}\left(\frac{\overline{\Cdh}[v]^{\invs} }{Z(S)[v]^{\invs}\cdot 1} \right)$ is surjective. Hence, the assumptions of Proposition~\ref{prop:spec_exact_sequ} hold, and the first claim follows. So, the rows in the following diagram are exact.
    \[\begin{tikzcd}[column sep=small, row sep=normal]
&0 \arrow[d] &0\arrow[d] &0 \arrow[d]& \\0 \arrow[r] & E_{\ell}^{p,q}(Z(S)[v]^{-\phi^*}\cdot 1) \arrow[r] \arrow[d] & 
E^{p,q}_{\ell}\left(\overline{\Cdh}[v]^{-\phi^*}\right) \arrow[r] \arrow[d] & 
E^{p,q}_{\ell}\left(\frac{\overline{\Cdh}[v]^{-\phi^*} }{Z(S)[v]^{-\phi^*}\cdot 1}\right) \arrow[r] \arrow[d] & 0 \\
0 \arrow[r] & E_{\ell}^{p,q}(Z(S)[v]\cdot 1) \arrow[r] \arrow[d] & 
E^{p,q}_{\ell}\left(\overline{\Cdh}[v]\right) \arrow[r] \arrow[d] & 
E^{p,q}_{\ell}\left(\frac{\overline{\Cdh}[v] }{Z(S)[v]\cdot 1}\right) \arrow[r] \arrow[d] & 0 \\
0 \arrow[r] & E_{\ell}^{p,q}(Z(S)[v]^{\phi^*}\cdot 1) \arrow[r] \arrow[d] & 
E^{p,q}_{\ell}\left(\overline{\Cdh}[v]^{\phi^*}\right) \arrow[r] \arrow[d] & 
E^{p,q}_{\ell}\left(\frac{\overline{\Cdh}[v]^{\phi^*} }{Z(S)[v]^{\phi^*}\cdot 1}\right) \arrow[r] \arrow[d] & 0 \\
& 0 & 0 & 0 & 
\end{tikzcd}\]
It follows from the Lemma~\ref{lm:real_exact_seq} that the middle column is exact. The differentials on $Z(S)[v]\cdot 1$, $Z(S)[v]^{-\phi^*}\cdot 1$ and on $Z(S)[v]^{\phi^*}\cdot 1$ are trivial.  So, the left column is exact. Hence, by the nine lemma, the right column is also exact.
\end{proof}
Define $\overline{\Cdh_{\not \e}}[v]^{-\phi^*}:=\frac{\overline{\Cdh}[v]^{-\phi^*} }{Z(S)[v]^{-\phi^*}\cdot 1}.$
\begin{cor}\label{cor:overline_Real_spec}
    \[E_{\infty}^{p,q}\left(\overline{\Cdh_{\not \e}}[v]^{-\phi^*}\right)=\begin{cases}E^{p,q}_\infty\left(\overline{\Cdh_{\not \e}}[v]\right),& -p-v=2,3\pmod 4,\\
    0, &otherwise.        
    \end{cases}\]    
\end{cor}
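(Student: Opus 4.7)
\bigskip
\noindent\textbf{Proof plan.} The plan is to apply Corollary~\ref{cor:real_exact_not_e} at the $E_\infty$ page and to show that, because $\phi^*$ acts trivially on differential forms on $L$, the involution acts as a single scalar on each page $E_\ell^{p,q}\!\left(\overline{\Cdh_{\not \e}}[v]\right)$; the corollary then splits into two cases depending on whether this scalar is $+1$ or $-1$.

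First I would compute the $E_0$ page. By Lemma~\ref{lm:Fpexplicit} applied to $\overline{\Cdh}[v]$ (which has the same underlying module as $\overline{\Cdh}$ with grading shifted by $v$), the associated graded of the $\nu_S$ filtration gives
\[
E_0^{p,q}\!\left(\overline{\Cdh_{\not \e}}[v]\right) \;\cong\; \bigl(S_{-p}[v] \otimes A^{2p+q+v}(L)\bigr)/\bigl(Z(S)[v]\cdot 1\text{ in bidegree }(p,q)\bigr).
\]
Since $L\subset \fix(\phi)$, the involution $\phi^*$ acts trivially on the $A^{2p+q+v}(L)$ factor. On the $S_{-p}[v]$ factor, Definition~\ref{dfn:inv_shift} gives $\phi^*(s^k)=(-1)^{m(m-1)/2}s^k$ with $m=|s^k|_{S[v]}=-p-v$. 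A case analysis modulo $4$ (as in Lemma~\ref{lm:Real_pi_r_mod_deg}) shows that this scalar equals $-1$ precisely when $-p-v\equiv 2,3\pmod 4$ and equals $+1$ when $-p-v\equiv 0,1\pmod 4$. Thus $\phi^*$ acts on all of $E_0^{p,q}\!\left(\overline{\Cdh_{\not \e}}[v]\right)$ by this single scalar.

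Next I would note that, by Lemma~\ref{lm:phi_star_cochainmap}, $\phi^*$ commutes with the differential $\bar{\mh}^{\gamma,b}_1[v]$, hence with each $d_r$. Since it acts as a scalar at the $E_0$ page, it acts as the same scalar on every subsequent page, including $E_\infty^{p,q}$.

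Finally I would apply Corollary~\ref{cor:real_exact_not_e} with $\ell=\infty$ to obtain the short exact sequence
\[
0 \;\to\; E_\infty^{p,q}\!\left(\overline{\Cdh_{\not \e}}[v]^{-\phi^*}\right) \;\hookrightarrow\; E_\infty^{p,q}\!\left(\overline{\Cdh_{\not \e}}[v]\right) \;\xrightarrow{\,\frac{\Id+\phi^*}{2}\,}\; E_\infty^{p,q}\!\left(\overline{\Cdh_{\not \e}}[v]^{\phi^*}\right) \;\to\; 0.
\]
When $-p-v\equiv 2,3\pmod 4$, the scalar is $-1$, so $\frac{\Id+\phi^*}{2}=0$ on the middle term; exactness forces the injection to be an isomorphism, giving $E_\infty^{p,q}\!\left(\overline{\Cdh_{\not \e}}[v]^{-\phi^*}\right)\cong E_\infty^{p,q}\!\left(\overline{\Cdh_{\not \e}}[v]\right)$. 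When $-p-v\equiv 0,1\pmod 4$, the scalar is $+1$, so the $(-\phi^*)$-fixed subspace of the middle term is zero, and the injectivity in the sequence forces $E_\infty^{p,q}\!\left(\overline{\Cdh_{\not \e}}[v]^{-\phi^*}\right)=0$.

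The only delicate point, and the step I expect to be the main obstacle, is the bookkeeping of the shift $v$ in Definition~\ref{dfn:inv_shift} when verifying that $\phi^*$ acts by the asserted scalar on $E_0^{p,q}$; once this is established, the scalar automatically propagates to $E_\infty$ and the short exact sequence of Corollary~\ref{cor:real_exact_not_e} immediately yields both branches of the conclusion.
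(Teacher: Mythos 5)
Your proposal is correct and follows essentially the same route as the paper: both reduce to the observation that $\phi^*$ acts trivially on $A^*(L)$ and by the sign $(-1)^{(-p-v)(-p-v-1)/2}$ on $S_{-p}[v]$ at the $E_0$ page (via Lemma~\ref{lm:Fpexplicit}), and then conclude through the exact sequences of Corollary~\ref{cor:real_exact_not_e} together with boundedness of the filtration. Your packaging via a scalar action propagating to $E_\infty$ is just a rephrasing of the paper's vanishing of $E_0^{p,q}$ for the $\pm\phi^*$-eigencomplexes, and your use of the exact sequence ``at $\ell=\infty$'' is justified by the stabilization coming from Lemma~\ref{lm:S_filration_modu}, exactly as in the paper's closing remark.
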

\begin{proof}
    It follows from Lemma~\ref{lm:Fpexplicit} with the fact that $\phi^*$ acts trivially on differential forms that
    \[E_{0}^{p,q}\left(\overline{\Cdh}[v]^{-\phi^*} \right)=\begin{cases}E^{p,q}_0\left(\overline{\Cdh}[v]\right),& -p-v=2,3\pmod 4,\\
    0, &otherwise,        
    \end{cases}\]
    and that 
    \[E_{0}^{p,q}\left(\overline{\Cdh}[v]^{\phi^*}\right)=\begin{cases}E^{p,q}_0\left(\overline{\Cdh}[v]\right),& -p-v=0,1\pmod 4,\\
    0, &otherwise.        
    \end{cases}\]
    So, if $-p-v=0,1\pmod 4,$ then $E_{\ell}^{p,q}\left(\overline{\Cdh}[v]^{-\phi^*}\right)=0$, and consequently by  Corollary~\ref{cor:real_exact_not_e} above $E_{\ell}^{p,q}\left(\overline{\Cdh_{\not \e}}[v]^{-\phi^*}\right)=0$.  Otherwise, $E_{\ell}^{p,q}\left(\overline{\Cdh}[v]^{\phi^*}\right)=0$, and thus $E^{p,q}_{\ell}\left(\frac{\overline{\Cdh}[v]^{\phi^*} }{Z(S)[v]^{\phi^*}\cdot 1}\right)=0$. So, if $-p-v=2,3 \pmod{4}$, the injective map that is given in the above corollary 
    \[E_{\ell}^{p,q}\left(\overline{\Cdh_{\not \e}}[v]^{-\phi^*}\right) \hookrightarrow E_{\ell}^{p,q}\left(\overline{\Cdh_{\not \e}}[v]\right)\]
    is surjective. Hence, the boundedness of the filtration implies the result.

    \end{proof}

\begin{lm}\label{lm:cohomo-asum-filt_real}
    Let $v\in 2\Z$. Assume $H^k(L;\R) = 0$ for $k \equiv 0,3\pmod 4, \, k \neq 0$. Then,  the natural map $F_{\nu_S}^{n-(2+v)}H^{2}(\overline{\Cdh_{\not \e}}[v]^{-\phi^*},\bar{\mh}^{\gamma,b}_1)\hookrightarrow H^{2}(\overline{\Cdh_{\not \e}}[v]^{-\phi^*},\bar{\mh}^{\gamma,b}_1)$ is surjective.
\end{lm}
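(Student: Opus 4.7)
The plan is to follow the strategy of Lemma~\ref{lm:cohomo-asum-filt}, adapted to the real subcomplex. By Theorem~\ref{thm:filtered_spec_conv} applied to the $\nu_S$ filtration on $\overline{\Cdh_{\not \e}}[v]^{-\phi^*}$, together with Lemma~\ref{lm:S_filration_modu} which gives $F_{\nu_S}^{-(2+v)}H^2 = H^2$, it suffices to show
\[
E_{\infty}^{p,2-p}\!\left(\overline{\Cdh_{\not \e}}[v]^{-\phi^*}\right) = 0 \qquad \text{for every } p \text{ with } -(2+v) \le p < n-(2+v).
\]
Given these vanishings one obtains $F_{\nu_S}^{p}H^2 = F_{\nu_S}^{p+1}H^2$ for each such $p$, and iterating telescopes $F^{-(2+v)}H^2 = F^{n-(2+v)}H^2$, yielding the claimed surjectivity.

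Next, I would invoke Corollary~\ref{cor:overline_Real_spec}: the term $E_{\infty}^{p,2-p}(\overline{\Cdh_{\not \e}}[v]^{-\phi^*})$ is automatically zero when $-p-v \equiv 0,1 \pmod 4$, and when $-p-v \equiv 2,3 \pmod 4$ it agrees with $E_{\infty}^{p,2-p}(\overline{\Cdh_{\not \e}}[v])$. Using the shift identity $E_r^{p,q}(A[v]) = E_r^{p,q+v}(A)$ and Theorem~\ref{thm:spectral_conv}, this latter group is a subquotient of $E_1^{p,2-p+v}(\overline{\Cdh_{\not \e}}) = S_{-p}\otimes H^{2+p+v}(L,d)$, with the convention that this $E_1$ already vanishes when $p\in 2\N$ and $2+p+v=0$.

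The heart of the argument is a parity check in the range $p > -(2+v)$, so that $2+p+v > 0$. The factor $H^{2+p+v}(L,d)$ is nontrivial only if $2+p+v \equiv 1$ or $2 \pmod 4$, by the cohomological hypothesis on $L$. Combining this with $-p-v \equiv 2,3 \pmod 4$ from Corollary~\ref{cor:overline_Real_spec} and adding the two residues, whose total is $(2+p+v)+(-p-v)=2$, one would need $2 \pmod 4$ to be representable as a sum of an element of $\{1,2\}$ and an element of $\{2,3\}$; however, the four possible sums are $\{3,0,0,1\} \pmod 4$, none of which equal $2$. Thus the constraints are incompatible and $E_1$ (hence $E_\infty$) vanishes throughout this range.

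At the boundary value $p = -(2+v)$, where $2+p+v = 0$: since $v \in 2\Z$, $p$ is even, and two sub-cases arise. If $v\le -2$, then $p\ge 0$ lies in $2\N$, and the special case of Theorem~\ref{thm:spectral_conv} forces $E_1 = 0$. If $v\ge 0$, then $p < 0$ but $-p = 2+v > 0$, and since $S$ is concentrated in non-positive degrees (because $|s|=1-n$ with $n>1$) we have $S_{-p} = 0$, again giving $E_1 = 0$. The main obstacle will be bookkeeping the parity and grading constraints imposed by the shift $[v]$, the filtration $\nu_S$, and the involution $\phi^*$ simultaneously; once these are properly aligned, the mod~$4$ incompatibility and the edge-case vanishing of $S_{-p}$ finish the proof.
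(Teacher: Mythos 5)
Your proposal is correct and follows essentially the same route as the paper's proof: reduce via Lemma~\ref{lm:S_filration_modu} to the vanishing of $E_\infty^{\ell,2-\ell}\left(\overline{\Cdh_{\not\e}}[v]^{-\phi^*}\right)$ for $-2-v\le \ell<n-2-v$, use Corollary~\ref{cor:overline_Real_spec} to pass to the unshifted-involution-free spectral sequence when $-p-v\equiv 2,3\pmod 4$, and kill the $E_1$ terms by the mod~$4$ cohomological hypothesis (your ``sum of residues equals $2$'' check is the same computation as the paper's $p+2+v\equiv 0,3\pmod 4$), with the edge case $2+p+v=0$ handled by Theorem~\ref{thm:spectral_conv}. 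Your explicit split of the edge case by the sign of $v$ (using $S_{-p}=0$ in positive degrees) is if anything slightly more careful than the paper's terse invocation of the theorem there.
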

\begin{proof}

    By Lemma~\ref{lm:S_filration_modu} , we know that 
    \[ F_{\nu_S}^{-2-v}H^{2}(\overline{\Cdh_{\not \e}}[v]^{-\phi^*},\bar{\mh}^{\gamma,b}_1)= H^{2}(\overline{\Cdh_{\not \e}}[v]^{-\phi^*},\bar{\mh}^{\gamma,b}_1).\]
    Consider the exact sequence:
    \[\begin{tikzcd}
    {F^{\ell+1}_{\nu_s}H^{2}(\overline{\Cdh_{\not \e}}[v]^{-\phi^*},\bar{\mh}^{\gamma,b}_1)} \arrow[r] & 
    {F^{\ell}_{\nu_s}H^{2}(\overline{\Cdh_{\not \e}}[v]^{-\phi^*},\bar{\mh}^{\gamma,b}_1)} \arrow[r] & 
    {E_{\infty}^{\ell,2-\ell}\left(\overline{\Cdh_{\not \e}}[v]^{-\phi^*}\right).}
    \end{tikzcd}\]
    To establish our claim, we need to show that $E_{\infty}^{\ell,2-\ell}\left(\overline{\Cdh_{\not \e}}[v]^{-\phi^*}\right)=0$ for all $\ell$ in the range $-2-v\leq \ell<n-2-v$.
    Let us first examine the case when $\ell=-2-v$. Let $p=\ell$ and $q=2-\ell$. We have 
    \[E_{\infty}^{p,q}\left(\overline{\Cdh_{\not \e}}[v]\right)\cong E_{\infty}^{p,q+v}\left(\overline{\Cdh_{\not \e}}\right).\]
    
    For our choice of $p$ and $q$, we have $2p+q+v=0$. Since  $v\in 2\mathbb{Z}$, we know $p$ is even. Hence, by Theorem~\ref{thm:spectral_conv}, $E_{\infty}^{p,q}\left(\overline{\Cdh_{\not \e}}[v]\right)=0$. It now follows from Corollary~\ref{cor:overline_Real_spec} that $E_{\infty}^{\ell,2-\ell}\left(\overline{\Cdh_{\not \e}}[v]^{-\phi^*}\right)=0$ when $\ell=-2-v$.
    
    Now consider when $-2-v< \ell<n-2-v$. With $p=\ell$ and $q=2-\ell$ as before, the pair $(p,q)$ satisfies $0<q+2p+v<n$. Theorem~\ref{thm:spectral_conv} tells us:
    \begin{align*}
        E_{\infty}^{p,q}\left(\overline{\Cdh_{\not \e}}[v]\right)
        &\cong E_{\infty}^{p,q+v}\left(\overline{\Cdh_{\not \e}}\right)\\
        &= E_1^{p,q+v}\left(\overline{\Cdh_{\not \e}}\right)\\
        &\cong S_{-p}\otimes H^{2p+q+v}\left(L\right).
    \end{align*}
    Applying Corollary~\ref{cor:overline_Real_spec} and using the fact that $q=2-p$, we obtain:
    \begin{align*}
        \bigoplus_{-2-v<p<n-2-v}E_{\infty}^{p,q}\left(\overline{\Cdh_{\not \e}}[v]^{-\phi^*}\right)
        &=\bigoplus_{\substack{-2-v<p<n-2-v\\-p-v\equiv 2,3\pmod{4}}}E_{\infty}^{p,q}\left(\overline{\Cdh_{\not \e}}[v]\right)\\
        &=\bigoplus_{\substack{-2-v<p<n-2-v\\-p-v\equiv 2,3\pmod{4}}}S_{-p}\otimes H^{2p+q+v}(L)\\
        &=\bigoplus_{\substack{-2-v<p<n-2-v\\-p-v\equiv 2,3\pmod{4}}}S_{-p}\otimes H^{p+2+v}(L).
    \end{align*}
    
    When $-p-v\equiv 2,3\pmod{4}$, we have $p+2+v\equiv 0,3\pmod{4}$. Our assumption states that $H^k(L;\mathbb{R})=0$ for $k\equiv 0,3\pmod{4}, k\neq 0$. Therefore, all these cohomology groups vanish, and we have:
    \[
    \bigoplus_{\substack{-2-v<p<n-2-v\\-p-v\equiv 2,3\pmod{4}}}S_{-p}\otimes H^{p+2+v}(L) = 0.
    \]
    
    This shows that $E_{\infty}^{\ell,2-\ell}\left(\overline{\Cdh_{\not \e}}[v]^{-\phi^*}\right)=0$ for all $\ell$ in the range $-2-v\leq \ell<n-2-v$. This completes the proof.
\end{proof}
\begin{lm}
\label{lm:cohomo-asum-filt_real_dt}
    Let $v\in 2\Z$. Assume $H^k(L;\R) = 0$ for $k \equiv 2,3\pmod 4, \, k \neq 0$. Then the natural map $F_{\nu_S}^{n+1-(2+v)}H^{2}(\overline{\mB}_{\not \e}[v]^{-\phi^*},\bar{\mt}^{\gt,\bt}_1)\hookrightarrow F_{\nu_S}^{1-(2+v)}H^{2}(\overline{\mB}_{\not \e}[v]^{-\phi^*},\bar{\mt}^{\gt,\bt}_1)$ is surjective.
\end{lm}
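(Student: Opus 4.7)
The plan is to mirror the proof of Lemma~\ref{lm:cohomo-asum-filt_real}, adapted to the pseudoisotopy complex. From the exact sequence
\[
F^{\ell+1}_{\nu_S}H^{2}(\overline{\mB}_{\not \e}[v]^{-\phi^*},\bar{\mt}^{\gt,\bt}_1) \to F^{\ell}_{\nu_S}H^{2}(\overline{\mB}_{\not \e}[v]^{-\phi^*},\bar{\mt}^{\gt,\bt}_1) \to E^{\ell,2-\ell}_\infty(\overline{\mB}_{\not \e}[v]^{-\phi^*}),
\]
it suffices to show $E^{\ell,2-\ell}_\infty(\overline{\mB}_{\not \e}[v]^{-\phi^*}) = 0$ for every integer $\ell$ in the range $-1-v \le \ell < n-1-v$.

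I would first establish the pseudoisotopy analog of Corollary~\ref{cor:overline_Real_spec}. Since $\phi^*$ acts trivially on differential forms on $I\times L$, inspecting Lemma~\ref{lm:Fpexplicit} shows that $E_0^{p,q}(\overline{\mB}_{\not\e}[v]^{-\phi^*})$ equals $E_0^{p,q}(\overline{\mB}_{\not\e}[v])$ when $-p-v \equiv 2$ or $3 \pmod 4$ and vanishes otherwise, and similarly for the $\phi^*$-fixed part. Applying Proposition~\ref{prop:spec_exact_sequ} to
\[
0 \to \overline{\mB}_{\not\e}[v]^{-\phi^*} \hookrightarrow \overline{\mB}_{\not\e}[v] \xrightarrow{\,(\Id+\phi^*)/2\,} \overline{\mB}_{\not\e}[v]^{\phi^*} \to 0
\]
(surjectivity holds as in Corollary~\ref{cor:real_exact_not_e}) then propagates the splitting through every page, giving
\[
E_\infty^{p,q}(\overline{\mB}_{\not\e}[v]^{-\phi^*}) \simeq \begin{cases} E_\infty^{p,q}(\overline{\mB}_{\not\e}[v]), & -p-v \equiv 2,3 \pmod 4,\\ 0, & \text{otherwise.}\end{cases}
\]

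At the boundary $\ell = -1-v$, we have $-p-v = 1$, which is not congruent to $2$ or $3 \pmod 4$, so the group vanishes automatically. For $-1-v < \ell < n-1-v$, the condition $2p+q+v = \ell+2+v \ne 0$ is satisfied, so Lemma~\ref{lm:B_not_e_almost_iso} and the shifted form of Theorem~\ref{thm:spectral_conv_psedu}, namely $E_\infty^{p,q}(\overline{\mB}[v]) \cong E_\infty^{p,q-1}(\overline{\Cdh_{\not\e}}[v])$, combine to identify
\[
E_\infty^{\ell, 2-\ell}(\overline{\mB}_{\not\e}[v]) \cong E_\infty^{\ell, 1-\ell}(\overline{\Cdh_{\not\e}}[v]).
\]
Since $0 < \ell+1+v < n$ in this range, Lemma~\ref{lm:E_infty_explisit} further identifies this with $S_{-\ell}\otimes H^{\ell+1+v}(L)$.

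To finish, observe that when $-\ell-v \equiv 2 \pmod 4$ we have $\ell+1+v \equiv 3 \pmod 4$, and when $-\ell-v \equiv 3 \pmod 4$ we have $\ell+1+v \equiv 2 \pmod 4$. The hypothesis $H^k(L;\R) = 0$ for $k \equiv 2, 3 \pmod 4$, $k \ne 0$, therefore annihilates every surviving $H^{\ell+1+v}(L)$ in the parity-active range, completing the proof. The main subtlety is the unit degree shift inherent to the pseudoisotopy (reflected in the passage from $q$ to $q-1$ in Theorem~\ref{thm:spectral_conv_psedu}), which is precisely what converts the $\{0,3\} \pmod 4$ vanishing hypothesis of Lemma~\ref{lm:cohomo-asum-filt_real} into the $\{2,3\} \pmod 4$ hypothesis here; the other delicate point is verifying that the degenerate boundary $\ell = -1-v$ is handled by the $-\phi^*$-equivariance parity rather than by any additional hypothesis on $\bt$.
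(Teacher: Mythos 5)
Your argument is correct and reaches the paper's conclusion by the same overall strategy: reduce to the vanishing of $E_\infty^{\ell,2-\ell}$ for $-1-v\le\ell\le n-2-v$, compare the $\overline{\mB}_{\not\e}$ spectral sequence with that of $\overline{\Cdh_{\not\e}}$ through the degree shift $q\mapsto q-1$, restrict to the columns with $-p-v\equiv 2,3\pmod 4$, and kill the surviving terms $S_{-\ell}\otimes H^{\ell+1+v}(L)$ using the hypothesis (your parity bookkeeping $p+1+v\equiv 2,3\pmod 4$ is the correct one; the paper's prose momentarily writes $0,3$ but then invokes the $2,3$ hypothesis, so that is only a typo there). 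The one organizational difference is where the $-\phi^*$-reduction happens: the paper carries the superscript $-\phi^*$ through the whole chain, using the $\invs$-equivariant comparison results (Lemma~\ref{lm:B_not_e_almost_iso}, Lemma~\ref{lm:B_zero_almost_iso_B}, Lemma~\ref{lm:B_zero_cone_quasi_iso}, Corollary~\ref{cor:cone_seq_exact_E_infty}) and only strips the involution at the level of $\overline{\Cdh_{\not\e}}[v]$ via Corollary~\ref{cor:overline_Real_spec}, whereas you strip it first, which forces you to supply an $\overline{\mB}_{\not\e}$-level analogue of Corollary~\ref{cor:overline_Real_spec} that is not stated in the paper. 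Your sketch of that analogue is sound: $\phi^*$ is a filtered cochain map on $\overline{\mB}_{\not\e}[v]$ (it preserves $\overline{\mB}$ and the ideal $Z(\ker d_{\mRdh})\cdot\e$, and $\bt$ is real), the surjectivity-on-cycles condition of Proposition~\ref{prop:spec_exact_sequ} holds by the same observation as in Lemma~\ref{lm:real_exact_seq}, and the $E_0$-parity splitting holds because $\phi^*$ acts on the associated graded of column $p$ by the single sign determined by $-p-v\bmod 4$; in fact, since you work over $\R$, you could simply split $\overline{\mB}_{\not\e}[v]$ as the direct sum of the two eigencomplexes and avoid the exact-sequence machinery altogether. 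In exchange for this extra verification you get to quote the non-equivariant Theorem~\ref{thm:spectral_conv_psedu} wholesale, and your treatment of the endpoint $\ell=-1-v$ (vanishing of the $-\phi^*$ eigenspace already at $E_0$, since $-\ell-v=1$) is a clean substitute for the paper's implicit handling of that column. All range and hypothesis checks (Lemma~\ref{lm:B_not_e_almost_iso} needs $2p+q+v\neq 0$, Lemma~\ref{lm:E_infty_explisit} needs $0<\ell+1+v<n$) are satisfied as you state them, so no gap remains.
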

\begin{proof}
  Consider the exact sequence:  
    \[\begin{tikzcd}
    {F^{\ell+1}_{\nu_s}H^{2}(\overline{\mB}_{\not \e}[v]^{-\phi^*},\bar{\mt}^{\gt,\bt}_1)} \arrow[r] & 
    {F^{\ell}_{\nu_s}H^{2}(\overline{\mB}_{\not \e}[v]^{-\phi^*},\bar{\mt}^{\gt,\bt}_1)} \arrow[r] & 
    {E_{\infty}^{\ell,2-\ell}\left(\overline{\mB}_{\not \e}[v]^{-\phi^*}\right).}
    \end{tikzcd}\]
    
    To establish our claim, we need to show that $E_{\infty}^{\ell,2-\ell}\left(\overline{\mB}_{\not \e}[v]^{-\phi^*}\right)=0$ for all $\ell$ in the range $-2-v< \ell<n+1-2-v$.
    
    Write $p=\ell$ and $q=2-\ell$. We have that the pair $(p,q)$ satisfies $0<q+2p+v<n+1$. 
    \begin{align*}
        \bigoplus_{-2-v<p<n+1-2-v}E_{\infty}^{p,q}\left(\overline{\mB}_{\not \e}[v]^{-\phi^*}\right)&\overset{\ref{lm:B_not_e_almost_iso}}{=}\bigoplus_{-2-v<p<n-1-v}E_{\infty}^{p,q}\left(\overline{\mB}[v]^{-\phi^*}\right)\\
        &\overset{\ref{lm:B_zero_almost_iso_B},\ref{lm:B_zero_cone_quasi_iso},\ref{cor:cone_seq_exact_E_infty}}{=}\bigoplus_{-2-v<p<n-1-v}E_{\infty}^{p,q-1}\left(\Cdh[v]^{-\phi^*}_{\not \e}\right)\\
        &\overset{\ref{cor:overline_Real_spec}}{=}\bigoplus_{\substack{-2-v<p<n-1-v\\-p-v\equiv 2,3\pmod{4}}}E_{\infty}^{p,q-1}\left(\overline{\Cdh_{\not \e}}[v]\right)\\
        &\overset{\ref{thm:spectral_conv}}{=}\bigoplus_{\substack{-2-v<p<n-1-v\\-p-v\equiv 2,3\pmod{4}}}S_{-p}\otimes H^{2p+q-1+v}(L)\\
        &\overset{q+p=2}{=}\bigoplus_{\substack{-2-v<p<n-1-v\\-p-v\equiv 2,3\pmod{4}}}S_{-p}\otimes H^{p+1+v}(L).
    \end{align*}
    When $-p-v\equiv 2,3\pmod{4}$, we have $p+1+v\equiv 0,3\pmod{4}$. Our assumption states that $H^k(L;\mathbb{R})=0$ for $k\equiv 2,3\pmod{4}, k\neq 0$. Therefore, all these cohomology groups vanish, and we have:
    \[\bigoplus_{\substack{-2-v<p<n-1-v\\-p-v\equiv 2,3\pmod{4}}}S_{-p}\otimes H^{p+1+v}(L) = 0. \]
    This shows that $E_{\infty}^{\ell,2-\ell}\left(\overline{\mB}_{\not \e}[v]^{-\phi^*}\right)=0$ for all $\ell$ in the range $-2-v< \ell<n+1-2-v$. This completes the proof.
\end{proof}
\section{Classification of bounding pairs}\label{sec:bd_chains}
\subsection{Existence of bounding cochains}\label{construct_bd_ch}
\subsubsection{Statement}
Recall the notion of a bounding pair $(\gamma,b)$  given in Definition~\ref{dfn_bd_pair}. Define $\mI_S=(s)\triangleleft S$ to be the ideal of $S$ generated by $s.$
It is our objective to prove the following result.
\begin{prop}\label{prop:exist}
Consider the following two cases.
\begin{enumerate}[label=(\arabic*), ref=(\arabic*)]
    \item \label{cond:sphere_coho}
        We assume $H^{i}(L;\R) = 0$ for $i \neq 0,n$ and take $\kappa = \Id.$
    \item \label{cond:invo_coho}
        We assume $(X,L,\omega,\phi)$ is a real setting, $\s$ is a spin structure, $R$ is concentrated in even degree, $H^k(L;\R) = 0$ for $k \equiv 0,3 \pmod 4, \, k \neq 0,n$, and take $\kappa = -\phi^*.$
\end{enumerate}
%Let $\invs = \mathrm{Id} $ in case~\ref{cond:sphere_coho} and $\invs = -\phi^*$ in the case~\ref{cond:invo_coho}. 
Then, for any closed $\gamma \in (\mI_QD)_2^{\invs}$ and any $a \in (\mI_S +\mI_{\Rh})_{1-n}^{\invs}$ such that $a^1 \in s \cdot \hat{R}^*$, there exists a bounding cochain $b$ for $\mgh$ such that
    \[
    \int_L b = a.
    \]
\end{prop}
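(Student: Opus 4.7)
The plan is to construct $b$ iteratively along the discrete filtration $\{0=E_0<E_1<E_2<\cdots\}$ of $\nu$ from equation~\eqref{eq:energy_filtration}, combining the obstruction theory of Section~\ref{ssec:obs_theory} with the spectral sequence calculations of Section~\ref{sec:spectral_sequence}. The setting of Section~\ref{sssec:sababa_property} applies, so that pseudo-completeness of Corollary~\ref{cor:fukaya (S,F) psedu complete} is available. To initialize, I pick a top form $\mathrm{pt}_L\in A^n(L)$ with $\int_L \mathrm{pt}_L=1$ (in Case~\ref{cond:invo_coho}, averaging under $\phi^*$ makes it $\phi$-invariant). Set $b^{(0)}:=\hat{g}(a\otimes[\mathrm{pt}_L])$, where $\hat{g}$ is the induced $\bar R$-structure on $\hat C$. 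Then $|b^{(0)}|=1$, $\nu_S\otimes\nu(b^{(0)})>0$, $b^{(0)}\in\hat C^\kappa$, and $\int_L b^{(0)}=a$. Lemma~\ref{lm:energy_zero_bounding_chain} guarantees that $\bar{b}^{(0)}$ is a bounding cochain for the residue $A_\infty$ algebra.

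I will then assume inductively that $b^{(\ell)}\in\hat C^\kappa$ of degree $1$ satisfies $\hat{\mathfrak m}^{\gamma,b^{(\ell)}}_0\equiv c^{(\ell)}\cdot 1\pmod{F^{>E_\ell}\hat C_{\not\e}}$ for some $c^{(\ell)}\in Z(\mathcal I_{\hat R})$, and that $\int_L b^{(\ell)}=a$. By Lemma~\ref{lm:o_is_class_HCe}, the obstruction class $\mathfrak o^{(\ell)}:=[\hat{\mathfrak m}^{\gamma,b^{(\ell)}}_{0,E_{\ell+1}}]\in H^2(\hat C^\kappa_{\not\e,E_{\ell+1}},\hat{\mathfrak m}^{\gamma,b^{(\ell)}}_{1,E_{\ell+1}})$ is defined. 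When it vanishes, Lemma~\ref{lm:obs_induction} produces $\xi\in F^{E_{\ell+1}}\hat C^\kappa_1$ with $b^{(\ell+1)}:=b^{(\ell)}+\xi$ a bounding cochain modulo $F^{>E_{\ell+1}}$.

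The main obstacle is showing the obstruction always vanishes. By Lemma~\ref{lm:twisted_f_chain_iso}, $\hat f$ gives a chain isomorphism
\[
(\hat C^\kappa_{\not\e,E_{\ell+1}},\hat{\mathfrak m}^{\gamma,b^{(\ell)}}_{1,E_{\ell+1}}) \cong (\hat R^\diamond_{E_{\ell+1}}\otimes_{\bar R}\overline{\hat C_{\not\e}}^\kappa,\ \mathrm{Id}\otimes\bar{\hat{\mathfrak m}}^{\gamma,\bar b^{(\ell)}}_1),
\]
so the cohomology factors through $H^2(\overline{\hat C_{\not\e}}^\kappa,\bar{\hat{\mathfrak m}}^{\gamma,\bar b^{(\ell)}}_1)$. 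The point-like hypothesis $a^1\in s\hat R^*$ forces $\int_L\bar b^{(\ell)}\ne 0$, because the residue of $u$ with $a^1=su$ is a nonzero scalar. In Case~\ref{cond:sphere_coho}, Corollary~\ref{cor:cohomo_asum_filt} gives surjectivity $F^{n-2}_{\nu_S}H^2\twoheadrightarrow H^2$, while Corollary~\ref{cor:even_top_deg} combined with Theorem~\ref{thm:spectral_conv} identifies $F^{n-2}_{\nu_S}H^2\cong E_\infty^{n-2,4-n}$ and forces it to vanish: for even $n\ge 4$ this is the ``$p\in 2\N+2$ and $q+2p=n$'' clause, and for odd $n\ge 3$ it follows from $S_{2-n}=0$ since $(n-1)\nmid(n-2)$. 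In Case~\ref{cond:invo_coho}, the same argument runs on the $-\phi^*$-invariant subcomplex via Corollary~\ref{cor:real_subcomp}, Corollary~\ref{cor:overline_Real_spec}, and Lemma~\ref{lm:cohomo-asum-filt_real}. The delicate low-dimensional case $n=2$ requires an additional argument based on the cyclic structure equations of Proposition~\ref{prop:super_cyclic_structure_equation}: centrality of $c^{(\ell)}$ combined with $\langle\hat{\mathfrak m}_0^{\gamma,b^{(\ell)}},\mathbf 1\rangle$-type identities forces the top-degree component of the representative to pair trivially against the generator of $E_\infty^{0,2}\cong \mathbb R$.

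To preserve $\int_L b^{(\ell+1)}=a$ at every step, I will use the freedom in choosing $\xi$: it is determined only modulo $\ker\hat{\mathfrak m}^{\gamma,b^{(\ell)}}_{1,E_{\ell+1}}$, and in particular can be shifted by any $d$-exact form in the top-degree component without changing its cohomology class. Since $\int_L$ annihilates exact top forms by Stokes, and the relevant obstruction primitive lies in $F^{E_{\ell+1}}$, the correction to $\int_L b$ lives in $F^{>E_\ell}$ and can be iteratively cancelled. Completeness of $\hat C$ in $\nu$ gives a limit $b\in\hat C^\kappa$, and Corollary~\ref{cor:mgh_expo_converge} guarantees the Maurer-Cartan sum converges at the limit, producing the desired bounding pair $(\gamma,b)$ with $\int_L b=a$.
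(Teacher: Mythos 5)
There is a genuine gap at the heart of your vanishing argument: the obstruction class does not live in $H^2\bigl(\overline{\Cdh_{\not\e}}^{\kappa},\bar{\mh}^{\gamma,\bar b}_1\bigr)$ alone. Under the chain isomorphism of Lemma~\ref{lm:twisted_f_chain_iso}, the group $H^2\bigl((\Cdh_{\not\e,E_{\ell+1}})^{\kappa},\mh^{\gamma,b}_{1,E_{\ell+1}}\bigr)$ splits as a sum, over a homogeneous basis of $\Rd_{E_{\ell+1}}$, of groups $H^{2+|r|}$ of the shifted complexes $\overline{\Cdh_{\not\e}}[\,|r|\,]^{\kappa}$, because $\Rd_{E_{\ell+1}}$ contains elements of many different degrees (coming from $\mu(\beta)$ and the $t_j$); this is exactly why the paper introduces the projections $\pi_r$ and works degreewise (Lemmas~\ref{lm:basis_exact}, \ref{lm:obs_cohomo_S_filt}). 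Your argument treats only the unshifted component, cohomological degree $2$, where the spectral sequence indeed kills everything. But for the components with $2+|r|$ odd or equal to $n$, Corollary~\ref{cor:even_top_deg} gives $F_{\nu_S}^{n-h}H^{h}\cong S_{h-n}\otimes H^{n}(L)\neq 0$, so no spectral-sequence vanishing is available and one must show that the \emph{specific} class $(\pi_r)_*\mathfrak{o}$ is zero. That is the content of Lemma~\ref{lm:td}: after pushing the representative into filtration $F^{n-h}_{\nu_S}$ (Lemma~\ref{lm:obs_cohomo_S_filt} combined with Corollary~\ref{cor:cohomo_asum_filt}, resp.\ Lemma~\ref{lm:cohomo-asum-filt_real}), it lies in $S_{h-n}\otimes A^{n}(L)$ and is detected by $\int_L$; the top-degree formula of Proposition~\ref{no_top_deg} gives $\bigl(\hat f(\mghi{b^{\dagger}}_0)\bigr)_n\equiv\bigl(d\hat f(b^{\dagger})\bigr)_n\pmod{(\mI_S^{odd})^2}$, and Stokes' theorem kills the integral. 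This step is entirely absent from your proposal; your remark that the ``delicate case $n=2$'' can be handled via the cyclic structure equations does not supply it, and the difficulty occurs in every dimension whenever $2+|r|$ is odd or equals $n$, not only when $n=2$.

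A secondary problem is the normalization step. Shifting the primitive $\xi$ by $d$-exact top-degree forms cannot change $\int_L\xi$ at all (that is precisely Stokes' theorem), so it cannot cancel a nonzero drift in $\int_L b$. What the paper does in Corollary~\ref{cor:obs_vanish} is delete the entire top-degree component $P_n\hat f(\xi)$ of the primitive: top-degree forms lie in the kernel of the twisted differential $\bar{\mh}^{\gamma,b}_1$ (the exterior derivative of a top form vanishes and the bracket with $\bar b$ exceeds degree $n$), so the truncated element is still a primitive, and its integral over $L$ vanishes, giving $\int_L b_{(\ell+1)}=\int_L b_{(\ell)}=a$ exactly rather than by an unspecified iterative cancellation. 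Your initialization $b^{(0)}=\hat g(a\otimes[\mathrm{pt}_L])$ and the overall induction along the discrete filtration, with Lemmas~\ref{lm:o_is_class_HCe} and~\ref{lm:obs_induction} and completeness at the end, do match the paper's skeleton; the missing ingredients are the $\pi_r$-decomposition and the top-degree/Stokes argument of Lemma~\ref{lm:td}, together with the correct mechanism for preserving $\int_L b=a$.
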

\begin{rem}
    When $\invs=-\phi^*$, the condition $a^1 \in s \cdot \hat{R}^*$ imposes restrictions on the dimension of $L$. Specifically, $-\phi^*(s) = s$ if and only if $n \equiv 2 \pmod{4}$ or $n \equiv 3 \pmod{4}$.
\end{rem}

The proof of Proposition~\ref{prop:exist} is based on the obstruction theory developed below building on ideas of~\cite{fukaya2010lagrangian, solomon2016differential}.

\subsubsection{Obstruction cochains}
Recall the definitions of $R, C$, and $S$, and the decomposition of tensor products of $S$ to odd and even parts from Section~\ref{subsubsec:a_infty}.
Also, recall the definition of $\nu$ from Section~\ref{ssec:gen_not}.
Fix a sababa sub-algebra $\Rd \subset R$ and the corresponding restriction of scalars $\Cd$ as in Section~\ref{sssec:sababa_property}.
By Lemma~\ref{lm:R_daimond_sababa_A_infty} $(\Cd, \mg_k, \langle\;,\,\rangle, 1 )$ is a sababa cyclic unital $A_\infty$ algebra. Let $S$ and $\nu_S$ be as in Section~\ref{ssec:the_ring_S}. Let $\left(\hat{C}^{\diamond},\left\{\hat{\mathfrak{m}}_k^\gamma\right\}_{k \geq 0},\langle,\rangle_F, 1\right)$ be the $(S,F)$ extension of $(\Cd, \mg_k, \langle\;,\,\rangle, 1 )$, which by Corollary~\ref{cor:fukaya (S,F) psedu complete} is pseudo-complete with respect to $\nu_S\otimes \nu$. Let $\hat{g}:\Rd \otimes_{\bar{\Rd}}\overline{\Cdh}\to \Cdh $ be the induced $\bar{\Rd}$ structure on $\Cdh$ as in Definition~\ref{dfn:induced_R_structure}, and let $\hat{f}$ be its inverse. 
Recall from Section~\ref{sssec:sababa_property} that $\bar{\Rd}$ is isomorphic to $\R$. Henceforward, all tensor products will be taken over $\R.$ By Lemma~\ref{lm:cano_iso} and Proposition~\ref{prop:fukaya_infity_arises_dga}, we have $\overline{\Cdh} \simeq S\otimes A^*(L).$ 

Let $E_0<E_1<...$ be defined as in \eqref{eq:energy_filtration}. For $i \in \N$ recall the definitions of $\Rd_{E_i}, \Rdh_{ E_i}, \Cdh_{ E_i}$ from Definition~\ref{dfn:frm}. Also recall the definition of $\pi_r : \Cdh_{E} \to \overline{\Cdh}[|r|]$ from Section~\ref{ssec:discrete_filtration}.

Let $b\in \left({\Cdh}^{\invs}\right)_1$ with $\nu_s\otimes \nu(b)>0$ be a bounding cochain in $\frac{\Cdh}{F^{>{E_{l}}}\Cdh}$, as in Definition~\ref{dfn:bounding_cochain}.  Then, Lemma~\ref{lm:o_is_class_HCe} implies that $\mghb_0\in F^{E_{l+1}}\Cdh_{\not \e}$ and that $\mghb_{0}$ represents a cocycle in $(\Cdh_{\not \e, E_{l+1}})^{\invs}$, which we denote by $\mgie{b}{E_{l+1}}$. For the rest of this section, we abbreviate \[
\mathfrak{o} =
[\mgie{b}{E_{l+1}}]\in H^2\left((\Cdh_{\not \e, E_{l+1}})^{\invs}, \mghb_1\right).
\]
\begin{lm}\label{lm:obs_cohomo_S_filt}
    Let $r\in {\Rd_{E}}^{\vee}$ and $\alpha\in F^{E}\left({\Cdh_{\not \e}}^{\invs}\right)_m$ such that $\bar{\mh}^{\gamma,b}_1\left(\pi_r(\a)\right)=0$. If $[\pi_r\left(\a\right)]\in F_{\nu_S}^{\ell}H^{m}(\overline{\Cdh_{\not \e}}[|r|]^{\invs},\bar{\mh}^{\gamma,b}_1)$ , then there exists 
    $\eta \in F^{ E}\left({\Cdh_{\not \e}}^{\invs}\right)_{m-1}$ such that
    \[\pi_r\left(\a+\mghb_1(\eta)\right)\in F_{\nu_S}^{\ell}\left(\overline{\Cdh_{\not \e}}\right)[|r|]^{\invs}_m.\]
\end{lm}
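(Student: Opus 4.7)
The plan is to reduce the lemma to a lifting problem through $\pi_r$ that can be solved directly using the $\bar \Rd$ structure. Since $\pi_r(\alpha)$ is a cocycle whose class lies in $F_{\nu_S}^{\ell} H^m(\overline{\Cdh_{\not\e}}[|r|]^{\invs}, \bar{\mh}^{\gamma,b}_1)$, the identification $F^{\ell}_{\nu_S}H^m = \Im\bigl(H^m(F_{\nu_S}^{\ell})\to H^m\bigr)$ supplies a decomposition
\[
\pi_r(\alpha) = \beta + \bar{\mh}^{\gamma,b}_1(\zeta),
\]
with $\beta \in F_{\nu_S}^{\ell}(\overline{\Cdh_{\not\e}}[|r|]^{\invs})_m$ a cocycle and $\zeta \in (\overline{\Cdh_{\not\e}}[|r|]^{\invs})_{m-1}$. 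Using Lemma~\ref{lm:pi_r_chain}, which gives $\pi_r \circ \mghb_1 = (-1)^{|r|}\bar{\mh}^{\gamma,b}_1 \circ \pi_r$, it suffices to construct $\eta \in F^{E}(\Cdh_{\not\e}^{\invs})_{m-1}$ with $\pi_r(\eta) = -(-1)^{|r|}\zeta$: indeed
\[
\pi_r(\alpha + \mghb_1(\eta)) = \pi_r(\alpha) + (-1)^{|r|}\bar{\mh}^{\gamma,b}_1(\pi_r(\eta)) = \pi_r(\alpha) - \bar{\mh}^{\gamma,b}_1(\zeta) = \beta \in F_{\nu_S}^{\ell},
\]
which is exactly what is needed.

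The lift will be produced via the induced $\bar{\Rd}$ structure $\hat g : \Rd \otimes_{\bar{\Rd}} \overline{\Cdh} \to \Cdh$. Assuming $r\neq 0$ (otherwise take $\eta=0$), first select a homogeneous $\tilde\lambda \in F^{E}\Rd$ of degree $-|r|$ whose image in $\Rd_E$ satisfies $r([\tilde\lambda])=1$; such a $\tilde\lambda$ exists since $r$ is non-zero on the degree $-|r|$ component of $\Rd_E$. Let $\zeta_0 \in \overline{\Cdh_{\not\e}}_{m-1+|r|}$ denote the underlying (unshifted) element of $\zeta$, and set
\[
\tilde\eta := -(-1)^{|r|}\,\hat g(\tilde\lambda \otimes \zeta_0) \in F^{E}(\Cdh_{\not\e})_{m-1}.
\]
Since $\pi_r = \hat\pi_r \circ \hat f$ as defined in Section~\ref{ssec:discrete_filtration}, we obtain $\pi_r(\tilde\eta) = -(-1)^{|r|}\zeta$ immediately from the definition of $\hat\pi_r$.

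Finally, to enforce the $\invs$ invariance, take $\eta := \tilde\eta$ when $\invs = \Id$, and when $\invs = -\phi^*$ take
\[
\eta := \tfrac{1}{2}\bigl(\tilde\eta - \phi^*(\tilde\eta)\bigr) \in F^{E}(\Cdh_{\not\e}^{-\phi^*})_{m-1}.
\]
Combining the $\phi^*$-equivariance of $\pi_r$ on the shifted complex (Lemma~\ref{lm:pi_r_phi_equivariant}) with the identity $\phi^*(\zeta) = -\zeta$ (the content of $\zeta$ being $(-\phi^*)$-invariant under the shifted involution of Definition~\ref{dfn:inv_shift}) gives
\[
\pi_r(\eta) = \tfrac{1}{2}\bigl(\pi_r(\tilde\eta) - \phi^*(\pi_r(\tilde\eta))\bigr) = \tfrac{1}{2}\bigl(-(-1)^{|r|}\zeta - (-1)^{|r|}\zeta\bigr) = -(-1)^{|r|}\zeta.
\]

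The main point requiring care is the bookkeeping of signs and grading shifts: the shift by $|r|$ in the target complex, the sign $(-1)^{|r|}$ in the chain-map property of $\pi_r$, and the shift-twisted involution of Definition~\ref{dfn:inv_shift}. Beyond these verifications, the argument is a purely module-theoretic lift using the $\bar{\Rd}$ structure, with no further homological input required.
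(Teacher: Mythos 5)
Your proposal is correct and follows essentially the same route as the paper's proof: use the hypothesis to write $\pi_r(\a)$ as a filtered cocycle plus an exact term, lift the primitive through $\pi_r$ (the paper cites surjectivity of $\pi_r$, proved exactly by the $r^\vee\otimes\bar\xi$ construction you make explicit with $\hat g(\tilde\lambda\otimes\zeta_0)$), symmetrize to enforce $\invs$-invariance, and conclude via Lemmas~\ref{lm:pi_r_chain} and~\ref{lm:pi_r_phi_equivariant}. Your tracking of the $(-1)^{|r|}$ sign is a harmless refinement, since membership in $F_{\nu_S}^{\ell}$ is unaffected by signs.
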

\begin{proof}
    Assume without loss of generality that $r\ne 0$. Let $\bar{\xi}\in \overline{\Cdh_{\not \e}}$. If $r^\vee$ is any element of $\Rd_E$ that solves the equation  $r(r^\vee)=1$, then $\bar{\xi}=\hat{\pi}_r(r^\vee\otimes \bar\xi).$ 
    Since $\hat{f}$ is an isomorphism, it follows that $\pi_r$ is surjective.
    By the assumption of the lemma, there exists $\bar\eta\in\left( \overline{\Cdh_{\not \e}}[|r|]^{\invs}\right)_{m-1}$ such that $\bar{\mh}_{1}^{\gamma,\bar{b}}(\bar\eta)+\pi_r\left(\a\right)\in F_{\nu_S}^{\ell} \left( \overline{\Cdh_{\not \e}}[|r|]^{\invs}\right)_{m}.$  Let $\eta^\prime$ be a homogenoues element in $\pi_r^{-1}(\bar{\eta})$, and define $\eta=\frac{\eta^\prime+f(\eta^\prime)}{2}$.  Then, by Lemma~\ref{lm:pi_r_chain} (and Lemma~\ref{lm:pi_r_phi_equivariant} if $\invs=-\phi^*$), we have
    \[\pi_r\left(\a+\mghb_1(\eta)\right)=\pi_r\left(\a\right)+\mh_{1}^{\gamma,\bar{b}}\left(\pi_r\left(\eta\right)\right)=\pi_r\left(\a\right)+\bar{\mh}_{1}^{\gamma,\bar{b}}(\bar\eta)\in F_{\nu_S}^{\ell} \left(\overline{\Cdh_{\not \e}}[|r|]^{\invs}\right)_{m}.
    \]

\end{proof}
\begin{lm}\label{lm:td}
Assume that $n$ is odd, or $\int f(\bar{b})\ne 0$.
Let $r\in {\Rd}^{\vee}_{E_{l+1}}$ be  homogeneous. If $(\pi_r)_*\left(\mathfrak{o}\right)$ is in the image of the inclusion 
\[
F_{\nu_S}^{n-(2+|r|)}H^{2}(\overline{\Cdh_{\not \e}}[|r|]^{\invs},\bar{\mh}^{\gamma,b}_1)\hookrightarrow H^{2}\left(\overline{\Cdh_{\not \e}}[|r|]^{\invs},\bar{\mh}^{\gamma,b}_1\right),
\]
then $(\pi_r)_*\left(\mathfrak{o}\right)=0.$
\end{lm}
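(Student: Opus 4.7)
The plan is to combine Corollary~\ref{cor:even_top_deg}, applied to the shifted complex $\overline{\Cdh_{\not \e}}[|r|]$ via equation~\eqref{eq:sssh} with $h=2+|r|$, together with Theorem~\ref{thm:spectral_conv}, to identify
\[
F_{\nu_S}^{n-(2+|r|)}H^{2}\bigl(\overline{\Cdh_{\not \e}}[|r|]^{\invs},\bar{\mh}^{\gamma,b}_1\bigr)\cong E_\infty^{n-(2+|r|),\,2(2+|r|)-n},
\]
and then to dispose of the few residual non-vanishing cases. Under the hypothesis ``$n$ odd or $\int_L\bar b\ne 0$'', the second clause of Corollary~\ref{cor:even_top_deg} shows this group vanishes whenever $2+|r|$ is an even integer distinct from $n$, and otherwise equals $S_{|r|+2-n}\otimes H^n(L,d)$, which is itself nonzero only when $|r|\le n-2$ and $|r|\equiv -1\pmod{n-1}$; in the vanishing cases the conclusion follows immediately.

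In the residual cases I would use Lemma~\ref{lm:obs_cohomo_S_filt} to replace $\mghb_0$ by an $\invs$-invariant cocycle representative whose $\pi_r$-projection already lies in $F^{n-(2+|r|)}_{\nu_S}$. Only the $S$-factor $s^i$ with $i=(n-2-|r|)/(n-1)$ appears and the form part is top degree, so the class is determined by a single number in $H^n(L;\R)\cong\R$ via $\int_L$. In the real setting $\invs=-\phi^*$, the identity $|r|+i(n-1)=n-2$ together with the congruence conditions coming from reality (Lemma~\ref{lm:reR}) place $|r|+p$ outside $\{1,2\}\pmod 4$ in the relevant dimension $n\equiv 2\pmod 4$, so Lemma~\ref{lm:Real_pi_r_mod_deg} forces the $\pi_r$-image of any real representative to vanish, giving the claim directly.

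In the non-real setting $\invs=\Id$, Proposition~\ref{no_top_deg} together with the $(S,F)$-extension formulas reduces the top-degree form part of $\mghb_0$ to $(\mgh_1(b))_n+(\mgh_2(b,b))_n$. Stokes' theorem annihilates the integral of the first summand on the closed manifold $L$, and the $s^i$-projection of $\int_L(\mgh_2(b,b))_n$ splits by parity: for $i$ odd, the cyclic identity $\langle b,b\rangle_F=0$ from Lemma~\ref{lm:phatsim} with $|b|=1$ forces the odd-$s$ part of $\int_L b\wedge b$ to vanish; for $i$ even, the contributions come from $b^0\wedge b^0$ and $b^1\wedge b^1$, with the first vanishing by graded commutativity of $\Cdh^{even}$ (Corollary~\ref{cor:center_of_hat_algebra}) applied to the odd-degree element $b^0$.

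The hard part will be this last case: ruling out contributions of $b^1\wedge b^1$ to the $s^i$-projection for even $i\ge 2$. This will require either combining Corollary~\ref{cor:center_of_hat_algebra} with the arithmetic constraint $|r|\equiv -1\pmod{n-1}$ to force such contributions to vanish, or extending the cyclic-pairing argument to cover these higher $s$-powers via careful sign bookkeeping through the $(S,F)$-extension.
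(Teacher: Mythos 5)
Your outline reproduces the paper's proof almost step for step: reduction of the representative into the deep filtration via Lemma~\ref{lm:obs_cohomo_S_filt}, identification of that filtration level with $E_\infty^{n-h,2h-n}$ ($h=2+|r|$) via Corollary~\ref{cor:even_top_deg}, immediate vanishing in the cases excluded by Theorem~\ref{thm:spectral_conv}, and in the residual cases an explicit top-degree computation (Proposition~\ref{no_top_deg}) killed by Stokes plus parity. The one step you flag as hard --- the contribution of $b^1\wedge b^1$ to $s^i$ with $i$ even, $i\ge 2$ --- is in fact vacuous, and for exactly the arithmetic reason you yourself set up. A product of two odd $s$-powers lies in $(\mI_S^{odd})^2$, i.e.\ in even $s$-powers $\ge 2$, so it could only hit the component $S_{h-n}$ when $i=(n-2-|r|)/(n-1)$ is even and $\ge 2$; but then $h=n-i(n-1)$ is even and $\ne n$ (for $n$ even), which is one of your \emph{vanishing} cases, where $E_\infty^{n-h,2h-n}=0$ and there is nothing left to prove; and for $n$ odd, $S$ is evenly graded so $b^1=0$. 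Thus in the residual cases $i$ is odd or $0$, and after Stokes, your $b^0\wedge b^0$ argument, and your odd-$i$ cyclic-pairing argument, nothing survives. This is precisely how the paper closes: it shows $\int_L\pi_r(\mghi{b^\dagger}_0)\equiv 0\pmod{(\mI_S^{odd})^2}$ using Corollary~\ref{cor:center_of_hat_algebra} and Stokes, and observes that in the residual cases $h-n$ is zero or odd, so $S_{h-n}\cap(\mI_S^{odd})^2=0$.

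Two smaller points. First, your displayed identification of $F_{\nu_S}^{n-h}H^{2}$ of the $\invs$-invariant complex with $E_\infty^{n-h,2h-n}$ is only an injection when $\invs=-\phi^*$; to conclude that the invariant class dies when the ambient group does, you need Corollary~\ref{cor:primitive_exists} (a class exact in the full complex is exact in the invariant subcomplex), which the paper invokes at exactly this point and you omit. Second, your real-case shortcut is a genuine deviation: the paper treats $\invs=\Id$ and $\invs=-\phi^*$ uniformly by the Stokes computation, whereas you kill the deep-filtration representative outright via Lemma~\ref{lm:S_filration_modu} and Lemma~\ref{lm:Real_pi_r_mod_deg}, using $|r|+p=n-2\equiv 0\pmod 4$ when $n\equiv 2\pmod 4$. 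That is correct and slightly more economical in that dimension, but it says nothing when the real setting occurs with $n\equiv 3\pmod 4$; there you must fall back on the generic branch (harmless, since then $\mI_S^{odd}=0$).
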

\begin{proof}
    Let $h=2+|r|$. It follows from Lemma~\ref{lm:obs_cohomo_S_filt} that we can choose $b^{\dagger}\in \left({\Cdh}^{\invs}\right)_1$ such that
    \begin{enumerate}
        \item $b^{\dagger}\equiv b\pmod{F^{>E_{l}}\Cdh_{\not \e}}$;
        \item $\pi_r\left(\mghi{b^{\dagger}}_0\right)\in F^{n-h}\overline{\Cdh_{\not \e}}[|r|]^{\invs}$\label{enum:td_2};
        \item  and $[\mgie{b^{\dagger}}{E_{l+1}}]=[\mgie{b}{E_{l+1}}]=\mathfrak{o}$ in $H^2\left(\Cdh_{\not \e, E_{l+1}},\mghb_1\right)$.  
    \end{enumerate}  
    Let $(E_r^{p,q},d_r)$ be the spectral sequence of Theorem~\ref{thm:spectral_conv}. Corollary~\ref{cor:even_top_deg}, and Corollary~\ref{cor:primitive_exists} imply that 
    \[F_{\nu_S}^{n-h}H^{2}(\overline{\Cdh_{\not \e}}[|r|]^{\invs},\bar{\mh}^{\gamma,b}_1)\hookrightarrow F^{n-h}_{\nu_S}H^{2}(\overline{\Cdh_{\not \e}}[|r|],\bar{\mh}^{\gamma,b}_1)\cong
    F_{\nu_S}^{n-h}H^{h}(\overline{\Cdh_{\not \e}},\bar{\mh}^{\gamma,b}_1)\cong E_{\infty}^{n-h,2h-n}.
    \]
    Assume first $E_1^{n-h,2h-n}=E_{\infty}^{n-h,2h-n}$. 
    Then, by Theorem~\ref{thm:spectral_conv}, we have that $n$ is odd, or $n-h$ is odd, or $n-h=0$. 
    Additionally, $(\pi_r)_*\left(\mathfrak{o}\right)$ is trivial if $[\pi_r\left(\mgie{b^{\dagger}}{E_{l+1}}\right)]\in E_{1}^{n-h,2h-n}$ is trivial. 
    Lemma~\ref{lm:S_filration_modu} and \eqref{enum:td_2} imply that
    \begin{equation}\label{eq:pi_r_Im}
        \pi_r\left(\mgie{b^{\dagger}}{E_{l+1}}\right)\in S_{h-n}\otimes A^{n}(L).
    \end{equation}
    Equation \eqref{eq:pi_r_Im} together with Lemma~\ref{lm:E_0_isom_diff_forms} implies that 
    \[\int_L \pi_r\left(\mgie{b^{\dagger}}{E_{l+1}}\right)=0 \implies (\pi_r)_*\left(\mathfrak{o}\right)=0.\]
Since either $n$ is odd, in which case $\mI_S^{odd}$ is trivial, or $h-n$ is zero or odd, in fact
    \[\int_L \pi_r\left(\mgie{b^{\dagger}}{E_{l+1}}\right)\equiv 0 \pmod{\left(\mI_S^{odd}\right)^2} \implies (\pi_r)_*\left(\mathfrak{o}\right)=0.\]
    Recalling that $\gamma|_{L}=0$, it follows from Proposition~\ref{no_top_deg} that 
    \begin{multline*}
        \left(\hat f(\mghi{b^\dagger}_0)\right)_n=\left(d\hat f(b^{\dagger})-\hat f(b^{\dagger})\cdot \hat f(b^{\dagger})\right)_n=\\
        \left(d\hat f(b^{\dagger})-\frac{1}{2}[\hat f(b^{\dagger}),\hat f (b^{\dagger})]\right)_n\equiv (d\hat f(b^{\dagger}) )_n\pmod{\left(\mI_S^{odd}\right)^2},
    \end{multline*}
    where the equivalence follows from Corollary~\ref{cor:center_of_hat_algebra}. Thus, using $[\cdot]$ to denote the equivalence class in the quotient $\Rdh_{E_{l+1}} = F^{E_{l+1}}\Rdh/F^{>E_{l+1}}\Rdh,$ we have
    \[
        \int_L \pi_r\left(\mgie{b^{\dagger}}{E_{l+1}}\right)=(r\otimes \Id)\left(\left[\int_L \hat{f}\left(\mghi{b^\dagger}_0\right)\right]\right)\equiv
         (r\otimes \Id)\left(\left[\int_L d\hat{f}\left(b^{\dagger}\right)\right]\right)\pmod{\left(\mI_S^{odd}\right)^2}.
    \]
   It follows from Stokes' Theorem
    that $(\pi_r)_*(\mathfrak{o})=0.$

    Now assume that $E_1^{n-h,2h-n}\ne E_{\infty}^{n-h,2h-n}$. Theorem~\ref{thm:spectral_conv} implies that  $E_{\infty}^{n-h,2h-n}=0$,
so $(\pi_r)_*\left(\mathfrak{o}\right)=0$.
\end{proof}

\begin{prop}\label{prop:obstruct_vanish}
In the two cases of Proposition~\ref{prop:exist}, if $n$ is odd, or $\int f(\bar{b})\ne 0$, then $\mathfrak{o}=0.$ 
\end{prop}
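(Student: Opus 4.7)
The plan is to combine three ingredients already in place: the exactness-detection Lemma~\ref{lm:basis_exact}, the filtration-surjectivity statements (Corollary~\ref{cor:cohomo_asum_filt} in Case~\ref{cond:sphere_coho} and Lemma~\ref{lm:cohomo-asum-filt_real} in Case~\ref{cond:invo_coho}), and the filtered obstruction-vanishing Lemma~\ref{lm:td}. Concretely, I will show that for every homogeneous $r \in (\Rd_{E_{l+1}})^{\vee}$, the class $(\pi_r)_*(\mathfrak{o})$ is zero, and then deduce $\mathfrak{o} = 0$ by Lemma~\ref{lm:basis_exact} (plus a symmetrization step in the real case).

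Fix a homogeneous $r$. By Lemma~\ref{lm:pi_r_chain} and, in Case~\ref{cond:invo_coho}, by Lemma~\ref{lm:Real_res_chain_map} together with the $\phi^*$-equivariance of $\pi_r$ from Lemma~\ref{lm:pi_r_phi_equivariant}, the map $\pi_r$ carries $\mathfrak{o}$ to a class in $H^2(\overline{\Cdh_{\not \e}}[|r|]^{\invs},\bar{\mh}^{\gamma,b}_1)$. In Case~\ref{cond:sphere_coho} ($\invs = \Id$), Corollary~\ref{cor:cohomo_asum_filt} gives that the filtered inclusion
\[
F_{\nu_S}^{n-(2+|r|)}H^{2}\bigl(\overline{\Cdh_{\not \e}}[|r|],\bar{\mh}^{\gamma,b}_1\bigr)\hookrightarrow H^{2}\bigl(\overline{\Cdh_{\not \e}}[|r|],\bar{\mh}^{\gamma,b}_1\bigr)
\]
is surjective; in Case~\ref{cond:invo_coho}, Lemma~\ref{lm:cohomo-asum-filt_real} (whose hypotheses on the cohomology of $L$ are exactly those of Case~\ref{cond:invo_coho}) furnishes the analogous surjectivity on the $-\phi^*$-invariant subspace. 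In either case, $(\pi_r)_*(\mathfrak{o})$ lies in the image of the filtered inclusion, so Lemma~\ref{lm:td} applies (its hypothesis $\int_L \hat{f}(\bar{b}) \ne 0$ or $n$ odd is exactly the hypothesis of the proposition) and yields $(\pi_r)_*(\mathfrak{o}) = 0$.

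Having $(\pi_r)_*(\mathfrak{o})=0$ for every homogeneous $r$, Lemma~\ref{lm:basis_exact} gives a primitive $\eta \in (\Cdh_{\not\e,E_{l+1}})_1$ with $\mgh^{b}_{1,E_{l+1}}(\eta) = \mgie{b}{E_{l+1}}$, proving $\mathfrak{o}=0$ in Case~\ref{cond:sphere_coho}. For Case~\ref{cond:invo_coho} I still need an \emph{$\invs$-invariant} primitive. Here I symmetrize: Lemma~\ref{lm:sd_m_zero} together with the reality of $b$ implies that the representative $\mgie{b}{E_{l+1}}$ is $\invs$-fixed, and Lemma~\ref{lm:sd_equivariance} says $\mgh_1^b$ commutes with $\phi^*$ hence with $\invs=-\phi^*$. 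Setting $\eta':=\tfrac{1}{2}(\eta+\invs(\eta))$ produces an $\invs$-invariant primitive, so $\mathfrak{o}=0$ in $H^2((\Cdh_{\not\e, E_{l+1}})^{\invs})$ as well.

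The conceptually nontrivial content — the vanishing of the relevant $E_{\infty}$ page of the $\nu_S$ spectral sequence under the sphere-like cohomological assumption, and the Stokes-type argument that kills the top-filtration piece — is already packaged in the cited Corollary~\ref{cor:cohomo_asum_filt}, Lemma~\ref{lm:cohomo-asum-filt_real} and Lemma~\ref{lm:td}. The only real care needed in the present proof is bookkeeping in Case~\ref{cond:invo_coho}: verifying that $\pi_r$ lands in the $-\phi^*$-invariant subspace (where the surjectivity of the filtered inclusion is asserted), and the brief averaging argument producing a real primitive.
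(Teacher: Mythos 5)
Your proposal is correct and takes essentially the same route as the paper's own proof: establish that each $(\pi_r)_*(\mathfrak{o})$ lies in the image of the filtered inclusion via Corollary~\ref{cor:cohomo_asum_filt} (resp.\ Lemma~\ref{lm:cohomo-asum-filt_real}), kill it with Lemma~\ref{lm:td}, and conclude with Lemma~\ref{lm:basis_exact}. Your explicit averaging step producing an $\invs$-invariant primitive in Case~\ref{cond:invo_coho} is a detail the paper leaves implicit (it is absorbed by Corollary~\ref{cor:primitive_exists}), but it does not change the argument.
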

\begin{proof}
    The assumptions of Corollary~\ref{cor:cohomo_asum_filt} are satisfied in case~\ref{cond:sphere_coho}, and the assumption of Lemma~\ref{lm:cohomo-asum-filt_real} are satisfied in case~\ref{cond:invo_coho}. Hence for any homogeneous element $r\in {\Rd_{E}}^{\vee}$ the element $(\pi_r)_*\left(\mathfrak{o}\right)$ is in the image of 
    \[
    F_{\nu_S}^{n-(2+|r|)}H^{2}(\overline{\Cdh_{\not \e}}[|r|]^{\invs},\bar{\mh}^{\gamma,b}_1)\hookrightarrow H^{2}\left(\overline{\Cdh_{\not \e}}[|r|],\bar{\mh}^{\gamma,b}_1\right).
    \]
    Thus, the assumptions of Lemma~\ref{lm:td} are satisfied, and $(\pi_r)_*\left(\mathfrak{o}\right)=0$. It now follows from Lemma~\ref{lm:basis_exact} that $\mghb_{0,E_{l+1}}$ is exact.
    So,  $\mathfrak{o}=0.$
\end{proof}
\begin{cor}\label{cor:obs_vanish}
In the two cases of Proposition~\ref{prop:exist}, there exists $\eta \in F^{E_{l}}\left({\Cdh_{\not \e}}^{\invs}\right)_1$ such that $b+\eta\in \left({\Cdh_{\not \e}}^{\invs}\right)_1$ is a solution to the Maurer-Cartan equation in $\frac{\Cdh_{\not \e}}{F^{>E_{l+1}}\Cdh_{\not \e}}$, and $\int_L \hat{f}(b+\eta)=\int_L \hat{f}\left(b\right)$.
\end{cor}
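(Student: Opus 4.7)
The plan is to combine the vanishing of the obstruction with a direct construction that preserves the integral. First, apply Proposition~\ref{prop:obstruct_vanish} to conclude $\mathfrak{o} = 0$; its hypothesis is guaranteed by the inductive setup of Proposition~\ref{prop:exist}, since Lemma~\ref{lm:energy_zero_degree_1} gives $\bar b = s \otimes \alpha$ with $\alpha \in A^n(L)$, and the preserved integral condition together with $a^1 \in s\cdot \hat R^*$ forces $\int_L\alpha \ne 0$. Lemma~\ref{lm:obs_induction} then produces some $\xi \in F^{E_{l+1}}(\Cdh^{\invs})_1 \subset F^{E_l}(\Cdh^{\invs})_1$ such that $b+\xi$ solves the Maurer--Cartan equation modulo $F^{>E_{l+1}}\Cdh_{\not\e}$.

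The remaining task is to correct $\xi$ so that $\int_L \hat f(\xi) = 0$. Set $\Delta := \int_L \hat f(\xi) \in F^{E_{l+1}}\Rdh_{1-n}$; in Case~\ref{cond:invo_coho}, $\Delta$ is $-\phi^*$-fixed. I would construct $\zeta \in F^{E_{l+1}}(\Cdh^{\invs})_1$ satisfying $\mghb_1(\zeta) \in F^{>E_{l+1}}\Cdh$ and $\int_L \hat f(\zeta) = -\Delta$; then $\eta := \xi + \zeta$ satisfies every conclusion of the corollary.

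For the cocycle, fix a top form $\omega_L \in A^n(L)$ with $\int_L \omega_L = 1$. For each $k \geq 0$, the element $s^k \otimes \omega_L \in \overline{\Cdh}$ is closed under $\bar \mh_1^{\gamma,\bar b} = d - [\bar b,\,\cdot\,]$, since $d(s^k\otimes\omega_L) = 0$ (top form) and any product of $\alpha$ with $\omega_L$ vanishes for dimensional reasons. For $r \in F^{E_{l+1}}\Rd$ of appropriate degree and invariance, set $\zeta_{k,r} := \hat g(r\otimes(s^k\otimes\omega_L))$. By Proposition~\ref{prop:deformed_boundrary_operator},
\[
\hat f(\mghb_1(\zeta_{k,r})) \equiv d\hat f(\zeta_{k,r}) - [\hat f(b),\hat f(\zeta_{k,r})] \pmod{F^{>E_{l+1}}};
\]
the first term vanishes since $\omega_L$ is closed and $\Rd$ is $d$-trivial, and in the second only the form-degree-zero part of $\hat f(b)$ can contribute a nonvanishing wedge with $\omega_L$, but that part has strictly positive $\nu$-filtration because $\bar b$ is a pure top form, so the bracket lies in $F^{>E_{l+1}}$. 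The integral equals $r\cdot s^k \in \Rdh$.

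The problem now reduces to the algebraic question of writing $-\Delta$ as $\sum_k r_k s^k$ with $r_k \in F^{E_{l+1}}\Rd$ of degree $(1-k)(1-n)$ and, in Case~\ref{cond:invo_coho}, with each $r_k$ in the $\phi^*$-fixed subspace. In Case~\ref{cond:invo_coho}, where $\Rd$ is concentrated in even degree and $n \equiv 2\pmod 4$, every degree-$(1-n)$ element of $\Rdh$ involves only odd powers of $s$, and the identity $\phi^*(s) = -s$ converts $-\phi^*$-invariance of $\Delta$ into $\phi^*$-invariance of the $r_k$. In Case~\ref{cond:sphere_coho} a parallel but less constrained decomposition works, with a separate $k=0$ term handled by the cocycle $1\otimes\omega_L$. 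I expect the main subtlety to be this final algebraic matching — tracking degrees, filtration, and $\invs$-invariance simultaneously through the extraction of $s$-factors from $\Delta$.
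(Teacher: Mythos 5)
Your proposal is correct, and its first half (invoking Proposition~\ref{prop:obstruct_vanish} to kill $\mathfrak{o}$ and then Lemma~\ref{lm:obs_induction} to produce $\xi\in F^{E_{l+1}}(\Cdh^{\invs})_1$) is exactly the paper's argument. Where you diverge is in preserving the integral: the paper does not add a correcting cocycle with integral $-\Delta$, but simply deletes the top-degree part of $\xi$, setting $\eta:=\hat{g}\bigl(\hat{f}(\xi)-\Id\otimes P_n\circ\hat{f}(\xi)\bigr)$ where $P_n:\overline{\Cdh}\to S\otimes A^n(L)$ is the projection. Since $\bar{\mh}^{\gamma,b}_1=d-[\bar b,\cdot]$ annihilates $S\otimes A^n(L)$ (top forms are closed and the bracket with $\bar b=s\otimes\alpha$, $\alpha\in A^n(L)$, lands in form degree $2n$), removing the top component does not spoil primitivity (via Lemma~\ref{lm:twisted_f_chain_iso}), while $\int_L$ only sees the top component, so $\int_L\hat f(\eta)=0$ automatically; $\invs$-invariance is preserved because $P_n$, $\hat f$, $\hat g$ commute with $\phi^*$. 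Your route rests on the same observation (elements of $S\otimes A^n(L)$ are cocycles for the twisted differential and carry the whole integral) but implements it additively, which forces the extra bookkeeping you flag at the end. That final matching does in fact go through: writing $\Delta=\sum_k s^k\otimes r_k$ along the basis of $S$ forces $r_k\in F^{E_{l+1}}\Rd$ homogeneous of degree $(1-k)(1-n)$, and in Case~\ref{cond:invo_coho} the realness of each $r_ks^k\otimes\omega_L$ is automatic from Lemma~\ref{lm:reR}, since $|r_ks^k|=1-n\equiv 2,3\pmod 4$ — so no delicate extraction of $s$-factors or sign-matching is actually needed. In short: your proof is valid, but the paper's projection trick buys you the integral normalization and the invariance for free, avoiding the decomposition step entirely.
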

\begin{proof}
    By Proposition~\ref{prop:obstruct_vanish} we are in the setting of Lemma~\ref{lm:obs_induction}, so we can choose $\xi \in F^{E_{l+1}}\left({\Cdh_{\not \e}}^{\invs}\right)_1$ such that $b+\xi$ is a bounding cochain in $\frac{\Cdh}{F^{>{E_{l+1}}}\Cdh}$. 
    Let $P_n: \overline{\Cdh} \to S \otimes A^{n}(L)$ be the projection. Then by Corollary ~\ref{cor:bar_fukaya_deformed_boundry}$, \ker\left(\bar{\m}_1^{\gamma,b}\circ P_n\right)=\overline{\Cdh}$. So, $\eta:=\hat{g}\left(\hat{f}(\xi)-\Id \otimes P_n\circ \hat{f}(\xi)\right)$ is a  primitive for $\mghb_{0,E_{l+1}}$ by Lemma~\ref{lm:twisted_f_chain_iso}. It follows that
    $ b+\eta$ is a solution to the Maurer-Cartan equation in $\frac{\Cdh_{\not \e}}{F^{>E_{l+1}}\Cdh_{\not \e}}$ , and 
    \[\int_L \hat{f}(b+\hat{g}(h))=\int_L \hat{f}\left(b\right)+\int_{L}\hat{f}(\xi)-\Id \otimes \pi_n\circ \hat{f}(\xi)=\int_L \hat{f}\left(b\right).\] 
\end{proof}
\begin{proof}[Proof of Proposition~\ref{prop:exist}]
Let $a \in (\mI_S+\mI_{\hat{R}})_{1-n}$ such that $a^1 \in s \cdot \hat{R}^*$. Define $\Rd:=R(a)$ as in Notation~\ref{notn:n_decomp}.
Take $\bar{b}_0\in A^n(L)$ any representative of the Poincar\'e dual of a point, and let $b_{(0)}:=a\otimes \bar{b}_0$. By Lemma~\ref{lm:energy_zero_bounding_chain} $b_{(0)}$ is a bounding cochain for $\overline{\Cdh}$.
Applying Corollary~\ref{cor:obs_vanish} repeatedely, we get a sequence of elements $\{\eta_i\}_{i=1}^\infty\subset \left(\Cdh\right)_1$ such that $\eta_i \in F^{ E_i}\Cdh$, and  the element $b_{(\ell)}:=b_{(0)}+\sum_{i=1}^{\ell}\eta_i$  is a solution to the Maurer-Cartan equation in $\frac{\Cdh_{\not \e}}{F^{>E_{\ell}}\Cdh_{\not \e}}$, and
\[\int_L \hat{f}\left(b_{(\ell)}\right)=\int_L \hat{f}(b_{(0)})=a.\] Since $\lim_{i \to \infty}\nu(\eta_i)=\infty$, it follows from completeness of $\Cdh$ that there exists a unique limit $b:=\lim_{\ell \to \infty}b_{(\ell)}$ and $\int_L b = \lim_{\ell \to \infty} \int_L b_{(\ell)} = a.$ By construction $b$ is a solution to the Maurer-Cartan equation in $\Cdh_{\not \e}.$ Moreover, $\nu(\mghb_0)>0$ as $b$ is a solution to the Maurer-Cartan equation in $\overline{\Cdh}.$  We conclude that $b$ is a bounding cochain for $\gamma$.
\end{proof}

\subsection{Gauge equivalence of bounding pairs}\label{ssec:classification}
\subsubsection{Formulation}
In the following, we use the notation of Section~\ref{pseudoisot}.  In particular, we write $\mgp$ for operations defined using the almost complex structure $J'$ and a closed form $\gamma' \in \mI_QD$ with $\deg_D \gamma' = 2.$
Recall also the definition~\eqref{eq:mR} of $\mR=A^*(I;R)$. Denote by $\pi:I\times X\to X$ the projection. By abuse of notation, we write $\mgt$ instead of $\hat{\mt}^{\gt}$. 

\begin{prop}\label{prop:unique}
Consider the following two cases.
\begin{enumerate}[label=(\arabic*), ref=(\arabic*)]
    \item \label{cond:vanishing_cohomology} We assume $H^{i}(L ; \mathbb{R})=0$ for $i \neq 0, n$, and take $\kappa = \Id.$
    \item \label{cond:real_setting} We assume $(X,L,\omega,\phi)$ is a real setting, $\s$ is a spin structure, $R$ is concentrated in even degree, and $H^k(L;\R) = 0$ for $k \equiv 0,3 \pmod 4$ and $k \neq 0,n$, and take $\kappa = -\phi^*.$
\end{enumerate}
Let $(\gamma, b)$ be a unit bounding pair with respect to $J$ and let $\left(\gamma^{\prime}, b^{\prime}\right)$ be a unit bounding pair with respect to $J^{\prime}$ such that $\varrho([\gamma, b])=\varrho\left(\left[\gamma^{\prime}, b^{\prime}\right]\right)$. Assume further that $\gamma, \gamma^\prime \in (\mI_QD)^{\invs}$ and $b, b^\prime \in (\Cdh)^{\invs}$. Then $(\gamma, b) \sim\left(\gamma^{\prime}, b^{\prime}\right)$.
\end{prop}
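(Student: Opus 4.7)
The plan is to mirror the existence proof of Proposition~\ref{prop:exist}, but with the Fukaya algebra $\Cdh$ replaced by the pseudoisotopy subcomplex $\mB$ of Section~\ref{sssec:psedu_residu_obstruct}. Since $\varrho$ agrees on both pairs, Lemma~\ref{lm_rho} gives $[\gamma]=[\gamma']$, so we may pick $\xi\in(\mI_QD)_1$ with $\gamma'-\gamma=d\xi$ and form $\gt=\gamma+t(\gamma'-\gamma)+dt\wedge\xi$. By Theorem~\ref{thm:pseduoisotopy}, $(\mgt,\ll\,,\gg,1)$ is a pseudoisotopy from $(\mg,\langle\,,\,\rangle,1)$ to $(\mgp,\langle\,,\,\rangle,1)$, and by Lemma~\ref{lm:pseudo} its $(S,F)$-extension is a pseudoisotopy between the corresponding extended algebras. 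Pick a sababa subring $\Rd\subset R$ containing all coefficients appearing in $b,b',\gt,\xi$ so that $b,b'\in\Cdh$ and the relevant forms lie in $\mCdh$. The goal is then to produce $\bt\in(\mCdh)^\invs_1$ with $j_0^*\bt=b$, $j_1^*\bt=b'$, and $\mgtb_0\in Z(\mRdh)\cdot 1$.

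The base of the induction uses the middle coordinate of $\varrho$: since $\int_L b^1/s=\int_L(b')^1/s$ (and, when $\invs=\mathrm{Id}$, also $\int_L b^0\equiv\int_L(b')^0\pmod{(\mI_{\hat R}^{odd})^2}$), one has $\int_L\hat f(\bar b)=\int_L\hat f(\bar{b'})$, so Lemma~\ref{lm:bt_exist_base_case1} supplies $\bt_0\in(\mCdh)_1$ with $\bar\bt_0$ a bounding cochain for $\bar\mgt$ and the correct boundary values. In the real case, an averaging $\bt_0\mapsto\tfrac12(\bt_0-\phi^*\bt_0)$ makes $\bt_0$ real while preserving all required properties. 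By Lemma~\ref{lm:bt_exist_base_case} the deformed operators $\{\mgti{\bt_0}_k\}$ preserve $\mB$, and by Corollary~\ref{cor:real_subcomp_dt} they further preserve $\mB^\kappa$. Working in $\mB_{\not\e}^\kappa$ ensures that any correction automatically has the prescribed boundary values.

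The inductive step runs the standard obstruction scheme of Section~\ref{ssec:obs_theory} in this setting. Suppose $\bt$ is a bounding cochain for $\mgt$ in $\mB_{\not\e}^\kappa/F^{>E_\ell}$. The obstruction is the class $\mathfrak{o}=[\mgtbl_{0,E_{\ell+1}}]\in H^2(\mB_{\not\e,E_{\ell+1}}^\kappa,\mgtbl_{1,E_{\ell+1}})$. By Lemma~\ref{lm:dt_basis_exact}, it suffices to show $(\pi_r)_*(\mathfrak{o})=0$ for every homogeneous $r\in(\Rd_{E_{\ell+1}})^\vee$. The pseudoisotopy analog of Lemma~\ref{lm:obs_cohomo_S_filt} lets us modify $\bt$ within $F^{E_\ell}$ so that $\pi_r(\mgtbl_0)$ lies in the deepest piece $F_{\nu_S}^{n-(2+|r|)}$ of the $\nu_S$-filtration; this is possible by Corollary~\ref{cor:cohomo_asum_filt_dt} in case~\ref{cond:vanishing_cohomology} and Lemma~\ref{lm:cohomo-asum-filt_real_dt} in case~\ref{cond:real_setting}. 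Once the obstruction is pushed to top filtration, Corollary~\ref{cor:dt_even_top_deg} identifies it with an element of $E_\infty^{n+1-h,2h-n-1}(\mB_{\not\e})$ computed by Theorem~\ref{thm:spectral_conv_psedu}, which vanishes unless $n$ is even and $\int_L\bar b\ne0$, $h$ lies in the critical bidegree.

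The main obstacle, and the place where the third coordinate of $\varrho$ enters, is this top-degree computation. Using Proposition~\ref{cl:qt_no_top_deg} (together with $\gt|_{I\times L}=0$ and the assumption that $\int_L b=\int_L b'$ modulo $(\mI_{\hat R}^{odd})^2$), the top-degree part of $\mgtbl_0$ evaluated against $r$ becomes, up to $(\mI_{\hat R}^{odd})^2$,
\[
\int_{I\times L}\bigl(d\hat f(\bt)\bigr)_{n+1}=\int_L\hat f(b')-\int_L\hat f(b),
\]
by Stokes' theorem. The vanishing modulo $(\mI_{\hat R}^{odd})^2$ supplied by the equality $\varrho([\gamma,b])=\varrho([\gamma',b'])$ (and in the real case by $\int_L b^0,\int_L(b')^0\in(\mI_{\hat R}^{odd})^2$, which is automatic since $\mI_S^{odd}$ degrees are forced to be zero by the spectral sequence analysis) then forces $(\pi_r)_*(\mathfrak{o})=0$. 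Lemma~\ref{lm:obs_induction} produces the correction $\bt'\in F^{E_{\ell+1}}\mB_{\not\e}^\kappa$, and since $\mB$ already encodes the boundary conditions, the updated $\bt+\bt'$ still restricts to $b$ and $b'$. By completeness of $\mCdh$ with respect to $\nu$, iterating the step produces a limit $\bt\in(\mCdh)_1^\invs$ which is a genuine bounding cochain for $\mgt$ with $j_0^*\bt=b$ and $j_1^*\bt=b'$, realizing the desired pseudoisotopy.
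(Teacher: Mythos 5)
Your overall strategy is the paper's own: construct $\gt$ from $[\gamma]=[\gamma']$, take $\Rd=R(b,b')$, start from the $\bt_0$ of Lemma~\ref{lm:bt_exist_base_case1}, run the obstruction theory on $\mB$ energy level by energy level using the spectral sequence of Theorem~\ref{thm:spectral_conv_psedu} together with the Stokes/top-degree argument fed by $\varrho([\gamma,b])=\varrho([\gamma',b'])$ and $\int_L\hat f(\bar b)\ne 0$, and pass to the limit by completeness. However, two steps in your sketch do not go through as written. First, your claim that ``since $\mB$ already encodes the boundary conditions, the updated $\bt+\bt'$ still restricts to $b$ and $b'$'' is false: by definition, elements of $\mB$ restrict at $t=0,1$ to elements of $Z(\Rdh)\cdot 1$, not to zero, so adding a primitive $\xi\in\mB$ of the obstruction can shift $j_i^*\bt$ by a central multiple of the unit, violating the exact conditions $j_0^*\bt=b$, $j_1^*\bt=b'$ required by Definition~\ref{dfn_g_equiv}. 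The paper repairs this in Corollary~\ref{cor:dt_obs_vanish} by replacing $\xi$ with $\eta=\xi-\bigl((1-t)j_0^*\xi+tj_1^*\xi\bigr)$, which has vanishing restrictions and still solves the Maurer--Cartan equation modulo the relevant filtration because the subtracted term is a central multiple of the unit whose image under $\mgtb_1$ lies in $Z(\ker d_{\mRdh})\cdot 1$, hence dies in $\mB_{\not\e}$. In case~\ref{cond:real_setting} this is vacuous for degree-parity reasons, but in case~\ref{cond:vanishing_cohomology} odd-degree variables $t_j$ can produce nonzero degree-one central elements, so the correction is genuinely needed.

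Second, your appeal to Corollary~\ref{cor:cohomo_asum_filt_dt} (resp.\ Lemma~\ref{lm:cohomo-asum-filt_real_dt}) to push the obstruction into top $\nu_S$-filtration is incomplete. Those statements only assert surjectivity of $F_{\nu_S}^{n+1-(2+|r|)}H^2\hookrightarrow F_{\nu_S}^{1-(2+|r|)}H^2$ of $(\overline{\mB}_{\not\e}[|r|]^{\invs},\bar{\mt}^{\gt,\bt}_1)$, whereas a priori (Lemma~\ref{lm:S_filration_modu}) the class $(\pi_r)_*(\mathfrak o)$ only lies in $F_{\nu_S}^{-(2+|r|)}H^2$; unlike the absolute case, the pseudoisotopy spectral sequence does not vanish at the bottom bidegree ($E_\infty^{p,q}(\overline{\mB})\cong S_{-p}$ for $p$ even, $2p+q=0$), so the improvement by one filtration step is not automatic. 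This is exactly the content of Lemma~\ref{lm:dt_o_vanish_bottom_filt}, which uses the lift $\mgti{\bt_0+\bt}_0-\ct$ to a cocycle in $\overline{\mB}$ and the vanishing of the map $Q_\infty$ in that bidegree (Lemma~\ref{lm:B_not_e_almost_iso}); without this step, the surjectivity results you cite do not apply to the obstruction class. (A smaller point: your intermediate filtration level should be $1+n-(2+|r|)$, not $n-(2+|r|)$, since the pseudoisotopy complex lives on $I\times L$ of dimension $n+1$.) With these two repairs your argument coincides with the paper's proof.
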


The proof of Proposition~\ref{prop:unique} is given toward the end of the section based on the construction detailed in the following.

\subsubsection{Obstruction theory}

Fix a sababa sub-algebra $\Rd\subset R$, and define $\mRd:=\Rd \otimes A^*(I)$ and $\mCd:=\Rd \otimes A^*(I \times L)$. Then, by Lemma~\ref{lm:R_daimond_sababa_A_infty} we have $(\mCd,\{\mgt_k\}_{k\ge 0},\langle\langle\;,\,\rangle\rangle,1)$ is a sababa cyclic unital $A_\infty$ algebra over $\Rd$.
As in Section~\ref{ssec:discrete_filtration}, $\nu$ gives a discrete filtration on $\mCdh$. To prove the gauge equivalence of Proposition~\ref{prop:unique}, we construct a bounding cochain $\bt \in\mCdh$ following the strategy of Remark~\ref{rem:induc_cons_desc}. Thus, the bounding cochain $\bt$ is defined as a limit of a sequence $\bt_{(\ell)} = \sum_{i=0}^{\ell} \bt_i,$ where $\bt_{(\ell)}$ satisfies  
\begin{equation}\label{eq:Enrergy_sol_maurer_cartan_rel}
    \mgti{\bt_{(\ell)}}\equiv \ct_{(\ell)}\cdot 1 \pmod{F^{E_l}\mCdh},\qquad \ct_{(\ell)}\in (\mI_R\mRdh)_2,
\end{equation}
and $d\ct_{(\ell)}\equiv 0  \pmod{F^{E_l}\mRdh}$.

Denote by $\hat{g}:\Rd \otimes \overline{\mCdh} \to \mCdh$ the induced $\bar \Rd$ structure on $\mCdh$ from Section~\ref{sssec:fukaya_scalar_ext_dfn} and by $\hat{f}$ its inverse. 
Let $b,b^\prime\in \Cdh$ be bounding cochains for the $A_\infty$ algebras $(\Cdh,\{\mgh_k\}_{k\ge 0},\langle\;,\,\rangle_F,1\otimes 1)$ and $\left(\Cdh, \mh^{\gamma^{\prime}},\langle\,,\,\rangle_F, 1\otimes 1\right)$ respectively. Assume that $\int_L \hat{f}\left(b\right)- \int_L \hat{f}\left(b'\right)\in \left(\mI_{\Rh}^{odd}\right)^2$. Let $\bt_0$ be as in Lemma~\ref{lm:bt_exist_base_case1}. By Lemma~\ref{lm:bt_exist_base_case} $\bt_0$ induces an $A_\infty$ algebra $(\mB,\{ \mgti{\bt_0}_k\}_{k\ge0},\ll\;,\,\gg_F,1)$. Let $\bt \in \left({\mB}^{\invs}\right)$ be a bounding cochain with respect to the induced $A_\infty$ algebra 
\[\left(\frac{\mB}{F^{>{E_{l}}}\mB},\{ \mgti{\bt_0}_k\}_{k\ge0},\ll\;,\,\gg_F,1\right). \]
It can be shown that $\left(\mgti{\bt_0}_0\right)^{\bt}=\mgti{\bt_0+\bt}_0$.
Then, the definition of a bounding cochain implies that $\left(\mgti{\bt_0}_0\right)^{\bt} \in F^{E_{l+1}}\mB_{\not \e}$. So,  Lemma~\ref{lm:o_is_class_HCe} implies that $\mgti{\bt_0+\bt}_0$ represents a cocycle in $(\mB_{\not \e, E_{l+1}})^{\invs}$, which we denote by $\mgti{\bt_0+\bt}_{0,E_{l+1}}$.
For the rest of this section, we abbreviate
\[
\mathfrak{o} =
[\mgti{\bt_0+\bt}_{0,E_{l+1}}]\in H^2\left(\mB_{\not \e, E_{l+1}}^{\invs}, \mghb_1\right).
\]

\begin{lm}\label{lm:dt_obs_cohomo_S_filt}
    Let $r$ be an element of ${\Rd_{E}}^{\vee},$ and $\alpha\in F^E\left(\mB_{\not \e}^{\invs}\right)_m$ such that $\mgti{\bt_0+\bt}_1\left(\alpha\right)=0$.
     If $[\pi_r\left(\a\right)]\in F_{\nu_S}^\ell H^{m}\left(\overline{\mB_{\not \e}}[|r|]^{\invs},\mgti{\bt_0+\bt}_0\right)$, then there exists $\eta \in F^{ E_{l}}\mB^1$, such that
    \[
    \pi_r\left(\a+\mgti{\bt}_1(\eta)\right)\in F_{\nu_S}^{\ell}\left(\overline{\mB_{\not \e}}\right)_{m}[|r|]^{\invs}.
    \]
\end{lm}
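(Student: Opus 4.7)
\emph{Plan.} The argument parallels that of Lemma~\ref{lm:obs_cohomo_S_filt}, with the extra constraint that all lifts must stay inside the subalgebra $\mB \subset \mCdh$. By hypothesis there exists a representative $\bar\eta \in (\overline{\mB_{\not \e}}[|r|]^{\invs})_{m-1}$ with
\[
\bar{\mgti{\bt_0+\bt}}_1(\bar\eta) + \pi_r(\a) \in F_{\nu_S}^\ell(\overline{\mB_{\not \e}}[|r|]^{\invs})_m.
\]
The plan is to lift $\bar\eta$ to an element $\eta \in F^{E}\mB^{\invs}$ of degree $m-1$ via the induced $\overline{\Rd}$ structure, and then to invoke the chain-map and $\kappa$-equivariance properties of $\pi_r$ to conclude.

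The first task is to verify that the induced $\overline{\Rd}$ structure $\hat g : \Rd\otimes_{\overline{\Rd}}\overline{\mCdh} \to \mCdh$ restricts to a bijection $\hat g_{\mB} : \Rd\otimes_{\overline{\Rd}}\overline{\mB} \to \mB$, as noted in the proof of Lemma~\ref{lm:dt_basis_exact}. Indeed, for $\bar\xi \in \overline{\mB}$ one has $j_i^*(\bar\xi) \in Z(\overline{\mRdh})\cdot 1 = Z(S)\cdot 1$, and for $\lambda \in \Rd$ a direct computation gives $j_i^*(\hat g(\lambda\otimes\bar\xi)) = \lambda\cdot j_i^*(\bar\xi) \in Z(\mRdh)\cdot 1$, where $Z(\mRdh) \supset Z(S)\otimes\Rd\otimes A^*(I)$ by Corollary~\ref{cor:center_of_hat_algebra} together with the commutativity of $\Rd$. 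Consequently $\pi_r = \hat\pi_r\circ\hat f$ restricts to a map $\mB \to \overline{\mB}[|r|]$ and descends to $\pi_r : \mB_{\not\e} \to \overline{\mB_{\not\e}}[|r|]$.

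Assuming without loss of generality that $r \neq 0$, pick $r^\vee \in \Rd_E$ with $r(r^\vee) = 1$ and a lift $\bar\eta_0 \in (\overline{\mB}[|r|]^{\invs})_{m-1}$ of $\bar\eta$. Set $\eta' := \hat g_{\mB}(r^\vee\otimes\bar\eta_0) \in F^{E}\mB$, so that $\pi_r(\eta') = \bar\eta$, and symmetrize by $\eta := (\eta' + \kappa(\eta'))/2 \in F^{E}\mB^{\invs}$. Membership in $\mB$ is preserved under symmetrization because $\kappa$ commutes with $j_0^*, j_1^*$ and sends $Z(\mRdh)\cdot 1$ into itself. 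The chain-map property of $\pi_r$ with respect to $\mgti{\bt_0+\bt}_1$ is the direct analogue of Lemma~\ref{lm:pi_r_chain}, obtained by applying Proposition~\ref{prop:deformed_boundrary_operator} in the pseudoisotopy setting, while $\kappa$-equivariance of $\pi_r$ is the analogue of Lemma~\ref{lm:pi_r_phi_equivariant}. Combined, they yield
\[
\pi_r\bigl(\a + \mgti{\bt_0+\bt}_1(\eta)\bigr) = \pi_r(\a) + \bar{\mgti{\bt_0+\bt}}_1(\bar\eta) \in F_{\nu_S}^\ell(\overline{\mB_{\not\e}}[|r|]^{\invs})_m,
\]
as required.

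The principal obstacle is the structural bookkeeping needed to confirm that the $\overline{\Rd}$ structure, the chain-map property, and the $\kappa$-equivariance of $\pi_r$ all descend compatibly from $\mCdh$ to the subalgebra $\mB$. Once these compatibilities are in place, the argument reduces to a verbatim adaptation of the proof of Lemma~\ref{lm:obs_cohomo_S_filt}, now in the pseudoisotopy context.
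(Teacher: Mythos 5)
Your proof is correct and takes essentially the same route as the paper, whose entire proof is ``same as Lemma~\ref{lm:obs_cohomo_S_filt}, but using Lemma~\ref{lm:dt_basis_exact} instead of Lemma~\ref{lm:basis_exact}''; the compatibilities you spell out (the induced $\bar{\Rd}$ structure restricting to $\mB$, the lift via $r^\vee$, symmetrization by $\invs$, and the chain-map and $\invs$-equivariance of $\pi_r$) are precisely what that citation hides. One cosmetic slip: membership in $\mB$ requires $j_i^*(\hat g(\lambda\otimes\bar\xi))\in Z(\Rdh)\cdot 1$ rather than $Z(\mRdh)\cdot 1$, which your computation in fact gives since $\lambda\, j_i^*(\bar\xi)\in \big(Z(S)\otimes\Rd\big)\cdot 1=Z(\Rdh)\cdot 1$.
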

\begin{proof}
    Same as the proof of Lemma~\ref{lm:obs_cohomo_S_filt}, but  using Lemma~\ref{lm:dt_basis_exact} instead of Lemma~\ref{lm:basis_exact}.
\end{proof}
\begin{lm}\label{lm:dt_o_vanish_bottom_filt}
\[\pi_r\left(\mathfrak{o}\right)\in F_{\nu_S}^{1-(2+|r|)}H^2\left(\overline{\mB_{\not \e}}[|r|]^{\invs},\bar{\mt}^{\gamma,\bt}_1\right)\]    
\end{lm}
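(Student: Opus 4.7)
The plan is to exhibit a representative of $\pi_r(\mathfrak{o})$ in the filtration level $F_{\nu_S}^{1-h}$, where $h=2+|r|$. Since $\nu_S$ takes non-negative values, the statement is trivial unless $|r|\le -2$; in that range the piece of $\overline{\mB_{\not \e}}[|r|]^{\invs}_2$ that lies in $F_{\nu_S}^{-h}$ but not in $F_{\nu_S}^{1-h}$ is concentrated in $S_h\otimes A^0(I\times L)$ by Lemma~\ref{lm:Fpexplicit}. The task therefore reduces to showing that the $A^0(I\times L)$-component of a natural representative of $\pi_r(\mathfrak{o})$ is a $\bar{\mt}^{\gt,\bt}_1$-coboundary modulo higher filtration.

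First, I would use Lemma~\ref{lm:deformed_operator_explicit_formula} to write $\mgti{\bt_0+\bt}_0$ as a convergent sum of $\mgt_k$ applied to tensor powers of $\bt_0+\bt$, and apply Proposition~\ref{cl:qt_no_top_deg} to extract the top differential-form-degree $(n+1)$ part on $I\times L$. Passing through $\hat f$ and using Propositions~\ref{prop:r_structure_energy_dga} and~\ref{prop:sf_extension_arises_dga}, the residue of this top-degree part equals $(d\hat f(\bt_0+\bt))_{n+1}-(\hat f(\bt_0+\bt)\wedge \hat f(\bt_0+\bt))_{n+1}$ modulo higher filtration.

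Second, I would dually pair the $A^0$-component of $\pi_r(\mgti{\bt_0+\bt}_{0,E_{l+1}})$ against the top differential form degree via integration over $I\times L$. Applying Stokes' theorem in the form of Lemma~\ref{lm:d_ll_gg} together with the boundary conditions $j_0^*\bt_0=b$ and $j_1^*\bt_0=b'$, the $d$-contribution collapses to the difference $\int_L\hat f(b')-\int_L\hat f(b)$, which lies in $(\mI_{\hat R}^{odd})^2$ by the standing hypothesis $\int_L\hat f(b)\equiv \int_L\hat f(b')\pmod{(\mI_{\hat R}^{odd})^2}$. By Corollary~\ref{cor:center_of_hat_algebra} the wedge square $\hat f(\bt_0+\bt)\wedge \hat f(\bt_0+\bt)$ also lies in $(\mI_{\hat R}^{odd})^2$. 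Composing with the functional $r$ therefore produces an element of the desired filtration.

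Finally, I would invoke Lemma~\ref{lm:dt_obs_cohomo_S_filt} with $\ell=1-h$ and $m=2$ to upgrade this pointwise control to a modification of the representative of $\pi_r(\mathfrak{o})$ by a coboundary $\mgti{\bt_0+\bt}_1(\eta)$ for some $\eta\in F^{E_{l+1}}(\mB_{\not \e}^{\invs})_1$, producing a representative of the class in $F_{\nu_S}^{1-h}\overline{\mB_{\not \e}}[|r|]^{\invs}_2$.

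The main obstacle will be the sign and filtration bookkeeping in Step~2: tracking the graded signs from Lemma~\ref{lm:sign_multipication_order}, confirming that the wedge square genuinely lands in $(\mI_{\hat R}^{odd})^2$ in top differential form degree, and, in the real case, respecting the $\invs$-equivariance of Lemma~\ref{lm:pi_r_phi_equivariant} so that the primitive $\eta$ can be chosen in the $\invs$-invariant subspace via Corollary~\ref{cor:primitive_exists}.
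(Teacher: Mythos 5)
Your reduction of the statement to killing the graded piece $S_{2+|r|}\otimes A^0(I\times L)$ (form degree zero) is the right starting point, but the mechanism you propose for killing it is aimed at a different graded piece and does not apply here. The top-degree extraction via Proposition~\ref{cl:qt_no_top_deg}, the Stokes argument over $I\times L$, and the hypothesis $\int_L\hat f(b)-\int_L\hat f(b')\in(\mI_{\hat R}^{odd})^2$ control the form-degree-$(n+1)$ component of the obstruction representative, which sits in the \emph{top} filtration piece $F_{\nu_S}^{n+1-(2+|r|)}/F_{\nu_S}^{n+2-(2+|r|)}$; that is precisely the content of the subsequent Lemma~\ref{lm:dt_td}, not of the present lemma, whose proof in the paper uses none of these ingredients. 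Integration over $I\times L$ annihilates everything except the top-degree component, so it gives no information about the $A^0$-component you need to control. Moreover, your final step is circular: Lemma~\ref{lm:dt_obs_cohomo_S_filt} takes as hypothesis that $[\pi_r(\alpha)]$ already lies in $F_{\nu_S}^{\ell}H^m$, which is exactly the statement to be proved; it only converts a filtration statement about the class into a choice of representative, it cannot establish the filtration statement.

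The argument the paper uses is of a different, softer nature. Since $\bt$ is a bounding cochain modulo $F^{>E_l}$, there is a central element $\ct\in Z(\ker d_{\mRdh})\cdot 1$ with $\mgti{\bt_0+\bt}_0-\ct\in F^{E_{l+1}}\mB$, and the $A_\infty$ relations together with the unit property show this difference is a genuine cocycle in $(\mB_{E_{l+1}})^{\invs}$, i.e.\ \emph{before} passing to the quotient $\mB_{\not\e}$. Hence the symbol of $\pi_r(\mathfrak{o})$ in $E_\infty^{p,q}\bigl(\overline{\mB_{\not\e}}[|r|]^{\invs}\bigr)$, with $(p,q)=(-(2+|r|),4+|r|)$, lies in the image of the map $Q_\infty$ induced by the quotient $\overline{\mB}\to\overline{\mB_{\not\e}}$. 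In this bidegree one has $2p+q+|r|=0$, so by Lemma~\ref{lm:B_not_e_almost_iso} the map $Q_\infty^{p,q}$ is trivial when $p$ is even, while for $p$ odd the source $E_\infty^{p,q}(\overline{\mB}[|r|])$ vanishes by Theorems~\ref{thm:spectral_conv} and~\ref{thm:spectral_conv_psedu}; in either case the symbol vanishes, which is exactly the assertion that $\pi_r(\mathfrak{o})\in F_{\nu_S}^{1-(2+|r|)}H^2$. To repair your proof you would need to replace Steps 1--3 by this lifting-plus-$Q_\infty$ argument (or an equivalent one); as written, the proposal proves (part of) the wrong lemma and assumes the present one.
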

\begin{proof}
    By Lemma~\ref{lm:S_filration_modu} we have that
    \[\pi_r(\mathfrak{o})\in F_{\nu_S}^{-(2+|r|)}H^2\left(\overline{\mB_{\not \e}}[|r|]^{\invs},\bar{\mt}^{\gamma,\bt}_1\right).\]
    To obtain the stronger bound asserted by the lemma, it suffices to demonstrate that the class represented by $\pi_r(\mathfrak{o})$ vanishes in the corresponding graded component of the spectral sequence, specifically\[[\pi_r(\mathfrak{o})]=0 \in E_{\infty}^{-(2+|r|),4+|r|}\left(\overline{\mB_{\not \e}}[|r|]^{\invs}\right).\] 
Since $\bt$ is a bounding cochain in $\frac{\mB}{F^{>E_{l}}\mB}$, there exists $\ct \in Z\left(\ker d_{\mRdh}\right)1$ such that $\mgti{\bt_0+\bt}_0-\ct\in F^{E_{l+1}}\mB$. 
    Properties~\ref{it:a_infty} and~\ref{it:unit1} of the $A_\infty$ algebra $\left(\mB,\{\mgti{\bt_0+\bt}\}_{k\ge 0}\right)$ imply that $[\mgti{\bt_0+\bt}_0-\ct]$ is a cocycle in $(\mB_{E_{l+1}})^{\invs}$.
    So, $\pi_r\left([\mgti{\bt_0+\bt}_0-\ct]\right)$ is a cocycle in $\left(\overline{\mB}[|r|]^{\invs}\right)_2$. Let $Q:\overline{\mB}[|r|]^{\invs}\to \overline{\mB_{\not \e}}[|r|]^{\invs}$ and $Q^{\prime}:\mB \to \mB_{\not \e}$ be the quotients map. We observe that 
    \begin{multline*}
        Q\circ \pi_r\left([\mgti{\bt_0+\bt}_0-\ct]\right)= \pi_r\left( [Q^{\prime }\left(\mgti{\bt_0+\bt}_0-\ct\right)]\right)=\\
        =
        \pi_r\left( [Q^{\prime }\left(\mgti{\bt_0+\bt}_0\right)-Q^{\prime }\left(\ct\right)]\right) =
        \pi_r\left( [Q^{\prime }\left(\mgti{\bt_0+\bt}_0\right)]\right)=\pi_r(\mgti{\bt_0+\bt}_{0,E_{l+1}}).
    \end{multline*}
    
Consequently, the class $[\pi_r(\mathfrak{o})]
$ lies in the image of the map induced by $Q$ on the $E_{\infty}$ page
\[[\pi_r(\mathfrak{o})]\in Q_\infty^{-2-|r|,4+|r|}\left(E_\infty^{-2-|r|,4+|r|}\left(\overline{\mB}[|r|]^{\invs}\right)\right).\]
We set $p=-(2+|r|)$ and $q=4+|r|$. To complete the proof we distinguish two cases based on the parity of $p$:
\begin{itemize}
    \item Case $p$ is even. The indices satisfy $2p+q+|r|=0$. By Lemma~\ref{lm:B_not_e_almost_iso}, $Q^{p,q}_{\infty}$ is trivial, implying $[\pi_r(\mathfrak{o})]=0$.
    \item Case $p$ is odd. By Theorem~\ref{thm:spectral_conv} and Theorem~\ref{thm:spectral_conv_psedu} $E_{\infty}^{p,q}\left(\overline{\mB}[|r|]\right)=0,$ which forces $[\pi_r(\mathfrak{o})]=0$.
\end{itemize}
\end{proof}

\begin{lm}\label{lm:dt_td}
Assume that $n$ is odd or $\int_L \hat{f}\left(\bar{b}\right)\ne 0$.
Let $r\in {\Rd_{E_{l+1}}}^{\vee}$ be homogeneous. 
Assume $(\pi_r)_*\left(\mathfrak{o}\right)$ is in the image of $F_{\nu_S}^{1+n-(2+|r|)}H^{2}(\overline{\mB_{\not \e}}[|r|]^{\invs},\bar{\mt}^{\gamma,\bt_0}_1)\hookrightarrow H^{2}(\overline{\mB_{\not \e}}[|r|]^{\invs},\bar{\mt}^{\gamma,\bt_0}_1)$.
 Then, $(\pi_r)_*\left(\mathfrak{o}\right)=0$.
\end{lm}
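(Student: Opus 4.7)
The plan is to mirror the proof of Lemma~\ref{lm:td} in the pseudoisotopy setting, using Lemma~\ref{lm:dt_obs_cohomo_S_filt}, Corollary~\ref{cor:dt_even_top_deg}, Theorem~\ref{thm:spectral_conv_psedu}, and Proposition~\ref{cl:qt_no_top_deg} in place of their non-pseudoisotopy counterparts, and invoking Stokes' theorem on $I\times L$ rather than the vanishing of the boundary integral on $L$.

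First I would set $h = 2+|r|$ and apply Lemma~\ref{lm:dt_obs_cohomo_S_filt} to adjust $\bt$ by a coboundary so as to obtain $\bt^\dagger \in \mB^\kappa$ with $\bt^\dagger \equiv \bt \pmod{F^{>E_l}\mB}$, $[\mgti{\bt_0+\bt^\dagger}_{0,E_{l+1}}] = \mathfrak{o}$, and $\pi_r(\mgti{\bt_0+\bt^\dagger}_0) \in F_{\nu_S}^{1+n-h}(\overline{\mB_{\not\e}}[|r|]^\kappa)_2$. Corollary~\ref{cor:dt_even_top_deg} combined with Corollary~\ref{cor:primitive_exists} then reduces the task to showing that the image of $(\pi_r)_*(\mathfrak{o})$ in $E_\infty^{1+n-h,\,2h-n-1}(\overline{\mB_{\not\e}}[|r|]^\kappa)$ vanishes. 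Via Lemma~\ref{lm:B_not_e_almost_iso} and Corollary~\ref{cor:B_zero_Cdh_spectral_iso}, this $E_\infty$-group is canonically identified (up to the shift by $|r|$) with $E_\infty^{1+n-h,\,2h-n-2}(\overline{\Cdh_{\not\e}}^\kappa)$, precisely the top-cohomology position of the spectral sequence treated in Theorem~\ref{thm:spectral_conv}.

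I would then split into the same two cases as in Lemma~\ref{lm:td}. If $E_1 \ne E_\infty$ at position $(1+n-h,\,2h-n-1)$, Theorem~\ref{thm:spectral_conv_psedu} forces $E_\infty^{1+n-h,\,2h-n-1}(\overline{\mB_{\not\e}}) = 0$ and the claim is immediate. Otherwise the same parity bookkeeping as in Lemma~\ref{lm:td} (either $n$ odd, or $h-n-1$ zero or odd) guarantees that $(\mI_S^{odd})^2$ annihilates the relevant $s$-degree of the image of $\int_{I\times L}$, so by Lemma~\ref{lm:E_0_isom_diff_forms} it suffices to establish
\[
\int_{I\times L}\pi_r\bigl(\mgti{\bt_0+\bt^\dagger}_{0,E_{l+1}}\bigr) \equiv 0 \pmod{(\mI_S^{odd})^2}.
\]

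To finish, I would rewrite the left-hand side as $(r\otimes\Id)\bigl[\int_{I\times L}\hat{f}(\mgti{\bt_0+\bt^\dagger}_0)\bigr]$ and use Proposition~\ref{cl:qt_no_top_deg} together with $\gt|_{I\times L}=0$ and Corollary~\ref{cor:center_of_hat_algebra} to replace the degree-$(n+1)$ component of $\hat{f}(\mgti{\bt_0+\bt^\dagger}_0)$ by $(d\hat{f}(\bt_0+\bt^\dagger))_{n+1}$ modulo $(\mI_S^{odd})^2$. Stokes' theorem on $I\times L$ then reduces the integral to $\int_L\hat{f}(j_1^*(\bt_0+\bt^\dagger)) - \int_L\hat{f}(j_0^*(\bt_0+\bt^\dagger))$. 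Since $\bt^\dagger \in \mB$ forces $j_i^*\bt^\dagger \in Z(\Rdh)\cdot 1$ (which integrates to zero on $L$) and $j_0^*\bt_0 = b$, $j_1^*\bt_0 = b'$, this equals $\int_L\hat{f}(b') - \int_L\hat{f}(b) \in (\mI_{\Rh}^{odd})^2$ by hypothesis. The containment $(r\otimes\Id)((\mI_{\Rh}^{odd})^2) \subseteq (\mI_S^{odd})^2$ (following from $\mI_S^{odd} = S^{odd}$) then closes the argument. The main obstacle will be bookkeeping the filtration shifts, the above ideal inclusion, and the naturality of $\hat{f}$ under $j_i^*$ on $\mB$; beyond these technicalities I expect no new conceptual ingredient beyond a faithful translation of Lemma~\ref{lm:td}.
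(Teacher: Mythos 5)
Your proposal follows the paper's proof of this lemma essentially step for step: the same choice of $\bt^\dagger$ via Lemma~\ref{lm:dt_obs_cohomo_S_filt}, the same reduction to the $E_\infty^{n+1-h,2h-n-1}$ position via Corollaries~\ref{cor:primitive_exists} and~\ref{cor:dt_even_top_deg} and Lemma~\ref{lm:B_not_e_almost_iso}, the same two-case parity analysis from Theorem~\ref{thm:spectral_conv_psedu}, and the same top-degree/Stokes computation on $I\times L$ using Proposition~\ref{cl:qt_no_top_deg}, Corollary~\ref{cor:center_of_hat_algebra}, $j_i^*\bt^\dagger\in Z(\Rdh)\cdot 1$, and the hypothesis $\int_L\hat f(b)-\int_L\hat f(b')\in(\mI_{\Rh}^{odd})^2$. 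The only cosmetic difference is that you assert the containment $(r\otimes\Id)\bigl((\mI_{\Rh}^{odd})^2\bigr)\subseteq(\mI_S^{odd})^2$ directly, whereas the paper phrases the same point by factoring $q_S\circ(r\otimes\Id)$ through $\Rdh/(\mI_{\Rh}^{odd})^2$; these are equivalent.
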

\begin{proof}
    Let $h=2+|r|$. It follows from Lemma~\ref{lm:dt_obs_cohomo_S_filt} that we can choose $\bt^{\dagger}\in (\mB^{\invs})_1$ such that 
    \begin{enumerate}
    \item $\bt^{\dagger}\equiv \bt\pmod{F^{>E_{l}}\mB^{\invs}}$;
    \item $\pi_r\left(\mgti{\bt_0+\bt^{\dagger}}_{0, E_{l+1}}\right)\in F_{\nu_S}^{1+n-h}\overline{\mB_{\not \e}}^{\invs}$;\label{enum:dt_td_filt}
    \item  and $[\mgti{\bt_0 +\bt^{\dagger}}_{0,E_{l+1}}]=[\mgti{\bt_0 +\bt}_{0,E_{l+1}}]=\mathfrak{o}$ in $H^2\left((\mB_{\not \e})_{E_l},\mgti{\bt_0}_1\right)$.  
\end{enumerate}
    Let $\left(E_r^{p,q},d_r\right)$ be the spectral sequence of Theorem~\ref{thm:spectral_conv_psedu}. By Lemma~\ref{lm:B_not_e_almost_iso} we have 
    \[E_{\infty}^{n+1-h,2h-n-1}(\overline{\mB_{\not \e}})\cong E_{\infty}^{n+1-h,2h-n-1}.\]
    Thus, Corollary~\ref{cor:primitive_exists}, and Corollary~\ref{cor:dt_even_top_deg} imply that
    \begin{multline*}
        F_{\nu_S}^{n+1-h}H^{2}(\overline{\mB_{\not \e}}[|r|]^{\invs},\mgti{\bt_0}_1)\hookrightarrow F^{n+1-h}_{\nu_S}H^{2}(\overline{\mB_{\not \e}}[|r|],\mgti{\bt_0}_1)\\ \cong
    F_{\nu_S}^{n+1-h}H^{h}(\overline{\mB_{\not \e}},\mgti{\bt_0}_1)\cong E_{\infty}^{n+1-h,2h-n-1}.
    \end{multline*}
    Assume first that $E_1^{n+1-h,2h-n-1}=E_\infty^{n+1-h,2h-n-1}$. Then, by Theorem~\ref{thm:spectral_conv_psedu}, we have $n$ is odd, or $n+1-h$ is odd, or $n+1-h=0$. 
    Additionally, $(\pi_r)_*\left(\mathfrak{o}\right)$ is trivial if and only if $[\pi_r\left(\mgti{\bt_0+\bt^{\dagger}}_{0,E_{l+1}}\right)]\in E_1^{n+1-h,2h-n-1}$ is trivial. Lemma~\ref{lm:S_filration_modu} with \eqref{enum:dt_td_filt} implies that
    \begin{equation}\label{eq:dt_pi_r_Im}
        \pi_r\left(\mgti{\bt_0+\bt^{\dagger}}_0\right)\in S_{h-n-1}\otimes A^{n+1}(I\times L).
    \end{equation}
    Equation \eqref{eq:dt_pi_r_Im} together with Lemma~\ref{lm:E_0_isom_diff_forms} implies that 
    \[\int_{I\times L} \pi_r\left(\mgti{\bt_0+\bt^{\dagger}}_{0, E_{l+1}}\right)=0 \implies (\pi_r)_*\left(\mathfrak{o}\right)=0.\]
    Since either $n$ is odd, in which case $\mI_S^{odd}$ is trivial, or $h-n-1$ is zero or odd, in fact
    \[\int_{I\times L} \pi_r\left(\mgti{\bt_0+\bt^{\dagger}}_{0, E_{l+1}}\right)\equiv 0 \pmod{\left(\mI_S^{odd}\right)^2} \implies (\pi_r)_*\left(\mathfrak{o}\right)=0.\]
    Recalling that $\gt|_{I\times L}=0$, it follows from Proposition~\ref{cl:qt_no_top_deg} that 
        \begin{multline*}
        \left(\hat f(\mgti{\bt_0+\bt^{\dagger}}_0)\right)_n=\left(d\hat f(\bt_0+\bt^{\dagger})-\hat f(\bt_0+\bt^{\dagger})\cdot \hat f(\bt_0+\bt^{\dagger})\right)_n=\\
        \left(d\hat f(\bt_0+\bt^{\dagger})-\frac{1}{2}[\hat f(\bt_0+\bt^{\dagger}),\hat f (\bt_0+\bt^{\dagger})]\right)_n\equiv (d\hat f(b^{\dagger}) )_n\pmod{\left(\mI_S^{odd}\right)^2},
    \end{multline*}
    where the equivalence follows from Corollary~\ref{cor:center_of_hat_algebra}. 
    Thus, using $[\cdot]$ to denote the equivalence class in the quotient $\Rdh_{E_{l+1}} = F^{E_{l+1}}\Rdh/F^{>E_{l+1}}\Rdh,$ we have 
    \begin{multline*}
        \int_{I\times L} \pi_r\left(\mgti{\bt_0+\bt^{\dagger}}_{0, E_{l+1}}\right)=(r\otimes \Id)\left(\left[\int_{I\times L } \hat{f}\left(\mgti{\bt_0+\bt^{\dagger}}_{0}\right)\right]\right)\equiv \\\equiv (r\otimes \Id)\left(\left[\int_{I\times L} d\hat{f}\left((\bt_0+\bt^{\dagger})\right)\right]\right)\pmod{\left(\mI_S^{odd}\right)^2}.
    \end{multline*}
   Recall that $n>0$ and that $j_i^*(\bt^{\dagger})\in Z(\Rdh)\cdot 1$ for $j=0,1$. Thus, Stokes' Theorem implies
    \[\int_{I\times L} d\hat{f}\left((\bt_0+\bt^{\dagger})\right)=\int_L\hat{f}(b)-\int_L \hat f (b^\prime)\equiv 0 \pmod{\left(\mI_{\Rh}^{odd}\right)^2},\]
    where the equivalence is by assumption.
     Let $q_S:S \to \frac{S}{\left(\mI_S^{odd}\right)^2}$ be the quotient map of vector spaces. We factorize $q_S\circ (r\otimes \Id_S )$ through $\left(\frac{\Rdh}{\left(\mI_{\Rh}^{odd}\right)^2}\right)_E$ as in the diagram,
\[\begin{tikzcd}
	{\Rdh_E} && {\left(\frac{\Rdh}{\left(\mI_{\Rh}^{odd}\right)^2}\right)_E} \\
	\\
	S && {\frac{S}{\left(\mI_S^{odd}\right)^2}}
	\arrow[from=1-1, to=1-3]
	\arrow["{r\otimes \Id_S}"', from=1-1, to=3-1]
	\arrow[dashed, from=1-3, to=3-3]
	\arrow["{q_S}"', from=3-1, to=3-3]
\end{tikzcd},\]
and deduce that 
\[q_S\circ (r\otimes \Id)\left(\left[\int_{I\times L} d\hat{f}\left((\bt_0+\bt^{\dagger})\right)\right]\right)=0.\]
    It follows that $(\pi_r)_*(\mathfrak{o})=0$.

    Now assume that $E_1^{n+1-h,2h-n-1}\not=E_\infty^{n+1-h,2h-n-1}$. By Theorem~\ref{thm:spectral_conv_psedu}, we have
    \[
     E_{\infty}^{n+1-h,2h-n-1}=0,
    \]
    so $(\pi_r)_*(\mathfrak{o})=0$.
\end{proof}
\begin{prop}\label{prop:dt_o_vanish}
In the two cases of Proposition~\ref{prop:unique}, if $n$ is odd or $\int_L \hat{f}\left(\bar{b}\right)\ne 0$, 
then $\mathfrak{o}=0.$ 
\end{prop}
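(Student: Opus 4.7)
The proof will mirror the structure of Proposition~\ref{prop:obstruct_vanish}, but adapted to the pseudoisotopy setting. The plan is to show that for every homogeneous element $r \in {\Rd_{E_{l+1}}}^\vee$, the class $(\pi_r)_*(\mathfrak{o})$ vanishes, and then invoke Lemma~\ref{lm:dt_basis_exact} to conclude that $\mathfrak{o}$ itself vanishes.

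First, I would fix a homogeneous $r \in {\Rd_{E_{l+1}}}^\vee$ and apply Lemma~\ref{lm:dt_o_vanish_bottom_filt} to place $(\pi_r)_*(\mathfrak{o})$ in $F_{\nu_S}^{1-(2+|r|)}H^2(\overline{\mB_{\not\e}}[|r|]^{\invs}, \bar{\mt}^{\gamma,\bt_0}_1)$. This is the weakest filtration bound guaranteed unconditionally by the geometry of $\mathfrak{o}$ (it comes from the fact that $\mgti{\bt_0+\bt}_0$ differs from an element of $Z(\ker d_{\mRdh})\cdot 1$ only by an element of $\mB_{\not\e}$).

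Next, I would upgrade this filtration bound to $F_{\nu_S}^{n+1-(2+|r|)}$ using the cohomological hypotheses of the two cases. In case~\ref{cond:vanishing_cohomology}, Corollary~\ref{cor:cohomo_asum_filt_dt} provides exactly the required surjectivity of the inclusion $F_{\nu_S}^{n-(2+v)} \hookrightarrow F_{\nu_S}^{1-(2+v)}$ (with $v = |r|$, up to the shift by $1$ coming from the pseudoisotopy having dimension $n+1$; more precisely, I would track that the relevant statement for the $(n+1)$-dimensional module $\overline{\mB_{\not\e}}$ follows from the spectral sequence calculation of Theorem~\ref{thm:spectral_conv_psedu} combined with the identification from Lemma~\ref{lm:B_not_e_almost_iso}). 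In case~\ref{cond:real_setting}, the real analog Lemma~\ref{lm:cohomo-asum-filt_real_dt} provides the same surjectivity for the $-\phi^*$-fixed subcomplex, which is why the hypotheses on the vanishing of $H^k(L;\R)$ for $k\equiv 0,3\pmod 4$ are needed.

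Having upgraded $(\pi_r)_*(\mathfrak{o})$ to lie in the image of $F_{\nu_S}^{n+1-(2+|r|)}H^2 \hookrightarrow H^2$, the hypotheses of Lemma~\ref{lm:dt_td} are satisfied: either $n$ is odd or $\int_L \hat f(\bar b) \neq 0$ by assumption, and the filtration condition is exactly what Lemma~\ref{lm:dt_td} requires. Applying Lemma~\ref{lm:dt_td}, we conclude $(\pi_r)_*(\mathfrak{o}) = 0$.

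Since $r$ was an arbitrary homogeneous element of ${\Rd_{E_{l+1}}}^\vee$, and since Lemma~\ref{lm:dt_basis_exact} says that a cochain in $(\mB_{\not\e}^{\invs})_{E_{l+1}}$ is $\mgti{\bt_0}_1$-exact if and only if every $\pi_r$ projection is exact in the residue complex, this yields $\mathfrak{o} = 0$. The main subtlety will be bookkeeping the degree shift $v = |r|$ and the shift by $1$ in the filtration index (reflecting that pseudoisotopies contribute forms of degree one higher); I expect this to require a careful identification along the lines of Corollary~\ref{cor:B_zero_Cdh_spectral_iso} and the indexing in Corollary~\ref{cor:dt_even_top_deg}, but all the needed isomorphisms are already in place, so no new spectral sequence input is required.
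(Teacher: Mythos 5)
Your proposal is correct and follows essentially the same route as the paper's proof: it combines Lemma~\ref{lm:dt_o_vanish_bottom_filt} with the surjectivity statements (Corollary~\ref{cor:cohomo_asum_filt_dt} in case~\ref{cond:vanishing_cohomology}, Lemma~\ref{lm:cohomo-asum-filt_real_dt} in case~\ref{cond:real_setting}) to verify the filtration hypothesis of Lemma~\ref{lm:dt_td}, and then concludes via Lemma~\ref{lm:dt_basis_exact}. The indexing subtlety you flag (the shift from $n$ to $n+1$ reflecting the extra $I$-direction) is handled the same way in the paper, so no further input is needed.
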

\begin{proof}
    The assumption of Corollary~\ref{cor:cohomo_asum_filt_dt} are satisfied in the case \ref{cond:vanishing_cohomology}. In case~\ref{cond:real_setting}, the assumptions of Lemma~\ref{lm:cohomo-asum-filt_real_dt}. Hence, by Lemma~\ref{lm:dt_o_vanish_bottom_filt} for any homogeneous element $r\in {\Rd_{E}}^{\vee}$ the element $(\pi_r)_*\left(\mathfrak{o}\right)$ is in the image of $F_{\nu_S}^{n-(2+|r|)}H^{2}(\overline{\mB}_{\not \e}[|r|]^{\invs},\bar{\mt}^{\gt,\bt_{0}}_1)\hookrightarrow H^{2}(\overline{\mB}_{\not \e}[|r|]^{\invs},\bar{\mt}^{\gt,\bt_{0}}_1)$. Thus, the assumptions of Lemma~\ref{lm:dt_td} are satisfied, and $(\pi_r)_*\left(\mathfrak{o}\right)$ is an exact cochain for any homogeneous element $r\in {\Rd_{E}}^{\vee}$. It follows from Lemma~\ref{lm:dt_basis_exact} that $\mathfrak{o}=0.$
\end{proof}
\begin{cor}\label{cor:dt_obs_vanish}
    In the two cases of Proposition~\ref{prop:unique}, assume that $n$ is odd, or $\int_L \hat{f}\left(\bar{b}\right)\ne 0$. 
Then, there exists $\eta \in F^{E_{l}}\left(\mB\right)_1$ such that
\begin{enumerate}
    \item $\bt+\eta$ is a bounding cochain for $\frac{\mB}{F^{>E_{l}}\mB}$;
    \item $j_i^*(\bt+\eta)=j_i^*(\bt)$ for $i=0,1$.
\end{enumerate}
\end{cor}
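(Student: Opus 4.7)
My plan is to combine the general obstruction-theoretic result of Lemma~\ref{lm:obs_induction} applied to the induced $A_\infty$ algebra of Lemma~\ref{lm:bt_exist_base_case} with an explicit central correction that will adjust the boundary restrictions. By Proposition~\ref{prop:dt_o_vanish}, the obstruction class
\[
\mathfrak{o} = [\mgti{\bt_0+\bt}_{0,E_{l+1}}] \in H^2\left(\mB_{\not\e,E_{l+1}}^\invs, \mgti{\bt_0+\bt}_1\right)
\]
vanishes. Since Lemma~\ref{lm:bt_exist_base_case} gives an $A_\infty$ structure $(\mB, \{\mgti{\bt_0+\bt}_k\}_{k\ge 0})$ over $Z(\mRdh)$, Lemma~\ref{lm:obs_induction} will produce an element $\xi \in F^{E_{l+1}}(\mB^\invs)_1$ such that $\bt + \xi$ is a bounding cochain modulo $F^{>E_{l+1}}\mB$.

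The element $\xi$ will generally not satisfy $j_i^*(\xi) = 0$. However, by definition of $\mB$, one has $j_0^*(\xi) = c_0 \cdot 1$ and $j_1^*(\xi) = c_1 \cdot 1$ for some $c_0, c_1 \in F^{E_{l+1}}Z(\hat{R})^\invs$. I will compensate for this by subtracting
\[
\zeta := \bigl((1-t)c_0 + t c_1\bigr)\cdot 1 \in F^{E_{l+1}}\mB^\invs,
\]
defining $\eta := \xi - \zeta$. Then $j_i^*(\eta) = 0$ by construction. The $\kappa$-invariance of $\zeta$ will follow from the $\kappa$-invariance of $c_0,c_1$ combined with the trivial $\kappa$-action on $t$ (together with the sign $\phi^*(c_i) = -c_i$ and the centrality of $t$ in the real case).

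The key verification will be that $\bt + \eta$ remains a bounding cochain modulo $F^{>E_{l+1}}$. Since $\tilde c := (1-t)c_0 + tc_1 \in Z(\mRdh)$ is central, the unit property applied via Lemma~\ref{lm:deformed_operator_explicit_formula} should yield
\[
\mgti{\bt_0+\bt}_1(\zeta) \equiv d\tilde c \cdot 1 = dt\wedge(c_1 - c_0)\cdot 1 \pmod{F^{>E_{l+1}}\mB},
\]
which is itself a central multiple of the unit. Then expanding $\mgti{\bt_0+\bt+\eta}_0$ around $\bt + \xi$ and using $\nu(\xi), \nu(\zeta) \ge E_{l+1}$ to discard quadratic-in-correction terms will give
\[
\mgti{\bt_0+\bt+\eta}_0 \equiv \mgti{\bt_0+\bt+\xi}_0 - \mgti{\bt_0+\bt}_1(\zeta) \pmod{F^{>E_{l+1}}\mB},
\]
which is a central multiple of $1$ because both summands are.

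The main obstacle I expect is the computation $\mgti{\bt_0+\bt}_1(\zeta) \equiv d\tilde c \cdot 1$. Executing it will require carefully tracking signs in Lemma~\ref{lm:deformed_operator_explicit_formula}: using property~\ref{it:unit1} to eliminate all $\mgt_k(\ldots, 1, \ldots)$ contributions for $k \ne 1, 2$; using property~\ref{it:unit2} to identify the $k=2$ terms pairwise so that they cancel against one another thanks to centrality of $\tilde c$; and invoking property~\ref{it:lin} to reduce the surviving $k=1$ term to $d\tilde c \cdot 1$. A clean way to organize this computation is to pass through the isomorphism $\hat g$ of the induced $\bar{\mathcal{R}}$ structure and apply Proposition~\ref{prop:deformed_boundrary_operator}, which reduces the calculation to the observation that $\hat f(\zeta) = \tilde c\otimes 1$ commutes with $\hat f(\bt_0+\bt)$, so only the differential piece $d\hat f(\zeta) = dt\wedge(c_1-c_0)\otimes 1$ survives.
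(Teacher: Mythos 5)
Your proposal is correct and follows essentially the same route as the paper: apply Proposition~\ref{prop:dt_o_vanish} and Lemma~\ref{lm:obs_induction} to obtain $\xi$, then set $\eta=\xi-\left((1-t)j_0^*(\xi)+t\,j_1^*(\xi)\right)$ and use that this correction is a central multiple of the unit (properties~\ref{it:lin} and~\ref{it:unit1}) so that the Maurer--Cartan condition in $\mB_{\not\e}$ is preserved. Your extra care with the $k=2$ terms, the quadratic corrections, and $\kappa$-invariance only makes explicit what the paper's terser argument leaves implicit.
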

\begin{proof}
    By Proposition~\ref{prop:dt_o_vanish} we are in the setting of Lemma~\ref{lm:obs_induction}, and we can choose $\xi \in F^{E_{l}}\left(\mB^{\invs}\right)_1$such that $\bt+\xi$ is a bounding cochain for $\frac{\mB}{F^{>E_{l}}\mB}$. Define $\eta:=\xi- \left((1-t)j^*_0(\xi)+tj^*_1(\xi)\right).$ Observe that 
    \[\left((1-t)j^*_0(\xi)+tj^*_1(\xi)\right)\in Z(\mRdh)\cdot 1.\]
    Hence, by the properties \ref{it:lin} and \ref{it:unit1} of the $A_\infty$ algebra
    \[\mgtb_1(\eta)=\mgtb_1(\xi)-d\left((1-t)j^*_0(\xi)+tj^*_1(\xi)\right).\]
    As $d\left((1-t)j^*_0(\xi)+tj^*_1(\xi)\right) \in Z(\ker d_{\mRdh})\cdot 1$, we deduce that $\bt+\eta$ is a solution to the Maurer-Cartan equation in $\frac{\mB}{F^{>E_{l}}\mB}$.
\end{proof}

\begin{proof}[Proof of Proposition~\ref{prop:unique}]
By assumption  $\varrho([\gamma, b])=\varrho\left(\left[\gamma^{\prime}, b^{\prime}\right]\right)$. Thus, we have $\int_L \hat{f}\left(b\right)- \int_L \hat{f}\left(b'\right)\in \left(\mI_{\Rh}^{odd}\right)^2$, and we can define $\gt$ by equation~\eqref{eq:gt_psedu_def}. Define $\Rd:=R(b,b^\prime)$ as in Notation~\ref{notn:n_decomp}.

By Lemma~\ref{lm:bt_exist_base_case1} there exists an element $\bt_0 \in (\mCdh)^\kappa$ with $\nu_s\otimes \nu(\bt)>0$, such that $\bar{\bt}_0$ is a bounding cochain for $\overline{\hat{\mC}}$, and $j_0^*(\bt_0)=b$, $j_1^*(\bt_0)=b^\prime$. 
Recall that $b$ is a unit bounding cochain, so
\[
\int_L \hat{f}\left(\bar{b}\right)\ne 0.
\]
Therefore, we can apply Corollary~\ref{cor:dt_obs_vanish} successively  and get a sequence of elements $\{\eta_i\}_{i=1}^\infty\subset \mB^\kappa$ such that $\eta_i \in F^{ E_i}\mB$, and  the element $\tilde{x}_{(\ell)}:=\sum_{i=1}^{\ell}\eta_i$  is a solution to the Maurer-Cartan equation in $\frac{\mB}{F^{>E_{l}}\mB}$.
Since $\lim_{i \to \infty}\nu(\eta_i)=\infty$, it follows from completeness of $\mCdh$ that there exists a unique limit $\mathfrak{b}:=\bt_0+\lim_{\ell \to \infty}\tilde{x}_{(\ell)}$.
By construction $\mathfrak{b}$ is a solution to the Maurer-Cartan equation in $\mCdh_{\not \e},$ and $j_0^*(\mathfrak{b})=b$, $j_1^*(\mathfrak{b})=b^\prime$. Moreover, $\nu(\mgti{\mathfrak{b}}_0)>0$ as $\mathfrak{b}$ is a solution to the Maurer-Cartan equation in $\frac{\mCdh}{F^{>0}\mCdh}.$  We conclude that $(\mgt,\mathfrak{b})$ is a pseudoisotopy from $(\mgh,b)$ to $(\hat{\m}^{\gamma^\prime},b^\prime)$.
\end{proof}
\begin{rem}\label{rem:pseudo_sababa}
Here it is important that $b$ and $b^{\prime}$ are elements of $\Ch$ and not only the $\nu_S \otimes \nu$ completion of $\hat{R}\otimes A^*(L)$ which we will denote by $\doublehat{C}$. Indeed, to construct the sababa algebra $R(b,b^{\prime})$, we decomposed the chains $b, b^{\prime}$ as in equation~\eqref{eq:n_decomp}, this decomposition does not always exist for elements of $\doublehat{C}$. For example $\sum_{k\in \N} \lambda_k \cdot 1$ where $\lambda_k\in \hat{R}$ are defined in Remark~\ref{rem:disaster_example} can not be decomposed as in equation~\ref{eq:n_decomp}. Working with the sababa $A_\infty$ algebra $\mCdh$ constructed from $\Rd$ is crucial for our construction of the bounding cochain $\bt$ as explained in Remark~\ref{rem:induc_cons_desc}.
\end{rem}
\section{Superpotential}
\subsection{Invariance under pseudoisotopy}\label{subsec:invar}
Here we restrict ourself to $n>0$ and even.
\begin{proof}[Proof of Theorem~\ref{thm_inv}]
Let $\gt$ and $\bt$ be as in the definition of gauge equivalence, Definition~\ref{dfn_g_equiv}. Recall from~\cite[Remark 2.4]{solomon2016differential} that Stokes' theorem for differential forms with coefficients in a graded ring comes with a sign,
\[
\int_M d\xi = (-1)^{\dim M+|\xi|+1}\int_{\d M} \xi.
\]
We use this for the computation of the sign below. We also remember that the induced chain map by a continues function is of degree $0$ in our computation of the signs. Hence by Lemma~\ref{lm:pseudo}, Lemma~\ref{lm:d_ll_gg}, 
and Proposition~\ref{prop:super_cyclic_structure_equation},
\begin{align*}
\Omega_{J'}&(\gamma',b')-\Omega_J(\gamma,b)=
(-1)^n\Big(\sum_{k\ge 0}\frac{1}{k+1}\left(\langle\m^{\gamma'}_k(b'^{\otimes k}),b'\rangle-\langle\mg_k(b^{\otimes k}),b\rangle\right)\Big)\\
=&(-1)^{n+n+1}\big(pt_*\sum_{k\ge 0}\frac{1}{k+1}d\ll\mgt_k(\bt^{\otimes k}),\bt\gg\big)\\
=&pt_*\sum_{\substack{k_1+k_2=k+1\\k\ge 0, k_1\ge 1,k_2\ge 0}}\frac{k_1}{k+1}\cdot
\ll\mgt_{k_1}(\bt^{\otimes k_1}), \mgt_{k_2}(\bt^{\otimes k_2})\gg
.
\shortintertext{We may relax the limit of summation to $k_1 \geq 0$ since $k_1$ multiplies the summands.
Interchanging the summation indices $k_1$ and $k_2$ and using the symmetry of the pairing~\ref{it:symm}, we continue}
=&pt_*\frac{1}{2}\sum_{\substack{k_1+k_2=k+1\\ k, k_1,k_2\ge 0}}\frac{k_1}{k+1}\cdot
\ll\mgt_{k_1}(\bt^{\otimes k_1}), \mgt_{k_2}(\bt^{\otimes k_2})\gg+\\
&+pt_*\frac{1}{2} \sum_{\substack{k_1+k_2=k+1\\k,k_1,k_2\ge 0}}\frac{k_2}{k+1}\cdot
\ll\mgt_{k_1}(\bt^{\otimes k_1}),\mgt_{k_2}(\bt^{\otimes k_2})\gg%
\shortintertext{We can add $0=pt_* \frac{1}{2}\ll \mgt_0, \mgt_0 \gg$ because $(\mgt_0)^1=0$, thus we derive }
=&pt_*\frac{1}{2}\sum_{k_1,k_2\ge 0}\ll \mgt_{k_1}(\bt^{\otimes k_1}),\mgt_{k_2}(\bt^{\otimes k_2})\gg.\\
\shortintertext{By $\bt$ being a bounding cochain}
=&pt_*\frac{1}{2}\ll \ct \cdot 1,\ct \cdot 1\gg
.\\
\shortintertext{Using $n > 0$ and the bilinearity of $\ll \cdot, \cdot \gg$ over $\hat\mR$ from Property~\ref{it:plin}, we conclude}
=&
0.
\end{align*}
Hence we conclude that the superpotential is invariant under gauge equivalence.
\end{proof}
\subsection{Comparison with Welschinger invariants}\label{ssec:comparison}
Proposition~\ref{prop:exist} in the real setting, can be simplified in the case $n=2,3.$
\begin{prop}\label{prop:3d}
If $n =2$, and $\gamma \in (\mI_QD)_2$ is real and closed, any form $b \in (\mI_{\hat{R}})_{1-n}  \otimes A^{n}(L)$ is a real bounding cochain for $\mgh$.
\end{prop}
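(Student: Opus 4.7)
The plan is to verify, for any $b$ of the stated form, both that $b$ is automatically real and that the Maurer--Cartan equation
\[
\sum_{k\ge 0}\mgh_k(b^{\otimes k})=c\cdot 1
\]
holds for some $c\in\mI_{\hat R}\cap Z(\hat R)$ of degree $2$. Convergence of the sum follows from Corollary~\ref{cor:mgh_expo_converge}, once one notes that $\nu_S\otimes\nu(b)>0$: every nonzero element of $(\mI_{\hat R})_{-1}$ involves a factor of positive $\nu$-weight, either some $t_i$ or some $T^\beta$ with $\omega(\beta)>0$.

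Reality of $b$ is automatic when $n=2$: every $a\in(\hat R)_{-1}$ satisfies $\phi^*a=(-1)^{(-1)(-2)/2}a=-a$ by the computation underlying Lemma~\ref{lm:reR}, and $\phi^*$ acts trivially on $A^n(L)$, so $\phi^*b=-b$. Since $|b|=1$ gives $s_\tau^{[1]}(b^{\otimes k})=\binom{k}{2}(|b|+1)^2\equiv 0\pmod 2$, Proposition~\ref{prop:sd_super} yields
\[
\phi^*\mgh_k(b^{\otimes k})=(-1)^{1+k}\mgh_k((-b)^{\otimes k})=(-1)^{1+k+k}\mgh_k(b^{\otimes k})=-\mgh_k(b^{\otimes k}),
\]
so every term of the Maurer--Cartan sum is real.

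The form-degree $n=2$ part of $\mgh_k(b^{\otimes k})$ vanishes by Proposition~\ref{no_top_deg}, since $db\in A^{n+1}(L)=0$, $b\wedge b\in A^{2n}(L)=0$, and $\gamma|_L$ vanishes in positive form-degree because $\gamma\in\mI_QD$. The form-degree $1$ part vanishes by the reality just established: such a term would carry an $\hat R$-coefficient of degree $2-1=1$ which, since $\phi^*$ is trivial on $A^1(L)$, would itself have to be real; but Lemma~\ref{lm:reR} allows real elements of $\hat R$ only in degrees $\equiv 2,3\pmod 4$, ruling out degree $1$. Thus only the form-degree $0$ piece $c\cdot 1$ survives. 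Each $b$-factor contributes a coefficient in $\mI_{\hat R}$ and each $\gamma$-factor contributes one in $\mI_Q\subseteq\mI_{\hat R}$, giving $c\in\mI_{\hat R}$. Centrality follows from Corollary~\ref{cor:center_of_hat_algebra}: $c$ is real of degree $2$, and because $R$ is concentrated in even degree in a real setting, any monomial $s^p t$ contributing to $c$ satisfies $-p+|t|=2$ with $|t|$ even, forcing $p$ even and hence $\degev{c}=0$.

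The main obstacle is recognizing that the intermediate form-degree $1$ can be eliminated by parity alone via Lemma~\ref{lm:reR}: this is a coincidence of $n=2$, in which the unique intermediate form-degree lies in the parity class forbidden to real elements of $\hat R$. Without this observation, intermediate form-degrees would require a more elaborate cyclic or dimension-counting argument.
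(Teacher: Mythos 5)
Your reduction to form degree zero is sound and runs essentially parallel to the paper's argument: reality of $b$ via Lemma~\ref{lm:reR}, reality of each $\mgh_k(b^{\otimes k})$ via Proposition~\ref{prop:sd_super}, elimination of the form-degree-$1$ component because $\hat{R}$ has no nonzero real elements of degree $1$, and elimination of the top form-degree either by Proposition~\ref{no_top_deg} (your route) or, as the paper does, again by the mod-$4$ reality constraint; the centrality argument ($R$ concentrated in even degree forces even powers of $s$, hence $\degev{c}=0$) also matches, and the convergence remark via Corollary~\ref{cor:mgh_expo_converge} is acceptable modulo the paper's standing device of replacing $R$ by the sababa subalgebra $R(b)$.

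The genuine gap is the phrase ``only the form-degree $0$ piece $c\cdot 1$ survives.'' What survives is a priori an element of $\hat{R}_2\otimes A^0(L)$, i.e.\ of the form $\sum_m a_m\otimes f_m$ with the $f_m$ arbitrary smooth functions on $L$, not a multiple of the constant function $1$; the definition of a bounding cochain requires exactly this constancy, and neither reality nor degree counting gives it, since $\phi^*$ acts trivially on $A^0(L)$. It is also not automatic geometrically: the degree-zero contributions are pushforwards $(evb_0^\beta)_*$ of top-degree forms on moduli spaces with boundary, so even though the integrand is closed, the pushforward need not be closed, hence need not be locally constant. This is precisely where the paper has to work: arguing at the lowest energy level at which the obstruction could fail to be a central multiple of $1$, it uses the deformed $A_\infty$ relation $\mghb_1(\mghb_0)=0$ (Lemma~\ref{lm:o_is_class_HCe}), the chain-map property of $\pi_r$ (Lemma~\ref{lm:pi_r_chain}), and the identification $\bar{\mh}^{\gamma,b}_1=\Id\otimes d-[\bar{b},\cdot\,]$ (Corollary~\ref{cor:bar_fukaya_deformed_boundry}); since the $d$-part and the bracket-part land in different form degrees, closedness forces the $A^0(L)$-part to be $d$-closed, and connectedness of $L$ then makes it constant, which by evenness of the $s$-power is a central multiple of $1$. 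Without this cocycle-plus-connectedness step (or an equivalent Stokes argument over the moduli boundaries), your argument does not establish the Maurer--Cartan equation in the required form $\mghb_0=c\cdot 1$, so the proof as written is incomplete.
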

\begin{proof}
Since $\dim L = 2,$ it follows that $\s$ is a spin structure. Lemma~\ref{lm:reR} implies that $b$ is real. Assume by contradiction that $b$ is not a bounding chain, and let $i$ be the minimal index such that 
\[\mghb_0 \not \equiv 0 \pmod{F^{>E_i}\Cdh_{\not \e}}.\]  
So, there exists $r\in (\Rd_{E_i})^{\vee}$ homogeneous, such that $\pi_r\left(\mghb_0 \right)\not\in Z(S)[|r|]\cdot 1.$

As $|\mghb_0|=2$ it follows from Lemma~\ref{lm:Real_pi_r_mod_deg}, and Proposition~\ref{prop:fukaya_infity_arises_dga} that 
\[\pi_r\left(\mghb_0\right)\in \bigoplus_{j\equiv0,3 \pmod{4}} \left(S[|r|]\right)_{2-j}\otimes \left(\overline{\Cd}\right)_{j}=\bigoplus_{j=0,3} \left(S[|r|]\right)_{2-j}\otimes \left(\overline{\Cd}\right)_{j}.\]

Since $n=2$, then $\left(\overline{\Cd}\right)_{3}\cong A^3(L)=0.$ So
\[\pi_r\left(\mghb_0\right)\in \left(S[|r|]\right)_{2}\otimes \left(\overline{\Cd}\right)_{0}\cong \left(S[|r|]\right)_{2}\otimes A^{0}(L).\] 

It follows from Lemma~\ref{lm:energy_zero_bounding_chain} that $i>0$, and that $\mghb_1$ is a boundary operator on $\Cdh_{\not \e,E_i}$. The choice of $i$ implies that $\mghb_{0,E_i}$ is a cocycle. Recall that $\pi_r$ is a cochain map. So, Lemma~\ref{cor:bar_fukaya_deformed_boundry} implies that $d\left(\pi_r\left(\mghb_0\right)\right)=0.$

Since $\pi_r\left(\mghb_0\right) \in \left(S[|r|]\right)_{2}\otimes A^{0}(L)$, we can write $\pi_r\left(\mghb_0\right) = \lambda \otimes 1$ where $\lambda \in \left(S[|r|]\right)_{2}$ and $1 \in A^{0}(L)$ is the constant function. 

Moreover, we know that $\pi_r\left(\mghb_0\right)$ is a $Z(S)[|r|]$ multiple of $1$. Since $S[|r|]_2 \subset Z(S)[|r|]$ (as elements of even degree in $S$ are in the center), we have that $\lambda \in Z(S)[|r|]$ and $\pi_r\left(\mghb_0\right) = \lambda \otimes 1$ is a $Z(S)[|r|]$ multiple of $1$, a contradiction.
\end{proof}
\begin{proof}[Proof of Theorem~\ref{Welsch}]
Assume without loss of generality that $[\gamma_N]=A.$ Abbreviate $k=k_{d,l}.$ By Proposition~\ref{prop:3d} we may choose $b=s\cdot\bb$ where $\bb$ is a representative of the Poincar\'e dual of a point.
By definition, for $k$ odd
\begin{align*}
\ogw_{\beta,k}(\gamma_N^{\otimes l})=&
k!l!\cdot[\text{the coefficient of } T^\beta s^k t_N^l\text{ in the formal power series }\Omega]\\
=&k!l!\left(
\frac{1}{l! k}\langle{evb_0^\beta}_* (\bigwedge_{j=1}^l(evi_j^\beta)^*\gamma_N\wedge
\bigwedge_{j=1}^{k-1} (evb_j^\beta)^*\bb
),\bb\rangle\right)\\
=&-(k-1)!\left(\int_{\M_{k,l}(\beta)} \wedge_{j=1}^l(evi_j)^*\gamma_N\wedge \wedge_{j=0}^{k-1} (evb_j)^*\bb\right).
\end{align*}

Denote by $\M_{k,l}^S(\beta)$ the moduli space of genus zero $J$-holomorphic open stable maps $u:(\Sigma,\d \Sigma) \to (X,L)$ of degree $\beta \in \sly$ with one boundary component, $k$ unordered boundary points, and $l$ interior marked points.
It comes with evaluation maps
$evb_j^\beta:\M_{k,l}^S(\beta)\to L$, $j=1,\ldots,k$, and $evi_j^\beta:\M_{k,l}^S(\beta)\to X$, $j=1,\ldots,l$.
The space $\M_{k,l}^S(\beta)$ carries a natural orientation induced by the spin structure on $L$ as in~\cite[Chapter 8]{fukaya2010lagrangian} and~\cite{solomon2006intersection}. The diffeomorphism of $\M_{k,l}^S(\beta)$ corresponding to relabeling boundary marked points by a permutation $\sigma \in S_k$ was defined to preserve orientation
\[
\ogw_{\beta,k}(\gamma_N^{\otimes l})=-\int_{\M_{k,l}^S(\beta)} \wedge_{j=1}^l(evi_j)^*\gamma_N\wedge \wedge_{j=1}^{k}(evb_j)^*\bb.
\]
Thus, the theorem follows from \cite[Theorem 1.8]{solomon2006intersection}:
The factor of $2^{1-l}$ arises from two independent sources.
First, as explained in \cite[Theorem 1.8]{solomon2006intersection}, each real holomorphic sphere corresponds to two holomorphic disks. This gives a factor of $2.$ Second, each interior constraint $\gamma_N$ is Poincar\'e dual to the homology class of point. On the other hand, Welschinger~\cite{welschinger2005enumerative, welschinger2005spinor} considers constraints that are pairs of conjugate points, thus representing twice the homology class of a point. This gives an additional factor of $2^{-l}.$
\end{proof}
\bibliography{refs}
\bibliographystyle{amsabbrvcnobysame} % Entries are in the "refs.bib" file
\end{document}